\newtheorem{theorem}{Theorem}[section]
\newtheorem{proposition}{Proposition}[section]
\newtheorem{lemma}[theorem]{Lemma}
\title{Global, Non-scattering solutions to the energy critical wave maps equation}
\date{}
\author{Mohandas Pillai} 
\subjclass[2010]{Primary 35L05, 35Q75}
\numberwithin{equation}{section}
\begin{document}
\maketitle
\begin{abstract} \noindent We consider the 1-equivariant energy critical wave maps problem with two-sphere target. Using a method based on matched asymptotic expansions, we construct infinite time relaxation, blow-up, and intermediate types of solutions that have topological degree one. More precisely, for a symbol class of admissible, time-dependent length scales, we construct solutions which can be decomposed as a ground state harmonic map (soliton) re-scaled by an admissible length scale, plus radiation, and small corrections which vanish (in a suitable sense) as time approaches infinity. Our class of admissible length scales includes positive and negative powers of t, with exponents sufficiently small in absolute value. In addition, we obtain solutions with soliton length scale undergoing damped or undamped oscillations in a bounded set, or undergoing unbounded oscillations, for all sufficiently large t.
 \end{abstract}
\tableofcontents
\section{Introduction}
We consider the wave maps equation for maps $\Phi: \mathbb{R}^{1+2} \rightarrow \mathbb{S}^{2}$. This wave maps equation is the Euler-Lagrange equation associated to 
$$\mathcal{S}(\Phi) = \int_{\mathbb{R}^{2+1}} \langle \partial^{\alpha} \Phi(t,x),\partial_{\alpha}\Phi(t,x) \rangle_{g(\Phi(t,x))} dtdx$$
where $g$ denotes the round metric on $\mathbb{S}^{2}$, and the $\alpha$ indices are contracted using the Minkowski metric. We consider the 1-equivariant symmetry reduction of this wave maps equation, which corresponds to writing
$$\Phi_{u}(t,x)=(\sin(u(t,r))\cos(\phi),\sin(u(t,r))\sin(\phi),\cos(u(t,r)))$$
where $(r,\phi)$ are polar coordinates on $\mathbb{R}^{2}$. The resulting equation for $u$ is the following.
\begin{equation}\label{wm} -\partial_{tt}u+\partial_{rr}u+\frac{1}{r}\partial_{r}u-\frac{\sin(2u)}{2r^{2}}=0, \quad r>0\end{equation}
Sufficiently regular solutions to \eqref{wm} satisfy the condition that the energy $E_{WM}(u,\partial_{t}u)$ is independent of time, where
$$E_{WM}(u,v) = \pi \int_{0}^{\infty} \left(v^{2}+\left(\partial_{r}u\right)^{2}+\frac{\sin^{2}(u)}{r^{2}}\right) rdr$$
We note that the family of solitons, $Q_{\lambda}(r)=2\arctan(r\lambda)$, for $\lambda>0$, are solutions to \eqref{wm}, which minimize $E_{WM}(u,0)$ within a class of functions $u$ such that $\Phi_{u}$ has topological degree one.

The work of Shatah and Tahvildar-Zadeh, \cite{stz}, studied the Cauchy problem associated to \eqref{wm}, with data $(u_{0},u_{1})$ such that 
$$(x_{1},x_{2}) \mapsto (\frac{x_{1}u_{0}(r)}{r},\frac{x_{2} u_{0}(r)}{r})\in H^{1}_{\text{loc}}(\mathbb{R}^{2}), \quad (x_{1},x_{2}) \mapsto (\frac{x_{1}u_{1}(r)}{r},\frac{x_{2} u_{1}(r)}{r})\in L^{2}_{\text{loc}}(\mathbb{R}^{2})$$ 
As in the previous work of the author, \cite{wm}, we will say that $u$ is a finite energy solution to \eqref{wm} if $u$ is a solution to \eqref{wm} in the sense of distributions, with $\Phi_{u}\in C^{0}_{t}\dot{H}^{1}(\mathbb{R}^{2})$ and $\partial_{t}\Phi_{u}\in C^{0}_{t}L^{2}(\mathbb{R}^{2})$.
 Throughout this work, we will consider the following wave equation with various right-hand sides.
\begin{equation}\label{linrpot}-\partial_{t}^{2}u+\partial_{r}^{2}u+\frac{1}{r}\partial_{r}u-\frac{u}{r^{2}}=0\end{equation}
The quantity $E(u,\partial_{t}u)$ is formally conserved for solutions to \eqref{linrpot}, where
\begin{equation}\label{ecoercive} E(u,v)=\pi \int_{0}^{\infty} \left(v^{2}+\left(\partial_{r}u\right)^{2}+\frac{u^{2}}{r^{2}}\right) rdr\end{equation}
The work \cite{ckls} of Cote, Kenig, Lawrie, and Schlag classified all solutions, $u$, to \eqref{wm}, which satisfy the condition that $\Phi_{u}$ has topological degree one, and  $E_{WM}(Q_{1},0)<E_{WM}(u,\partial_{t}u) < 3 E_{WM}(Q_{1},0)$. In particular, the result of \cite{ckls} implies that any such solution which also exists globally in time can be decomposed as
\begin{equation}\label{cklsdecomp}Q_{\frac{1}{\lambda(t)}}(r)+\varphi_{L}(t,r)+\epsilon(t,r)\end{equation}
where $\varphi_{L}$ solves \eqref{linrpot}, $\lambda(t)=o(t), \quad t \rightarrow \infty$, and $\epsilon \rightarrow 0$ in an appropriate sense, as $t \rightarrow \infty$. According to \cite{ckls}, at the time of its writing, there were no known constructions of solutions to \eqref{wm} which can be decomposed as above, for $\lambda(t) \rightarrow 0$ or $\lambda(t) \rightarrow \infty$ as $t \rightarrow \infty$. To the knowledge of the author, the only currently known examples of such solutions with $\lambda(t) \rightarrow 0$, are those constructed in the previous work of the author, \cite{wm}. More precisely, for all $b>0$, and all functions $\lambda_{0}\in C^{\infty}([100,\infty))$ which satisfy the following conditions for some constants $C_{l},C_{m},C_{m,k}>0$,
$$\frac{C_{l}}{\log^{b}(t)} \leq \lambda_{0}(t) \leq \frac{C_{m}}{\log^{b}(t)}, \quad |\lambda_{0}^{(k)}(t)| \leq \frac{C_{m,k}}{t^{k} \log^{b+1}(t)}, k\geq 1, \quad t \geq 100$$
the work \cite{wm} constructs finite energy solutions to \eqref{wm}, for $t$ sufficiently large, of the form 
$$u(t,r)=Q_{\frac{1}{\lambda(t)}}(r) + v_{rad}(t,r)+u_{e}(t,r)$$
where $v_{rad}$ solves \eqref{linrpot}, with $E(v_{rad},\partial_{t}v_{rad})<\infty$, 
$$E(u_{e},\partial_{t}\left(Q_{\frac{1}{\lambda(t)}}(r)+u_{e}\right)) \leq \frac{C}{t^{2}\log^{2b}(t)}$$
and $$\lambda(t)=\lambda_{0}(t)+e(t), \quad |e(t)| \leq \frac{C}{\log^{b}(t) \sqrt{\log(\log(t))}}$$
The main result of this work can be summarized as follows. For each positive $\lambda \in C^{\infty}((50,\infty))$ satisfying the following for all $t$ sufficiently large and  $C_{l},C_{u},C_{2}>0$ sufficiently small (see \eqref{lambdasetdef} for the precise conditions)
\begin{equation}\frac{-C_{l}}{t} \leq \frac{\lambda'(t)}{\lambda(t)} \leq \frac{C_{u}}{t}, \frac{|\lambda^{(k)}(t)|}{\lambda(t)} \leq \frac{C_{k}}{t^{k}},\text{ for }k \geq 2\end{equation}
this work constructs a solution to \eqref{wm} which can be decomposed as in \eqref{cklsdecomp} (see Theorem \ref{mainthm} for the precise sense in which $\epsilon \rightarrow 0$). This class of $\lambda$ includes positive and negative powers of $t$, as well as oscillatory functions which satisfy any combination of the following (see the Remarks after Theorem \ref{mainthm})
\begin{equation}\begin{split}&\liminf_{t \rightarrow \infty} \lambda(t) = 0 \text{ or } \liminf_{t \rightarrow \infty} \lambda(t)=\lambda_{0}>0 \quad \text{   and  }
\limsup_{t \rightarrow \infty} \lambda(t) = \lambda_{1}>0 \text{ or } \limsup_{t \rightarrow \infty} \lambda(t)=\infty \\
&\text{ where } \lambda_{0} \leq \lambda_{1}\end{split}\end{equation}
(in addition to oscillatory $\lambda$ such that $\liminf_{t \rightarrow \infty} \lambda(t) =\limsup_{t \rightarrow \infty} \lambda(t)= 0 \text{  or  } \infty$). The method of construction of the ansatz of this work is quite different than that used in \cite{wm}, see Remark 7 after Theorem \ref{mainthm} (as well as Section 3) for a comparison.

To the knowledge of the author, the solutions constructed in this work are the first examples of solutions to \eqref{wm} of the form \eqref{cklsdecomp}, with $\lambda(t) \rightarrow \infty$. Our class of solutions also enlarges the symbol class of known infinite time blow-up rates for \eqref{wm}, and also includes solutions for which $\lambda(t)$ is a power of $t$, or oscillates as described above, see also Remarks 4, 5, 6 after Theorem \ref{mainthm}. 

Before we state our main theorem, we will have to precisely describe the set of admissible $\lambda(t)$, and this will require a short discussion of the work \cite{kst} of Krieger, Schlag, and Tataru. The work \cite{kst} constructed a continuum of finite time blow-up solutions to \eqref{wm}, with blow-up rates given by $\lambda(t)=t^{1+\nu}$, for $\nu>\frac{1}{2}$. (Here, $\lambda(t)$ is the length scale of the soliton). 

When completing our approximate solution to \eqref{wm} to an exact one, we use two important notions from \cite{kst} in this present work. First, we will use the distorted Fourier transform, $\mathcal{F}$, associated to (a conjugation of) the elliptic part of the wave equation obtained by linearizing \eqref{wm} around $Q_{1}$. This distorted Fourier transform is defined in Section 5 of \cite{kst}. Second, we will use the ``transference operator'',$\mathcal{K}$, defined in Section 6 of \cite{kst} by
$$\mathcal{F}(r \partial_{r}u)=-2\xi\partial_{\xi}\mathcal{F}(u)+\mathcal{K}(\mathcal{F}(u))$$
In order to precisely describe the set of $\lambda(t)$ to which our main theorem applies, we will have to define a few absolute constants. First, we let $\rho$ denote the density of the spectral measure of $\mathcal{F}$, as defined in Theorem 5.3 of \cite{kst}. From Proposition 5.7b of \cite{kst}, there exists $C_{\rho}>0$, such that, for all $y,z>0$,
$$\frac{\rho(y)}{\rho(z)} \leq C_{\rho}\left(\frac{y}{z}+\frac{z}{y}\right)$$ 
Second, by Theorem 6.1, and Proposition 6.2 of \cite{kst}, the operators $\mathcal{K}$ and $[\xi \partial_{\xi},\mathcal{K}]$ are bounded on $L^{2,\alpha}_{\rho}$, for example, for $\alpha=0,\frac{1}{2}$, where
\begin{equation}\label{l2alphadef}||f||_{L^{2,\alpha}_{\rho}} = ||f(\xi) \langle\xi\rangle^{\alpha}\sqrt{\rho(\xi)}||_{L^{2}(d\xi)}\end{equation}
Now, we define $\Lambda$ to be the set of positive functions $\lambda\in C^{\infty}((50,\infty))$ such that there exists $T_{\lambda}>100$, constants $C_{l},C_{u},C_{2}\geq 0$ satisfying \eqref{lambdaonlyconstr}, \eqref{cofconstr1p1}, and \eqref{cofconstr2}, and constants $C_{k} \geq0$ for $k \geq 3$, such that the following hold for $t \geq T_{\lambda}$
\begin{equation}\label{lambdasetdef}\frac{-C_{l}}{t} \leq \frac{\lambda'(t)}{\lambda(t)} \leq \frac{C_{u}}{t},\quad \frac{|\lambda^{(k)}(t)|}{\lambda(t)} \leq \frac{C_{k}}{t^{k}},\text{ for }k \geq 2\end{equation}
where, for $M:=\text{max}\{C_{l},C_{u}\}$,
\begin{equation}\label{lambdaonlyconstr} 0\leq C_{u}<\frac{1}{30}-\frac{C_{l}}{5}\end{equation}
\begin{equation}\label{cofconstr1p1}\begin{split} \sqrt{C_{\rho}} \cdot \frac{179}{267} \cdot \begin{aligned}[t]&\left(M(1+2||\mathcal{K}||_{\mathcal{L}(L^{2}_{\rho})})+M^{2}\left(\frac{1}{4}+2||[\xi\partial_{\xi},\mathcal{K}]||_{\mathcal{L}(L^{2}_{\rho})} + ||\mathcal{K}||_{\mathcal{L}(L^{2}_{\rho})}^{2}\right)\right.\\
&\left.+C_{2}\left(\frac{1}{2}+||\mathcal{K}||_{\mathcal{L}(L^{2}_{\rho})}\right)+4\left(M^{2}+\frac{C_{2}}{3}(3+2\pi^{2})\right)\right)< \frac{1}{3}\end{aligned} \end{split}\end{equation}
\begin{equation}\label{cofconstr2}\begin{split}M &< \frac{1}{3 \sqrt{C_{\rho}}} \cdot \frac{1513}{1044} \cdot \frac{1}{\left(1+2||\mathcal{K}||_{\mathcal{L}(L^{2,\frac{1}{2}}_{\rho})}\right)}\end{split}\end{equation}
\emph{Remark}: Given any positive function $f \in C^{\infty}((0,\infty))$ such that there exists $T_{f}>100$ so that
\begin{equation}\label{fsymbol}\frac{|f^{(k)}(t)|}{f(t)} \leq \frac{C_{f,k}}{t^{k}}, \quad k \geq 1, \quad t \geq T_{f}\end{equation}
if, for $d>0$, we define $\lambda$ by
$$\lambda(t) = f(t^{\frac{1}{d}}), t \geq 50$$
then, $\lambda \in \Lambda$ for $d$ chosen sufficiently large so that the smallness constraints \eqref{lambdaonlyconstr} through \eqref{cofconstr2} are satisfied. The main theorem of this paper is the following.
\begin{theorem}\label{mainthm} For all $\lambda \in \Lambda$, there exists $T_{0}>0$ such that there exists a finite energy solution, $u$, to \eqref{wm} for $t \geq T_{0}$, satisfying the following properties.
\begin{equation}\label{solndecomp}u(t,r)=Q_{\frac{1}{\lambda(t)}}(r) + u_{e}(t,r)+v_{rad}(t,r)\end{equation}
where
$$-\partial_{t}^{2}v_{rad}+\partial_{r}^{2}v_{rad}+\frac{1}{r}\partial_{r}v_{rad}-\frac{v_{rad}}{r^{2}}=0, \quad E(v_{rad},\partial_{t}v_{rad})<\infty$$
and
$$E(u_{e},\partial_{t}\left(Q_{\frac{1}{\lambda(t)}}+u_{e}\right)) \leq \frac{C \log^{2}(t)}{t^{2-2C_{u}}}$$
where $C_{u}$ is as in \eqref{lambdasetdef}
\end{theorem}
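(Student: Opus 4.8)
The plan is to obtain $u$ as an explicit, iteratively constructed approximate solution together with a final small correction handled by a fixed point argument. \emph{Step 1 (the leading error).} I would begin from the modulated soliton $Q_{1/\lambda(t)}(r)=2\arctan(r/\lambda(t))$ and compute the error $e_0$ produced by substituting it into \eqref{wm}. Since $\partial_t Q_{1/\lambda(t)}$ is of order $\lambda'/\lambda$ times a bounded profile, and $\partial_{tt}Q_{1/\lambda(t)}$ brings in $\lambda''/\lambda$ and $(\lambda'/\lambda)^2$, the constraints \eqref{lambdasetdef} show that $e_0$ is $O(1/t^2)$ in weighted norms adapted to the self-similar variable $R=r/\lambda(t)$, with spatial profiles that are smooth and concentrated near $r\sim\lambda(t)$ but grow slowly (at worst polynomially or logarithmically) as $R\to\infty$ — a growth forced by the slowly-decaying zero mode of the scaling symmetry for the operator obtained by linearizing \eqref{wm} around $Q_{1/\lambda}$.

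\emph{Step 2 (ansatz and matching).} I would iteratively improve the ansatz by solving $\mathcal{L}_{\lambda(t)}v_k=-(\text{leftover error})$, where $\mathcal{L}_{\lambda}$ is the elliptic part of \eqref{wm} linearized around $Q_{1/\lambda}$; in the variable $R$ this is precisely the operator diagonalized by the distorted Fourier transform $\mathcal{F}$ of \cite{kst}. The corrections $v_k$ inherit the slowly-growing tails of their forcing, so the construction must be organized into an interior region $r\lesssim t$, where the $v_k$ are defined by inverting a regularization of $\mathcal{L}_{\lambda(t)}$, and a wave zone $r\gtrsim t$, where the relevant dynamics is essentially that of the free equation \eqref{linrpot}. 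The radiation field $v_{rad}$ is then taken to be the finite-energy solution of \eqref{linrpot} whose behavior across the light cone matches the leading far-field tail of the interior corrections (its finiteness of energy being part of the matching requirement); truncating these tails at $r\sim t$ produces an unavoidable remainder whose energy is of size $\log^2(t)/t^{2-2C_u}$, which is where the exponent $C_u$ from \eqref{lambdasetdef} enters the final bound.

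\emph{Step 3 (the fixed point, and the main obstacle).} Writing $u=Q_{1/\lambda(t)}+\sum_k v_k+v_{rad}+\epsilon$ and imposing \eqref{wm} yields an equation for $\epsilon$ of the form $-\partial_{tt}\epsilon+\partial_{rr}\epsilon+\tfrac{1}{r}\partial_r\epsilon-\tfrac{\epsilon}{r^2}+V_{\lambda(t)}\epsilon=F+N(\epsilon)$, with $F$ the (now small) leftover error from Steps 1--2 and $N$ at least quadratic in $\epsilon$. Conjugating and applying $\mathcal{F}$ turns the linear flow into an equation in the spectral variable $\xi$; because one works in the self-similar frame $R=r/\lambda(t)$, the time derivative produces the operator $\tfrac{\lambda'}{\lambda}R\partial_R$, and the transference identity $\mathcal{F}(r\partial_r u)=-2\xi\partial_\xi\mathcal{F}(u)+\mathcal{K}(\mathcal{F}(u))$ converts this into the appearance of $\mathcal{K}$ (and, after a further modulation-derivative, of $[\xi\partial_\xi,\mathcal{K}]$) in the spectral equation. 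I would then solve the resulting Duhamel equation for $\mathcal{F}(\epsilon)$ by a contraction mapping (equivalently, a closing bootstrap estimate) in a time-weighted space built on the norms $L^{2}_{\rho}$ and $L^{2,\frac{1}{2}}_{\rho}$ of \eqref{l2alphadef}. The spectral-density comparison constant $C_\rho$, together with $||\mathcal{K}||_{\mathcal{L}(L^{2}_{\rho})}$, $||\mathcal{K}||_{\mathcal{L}(L^{2,\frac{1}{2}}_{\rho})}$ and $||[\xi\partial_\xi,\mathcal{K}]||_{\mathcal{L}(L^{2}_{\rho})}$, is exactly what controls this map, and the smallness requirements \eqref{lambdaonlyconstr}--\eqref{cofconstr2} are precisely the conditions on $C_l,C_u,C_2$ making the associated linear operator a strict contraction. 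Closing this fixed point — absorbing the transference terms generated by the moving scale $\lambda(t)$ without losing the time decay gained in Steps 1--2 — is the main obstacle.

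\emph{Step 4 (conclusion).} With $\epsilon$ in hand I would set $u_e:=\sum_k v_k+\epsilon$, verify the bound $E(u_e,\partial_t(Q_{1/\lambda(t)}+u_e))\leq C\log^2(t)/t^{2-2C_u}$ (the dominant contribution being the truncated tail from Step 2), check that $\Phi_u\in C^0_t\dot H^1(\mathbb{R}^2)$ and $\partial_t\Phi_u\in C^0_tL^2(\mathbb{R}^2)$ so that $u$ is a finite energy solution of \eqref{wm} in the sense defined above, and recall that $v_{rad}$ is by construction a finite-energy solution of \eqref{linrpot}. This gives the decomposition \eqref{solndecomp} with the asserted estimates and completes the proof.
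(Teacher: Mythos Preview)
Your outline captures the architecture correctly---an iterated ansatz followed by a contraction in the distorted-Fourier variable---and your description of Step~3 is accurate: the transference identity produces the $\mathcal{K}$ and $[\xi\partial_\xi,\mathcal{K}]$ terms, and the smallness conditions \eqref{lambdaonlyconstr}--\eqref{cofconstr2} are exactly what make the Duhamel map a contraction on the weighted $L^{2,\alpha}_{\rho}$ spaces.

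Step~2, however, misidentifies the structure of the ansatz in two ways that matter. First, the matching is \emph{not} at the light cone $r\sim t$: the paper joins an elliptic correction $u_{e}=u_{ell}+u_{ell,2}$ (valid for $r\lesssim h(t)$) to a \emph{wave} correction $u_{wave}$ (built from genuine solutions of \eqref{linrpot} with Cauchy data at $t=+\infty$, plus free waves) at an intermediate scale $r\sim h(t)=t^{\alpha}\lambda(t)$ with $\alpha$ in the range \eqref{alphaconstr}. The free waves $v_{2},v_{2,2},v_{2,4}$ composing $v_{rad}$ have their data fixed by equating the $r\log^{k}r$, $r^{-1}\log^{k}r$, $r^{3}\log^{k}r$ coefficients of $u_{e}$ and $u_{wave}$ in this intermediate zone (Sections \ref{first_order_matching_section}--\ref{third_order_matching_section}), not by matching tails across the cone; the scale $h(t)$ is chosen so that the elliptic error $\partial_{t}^{2}u_{e}\sim u_{e}/t^{2}$ and the potential error $(\cos(2Q_{1/\lambda})-1)u_{wave}/r^{2}\sim \lambda^{2}u_{wave}/r^{4}$ balance. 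Second, your Step~2 omits a substantial piece of work that your Step~3 would not close without: the cubic self-interactions of $v_{2}$ near the cone are of size $t^{-7/2}$, which is \emph{not} perturbative for the fixed point (whose forcing must satisfy the $t^{-4-2\delta}$ bound of Lemma~\ref{uansatzerrorest}). These require further corrections $u_{N_{0}}$, $u_{N_{0},corr}$, $u_{N_{2}}$ in the ansatz itself, each introducing either a new, smaller free wave or a term with additional $\langle t-r\rangle$-decay inside the cone to avoid regenerating a non-perturbative nonlinear error. Finally, the exponent $2-2C_{u}$ in the energy bound is not a truncation artifact at $r\sim t$: it arises because neither $\partial_{t}Q_{1/\lambda(t)}$ nor $\partial_{t}w_{1}$ lies in $L^{2}(r\,dr)$ separately, but their sum does, and the resulting norm is controlled by terms of order $\lambda(t)/t$ with $\lambda(t)\lesssim t^{C_{u}}$ (see the computation closing Section~6).
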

\textit{Remark 1}. The function $u_{e}$ appearing in \eqref{solndecomp} satisfies
$$u_{e}(t,r) = u_{e,0}(t,r)+v_{6,0}(t,r)$$
where $u_{e,0}$ is fairly explicit, and $v_{6,0}$ is constructed with a fixed point argument, and satisfies
$$(t,r,\theta) \mapsto e^{i \theta} v_{6,0}(t,r) \in C^{0}_{t}([T_{0},\infty),H^{2}(\mathbb{R}^{2}))$$
$$(t,r,\theta) \mapsto e^{i \theta} \partial_{t}v_{6,0}(t,r) \in C^{0}_{t}([T_{0},\infty),H^{1}(\mathbb{R}^{2}))$$
where $(r,\theta)$ are polar coordinates on $\mathbb{R}^{2}$. This follows from the continuity of dilation on $L^{2}$, and Lemma 10.1 of \cite{kst}.\\
\\
\textit{Remark 2}. Our class of solutions includes infinite time relaxation solutions, in other words, solutions of the form \eqref{solndecomp}, for $\lambda(t) \rightarrow \infty$ as $t$ approaches infinity. For example, we can apply the remark before Theorem \ref{mainthm} to $f(t) =t$, to get that
$$\lambda_{1}(t)=t^{C_{u}} \quad t \geq 50$$ 
is in $\Lambda$, for $C_{u}>0$ sufficiently small.\\
\\
\textit{Remark 3}. We also have infinite time blow-up solutions, obtained by applying the remark before Theorem \ref{mainthm} to $f(t)=t^{-1}$, which shows that
$$\lambda_{2}(t)=t^{-C_{l}}, \quad t \geq 50$$
is in $\Lambda$, for $0<C_{l}$ sufficiently small.\\
\\
\textit{Remark 4}. (Oscillatory $\lambda$, part 1). If $0<\lambda_{0} \leq \lambda_{1}$ are two real numbers, and $0 \leq \alpha_{0},\alpha_{1}$ with $\alpha_{1}+\alpha_{0}<1$, define $\lambda$ by
$$\lambda(t) = \frac{\lambda_{0} \log^{-\alpha_{0}}(t) + \lambda_{1} \log^{\alpha_{1}}(t)}{2}+\frac{\left(\lambda_{1}\log^{\alpha_{1}}(t) - \lambda_{0}\log^{-\alpha_{0}}(t)\right)}{2} \sin(\log(\log(t)))$$
Then, $\lambda(t) \geq \lambda_{0} \log^{-\alpha_{0}}(t)$. Therefore,
$$\frac{|\lambda^{(k)}(t)|}{\lambda(t)} \leq \frac{C_{k}}{t^{k}\log^{-\alpha_{0}-\alpha_{1}+1}(t)}, \quad t \geq 50, \quad k \geq 1$$
Since $\alpha_{0}+\alpha_{1}<1$, there exists $T_{\lambda}>100$ so that \eqref{cofconstr1p1},\eqref{cofconstr2}, and \eqref{lambdaonlyconstr} are satisfied for $t \geq T_{\lambda}$ (for appropriate choices of $C_{u},C_{l},C_{2}$). So, $\lambda \in \Lambda$ and, by considering any combination of 
$$\alpha_{0} =0 \text{  or  } \alpha_{0}>0, \quad \text{ and } \alpha_{1}=0 \text{  or  }\alpha_{1}>0$$
we can satisfy any combination of the following
$$\liminf_{t \rightarrow \infty} \lambda(t) = 0 \text{ or } \liminf_{t \rightarrow \infty} \lambda(t)=\lambda_{0} \quad \text{   and  }
\limsup_{t \rightarrow \infty} \lambda(t) = \lambda_{1} \text{ or } \limsup_{t \rightarrow \infty} \lambda(t)=\infty$$ 
\textit{Remark 5}. (Oscillatory $\lambda$, part 2). We have solutions of the form \eqref{solndecomp}, for $\lambda(t)$ with bounded (damped or undamped), or unbounded oscillations for all $t$ sufficiently large. To ease notation, let
\begin{equation}\label{fdef}f(x,y,z):=\begin{aligned}[t]&x(1+2||\mathcal{K}||_{\mathcal{L}(L^{2}_{\rho})})+y\left(\frac{1}{4}+2||[\xi\partial_{\xi},\mathcal{K}]||_{\mathcal{L}(L^{2}_{\rho})} + ||\mathcal{K}||_{\mathcal{L}(L^{2}_{\rho})}^{2}\right)+z\left(\frac{1}{2}+||\mathcal{K}||_{\mathcal{L}(L^{2}_{\rho})}\right)\\
&+4\left(y+\frac{z}{3}(3+2\pi^{2})\right)\end{aligned}\end{equation}
so that the constraint \eqref{cofconstr1p1} is
$$\sqrt{C_{\rho}} \cdot \frac{179}{267} f(M,M^{2},C_{2}) < \frac{1}{3}$$
We can, for example, let 
$$0<c_{0}<\text{min}\{\frac{2}{73},\frac{4 m}{2 m+9}\}$$
and $$0\leq |a| < \frac{c_{0}}{2-c_{0}}$$
where
$$m=\text{min}\{\frac{1513}{3132 \sqrt{C_{\rho}}  (1+2||\mathcal{K}||_{\mathcal{L}(L^{2,\frac{1}{2}}_{\rho})})},\frac{89}{179 \sqrt{C_{\rho}} f(1,1,1)}\}. $$
Then, define
$$C_{u}=|a|+\frac{c_{0}}{2-c_{0}}= C_{l}, \quad C_{2}=\left(|a|+\frac{2c_{0}}{2-c_{0}}\right)(1+|a|)$$
and
$$\lambda_{3}(t):=t^{a}\left(2+c_{0}\sin(\log(t))\right), \quad t \geq 50$$
For instance $C_{2}\geq M \geq M^{2}$.   
We thus have $f(M,M^{2},C_{2}) \leq C_{2} f(1,1,1)$, and this shows that \eqref{cofconstr1p1},\eqref{cofconstr2}, and \eqref{lambdaonlyconstr} are satisfied, and $\lambda_{3}\in\Lambda$. When $a=0$, this is an example of $\lambda(t)$ undergoing undamped oscillations while staying in a bounded set for all $t$ sufficiently large, if $a<0$, $\lambda$ undergoes damped oscillations, and if $a>0$, $\lambda(t)$ oscillates, while staying positive, but is unbounded.\\
\\
\textit{Remark 6}. If $f_{j}$ satisfy \eqref{fsymbol} for $j=1,2$, then, since $f_{j}(t)>0$, we have, for $k \geq 1$, and $t \geq T_{f_{1}}+T_{f_{2}}$,
$$\frac{|(f_{1}+f_{2})^{(k)}(t)|}{f_{1}(t)+f_{2}(t)} \leq \frac{|f_{1}^{(k)}(t)|}{f_{1}(t)}+\frac{|f_{2}^{(k)}(t)|}{f_{2}(t)} \leq \frac{D_{k}}{t^{k}}$$
Therefore, by the remark before Theorem \ref{mainthm}, we get
$$\lambda_{5}(t) = \left(f_{1}+f_{2}\right)(t^{\frac{1}{d}}) \in \Lambda, \quad d>0,\text{ sufficiently large}$$
In particular, for any $c_{0}>0$, some sufficiently small $c_{1},a>0$, and a sufficiently large $d>0$,
$$c_{0}+t^{-|a|}\left(2+c_{1}\sin(\frac{\log(t)}{d})\right) \in \Lambda$$
\textit{Remark 7}. We provide a quick comparison and contrast of the method used in this work and the previous work of the author \cite{wm}. In \cite{wm}, the leading order part of $\lambda(t)$, say $\lambda_{0}(t)$ was chosen from an appropriate class of functions. Then, the radiation $v_{2}$ was inserted into the ansatz by hand, and its data was chosen so as to allow  $\lambda(t)=\lambda_{0}(t)$ to be the leading order solution to an equation resulting from enforcing that the principal part of an error term is orthogonal to $\phi_{0}(\frac{\cdot}{\lambda(t)})$. \\
\\
Here, the exact $\lambda(t)$ is prescribed from the beginning of the argument. An accurate approximate solution is obtained by constructing approximate solutions for small and large $r$, matching them in an intermediate region, and then, writing a function which interpolates between these two approximate solutions (see Section 3 for more information). This method was inspired by the discussions (in the textbooks of Nayfeh, \cite{n}, and Bender-Orszag, \cite{bo}) of matched asymptotic expansions for one-dimensional boundary value problems with a singular perturbation.\\
\\
In order to be able to match our approximate solutions in the intermediate region, we need to use general solutions to various inhomogeneous wave equations, for example a particular solution which has zero Cauchy data at infinity, plus a free wave (radiation). The radiation component of our solution in this work therefore naturally arises from the fact that we need to use general solutions to inhomogeneous wave equations when doing matching, rather than being inserted into the ansatz by hand. The relation between the radiation component of our solution and $\lambda(t)$ is thus determined by a matching condition, rather than by enforcing an orthogonality condition. Despite the very different approaches to the construction of the ansatz, and different methods of determining the relation between $\lambda(t)$ and the radiation, we still have the same leading order relation in this work between $\lambda(t)$ and the radiation (for example, when we restrict attention to $\lambda(t)=\frac{1}{\log^{b}(t)}$ for $b>0$ so that $\lambda$ is in the admissible class of rates from \cite{wm}) as we had in \cite{wm}. (See the discussion following \eqref{v20eqnsummary}).\\
\\
Finally, the method of completion of our ansatz to an exact solution to \eqref{wm} uses a simpler version of the same argument used in the previous work of the author \cite{wm}. (As we will describe in the summary of the proof, there is no orthogonality condition on the error term of our ansatz here, as opposed to \cite{wm}, and this is what makes the iteration here simpler, at the expense of requiring a more accurate ansatz). We remark again that this final step of completing the ansatz to an exact solution uses the distorted Fourier transform, and the transference identity from \cite{kst}.\\
\\
Now, we briefly mention previous results which are related to this work.  For the energy critical wave maps problem with $\mathbb{S}^{m}$ target, the work \cite{tao} of Tao proved global regularity at small energies. The works \cite{stthreshold1} and \cite{stthreshold2}, of Sterbenz and Tataru established a threshold theorem for the energy critical wave maps equation with a general compact Riemannian manifold target. The work \cite{ckls1}, of Cote, Kenig, Lawrie, and Schlag, proved a threshold theorem for the 1-equivariant, energy critical wave maps problem with degree zero data, and a refined threshold, which accounts for the topological degree. The work \cite{lo}, of Lawrie and Oh proved an analogous result, but without the equivariance restriction. 

The works of Jendrej, and Jendrej, Lawrie \cite{j}, \cite{jl}, constructed and classified, respectively, topological degree zero, threshold energy solutions to the $k$-equivariant wave maps equation for $k \geq 2$. The subsequent works of Jendrej and Lawrie, \cite{jendrej2020asymptotic} and \cite{jendrej2020uniqueness} give a more precise classification of the topological degree 0, threshold energy solutions to the $k$-equivariant critical wave map problem with $\mathbb{S}^{2}$ target, for $k \geq 4$. The work, \cite{r}, of Rodriguez, classifies threshold energy degree 0 solutions to the 1-equivariant wave maps problem, in particular obtaining a finite-time blow-up solution in this setting.

 As previously mentioned, the work \cite{kst} of Krieger, Schlag, and Tataru constructed finite time blow-up solutions to \eqref{wm} with a continuum of possible rates. The subsequent work of Gao and Krieger, \cite{gk}, extended the set of solutions constructed in \cite{kst} to include ones for which $\lambda(t) = t^{1+\nu}$, for $\nu>0$. The stability of these solutions under equivariant perturbations was studied in the work of Krieger and Miao \cite{km}, and under non-equivariant perturbations, in the recent work of Krieger, Miao, and Schlag, \cite{kms}. The works \cite{kstym} and \cite{kstslw} of Krieger, Schlag, and Tataru are analogs of \cite{kst} for an equivariant reduction of the $4+1$-dimensional Yang-Mills equation with gauge group $SO(4)$, and the $3+1$-dimensional quintic, focusing semilinear wave equation, respectively. We also remark that the work of Perelman \cite{psm} constructs solutions of a similar form to those of \cite{kst}, for the 1-equivariant Schrodinger map problem with domain $\mathbb{R}^{1+2}$ and target $\mathbb{S}^{2}$. In addition, the work of Rodnianski and Sterbenz, \cite{rs}, constructed finite time blow-up solutions to the $k$ equivariant wave maps problem, for $k \geq 4$. Finite time blow-up solutions to the critical wave maps equation in all equivariance classes, as well as for the $4+1$ dimensional Yang-Mills equation with gauge group $SO(4)$ were constructed in the work \cite{rr} of Raphael and Rodnianski. The work \cite{jlr} of Jendrej, Lawrie, and Rodriguez also constructed new finite time blow-up solutions to \eqref{wm}. The work of Bejenaru, Krieger, and Tataru, \cite{bkt} constructed solutions to \eqref{wm} with energy close to that of $Q_{1}$, and whose modulated soliton component has length scale bounded away from $0$ and infinity, for all time.

The work \cite{dk}, of Donninger and Krieger, constructed infinite time blow-up and infinite time relaxation solutions to the quintic, focusing, energy critical semilinear wave equation on $\mathbb{R}^{1+3}$, with rates $\lambda(t) = t^{\mu}$, where $|\mu|$ is sufficiently small, but $\mu$ can be positive or negative. The procedure used in this work is quite different than that used in \cite{dk}. We also note that the work of Gustafson, Nakanishi, and Tsai, \cite{gnt}, constructs solutions to the 2-equivariant harmonic map heat flow with soliton length scale having several possible asymptotic behaviors, including approaching zero, a positive constant, infinity, or having various combinations of finite or infinite $\limsup$ with positive or zero $\liminf$ as $t$ approaches infinity. 
 \subsection{Acknowledgments}Part of this work was completed while the author was a graduate student of Daniel Tataru, whom the author thanks for useful discussions. This material is based upon work partially supported by the National Science Foundation under Grant No. DMS-1800294.
\section{Notation}
\subsection{Index of terms in the ansatz}\label{indexsubsection}
Our final ansatz will involve several functions, which are listed here, along with the references to the equations they solve. 
\begin{align}&u_{ell}, \eqref{uelleqn} \quad &&u_{ell,2}, \eqref{uell2eqn}\quad &&&u_{w}, \eqref{uweqn} \quad &&&&u_{w,2},\eqref{uw2eqndef}\\
&v_{2,2}, \eqref{v22def} \quad &&f_{5,0}, \eqref{f50def} \quad &&&f_{ex,sub}, \eqref{fexsub}\quad &&&& f_{ell,2}, \text{Lemma } \ref{fell2lemma}\\
&f_{2,2}, \eqref{f22def}\quad &&u_{N_{0}}, \eqref{un0eqn}\quad &&&u_{N_{0},corr}, \eqref{un0corrdef}, \eqref{v24eqn}, \eqref{un0elleqn} \quad &&&& u_{N_{0},corr,2} \eqref{un0corr2eqn}\\
&u_{N_{2}}, \eqref{un2eqn} \end{align}
The first five corrections in the list above are combined into $u_{e}$ and $u_{wave}$, which are defined in \eqref{uewdef}, and are further combined into $u_{c}$, which is defined in \eqref{ucdef}. $u_{c}$ is combined with other corrections, to form $u_{a}$, which is defined in \eqref{uadef}. The sum of the rest of the terms in the listing above are combined into the sum of $u_{n}$ (defined in \eqref{undef}), and $u_{N_{2}}$).\\
If $f \in C^{k}(D)$, and $a \in D$, we let $P_{k,a}(f)(x) = \sum_{j=0}^{k} \frac{f^{(j)}(a)}{j!} (x-a)^{j}$. \\
We let $m_{\leq 1}$ denote the following cutoff function.
\begin{equation}\label{mleq1def}m_{\leq 1}(x) \in C^{\infty}([0,\infty)), \quad 0 \leq m_{\leq 1}(x) \leq 1, \quad m_{\leq 1}(x) =\begin{cases} 1, \quad x \leq \frac{1}{2}\\
0, \quad x \geq 1\end{cases}\end{equation}
We will use the notation
\begin{equation}\label{Inotation}\begin{split}I_{f}(s,y,\rho,\theta) &= \partial_{2}f(s,\sqrt{\rho^{2}+y^{2}+2\rho y \cos(\theta)}) \frac{(y+\rho\cos(\theta))^{2}}{(\rho^{2}+y^{2}+2 \rho y \cos(\theta))} \\
&+ \frac{f(s,\sqrt{\rho^{2}+y^{2}+2 \rho y \cos(\theta)})}{\sqrt{\rho^{2}+y^{2}+2 \rho y \cos(\theta)}} \left(1-\frac{(y+\rho \cos(\theta))^{2}}{\rho^{2}+y^{2}+2 \rho y \cos(\theta)}\right), \quad s > M, y,\rho \geq 0,  y\neq \rho , \theta \in[0, 2\pi]\end{split}\end{equation}
for various choices of $M \gg 1$, and functions $f:(M,\infty)\times [0,\infty) \rightarrow \mathbb{R}$ throughout the paper. \\
We denote, by $I_{n}$ and $K_{n}$, the modified Bessel functions of the first and second kind, respectively. We use the standard definition of $\langle \cdot \rangle$, namely
$$\langle x \rangle = \sqrt{1+x^{2}}$$
The wave maps equation, \eqref{wm}, linearized around $Q_{1}(r)$ takes the form
$$-\partial_{tt}u-L^{*}Lu=0, \quad L(f) = f'(r)-\frac{\cos(Q_{1}(r))}{r} f(r)$$
where $\left(\cdot\right)^{*}$ denotes the $L^{2}(r dr)$ adjoint. We will also use the notation
$$L_{\frac{1}{\lambda(t)}}(f) = f'(r) - \frac{\cos(Q_{1}(\frac{r}{\lambda(t)}))}{r} f(r)$$
 We denote by $e_{2}$ and $\phi_{0}$ the following two linearly independent solutions to $L^{*}L(u)=0$:
\begin{equation}\label{e2def}e_{2}(R) = \frac{R^4+4 R^2 \log (R)-1}{2 R \left(R^2+1\right)}, \quad \phi_{0}(R) = \frac{2 R}{1+R^{2}}\end{equation}
We will use $\widehat{\cdot}$ to denote the Hankel transform of order 1:
$$\widehat{f}(\xi) = \int_{0}^{\infty} J_{1}(r \xi) f(r) r dr$$
Recalling the definition of $||f||_{L^{2,\alpha}_{\rho}}$ given in the introduction, \eqref{l2alphadef}, we will use the notation $||A||_{\mathcal{L}(L^{2,\alpha}_{\rho})}$ to denote the operator norm of a bounded operator $A: L^{2,\alpha}_{\rho} \rightarrow L^{2,\alpha}_{\rho}$. We use the standard notation for the dilogarithm function.
$$\text{Li}_{2}(x) = \int_{0}^{x} -\frac{\log (1-y)}{y} dy$$
\section{Summary of the Proof}\label{summarysection}
We start by letting $\lambda \in \Lambda$, and considering, for $t$ sufficiently large, the modulated soliton $Q_{\frac{1}{\lambda(t)}}(r)$. The error term of the modulated soliton is $\partial_{t}^{2}Q_{\frac{1}{\lambda(t)}}(r)$. We fix $g(t)=t^{\alpha}$, where $\alpha$ satisfies \eqref{alphaconstr}. (A priori, one might not know what constraints $\alpha$ must satisfy, and one could leave $\alpha$ general, until the very end of the argument, at which point the constraints would be clear. This is what was originally done, but, this results in many long expressions that can be greatly simplified once the constraints \eqref{alphaconstr} are imposed. This is why we impose the constraints from the beginning of the argument. We will provide some intuition about why \eqref{alphaconstr} has its particular form throughout this section).

 Our first step is to obtain an approximate solution whose error term is small for all $r$, to the following linear equation
\begin{equation}-\partial_{tt}u_{1}+\partial_{rr}u_{1}+\frac{1}{r}\partial_{r}u_{1} - \frac{\cos(2Q_{1}(\frac{r}{\lambda(t)}))}{r^{2}}u_{1} = \partial_{t}^{2}Q_{1}(\frac{r}{\lambda(t)})\end{equation}
Our plan is to start with a function of the form
\begin{equation}\label{ucsummary}u_{c}(t,r)=\chi_{\leq 1}(\frac{r}{h(t)})u_{small r} + (1-\chi_{\leq 1}(\frac{r}{h(t)}))u_{large r}, \quad h(t):=g(t)\lambda(t)\end{equation}
where  $u_{small r}$ and $u_{large r}$ are good approximate solutions for $r \lesssim h(t)$ and $h(t) \lesssim r$, respectively, and whose asymptotic expansions for large $t$ in the region $r \sim h(t)$ ``match''. Here, by ``match'', we mean that the terms at various orders in the aforementioned expansions of our corrections take the form
$$ \sum_{k,l=-|j|}^{j} q_{k,l}(t) r^{k} \log(r)^{l}$$
for various choices of $j\geq 0$, and the difference between these sums associated to $u_{small r}$ and $u_{large r}$ is equal to zero (see sections \ref{first_order_matching_section}, \ref{second_order_matching_pt1_section}, \ref{second_order_matching_pt2_section}, \ref{third_order_matching_section}). 

We start with a correction to the soliton error term that we will use for $r \lesssim h(t)$. (In other words, we start with a part of $u_{small r}$.)  More precisely, we  consider the general solution (in general, in order to do the matching process described above, one must keep sufficiently many degrees of freedom in the various corrections, which will be later fixed when we impose the matching) to the ODE
$$\partial_{r}^{2}u_{ell}+\frac{1}{r}\partial_{r}u_{ell}-\frac{\cos(2Q_{\frac{1}{\lambda(t)}}(r))}{r^{2}}u_{ell}=\partial_{t}^{2}Q_{\frac{1}{\lambda(t)}}(r)$$
which does not have a singularity near the origin. (This ODE was also considered in the previous works \cite{kst}, \cite{rr}, but we will end up choosing a different solution to this ODE than what was considered in the aforementioned works). The general solution that does not have a singularity at the origin depends on one function of $t$, say, $c_{1}(t)$, see \eqref{velldef}. The linear error term associated to $u_{ell}$ is $\partial_{t}^{2}u_{ell}(t,r)$. (Even though $u_{ell}$ will appear in our ansatz only after being multiplied by $\chi_{\leq 1}(\frac{r}{h(t)})$, it is still useful for us to consider the linear error term associated to $u_{ell}$ alone, when trying to improve the error term in the region $r \lesssim h(t)$). Strictly speaking, this error term depends on $c_{1}(t)$, which is not yet chosen, but we can still give the reader an idea of the size of the error term of $u_{ell}$ by noting that 
$$|\partial_{t}^{2}u_{ell}(t,r)| \leq \begin{cases} \frac{C r \lambda(t)}{t^{4}} + C\frac{r}{\lambda(t)}\sum_{j=0}^{2}\frac{|c_{1}^{(j)}(t)|}{t^{2-j}}, \quad r \leq \lambda(t)\\
\frac{C \lambda(t)}{r} \sum_{j=0}^{2} \frac{|c_{1}^{(j)}(t)|}{t^{2-j}} + \frac{C  r \lambda(t)}{t^{4}} (1+\log(\frac{r}{\lambda(t)})), \quad r \geq \lambda(t)\end{cases}$$
Even for $r \leq \lambda(t)$, the error term of $u_{ell}$ is not quite good enough for our purposes. (The error term, $F_{5}$, of our final ansatz, satisfies the following estimates, for some $\delta>0$ (see Lemma \ref{uansatzerrorest})).
\begin{equation}\label{f5estsummary}\frac{||F_{5}(t,r)||_{L^{2}(r dr)}}{\lambda(t)^{2}} \leq \frac{C_{5} \log^{30}(t)}{t^{4+2\delta}}, \quad ||L_{\frac{1}{\lambda(t)}}F_{5}(t,r)||_{L^{2}(r dr)} \leq \frac{C \log^{2}(t)}{t^{2+5\alpha}}+ \frac{C \log^{6}(t)}{t^{4+2\alpha-3C_{u}}}+\frac{C \log^{30}(t)}{t^{9/2-C_{l}-15C_{u}}}\end{equation}
Therefore, we correct the error term $\partial_{t}^{2}u_{ell}(t,r)$, with a second term in $u_{small r}$, by considering a particular solution to the following ODE (see \eqref{vell2formula0}).
\begin{equation}\partial_{rr}u_{ell,2}(t,r)+\frac{1}{r}\partial_{r}u_{ell,2}(t,r)-\frac{\cos(2Q_{1}(\frac{r}{\lambda(t)}))}{r^{2}} u_{ell,2}(t,r) = \partial_{t}^{2}u_{ell}(t,r)\end{equation}
Defining $u_{small,r}=u_{ell}+u_{ell,2}$ will turn out to be sufficient for us. (Strictly speaking, the error terms of $u_{c}$ depend not only on the quality of the matching between $u_{small r}$ and $u_{large r}$ (for the terms which involve derivatives of $\chi_{\leq 1}$) but also on $c_{1}(t)$, since $u_{ell,2}$ depends on $c_{1}(t)$. Also, $c_{1}(t)$ is chosen during a second order matching process, and is not specifically chosen to make the error of $u_{ell,2}$ small, see section \ref{second_order_matching_pt2_section}).

After defining $u_{ell,2}$, we start defining the various components of $u_{large r}$. In particular, we consider the solution to 
\begin{equation}-\partial_{tt}u_{w}+\partial_{rr}u_{w}+\frac{1}{r}\partial_{r}u_{w}-\frac{u_{w}}{r^{2}} = \partial_{t}^{2} Q_{1}(\frac{r}{\lambda(t)})\end{equation}
given by
$$u_{w}(t,r)=v_{1}(t,r)+v_{2}(t,r)$$
where
$v_{1}$ solves 
$$-\partial_{tt}v_{1}+\partial_{rr}v_{1}+\frac{1}{r}\partial_{r}v_{1}-\frac{v_{1}}{r^{2}} = \partial_{t}^{2} Q_{1}(\frac{r}{\lambda(t)})$$
with 0 Cauchy data at infinity, and $v_{2}$ solves the following Cauchy problem
$$\begin{cases}-\partial_{tt}v_{2}+\partial_{rr}v_{2}+\frac{1}{r}\partial_{r}v_{2}-\frac{v_{2}}{r^{2}} = 0\\
v_{2}(0)=0\\
\partial_{t}v_{2}(0,r)=v_{2,0}(r)\end{cases}$$
where $v_{2,0}\in L^{2}(r dr)$ will be chosen later. We remark that, in general, one needs to use general solutions, rather than particular solutions, to the inhomogeneous equations defining corrections, if one wishes to use the matching procedure described above.  We also remark that, if $\lambda'(t) \neq 0$, then, $(1-\chi_{\leq 1}(\frac{r}{h(t)}))u_{w}$ has infinite kinetic energy, as does $Q_{\frac{1}{\lambda(t)}}(r)$ (because $\partial_{t}Q_{\frac{1}{\lambda(t)}}(r) \not \in L^{2}(r dr)$, for $\lambda'(t) \neq 0$). However, there is sufficient cancellation in the sum $v_{1}(t,r)+Q_{\frac{1}{\lambda(t)}}(r)$ for large $r$, so that $\partial_{t}\left((1-\chi_{\leq 1}(\frac{r}{h(t)}))v_{1}(t,r)+Q_{\frac{1}{\lambda(t)}}(r)\right) \in L^{2}(r dr)$, see \eqref{w1fordelicateenergy}. (The function $w_{1}$ appearing in \eqref{w1fordelicateenergy} is such that $\partial_{t}\left((1-\chi_{\leq 1}(\frac{r}{h(t)}))\left(v_{1}(t,r)-w_{1}(t,r)\right)\right) \in L^{2}(r dr)$).

Next, we do the ``first order matching'' as follows. As long as $\widehat{v_{2,0}}(\xi)$ satisfies, for example, that $|\widehat{v_{2,0}}(\xi)| \leq \frac{C}{\xi^{3}}, \quad \xi \geq \frac{1}{100}$ (so that the integral below in \eqref{v2fo} converges), the leading contributions of $u_{ell}(t,r)$, $v_{1}(t,r)$, and $v_{2}(t,r)$ in the region $r \sim h(t)$ are, respectively:
\begin{equation}u_{ell,first order}=\frac{r}{\lambda(t)} \left(\frac{1}{2}\lambda'(t)^{2} + \lambda(t) \lambda''(t) - \lambda(t) \lambda''(t) \log(\frac{r}{\lambda(t)})\right)\end{equation}

\begin{equation}v_{1,first order}(t,r) = r\left(\log(2)+\frac{1}{2}\right) \lambda''(t) + r \int_{t}^{2t} \frac{(\lambda''(s) - \lambda''(t))}{s-t} ds + r \lambda''(t) \log(\frac{t}{r}) + r \int_{2t}^{\infty} \frac{\lambda''(s) ds}{(s-t)}\end{equation}
 
\begin{equation}\label{v2fo}v_{2,first order}=\frac{-r}{4} \left( -2 \int_{0}^{\infty} \xi \sin(t\xi) \widehat{v_{2,0}}(\xi) d\xi\right)\end{equation}
Note that the $r \log(r)$ terms from $u_{ell,first order}$ and $v_{1,first order}$ already match. Therefore, it is possible to achieve 
$$u_{ell,first order}=v_{1,first order}+v_{2,first order}$$
by choosing $v_{2,0}$ appropriately. In particular, we choose $v_{2,0}$ to satisfy the following equation, for all sufficiently large $t$:
\begin{equation}\label{v20eqnsummary}\begin{split}&F(t):=-2 \int_{0}^{\infty} \xi \sin(t\xi) \widehat{v_{2,0}}(\xi) d\xi \\
&= 4\left(\left(\log(2)-\frac{1}{2}\right)\lambda''(t) + \int_{t}^{2t} \left(\frac{\lambda''(s)-\lambda''(t)}{s-t}\right) ds+\lambda''(t) \log(\frac{t}{\lambda(t)}) + \int_{2t}^{\infty} \frac{\lambda''(s) ds}{s-t} - \frac{\lambda'(t)^{2}}{2 \lambda(t)}\right)\end{split}\end{equation}
In particular, we have
\begin{equation}\widehat{v_{2,0}}(\xi) = \frac{-1}{\pi\xi} \int_{0}^{\infty} H(t) \sin(t\xi) dt\end{equation}
with
\begin{equation}H(t) = 4\left(\left(\log(2)-\frac{1}{2}\right)\lambda''(t) + \int_{t}^{2t} \left(\frac{\lambda''(s)-\lambda''(t)}{s-t}\right) ds+\lambda''(t) \log(\frac{t}{\lambda(t)}) + \int_{2t}^{\infty} \frac{\lambda''(s) ds}{s-t} - \frac{\lambda'(t)^{2}}{2 \lambda(t)}\right) \psi(t)\end{equation}
where $\psi$ is a relatively unimportant cutoff defined in \eqref{psidef}.

The function $v_{2}$ is the leading part of our radiation in the matching region. (The other free waves $v_{2,2}$ and $v_{2,4}$ which are part of our ansatz and added later on in the argument, have much more decay in $t$ in the entire region, for instance, $r \leq \frac{t}{2}$, and contribute to higher-order matching).  Therefore, the relation between the leading part of the radiation in the matching region, and $\lambda(t)$ is determined by the matching of leading terms from parts of $u_{small r}$ with corresponding terms from $u_{large r}$.

Note that the integral on the left-hand side of \eqref{v20eqnsummary} is the precise integral which determined the relation between the radiation and the leading order dynamics of $\lambda(t)$ in the previous work of the author, \cite{wm} (see Section 3, pg. 6 of \cite{wm}, keeping in mind that $K_{1}(x) = \frac{1}{x}+O\left(x |\log(x)|\right), \quad x \rightarrow 0$), though it arose from very different considerations, namely the inner product of the linear error term of $v_{2}$ with $\phi_{0}(\frac{\cdot}{\lambda(t)})$ rather than the near origin behavior of $v_{2}$. The fact that there is a connection between these two quantities also appears in (4.166), pg. 34 of \cite{wm}.

Despite the very different approaches between the two works, if we consider the example, for $b>0$, $\lambda(t)=\lambda_{b}(t)=\frac{1}{\log^{b}(t)}$, then, $\lambda_{b}$ is in the admissible class of $\lambda$ in the work \cite{wm} (as described in the discussion following \eqref{cklsdecomp}), and the leading order relation between $\lambda_{b}(t)$ and the radiation of this work is the same as that of \cite{wm} (see the third equation of pg. 6 of \cite{wm}). 

We provide some intuition on why the $\alpha$ appearing in $g(t)=t^{\alpha}$ has the constraints \eqref{alphaconstr}. Note that, if $u_{ell}(t,r)$ and $u_{w}(t,r)$ are roughly of the same size in a region $r \sim q(t)$ for some $q(t) \gg \lambda(t)$, then, firstly, the linear error term of $u_{ell}(t,r)$ is $\partial_{t}^{2}u_{ell}(t,r)$, and is roughly on the order of $\frac{u_{ell}(t,r)}{t^{2}}$. The linear error term of $u_{w}(t,r)$ is $\left(\frac{\cos(2Q_{1}(\frac{r}{\lambda(t)}))-1}{r^{2}}\right) u_{w}(t,r)$, is roughly $\frac{\lambda(t)^{2}}{q(t)^{4}} u_{w}(t,r)$, for $r \sim q(t)$. Since we assume that $u_{ell}(t,r)$ and $u_{w}(t,r)$ are roughly of the same size for $r \sim q(t)$, the linear error terms would be of comparable size if 
$$q(t) \sim \sqrt{t\lambda(t)}$$
This is not quite exactly what we have, but provides some intuition on why it is natural to expect that $\frac{1}{2}+\frac{C_{l}}{2}$ appears as part of \eqref{alphaconstr}. In addition, we choose to eliminate certain error terms of borderline size which become large if $g(t)$ is taken too large. In this way, certain other error terms of borderline size can be made perturbative by having $\alpha>\frac{1}{2}+\frac{C_{l}}{2}$. On the other hand, some other, less delicate errors become too large if $g(t)$ is taken to be too large, which is why \eqref{alphaconstr} involves an upper bound on $\alpha$ as well.

We then add a second term in $u_{large r}$, namely, $u_{w,2}+v_{2,2}$ (see \eqref{uw2eqndef}), which corrects the linear error term associated to $w_{1}+v_{2}$, where $w_{1}$ is part of $v_{1}$, and is defined in \eqref{w1firsteqn}. We also remark that $u_{w,2}$ is a particular solution to an inhomogeneous wave equation (and it has zero Cauchy data at infinity), while $v_{2,2}$ is a free wave, which we will choose later on, as part of a higher-order matching process.

Next, we consider the terms in an expansion of $u_{large r}$ and $u_{small r}$ in the matching region, which are roughly of size $r^{3} \log^{k}(r)$, multiplied by expressions that involve roughly four derivatives of $\lambda(t)$. In particular, we note that $u_{ell,2}$, $w_{1}$, and $v_{2}$, respectively, contribute the following terms when expanded in the matching region.
\begin{equation}v_{ell,2,0,main}(t,\frac{r}{\lambda(t)}) = \frac{r^{3}}{8} \partial_{t}^{2}\left(\frac{\lambda'(t)^{2}}{2\lambda(t)} + \lambda''(t) - \lambda''(t) \log(\frac{r}{\lambda(t)})\right) + \frac{3}{32} r^{3} \lambda''''(t)\end{equation}

\begin{equation}\begin{split}w_{1,cubic,main}(t,r) &=\frac{3}{32} r^{3} \lambda''''(t)+\frac{r^{3}}{8}\left( \lambda''''(t) \left(\log(2)+\frac{1}{2}\right)-\log(r) \lambda''''(t) + \log(t) \lambda''''(t)\right. \\
&\left.+ \int_{t}^{2t} \frac{\lambda''''(s)-\lambda''''(t)}{s-t} ds + \int_{2t}^{\infty} \frac{\lambda''''(s) ds}{s-t}\right)\end{split}\end{equation}

$$v_{2,cubic,main}(t,r) = \frac{-r^{3}}{32} F''(t)$$

Note that these quantities do not involve any degrees of freedom (like, for instance, integration constants, or free waves) which can be tuned so as to guarantee matching. In fact, the coefficient of $r^{3}$ in  $v_{2,cubic,main}$  is precisely one-eighth of two time derivatives of the $r$ coefficient in $v_{2,first order}$ (recall \eqref{v2fo}). On the other hand, the $r^{3}$ coefficient of $v_{ell,2,0,main}(t,\frac{r}{\lambda(t)})$ and that of $w_{1,cubic,main}(t,r)$ are not precisely one-eighth of two time derivatives of the $r$ coefficients of $u_{ell,first order}(t,r)$ and $v_{1,first order}(t,r)$, respectively. However, the $r^{3}$ coefficient of the \emph{difference} of these two functions is precisely one-eighth of two time derivatives of the $r$ coefficient of $u_{ell,first order}(t,r)-v_{1,first order}(t,r)$. Note the important cancellation between the $\frac{3}{32} r^{3} \lambda''''(t)$ terms when $v_{ell,2,0,main}(t,\frac{r}{\lambda(t)})$ and $w_{1,cubic,main}(t,r)$ are subtracted. Therefore, the matching of the $r^{3}$ terms is already accomplished with our above choice of $v_{2,0}$. In other words, by the choice of $v_{2,0}$, we have
\begin{equation} v_{ell,2,0,main}(t,\frac{r}{\lambda(t)})-\left(w_{1,cubic,main}(t,r)+v_{2,cubic,main}(t,r)\right) =0\end{equation}
Next, we consider the terms arising from expansions of $u_{small r}$ and $u_{large r}$ in the matching region which are roughly of size $\frac{\log^{k}(r)}{r}$ multiplied by roughly two derivatives of $\lambda(t)$.
These terms coming from $u_{ell}$, $u_{w}$, and $u_{w,2}$, respectively, are
\begin{equation}\begin{split}&v_{ell,sub,cont}(t,r)\\
 &= \frac{c_{1}(t) \lambda(t)-\frac{3}{2} \lambda(t) \lambda'(t)^2}{r} \\
& -\frac{\lambda(t)^2 \lambda''(t) \left(-6 \log (r) (4 \log (\lambda(t))+1)+6 \log (\lambda(t)) (2 \log (\lambda(t))+1)+12 \log ^2(r)+\pi ^2+12\right)}{6 r}\end{split}\end{equation}

\begin{equation}v_{ex,cont}(t,r) = \frac{1}{2r}\begin{aligned}[t]&\left(\log(r)\left(-4 \lambda(t)\lambda'(t)^{2}-2 \lambda(t)^{2}\lambda''(t)\right)\right.\\
&+\left.4 \lambda(t)\log(\lambda(t))\lambda'(t)^{2}-\lambda(t)^{2}\lambda''(t) + 2 \lambda(t)^{2}\lambda''(t)\log(\lambda(t))\right)\end{aligned}\end{equation}
and
\begin{equation}\begin{split}u_{w,2,ell,0,cont}(t,r) = \frac{-2}{r}&\left(\log(r)\left(-\lambda(t)\lambda'(t)^{2}-\lambda(t)^{2}\lambda''(t)-2 \lambda(t)^{2} \log(\lambda(t))\lambda''(t)\right)\right.\\
&\left.+\lambda(t) \log(\lambda(t))\lambda'(t)^{2}+\lambda(t)^{2}\log(\lambda(t))\lambda''(t) + \lambda(t)^{2}\log^{2}(\lambda(t))\lambda''(t)\right.\\
&+\left.\lambda(t)^{2}\log^{2}(r)\lambda''(t)+\frac{1}{2} \lambda(t)^2 \lambda''(t)-\frac{1}{12} \pi ^2 \lambda(t)^2 \lambda''(t)\right)\end{split}\end{equation}
Note that the only free parameter we can choose in this expression is $c_{1}$, which multiplies $\frac{\lambda(t)}{r}$. On the other hand, both $u_{w,2,ell,0,cont}$ and $v_{ex,cont}$ involve $\frac{\log^{k}(r)}{r}$ terms, for $k> 0$. It therefore appears that it is not possible to enforce
$$v_{ell,sub,cont}(t,r)=v_{ex,cont}(t,r)+u_{w,2,ell,0,cont}(t,r)$$
by choosing $c_{1}(t)$ appropriately. However, it turns out that the $\frac{\log(r)}{r}$ and $\frac{\log^{2}(r)}{r}$ terms from $v_{ell,sub,cont}$ happen to exactly match those from $v_{ex,cont}+u_{w,2,ell,0,cont}$, which therefore allows us to choose $c_{1}(t)$ as just discussed. (This ``automatic'' matching of the logarithmically higher order terms for large $r$ is reminiscent of what we saw at the first order as well, recall the remark after \eqref{v2fo}). In other words, we have
\begin{equation}v_{ell,sub,cont}(t,r)-\left(u_{w,2,ell,0,cont}(t,r)+v_{ex,cont}(t,r)\right) = -\frac{\lambda(t) \left(-6 c_{1}(t)+\left(3+2 \pi ^2\right) \lambda(t) \lambda''(t)+9 \lambda'(t)^2\right)}{6 r}\end{equation}
which vanishes if 
\begin{equation}c_{1}(t) := \frac{3}{2}\lambda'(t)^{2}+\frac{\lambda(t) \lambda''(t)}{2}+\frac{\pi^{2}}{3} \lambda(t)\lambda''(t)\end{equation}
From the point of view of constructing an approximate solution of the form \eqref{ucsummary}, the matching conditions are needed to reduce the size of error terms involving derivatives of $\chi_{\leq 1}(\frac{r}{h(t)})$. Our three matching conditions thus far reduce these error terms, but not quite enough, so that we need to do a third order matching. We remark that the previous two matching conditions are called second order matching, part 1 and 2 in the paper, since, if $\overline{g}(t)=\sqrt{\lambda(t) t}$ (again, we do not quite exactly choose $g(t)=\overline{g}(t)$, but $\overline{g}$ is the scale at which the elliptic and wave corrections are expected to have comparably sized error terms) then, the terms from the previous two matching conditions are of comparable size:
$$\frac{\overline{g}(t)^{3} \lambda(t)}{t^{4}} =\frac{\lambda(t)^{\frac{5}{2}}}{t^{5/2}}, \quad \text{ and } \frac{\lambda(t)^{3}}{t^{2} \cdot \overline{g}(t)} = \frac{\lambda(t)^{5/2}}{t^{5/2}}$$ 

The third order matching involves comparing terms from $u_{ell,2}$, $v_{1}$, $u_{w,2}$, and $v_{2,2}$, which are on the order of $r \log^{k}(r)$, for $0 \leq k \leq 3$, multiplied by terms involving roughly four derivatives of $\lambda$. The computations of these terms are lengthy, and the terms themselves result in  very long expressions, which we will not reproduce here. We remark that the $u_{ell,2}$ contributions are given in \eqref{ue3def1}, and the $v_{1}$ and $u_{w,2}$ contributions are individually computed in Lemmas \ref{uw2sub0leading}, \ref{vexsub0leading}, \ref{ellminusell0},  \eqref{q50defthirdorder} combined with Lemma \ref{q5lemma}, and \eqref{q410def}, combined with Lemma \ref{q41lemma}. A very careful inspection reveals that, for $j=1,2,3$ the $r \log^{j}(r)$ terms involved in \eqref{ue3def1} exactly match those terms from the $v_{1}$ and $u_{w,2}$ contributions (which we denote, in \eqref{uw3finalexpr}, by $u_{w,3}$) see \eqref{f3def}. We therefore again see the ``automatic'' matching of logarithmically higher order in $r$ (for large $r$) terms involved in the expansions. Precisely because of this ``automatic'' matching, we can choose the data for $v_{2,2}$ in a similar way for $v_{2}$ in order to do the third order matching, which is precisely stated in Proposition \ref{thirdorderprop}. 

There are a few more corrections needed to be added to our ansatz in order to improve the linear terms, namely $f_{5,0}$, $f_{ex,sub}, f_{ell,2}$, and $f_{2,2}$. These corrections are not as delicate as those mentioned up to this point, so we refer the reader to Section \ref{indexsubsection} and the equation references therein. We also remark that the cutoff, $\chi_{\leq 1}(x)=1-\chi_{\geq 1}(x)$, where $\chi_{\geq 1}(x)$ is chosen in Lemma \ref{chilemma}, so as to satisfy certain orthogonality conditions, for technical reasons.

At this stage, our ansatz is 
\begin{equation} u_{a}(t,r) = u_{c}(t,r)+f_{5,0}(t,r)+f_{ex,sub}(t,r)+f_{ell,2}(t,r)+f_{2,2}(t,r)\end{equation}
with
\begin{equation}u_{c}(t,r) = \chi_{\leq 1}(\frac{r}{\lambda(t) g(t)}) u_{e}(t,r) + \left(1-\chi_{\leq 1}(\frac{r}{\lambda(t) g(t)})\right)u_{wave}(t,r)\end{equation}
and
\begin{equation}u_{e}(t,r) = u_{ell}(t,r)+u_{ell,2}(t,r), \quad u_{wave}(t,r) = u_{w}(t,r)+u_{w,2}(t,r)+v_{2,2}(t,r)\end{equation}

The ansatz $u_{a}$ has a linear error term, $e_{a}$, which is small enough for our purposes, see Lemma \ref{eaestlemma}. We also remark that the radiation component of our ansatz thus far, is $v_{2}+v_{2,2}$. On the other hand, the nonlinear error terms of $u_{a}$ are not small enough. For example, the $v_{2}$ cubic self-interactions near the cone are roughly of the size
$$|\frac{\cos(2Q_{1}(\frac{r}{\lambda(t)}))}{2r^{2}} \left(\sin(2v_{2})-2v_{2}\right)| \leq \frac{C}{t^{2}} \cdot \frac{1}{t^{3/2}}$$
More precisely, the set of nonlinear interactions between the terms of $u_{a}$ is decomposed into $N_{0}+N_{1}$, where $N_{1}$ is perturbative, and 
\begin{equation}N_{0}(t,r) = \frac{\cos(2Q_{1}(\frac{r}{\lambda(t)}))}{2r^{2}} \left(\sin(2u_{a,0})-2u_{a,0}\right) + \frac{\sin(2Q_{1}(\frac{r}{\lambda(t)}))}{2r^{2}} \left(\cos(2u_{a,0})-1\right)\end{equation}
where
$$u_{a,0}(t,r) = \chi_{\leq 1}(\frac{r}{h(t)}) u_{ell}(t,r) + \left(1-\chi_{\leq 1}(\frac{r}{h(t)})\right)\left(w_{1}(t,r)+v_{2}(t,r)+v_{2,2}(t,r)\right)$$
First, we define $u_{N_{0}}$ to be the solution to the following equation with zero Cauchy data at infinity.
\begin{equation}\label{un0summary}-\partial_{t}^{2}u_{N_{0}}+\partial_{r}^{2}u_{N_{0}}+\frac{1}{r}\partial_{r}u_{N_{0}}-\frac{u_{N_{0}}}{r^{2}} = N_{0}(t,r)\end{equation}
The linear error term of $u_{N_{0}}$ is $e_{N_{0}}=\left(\frac{\cos(2Q_{1}(\frac{r}{\lambda(t)}))-1}{r^{2}}\right) u_{N_{0}}$, which is small for $r$ such that $h(t) \lesssim r$ (see Lemma \ref{en0estlemma} for the precise details). Therefore, the addition of $u_{N_{0}}$ reduces us to the task of eliminating an error term which is localized to the region $r \lesssim h(t)$. We carry out this task as follows. We start with solving the equation
\begin{equation}\partial_{rr}u_{N_{0},ell}+\frac{1}{r}\partial_{r}u_{N_{0},ell}-\frac{\cos(2Q_{\frac{1}{\lambda(t)}}(r))}{r^{2}}u_{N_{0},ell} = m_{\leq 1}(\frac{r}{2h(t)}) e_{N_{0}}(t,r)\end{equation}
and then inserting the following function into the ansatz
\begin{equation}u_{N_{0},corr}(t,r):=\left(u_{N_{0},ell}(t,r) - \frac{r \lambda(t)}{4} \langle m_{\leq 1}(\frac{R \lambda(t)}{2h(t)}) e_{N_{0}}(t,R \lambda(t)),\phi_{0}(R)\rangle_{L^{2}(R dR)}\right) m_{\leq 1}(\frac{2 r}{t}) + v_{2,4}(t,r)\end{equation}
where $v_{2,4}$ solves
\begin{equation}-\partial_{tt}v_{2,4}+\partial_{rr}v_{2,4}+\frac{1}{r}\partial_{r}v_{2,4}-\frac{v_{2,4}}{r^{2}}=0\end{equation}
and $v_{2,4}(t,r)$ matches $\frac{r \lambda(t)}{4} \langle m_{\leq 1}(\frac{R \lambda(t)}{2h(t)}) e_{N_{0}}(t,R \lambda(t)),\phi_{0}(R)\rangle_{L^{2}(R dR)}$ for small $r$. (where $m_{\leq 1}$ is a cutoff, and is otherwise unimportant). In particular, we choose the initial velocity, say $v_{2,5}$ of $v_{2,4}$ by requiring, for all $t$ sufficiently large,
$$\frac{-r}{4}\cdot -2 \int_{0}^{\infty} \xi \sin(t\xi) \widehat{v_{2,5}}(\xi) d\xi =\frac{r \lambda(t)}{4} \langle m_{\leq 1}(\frac{R \lambda(t)}{2h(t)}) e_{N_{0}}(t,R \lambda(t)),\phi_{0}(R)\rangle_{L^{2}(R dR)}$$ 
This is analogous to how we chose the data for $v_{2}$. The point is that the error term of $v_{2,4}(t,r)$ is worst for small $r$, while that of $u_{N_{0},ell}(t,r)$ is worst for large $r$. On the other hand, the error term of $v_{2,4}(t,r) - \frac{r \lambda(t)}{4} \langle m_{\leq 1}(\frac{R \lambda(t)}{2h(t)}) e_{N_{0}}(t,R \lambda(t)),\phi_{0}(R)\rangle_{L^{2}(R dR)}$ is much smaller than the error term of $v_{2,4}(t,r)$ alone, for small $r$, and the error term of \\
$u_{N_{0},ell}(t,r)- \frac{r \lambda(t)}{4} \langle m_{\leq 1}(\frac{R \lambda(t)}{2h(t)}) e_{N_{0}}(t,R \lambda(t)),\phi_{0}(R)\rangle_{L^{2}(R dR)}$ is much smaller than the error term of $u_{N_{0},ell}(t,r)$ alone for large $r$. So, the term $ \frac{r \lambda(t)}{4} \langle m_{\leq 1}(\frac{R \lambda(t)}{2h(t)}) e_{N_{0}}(t,R \lambda(t)),\phi_{0}(R)\rangle_{L^{2}(R dR)}$ cancels the worst behavior of both $v_{2,4}$ and $u_{N_{0},ell}$ in the regions where the associated error terms are largest. This is again reminiscent of arguments related to matched asymptotic expansions for one-dimensional boundary value problems with a singular perturbation, see \cite{n}. After adding one more term to our ansatz, namely $m_{\geq 1}(\frac{r}{t}) u_{N_{0},corr,2}(t,r)$, in order to eliminate the linear error term associated to $v_{2,4}$ for large $r$, we have the improved ansatz $u_{a}+u_{n}$, where
\begin{equation}u_{n}(t,r):=u_{N_{0}}(t,r)+u_{N_{0},corr}(t,r)+m_{\geq 1}(\frac{r}{t}) u_{N_{0},corr,2}(t,r)\end{equation}
At this stage, the linear error terms of $u_{a}+u_{n}$ are perturbative, but, some nonlinear interactions are not. In particular, the correction $u_{n}$ involves a free wave, $v_{2,4}(t,r)$, which has no more pointwise decay in $t$ near the cone than $v_{2}$ did.
The nonlinear error terms of $u_{a}+u_{n}$ are decomposed into $N_{2}+N_{3}$, where
\begin{equation}\begin{split}N_{2}(t,r):&=\frac{\cos (2 Q_{1}(\frac{r}{\lambda(t)}))}{2r^{2}} (-2 (u_{a0}+v_{2,4})+\sin (2 (u_{a,0}+v_{2,4}))-(\sin (2 u_{a,0})-2 u_{a,0}))\\
&+\frac{\sin (2 Q_{1}(\frac{r}{\lambda(t)}))}{2r^{2}} (\cos (2 u_{a,0}+2 v_{2,4})-\cos (2 u_{a,0}))\\
\end{split}\end{equation}
and the precise formula for $N_{3}$ is not important for the purposes of summarizing the main points of the argument. $N_{3}$ is defined in \eqref{n3exp}, and is perturbative.

We recall that, after adding $u_{a}$ into our ansatz, we had perturbative linear error terms, and some non-perturbative nonlinear error terms, collected together in $N_{0}$. After adding the correction to $N_{0}$, namely $u_{n}$, we again have perturbative linear error terms, and some non-perturbative nonlinear error terms, and the pointwise in $t$ decay of the nonlinear self interactions of $v_{2,4}(t,r)$ near the cone are no better than that of the nonlinear self interactions of $v_{2}(t,r)$ near the cone. It thus might not be clear why $u_{a}+u_{n}$ is an improvement over $u_{a}$. However, the point is that $v_{2,4}(t,r)$ has much more decay in the variable $\langle t-r \rangle$, in the region $\frac{t}{2} \leq r \leq t$, than does $v_{2}$ (compare Lemma \ref{v24estlemma}, and Lemma \ref{v2estlemma}). This allows us to eliminate $N_{2}$ with a correction, $u_{N_{2}}$, which solves the following equation with 0 Cauchy data at infinity.
\begin{equation}-\partial_{t}^{2}u_{N_{2}}+\partial_{r}^{2}u_{N_{2}}+\frac{1}{r}\partial_{r}u_{N_{2}}-\frac{u_{N_{2}}}{r^{2}}=N_{2}(t,r)\end{equation}

It is possible to eliminate $N_{2}$ in this manner because the decay of $v_{2,4}(t,r)$ in the variable $\langle t-r \rangle$ inside the cone leads to $u_{N_{2}}$ having decay in the variable $\langle t-r \rangle$ inside the light-cone, see Lemma \ref{un2lemma}. This decay of $u_{N_{2}}$ is important because the linear error term of $u_{N_{2}}$ is $\left(\frac{\cos(2Q_{1}(\frac{r}{\lambda(t)}))-1}{r^{2}}\right)u_{N_{2}}(t,r)$, which is thus most delicate in the region, for example, $r \leq \frac{t}{2}$, because
$$|\left(\frac{\cos(2Q_{1}(\frac{r}{\lambda(t)}))-1}{r^{2}}\right)u_{N_{2}}(t,r)| \leq \frac{C \lambda(t)^{2}}{t^{4}} |u_{N_{2}}(t,r)|, \quad r \geq \frac{t}{2}$$ 
On the other hand, when $r \leq \frac{t}{2}$, then, $\frac{1}{\langle t-r \rangle} \leq \frac{C}{t}$, which means that the aforementioned decay of $u_{N_{2}}(t,r)$ in the variable $\langle t-r\rangle$, for $r<t$, makes the linear error term associated to $u_{N_{2}}$ much smaller than otherwise.

Without this extra decay in the variable $\langle t-r \rangle$, we might have had to eliminate $N_{2}$ by using a matching process involving a free wave, as in $u_{N_{0},corr}$, but then, the nonlinear self interactions of this free wave would yet again produce nonlinear error terms which are not perturbative from the point of view of our procedure. The key point here is that $u_{N_{2}}$ has zero Cauchy data at infinity, and much more than $\frac{1}{\sqrt{t}}$ pointwise decay near the cone.

The linear error term of $u_{N_{2}}$ is perturbative, see Lemma \ref{en231estlemma}. Our final ansatz is
$$u_{ansatz}(t,r)=u_{a}(t,r)+u_{n}(t,r)+u_{N_{2}}(t,r)$$
and its nonlinear error terms are perturbative, see Lemma \ref{n4estlemma}, and recall the importance of $u_{N_{2}}$ eliminating $N_{2}$, while also not containing a free wave (which leads to improved decay  of $u_{N_{2}}$ near the cone).\\
\\
At this stage, we are ready to complete the ansatz to an exact solution of \eqref{wm}. This step is done by following a simpler version of the analogous step in \cite{wm}, with one extra detail. For completeness, we provide a summary of this step here. Substituting 
$$u(t,r)=Q_{1}(\frac{r}{\lambda(t)})+u_{ansatz}(t,r)+v_{6}(t,r)$$
into \eqref{wm}, we get
\begin{equation}\label{v6eqnsummary}\begin{split}&-\partial_{t}^{2}v_{6}+\partial_{r}^{2}v_{6}+\frac{1}{r}\partial_{r}v_{6}-\frac{\cos(2Q_{1}(\frac{r}{\lambda(t)}))}{r^{2}}v_{6}=F_{5}+F_{3}(v_{6})\end{split}\end{equation}
where $F_{3}$ is defined in \eqref{finalf3def}, and contains error terms involving $v_{6}$ both linearly and nonlinearly, and $F_{5}$ is the error term of $u_{ansatz}$, which we recall satisfies \eqref{f5estsummary}. We remark that \eqref{f5estsummary} is satisfied for $\delta>0$ partly due to \eqref{lambdaonlyconstr} and \eqref{alphaconstr}. We will solve \eqref{v6eqnsummary} by first formally deriving the equation for $y$ (namely \eqref{yeqnsummary}) given by
\begin{equation}y(t,\xi) = \mathcal{F}(\sqrt{\cdot} v_{6}(t,\cdot \lambda(t)))(\xi \lambda(t)^{2})\end{equation}
where $\mathcal{F}$ denotes the distorted Fourier transform of \cite{kst} (which is defined in section 5 of \cite{kst}). Then, we will prove that \eqref{yeqnsummary} admits a solution, say $y_{0}$ (with 0 Cauchy data at infinity) which has enough regularity to rigorously justify the statement that if $v_{6}$ given by the following expression, with $y=y_{0}$
\begin{equation}\label{v6intermsofysummary}v_{6}(t,r) = \sqrt{\frac{\lambda(t)}{r}} \mathcal{F}^{-1}\left(y(t,\frac{\cdot}{\lambda(t)^{2}})\right)\left(\frac{r}{\lambda(t)}\right),\end{equation}
then, $v_{6}$ is a solution to \eqref{v6eqnsummary}. We have (see also (5.4), (5.5), pg. 145 of \cite{wm})
\begin{equation}\label{yeqnsummary}\begin{split} \partial_{tt}y+\omega y = -\mathcal{F}(\sqrt{\cdot}F_{5}(t,\cdot \lambda(t)))(\omega \lambda(t)^{2})+F_{2}(y)(t,\omega) -\mathcal{F}(\sqrt{\cdot}F_{3}(v_{6}(y))(t,\cdot \lambda(t)))(\omega \lambda(t)^{2})\end{split}\end{equation}
where $v_{6}(y)$, which appears in the argument of $F_{3}$, is the expression given in \eqref{v6intermsofysummary}, and
\begin{equation}\begin{split}F_{2}(y)(t,\omega) &=-\frac{\lambda'(t)}{\lambda(t)} \partial_{t}y(t,\omega) + \frac{2 \lambda'(t)}{\lambda(t)} \mathcal{K}\left(\partial_{1}y(t,\frac{\cdot}{\lambda(t)^{2}})\right)(\omega \lambda(t)^{2}) +\left(\frac{-\lambda''(t)}{2\lambda(t)}+\frac{\lambda'(t)^{2}}{4 \lambda(t)^{2}}\right)y(t,\omega) \\
&+ \frac{\lambda''(t)}{\lambda(t)} \mathcal{K}\left(y(t,\frac{\cdot}{\lambda(t)^{2}})\right)(\omega \lambda(t)^{2}) +2\frac{\lambda'(t)^{2}}{\lambda(t)^{2}}\left([\xi \partial_{\xi},\mathcal{K}](y(t,\frac{\cdot}{\lambda(t)^{2}}))\right)(\omega \lambda(t)^{2}) \\
&-\frac{\lambda'(t)^{2}}{\lambda(t)^{2}} \mathcal{K}\left(\mathcal{K}(y(t,\frac{\cdot}{\lambda(t)^{2}}))\right)(\omega \lambda(t)^{2})\end{split}\end{equation}
where $\mathcal{K}$ is the transference operator of \cite{kst} (which is defined in section 6 of \cite{kst}). We solve \eqref{yeqnsummary} in the space $Z$ , defined in \eqref{Zdef}, by showing that the operator
\begin{equation}\label{tdefsummary}T(y)(t,\omega):=\int_{t}^{\infty}\frac{\sin((x-t)\sqrt{\omega})}{\sqrt{\omega}}\left(-\mathcal{F}(\sqrt{\cdot}\left(F_{5}+F_{3}(v_{6}(y))\right)(x,\cdot \lambda(x)))(\omega \lambda(x)^{2})+F_{2}(y)(x,\omega)\right) dx\end{equation}
is a strict contraction on $\overline{B}_{1}(0) \subset Z$. This process essentially only uses Minkowski's inequality, as well as the following simple property of the density of the spectral measure of $\mathcal{F}$: \eqref{rhoscaling}. The reason why this iteration is simpler than that of \cite{wm} is that here, there is no orthogonality condition on the error which we need to exploit, unlike in \cite{wm}. On the other hand, there is one extra detail which is present here, that is not present in \cite{wm}. One of the estimates we have on $F_{2}$ is the following (see Lemma \ref{f2lemma} for the complete set of estimates).
\begin{equation}\label{f2summaryest}\begin{split}&||F_{2}(t,\omega)||_{L^{2}(\rho(\omega \lambda(t)^{2})d\omega)}\\
&\leq ||\partial_{1}y(t,\omega)||_{L^{2}(\rho(\omega \lambda(t)^{2})d\omega)} \frac{|\lambda'(t)|}{\lambda(t)} \left(1+2 ||\mathcal{K}||_{\mathcal{L}(L^{2}_{\rho})}\right)  \\
&+ ||y(t,\omega)||_{L^{2}(\rho(\omega \lambda(t)^{2})d\omega)} \begin{aligned}[t]&\left(\left(\frac{\lambda'(t)}{\lambda(t)}\right)^{2} \left(\frac{1}{4}+2||[\xi\partial_{\xi},\mathcal{K}]||_{\mathcal{L}(L^{2}_{\rho})} + ||\mathcal{K}||^{2}_{\mathcal{L}(L^{2}_{\rho})}\right) + \frac{|\lambda''(t)|}{\lambda(t)} \left(\frac{1}{2}+||\mathcal{K}||_{\mathcal{L}(L^{2}_{\rho})}\right)\right)\end{aligned}\end{split}\end{equation}
Using the symbol-type estimates on $\lambda(t)$, we see that the terms of \eqref{f2summaryest} are of critical size, noting that \eqref{tdefsummary} roughly loses two powers of $t$ decay relative to its integrand. Therefore, we need the constants appearing in the symbol type estimates on $\lambda$ to be sufficiently small in order to guarantee that \eqref{tdefsummary} is a contraction. All of the terms estimated  in Lemmas \ref{l1nlemma} and \ref{f2lemma} with quantitative constants are where \eqref{lambdaonlyconstr}, \eqref{alphaconstr}, \eqref{cofconstr1p1} and \eqref{cofconstr2} are used. This detail does not appear in \cite{wm} because, there, $\frac{|\lambda^{(k)}(t)|}{\lambda(t)} \leq \frac{C_{k}}{t^{k} \log(t)}$.

\section{Construction of the Ansatz}
\noindent Let $\lambda \in \Lambda$. By definition of $\Lambda$, there exists $T_{\lambda}>100$ such that \eqref{lambdasetdef} is true. Let $T_{2}\geq e^{900}(1+T_{\lambda})$ be such that
\begin{equation}\label{t2constraint}t^{\frac{2}{3}} \lambda(t) < \frac{t}{100}, \quad t \geq T_{2}\end{equation}
Let $T_{0}\geq T_{2}$ be otherwise arbitrary. For the whole paper, we work in the region $t \geq T_{0}$, and $C$ denotes a constant, \emph{independent} of $T_{0}$, unless otherwise specified. We define $g$ by
\begin{equation}\label{gdef}g(t) := t^{\alpha}\end{equation}
where $\alpha$ is any number satisfying
\begin{equation}\label{alphaconstr}\alpha > \frac{1}{2}+\frac{C_{l}}{2}, \quad \alpha < \frac{2}{3}-\frac{5}{6}C_{u}-\frac{C_{l}}{6}\end{equation}
(Note that \eqref{lambdaonlyconstr} implies that $\frac{2}{3} - \frac{5}{6} C_{u}-\frac{C_{l}}{6} > \frac{1}{2}+\frac{C_{l}}{2}$, so such $\alpha$ as above exists. We also remind the reader that some intuition behind this choice of $g$ is given in Section \ref{summarysection}.) We note that the definition of $\Lambda$ implies that, for $x \geq t \geq T_{\lambda}$,
\begin{equation}\label{lambdacomparg}\left(\frac{x}{t}\right)^{-C_{l}} \leq \frac{\lambda(x)}{\lambda(t)} \leq \left(\frac{x}{t}\right)^{C_{u}}, \quad t \mapsto \frac{\lambda(t)}{t^{\frac{1}{30}}} \text{ is decreasing,} \quad \frac{\sup_{y \in [100,x]}(\lambda(y)\log(y))}{\sup_{y \in [100,t]}(\lambda(y)\log(y))} \leq C \left(\frac{x}{t}\right)^{C_{u}} \frac{\log(x)}{\log(t)} \end{equation}
We will construct an approximate solution to 
\begin{equation}\label{u1eqn}-\partial_{tt}u_{1}+\partial_{rr}u_{1}+\frac{1}{r}\partial_{r}u_{1} - \frac{\cos(2Q_{1}(\frac{r}{\lambda(t)}))}{r^{2}}u_{1} = \partial_{t}^{2}Q_{1}(\frac{r}{\lambda(t)})\end{equation}
starting by matching explicit solutions to certain approximations of the operator on the left-hand side.  
\subsection{Small $r$ corrections}
\subsubsection{First Iteration}
We first consider the ODE
\begin{equation}\label{uelleqn}\partial_{rr}u_{ell}+\frac{1}{r}\partial_{r}u_{ell}-\frac{\cos(2Q_{1}(\frac{r}{\lambda(t)}))}{r^{2}}u_{ell} = \partial_{t}^{2}Q_{1}(\frac{r}{\lambda(t)}), \quad u_{ell}(t,0)=0\end{equation}
We get
$$u_{ell}(t,r) = v_{ell}(t,\frac{r}{\lambda(t)})$$
where
\begin{equation}\label{velldef}v_{ell}(t,R) = f_{1}(R) \lambda'(t)^{2} + \lambda(t) \lambda''(t) f_{2}(R) + \frac{c_{1}(t) R}{1+R^{2}}\end{equation}
and
$$f_{1}(R) = \frac{R (-2+R^{2})}{2(1+R^{2})}, \quad f_{2}(R) = \frac{R^{2}(-1+2R^{2}) - (-1+R^{4})\log(1+R^{2}) + 2 R^{2} Li_{2}(-R^{2})}{2R(1+R^{2})}$$
and $c_{1}$ will be chosen later.
\subsubsection{Second iteration}
We define a second order correction, $u_{ell,2}(t,r)$, which is useful in the region of small $r$, by the following solution to 
\begin{equation}\label{uell2eqn}\partial_{rr}u_{ell,2}(t,r)+\frac{1}{r}\partial_{r}u_{ell,2}(t,r)-\frac{\cos(2Q_{1}(\frac{r}{\lambda(t)}))}{r^{2}} u_{ell,2}(t,r) = \partial_{t}^{2}\left(v_{ell}(t,\frac{r}{\lambda(t)})\right)\end{equation}
\begin{equation}\label{uell2def}u_{ell,2}(t,r) = v_{ell,2}(t,\frac{r}{\lambda(t)})\end{equation}
where $v_{ell,2}$ is defined by
\begin{equation}\label{vell2formula0}\begin{split}v_{ell,2}(t,R)&= - \frac{\phi_{0}(R)}{2} \int_{0}^{R} \lambda(t)^{2} \partial_{t}^{2}\left(v_{ell}(t,\frac{r}{\lambda(t)})\right)\Bigr|_{r=s\lambda(t)}s e_{2}(s) ds \\
&+ e_{2}(R) \int_{0}^{R} \lambda(t)^{2} \partial_{t}^{2}\left(v_{ell}(t,\frac{r}{\lambda(t)})\right)\Bigr|_{r=s\lambda(t)} \frac{s \phi_{0}(s)}{2} ds\end{split}\end{equation}
In order to compute explicit terms in an expansion of $v_{ell,2}(t,R)$ for large $R$, we let 
\begin{equation}\label{velllgR}v_{ell,lgR}(t,R) = \frac{1}{2} R \lambda'(t)^2+R \lambda(t) (1-\log (R)) \lambda''(t)\end{equation}
be the leading behavior of $v_{ell}(t,R)$ for large $R$ (recall \eqref{velldef}), and define
$$err_{0}(t,r) = \lambda(t)^{2} \partial_{t}^{2}\left(v_{ell,lgR}(t,\frac{r}{\lambda(t)})\right)= r \lambda(t)^{2} \partial_{t}^{2}\left(\frac{\lambda'(t)^{2}}{2 \lambda(t)}+\lambda''(t)-\lambda''(t) \log(\frac{r}{\lambda(t)})\right)$$
To understand $v_{ell,2}(t,R)$ for large $R$, we let
\begin{equation}\begin{split}v_{ell,2,0}(t,R) &= - \frac{\phi_{0}(R)}{2} \int_{0}^{R} err_{0}(t,s\lambda(t))s e_{2}(s) ds + e_{2}(R) \int_{0}^{R} err_{0}(t,s\lambda(t)) \frac{s \phi_{0}(s)}{2} ds\end{split}\end{equation}
We get
\begin{equation}\label{vell20def}\begin{split} v_{ell,2,0}(t,R)&= f_{3}(R) \left(2 \lambda'(t)^{4} - 7 \lambda(t) \lambda'(t)^{2} \lambda''(t) + 4 \lambda(t)^{2} \lambda''(t)^{2} + 6 \lambda(t)^{2} \lambda'(t)\lambda'''(t)\right)+f_{4}(R) \lambda(t)^{3} \lambda''''(t)\end{split}\end{equation}
where
\begin{equation}\label{f3elldef}f_{3}(R) = \frac{2\left(R^{2}\left(-2+6R^{2}+R^{4}\right)-2(-1+R^{4})\log(1+R^{2})+4R^{2}Li_{2}(-R^{2})\right)}{32(R+R^{3})}\end{equation}
\begin{equation}\label{f4def}\begin{split} f_{4}(R)= \frac{1}{32(R+R^{3})}&\left(R^{2}\left(-12+44R^{2}+7R^{4}-4(-2+6R^{2}+R^{4})\log(R)\right)\right.\\
&\left.+8\left(-1+R^{4}\right)(-1+\log(R))\log(1+R^{2})\right.\\
&\left.+4\left(-1+4R^{2}+R^{4}-4R^{2}\log(R)\right)Li_{2}(-R^{2})+16R^{2}Li_{3}(-R^{2})\right)\end{split}\end{equation}
For later convenience, we split 
$$v_{ell,2,0}(t,\frac{r}{\lambda(t)})=v_{ell,2,0,main}(t,\frac{r}{\lambda(t)})+soln_{1}(t,\frac{r}{\lambda(t)})$$
and write
\begin{equation}\label{soln2def}v_{ell,2}(t,R) = v_{ell,2,0,main}(t,R)+soln_{1}(t,R)+soln_{2}(t,R)\end{equation}
where
\begin{equation}\label{vell20maindef}v_{ell,2,0,main}(t,s) = \frac{s^{3} \lambda(t)^{3}}{8}\left(\partial_{t}^{2}\left(\frac{\lambda'(t)^{2}}{2\lambda(t)}+\lambda''(t)+\lambda''(t)\log(\lambda(t))\right)-\lambda''''(t)\log(s\lambda(t)) + \frac{3}{4} \lambda''''(t)\right)\end{equation}
\begin{equation}\label{soln1def}\begin{split}soln_{1}(t,s) &= \left(f_{3}(s)-f_{3,0}(s)\right)\left(2\lambda'(t)^{4}-7 \lambda(t) \lambda'(t)^{2}\lambda''(t) + 4 \lambda(t)^{2}\lambda''(t)^{2}+6 \lambda(t)^{2}\lambda'(t)\lambda'''(t)\right) \\
&+ \left(f_{4}(s)-f_{4,0}(s)\right) \lambda(t)^{3} \lambda''''(t)\end{split}\end{equation}
and
$$f_{3,0}(x) = \frac{x^{3}}{16}, \quad f_{4,0}(x) = \frac{1}{32} x^3 (7-4 \log (x)) $$

Finally, 
\begin{equation}\label{ell2minusell20}\begin{split}&soln_{2}(t,\frac{s}{\lambda(t)}) = u_{ell,2}(t,s) - v_{ell,2,0,main}(t,\frac{s}{\lambda(t)})-soln_{1}(t,\frac{s}{\lambda(t)})\\
&=-\frac{\phi_{0}(\frac{s}{\lambda(t)})}{2}\int_{0}^{\frac{s}{\lambda(t)}} err_{1}(t,q\lambda(t)) q e_{2}(q)dq+\frac{e_{2}(\frac{s}{\lambda(t)})}{2}\int_{0}^{\frac{s}{\lambda(t)}} err_{1}(t,q\lambda(t)) q \phi_{0}(q)dq\end{split}\end{equation}
where
\begin{equation}\label{err1def} err_{1}(t,r)=\lambda(t)^{2}\partial_{t}^{2}\left(v_{ell}(t,\frac{r}{\lambda(t)})-v_{ell,lgR}(t,\frac{r}{\lambda(t)})\right)\end{equation}
Note in particular, that
\begin{equation}\label{vell20r3cof} v_{ell,2,0}(t,\frac{r}{\lambda(t)}) = \frac{r^{3}}{8} \partial_{t}^{2}\left(\frac{\lambda'(t)^{2}}{2\lambda(t)}+\lambda''(t)-\lambda''(t) \log(\frac{r}{\lambda(t)})\right) + \frac{3}{32} r^{3} \lambda''''(t)+O\left(r \log^{2}(r)\right), \quad r \rightarrow \infty\end{equation}
This observation will be important when we match the small $r$ corrections to the large $r$ corrections. Also, the leading behavior of $err_{1}(t,r)$ in the region $r \geq \lambda(t)$ is $err_{1,0}$ given by
\begin{equation}\label{err10def} err_{1,0}(t,r) = \lambda(t)^{2} \partial_{t}^{2}\left(\lambda(t)\left(\frac{-3}{2r}\lambda'(t)^{2}+\frac{\lambda(t)\lambda''(t)}{r}\left(-(2+\frac{\pi^{2}}{6}) +  \log(\frac{r}{\lambda(t)}) -2 \log^{2}(\frac{r}{\lambda(t)}) \right)+\frac{c_{1}(t)}{r}\right)\right)\end{equation}
\subsection{First Wave Iteration}
We consider the PDE
\begin{equation}\label{uweqn}-\partial_{tt}u_{w}+\partial_{rr}u_{w}+\frac{1}{r}\partial_{r}u_{w}-\frac{u_{w}}{r^{2}} = \partial_{t}^{2} Q_{1}(\frac{r}{\lambda(t)})\end{equation}
We consider the solution $u_{w}$, that can be written as 
\begin{equation}\label{uwdef}u_{w}(t,r) = v_{1}(t,r) + v_{2}(t,r)\end{equation}
where $v_{1}$ solves 
$$-\partial_{tt}v_{1}+\partial_{rr}v_{1}+\frac{1}{r}\partial_{r}v_{1}-\frac{v_{1}}{r^{2}} = \partial_{t}^{2} Q_{1}(\frac{r}{\lambda(t)})$$
with 0 Cauchy data at infinity, and $v_{2}$ solves
$$\begin{cases}-\partial_{tt}v_{2}+\partial_{rr}v_{2}+\frac{1}{r}\partial_{r}v_{2}-\frac{v_{2}}{r^{2}} = 0\\
v_{2}(0)=0\\
\partial_{t}v_{2}(0,r)=v_{2,0}(r)\end{cases}$$
where $v_{2,0}\in L^{2}(r dr)$ will be chosen later. To describe $v_{1}(t,r)$, we decompose it as
\begin{equation}\label{v1splitting}v_{1}(t,r) = w_{1}(t,r) + v_{ex}(t,r)\end{equation}
where $w_{1}$ solves
\begin{equation}\label{w1firsteqn}-\partial_{tt}w_{1}+\partial_{rr}w_{1}+\frac{1}{r}\partial_{r}w_{1}-\frac{w_{1}}{r^{2}} = \frac{-2 \lambda''(t)}{r}\end{equation}
with 0 Cauchy data at infinity. We use the same procedure used in Section 4.2 of \cite{wm} to obtain (4.25) of \cite{wm}, to derive the following expression which can be directly checked to be the solution to \eqref{w1firsteqn} with 0 Cauchy data at infinity.
\begin{equation}\label{firstw1formula}\begin{split}w_{1}(t,r)&= \frac{-1}{2 \pi}\int_{t}^{\infty} ds \int_{0}^{s-t} \frac{\rho d\rho}{\sqrt{(s-t)^{2}-\rho^{2}}} \int_{0}^{2\pi} \left(-2\lambda''(s)\right)\frac{\left(r+\rho \cos(\theta)\right)}{r^{2}+\rho^{2}+2 r \rho \cos(\theta)} d\theta \\
&= \int_{t}^{\infty} ds \frac{\lambda''(s)}{r} \int_{0}^{s-t} \frac{\rho d\rho}{\sqrt{(s-t)^{2}-\rho^{2}}}\left(1+\text{sgn}(r^{2}-\rho^{2})\right) \\
&= \frac{2}{r} \int_{t}^{t+r} ds \lambda''(s) (s-t) + \frac{2}{r} \int_{t+r}^{\infty} ds \lambda''(s) \left((s-t) - \sqrt{(s-t)^{2}-r^{2}}\right)\\
&=\frac{-2}{r}\left(\lambda(t+r)-\lambda(t)-r\lambda'(t+r)\right)+ \frac{2}{r} \int_{t+r}^{\infty} ds \lambda''(s) \left((s-t) - \sqrt{(s-t)^{2}-r^{2}}\right)\end{split}\end{equation}
(For $r \neq \rho$, we can evaluate the following integral using Cauchy's residue theorem).
\begin{equation}\label{rinverseint}\int_0^{2 \pi } \frac{\rho \cos (\theta)+r}{\rho^2+2 \rho r \cos (\theta)+r^2} \, d\theta = \frac{\pi  \left(\text{sgn}\left(r^2-\rho^2\right)+1\right)}{r}\end{equation}
First, we note that $w_{1}(t,\cdot) \in C^{\infty}((0,\infty))$. This follows from
\begin{equation}\label{w1simp}w_{1}(t,r) = \frac{-2}{r}\left(\lambda(t+r)-\lambda(t)-r\lambda'(t+r)\right)+ 2 r \int_{1}^{\infty} \lambda''(t+r y)\left(y-\sqrt{y^{2}-1}\right) dy\end{equation}
and the smoothness and symbol type estimates on $\lambda$. Next, we introduce some notation. Let
\begin{equation}\label{w1main}\begin{split} w_{1,main}(t,r) =  r\left(\log(2)+\frac{1}{2}\right) \lambda''(t) + r \int_{t}^{2t} \frac{(\lambda''(s) - \lambda''(t))}{s-t} ds + r \lambda''(t) \log(\frac{t}{r}) + r \int_{2t}^{\infty} \frac{\lambda''(s) ds}{(s-t)}\end{split}\end{equation}
\begin{equation}\label{w1subdef} w_{1,sub}(t,r) = w_{1}(t,r) -w_{1,main}(t,r)\end{equation}
\begin{equation}\begin{split}w_{1,cubic,main}(t,r) &=\frac{3}{32} r^{3} \lambda''''(t)+\frac{r^{3}}{8}\left( \lambda''''(t) \left(\log(2)+\frac{1}{2}\right)-\log(r) \lambda''''(t) + \log(t) \lambda''''(t)\right. \\
&\left.+ \int_{t}^{2t} \frac{\lambda''''(s)-\lambda''''(t)}{s-t} ds + \int_{2t}^{\infty} \frac{\lambda''''(s) ds}{s-t}\right)\end{split}\end{equation}
The following lemma will be useful later on when we match the small $r$ and large $r$ corrections.
\begin{lemma} \label{w1strlemma}
For $0 \leq j \leq 8$, $0 \leq k \leq 3$ and $r \leq t$,
\begin{equation}\begin{split}&\Bigr|\partial_{t}^{j}\partial_{r}^{k}\left(w_{1,sub}(t,r)-w_{1,cubic,main}(t,r)-\frac{r^{5} \lambda^{(6)}(t)}{576}\left(5+\log(8)\right)\right.\\
&\left.-\frac{r^{5}}{192} \left(\int_{t}^{2t} \frac{\left(\lambda^{(6)}(s)-\lambda^{(6)}(t)\right)ds}{s-t} + \lambda^{(6)}(t) \log(\frac{t}{r})+\int_{2t}^{\infty} \frac{\lambda^{(6)}(s) ds}{s-t}\right)\right)\Bigr| \\
&\leq \frac{C \lambda(t) r^{7-k}}{t^{8+j}} \left(\log(t)+|\log(r)|\right)\end{split}\end{equation}
\end{lemma}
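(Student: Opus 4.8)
\emph{Proof proposal.} The plan is to start from the closed-form expressions \eqref{w1simp} and \eqref{firstw1formula}, extract a Taylor-type expansion of $w_1$ in $r$ whose coefficients are functions of $t$ (and of $\log r$), identify the $r$-, $r^3$- and $r^5$-order parts with $w_{1,main}$, $w_{1,cubic,main}$ and the explicit $r^5$ expression in the statement, and control the remainder together with its $t$- and $r$-derivatives. Write $w_1=\mathrm{I}+\mathrm{II}$ with
$$\mathrm{I}(t,r)=\frac{-2}{r}\left(\lambda(t+r)-\lambda(t)-r\lambda'(t+r)\right),\qquad \mathrm{II}(t,r)=\frac{2}{r}\int_{t+r}^{\infty}\lambda''(s)\left((s-t)-\sqrt{(s-t)^2-r^2}\right)ds .$$
A useful bookkeeping remark: writing $w_1=\frac{2}{r}\int_t^\infty\lambda''(s)K(s-t,r)\,ds$ as in \eqref{firstw1formula}, with $K(u,r)=u$ for $u\le r$ and $K(u,r)=u-\sqrt{u^2-r^2}$ for $u>r$, the kernel $\frac{2}{r}K(u,r)$ is even-in-nothing, i.e. it is odd in $r$; hence $w_1(t,\cdot)$ is odd, so its expansion about $r=0$ contains only odd powers of $r$, and any even-order-in-$r$ contributions produced separately from $\mathrm I$ and $\mathrm{II}$ must cancel.

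For $\mathrm I$ I would apply Taylor's theorem with integral remainder to $\lambda(t+r)$ and $\lambda'(t+r)$ about $t$. Since $\lambda(t+r)-\lambda(t)-r\lambda'(t+r)=\sum_{n\ge2}\frac{1-n}{n!}\lambda^{(n)}(t)r^n$, this gives $\mathrm I=\sum_{n=2}^{7}2\,\frac{n-1}{n!}\lambda^{(n)}(t)\,r^{n-1}+R_{\mathrm I}$, with $R_{\mathrm I}$ an explicit integral of $\lambda^{(8)}$ on $[t,t+r]$. Using $r\le t$, the symbol bounds \eqref{lambdasetdef}, and the comparison \eqref{lambdacomparg} (so that $|\lambda^{(n)}(s)|\lesssim\lambda(t)t^{-n}$ for $s\in[t,t+r]$, $n\ge2$), one obtains $|\partial_t^j\partial_r^k R_{\mathrm I}|\lesssim\lambda(t)\,r^{7-k}\,t^{-8-j}$ for $0\le j\le8$, $0\le k\le3$; the kept monomials and their derivatives are of course explicit.

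For $\mathrm{II}$ I would use $(s-t)-\sqrt{(s-t)^2-r^2}=\frac{r^2}{(s-t)+\sqrt{(s-t)^2-r^2}}$ together with the expansion $\frac{2}{1+\sqrt{1-x}}=\sum_{m\ge0}d_m x^m$ (with $d_m>0$, $d_0=1$, $d_1=\tfrac14$, $d_2=\tfrac18$), which converges uniformly on $[0,1]$ since its coefficients are positive and the sum is continuous up to $x=1$ (Dini). As $r^2/(s-t)^2\in(0,1]$ on $[t+r,\infty)$, this yields $\mathrm{II}=r\sum_{m\ge0}d_m r^{2m}\int_{t+r}^\infty\lambda''(s)(s-t)^{-2m-1}\,ds$. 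For $m=0$ I would split the integral at $s=2t$ and write $\lambda''(s)=\lambda''(t)+(\lambda''(s)-\lambda''(t))$, producing $r\lambda''(t)\log\frac tr$, the kept integral $r\int_t^{2t}\frac{\lambda''(s)-\lambda''(t)}{s-t}ds$, the tail $r\int_{2t}^\infty\frac{\lambda''(s)}{s-t}ds$, and an $\int_t^{t+r}$ correction. For $m=1,2$ I would integrate by parts $2m$ times on $[t+r,\infty)$: the boundary terms appear only at $s=t+r$ (coefficients $\lambda^{(\cdot)}(t+r)$, which I Taylor-expand in $r$), and the integral becomes $\frac{d_m}{(2m)!}r^{2m+1}\int_{t+r}^\infty\frac{\lambda^{(2m+2)}(s)}{s-t}ds$; note $\frac{d_1}{2!}=\tfrac18$ and $\frac{d_2}{4!}=\tfrac1{192}$ already reproduce the coefficients in $w_{1,cubic,main}$ and in the stated $r^5$ expression. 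Splitting this last integral at $2t$ and peeling off $\lambda^{(2m+2)}(t)$ produces exactly the $\lambda^{(2m+2)}(t)\log(t/r)$ terms and the kept integrals $\int_t^{2t}\frac{\lambda^{(2m+2)}(s)-\lambda^{(2m+2)}(t)}{s-t}ds$ and $\int_{2t}^\infty\frac{\lambda^{(2m+2)}(s)}{s-t}ds$. The tail $\sum_{m\ge3}$ contributes $O(r^7)$ with the claimed derivative bounds.

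Finally I would assemble: collect all $r$-, $r^3$-, $r^5$-order contributions from $\mathrm I$, from the $m=0$ term, and from the boundary-at-$(t+r)$ and peeled-integral parts of the $m=1,2$ terms, and verify that the total equals $w_{1,main}(t,r)+w_{1,cubic,main}(t,r)$ plus the explicit $r^5$ expression. This uses the oddness remark (all $r^2,r^4,r^6$ contributions cancel) and exact evaluation of the resulting numerical series — for instance the constant in $w_{1,main}$ emerges as $1+\sum_{m\ge1}\frac{d_m}{2m}=1+(\log2-\tfrac12)=\log2+\tfrac12$. Everything left over, after $\partial_t^j\partial_r^k$, is $O\!\left(\lambda(t)\,r^{7-k}\,t^{-8-j}(\log t+|\log r|)\right)$ by the symbol bounds and $r\le t$, the logarithm coming from the $\log(t/r)$ already present in the $r^7$-order coefficient. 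I expect the main obstacle to be exactly this assembly step: there are many individually oversized contributions at orders $r,r^3,r^5$ (the even-power ones, and the odd-power ones of size $\lambda(t)r^{2j-1}t^{-2j}$ rather than the final $\lambda(t)r^{2j-1}t^{-2j-2}$), and one must check by careful inspection that the large pieces cancel and that the survivors reassemble into precisely the three stated expressions with residue no larger than $O(r^7)$; the derivative estimates themselves are routine once the integrable singularity of the kernel at $s=t+r$ is handled as in Section 4.2 of \cite{wm}.
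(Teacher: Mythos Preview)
Your series-expansion strategy for $\mathrm{II}$ is different from the paper's iterative approach and is in principle workable, but there is a concrete error in the treatment of the tail. You claim that the tail $\sum_{m\ge 3}$ of
\[
\mathrm{II}=r\sum_{m\ge 0} d_m\, r^{2m}\int_{t+r}^{\infty}\frac{\lambda''(s)}{(s-t)^{2m+1}}\,ds
\]
contributes $O(r^{7})$. It does not. Since $\int_{t+r}^{\infty}(s-t)^{-2m-1}\,ds=\frac{1}{2m\,r^{2m}}$, each term in the tail is of size $\sim \frac{d_m}{2m}\, r\,\lambda''(t)$, hence $O\!\left(\frac{r\lambda(t)}{t^{2}}\right)$; equivalently, with $y=(s-t)/r$ the tail equals $r\int_{1}^{\infty}\lambda''(t+ry)\,y^{-7}h(y^{-2})\,dy$ for a bounded $h$, which is again $O\!\left(\frac{r\lambda(t)}{t^{2}}\right)$, not $O\!\left(\frac{r^{7}\lambda(t)}{t^{8}}\right)$. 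The difficulty is that the series for $\tfrac{2}{1+\sqrt{1-x}}$ is being used at the endpoint $x=1$ (i.e.\ $s=t+r$), where no extra smallness is gained from the higher powers.

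This is fixable, but not for free: you must also integrate by parts (six times, say) in the $m\ge 3$ tail, which produces additional boundary contributions $c_{j}\,r^{j+1}\lambda^{(j+2)}(t+r)$ for $j=0,\dots,5$ that have to enter the assembly at orders $r,\dots,r^{6}$. So the ``many individually oversized contributions'' you anticipate are even more numerous than your outline suggests, and the cancellation you rely on (via the oddness remark) has to absorb these extra pieces as well. The paper sidesteps this bookkeeping by never expanding the kernel in a series: it works with $2r\int_{1}^{\infty}\lambda''(t+ry)\bigl(y-\sqrt{y^{2}-1}\bigr)\,dy$, peels off only the leading $\tfrac{1}{2y}$ at each stage, and integrates by parts in $y$ against the \emph{full} subtracted kernel, whose iterated antiderivatives are bounded on $[1,\infty)$ (see \eqref{w1expr}, \eqref{w1cubeterms}, \eqref{firstkdef}). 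That is why in the paper each integration by parts cleanly gains a factor of $r$ and a derivative on $\lambda$, with a single explicit boundary term, rather than an infinite family of them.
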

\begin{proof}
As a first step, we start with \eqref{w1simp}, and subtract and add the Taylor polynomial centered at $0$, of degree 3, of $\lambda(t+r)-\lambda(t)-r \lambda'(t+r) + \frac{r^{2}}{2} \lambda''(t+r)$, regarded as a function of $r$ for each fixed $t$. For the integral term of \eqref{w1simp}, we first note that
$$y-\sqrt{y^{2}-1} = \frac{1}{2y} + O\left(\frac{1}{y^{3}}\right)$$
Therefore, we have
\begin{equation}\label{w1firstorderintstep}2 r \int_{1}^{\infty} \lambda''(t+ry) (y-\sqrt{y^{2}-1}) dy = 2 r \int_{1}^{\infty} \frac{\lambda''(t+r y)}{2y} dy + 2 r \int_{1}^{\infty} \lambda''(t+r y) \left(y-\sqrt{y^{2}-1}-\frac{1}{2y}\right) dy\end{equation}
After integrating by parts twice in the second integral on the right-hand side, we get the following expression for $w_{1}$, whose second line is equal to $w_{1,main}(t,r)$, and the other terms vanish faster than $O(r \log(r))$ as $r$ approaches zero.
\begin{equation}\label{w1expr}\begin{split} w_{1}(t,r) &= \frac{-2}{r} \left(\lambda(t+r) - \lambda(t) -r \lambda'(t+r) +\frac{r^{2}}{2} \lambda''(t+r) - \frac{r^{3}}{6} \lambda'''(t)\right)\\
&+r\left(\log(2)+\frac{1}{2}\right) \lambda''(t) + r \int_{t}^{2t} \frac{(\lambda''(s) - \lambda''(t))}{s-t} ds + r \lambda''(t) \log(\frac{t}{r}) + r \int_{2t}^{\infty} \frac{\lambda''(s) ds}{(s-t)}\\
&+r (\log(2) + \frac{1}{2}) \left(\lambda''(t+r) - \lambda''(t) - r \lambda'''(t)\right)\\
&-\left(\frac{r^{2}}{6} - r^{2} (1+\log(\frac{1}{2}))\right) \left(\lambda'''(t+r) - \lambda'''(t)\right)- r \int_{t}^{t+r} \frac{\lambda''(s) - \lambda''(t) - (s-t) \lambda'''(t)}{(s-t)} ds\\
&+\frac{1}{r} \int_{t+r}^{\infty} \lambda''''(s) \left(\frac{(s-t)^{3}}{3} \left(1-\left(1-\frac{r^{2}}{(s-t)^{2}}\right)^{3/2} - \frac{3 r^{2}}{2 (s-t)^{2}}\right) \right.\\
&\left.+ r^{2}(s-t) \left(1-\sqrt{1-\frac{r^{2}}{(s-t)^{2}}}+\log\left(1+\frac{\sqrt{1-\frac{r^{2}}{(s-t)^{2}}}-1}{2}\right)\right)\right) ds\end{split}\end{equation}
Repeating this process, we make manifest the terms composing $w_{1,cubic,main}(t,r)$, and recall the notation for $P_{k,a}(f)$ given just above \eqref{Inotation}.
\begin{equation}\label{w1cubeterms}\begin{split} w_{1}(t,r) &= r \left(\log(2)+\frac{1}{2}\right)\lambda''(t) + r \int_{t}^{2t} \frac{\left(\lambda''(s)-\lambda''(t)\right)}{s-t} ds + r \lambda''(t) \log(\frac{t}{r}) + r \int_{2t}^{\infty} \frac{\lambda''(s) ds}{s-t}\\
&-\frac{2}{r}\left(\lambda(t+r)-\lambda(t)-r \lambda'(t+r)+\frac{r^{2}}{2} \lambda''(t+r)-\frac{r^{3}}{6} \lambda'''(t) - \frac{r^{4}}{8} \lambda''''(t) - \frac{r^{5}}{20} \lambda'''''(t)\right)\\
&+\frac{r^{3}}{8} \lambda''''(t) \left(\log(2)+\frac{1}{2}\right)+\frac{3}{32} r^{3} \lambda''''(t)\\
&+r\left(\log(2)+\frac{1}{2}\right)\left(\lambda''(t+r)-P_{3,0}(r \mapsto \lambda''(t+r))\right)\\
&-r^{2}\left(\frac{-5}{6}+\log(2)\right)\left(\lambda'''(t+r)-P_{2,0}(r\mapsto \lambda'''(t+r))\right)\\
&-r \int_{t}^{t+r} \frac{\left(\lambda''(s)-P_{3,t}(\lambda'')(s)\right)}{s-t}ds\\
&-\frac{r^{3}}{8} \log(r) \lambda''''(t) + \frac{r^{3}}{8} \log(t)\lambda''''(t) + \frac{r^{3}}{8} \int_{t}^{2t} \frac{\lambda''''(s)-\lambda''''(t)}{s-t} ds\\
&+\frac{r^{3}}{8} \int_{2t}^{\infty} \frac{\lambda''''(s)}{s-t} ds - \frac{r^{3}}{8} \int_{t}^{t+r} \frac{\lambda''''(s)-\lambda''''(t)-(s-t)\lambda'''''(t)}{s-t} ds\\
&-\frac{r^{3}}{96} \left(41-60\log(2)\right)\left(\lambda''''(t+r)-\lambda''''(t)-r\lambda'''''(t)\right) \\
&+ \left(\lambda'''''(t+r)-\lambda'''''(t)\right)r^{4} \frac{(299-420\log(2))}{1440}+\frac{1}{r}\int_{t+r}^{\infty}\lambda''''''(s) K(r,s-t) ds\end{split}\end{equation}
\begin{equation}\label{firstkdef}\begin{split}K(r,w)&=\frac{1}{1440} \left(24 w^{5} \left(1-\left(\sqrt{1-\frac{r^{2}}{w^{2}}}+\frac{r^{2}}{2w^{2}}+\frac{r^{4}}{8w^{4}}\right)\right)+332 r^{2}w^{3} \left(1-\left(\sqrt{1-\frac{r^{2}}{w^{2}}}+\frac{r^{2}}{2w^{2}}\right)\right)\right.\\
&\left.-64 r^{4} w\left(\sqrt{1-\frac{r^{2}}{w^{2}}}-1\right) + 240 r^{2} w^{3} \left(\log\left(\frac{1}{2}\left(1+\sqrt{1-\frac{r^{2}}{w^{2}}}\right)\right)+\frac{r^{2}}{4w^{2}}\right)\right.\\
&\left.+180 r^{4} w \log\left(\frac{1}{2}\left(1+\sqrt{1-\frac{r^{2}}{w^{2}}}\right)\right)\right)\end{split}\end{equation}
Finally, to prove the lemma, we treat \eqref{w1cubeterms} line by line. We show in detail how we obtain the leading part of terms of different forms. In the following computations, we work in the region $r \leq t$. By Taylor's theorem, we have
\begin{equation}\begin{split}& |\frac{-2}{r}\left(\lambda(t+r)-\lambda(t)-r\lambda'(t+r)+\frac{r^{2}}{2}\lambda''(t+r)-\frac{r^{3}}{6} \lambda'''(t)-\frac{r^{4}}{8} \lambda''''(t)-\frac{r^{5}}{20} \lambda'''''(t)\right)\\
&-\left( \frac{-\lambda^{(6)}(t) r^{5}}{36} - \frac{\lambda^{(7)}(t) r^{6}}{168}\right)|\\
&\leq C \frac{r^{7}\lambda(t)}{t^{8}}\end{split}\end{equation}
where we used \eqref{lambdacomparg}, and the fact that $r \leq t$. Other terms in \eqref{w1cubeterms} of the same form are treated with the same argument. Next, we have
$$-r \int_{t}^{t+r} \frac{ds}{(s-t)} \left(\frac{(s-t)^{4}}{24}\lambda^{(6)}(t)+\frac{(s-t)^{5}}{120} \lambda^{(7)}(t)\right)=-r \left(\frac{r^{4}}{4 \cdot 24} \lambda^{(6)}(t) + \frac{r^{5}}{600} \lambda^{(7)}(t)\right)$$
and
\begin{equation}\begin{split}&\Bigl|-r \int_{t}^{t+r} \frac{\left(\lambda''(s)-P_{3,t}(\lambda'')(s)\right)}{s-t}ds-\left(-r \int_{t}^{t+r} \frac{ds}{(s-t)} \left(\frac{(s-t)^{4}}{24}\lambda^{(6)}(t)+\frac{(s-t)^{5}}{120} \lambda^{(7)}(t)\right)\right)\Bigr|\\
&= |-r \int_{t}^{t+r} \frac{\left(\lambda''(s)-P_{5,t}(\lambda'')(s)\right)ds}{s-t}|\leq C \frac{\lambda(t) r^{7}}{t^{8}}\end{split}\end{equation}
where we again used Taylor's theorem and \eqref{lambdacomparg}. Again, other terms in \eqref{w1cubeterms} of the same form are treated with the same argument. Finally, we recall \eqref{firstkdef}, and note that
$$K(r,w) - \frac{r^6}{192 w} = O\left(\frac{r^{8}}{w^{3}}\right), \quad w \rightarrow \infty$$
Therefore, with the same procedure used in \eqref{w1firstorderintstep},
\begin{equation}\begin{split}&\frac{1}{r}\int_{t+r}^{\infty} \lambda^{(6)}(s) K(r,s-t) ds\\
&=\frac{r^{5}}{192} \left(\int_{t}^{2t} \frac{\left(\lambda^{(6)}(s)-\lambda^{(6)}(t)\right)}{(s-t)} ds + \lambda^{(6)}(t) \log(\frac{t}{r}) + \int_{2t}^{\infty} \frac{\lambda^{(6)}(s)}{(s-t)} ds\right)\\
&-\frac{r^{5}}{192} \int_{t}^{t+r} \frac{\left(\lambda^{(6)}(s)-\lambda^{(6)}(t)-(s-t)\lambda^{(7)}(t)\right)}{(s-t)} ds - \frac{r^{6}}{192}\lambda^{(7)}(t) - \lambda^{(6)}(t) \frac{r^{5} (218-315 \log(2))}{2880} \\
&+\lambda^{(7)}(t) r^{6} \frac{(2413-3465 \log(2))}{100800} + \frac{\left(\lambda^{(7)}(t+r)-\lambda^{(7)}(t)\right)}{100800}\cdot r^{6}\left(2413-3465 \log(2)\right)\\
&-\left(\lambda^{(6)}(t+r)-\lambda^{(6)}(t)-r \lambda^{(7)}(t)\right) r^{5} \frac{(218-315 \log(2))}{2880}-r^{6} \lambda^{(7)}(t)\frac{(218-315 \log(2))}{2880}+\text{Err}\end{split}\end{equation}
where\begin{equation}|\text{Err}| \leq \frac{1}{r} \int_{t+r}^{\infty} |W(s) \lambda^{(8)}(s)| ds\end{equation}
and, for $y=\frac{s-t}{r}$
\begin{equation}\begin{split} W(s) = \frac{r^7}{2880} &\left(-\frac{128}{35} \left(\sqrt{1-\frac{1}{y^2}}-1\right) y+15 y \log \left(\frac{1}{2} \sqrt{1-\frac{1}{y^2}}+\frac{1}{2}\right)-\frac{1779}{35} \left(\sqrt{1-\frac{1}{y^2}}+\frac{1}{2 y^2}-1\right) y^3\right.\\
&\left.-\frac{1518}{35} \left(\sqrt{1-\frac{1}{y^2}}-\left(-\frac{1}{8 y^4}-\frac{1}{2 y^2}+1\right)\right) y^5\right.\left.-\frac{8}{7}  \left(\sqrt{1-\frac{1}{y^2}}+\left(\frac{1}{16 y^6}+\frac{1}{8 y^4}+\frac{1}{2 y^2}\right)-1\right) y^7\right.\\
&\left.+12 \left(2 y^5+5 y^3\right) \left(\frac{3}{32 y^4}+\frac{1}{4 y^2}+\log \left(\frac{1}{2} \sqrt{1-\frac{1}{y^2}}+\frac{1}{2}\right)\right)-\frac{45}{8 y}\right)\end{split}\end{equation}
We finish the proof by noting that $$W(s) = O\left(\frac{r^{8}}{(s-t)}\right), \quad s \rightarrow \infty$$
and estimating $\text{Err}$. The higher derivatives are treated similarly.
 \end{proof}
We remark that $w_{1,main}(t,r)$, along with the expression of $u_{ell}$ will end up determining the data for the free wave $v_{2}$, as $w_{1,main}(t,r)$ will turn out to be the leading contribution of $w_{1}$ in the matching region. \\
\\
We also record some pointwise estimates on $w_{1}$ and some of its derivatives.
\begin{lemma}\label{w1estlemma}
We have the following estimates. For $0 \leq k \leq 8$ and $0 \leq j \leq 3$,
\begin{equation}\label{w1estlemmaeqn}|\partial_{t}^{k}\partial_{r}^{j}w_{1}(t,r)| \leq \begin{cases} \frac{C r^{1-j} \lambda(t) \left(\log(t)+|\log(r)|\right)}{t^{2+k}}, \quad r \leq t\\
\frac{C}{r^{1+j} t^{k}} \sup_{x \in [t,t+r]}\left(\lambda(x)\right), \quad r \geq t\end{cases}\end{equation}
\end{lemma}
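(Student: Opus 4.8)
The plan is to work directly from the closed form \eqref{w1simp}, writing $w_{1}(t,r)=A(t,r)+B(t,r)$ with
$A(t,r)=-\tfrac{2}{r}\,g(t,r)$, $g(t,r):=\lambda(t+r)-\lambda(t)-r\lambda'(t+r)$, and
$B(t,r)=2r\int_{1}^{\infty}\lambda''(t+ry)\bigl(y-\sqrt{y^{2}-1}\bigr)\,dy$, and to estimate $\partial_{t}^{k}\partial_{r}^{j}$ of each piece separately in the two regimes $r\le t$ and $r\ge t$, using only the symbol bounds \eqref{lambdasetdef}, the comparison estimates \eqref{lambdacomparg}, and Taylor's theorem.

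For the first piece, I would use that $\partial_{r}g=-r\lambda''(t+r)$ and, inductively, $\partial_{r}^{m}g=-r\lambda^{(m+1)}(t+r)-(m-1)\lambda^{(m)}(t+r)$ for $m\ge1$ (only two terms, since $\partial_{r}^{2}r=0$), with the analogous identities after applying $\partial_{t}^{k}$ (replace $\lambda$ by $\lambda^{(k)}$ throughout). The Leibniz rule for $1/r$ then gives $|\partial_{t}^{k}\partial_{r}^{j}A|\le C\sum_{m=0}^{j}r^{-(j-m+1)}|\partial_{t}^{k}\partial_{r}^{m}g|$. Taylor's theorem yields $|\partial_{t}^{k}g|=|\lambda^{(k)}(t+r)-\lambda^{(k)}(t)-r\lambda^{(k+1)}(t+r)|\le Cr^{2}\sup_{[t,t+r]}|\lambda^{(k+2)}|$, while the remaining $|\partial_{t}^{k}\partial_{r}^{m}g|$ are bounded termwise by \eqref{lambdasetdef}. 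In the region $r\le t$ one feeds in $\sup_{[t,t+r]}|\lambda^{(k+m)}|\le C\lambda(t)t^{-(k+m)}$ (valid because $\lambda(x)\le C\lambda(t)$ for $x\in[t,2t]$ by \eqref{lambdacomparg}); all terms collapse to $\le C\lambda(t)\,t^{-(2+k)}r^{1-j}$, which is even stronger than the claim (no logarithm). In the region $r\ge t$ one uses instead $t+r\ge r$ and $t+r\ge t$, so that $(t+r)^{k+m}\ge t^{k}r^{m}$ and $|g_{k}|\le C\,t^{-k}\sup_{[t,t+r]}\lambda$, giving $|\partial_{t}^{k}\partial_{r}^{j}A|\le C\,t^{-k}r^{-(1+j)}\sup_{[t,t+r]}\lambda$.

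For the second piece I would differentiate under the integral: $\partial_{t}^{k}\partial_{r}^{j}B=2\bigl(r\,\partial_{t}^{k}\partial_{r}^{j}I+j\,\partial_{t}^{k}\partial_{r}^{j-1}I\bigr)$ where $\partial_{t}^{k}\partial_{r}^{m}I=\int_{1}^{\infty}y^{m}\lambda^{(k+m+2)}(t+ry)\bigl(y-\sqrt{y^{2}-1}\bigr)\,dy$; the weight $y-\sqrt{y^{2}-1}$ is independent of $t,r$, is bounded, behaves like $(2y)^{-1}$ as $y\to\infty$, and satisfies $0\le y-\sqrt{y^{2}-1}\le y^{-1}$, so the integrand stays integrable near $y=1$. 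Inserting $|\lambda^{(k+m+2)}(t+ry)|\le C\lambda(t+ry)(t+ry)^{-(k+m+2)}$: when $r\ge t$ one has $t+ry\ge ry$ for $y\ge1$ and $(t+ry)^{k+m+2}\ge t^{k}(ry)^{m+2}$, while \eqref{lambdacomparg} gives $\lambda(t+ry)\le C\lambda(t+r)\,y^{C_{u}}$; since $C_{u}<2$ the $y$-integral converges to a constant times $t^{-k}r^{-(m+2)}\lambda(t+r)$, and the prefactor $2r$ (plus the two Leibniz terms) produces $t^{-k}r^{-(1+j)}\sup_{[t,t+r]}\lambda$. When $r\le t$ one splits $y\in[1,t/r]\cup[t/r,\infty)$: on the first interval $t+ry\le2t$, so $|\lambda^{(k+m+2)}(t+ry)|\le C\lambda(t)t^{-(k+m+2)}$, and $\int_{1}^{t/r}y^{m-1}\,dy=\tfrac1m\bigl((t/r)^{m}-1\bigr)$ for $m\ge1$ (giving $\le C\lambda(t)t^{-(k+2)}r^{-m}$) but equals $\log(t/r)$ for $m=0$ — this is the \emph{only} source of the logarithm, and $1+\log(t/r)\le C(\log t+|\log r|)$ for $r\le t$; on the second interval $t+ry\ge ry\ge t$ and \eqref{lambdacomparg} makes the tail converge to $C\lambda(t)t^{-(k+2)}r^{-m}$ as well. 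Combining the two Leibniz terms, adding the $A$-contribution, and using $\log t\ge1$ in the region $r\ge t$ (where no logarithm appears), one obtains exactly \eqref{w1estlemmaeqn}.

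The main obstacle is purely bookkeeping: tracking the correct powers of $r$ and of $t$ uniformly in $0\le j\le3$, $0\le k\le8$ in both regimes, applying \eqref{lambdacomparg} at the right shifted arguments ($t+r$ for $A$, $t+ry$ for $B$), and verifying that the logarithmic loss is confined to the $m=0$ (equivalently $j\in\{0,1\}$) contribution of $B$ when $r\le t$, which is precisely where the stated bound already carries a $\log t+|\log r|$, and is harmless for $r\ge t$. No genuinely new estimate is needed beyond \eqref{w1simp}, the symbol bounds, and \eqref{lambdacomparg}.
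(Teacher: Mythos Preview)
Your argument is correct. For the region $r\ge t$ you do essentially what the paper does: estimate each piece of \eqref{w1simp} directly using the symbol bounds on $\lambda$ and \eqref{lambdacomparg}. The details differ slightly (the paper exploits the monotonicity of $\lambda(x)/x$ implicit in \eqref{lambdacomparg} to bound the integrand of $B$, whereas you use the cruder comparison $(t+ry)^{k+m+2}\ge t^{k}(ry)^{m+2}$ together with $\lambda(t+ry)\le C\lambda(t+r)\,y^{C_{u}}$), but both routes give the same bound.

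The genuine difference is in the region $r\le t$. The paper does not argue from \eqref{w1simp} here at all; it simply says ``By Lemma~\ref{w1strlemma}, it suffices to consider \eqref{w1simp} in the region $r\ge t$.'' In other words, the paper uses the refined expansion already established in Lemma~\ref{w1strlemma} (where $w_{1}$ is written as $w_{1,main}+w_{1,cubic,main}+$ an explicit $r^{5}$ piece $+$ a remainder of size $r^{7}\lambda(t)t^{-8}(\log t+|\log r|)$), and reads off the cruder bound \eqref{w1estlemmaeqn} from the explicit pieces. Your approach is more elementary and self-contained for this particular lemma: you split $w_{1}=A+B$ from \eqref{w1simp}, use Taylor's theorem for $A$ (getting no logarithm), and split the $B$-integral at $y=t/r$ to isolate the single $\int_{1}^{t/r}y^{-1}\,dy=\log(t/r)$ term responsible for the logarithmic loss. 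This has the minor advantage of showing that the logarithm actually only enters when $j\in\{0,1\}$; the paper's route is more economical only because Lemma~\ref{w1strlemma} is already available and needed elsewhere.
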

\begin{proof}
By Lemma \ref{w1strlemma}, it suffices to consider \eqref{w1simp} in the region $r \geq t$: 
\begin{equation}\begin{split}|\frac{2}{r}\left(\lambda'(t+r) r -(\lambda(t+r)-\lambda(t))\right)|& \leq \frac{C}{r}\left(\lambda(t+r)+\lambda(t)+ r \frac{\lambda(t+r)}{t+r}\right) \leq \frac{C}{r}\left(\lambda(t+r)+\lambda(t)\right) \end{split}\end{equation}
Then, we have
\begin{equation}\begin{split} |2r \int_{1}^{\infty} \lambda''(t+r y) \left(y-\sqrt{y^{2}-1}\right)dy| &\leq \frac{C r \lambda(t+r)}{t+r} \int_{1}^{\infty} \frac{dy}{y(t+r y)} \leq \frac{C r \lambda(t+r)}{r (t+r)} \frac{\log(1+\frac{t}{r})}{\frac{t}{r}}\\
&\leq \frac{C \lambda(t+r)}{t+r}, \quad r \geq t\end{split}\end{equation}
where we used \eqref{lambdacomparg}. Then, using the symbol type estimates on $\lambda$, we get \eqref{w1estlemmaeqn} for $r \geq t$.  \end{proof}
\noindent Next, we study $v_{ex}$ (\eqref{v1splitting}) which solves the following equation with 0 Cauchy data at infinity.
\begin{equation}\label{vexeqn}-\partial_{t}^{2}v_{ex}+\partial_{r}^{2}v_{ex}+\frac{1}{r}\partial_{r}v_{ex}-\frac{v_{ex}}{r^{2}}= \partial_{t}^{2} Q_{1}(\frac{r}{\lambda(t)}) + \frac{2 \lambda''(t)}{r}:=RHS(t,r)\end{equation}
From integral identities of \cite{gr}, we have
\begin{equation}\label{rhshat}\widehat{RHS}(t,\xi) = 2 \xi K_{0}(\xi \lambda(t))\lambda(t) \lambda'(t)^{2} + \frac{2\lambda''(t)}{\xi} \left(1-\xi \lambda(t) K_{1}(\xi \lambda(t))\right)\end{equation}
We have the following representation formula for $v_{ex}$
\begin{lemma}\label{vexreplemma} For $r >0$, $v_{ex}$ is given by
\begin{equation}\label{vexdef}\begin{split} v_{ex}(t,r)&= \int_{t}^{\infty} ds \int_{0}^{\infty} d\xi J_{1}(r\xi) \sin((t-s)\xi) \widehat{RHS}(s,\xi)\\
&= - \int_{0}^{\infty} \frac{J_{1}(r\xi)}{\xi} \widehat{RHS}(t,\xi) d\xi-\int_{0}^{\infty} d\xi J_{1}(r\xi) \int_{t}^{\infty} ds \frac{\sin((t-s)\xi)}{\xi^{2}} \partial_{s}^{2}\widehat{RHS}(s,\xi)\end{split}\end{equation}
\end{lemma}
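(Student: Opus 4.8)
The plan is to verify the representation formula by the standard Duhamel/Hankel transform method for the radial wave equation with the inverse-square potential $-u/r^2$, using the fact that $v_{ex}$ has zero Cauchy data at infinity. First I would recall that the Hankel transform of order $1$, $\widehat{f}(\xi) = \int_0^\infty J_1(r\xi) f(r) r\,dr$, diagonalizes the operator $\partial_{rr} + \frac1r\partial_r - \frac1{r^2}$, turning it into multiplication by $-\xi^2$. Applying $\widehat{\cdot}$ in the $r$-variable to \eqref{vexeqn} therefore converts the PDE into the family of ODEs $\partial_t^2 \widehat{v_{ex}}(t,\xi) + \xi^2 \widehat{v_{ex}}(t,\xi) = -\widehat{RHS}(t,\xi)$, for each fixed $\xi > 0$. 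The solution of this inhomogeneous ODE with zero data at $t = \infty$ is given by the Duhamel integral
\[
\widehat{v_{ex}}(t,\xi) = \int_t^\infty \frac{\sin((t-s)\xi)}{\xi}\,\widehat{RHS}(s,\xi)\,ds,
\]
where one checks the sign and the ``data at infinity'' interpretation directly: the kernel $\frac{\sin((t-s)\xi)}{\xi}$ vanishes to first order as $s\to t$, and the decay of $\lambda',\lambda''$ and their derivatives (from the symbol estimates, together with $K_0, K_1$ bounds applied to \eqref{rhshat}) guarantees the integral converges and defines the unique decaying solution. Inverting the Hankel transform then gives the first line of \eqref{vexdef}, after justifying the interchange of the $\xi$-integral (from $\widehat{\cdot}^{-1}$) with the $s$-integral by Fubini, using the pointwise decay of $\widehat{RHS}(s,\xi)$ in $s$ and its behavior in $\xi$.

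The second representation is obtained from the first by integrating by parts twice in the $s$-variable, exactly as in the derivation of $w_1$ in \eqref{firstw1formula} and the manipulation \eqref{w1firstorderintstep}. Writing $\frac{\sin((t-s)\xi)}{\xi} = -\partial_s\left(\frac{\cos((t-s)\xi)}{\xi^2}\right)$ and then $\frac{\cos((t-s)\xi)}{\xi^2} = \partial_s\left(\frac{\sin((t-s)\xi)}{\xi^3}\right) \cdot (-1)$ — more precisely, one integrates by parts once to move the $s$-derivative off the trigonometric kernel and onto $\widehat{RHS}$: the boundary term at $s = t$ produces $-\int_0^\infty \frac{J_1(r\xi)}{\xi}\widehat{RHS}(t,\xi)\,d\xi$ (the $\cos$ kernel evaluates to $1/\xi^2$ at $s=t$), the boundary term at $s=\infty$ vanishes by the decay of $\widehat{RHS}$, and the remaining integral is $-\int_0^\infty d\xi\, J_1(r\xi)\int_t^\infty ds\, \frac{\cos((t-s)\xi)}{\xi^2}\partial_s\widehat{RHS}(s,\xi)$. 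A second integration by parts in $s$ on this last term, again with vanishing boundary contributions, yields $-\int_0^\infty d\xi\, J_1(r\xi)\int_t^\infty ds\, \frac{\sin((t-s)\xi)}{\xi^2}\partial_s^2\widehat{RHS}(s,\xi)$ — note this time the boundary term at $s=t$ vanishes because $\sin 0 = 0$ — which is precisely the second line of \eqref{vexdef}.

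The main obstacle is not the formal computation but the justification of convergence and of the Fubini interchanges, particularly the behavior of $\widehat{RHS}(t,\xi)$ and its $s$-derivatives as $\xi \to 0$ and $\xi \to \infty$. Near $\xi = 0$, the expansion $K_1(x) = \frac1x + O(x|\log x|)$ shows that the factor $\frac1\xi(1 - \xi\lambda(t) K_1(\xi\lambda(t)))$ in \eqref{rhshat} is in fact bounded (indeed $O(\xi\lambda(t)^2 |\log(\xi\lambda(t))|)$), and $\xi K_0(\xi\lambda(t))$ is $O(\xi|\log(\xi\lambda(t))|)$, so $\widehat{RHS}$ vanishes as $\xi \to 0$; combined with the exponential decay $K_0, K_1 = O(e^{-\xi\lambda(t)})$ for large argument, one gets enough decay in $\xi$ (uniformly for $s$ in compact sets, and with the right $s$-decay from the symbol estimates on $\lambda$ and \eqref{lambdacomparg}) to run Fubini and to differentiate under the integral sign. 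Once these integrability facts are in hand, both lines of \eqref{vexdef} follow, and one confirms a posteriori that the resulting $v_{ex}$ solves \eqref{vexeqn} with zero Cauchy data at infinity by differentiating the representation formula directly.
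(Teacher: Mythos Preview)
Your proposal is correct and follows essentially the same approach as the paper: establish the decay bound $|\partial_s^k\widehat{RHS}(s,\xi)| \leq C\frac{\lambda(s)^2}{s^{2+k}}\frac{\xi\lambda(s)}{1+\xi^2\lambda(s)^2}\langle\log(\xi\lambda(s))\rangle$ from the $K_0,K_1$ asymptotics applied to \eqref{rhshat}, use this to justify Fubini and the two integrations by parts in $s$ (the paper writes $\sin((t-s)\xi)=\partial_s\bigl(\frac{\cos((t-s)\xi)}{\xi}\bigr)$; your sign in the displayed identity is off by one, but your ``more precisely'' paragraph has it right), and then check differentiability under the integral. The only slight difference is that the paper takes a verification rather than derivation stance at the final step: instead of Hankel-transforming the PDE, it observes that the first term on the second line of \eqref{vexdef} solves the elliptic ODE $\partial_r^2 f+\frac1r\partial_r f-\frac{f}{r^2}=RHS(t,r)$ (this is the explicit computation \eqref{vexell}), and from that together with differentiation under the integral for the second term concludes directly that the formula solves \eqref{vexeqn} with zero Cauchy data at infinity.
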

\begin{proof}
From asymptotics of $K_{1}, K_{0}$ (see \cite{gr}), we get, for $k=0,1,2$,
\begin{equation}\label{rhssymb0}|\partial_{s}^{k}\widehat{RHS}(s,\xi)| \leq C\frac{\lambda(s)^{2}}{s^{2+k}} \frac{\xi \lambda(s)}{1+\xi^{2}\lambda(s)^{2}} \langle \log(\xi\lambda(s))\rangle\end{equation}
Using
$$|J_{1}(r\xi)| \leq \frac{C}{\sqrt{r \xi}}, \quad r >0$$
the first integral on the right-hand side of \eqref{vexdef} converges absolutely, for $r >0$. Therefore, by the Fubini theorem, and \eqref{rhssymb0}, we get
\begin{equation}\begin{split}&\int_{t}^{\infty} ds \int_{0}^{\infty} d\xi J_{1}(r\xi) \sin((t-s)\xi) \widehat{RHS}(s,\xi) = \int_{0}^{\infty} d\xi \int_{t}^{\infty}ds J_{1}(r\xi) \partial_{s}\left(\frac{\cos((t-s)\xi)}{\xi}\right) \widehat{RHS}(s,\xi)\\
&=- \int_{0}^{\infty} \frac{J_{1}(r\xi)}{\xi} \widehat{RHS}(t,\xi) d\xi-\int_{0}^{\infty} d\xi J_{1}(r\xi) \int_{t}^{\infty} ds \frac{\sin((t-s)\xi)}{\xi^{2}} \partial_{s}^{2}\widehat{RHS}(s,\xi)\end{split}\end{equation}
Moreover, using $$|\sin((t-s)\xi)|\leq 1, \quad |J_{1}'(x)| \leq C, \quad |J_{1}''(x)| \leq \frac{C}{\sqrt{x}}, \quad x >0, \text{ and }\eqref{rhssymb0}$$
the dominated convergence theorem shows that, for all $r_{0}>0$, and $k=1,2$,
\begin{equation}\begin{split}&\partial_{r}^{k}\left(\int_{0}^{\infty} d\xi J_{1}(r\xi) \int_{t}^{\infty} ds \frac{\sin((t-s)\xi)}{\xi^{2}} \partial_{s}^{2}\widehat{RHS}(s,\xi)\right)\Bigr|_{r=r_{0}} \\
&= \int_{0}^{\infty} d\xi \partial_{r}^{k}(J_{1}(r\xi))|_{r=r_{0}} \cdot  \int_{t}^{\infty} ds \frac{\sin((t-s)\xi)}{\xi^{2}} \partial_{s}^{2}\widehat{RHS}(s,\xi)\end{split}\end{equation}
A similar argument shows that we can differentiate (up to) two times under the integral sign in $t$. Then, by the fact that the first term on the second line of \eqref{vexdef} is a solution to the following ODE (which follows, for instance, by direct computation of the integral, see \eqref{vexell})
$$\partial_{r}^{2}f + \frac{1}{r} \partial_{r}f - \frac{f}{r^{2}} = RHS(t,r)$$
we see that $v_{ex}$, the solution to \eqref{vexeqn} with zero Cauchy data at infinity, is given by \eqref{vexdef}.  \end{proof}
Note that we could not have done the integration by parts in the $s$ variable to get from the first to the second line of \eqref{vexdef} if $RHS$ was simply equal to $\partial_{t}^{2}Q_{1}(\frac{r}{\lambda(t)})$, because there would be too large of a singularity of the resulting integrand at low frequencies. Also, we have
\begin{equation}\label{vexell}\begin{split}v_{ex,ell}(t,r):=&-\int_{0}^{\infty} \frac{J_{1}(r\xi)}{\xi} \widehat{RHS}(t,\xi) d\xi \\
&= \frac{-\left(2 \lambda(t) \log(1+\frac{r^{2}}{\lambda(t)^{2}})\lambda'(t)^{2} + \left(\lambda(t)^{2} \log(1+\frac{r^{2}}{\lambda(t)^{2}})+r^{2} \log(1+\frac{\lambda(t)^{2}}{r^{2}})\right)\lambda''(t)\right)}{2 r}\end{split}\end{equation}
Finally, we consider $v_{2}$ solving
$$\begin{cases} -\partial_{tt}v_{2}+\partial_{rr}v_{2}+\frac{1}{r}\partial_{r}v_{2}-\frac{v_{2}}{r^{2}} = 0\\
v_{2}(0)=0\\
\partial_{t}v_{2}(0)=v_{2,0}\end{cases}$$
with $v_{2,0}$ not yet chosen. For $\widehat{v_{2,0}}\in C^{0}((0,\infty))$ such that $|\widehat{v_{2,0}}(\xi)| \leq C \left(\sqrt{\xi}(1+\xi)^{5/2}\right)^{-1}$, we have
\begin{equation}\label{v2}\begin{split}v_{2}(t,r) &= \int_{0}^{\infty} J_{1}(r\xi) \sin(t\xi) \widehat{v_{2,0}}(\xi) d\xi = -\frac{r}{4 \pi} \int_{0}^{\pi} \sin^{2}(\theta) \left(F(t+r\cos(\theta))+F(t-r\cos(\theta))\right)d\theta\\
&=\frac{-r}{4} F(t) -\frac{r^{3}}{32}F''(t) -\frac{r}{2\pi} \int_{0}^{\pi} \sin^{2}(\theta) \left(F(t+r \cos(\theta))-F(t)-\frac{r^{2}}{2} \cos^{2}(\theta) F''(t)\right)d\theta \end{split}\end{equation}
where
\begin{equation}\label{Fdef}F(t) = -2 \int_{0}^{\infty} \xi \sin(t\xi) \widehat{v_{2,0}}(\xi) d\xi\end{equation}
and we used
$$J_{1}(x) = \frac{x}{\pi} \int_{0}^{\pi} \cos(x\cos(\theta))\sin^{2}(\theta) d\theta$$
\subsection{First order matching}\label{first_order_matching_section}
The error of $u_{ell}$ in solving \eqref{u1eqn} is $\partial_{t}^{2} u_{ell}(t,r)$, which is large for large values of $r$, because of the growth of $u_{ell}(t,r)$ for large $r$. On the other hand, the error of $u_{w}$ in solving \eqref{u1eqn} is $\left(\frac{\cos(2Q_{1}(\frac{r}{\lambda(t)}))-1}{r^{2}}\right) u_{w}(t,r)$.
The largest contributions to this error term arise from substituting the second line of \eqref{w1expr} plus \eqref{v2}, expanded for $r \ll t$ into $u_{w}$ in the error term.  Therefore, we choose the data for $v_{2}$ in order to match the largest contributions to the error terms of $u_{ell}$ and $u_{w}$. In particular, $v_{ell,lgR}$, defined in \eqref{velllgR} and recalled here makes the largest contribution to the error term of $u_{ell}$: 
\begin{equation}\label{vellfirstorder}v_{ell,lgR}(t,R) = R \left(\frac{1}{2}\lambda'(t)^{2} + \lambda(t) \lambda''(t) - \lambda(t) \lambda''(t) \log(R)\right)\end{equation}
Also, as per Lemma \ref{w1strlemma}, the main contribution from $w_{1}(t,r)$ is 
\begin{equation}w_{1,main}(t,r) = r\left(\log(2)+\frac{1}{2}\right) \lambda''(t) + r \int_{t}^{2t} \frac{(\lambda''(s) - \lambda''(t))}{s-t} ds + r \lambda''(t) \log(\frac{t}{r}) + r \int_{2t}^{\infty} \frac{\lambda''(s) ds}{(s-t)}\end{equation}
and, by \eqref{v2}, the main contribution from $v_{2}$ is 
\begin{equation}\label{v2main}v_{2,main}(t,r) = \frac{-r}{4} \left( -2 \int_{0}^{\infty} \xi \sin(t\xi) \widehat{v_{2,0}}(\xi) d\xi\right)\end{equation}
Note that the $r \log(r)$ terms from $w_{1,main}$ and \eqref{vellfirstorder} (where $R = \frac{r}{\lambda(t)}$) are the same. Therefore, it is possible to choose $v_{2,0}$ so that (for example, for all $t \geq 2 T_{\lambda}$)
\begin{equation} \frac{r}{\lambda(t)} \left(\frac{1}{2}\lambda'(t)^{2} + \lambda(t) \lambda''(t) - \lambda(t) \lambda''(t) \log(\frac{r}{\lambda(t)})\right)=v_{2,main}(t,r)+w_{1,main}(t,r)\end{equation} 
In other words, we choose $v_{2,0}$ to satisfy, for all $t \geq 2 T_{\lambda}$,
\begin{equation}\label{v20eqn}\begin{split}&-2 \int_{0}^{\infty} \xi \sin(t\xi) \widehat{v_{2,0}}(\xi) d\xi \\
&= 4\left(\left(\log(2)-\frac{1}{2}\right)\lambda''(t) + \int_{t}^{2t} \left(\frac{\lambda''(s)-\lambda''(t)}{s-t}\right) ds+\lambda''(t) \log(\frac{t}{\lambda(t)}) + \int_{2t}^{\infty} \frac{\lambda''(s) ds}{s-t} - \frac{\lambda'(t)^{2}}{2 \lambda(t)}\right)\end{split}\end{equation}
We remind the reader of the discussion following \eqref{v20eqnsummary}, which compares \eqref{v20eqn} to the relation between the radiation and $\lambda(t)$ from \cite{wm}.

\noindent Since we will only need \eqref{v20eqn} to be true for all $t$ sufficiently large, and since we only assume $\lambda(t)$ to be defined for $t > 50$, we use a (relatively unimportant) cutoff, $\psi \in C^{\infty}([0,\infty))$, such that
\begin{equation}\label{psidef}\psi(x) = \begin{cases} 0, \quad x \leq T_{\lambda}\\
1, \quad x \geq 2 T_{\lambda}\end{cases}, \quad 0 \leq \psi(x) \leq 1\end{equation}
where we recall that $T_{\lambda}>100$ is part of the definition of $\Lambda$, see \eqref{lambdasetdef}, and then get
\begin{equation}\label{v20hatform}\widehat{v_{2,0}}(\xi) = \frac{-1}{\pi\xi} \int_{0}^{\infty} H(t) \sin(t\xi) dt\end{equation}
with
\begin{equation}\label{hdef}H(t) = 4\left(\left(\log(2)-\frac{1}{2}\right)\lambda''(t) + \int_{t}^{2t} \left(\frac{\lambda''(s)-\lambda''(t)}{s-t}\right) ds+\lambda''(t) \log(\frac{t}{\lambda(t)}) + \int_{2t}^{\infty} \frac{\lambda''(s) ds}{s-t} - \frac{\lambda'(t)^{2}}{2 \lambda(t)}\right) \psi(t)\end{equation}
We can now record some basic estimates on $\widehat{v_{2,0}}$ and $v_{2}$. We remark that the estimates of Lemma \ref{v20ests} below imply that the conditions on $\widehat{v_{2,0}}$ stated just above \eqref{v2} are satisfied. 
\begin{lemma} \label{v20ests} For $0 \leq k$, and $N \geq 1$, there exist $C_{k},C_{k,N}$ such that 
$$\xi^{k}|\widehat{v_{2,0}}^{(k)}(\xi)|  \leq \begin{cases} C_{k} \int_{100}^{\frac{1}{\xi}} \frac{\lambda(\sigma) \log(\sigma) d\sigma}{\sigma} + C_{k} \lambda(\frac{1}{\xi}) \log(\frac{1}{\xi}), \quad \xi \leq \frac{1}{100}\\
\frac{C_{k,N}}{\xi^{N}},  \quad \xi > \frac{1}{100}\end{cases}$$
\end{lemma}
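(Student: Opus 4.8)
The plan is to extract the two claimed regimes, $\xi\leq\frac{1}{100}$ and $\xi>\frac{1}{100}$, directly from the formula \eqref{v20hatform}, namely $\widehat{v_{2,0}}(\xi)=-\frac{1}{\pi\xi}\int_0^\infty H(t)\sin(t\xi)\,dt$, where $H$ is given by \eqref{hdef}. The first step is to record pointwise and integrable bounds on $H$ and its derivatives. From the symbol-type estimates \eqref{lambdasetdef} one has $|\lambda''(t)|\leq \frac{C_2\lambda(t)}{t^2}$ and $\frac{\lambda'(t)^2}{\lambda(t)}\leq \frac{M^2\lambda(t)}{t^2}$; the two integral terms in \eqref{hdef} are handled by splitting $\int_t^{2t}$ using the $\lambda'''$ bound on the difference quotient (so that term is $O(\lambda(t)t^{-2})$ up to logs) and by crudely bounding $\int_{2t}^\infty\frac{|\lambda''(s)|}{s-t}\,ds\leq C\int_{2t}^\infty\frac{\lambda(s)}{s^3}\,ds$, which using \eqref{lambdacomparg} ($\lambda(s)/s^{1/30}$ decreasing) is $O(\lambda(t)t^{-2})$. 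The logarithmic factor $\log(t/\lambda(t))$ is where the $\lambda(t)\log(t)$-type weights in the stated bound originate; since $\psi$ kills $t\lesssim T_\lambda$, all these are legitimate. The upshot of this step is that for every $j\geq 0$, $|H^{(j)}(t)|\leq \frac{C_j\,\sup_{y\in[100,t]}(\lambda(y)\log(y))}{t^{2+j}}\cdot\langle\log t\rangle$ for $t$ large, and $H$ is smooth and compactly-supported-near-$0$-cutoff so $H^{(j)}\in L^1$.

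For the high-frequency regime $\xi>\frac{1}{100}$ the strategy is repeated integration by parts: $\int_0^\infty H(t)\sin(t\xi)\,dt = \frac{(-1)^{?}}{\xi^{N}}\int_0^\infty H^{(N)}(t)(\pm\sin\text{ or }\cos)(t\xi)\,dt$ with no boundary terms (all derivatives of $H$ decay at $\infty$ and $H$ vanishes near $0$ because of $\psi$), so $|\widehat{v_{2,0}}(\xi)|\leq \frac{C}{\xi^{N+1}}\|H^{(N)}\|_{L^1}$, and since $\xi>\frac1{100}$ we can absorb the extra power to get $\leq C_N\xi^{-N}$; the derivatives $\widehat{v_{2,0}}^{(k)}$ are handled the same way after differentiating under the integral sign (each $\partial_\xi$ either hits $1/\xi$ or brings down a factor of $t$, which is controlled by integrating by parts one extra time), giving $\xi^k|\widehat{v_{2,0}}^{(k)}(\xi)|\leq C_{k,N}\xi^{-N}$. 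For the low-frequency regime $\xi\leq\frac{1}{100}$ the idea is to split the $t$-integral at $t\sim 1/\xi$: on $t\lesssim 1/\xi$ bound $|\sin(t\xi)|\leq t\xi$ and use $|H(t)|\lesssim \frac{\sup_{[100,t]}(\lambda\log)}{t^2}\langle\log t\rangle$ so that $\frac{1}{\xi}\int^{1/\xi} t\xi\,|H(t)|\,dt\lesssim \int_{100}^{1/\xi}\frac{\sup_{[100,t]}(\lambda\log)\langle\log t\rangle}{t}\,dt$, which after absorbing the $\langle\log t\rangle$ is comparable to $\int_{100}^{1/\xi}\frac{\lambda(\sigma)\log\sigma}{\sigma}\,d\sigma$ plus a boundary contribution $\lambda(1/\xi)\log(1/\xi)$; on $t\gtrsim 1/\xi$ bound $|\sin(t\xi)|\leq 1$ and $\frac1\xi\int_{1/\xi}^\infty|H(t)|\,dt\lesssim \frac1\xi\cdot \xi\,\sup_{[100,1/\xi]}(\lambda\log)\langle\log(1/\xi)\rangle$, again of the stated form. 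For the $\xi$-derivatives at low frequency one differentiates \eqref{v20hatform}; each derivative either produces a factor $1/\xi$ (compensated by the $\xi^k$ prefactor in the statement) or a factor $t$ inside the integral, and a factor of $t$ is tamed by the extra decay of $H$, so the same split yields the claim with a possibly larger constant.

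I expect the main obstacle to be the careful bookkeeping of the logarithmic weights, in particular showing that the quantity $\int_{100}^{1/\xi}\frac{\sup_{y\in[100,t]}(\lambda(y)\log(y))\,\langle\log t\rangle}{t}\,dt$ is genuinely controlled by $\int_{100}^{1/\xi}\frac{\lambda(\sigma)\log(\sigma)}{\sigma}\,d\sigma + \lambda(1/\xi)\log(1/\xi)$ as stated (rather than by that expression times an extra $\log$), which relies on the monotonicity-type statements in \eqref{lambdacomparg} — specifically the third inequality there comparing $\sup_{[100,x]}(\lambda\log)$ to $\sup_{[100,t]}(\lambda\log)$ — to upgrade the supremum into a comparison with $\lambda(\sigma)\log\sigma$ itself and to justify dropping the stray $\langle\log t\rangle$. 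A secondary technical point is confirming that the formula \eqref{v2} is applicable, i.e. that $\widehat{v_{2,0}}\in C^0((0,\infty))$ with $|\widehat{v_{2,0}}(\xi)|\leq C(\sqrt\xi(1+\xi)^{5/2})^{-1}$; but this is exactly what the $k=0$ case of the lemma gives (the low-frequency bound is $\lesssim\log(1/\xi)\cdot\sup\lambda$ which beats $\xi^{-1/2}$, and the high-frequency bound with $N=3$ beats $\xi^{-3}$), so no separate argument is needed beyond remarking on it, as the text already does.
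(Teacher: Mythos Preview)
Your approach is essentially the same as the paper's: bound $H$ and its derivatives, integrate by parts repeatedly for $\xi>\frac{1}{100}$, and split the $t$-integral at $t\sim 1/\xi$ for $\xi\leq \frac{1}{100}$.

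The obstacle you anticipate, however, does not actually arise. Your bound $|H^{(j)}(t)|\lesssim \frac{\sup_{y\in[100,t]}(\lambda(y)\log y)}{t^{2+j}}\langle\log t\rangle$ is unnecessarily loose: each term in \eqref{hdef} is in fact bounded by $C\lambda(t)\log(t)/t^{2}$ with no $\sup$ and no extra logarithm. For instance, $\int_{2t}^\infty\frac{|\lambda''(s)|}{s-t}\,ds\leq \frac{C}{t}\int_{2t}^\infty \frac{\lambda(s)}{s^{2}}\,ds\leq \frac{C\lambda(t)}{t^{2}}$ directly from $\lambda(s)\leq\lambda(t)(s/t)^{C_u}$, and $|\lambda''(t)\log(t/\lambda(t))|\leq C\lambda(t)\log(t)/t^{2}$ because $|\log\lambda(t)|\leq C\log t$. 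The paper thus records the clean symbol bound $|H^{(k)}(t)|\leq C_k\mathbbm{1}_{\{t\geq T_\lambda\}}\lambda(t)\log(t)/t^{2+k}$, after which the low-frequency split gives exactly $\int_{100}^{1/\xi}\frac{\lambda(\sigma)\log\sigma}{\sigma}\,d\sigma + C\lambda(1/\xi)\log(1/\xi)$ with no bookkeeping of sups needed. For the $\xi$-derivatives, the paper rewrites $\widehat{v_{2,0}}(\xi)=-\frac{1}{\pi\xi^{2}}\int_0^\infty H(\omega/\xi)\sin\omega\,d\omega$ and differentiates in $\xi$; this is a bit cleaner than your plan of bringing down factors of $t$, since each $\partial_\xi$ lands on $H$ as $-\omega\xi^{-2}H'(\omega/\xi)$, which has the same decay structure.
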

\begin{proof}
We start with \eqref{hdef}. By the mean value theorem, there exists $x \in [t,s]$ such that
\begin{equation}\begin{split} |H(t)| &\leq C\left(\frac{\lambda(t)}{t^{2} } + \int_{t}^{2t} |\lambda'''(x)|ds + \log(t) |\lambda''(t)| + \frac{1}{t} \int_{2t}^{\infty} \frac{\lambda(s) ds}{s^{2}} + \frac{ \lambda(t)}{t^{2}}\right)|\psi(t)|\leq \frac{C \lambda(t) \log(t)}{t^{2}} \mathbbm{1}_{\{t \geq T_{\lambda}\}}\end{split}\end{equation}
where we used \eqref{lambdacomparg}. Using the symbol-type nature of the estimates on $\lambda$, we get, for $k \geq 0$,
\begin{equation}\label{hsymbforv20}|H^{(k)}(t)| \leq C_{k} \mathbbm{1}_{\{t \geq T_{\lambda}\}} \frac{\lambda(t) \log(t)}{t^{2+k}}\end{equation}
Then, for $\xi \geq \frac{1}{100}$, we integrate by parts in the formula
$$\widehat{v_{2,0}}(\xi) = \frac{-1}{\pi \xi} \int_{0}^{\infty} H(\sigma) \sin(\sigma \xi) d\sigma$$
noting that no boundary terms arise, to get that, for each $k \in \mathbb{N}$, there exists $C_{k}>0$ such that
$$|\widehat{v_{2,0}}(\xi)| \leq \frac{C_{k}}{\xi^{k}}, \quad \xi \geq  \frac{1}{100}$$
For $\xi \leq  \frac{1}{100}$, we have
\begin{equation}|\widehat{v_{2,0}}(\xi)| \leq \begin{cases} C \int_{T_{\lambda}}^{\frac{1}{\xi}} \frac{\lambda(\sigma) \log(\sigma) d\sigma}{\sigma} + \frac{C}{\xi} \int_{\frac{1}{\xi}}^{\infty} \frac{\lambda(\sigma) \log(\sigma)}{\sigma^{2} } d\sigma \leq C \int_{100}^{\frac{1}{\xi}} \frac{\lambda(\sigma) \log(\sigma)}{\sigma} d\sigma + C \lambda(\frac{1}{\xi}) \log(\frac{1}{\xi}),\text{  } \xi \leq \text{min}\{\frac{1}{100},\frac{1}{T_{\lambda}}\}\\
\frac{C}{\xi} \int_{\frac{1}{\xi}}^{\infty} \frac{\lambda(\sigma) \log(\sigma)}{\sigma^{2}} d\sigma \leq C \int_{100}^{\infty} \frac{\log(\sigma)}{\sigma^{2-C_{u}}} d\sigma \leq C \lambda(\frac{1}{\xi}) \log(\frac{1}{\xi}), \quad \frac{1}{T_{\lambda}}\leq \xi \leq \frac{1}{100}\end{cases}\end{equation}
where we used $T_{\lambda}\geq100$ and \eqref{lambdacomparg} for the first case, and the fact that $\text{min}_{x\in[100,T_{\lambda}]}(\lambda(x)) \log(100) >0$ for the second. In particular, the constants $C$ depend on (the fixed function) $\lambda$, but \emph{not} on $\xi$. Finally, Lemma \ref{v20ests} for $k >0$ follows from \eqref{hsymbforv20} and
$$\widehat{v_{2,0}}(\xi) = \frac{-1}{\pi \xi^{2}} \int_{0}^{\infty} H(\frac{\omega}{\xi}) \sin(\omega) d\omega$$
 \end{proof}
\begin{lemma}\label{v2estlemma}
For $0 \leq k\leq 7$, $j=0,1$, we have
\begin{equation}\label{v2est}|\partial_{t}^{k}\partial_{r}^{j}v_{2}(t,r)| \leq  \frac{C r^{1-j} \lambda(t) \log(t)}{t^{2+k}}, \quad r \leq \frac{t}{2}\end{equation}
For all $j+k \leq 7$, we have the following two estimates
\begin{equation}|\partial_{t}^{k}\partial_{r}^{j}v_{2}(t,r)| \leq \frac{C \log(r)}{\sqrt{r}\langle|t-r|\rangle^{\frac{1}{2}+k+j}} \sup_{x \in [100,r]}\left(\lambda(x) \log(x)\right), \quad r \geq \frac{t}{2}\end{equation}
and
$$|\partial_{t}^{j}\partial_{r}^{k}v_{2}(t,r)| \leq \frac{C}{\sqrt{r}}, \quad r \geq \frac{t}{2}$$
\begin{equation}\label{dtdrv2est}|\left(\partial_{t}+\partial_{r}\right)v_{2}(t,r)| \leq \frac{C \log(r) \sup_{x \in [100,r]}\left(\lambda(x)\log(x)\right)}{r^{3/2}\sqrt{\langle t-r \rangle}}, \quad r >\frac{t}{2}\end{equation}
\begin{equation}\label{drrv2est} |\partial_{r}^{2} v_{2}(t,r)| \leq  \frac{C r \lambda(t) \log(t)}{t^{4}}, \quad r \leq \frac{t}{2}\end{equation}
\end{lemma}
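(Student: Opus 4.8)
The plan is to argue directly from the representation formula \eqref{v2}, splitting into the two regions $r \le t/2$ and $r \ge t/2$. First I would record two preliminary facts: that $F$ defined in \eqref{Fdef} coincides with $H$ from \eqref{hdef} for positive argument (by Fourier sine inversion) and is an odd function, so $F$ vanishes on $(-T_\lambda,T_\lambda)$ and $F^{(k)}$ obeys the symbol bound $|F^{(k)}(\mu)| \le C_k\,\mathbbm 1_{\{|\mu|\ge T_\lambda\}}\,\lambda(|\mu|)\log|\mu|\,|\mu|^{-2-k}$, which for $\mu>0$ is exactly \eqref{hsymbforv20}; and that every $t$-derivative of $v_2$ simply replaces $F$ by $F'$ in \eqref{v2}, so it is enough to prove the $k=0$ cases and their $r$-derivatives and then rerun the argument with $F$ replaced by $F^{(k)}$.

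For $r \le t/2$ I would use the last line of \eqref{v2}. The terms $-\tfrac r4 F(t)$ and $-\tfrac{r^3}{32}F''(t)$ are bounded by the symbol estimate on $F$, and since $r \le t/2$ the cubic term is dominated by the linear one. In the remaining $\theta$-integral I would add back the identically vanishing contributions $\int_0^\pi \sin^2\theta\bigl(r\cos\theta\,F'(t)+\tfrac{r^3}{6}\cos^3\theta\,F'''(t)\bigr)d\theta = 0$ (odd in $\cos\theta$ about $\theta=\pi/2$), turning the bracket into the third-order Taylor remainder of $F$ at $t$; Taylor's theorem and the comparison relations \eqref{lambdacomparg}, which keep $|F^{(4)}(t+s)| \lesssim \lambda(t)\log(t)/t^6$ for $|s|\le r\le t/2$, then give the bound $\lesssim r^5\lambda(t)\log(t)/t^6 \lesssim r\lambda(t)\log(t)/t^2$. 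The $r$-derivative is taken by differentiating the explicit prefactors and, inside the remainder, again using parity to discard the lowest-order pieces; the refined bound \eqref{drrv2est} is the two-$r$-derivative case, whose surviving leading term is $-\tfrac{3r}{16}F''(t)$, of size $\lesssim r\lambda(t)\log(t)/t^4$.

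For $r \ge t/2$ I would change variables $\mu = t \pm r\cos\theta$ in \eqref{v2} to obtain $v_2(t,r) = -\tfrac1{2\pi}\int_{t-r}^{t+r}\sqrt{1-(\mu-t)^2/r^2}\,F(\mu)\,d\mu$; since $F$ is odd and integrable (using $\lambda(\mu)\lesssim\mu^{C_u}$ with $C_u<1/30$), one has $\int_{\mathbb R}F = 0$, which both reconciles the formula with $v_2(0)=0$ and lets one subtract a constant to exhibit decay in $r$. For the derivative estimates and the $\langle t-r\rangle$-gains it is cleanest to pass to the frequency side, writing $\partial_t^k\partial_r^j v_2 = \int_0^\infty \partial_t^k[\sin(t\xi)]\,\partial_r^j[J_1(r\xi)]\,\widehat{v_{2,0}}(\xi)\,d\xi$, inserting the Bessel asymptotics $J_1(r\xi) = \sqrt{2/\pi r\xi}\,\cos(r\xi-3\pi/4) + O((r\xi)^{-3/2})$ to pull out the $r^{-1/2}$ factor and the oscillation in $(t\pm r)\xi$, and integrating by parts in $\xi$ against that oscillation using the symbol bounds of Lemma \ref{v20ests} on $\xi^m\widehat{v_{2,0}}^{(m)}(\xi)$ (dominated by $\log(1/\xi)\sup_{[100,1/\xi]}(\lambda\log)$). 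The crude bound $|\partial_t^j\partial_r^k v_2| \le C/\sqrt r$ follows from the same split without integrating by parts, using $|J_1(x)| \lesssim \min(x, x^{-1/2})$ and absorbing logarithms into an $\varepsilon$-power of $\xi$. The null-form bound \eqref{dtdrv2est} is transparent from the physical-space formula: $(\partial_t+\partial_r)$ kills the boundary contributions (the weight vanishes at $\mu=t+r$, and the $\mu=t-r$ pieces cancel) and acts only on $\sqrt{1-(\mu-t)^2/r^2}$, producing the extra factor $(\mu-t)(\mu-t+r)\,r^{-2}/\sqrt{1-(\mu-t)^2/r^2}$, i.e. a gain of one power of $1/r$.

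The main obstacle is the light-cone region $r \approx t$, where $\langle t-r\rangle = O(1)$ and the weight $\sqrt{1-(\mu-t)^2/r^2}$ degenerates like a square root at the endpoints, so its $t$- and $r$-derivatives (up to total order $7$) cannot be taken naively. There the derivative estimates must be obtained by integrating by parts in $\mu$ to shift derivatives onto $F$ — each step trading a weight derivative for a factor $\langle t-r\rangle^{-1}$ and a $t^{-1}$-smaller derivative of $F$ — or, on the Fourier side, by carefully counting how many integrations by parts in $\xi$ are permissible before the non-integrable behavior of $\xi^{-1/2}\widehat{v_{2,0}}(\xi)$ near $\xi = 0$ blocks further gains. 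Extracting exactly the exponent $\tfrac12 + k + j$ and verifying that the $\sup$ of $\lambda\log$ is taken over $[100,r]$ rather than $[100,t]$ (the relevant frequencies reach down to $\sim 1/r$) is the delicate bookkeeping.
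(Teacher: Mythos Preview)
Your proposal is correct and follows essentially the same strategy as the paper: the physical-space formula \eqref{v2} together with the symbol bound \eqref{hsymbforv20} on $F$ for $r\le t/2$, and the Fourier representation with Bessel asymptotics and integration by parts in $\xi$ for $r\ge t/2$. The only cosmetic differences are that the paper estimates the $\theta$-integral directly for small $r$ (without passing to a higher-order Taylor remainder) and, for \eqref{dtdrv2est}, works on the frequency side---noting that $(\partial_t+\partial_r)$ annihilates the $\sin((t-r)\xi)$ and $\cos((t-r)\xi)$ contributions---rather than in physical space.
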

\begin{proof}
To estimate $v_{2}$, we start with the region $r \leq \frac{t}{2}$. Here, we use \eqref{v2} and \eqref{hsymbforv20}, to get
\begin{equation}\begin{split}|v_{2}(t,r)| &\leq C r \int_{0}^{\pi} |F(t+r \cos(\theta))|d\theta \leq C r \int_{0}^{\pi} \frac{\lambda(t+r \cos(\theta)) \log(t+r \cos(\theta))}{(t+r \cos(\theta))^{2}} d\theta\leq \frac{C r \lambda(t) \log(t)}{t^{2}}\end{split}\end{equation}
where we used \eqref{lambdacomparg}. For the region $r \geq \frac{t}{2}$, we start with the first term on the right-hand side of \eqref{v2}, namely
$$v_{2}(t,r) = \int_{0}^{\infty} J_{1}(r\xi) \sin(t\xi) \widehat{v_{2,0}}(\xi) d\xi$$
One estimate which will be useful in the region $r \geq \frac{t}{2}$ (in particular, it is useful in the region $|t-r| \leq 100$) is the following. Using $|J_{1}(x)| \leq \frac{C}{\sqrt{x}}, x>0$ (though a much better estimate is true for small $x>0$)
we get
\begin{equation}\label{v2sqrtest}\begin{split}|v_{2}(t,r)|&\leq \frac{C}{\sqrt{r}}\int_{0}^{\frac{1}{100}} \frac{1}{\sqrt{\xi}}\left(\frac{\log(\frac{1}{\xi})}{\xi^{C_{u}}}+\int_{100}^{\frac{1}{\xi}} \frac{\log(\sigma) d\sigma}{\sigma^{1-C_{u}}}\right) d\xi + \frac{C}{\sqrt{r}}\int_{\frac{1}{100}}^{\infty} \frac{d\xi}{\xi^{5}}\leq \frac{C}{\sqrt{r}} \int_{0}^{\frac{1}{100}} \frac{\log(\frac{1}{\xi}) d\xi}{\xi^{C_{u}+1/2}}+\frac{C}{\sqrt{r}} \leq \frac{C}{\sqrt{r}}\end{split}\end{equation}
 where we also used \eqref{lambdacomparg}.
The higher derivatives of $v_{2}$ are treated similarly. To treat the region where $|t-r| > 100$ and $r \geq \frac{t}{2}$, we use
\begin{equation}\label{v2largerintstep12}v_{2}(t,r) = \int_{0}^{\frac{1}{r}} J_{1}(r\xi) \sin(t\xi) \widehat{v_{2,0}}(\xi) d\xi + \int_{\frac{1}{r}}^{\infty} J_{1}(r\xi) \sin(t\xi) \widehat{v_{2,0}}(\xi) d\xi \end{equation}
We start with
\begin{equation}\begin{split}|\int_{0}^{\frac{1}{r}} J_{1}(r\xi) \sin(t\xi) \widehat{v_{2,0}}(\xi) d\xi| &\leq C \int_{r}^{\infty} d\sigma \int_{0}^{\frac{1}{\sigma}} d\xi \frac{r \xi \lambda(\sigma) \log(\sigma)}{\sigma } + C \int_{100}^{r} d\sigma \int_{0}^{\frac{1}{r}} d\xi r \xi \frac{\lambda(\sigma)\log(\sigma)}{\sigma }\\
&+ C \int_{0}^{\frac{1}{r}} d\xi r \xi \lambda(\frac{1}{\xi}) \log(\frac{1}{\xi})\\
&\leq \frac{C r \lambda(r) \log(r)}{r} \int_{r}^{\infty} \frac{d\sigma}{\sigma^{2}} + \frac{C}{r} \int_{100}^{r} \frac{\lambda(\sigma) \log(\sigma) d\sigma}{\sigma}+ \frac{C r \lambda(r) \log(r)}{r^{2}} \\
&\leq \frac{C}{r} \sup_{\sigma \in [100,r]}\left(\lambda(\sigma) \log(\sigma)\right) \log(r)\end{split}\end{equation}
Here, we used \eqref{lambdacomparg}, and Lemma \ref{v20ests} in the region $\xi \leq \frac{1}{100}$, since $\frac{1}{r} \ll \frac{1}{100}$, along with Fubini's theorem to switch the order of the $\xi$ and $\sigma$ integrals. It remains to treat the following integral.
\begin{equation}\begin{split}\int_{\frac{1}{r}}^{\infty} J_{1}(r\xi) \sin(t\xi) \widehat{v_{2,0}}(\xi) d\xi &= -\int_{\frac{1}{r}}^{\infty} \frac{1}{\sqrt{\pi r \xi}} \left(\cos(r \xi)-\sin(r\xi)\right) \sin(t\xi) \widehat{v_{2,0}}(\xi) d\xi\\
&+\int_{\frac{1}{r}}^{\infty} \left(J_{1}(r\xi)+\left(\frac{\cos(r\xi)}{\sqrt{\pi r \xi}} -\frac{\sin(r\xi)}{\sqrt{\pi r \xi}}\right)\right) \sin(t\xi) \widehat{v_{2,0}}(\xi) d\xi\end{split}\end{equation}
We start with
\begin{equation}\label{v2largerintstep}\begin{split}&\int_{\frac{1}{r}}^{\infty} \frac{1}{\sqrt{\pi r \xi}} \left(\cos(r \xi)-\sin(r\xi)\right) \sin(t\xi) \widehat{v_{2,0}}(\xi) d\xi\\
&=\frac{1}{2\sqrt{\pi r}} \int_{\frac{1}{r}}^{\infty} \frac{1}{\sqrt{\xi}} \left(\sin((t+r)\xi)+\sin((t-r)\xi) + \cos((t+r)\xi)-\cos((t-r)\xi)\right) \widehat{v_{2,0}}(\xi) d\xi\\
&=\frac{1}{2\sqrt{\pi r}} \int_{\frac{1}{\sqrt{r}}}^{\infty} \left(\sin((t+r)\omega^{2})+\cos((t+r)\omega^{2})+\sin((t-r)\omega^{2})-\cos((t-r)\omega^{2})\right)\widehat{v_{2,0}}(\omega^{2}) \cdot 2 d\omega\end{split}\end{equation}

We will show in detail how to treat the term involving $\sin((t-r)\omega^{2})$. The other terms can be treated with a similar argument.
\begin{equation}\begin{split}\frac{1}{\sqrt{\pi r}} \int_{\frac{1}{\sqrt{r}}}^{\infty} \sin((t-r)\omega^{2}) \widehat{v_{2,0}}(\omega^{2}) d\omega &= \frac{1}{\sqrt{\pi r}} \int_{\frac{1}{\sqrt{r}}}^{\frac{1}{\sqrt{|t-r|}}} \sin((t-r)\omega^{2}) \widehat{v_{2,0}}(\omega^{2}) d\omega \\
&+ \frac{1}{\sqrt{\pi r}} \int_{\frac{1}{\sqrt{|t-r|}}}^{\infty} \sin((t-r)\omega^{2}) \widehat{v_{2,0}}(\omega^{2}) d\omega\end{split}\end{equation}
For the first term, we get
\begin{equation} \begin{split}|\frac{1}{\sqrt{\pi r}} \int_{\frac{1}{\sqrt{r}}}^{\frac{1}{\sqrt{|t-r|}}} \sin((t-r)\omega^{2}) \widehat{v_{2,0}}(\omega^{2}) d\omega| &\leq \frac{C}{\sqrt{r}} \int_{\frac{1}{\sqrt{r}}}^{\frac{1}{\sqrt{|t-r|}}} \sup_{x \in [100,\frac{1}{\omega^{2}}]}\left(\lambda(x) \log(x)\right) \log(\frac{1}{\omega^{2}}) d\omega\\
&\leq \frac{C \sup_{x \in [100,r]}\left(\lambda(x) \log(x)\right) \log(r)}{\sqrt{r}\sqrt{|t-r|}}\end{split}\end{equation}
On the other hand, we integrate by parts to get
\begin{equation}\begin{split} &|\frac{1}{\sqrt{\pi r}} \int_{\frac{1}{\sqrt{|t-r|}}}^{\infty} 2 \omega \sin((t-r)\omega^{2}) \frac{\widehat{v_{2,0}}(\omega^{2})}{2 \omega} d\omega|\leq \frac{C}{\sqrt{r}} \left(\frac{|\widehat{v_{2,0}}(\frac{1}{|t-r|})|}{\sqrt{|t-r|}} + \int_{\frac{1}{\sqrt{|t-r|}}}^{\infty} \frac{|\widehat{v_{2,0}}'(\omega^{2})| + \frac{|\widehat{v_{2,0}}(\omega^{2})|}{\omega^{2}}}{|t-r|} d\omega \right)\end{split}\end{equation}
This gives
\begin{equation}\begin{split} &|\frac{1}{\sqrt{\pi r}} \int_{\frac{1}{\sqrt{|t-r|}}}^{\infty} 2 \omega \sin((t-r)\omega^{2}) \frac{\widehat{v_{2,0}}(\omega^{2})}{2 \omega} d\omega|\\
&\leq \frac{C}{\sqrt{r}} \frac{|\widehat{v_{2,0}}(\frac{1}{|t-r|})|}{\sqrt{|t-r|}} + \frac{C}{\sqrt{r} |t-r|} \int_{\frac{1}{\sqrt{|t-r|}}}^{\frac{1}{10}} \frac{1}{\omega^{2}} \sup_{x \in [100,|t-r|]}\left(\lambda(x) \log(x)\right) \log(\frac{1}{\omega^{2}})d\omega+\frac{C}{\sqrt{r}|t-r|} \int_{\frac{1}{10}}^{\infty} \frac{d\omega}{\omega^{50}}\\
&\leq \frac{C \log(|t-r|)}{\sqrt{r}\sqrt{|t-r|}} \sup_{x\in [100,|t-r|]}\left(\lambda(x) \log(x)\right)\end{split}\end{equation}
Finally
\begin{equation}\begin{split}&|\int_{\frac{1}{r}}^{\infty} \left(J_{1}(r\xi)+\left(\frac{\cos(r\xi)}{\sqrt{\pi r \xi}} -\frac{\sin(r\xi)}{\sqrt{\pi r \xi}}\right)\right) \sin(t\xi) \widehat{v_{2,0}}(\xi) d\xi|\\
&\leq C \int_{\frac{1}{r}}^{\infty} \frac{1}{(r\xi)^{3/2}} |\widehat{v_{2,0}}(\xi)| d\xi\leq C \int_{\frac{1}{r}}^{\frac{1}{100}} \frac{d\xi}{(r\xi)^{3/2}} \sup_{x \in [100,\frac{1}{\xi}]}\left(\lambda(x) \log(x)\right) \log(\frac{1}{\xi})+C \int_{\frac{1}{100}}^{\infty} \frac{d\xi}{(r\xi)^{3/2}} \frac{1}{\xi^{101}}\\
&\leq \frac{C}{r} \sup_{x \in [100,r]}\left(\lambda(x) \log(x)\right) \log(r)\end{split}\end{equation}
Combining this with \eqref{v2sqrtest} finishes the estimation of $v_{2}(t,r)$ in the region $r \geq \frac{t}{2}$. The derivatives of $v_{2}$ are treated similarly. The only important difference in the procedure used to estimate $\partial_{t}v_{2}(t,r)+\partial_{r}v_{2}(t,r)$ is that we exploit the fact that $\left(\partial_{t}+\partial_{r}\right)\sin((t-r)\xi) =0$, and similarly with $\cos((t-r)\xi)$. \end{proof}
Finally, we define  $v_{2,sub}$ by 
\begin{equation}\label{v2subdef} \begin{split} v_{2,sub}(t,r) &= \frac{-r}{2\pi} \int_{0}^{\pi} \sin^{2}(\theta) \left(F(t+r\cos(\theta))-F(t)\right)d\theta=v_{2}(t,r)-v_{2,main}(t,r)\end{split}\end{equation}
(where we recall that $F$ is defined in \eqref{Fdef}). We also define
$$v_{2,cubic,main}(t,r) = \frac{-r^{3}}{32} F''(t)$$
Then, the $v_{2}$ analog of Lemma \ref{w1strlemma} is 
\begin{lemma}\label{v2strlemma} We have the following estimates. For $0 \leq j \leq 8$, $0 \leq k \leq 2$ and $r \leq \frac{t}{2}$,
 $$|\partial_{t}^{j}\partial_{r}^{k}\left(v_{2,sub}(t,r)-v_{2,cubic,main}(t,r)+\frac{r^{5}}{768} F^{(4)}(t)\right)| \leq \frac{C r^{7-k} \lambda(t)\log(t)}{t^{8+j}}$$
\end{lemma}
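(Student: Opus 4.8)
The plan is to Taylor expand $F$ in the spatial slot inside the integral representation of $v_{2,sub}$, read off the algebraic main terms, and estimate the remainder. From the last line of \eqref{v2} together with \eqref{v2subdef} and \eqref{Fdef},
$$v_{2,sub}(t,r)=\frac{-r}{2\pi}\int_{0}^{\pi}\sin^{2}(\theta)\left(F(t+r\cos(\theta))-F(t)\right)d\theta.$$
Since $r\leq t/2$ and $t\geq T_{0}\geq e^{900}(1+T_{\lambda})$, every argument $t+r\cos(\theta)$ lies in $[t/2,3t/2]$, which is in the region $[2T_{\lambda},\infty)$ where $F$ agrees with $H$ (by \eqref{v20eqn} and \eqref{hdef}, using $\psi\equiv 1$ there). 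Hence \eqref{hsymbforv20} and \eqref{lambdacomparg} give $|F^{(m)}(t+r\cos(\theta))|\leq C_{m}\,\lambda(t)\log(t)/t^{2+m}$ for every $m\geq 0$, uniformly in $\theta$ and in $r\leq t/2$; this uniform control legitimizes all the differentiations under the integral sign performed below.

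Next I would isolate the main terms. Put $\widetilde{R}(t,x):=F(t+x)-\sum_{n=0}^{5}\frac{x^{n}}{n!}F^{(n)}(t)$, the degree-$5$ Taylor remainder of $x\mapsto F(t+x)$ at $x=0$; Taylor's theorem applied to $x\mapsto F^{(j+k)}(t+x)$ gives, for $0\leq k\leq 5$,
$$\bigl|\partial_{t}^{j}\partial_{x}^{k}\widetilde{R}(t,x)\bigr|\leq \frac{|x|^{6-k}}{(6-k)!}\,\sup_{|s|\leq|x|}\bigl|F^{(6+j)}(t+s)\bigr|.$$
The elementary moments $\int_{0}^{\pi}\sin^{2}(\theta)\cos^{n}(\theta)\,d\theta$ vanish for odd $n$ (substitute $\theta\mapsto\pi-\theta$) and equal $\pi/2,\pi/8,\pi/16$ for $n=0,2,4$; so the odd Taylor terms integrate to zero against $\sin^{2}(\theta)$, the $n=0$ term is cancelled by the $-F(t)$, the $n=2$ term produces exactly $v_{2,cubic,main}(t,r)=\frac{-r^{3}}{32}F''(t)$ after the factor $-r/2\pi$, and the $n=4$ term produces exactly $-\frac{r^{5}}{768}F^{(4)}(t)$. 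Therefore
$$v_{2,sub}(t,r)-v_{2,cubic,main}(t,r)+\frac{r^{5}}{768}F^{(4)}(t)=\frac{-r}{2\pi}\int_{0}^{\pi}\sin^{2}(\theta)\,\widetilde{R}(t,r\cos(\theta))\,d\theta.$$

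Finally I would apply $\partial_{t}^{j}\partial_{r}^{k}$ to the right-hand side. The $t$-derivatives pass directly onto $\widetilde{R}$ (shifting $F\to F^{(j)}$). For the $r$-derivatives, write the right-hand side as $\frac{-1}{2\pi}\,r\,g(t,r)$ with $g(t,r)=\int_{0}^{\pi}\sin^{2}(\theta)\widetilde{R}(t,r\cos(\theta))\,d\theta$, so $\partial_{r}^{k}(rg)=r\,\partial_{r}^{k}g+k\,\partial_{r}^{k-1}g$ and $\partial_{r}^{m}g(t,r)=\int_{0}^{\pi}\sin^{2}(\theta)\cos^{m}(\theta)\,(\partial_{x}^{m}\widetilde{R})(t,r\cos(\theta))\,d\theta$. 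By the Taylor bound above (only $m\leq k\leq 2$ occur), $|\partial_{t}^{j}\partial_{r}^{m}g(t,r)|\leq C\,r^{6-m}\sup_{|s|\leq r}|F^{(6+j)}(t+s)|$, so each of the two Leibniz terms carries exactly $r^{7-k}$: losing one power of $r$ off the prefactor is compensated by one fewer $\partial_{x}$ on $\widetilde{R}$, which raises its vanishing order. Combining with $\sup_{|s|\leq r}|F^{(6+j)}(t+s)|\leq C\,\lambda(t)\log(t)/t^{8+j}$ from the first paragraph (using $\lambda(t+s)\sim\lambda(t)$, $\log(t+s)\sim\log(t)$ for $|s|\leq r\leq t/2$, by \eqref{lambdacomparg}) yields $|\partial_{t}^{j}\partial_{r}^{k}(\,\cdots\,)|\leq C\,r^{7-k}\lambda(t)\log(t)/t^{8+j}$, which is the claim. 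This lemma is the $v_{2}$ counterpart of Lemma \ref{w1strlemma}; there is no genuine analytic difficulty, and the only point deserving care is the bookkeeping in this last step to verify that every Leibniz term carries the correct power of $r$.
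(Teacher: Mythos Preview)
Your proof is correct and is exactly the approach the paper intends: the paper's own proof is the single line ``We expand \eqref{v2} and directly estimate, as in Lemma \ref{v2estlemma},'' and you have carried out precisely that expansion, correctly identifying the moments $\int_{0}^{\pi}\sin^{2}\theta\cos^{n}\theta\,d\theta$ that produce $v_{2,cubic,main}$ and $-\frac{r^{5}}{768}F^{(4)}(t)$, and bounding the degree-$5$ Taylor remainder via \eqref{hsymbforv20}.
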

\begin{proof} We expand \eqref{v2} and directly estimate, as in Lemma \ref{v2estlemma}  \end{proof}
\subsection{Second Wave Iteration}
The second wave correction, $u_{w,2}$ is defined as the solution to
\begin{equation}\label{uw2eqndef}-\partial_{tt}u_{w,2}+\partial_{rr}u_{w,2}+\frac{1}{r}\partial_{r}u_{w,2}-\frac{u_{w,2}}{r^{2}}=\left(\frac{\cos(2Q_{1}(\frac{r}{\lambda(t)}))-1}{r^{2}}\right)\left(w_{1}+v_{2}\right):=RHS_{2}(t,r)\end{equation}
with $0$ Cauchy data at infinity. (We carried out the first order matching before defining $u_{w,2}$ so that we could choose $v_{2,0}$ before having to consider the equation defining $u_{w,2}$). Later on, we will add, to $u_{w,2}$, a free wave, $v_{2,2}$, solving
\begin{equation}\label{v22def}\begin{cases}-\partial_{tt}v_{2,2}+\partial_{rr}v_{2,2}+\frac{1}{r}\partial_{r}v_{2,2}-\frac{v_{2,2}}{r^{2}}=0\\
v_{2,2}(0,r)=0\\
\partial_{t}v_{2,2}(0,r) = v_{2,3}(r)\end{cases}\end{equation}
with $v_{2,3}$ chosen so as to satisfy a third order matching condition which we will describe later on. We start by proving estimates on $\widehat{RHS_{2}}$, which will allow us to justify a representation formula for $u_{w,2}$. For this, we start with the following definitions.
\begin{equation}\label{rhsdefs}RHS_{2,1}(t,r) = \left(\frac{\cos(2Q_{1}(\frac{r}{\lambda(t)}))-1}{r^{2}}\right)w_{1}, \quad RHS_{2,2}(t,r) = \left(\frac{\cos(2Q_{1}(\frac{r}{\lambda(t)}))-1}{r^{2}}\right)v_{2}\end{equation}   
Then, we have the following.
\begin{lemma}\label{rhs21lemma} Recalling that $w_{1}$ is defined in \eqref{firstw1formula}, we have, for $2 \leq k \leq 4$ and $0 \leq j \leq 1$,
\begin{equation}\begin{split}& \xi^{j}t^{k}|\partial_{\xi}^{j}\partial_{t}^{k} \widehat{RHS_{2,1}}(t,\xi)|+ \xi^{j} |\partial_{\xi}^{j}\widehat{RHS_{2,1}}(t,\xi)| \leq \begin{cases} \frac{C \xi \lambda(t)^{3} \log^{2}(t)}{t^{2} }, \quad \xi \leq \frac{1}{\lambda(t)}\\
\frac{C (\log(t)+|\log(\xi)|)}{\sqrt{\lambda(t)}\xi^{5/2} t^{2} }, \quad \frac{1}{\lambda(t)} < \xi\end{cases}\end{split}\end{equation}
\end{lemma}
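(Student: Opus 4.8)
The plan is to compute the Hankel transform of order one of $RHS_{2,1}(t,r)$ directly from the pointwise and integral information we already have on $w_1$. Recall from \eqref{firstw1formula}, \eqref{w1simp} that $w_1(t,\cdot) \in C^\infty((0,\infty))$, and from Lemma \ref{w1estlemma} that $|\partial_t^k w_1(t,r)| \leq \frac{C r \lambda(t)(\log(t)+|\log(r)|)}{t^{2+k}}$ for $r \leq t$, while $|\partial_t^k w_1(t,r)| \leq \frac{C}{r\, t^{k}} \sup_{x\in[t,t+r]}\lambda(x)$ for $r \geq t$; also the weight $\frac{\cos(2Q_1(r/\lambda(t)))-1}{r^2}$ is, by the explicit form of $Q_1$, bounded by $\frac{C\lambda(t)^2}{r^2(r^2+\lambda(t)^2)}$ and in particular behaves like $O(1/\lambda(t)^2)$ for $r\lesssim\lambda(t)$ and like $O(\lambda(t)^2/r^4)$ for $r\gtrsim\lambda(t)$, with analogous bounds on its $r$-derivatives gaining powers of $\langle r/\lambda(t)\rangle^{-1}$ per derivative. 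Write $\widehat{RHS_{2,1}}(t,\xi) = \int_0^\infty J_1(r\xi)\, RHS_{2,1}(t,r)\, r\, dr$. I would split the $r$-integral at $r = \lambda(t)$, at $r = 1/\xi$, and at $r = t$, and estimate each piece.

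The key steps, in order: (1) For $\xi \leq 1/\lambda(t)$, use $|J_1(r\xi)| \leq C\min\{r\xi, (r\xi)^{-1/2}\}$. On $r \leq \lambda(t)$ the weight is $O(\lambda(t)^{-2})$ but $w_1 = O(r\lambda(t)\log(t)/t^2)$ and $J_1(r\xi)\sim r\xi$, giving a contribution $\lesssim \xi\lambda(t)\log(t)/t^2 \cdot \lambda(t)$; on $\lambda(t) \leq r \leq 1/\xi$ the weight is $O(\lambda(t)^2/r^4)$, $w_1 \lesssim r\lambda(t)(\log t + |\log r|)/t^2$ and $J_1 \sim r\xi$, so the $r$-integral of $r\cdot \frac{\lambda(t)^2}{r^4}\cdot r\lambda(t)\frac{\log t+|\log r|}{t^2}\cdot r\xi$ converges at the lower endpoint and is controlled by the endpoint $r=\lambda(t)$, again yielding $\lesssim \xi\lambda(t)^3\log^2(t)/t^2$ (the extra $\log$ coming from the $|\log r|$ factor evaluated near $r=\lambda(t)$); on $r \geq 1/\xi$ use $|J_1|\lesssim (r\xi)^{-1/2}$ and the rapid decay $\lambda(t)^2/r^4$ of the weight together with the $r\geq t$ bound on $w_1$ when relevant — this tail is lower order. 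Summing gives the first case. (2) For $\xi > 1/\lambda(t)$, the dominant region is $r \sim 1/\xi < \lambda(t)$, where the weight is $\sim \lambda(t)^{-2}$; here I would integrate by parts once in $r$ using $\frac{d}{dr}(rJ_1(r\xi)) $-type identities (or the standard $\int_0^\infty J_1(r\xi)f(r)rdr$ manipulations, exactly as invoked for \eqref{v2} and \eqref{rhshat} via \cite{gr}) to exploit oscillation, and bound the resulting integrand using the pointwise bounds on $w_1$ and one $r$-derivative of the weight. This produces the $\frac{\log t + |\log\xi|}{\sqrt{\lambda(t)}\,\xi^{5/2}t^2}$ bound, the $\xi^{-5/2}$ and $\lambda(t)^{-1/2}$ powers being exactly the book-keeping of the $r\sim 1/\xi$ scaling against the measure $r\,dr$ and the $J_1$ decay. (3) Time derivatives $\partial_t^k$ for $2\leq k\leq 4$: differentiate under the integral sign (justified by the dominated convergence arguments of the type used in Lemma \ref{vexreplemma}); each $\partial_t$ hitting $w_1$ costs $1/t$ by Lemma \ref{w1estlemma}, and each $\partial_t$ hitting the weight $\frac{\cos(2Q_1(r/\lambda(t)))-1}{r^2}$ costs $|\lambda'(t)|/\lambda(t) \lesssim 1/t$ by \eqref{lambdasetdef} while preserving the same spatial profile up to constants; hence the $t^k$ prefactor. (4) The $\xi$-derivative $\partial_\xi$ (for $j=1$): $\partial_\xi$ acting on $J_1(r\xi)$ brings down a factor $r$, which is morally $\langle r\rangle \lesssim 1/\xi$ in the dominant region, accounting for the $\xi^j$ prefactor; repeat the estimates of steps (1)–(2) with this extra factor.

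The main obstacle I expect is the borderline region $r \sim 1/\xi \sim \lambda(t)$ when $\xi$ is close to $1/\lambda(t)$, where neither the small-$r$ nor the large-$r$ asymptotics of the weight is cleanly dominant and where the logarithmic factors in $w_1$ (the $|\log r|$ term, which near $r=\lambda(t)$ becomes $|\log\lambda(t)|$, and by \eqref{lambdacomparg} is $\lesssim \log t$) must be tracked carefully to land exactly the $\log^2(t)$ in the first case and the $(\log t + |\log\xi|)$ in the second; getting the matching of the two cases at $\xi = 1/\lambda(t)$ to be consistent (so the piecewise bound is genuinely continuous up to constants) is the delicate point. A secondary technical nuisance is justifying the integration by parts in $r$ at $\xi > 1/\lambda(t)$ without boundary contributions — this uses $w_1(t,\cdot)\in C^\infty((0,\infty))$, $w_1(t,r) = O(r\log r)$ as $r\to 0^+$ (from \eqref{w1simp}), and the decay $\frac{\cos(2Q_1)-1}{r^2}w_1 \to 0$ with its $r$-derivative as $r\to\infty$ (from the $r\geq t$ bound in Lemma \ref{w1estlemma} combined with the $\lambda(t)^2/r^4$ decay of the weight), so all boundary terms vanish. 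Everything else is a routine, if lengthy, splitting-and-estimating computation of the same flavor as Lemma \ref{v2estlemma}.
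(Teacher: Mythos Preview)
Your low-frequency treatment ($\xi \leq 1/\lambda(t)$), the handling of $\partial_t^k$ by differentiating under the integral, and the overall strategy all match the paper. The gap is in step (2): one integration by parts in $r$ does not reach $\xi^{-5/2}$. A single IBP via $J_1(r\xi)=-\xi^{-1}\partial_r J_0(r\xi)$ gains only one power of $\xi^{-1}$, and the dominant contribution for $\xi>1/\lambda(t)$ comes not from the localized shell $r\sim 1/\xi$ (which, as you can check, already gives $\xi^{-3}\lambda(t)^{-1}$ and is harmless) but from the full range $1/\xi \lesssim r \lesssim \lambda(t)$, where the $|J_1|\lesssim (r\xi)^{-1/2}$ bound together with $|RHS_{2,1}|\lesssim r(\log t+|\log r|)/(\lambda(t)t^2)$ and one IBP only yield $\xi^{-3/2}\lambda(t)^{1/2}t^{-2}\log(\cdot)$. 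This matches the target at $\xi=1/\lambda(t)$ but is strictly weaker for $\xi\gg 1/\lambda(t)$.

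The paper's fix is to use the self-adjointness of the order-one Bessel operator: writing $H_1(g)=-\partial_r^2 g-\tfrac{1}{r}\partial_r g+\tfrac{g}{r^2}$, one has
\[
\widehat{RHS_{2,1}}(t,\xi)=\frac{1}{\xi^2}\int_0^\infty J_1(r\xi)\,H_1\!\big(RHS_{2,1}(t,\cdot)\big)(r)\,r\,dr,
\]
which gains the full $\xi^{-2}$ in one stroke; then one estimates $H_1(RHS_{2,1})$ pointwise using Lemma~\ref{w1estlemma} (which supplies up to three $r$-derivatives of $w_1$) together with the explicit form of the weight, and the $\xi^{-5/2}\lambda(t)^{-1/2}$ bound drops out of the $r\in[1/\xi,\lambda(t)]$ integral. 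Your boundary-term discussion carries over unchanged to this setting. For the $\xi$-derivative the paper takes a slightly different route than yours: it rescales $x=r\xi$ so that $\widehat{RHS_{2,1}}(t,\xi)=\xi^{-2}\int_0^\infty RHS_{2,1}(t,x/\xi)J_1(x)x\,dx$ and then differentiates in $\xi$, using that $RHS_{2,1}$ obeys symbol-type bounds in $r$; your approach of letting $\partial_\xi$ hit $J_1(r\xi)$ should also work, but the rescaling avoids having to redo the splitting with an extra power of $r$.
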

\begin{proof} In the regions where $\xi \leq \frac{1}{\lambda(t)}$, we use
$$\widehat{RHS_{2,1}}(t,\xi) = \int_{0}^{\infty} J_{1}(r\xi) \left(\frac{\cos(2Q_{1}(\frac{r}{\lambda(t)}))-1}{r^{2}}\right) w_{1}(t,r) r dr$$
and Lemma \ref{w1estlemma}. When $\xi \geq \frac{1}{\lambda(t)}$, we use
$$\widehat{RHS_{2,1}}(t,\xi) = \frac{1}{\xi^{2}}\int_{0}^{\infty} J_{1}(r\xi) H_{1}\left(\left(\frac{\cos(2Q_{1}(\frac{r}{\lambda(t)}))-1}{r^{2}}\right)w_{1}(t,r)\right) r dr$$
where
$$H_{1}(g) = -\partial_{r}^{2}g-\frac{1}{r}\partial_{r}g+\frac{g}{r^{2}}$$
and then use the estimates on $w_{1}$ from Lemma \ref{w1estlemma}. We use the same procedure to estimate $\partial_{t}^{k} \widehat{RHS_{2,1}}(t,\xi)$. To estimate $\partial_{\xi}\partial_{t}^{k}\widehat{RHS_{2,1}}(t,\xi)$, we start with
$$\widehat{RHS_{2,1}}(t,\xi) = \int_{0}^{\infty} RHS_{2,1}(t,\frac{x}{\xi}) J_{1}(x) \frac{x dx}{\xi^{2}}$$
differentiate under the integral, and use the symbol-type nature of the estimates in Lemma \ref{w1estlemma}.
 \end{proof}
Note that some of the estimates in the following lemma can be combined into a single estimate with multiple cases of numbers of derivatives, but is presented as is for convenience.
\begin{lemma} \label{kernelests} Let 
$$f(x) = \left(\frac{\cos(2Q_{1}(x))-1}{x^{2}}\right)=\frac{-8}{(1+x^{2})^{2}}$$
and
\begin{equation}\label{kdef}K(y,z) = \int_{0}^{\infty} J_{1}(xy) f(x) J_{1}(x z) x dx\end{equation}
Then, 
\begin{equation}\label{kexpform}K(y,z) = 4 \cdot \begin{cases} z I_{0}(z) K_{1}(y)-y I_{1}(z)K_{2}(y), \quad 0 < z < y\\
y I_{0}(y) K_{1}(z) - z I_{1}(y) K_{2}(z), \quad 0 < y < z\end{cases}:= 4 \cdot \begin{cases} K_{y>z}(y,z), \quad 0 < z < y\\
K_{z>y}(y,z), \quad 0 < y < z\end{cases}\end{equation}
and, we have the following estimates, for $z \neq y$, $0 \leq j \leq 1$, and  $0 \leq k \leq 4$:
\begin{equation}|\partial_{y}^{j}\partial_{z}^{k} K(y,z)| \leq \left(\text{max}\{1,\frac{1}{y}\}\right)^{j} \begin{cases}  C e^{-|y-z|} \left(\frac{|z-y|}{\sqrt{zy}} + \frac{\sqrt{\text{max}\{y,z\}}}{\text{min}\{y,z\}^{3/2}}\right), \quad y,z>1\\
C \sqrt{y} e^{-y} z^{\frac{1}{2}\left((-1)^{k}+1\right)}, \quad y>1>z\\
C \sqrt{z} e^{-z} y, \quad z>1>y\end{cases}\end{equation}
For $1>y,z$, $y \neq z$, and $0 \leq j, k \leq 1$,
\begin{equation}\begin{split}&|\partial_{y}^{j}\partial_{z}^{k}K(y,z)| \leq C y^{1-j} z^{1-k} (1+|\log(\text{max}\{y,z\})|), \quad |\partial_{y}^{j}\partial_{z}^{2+k}K(y,z)| \leq \frac{C y^{-j} \text{min}\{y,z\}}{z^{k}\text{max}\{y,z\}}\\
&|\partial_{y}^{j}\partial_{z}^{4+k} K(y,z)| \leq \begin{cases}
\frac{C z^{1-k}}{y^{1+j}}, \quad 1>y>z\\
\frac{C y^{1-j}}{z^{3+k}}, \quad 1>z>y
\end{cases}\end{split} \end{equation}
If $1 \leq j \leq 2$, $0 \leq n \leq 1$, and  $0 \leq k \leq 4-j$, then, for $0 < \omega, \xi$ and $\omega \neq \xi$, we have
\begin{equation}\begin{split}&|\partial_{\xi}^{n}\partial_{\omega}^{k}\partial_{t}^{j}\left(K(\xi\lambda(t),\omega\lambda(t))\right)| \\
&\leq \frac{C \lambda(t)^{k}}{t^{j}} \left(\text{max}\{\lambda(t),\frac{1}{\xi}\}\right)^{n}\begin{cases} e^{-\lambda(t)|\xi-\omega|} \left(\frac{\lambda(t)^{j} |\xi-\omega|^{j+1}}{\sqrt{\omega \xi}}+\frac{\sqrt{\text{max}\{\xi,\omega\}}}{\text{min}\{\xi,\omega\}^{3/2}\lambda(t)}\right), \quad \omega, \xi >  \frac{1}{\lambda(t)}\\
e^{-\lambda(t) \xi} \left(\xi \lambda(t)\right)^{1/2+j} \left(\omega \lambda(t)\right)^{\frac{1}{2}\left((-1)^{k}+1\right)}, \quad \xi > \frac{1}{\lambda(t)} > \omega\\
e^{-\omega \lambda(t)} \xi \lambda(t) \left(\omega \lambda(t)\right)^{\frac{1}{2}+j}, \quad \omega > \frac{1}{\lambda(t)} > \xi\end{cases}\end{split}\end{equation}
For $1 \leq j \leq 2$ , $0 \leq n,k \leq 1$, and $\frac{1}{\lambda(t)} \geq \omega , \xi$,  $\omega \neq \xi$, we have
$$|\partial_{\xi}^{n}\partial_{t}^{j} \partial_{\omega}^{k}\left(K(\xi\lambda(t),\omega\lambda(t))\right)| \leq \frac{C \lambda(t)^{2}}{t^{j}} \omega^{1-k}\xi^{1-n}\left(1+|\log(\text{max}\{\xi,\omega\}\lambda(t))|\right)$$
 For $0 \leq n,k \leq 1$, $1 \leq j \leq 2$, and $\frac{1}{\lambda(t)} > \omega, \xi$, $\omega \neq \xi$, we have
$$|\partial_{\xi}^{n}\partial_{\omega}^{2+k}\partial_{t}^{j}\left(K(\xi\lambda(t),\omega \lambda(t))\right)| \leq \frac{C \lambda(t)^{2}}{t^{j} } \frac{\text{min}\{\omega,\xi\}}{\text{max}\{\omega,\xi\} \omega^{k}\xi^{n}}$$
\end{lemma}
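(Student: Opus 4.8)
The plan is to first establish the closed form \eqref{kexpform} for $K(y,z)$ and then to read every one of the stated estimates off the standard asymptotics of the modified Bessel functions, together with the symbol bounds on $\lambda$ coming from the definition of $\Lambda$.

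For the closed form I would start from the classical Lipschitz--Hankel (Weber--Schafheitlin) integral, which for $c>0$ and $0<a\le b$ reads $\int_{0}^{\infty}\frac{J_{1}(ax)J_{1}(bx)}{x^{2}+c^{2}}\,x\,dx=I_{1}(ca)K_{1}(cb)$ (see \cite{gr}). Since $\frac{-8}{(1+x^{2})^{2}}=4\,\partial_{c}\!\big(\frac{1}{x^{2}+c^{2}}\big)\big|_{c=1}$, and since for $c$ near $1$ the integrand of \eqref{kdef} is dominated, uniformly in $c$, by an $x$-integrable function (comparable to $x^{3}$ near $x=0$ and to $x^{-4}$ as $x\to\infty$), one may differentiate under the integral sign to obtain $K(y,z)=4\,\partial_{c}\big|_{c=1}\big(I_{1}(c\,\text{min}\{y,z\})K_{1}(c\,\text{max}\{y,z\})\big)$. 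Computing the $c$-derivative for $0<z<y$, it equals $4\big(zI_{1}'(z)K_{1}(y)+yI_{1}(z)K_{1}'(y)\big)$, and the recurrences $I_{1}'(s)=I_{0}(s)-\tfrac{1}{s}I_{1}(s)$, $K_{1}'(s)=-K_{0}(s)-\tfrac{1}{s}K_{1}(s)$, $K_{2}(s)=K_{0}(s)+\tfrac{2}{s}K_{1}(s)$ turn this into $4\big(zI_{0}(z)K_{1}(y)-yI_{1}(z)K_{2}(y)\big)$, which is \eqref{kexpform}; the case $0<y<z$ follows from the symmetry $K(y,z)=K(z,y)$ evident in \eqref{kdef}.

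For all the size estimates I would substitute into \eqref{kexpform} the standard expansions, with quantitative remainders: for small argument $I_{\nu}(s)\sim\tfrac{1}{\nu!}(s/2)^{\nu}$, $K_{0}(s)\sim-\log s$, $K_{\nu}(s)\sim\tfrac{1}{2}(\nu-1)!\,(2/s)^{\nu}$ for $\nu\ge1$ (with $O(s^{2}\log s)$ corrections), and for large argument $I_{\nu}(s)=\frac{e^{s}}{\sqrt{2\pi s}}(1+O(1/s))$, $K_{\nu}(s)=\sqrt{\tfrac{\pi}{2s}}\,e^{-s}(1+O(1/s))$. In the regime $y,z>1$ the exponential factors combine to $\frac{e^{-|y-z|}}{\sqrt{yz}}$; decomposing $zI_{0}(z)K_{1}(y)-yI_{1}(z)K_{2}(y)=z\big(I_{0}(z)-I_{1}(z)\big)K_{1}(y)+(z-y)I_{1}(z)K_{1}(y)+I_{1}(z)\big((y-2)K_{1}(y)-yK_{0}(y)\big)$ and noting that $z(I_{0}(z)-I_{1}(z))=O(I_{1}(z))$ and $(y-2)K_{1}(y)-yK_{0}(y)=O(K_{1}(y))$ yields the bound with the stated $\frac{|z-y|}{\sqrt{zy}}+\frac{\sqrt{\text{max}\{y,z\}}}{\text{min}\{y,z\}^{3/2}}$ shape (and one factor of $\text{max}\{1,1/y\}$ per $y$-derivative, since near the origin each $\partial_{y}$ can cost a power of $1/y$). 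In the mixed regimes $y>1>z$ or $z>1>y$ one pairs the exponential decay (or growth) of the large-argument Bessel factor with the polynomial small-argument behaviour of the other to produce the claimed powers of the small variable. In the doubly-small regime $y,z<1$ the naive term-by-term bound overestimates $K$ because of a leading-order cancellation: for $0<z<y<1$, $zI_{0}(z)K_{1}(y)-yI_{1}(z)K_{2}(y)=z\big(K_{1}(y)-\tfrac{y}{2}K_{2}(y)\big)+O(z^{3}/y)$ with $K_{1}(y)-\tfrac{y}{2}K_{2}(y)=O\big(y(1+|\log y|)\big)$, which is what produces the $1+|\log\,\text{max}\{y,z\}|$ factor and the extra power; the higher $z$-derivative bounds in this regime are obtained by tracking the analogous cancellations order by order after differentiating \eqref{kexpform} and reducing $\partial_{z}^{m}I_{\nu}$ and $\partial_{z}^{m}K_{\nu}$ to finite integer combinations of $I_{\nu'},K_{\nu'}$ (with $|\nu-\nu'|\le m$) times powers of $1/z$.

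The time-derivative estimates then follow from the chain rule: $\partial_{\xi}^{n}\partial_{t}^{j}\big(K(\xi\lambda(t),\omega\lambda(t))\big)$ is a finite sum of terms of the shape $\lambda^{(a_{1})}(t)\cdots\lambda^{(a_{m})}(t)\,\xi^{p'}\omega^{q'}\,\big(\partial_{y}^{p}\partial_{z}^{q}K\big)(\xi\lambda(t),\omega\lambda(t))$ with $a_{1}+\cdots+a_{m}=j$ and $p+q\le m+n$; one bounds each Bessel factor by the estimates just derived (noting that $\xi\lambda(t)>1\iff\xi>1/\lambda(t)$, and similarly for $\omega$, which reproduces the case split), and uses $|\lambda^{(a)}(t)|\le C_{a}\lambda(t)/t^{a}$ from \eqref{lambdasetdef} together with \eqref{lambdacomparg}; this yields the stated $\lambda(t)^{k}t^{-j}$ prefactors and the $\text{max}\{\lambda(t),1/\xi\}$ weights. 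I expect the only real difficulty to be bookkeeping: correctly identifying and exploiting the leading-order cancellations in the small-argument regimes, and keeping track of which $\text{max}/\text{min}$ combination survives after differentiation — there is no conceptual obstacle beyond careful computation with known Bessel asymptotics.
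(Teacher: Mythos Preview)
Your derivation of the closed form \eqref{kexpform} and the pointwise estimates on $K$ and its $y,z$-derivatives matches the paper's approach exactly: differentiate the Weber--Schafheitlin identity in the parameter $c$, then feed in standard modified Bessel asymptotics, paying attention to the leading-order cancellation in the doubly-small regime.

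There is, however, a real gap in your treatment of the time derivatives in the regime $\omega,\xi>1/\lambda(t)$. Your chain-rule expansion produces terms of the form $\lambda^{(a_{1})}\cdots\lambda^{(a_{m})}\,\xi^{p'}\omega^{q'}\,(\partial_{y}^{p}\partial_{z}^{q}K)(\xi\lambda,\omega\lambda)$, and you then propose to bound the last factor by the $y,z>1$ estimate already proved. But that estimate only carries one power of $|y-z|$, while the prefactors $\xi^{p'}\omega^{q'}$ can be arbitrarily large even when $|\xi-\omega|$ is small. The claimed bound in this regime has $\lambda(t)^{j}|\xi-\omega|^{j+1}/\sqrt{\omega\xi}$ in the leading term, and your scheme would instead produce something like $\xi\cdot e^{-\lambda|\xi-\omega|}/\sqrt{\omega\xi}$ for $j=1$, which is strictly worse when $\xi,\omega$ are both large and close.

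The paper avoids this by writing $\partial_{t}\big(K(\xi\lambda,\omega\lambda)\big)=\frac{\lambda'}{\lambda}\,V(K)$ with the dilation field $V=y\partial_{y}+z\partial_{z}$, and then decomposing
\[
V^{j}(K)(y,z)=\sum_{w=0}^{j} z^{w}\sum_{k=0}^{j} c_{k,j,w}\,(y-z)^{k}\,\partial_{y}^{k}(\partial_{y}+\partial_{z})^{w}K(y,z).
\]
The point is that $(\partial_{y}+\partial_{z})$ annihilates the exponential $e^{\pm(y-z)}$ that dominates $K$ for large arguments, so $z^{w}(\partial_{y}+\partial_{z})^{w}K$ stays of size $K$ rather than growing like $z^{w}$; the remaining $(y-z)^{k}$ factors are exactly what produce the $|\xi-\omega|^{j+1}$ in the target bound. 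Without this (or an equivalent) reorganisation the large-argument time-derivative estimate does not follow.
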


\begin{proof}
We start with the table of Gradshteyn and Ryzhik, \cite{gr}, entry 6.541, 1. A special case of this identity says that, for $c>0$, 
\begin{equation}\label{grint}\int_{0}^{\infty} J_{1}(x y) J_{1}(x z) \frac{x dx}{x^{2}+c^{2}} = \begin{cases} I_{1}(z c) K_{1}(y c), \quad 0 < z < y\\
I_{1}(y c) K_{1}(z c), \quad 0 < y < z\end{cases}\end{equation}
Upon differentiating in $c$ (it is possible to differentiate under the integral, by the dominated convergence theorem), and setting $c=1$, we get
$$K(y,z) = -8 \int_{0}^{\infty} \frac{J_{1}(x y) J_{1}(x z) x dx}{(1+x^{2})^{2}} = 4 \cdot \begin{cases} z I_{0}(z) K_{1}(y)-y I_{1}(z)K_{2}(y), \quad 0 < z < y\\
y I_{0}(y) K_{1}(z) - z I_{1}(y) K_{2}(z), \quad 0 < y < z\end{cases}$$
At this stage, the estimates on $K(y,z)$ and its derivatives follow from a straightforward application of asymptotics of Bessel functions, which can be found from numerous sources, for example, \cite{gr}. To estimate the time derivatives of  $K(\xi\lambda(t), \omega \lambda(t))$, we let $V=y\partial_{y}+z\partial_{z}$, and note that
$$\partial_{t}\left(K(\xi\lambda(t),\omega\lambda(t))\right) = \frac{\lambda'(t)}{\lambda(t)} V(K)\Bigr|_{\substack{y=\xi\lambda(t)\\ z=\omega\lambda(t)}}.$$
For $1 \leq j \leq 4$, we can iterate this to obtain expressions for $\partial_{t}^{j}\left(K(\xi\lambda(t),\omega\lambda(t))\right)$ in terms of $V^{n}(K)$ for $1 \leq n \leq j$. Then, we note that, for all $1 \leq j \leq 4$, there exist constants $c_{k,j,w}$ such that
$$V^{j}(K)(y,z) = \sum_{w=0}^{j} z^{w} \sum_{k=0}^{j} c_{k,j,w} (y-z)^{k} \partial_{y}^{k} \left(\partial_{y}+\partial_{z}\right)^{w} K(y,z).$$ 
This observation (and a decomposition of the above form, except with $y$ and $z$ switched (note that $V$ is symmetric in $y$ and $z$)), combined with the same procedure used above, allows us to estimate the time derivatives of  $K(\xi\lambda(t), \omega \lambda(t))$.
 \end{proof}
We can now estimate $\widehat{RHS_{2,2}}(t,\xi)$.
\begin{lemma}\label{rhs22lemma} With $RHS_{2,2}$ given in \eqref{rhsdefs}, and $m_{\geq 1}(x) = 1-m_{\leq 1}(x)$, where $m_{\leq 1}$ is defined in \eqref{mleq1def}, we have
\begin{equation}\label{rhs22hatformula}\widehat{RHS_{2,2}}(t,\xi) = \int_{0}^{\infty} \widehat{v_{2,0}}(\omega) \sin(t\omega) K(\xi \lambda(t), \omega \lambda(t)) d\omega\end{equation}
and, for any $N>10$, there exists $C_{N}>0$ such that we have the following estimates for $0 \leq k \leq 1$:
\begin{equation}|\partial_{\xi}^{k}\widehat{RHS_{2,2}}(t,\xi)| \leq \frac{C \lambda(t) \log^{2}(t)}{t^{2}} \sup_{x \in [100,\frac{t}{2}]}\left(\lambda(x) \log(x)\right)\begin{cases} \lambda(t)\xi^{1-k} \log(t), \quad \xi \leq \frac{1}{\lambda(t)}\\
\lambda(t)^{k}\sqrt{\xi \lambda} e^{-\xi \lambda(t)} + \frac{C_{N}\lambda(t)^{k}}{\langle \xi \rangle^{N}}, \quad \xi > \frac{1}{\lambda(t)}\end{cases}\end{equation}
\begin{equation}\label{dxidttrhs22est}\begin{split}&|\partial_{\xi}^{k}\left(\partial_{t}^{2} \widehat{RHS_{2,2}}(t,\xi)-\frac{8 \widehat{v_{2,0}}(\xi) \xi \lambda(t)^{2} m_{\geq 1}(\xi t) \sin(\xi t)}{t^{4}}\right)| \\
&\leq \frac{C \lambda(t) \log^{2}(t)}{t^{4}} \sup_{x \in [100,\frac{t}{2}]}\left(\lambda(x) \log(x)\right) \begin{cases} \xi^{1-k} \lambda(t)  \log(t), \quad \xi \leq \frac{1}{\lambda(t)}\\
\left(\xi \lambda(t)\right)^{5/2} \lambda(t)^{k}  e^{-\xi \lambda(t)} + \frac{C_{N} \lambda(t)^{k} (1+\lambda(t))}{\langle \xi \rangle^{N}}, \quad \xi > \frac{1}{\lambda(t)} \end{cases}\end{split}\end{equation}
\end{lemma}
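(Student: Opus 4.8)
The argument has three stages: establish the representation formula \eqref{rhs22hatformula}, prove the first pointwise bound by a region-by-region analysis, and then — for the second bound — extract the explicit resonant (``light-cone'') term that must be subtracted. To derive \eqref{rhs22hatformula}, use $\frac{\cos(2Q_1(r/\lambda(t)))-1}{r^2}=\frac{-8\lambda(t)^2}{(\lambda(t)^2+r^2)^2}$ and the integral representation \eqref{v2} of $v_2$ to write
\[\widehat{RHS_{2,2}}(t,\xi)=\int_0^\infty J_1(r\xi)\,\frac{\cos(2Q_1(r/\lambda(t)))-1}{r^2}\left(\int_0^\infty J_1(r\omega)\sin(t\omega)\widehat{v_{2,0}}(\omega)\,d\omega\right)r\,dr.\]
Interchanging the $r$- and $\omega$-integrations — justified by Lemma \ref{v20ests} (rapid decay of $\widehat{v_{2,0}}$ beyond $\omega=1/100$, and only mild, $\lambda$- and $\log$-controlled growth for small $\omega$), together with $|J_1(x)|\le C\min\{x,x^{-1/2}\}$ and the decay of $(\lambda(t)^2+r^2)^{-2}$ — and then substituting $r=x\lambda(t)$ in the inner integral identifies it, via \eqref{kdef}, with $K(\xi\lambda(t),\omega\lambda(t))$, which is \eqref{rhs22hatformula}.

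For the first pointwise estimate I would play off two representations: \eqref{rhs22hatformula}, efficient wherever either the oscillation of $\sin(t\omega)$ in $\omega$ or the exponential kernel decay of Lemma \ref{kernelests} supplies a gain, against the direct form $\widehat{RHS_{2,2}}(t,\xi)=-8\lambda(t)^2\int_0^\infty J_1(r\xi)\frac{v_2(t,r)}{(\lambda(t)^2+r^2)^2}r\,dr$, which handles the far field $r\gg t$ most cleanly. In \eqref{rhs22hatformula}, split the $\omega$-integral at $\omega\sim 1/t$: on $\omega\lesssim 1/t$ use $|\sin(t\omega)|\le t\omega$ directly; on $\omega\gtrsim 1/t$ integrate by parts twice against $\sin(t\omega)$, which is legitimate since $K$ is $C^2$ in $\omega$ (the singularity of $K$ across $\omega=\xi$ only appears at the third $\omega$-derivative). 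Each integration by parts gains $1/t$ while $\partial_\omega$ costs at most $1/\omega$ on the integrand, so the gain per step is $\lesssim 1/(t\omega)\ll 1$ throughout $\omega\gtrsim 1/t$; the one boundary term, at $\omega\sim 1/t$, is controlled using $|\widehat{v_{2,0}}(1/t)|\lesssim\sup_{[100,t]}(\lambda(x)\log x)\langle\log t\rangle$ and the kernel bound $|K(\xi\lambda(t),\omega\lambda(t))|\le C\xi\lambda(t)\,\omega\lambda(t)(1+|\log\max\{\xi\lambda(t),\omega\lambda(t)\}|)$; and the remaining twice-integrated integral, split at $\omega=1/\lambda(t)$ (using the $\omega\lambda(t)\le 1$ kernel bound below this scale and the $e^{-\omega\lambda(t)}$ bound above it), reproduces precisely the asserted size after invoking \eqref{lambdacomparg} to control $\sup_{[100,1/\omega]}(\lambda\log)$ by $\sup_{[100,t/2]}(\lambda\log)$. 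The far field $r\gg t$, estimated from the direct form, uses the sharp bound $|v_2(t,r)|\le\frac{C\log r}{\sqrt r\langle|t-r|\rangle^{1/2}}\sup_{[100,r]}(\lambda\log)$ from Lemma \ref{v2estlemma} together with $\sup_{[100,r]}(\lambda\log)\lesssim(r/t)^{C_u}\frac{\log r}{\log t}\sup_{[100,t/2]}(\lambda\log)$ from \eqref{lambdacomparg}, which is exactly what makes the $r$-integral converge with the claimed size. Differentiation in $\xi$ is absorbed into the $\partial_\xi$-kernel bounds of Lemma \ref{kernelests}, which cost only a factor $\max\{\lambda(t),1/\xi\}$, producing the weight $\xi^{-k}$; the logarithmic kernel weights stay $\lesssim\log t$ over the relevant $\omega$-ranges.

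For the second estimate I would differentiate \eqref{rhs22hatformula} (equivalently the direct form) twice in $t$. Terms in which a time derivative falls on the potential carry a factor $\lambda'(t)/\lambda(t)\lesssim 1/t$ or $\lambda''(t)/\lambda(t)\lesssim 1/t^2$ and, estimated as in the first bound using the time-derivative kernel estimates of Lemma \ref{kernelests}, contribute only to the error in \eqref{dxidttrhs22est}. The delicate term is the one with both derivatives on $v_2$: since $v_2$ is a free wave, $\partial_t^2 v_2=(\partial_r^2+\frac1r\partial_r-\frac1{r^2})v_2$, so this term equals $\int_0^\infty J_1(r\xi)\frac{\cos(2Q_1(r/\lambda(t)))-1}{r^2}(\partial_r^2+\frac1r\partial_r-\frac1{r^2})v_2\,r\,dr$. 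Split at $r\sim t$: the part $r\le t/2$ is below the claimed size by the $v_2$- and $\partial_r v_2$-bounds of Lemma \ref{v2estlemma} and the smallness of the potential there; on $r\gtrsim t/2$, freeze the slowly-varying potential at $r=t$ (the replacement error controlled by $|\partial_r\frac{\cos(2Q_1(r/\lambda(t)))-1}{r^2}|\lesssim\lambda(t)^2/r^5$ and the near-cone $v_2$-bounds), then integrate by parts in $r$ to move $\partial_r^2+\frac1r\partial_r-\frac1{r^2}$ onto $J_1(r\xi)$, on which it acts as $-\xi^2$. This yields $\frac{\cos(2Q_1(t/\lambda(t)))-1}{t^2}(-\xi^2)\int_{r\gtrsim t}J_1(r\xi)v_2(t,r)r\,dr$, whose leading part for $\xi t\gtrsim 1$ is $\frac{\cos(2Q_1(t/\lambda(t)))-1}{t^2}(-\xi^2)\frac{\sin(t\xi)\widehat{v_{2,0}}(\xi)}{\xi}m_{\ge 1}(\xi t)$ (using that $\frac{\sin(t\xi)\widehat{v_{2,0}}(\xi)}{\xi}$ is the order-$1$ Hankel transform of $v_2$, concentrated near the cone for these frequencies); replacing $\frac{\cos(2Q_1(t/\lambda(t)))-1}{t^2}$ by $-\frac{8\lambda(t)^2}{t^4}$, with error $O(\lambda(t)^4/t^6)$, gives exactly the subtracted term $\frac{8\widehat{v_{2,0}}(\xi)\xi\lambda(t)^2 m_{\ge 1}(\xi t)\sin(\xi t)}{t^4}$, and every remaining piece (the freezing error, the boundary term at $r\sim t$, the $r\le t/2$ part, and the below-cone part of $\int J_1 v_2 r\,dr$ absorbed into the $m_{\ge1}(\xi t)$ cutoff) goes into the right-hand side of \eqref{dxidttrhs22est}.

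The main obstacle, in both estimates, is the region near the light cone $r\sim t$ — equivalently the diagonal $\omega\sim\xi$ — where $v_2$ has only $\sim 1/\sqrt r$ pointwise decay (the $\langle|t-r|\rangle^{-1/2}$ gain being $O(1)$), the kernel $K(\xi\lambda(t),\omega\lambda(t))$ has its slowest decay and a mild singularity at $\omega=\xi$, and neither integration by parts against $\sin(t\omega)$ nor $|\sin(t\omega)|\le t\omega$ is efficient. The resolution is to handle the far field from the direct Hankel form using the sharp $v_2$-bounds of Lemma \ref{v2estlemma} and the comparison \eqref{lambdacomparg}, while near the cone one isolates the exact resonant contribution — precisely the subtracted term of \eqref{dxidttrhs22est} — and shows all else is perturbative.
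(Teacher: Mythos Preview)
Your derivation of \eqref{rhs22hatformula} and your treatment of the first estimate match the paper: split the $\omega$-integral at $\omega\sim 1/t$ with a cutoff, estimate directly below and integrate by parts (twice, since only $1/t^2$ is needed and $K$ is $C^2$ across the diagonal) above, then carve up $\xi$- and $\omega$-ranges using Lemmas \ref{v20ests} and \ref{kernelests}.

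For the second estimate your route genuinely diverges from the paper's. The paper stays entirely in the $\omega$-representation: writing $\partial_t^2\widehat{RHS_{2,2}}$ as the sum of three integrals $int_i$ (according to how the two $t$-derivatives distribute over $\sin(t\omega)$ and $K(\xi\lambda(t),\omega\lambda(t))$), it splits each at $\omega\sim 1/t$ and, on $\omega\gtrsim 1/t$, integrates by parts in $\omega$ \emph{four} times for $int_1$ (three and two times for $int_2,int_3$). Because $\partial_\omega^3 K$ has a jump across $\omega=\xi$ of size $\tfrac{8}{\xi\lambda(t)}$ (from $4(\partial_z^3 K_{y>z}-\partial_z^3 K_{z>y})|_{z=y}=8/y$), the fourth integration by parts in $int_1$ produces a boundary term at $\omega=\xi$ that is exactly the subtracted expression in \eqref{dxidttrhs22est}; everything else is then estimated case-by-case in $\xi$. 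Your approach instead passes to physical space via $\partial_t^2 v_2=(\partial_r^2+\tfrac1r\partial_r-\tfrac1{r^2})v_2$, freezes the potential near the light cone, and moves the Bessel operator onto $J_1$. This does recover the same subtracted term and is more transparent geometrically, but it is more delicate to make rigorous: the potential is not ``slowly varying'' over $r\gtrsim t/2$ (it changes by an $O(1)$ factor there), and what actually justifies the freezing is the $\langle t-r\rangle^{-5/2}$ localization of $\partial_t^2 v_2$ near $r=t$, which you allude to but do not spell out; you must also control the boundary terms at $r\sim t/2$ from the integration by parts and the far-field error $r\gg t$ separately. The paper's $\omega$-space argument avoids all of this by letting the algebraic jump in $\partial_\omega^3 K$ do the work.
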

\begin{proof} Equation \eqref{rhs22hatformula} follows from insertion of \eqref{v2} into \eqref{rhsdefs}, and Fubini's theorem. To prove the stated estimates, we start with the decomposition
\begin{equation}\label{dttrhs22hat}\begin{split} \partial_{t}^{2}\widehat{RHS_{2,2}}(t,\xi) &= - \int_{0}^{\infty} d\omega \widehat{v_{2,0}}(\omega) \omega^{2} \sin(t\omega) K(\xi\lambda(t),\omega \lambda(t))\\
&+2 \int_{0}^{\infty} d\omega \widehat{v_{2,0}}(\omega) \omega \cos(\omega t) \partial_{t}\left(K(\xi \lambda(t),\omega \lambda(t))\right)\\
&+\int_{0}^{\infty} d\omega \widehat{v_{2,0}}(\omega) \sin(\omega t) \partial_{t}^{2}\left(K(\xi \lambda(t),\omega\lambda(t))\right)\end{split}\end{equation}
(Strictly speaking, we first split the integral in \eqref{rhs22hatformula} over the regions $\omega \leq \xi$ and $\omega \geq \xi$, differentiate under the integral sign, and then combine the resulting integrals). Define $int_{i}(t,\xi)$ to be the $ith$ line on the right-hand side of the expression above. Starting with $int_{1}$, we make the following decomposition (where $m_{\leq 1}$ is defined in \eqref{mleq1def})
$$int_{1}(t,\xi) = int_{1,a}(t,\xi) +int_{1,b}(t,\xi), \quad int_{1,a}(t,\xi) = -\int_{0}^{\infty} d\omega \widehat{v_{2,0}}(\omega) \omega^{2} \sin(\omega t) m_{\leq 1}(\omega t) K(\xi \lambda(t), \omega \lambda(t))$$
For $int_{1,a}$, we separately treat the cases $\xi < \frac{1}{t}$, $\frac{1}{\lambda(t)} > \xi > \frac{1}{t}$, and $\xi > \frac{1}{\lambda(t)}$. In each case, we directly estimate $int_{1,a}$ using Lemmas \ref{v20ests} and \ref{kernelests}. To estimate $int_{1,b}$, we decompose the integral into the regions $\omega \leq \xi$ and $\omega \geq \xi$, and integrate by parts four times in the $\omega$ variable. We recall \eqref{kexpform} and note that
$$4\left(\partial_{z}^{3} K_{y>z}(y,z)-\partial_{z}^{3} K_{z>y}(y,z)\right)\Bigr|_{z=y}=\frac{8}{y}$$
which explains the form of \eqref{dxidttrhs22est}. Then, we consider the two cases $\frac{1}{\lambda(t)} < \frac{1}{100}$, and $\frac{1}{\lambda(t)} > \frac{1}{100}$. Within each of these two cases, we then consider various regions of $\xi$. For example, in the case $\frac{1}{\lambda(t)} < \frac{1}{100}$, we consider the regions
$$\xi < \frac{1}{2 t}, \quad \frac{1}{2t} < \xi < \frac{1}{\lambda(t)}, \quad \frac{1}{\lambda(t)} < \xi < \frac{1}{100}, \quad \frac{1}{100} < \xi$$
Then, we directly estimate the resulting integrals using Lemmas \ref{v20ests} and \ref{kernelests}. After this, we combine the estimates in the separate cases $\frac{1}{\lambda(t)} < \frac{1}{100}$, and $\frac{1}{\lambda(t)} > \frac{1}{100}$. We then use the same procedure to estimate $int_{2}, int_{3}$ (where we integrate by parts $4-i+1$ times to treat $int_{i,b}$) and combine everything to get \eqref{dxidttrhs22est} for $k=0$. \\
\\
To estimate $\partial_{\xi}\partial_{t}^{2}\widehat{RHS_{2,2}}(t,\xi)$, we start with
\begin{equation}\label{dttrhs22hatint}\begin{split}\partial_{t}^{2} \widehat{RHS}_{2,2}(t,\xi) &= \int_{0}^{\infty} \widehat{v_{2,0}}(\omega)\begin{aligned}[t]&\left(-\omega^{2}\sin(t\omega) K(\xi\lambda(t),\omega\lambda(t)) + 2 \omega \cos(\omega t) \partial_{t}\left(K(\xi\lambda(t),\omega\lambda(t))\right) \right.\\
&\left.+ \sin(\omega t) \partial_{t}^{2}\left(K(\xi\lambda(t),\omega\lambda(t))\right)\right) m_{\leq 1}(\omega t) d\omega\end{aligned}\\
&+\frac{8 \widehat{v_{2,0}}(\xi) \xi \lambda(t)^{2} m_{\geq 1}(\xi t) \sin(\xi t)}{t^{4}}\\
&-\int_{0}^{\infty} \frac{\sin(\omega t)}{t^{4}}\partial_{\omega}^{4}\left(\widehat{v_{2,0}}(\omega) \omega^{2} K(\xi\lambda(t),\omega\lambda(t)) m_{\geq 1}(\omega t)\right) d\omega\\
&+2 \int_{0}^{\infty} \frac{\sin(\omega t)}{t^{3}} \partial_{\omega}^{3}\left(\widehat{v_{2,0}}(\omega) \omega \partial_{t}\left(K(\xi\lambda(t),\omega\lambda(t))\right) m_{\geq 1}(\omega t)\right) d\omega\\
&-\int_{0}^{\infty} \frac{\sin(\omega t)}{t^{2}} \partial_{\omega}^{2}\left(\widehat{v_{2,0}}(\omega)m_{\geq 1}(\omega t) \partial_{t}^{2}\left(K(\xi\lambda(t),\omega\lambda(t))\right)\right) d\omega \end{split}\end{equation}
where $m_{\geq 1}(x) = 1-m_{\leq 1}(x)$. We split each integral of \eqref{dttrhs22hatint} into the regions $\omega \leq \xi$ and $\omega \geq \xi$, and differentiate with respect to $\xi$. Finally, we use the estimates of Lemmas \ref{kernelests}, \ref{v20ests} to finish the proof of \eqref{dxidttrhs22est}. The estimation of $\partial_{\xi}^{k}\widehat{RHS}_{2,2}(t,\xi)$ for $k=0,1$ is done similarly. 
 \end{proof}
\noindent With the same procedure used to establish Lemma \ref{vexreplemma}, we get that $u_{w,2}$, the solution to \eqref{uw2eqndef} with zero Cauchy data at infinity, is given by the following. 
\begin{equation}\label{uw2def}\begin{split}u_{w,2}(t,r) &= \int_{t}^{\infty} ds \int_{0}^{\infty} d\xi \sin((t-s)\xi) J_{1}(r\xi) \widehat{RHS_{2}}(s,\xi)\\
&= - \int_{0}^{\infty} J_{1}(r\xi) \frac{\widehat{RHS_{2}}(t,\xi)}{\xi} d\xi - \int_{0}^{\infty} d\xi \int_{t}^{\infty} ds \frac{\sin((t-s)\xi)}{\xi^{2}} \partial_{s}^{2}\widehat{RHS_{2}}(s,\xi) J_{1}(r\xi)\end{split}\end{equation} 
Using Lemmas \ref{w1estlemma} and \ref{v2estlemma}, we can use Fubini's theorem to get that the first integral in \eqref{uw2def} is
\begin{equation}\label{uw2elldef}u_{w,2,ell}(t,r):=- \int_{0}^{\infty} J_{1}(r\xi) \frac{\widehat{RHS_{2}}(t,\xi)}{\xi} d\xi = -\frac{1}{2}\left(\frac{1}{r}\int_{0}^{r} s^{2} RHS_{2}(t,s) ds + r \int_{r}^{\infty} RHS_{2}(t,s) ds\right)\end{equation}
It will be useful to compute the following function, which will turn out to contain the leading behavior of $u_{w,2,ell}$ in the matching region.
\begin{equation}\label{uw2ell0def}u_{w,2,ell,0}(t,r):=-\frac{1}{2}\left(\frac{1}{r}\int_{0}^{r} s^{2} RHS_{2,0}(t,s) ds + r \int_{r}^{\infty} RHS_{2,0}(t,s) ds\right)\end{equation}
with
\begin{equation}\label{rhs20def}\begin{split}RHS_{2,0}(t,s) &:= \left(\frac{\cos(2Q_{1}(\frac{s}{\lambda(t)}))-1}{s^{2}}\right) s \left(f_{1}(t) - \lambda''(t)\log(s)\right) \\
&= \left(\frac{\cos(2Q_{1}(\frac{s}{\lambda(t)}))-1}{s^{2}}\right)\left(w_{1,main}+v_{2,main}\right)(t,s) \end{split}\end{equation}
and
\begin{equation}\label{f1tdef}f_{1}(t) = \lambda''(t) + \frac{\lambda'(t)^{2}}{2 \lambda(t)} + \lambda''(t) \log(\lambda(t))\end{equation}
where we recall the definitions of $w_{1,main}$ and $v_{2,main}$ in  \eqref{w1main} and \eqref{v2main}, which are the main parts of $w_{1}$ and $v_{2}$, respectively, in the matching region. We have
\begin{equation}\label{uw2ell0exp}\begin{split} u_{w,2,ell,0}(t,r)&= \frac{2 f_{1}(t) \lambda(t)^2 \log \left(\frac{r^2}{\lambda(t)^2}+1\right)}{r}\\
&-\frac{\lambda(t)^2 \lambda''(t) \left(\text{Li}_2\left(-\frac{r^2}{\lambda(t)^2}\right)+2 \log (r) \left(\log \left(\frac{r^2}{\lambda(t)^2}+1\right)-\frac{r^2}{\lambda(t)^2+r^2}\right)+\log \left(\frac{r^2}{\lambda(t)^2}+1\right)\right)}{r}\\
& -r \lambda''(t) \left(\frac{2 \lambda(t)^2 \log (r)}{\lambda(t)^2+r^2}+\log \left(\frac{\lambda(t)^2}{r^2}+1\right)\right)\end{split}\end{equation}
The leading behavior of $u_{w,2,ell,0}$ in the matching region is $u_{w,2,ell,0,cont}$, which is defined by
\begin{equation}\label{uw2ell0contdef}\begin{split}u_{w,2,ell,0,cont}(t,r):= \frac{-2}{r}&\left(\log(r)\left(-\lambda(t)\lambda'(t)^{2}-\lambda(t)^{2}\lambda''(t)-2 \lambda(t)^{2} \log(\lambda(t))\lambda''(t)\right)\right.\\
&\left.+\lambda(t) \log(\lambda(t))\lambda'(t)^{2}+\lambda(t)^{2}\log(\lambda(t))\lambda''(t) + \lambda(t)^{2}\log^{2}(\lambda(t))\lambda''(t)\right.\\
&+\left.\lambda(t)^{2}\log^{2}(r)\lambda''(t)+\frac{1}{2} \lambda(t)^2 \lambda''(t)-\frac{1}{12} \pi ^2 \lambda(t)^2 \lambda''(t)\right)\end{split}\end{equation}
In the above computation, we have used asymptotics of $\text{Li}_{2}$, for example, from \cite{gr}. In the course of proving Proposition \ref{thirdorderprop}, which will occur when we do a third order matching, we will record refined estimates on the difference between $u_{w,2}$ and its leading parts in the matching region. On the other hand, we will also need estimates which are global in the spatial coordinate, of the difference between the full correction $u_{w,2}$, and the piece $u_{w,2,ell,0,cont}$. Now that we have established Lemmas \ref{rhs21lemma} and \ref{rhs22lemma}, we can estimate $u_{w,2}-u_{w,2,ell,0,cont}$.
\begin{lemma}\label{uw2minuselllemma} For $0 \leq k \leq 1$, the following two estimates are true:
\begin{equation}\label{uw2minusellest}\begin{split}&|\partial_{r}^{k}\left(u_{w,2}(t,r)-u_{w,2,ell}\right)(t,r)| \leq Cr \frac{\lambda(t)^{2-k}(1+\lambda(t))\log^{5}(t+r)}{t^{4}} \sup_{x \in [100,\frac{t}{2}]}\left(\lambda(x) \log(x)\right), \quad r \geq g(t) \lambda(t)\end{split}\end{equation}
and
\begin{equation}\label{uw2largerest}|\partial_{r}^{k}u_{w,2}(t,r)| \leq C \lambda(t)^{\frac{1}{2}-k}(1+\lambda(t)^{2+k}) \log^{2}(t)\log^{2}(r) \frac{\sup_{x \in [100,t]} \left(\lambda(x) \log(x)\right) }{\sqrt{r}t^{2}}, \quad r \geq \frac{t}{2}\end{equation}
For $0 \leq j \leq 8$ and $0 \leq k \leq 1$, the following two estimates are true.
\begin{equation}\label{ellminusell0inside}|\partial_{r}^{k}\partial_{t}^{j}\left(u_{w,2,ell}-u_{w,2,ell,0}\right)(t,r)| \leq \frac{C r^{1-k} \lambda(t)^{2}\log(t)}{t^{4+j}} \sup_{x \in [100,t]}\left(\lambda(x) \log(x)\right), \quad r \leq \frac{t}{2}\end{equation}
and
$$|\partial_{r}^{k}\left(u_{w,2,ell}- u_{w,2,ell,0}\right)(t,r)| \leq \frac{C \lambda(t)^{2} \log(r)}{r^{1+k} t^{2}} \sup_{x \in [100,t+r]} \left(\lambda(x) \log(x)\right), \quad r > \frac{t}{2}$$
For $0 \leq j,k \leq 5$,
\begin{equation}\label{ell0minusell0contprelim} |\partial_{t}^{j}\partial_{r}^{k}\left(u_{w,2,ell,0}(t,r)-u_{w,2,ell,0,cont}(t,r)\right)| \leq \begin{cases} \frac{C \lambda(t)^{3} \left(\log^{2}(r) + \log^{2}(t)\right)}{r^{1+k} t^{2+j}}, \quad r \leq \lambda(t)\\
\frac{C \lambda(t)^{5} \left(|\log(r)|+\log(t)\right)}{r^{3+k} t^{2+j} }, \quad r \geq \lambda(t)\end{cases}\end{equation}
For $0 \leq k \leq 1$,
\begin{equation}\label{uw2ellsymb} r|\partial_{r} u_{w,2,ell}(t,r)| + t^{k}|\partial_{t}^{k}u_{w,2,ell}(t,r)| \leq \begin{cases} \frac{C r \lambda(t)(\log(t)+|\log(r)|)}{t^{2}}, \quad r \leq \lambda(t)\\
\frac{C \lambda(t)^{2} \log(t)}{r t^{2}} \sup_{x \in [100,t]}\left(\lambda(x) \log(x)\right), \quad \lambda(t) < r < \frac{t}{2}\\
\frac{C \lambda(t)^{2} \log(r) \sup_{x \in [100,t+r]}\left(\lambda(x) \log(x)\right)}{\sqrt{r} t^{5/2}}, \quad r > \frac{t}{2}\end{cases}\end{equation}
For $0 \leq k \leq 2$,
\begin{equation}\label{dttuw2ellsymb} |\partial_{t}^{2+k}u_{w,2,ell}(t,r)| \leq \begin{cases} \frac{C r \lambda(t)(\log(t)+|\log(r)|)}{t^{4+k}}, \quad r \leq \lambda(t)\\
\frac{C \lambda(t)^{2} \log(t)}{r t^{4+k}} \sup_{x \in [100,t]}\left(\lambda(x) \log(x)\right), \quad \lambda(t) < r < \frac{t}{2}\\
\frac{C \lambda(t)^{2} \log(r) \sup_{x \in [100,t+r]}\left(\lambda(x) \log(x)\right)}{\sqrt{r}}\left(\frac{1}{r^{4}\langle t-r \rangle^{\frac{1}{2}+k}}+\frac{1}{t^{9/2+k}}\right), \quad r > \frac{t}{2}\end{cases}\end{equation}
Finally, we have the following estimate, for all $r >0$. (Recall the definition of $E$ in \eqref{ecoercive}).
\begin{equation}\label{uw2enest}\sqrt{E(u_{w,2}(t),\partial_{t}u_{w,2})} +|u_{w,2}(t,r)| \leq \frac{C \log(t)}{t^{1-C_{u}}}\end{equation}
\end{lemma}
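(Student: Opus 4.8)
The two tools are the local elliptic representation \eqref{uw2elldef} of $u_{w,2,ell}$ and the full representation \eqref{uw2def}, which together exhibit $u_{w,2}=u_{w,2,ell}+R$ with
$$R(t,r)=-\int_{0}^{\infty} d\xi\, J_{1}(r\xi)\int_{t}^{\infty} ds\,\frac{\sin((t-s)\xi)}{\xi^{2}}\,\partial_{s}^{2}\widehat{RHS_{2}}(s,\xi),$$
together with the splitting $RHS_{2}=RHS_{2,1}+RHS_{2,2}$, the pointwise bounds on $w_{1},v_{2}$ from Lemmas \ref{w1estlemma} and \ref{v2estlemma}, the structural bounds from Lemmas \ref{w1strlemma} and \ref{v2strlemma}, and the Fourier-side bounds from Lemmas \ref{rhs21lemma}, \ref{rhs22lemma}, \ref{kernelests}. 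Throughout one uses the identity $\tfrac{\cos(2Q_{1}(x))-1}{x^{2}}=\tfrac{-8}{(1+x^{2})^{2}}$ from Lemma \ref{kernelests}, so that $|RHS_{2,i}(t,s)|\leq C\min\{\lambda(t)^{-2},\lambda(t)^{2}s^{-4}\}\,|(w_{1}\text{ or }v_{2})(t,s)|$, and similarly for its $t$-derivatives (the differentiation of $Q_{1}(s/\lambda(t))$ producing only the symbol factors $\lambda'(t)/\lambda(t)$, $\lambda''(t)/\lambda(t)$, \dots).

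First, the symbol-type estimates \eqref{uw2ellsymb} and \eqref{dttuw2ellsymb}: I differentiate \eqref{uw2elldef} directly in $r$ (the $rRHS_{2}(t,r)$ boundary terms cancel, leaving $\partial_{r}u_{w,2,ell}=\tfrac12(\tfrac1{r^{2}}\int_{0}^{r}s^{2}RHS_{2}-\int_{r}^{\infty}RHS_{2})$) and in $t$ under the integral sign, and then feed in the bounds just described, splitting the $s$-integral over $s\leq\lambda(t)$, $\lambda(t)<s<t/2$, and $s>t/2$; this produces exactly the three cases, and for $r>t/2$ in \eqref{dttuw2ellsymb} the extra $\langle t-r\rangle$ decay comes from \eqref{dtdrv2est} for $v_{2}$. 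Next, the elliptic differences \eqref{ellminusell0inside}: since $u_{w,2,ell}-u_{w,2,ell,0}$ has the local form \eqref{uw2elldef}, \eqref{uw2ell0def} with integrand $RHS_{2}-RHS_{2,0}=\tfrac{-8}{(1+(s/\lambda(t))^{2})^{2}s^{2}}\bigl(w_{1,sub}+v_{2,sub}\bigr)(t,s)$, I apply Lemmas \ref{w1strlemma}, \ref{v2strlemma} for small $s$ and Lemmas \ref{w1estlemma}, \ref{v2estlemma} for large $s$, the weight $\lambda(t)^{2}s^{-4}$ supplying convergence; for $r>t/2$ the outer term $r\int_{r}^{\infty}$ dominates. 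Finally the comparison $u_{w,2,ell,0}-u_{w,2,ell,0,cont}$ in \eqref{ell0minusell0contprelim} is purely algebraic: both are the explicit functions \eqref{uw2ell0exp}, \eqref{uw2ell0contdef}, so I expand $\text{Li}_{2}(-r^{2}/\lambda(t)^{2})$ and the logarithms by their Taylor series at $0$ when $r\leq\lambda(t)$, and by the large-argument asymptotics $\text{Li}_{2}(-x)=-\tfrac{\pi^{2}}{6}-\tfrac12\log^{2}x+O(x^{-1}\log x)$ when $r\geq\lambda(t)$, and differentiate term by term.

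For the radiative remainder $R=u_{w,2}-u_{w,2,ell}$, which yields \eqref{uw2minusellest} and, combined with the symbol estimates above, \eqref{uw2largerest}: I bound $R$ and $\partial_{r}R$ by splitting the $\xi$-integral at $\xi=1/\lambda(s)$ and at $\xi=1/r$, using $|J_{1}(r\xi)|\leq C\min\{r\xi,(r\xi)^{-1/2}\}$, $|\sin((t-s)\xi)|\,\xi^{-2}\leq\min\{(s-t)^{2},\xi^{-2}\}$, and the bounds on $\partial_{s}^{2}\widehat{RHS_{2,1}}$, $\partial_{s}^{2}\widehat{RHS_{2,2}}$ from Lemmas \ref{rhs21lemma}, \ref{rhs22lemma}. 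The delicate point is the slowly decaying piece $\tfrac{8\widehat{v_{2,0}}(\xi)\,\xi\lambda(s)^{2}m_{\geq 1}(\xi s)\sin(\xi s)}{s^{4}}$ isolated in \eqref{dxidttrhs22est}: it must be pulled out and integrated in $s$ separately, the cutoff $m_{\geq 1}(\xi s)$ confining it to $\xi\gtrsim 1/s$ while Lemma \ref{v20ests} controls $\widehat{v_{2,0}}$; in the region $r\geq t/2$ this is exactly where the $\langle t-r\rangle$ gain is produced, by the substitution $\omega=\sqrt{\xi}$ and integration by parts exploiting $(\partial_{t}+\partial_{r})\sin((t-r)\xi)=0$, as in the proof of Lemma \ref{v2estlemma}. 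The energy bound \eqref{uw2enest} then follows from the Hankel--Plancherel identity $E(u_{w,2},\partial_{t}u_{w,2})=\pi\int_{0}^{\infty}\bigl(|\partial_{t}\widehat{u_{w,2}}(t,\xi)|^{2}+\xi^{2}|\widehat{u_{w,2}}(t,\xi)|^{2}\bigr)\,d\xi$ with $\widehat{u_{w,2}}(t,\xi)=\int_{t}^{\infty}\tfrac{\sin((t-s)\xi)}{\xi}\widehat{RHS_{2}}(s,\xi)\,ds$ (after the integration by parts already used for \eqref{uw2def}): Minkowski's inequality in $s$ reduces it to the weighted $L^{2}_{\xi}$ norms of $\widehat{RHS_{2}}$ and $\partial_{s}\widehat{RHS_{2}}$ from Lemmas \ref{rhs21lemma}, \ref{rhs22lemma}, and the pointwise bound on $|u_{w,2}(t,r)|$ comes from the symbol estimates for $r<t/2$ and from \eqref{uw2largerest} for $r\geq t/2$.

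I expect the main obstacle to be the light-cone region of the radiative remainder: reconciling the merely logarithmic low-frequency behaviour of $\widehat{v_{2,0}}$ (Lemma \ref{v20ests}) with the simultaneous demands of $\sqrt{r}$ spatial decay and $\langle t-r\rangle$ decay forces a fine decomposition of the $\xi$-integral and repeated integration by parts in $\xi$ — in effect a more elaborate rerun of the oscillatory-integral analysis of Lemma \ref{v2estlemma}, now carrying the extra kernel $K(\xi\lambda(s),\omega\lambda(s))$ and its derivative bounds from Lemma \ref{kernelests}. The remaining cases are comparatively routine weighted-integral bookkeeping once the case split in $r$, $s$ and $\xi$ is fixed as above.
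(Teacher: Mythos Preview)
Your overall strategy matches the paper's: elliptic formula \eqref{uw2elldef} for $u_{w,2,ell}$ and its differences, the Fourier representation for $R=u_{w,2}-u_{w,2,ell}$, the isolated resonant piece from \eqref{dxidttrhs22est}, explicit asymptotics for \eqref{ell0minusell0contprelim}, and Hankel--Plancherel plus Minkowski for \eqref{uw2enest}. Three points, however, need to be addressed.

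\textbf{The $t$-derivative trick for $v_{2}$ in the far region.} Your derivation of \eqref{ellminusell0inside} for $j\geq 1$, and of \eqref{uw2ellsymb}, \eqref{dttuw2ellsymb}, differentiates under the integral sign in $t$. But the estimates of Lemma~\ref{v2estlemma} on $\partial_{t}^{k}v_{2}(t,s)$ in the region $s\geq t/2$ do \emph{not} gain $t^{-k}$: free waves do not decay in $t$ along the light cone. Invoking \eqref{dtdrv2est} alone does not close this; the paper's mechanism is to rewrite, inside the $s$-integral,
\[
\partial_{t}v_{2}(t,s)=\bigl(\partial_{t}+\partial_{s}\bigr)v_{2}(t,s)-\partial_{s}v_{2}(t,s),\qquad
\partial_{t}^{2}v_{2}(t,s)=\partial_{s}^{2}v_{2}+\tfrac1s\partial_{s}v_{2}-\tfrac{v_{2}}{s^{2}},
\]
(iterated as needed) and then integrate by parts in $s$, so that the $t^{-j}$ gain comes from extra $s$-decay of the weight $\lambda(t)^{2}(s^{2}+\lambda(t)^{2})^{-2}$ rather than from $v_{2}$. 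This is also what distinguishes \eqref{uw2ellsymb} from \eqref{dttuw2ellsymb} in the region $r>t/2$: for one integration by parts the boundary contributions at $s=r$ cancel between $\tfrac1r\int_{0}^{r}$ and $r\int_{r}^{\infty}$, but for two or more they do not, which produces the extra $\langle t-r\rangle^{-1/2-k}$ term. You should make this step explicit.

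\textbf{The $(s,\xi)$ decomposition for $R$.} Splitting only at $\xi=1/\lambda(s)$ and $\xi=1/r$ is not enough for \eqref{uw2minusellest}. The paper first splits the $s$-integral at $s-t=r$; then, within each piece, splits $\xi$ at the three scales $1/r$, $1/(s-t)$, $1/\lambda(s)$ (in the relevant order), and integrates by parts in $\xi$ (against $\sin((s-t)\xi)$) precisely when $\xi\geq 1/(s-t)$ and $s-t\geq r$. For the isolated resonant term $8\widehat{v_{2,0}}(\xi)\xi\lambda(s)^{2}m_{\geq 1}(\xi s)\sin(\xi s)/s^{4}$, the paper does not integrate in $s$ first but uses the product-to-sum identity $\sin((s-t)\xi)\sin(\xi s)=\tfrac12(\cos(t\xi)-\cos((2s-t)\xi))$ and then integrates by parts in $\xi$ for $\xi\geq 1/t$. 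Your description of ``integrating in $s$ separately'' for this piece is too vague to be checked.

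\textbf{The energy identity.} Hankel--Plancherel gives $\|\partial_{r}u_{w,2}+u_{w,2}/r\|_{L^{2}}$; to pass to $\|\partial_{r}u_{w,2}\|_{L^{2}}^{2}+\|u_{w,2}/r\|_{L^{2}}^{2}$ one must verify that the cross term $\int \partial_{r}(u_{w,2}^{2})\,dr$ vanishes. The paper does this by a limiting argument, using $\lim_{r\to 0}u_{w,2}(t,r)=0$ (from $\xi\widehat{RHS_{2}}\in L^{1}$ and dominated convergence) and the decay \eqref{uw2largerest} at infinity, together with monotone convergence. The pointwise part of \eqref{uw2enest} is then obtained not from the symbol estimates but uniformly in $r$ via $|u_{w,2}(t,r)|\leq C\int_{t}^{\infty}\|RHS_{2}(s,\cdot)\|_{L^{2}}\,ds$ (Cauchy--Schwarz in $\xi$ against $|J_{1}(x)|^{2}/x\in L^{1}$).
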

\begin{proof} We have
\begin{equation}\label{uw2ellminusell0initialstep}\begin{split}&u_{w,2,ell}(t,r)-u_{w,2,ell,0}(t,r) \\
&= -\frac{1}{2}\left(\frac{1}{r}\int_{0}^{r} s^{2}\left(RHS_{2}(t,s)-RHS_{2,0}(t,s)\right) ds + r \int_{r}^{\infty} \left(RHS_{2}(t,s)-RHS_{2,0}(t,s)\right) ds\right)\end{split}\end{equation}
where we recall the definitions of $RHS_{2,0}$ and $RHS_{2}$ from \eqref{rhs20def}, and \eqref{uw2eqndef}, respectively. Inside the integrals, in the region $s \leq \frac{t}{2}$, we use
$$RHS_{2}(t,s) - RHS_{2,0}(t,s) = \left(\frac{\cos(2Q_{1}(\frac{s}{\lambda(t)}))-1}{s^{2}}\right) \left(v_{2,sub}(t,s) + w_{1,sub}(t,s)\right)$$
and the estimates on $v_{2,sub}$ and $w_{1,sub}$ from Lemmas \ref{w1strlemma} and \ref{v2strlemma}. (Recall the definitions of $w_{1,sub}$ and $v_{2,sub}$ in \eqref{w1subdef} and \eqref{v2subdef}, respectively). On the other hand, in the region $s \geq \frac{t}{2}$, we use 
$$\left(RHS_{2}- RHS_{2,0}\right)(t,s) = \left(\frac{\cos(2Q_{1}(\frac{s}{\lambda(t)}))-1}{s^{2}}\right) \left(v_{2}(t,s) + w_{1}(t,s) - s \left(f_{1}(t)-\lambda''(t) \log(s)\right)\right)$$
where $f_{1}$ is given by \eqref{f1tdef}, and the estimates from Lemmas \ref{w1estlemma} and \ref{v2estlemma}. The only detail to note is that, when establishing \eqref{ellminusell0inside} for $j \geq 1$, we will have terms of the following form, for $1 \leq k \leq j$.
$$-\frac{r}{2} \int_{\frac{t}{2}}^{\infty} g(s,t)\partial_{t}^{k} v_{2}(t,s) ds$$
For these, since the estimates on $\partial_{t}^{k}v_{2}(t,s)$ in the region $s \geq \frac{t}{2}$ from Lemma \ref{v2estlemma} are not a factor of $t^{-k}$ better than the corresponding estimates on $v_{2}(t,s)$, we write the following (iterating as needed if $j >2$)
\begin{equation}\label{dtkv2rewrite}\partial_{t}^{k}v_{2}(t,s) = \begin{cases} \left(\partial_{t}+\partial_{s}\right)v_{2}(t,s) -\partial_{s}v_{2}(t,s), \quad k=1\\
\partial_{s}^{2}v_{2}+\frac{1}{s}\partial_{s}v_{2}-\frac{v_{2}}{s^{2}}, \quad k=2\end{cases}\end{equation}
and integrate by parts for the terms involving $s$ derivatives, which is why \eqref{ellminusell0inside} has a factor of $\frac{1}{t^{j}}$.\\
\\
To estimate $u_{w,2}-u_{w,2,ell}$, we  first note that 
\begin{equation}\label{uw2minusuw2ell}u_{w,2}(t,r)-u_{w,2,ell}(t,r) = \int_{t}^{\infty} ds \int_{0}^{\infty} d\xi \frac{\sin((s-t)\xi)}{\xi^{2}} \partial_{s}^{2}\widehat{RHS_{2}}(s,\xi) J_{1}(r\xi)\end{equation}
Then, we recall \eqref{dxidttrhs22est}, and write
\begin{equation}\label{RHS23}\partial_{s}^{2}\widehat{RHS_{2}}(s,\xi) = \partial_{s}^{2}\widehat{RHS_{2,3}}(s,\xi)+ \frac{8 \widehat{v_{2,0}}(\xi) \xi \lambda(t)^{2} m_{\geq 1}(\xi t) \sin(\xi t)}{t^{4}}\end{equation}
We  split the $s$ integral in \eqref{uw2minusuw2ell} involving $\partial_{s}^{2}\widehat{RHS_{2,3}}(s,\xi)$ into the regions $s-t \leq r$ and $s-t \geq r$. Then, when $s-t \leq r$, we split the $\xi$ integral in \eqref{uw2minusuw2ell} over the three regions $\xi \leq \frac{1}{r}$, $\frac{1}{r} \leq \xi \leq \frac{1}{s-t}$ and $\frac{1}{s-t} < \xi$. We make a similar splitting in the region $s-t \geq r$, with the following detail. In the region $\xi \geq \frac{1}{s-t}$, and $s-t \geq r$, we integrate by parts in the $\xi$ variable, integrating $\sin((s-t)\xi)$, and differentiating the rest of the integrand. 
Then, we use
$$|J_{1}(x)| \leq \begin{cases} C x , \quad x \leq 1\\
\frac{C}{\sqrt{x}}, \quad x \geq 1\end{cases}$$
along with \eqref{lambdacomparg} and Lemmas \ref{rhs21lemma} and \ref{rhs22lemma} to estimate each resulting integral. Recalling \eqref{RHS23}, we now need to consider the $\frac{8 \widehat{v_{2,0}}(\xi) \xi \lambda(t)^{2} m_{\geq 1}(\xi t) \sin(\xi t)}{t^{4}}$ contribution to \eqref{uw2minusuw2ell}. For this, we first use
$$ \sin((s-t)\xi) \sin(\xi s) = \frac{1}{2}\left(\cos(t\xi)-\cos((2s-t)\xi)\right)$$
and then split the $\xi$ integral in \eqref{uw2minusuw2ell} over the regions $\xi \leq \frac{1}{t}$ and $\xi\geq \frac{1}{t}$, integrating by parts in $\xi$ in the second integral (where we integrate $\cos(t\xi)-\cos((2s-t)\xi)$ and differentiate the rest of the integrand). This establishes \eqref{uw2minusellest} for $k=0$. To establish \eqref{uw2minusellest} for $k=1$, we start by differentiating \eqref{uw2minusuw2ell} in $r$, and then use the same procedure as for $u_{w,2}-u_{w,2,ell}$.\\
\\
To estimate $u_{w,2}(t,r)$ for $r \geq \frac{t}{2}$, we start with
\begin{equation}\label{uw2formulaintermediate}u_{w,2}(t,r) = \int_{t}^{\infty} ds \int_{0}^{\infty} d\xi \sin((t-s)\xi)\widehat{RHS_{2}}(s,\xi) J_{1}(r\xi)\end{equation}
Then, we use the same procedure as above, except that, whenever $|s-t-r|\geq 1$, $\xi \geq \frac{1}{s-t}$ and $\xi \geq \frac{1}{r}$, we write
\begin{equation} J_{1}(x) = \frac{\sin (x)}{\sqrt{\pi } \sqrt{x}}-\frac{\cos (x)}{\sqrt{\pi } \sqrt{x}} + J_{1}(x) -\left(\frac{\sin (x)}{\sqrt{\pi } \sqrt{x}}-\frac{\cos (x)}{\sqrt{\pi } \sqrt{x}}\right)\end{equation}
and use
$$\sin (y) \left(\frac{\sin (x)}{\sqrt{\pi x }}-\frac{\cos (x)}{\sqrt{\pi x }}\right)=\frac{-\sin (y-x)-\sin (x+y)+\cos (y-x)-\cos (x+y)}{2\sqrt{ \pi x }}$$
with $y=(s-t)\xi$ and $x=r\xi$. Then, we integrate by parts in $\xi$, and estimate all of the resulting terms directly. We use the same procedure for $\partial_{r}u_{w,2}(t,r)$ in the region $r \geq \frac{t}{2}$. This yields an extra factor of $1+\frac{1}{\lambda(t)}$ relative to the estimates for $u_{w,2}(t,r)$ in the region $r \geq \frac{t}{2}$. (We can differentiate under the integral sign in \eqref{uw2formulaintermediate} by the dominated convergence theorem, and the fact that $\xi \widehat{RHS}_{2}(s,\xi) \in L^{1}_{s,\xi}((t,\infty) \times (0,\infty))$).\\
\\
Finally, to obtain \eqref{uw2ellsymb}, we recall the definition of $u_{w,2,ell}$, in \eqref{uw2elldef}.
$$u_{w,2,ell}(t,r) = -\frac{1}{2}\left(\frac{1}{r}\int_{0}^{r} s^{2} RHS_{2}(t,s) ds + r \int_{r}^{\infty} RHS_{2}(t,s) ds\right)$$
If $r \leq \frac{t}{2}$, we write
\begin{equation}\label{dtkuw2ellrleqtover2}\partial_{t}^{k} u_{w,2,ell}(t,r) = \frac{-1}{2r} \int_{0}^{r} s^{2}\partial_{t}^{k}RHS_{2}(t,s) ds - \frac{r}{2}\int_{\frac{t}{2}}^{\infty} \partial_{t}^{k}RHS_{2}(t,s) ds -\frac{r}{2}\int_{r}^{\frac{t}{2}} \partial_{t}^{k}RHS_{2}(t,s) ds\end{equation}
If $r \geq \frac{t}{2}$, we have
\begin{equation}\label{dtkuw2ellrgeqtover2}\partial_{t}^{k} u_{w,2,ell}(t,r) = \frac{-1}{2r} \int_{0}^{\frac{t}{2}} s^{2}\partial_{t}^{k}RHS_{2}(t,s) ds - \frac{1}{2r}\int_{\frac{t}{2}}^{r} s^{2}\partial_{t}^{k}RHS_{2}(t,s) ds -\frac{r}{2}\int_{r}^{\infty} \partial_{t}^{k}RHS_{2}(t,s) ds\end{equation}
The point of this decomposition is the following. The function $\frac{\cos(2Q_{1}(\frac{r}{\lambda(t)}))-1}{r^{2}}$ is a symbol in $t$ and $r$. The estimates on $w_{1}(t,r)$ and $v_{2}(t,r)$ from Lemmas \ref{w1estlemma} and \ref{v2estlemma} improve by a factor of $\frac{1}{t}$ with each time derivative taken (for example, for up to 4 derivatives), in the region $r \leq \frac{t}{2}$. On the other hand, $v_{2}(t,r)$ is a free wave, and our estimates on, for example, $\partial_{t}v_{2}(t,r)$ from Lemma \ref{v2estlemma} are not a power of $t$ better than the estimates on $v_{2}(t,r)$ in the region $r \geq \frac{t}{2}$. Therefore, when $s \geq \frac{t}{2}$, we rewrite $\partial_{t}^{k} v_{2}(t,s)$ as in \eqref{dtkv2rewrite} (iterating these as needed, to treat $k=3,4$). Then, we insert this re-writing into \eqref{dtkuw2ellrleqtover2} and \eqref{dtkuw2ellrgeqtover2}, and integrate by parts in $s$ to remove all $s$ derivatives from $v_{2}$ (or $\left(\partial_{t}+\partial_{s}\right)v_{2}$, as appropriate). A direct estimation of all of the integral and boundary terms then gives rise to \eqref{uw2ellsymb} and \eqref{dttuw2ellsymb}. The form of the estimates \eqref{uw2ellsymb} and \eqref{dttuw2ellsymb} are different in the region $r \geq \frac{t}{2}$ because, for sufficiently regular $f$,
\begin{equation}\begin{split} &-\frac{1}{2r} \int_{\frac{t}{2}}^{r} s^{2}f(t,s)\partial_{s}v_{2}(t,s) ds - \frac{r}{2}\int_{r}^{\infty} f(t,s)\partial_{s}v_{2}(t,s) ds\\
&=\frac{1}{2r} \left(\frac{t}{2}\right)^{2} f(t,\frac{t}{2}) v_{2}(t,\frac{t}{2}) + \frac{1}{2r} \int_{\frac{t}{2}}^{r} v_{2}(t,s) \partial_{s}\left(s^{2}f(t,s)\right) ds + \frac{r}{2}\int_{r}^{\infty} v_{2}(t,s) \partial_{s}f(t,s) ds\end{split}\end{equation}
In other words, the boundary terms at $r$ arising from one integration by parts exactly cancel. This does not happen if $\partial_{s}v_{2}(t,s)$ in the first line is replaced by $\partial_{s}^{k} v_{2}(t,s)$, and we integrate by parts $k$ times, for $k \geq 2$. The estimate \eqref{ell0minusell0contprelim} follows from the explicit formulae \eqref{uw2ell0exp} and \eqref{uw2ell0contdef}. Finally, we prove the energy estimate, \eqref{uw2enest}. By the dominated convergence theorem, and fact that $\xi \widehat{RHS}_{2}(s,\xi) \in L^{1}_{s,\xi}((t,\infty) \times (0,\infty))$, we can differentiate once in either $t$ or $r$ under the integral in \eqref{uw2formulaintermediate}. The estimate on $||\partial_{t}u_{w,2}||_{L^{2}(r dr)}$ implied by \eqref{uw2enest} then follows directly from Minkowski's inequality, the $L^{2}$ isometry property of the Hankel transform of order 1, and the estimates from Lemmas \ref{w1estlemma} and \ref{v2estlemma}. Next, we have
$$\partial_{r}u_{w,2}(t,r)+\frac{u_{w,2}(t,r)}{r} = \int_{t}^{\infty} ds \int_{0}^{\infty} d\xi \sin((t-s)\xi) \xi J_{0}(r\xi) \widehat{RHS}_{2}(s,\xi)$$
Using again Minkowski's inequality, and the $L^{2}$ isometry property of the Hankel transforms of orders 0 and 1, we get
$$||\partial_{r}u_{w,2}(t,r)+\frac{u_{w,2}(t,r)}{r}||_{L^{2}(r dr)} \leq C \int_{t}^{\infty} ds ||RHS_{2}(s,r)||_{L^{2}(r dr)}<\infty$$
where the last inequality is true by the estimates from Lemmas \ref{w1estlemma} and \ref{v2estlemma}. Therefore, by the dominated convergence theorem,
$$\int_{0}^{\infty} \left(\partial_{r}u_{w,2}(t,r)+\frac{u_{w,2}(t,r)}{r}\right)^{2} rdr = \lim_{M \rightarrow \infty} \int_{\frac{1}{M}}^{M} \left(\partial_{r}u_{w,2}(t,r)+\frac{u_{w,2}(t,r)}{r}\right)^{2} rdr$$
Then, we note that
$$\left(\partial_{r}u_{w,2}(t,r)+\frac{u_{w,2}(t,r)}{r}\right)^{2} = (\partial_{r}u_{w,2}(t,r))^{2}+\frac{u_{w,2}(t,r)^{2}}{r^{2}} + \frac{\partial_{r}(u_{w,2}^{2})}{r}$$
For each $M>0$,
$$\int_{\frac{1}{M}}^{M} \frac{\partial_{r}(u_{w,2}^{2})(t,r)}{r} rdr = u_{w,2}^{2}(t,M)-u_{w,2}^{2}(t,\frac{1}{M})$$
Again because of $\xi \widehat{RHS}_{2}(s,\xi) \in L^{1}_{s,\xi}((t,\infty) \times (0,\infty))$ and the dominated convergence theorem, 
$$\lim_{r \rightarrow 0} \left(\int_{t}^{\infty} ds \int_{0}^{\infty} d\xi \sin((s-t)\xi)\widehat{RHS_{2}}(s,\xi) J_{1}(r\xi)\right) =0$$
Then, by \eqref{uw2largerest}, we get 
$$\lim_{M \rightarrow \infty} \int_{\frac{1}{M}}^{M} \frac{\partial_{r}(u_{w,2}^{2})(t,r)}{r} rdr = 0$$
Therefore, 
$$\infty> \int_{0}^{\infty} \left(\partial_{r}u_{w,2}(t,r)+\frac{u_{w,2}(t,r)}{r}\right)^{2} rdr = \lim_{M \rightarrow \infty} \int_{\frac{1}{M}}^{M} \left((\partial_{r}u_{w,2}(t,r))^{2}+\frac{u_{w,2}(t,r)^{2}}{r^{2}}\right) rdr$$
By the monotone convergence theorem, we thus get
$$\int_{0}^{\infty} \left((\partial_{r}u_{w,2}(t,r))^{2}+\frac{u_{w,2}(t,r)^{2}}{r^{2}}\right) rdr \leq C \left(\int_{t}^{\infty} ds ||RHS_{2}(s,r)||_{L^{2}(r dr)}\right)^{2}$$
Finally, \eqref{uw2enest} follows from
\begin{equation}\label{ptwseenest}\begin{split}|u_{w,2}(t,r)| &\leq \int_{t}^{\infty}  \int_{0}^{\infty}  |\widehat{RHS_{2}}(s,\xi)| \sqrt{\xi} \frac{|J_{1}(r\xi)|}{\sqrt{\xi}} d\xi ds\\
&\leq \int_{t}^{\infty}  \left(\int_{0}^{\infty} |\widehat{RHS_{2}(s,\xi)}|^{2} \xi d\xi\right)^{1/2} \left(\int_{0}^{\infty} \frac{|J_{1}(x)|^{2}}{x} dx\right)^{1/2}ds\leq C \int_{t}^{\infty}||RHS_{2}(s,r)||_{L^{2}(r dr)}ds\end{split}\end{equation}
 \end{proof}
\subsection{Second order matching, part 1}\label{second_order_matching_pt1_section}
The first question is whether the choice of $v_{2,0}$ allows for matching between the next order ( roughly on the order of $\frac{r^{3} \lambda(t)}{t^{4}}$) terms in $w_{1}(t,r)$ and $v_{ell,2}(t,\frac{r}{\lambda(t)})$. For clarity, we recall the third, seventh, and eighth lines of \eqref{w1cubeterms}, and the expression \eqref{vell20r3cof}. The main contribution of $v_{ell,2,0}(t,\frac{r}{\lambda(t)})$ to the matching is
\begin{equation}v_{ell,2,0,main}(t,\frac{r}{\lambda(t)}) = \frac{r^{3}}{8} \partial_{t}^{2}\left(\frac{\lambda'(t)^{2}}{2\lambda(t)} + \lambda''(t) - \lambda''(t) \log(\frac{r}{\lambda(t)})\right) + \frac{3}{32} r^{3} \lambda''''(t)\end{equation}
The $r^{3}$ contribution of $w_{1}(t,r)$ is (recall \eqref{w1strlemma})
\begin{equation}\begin{split}w_{1,cubic,main}(t,r) &=\frac{3}{32} r^{3} \lambda''''(t)+\frac{r^{3}}{8}\left( \lambda''''(t) \left(\log(2)+\frac{1}{2}\right)-\log(r) \lambda''''(t) + \log(t) \lambda''''(t)\right. \\
&\left.+ \int_{t}^{2t} \frac{\lambda''''(s)-\lambda''''(t)}{s-t} ds + \int_{2t}^{\infty} \frac{\lambda''''(s) ds}{s-t}\right)\end{split}\end{equation}
Finally, the $v_{2}$ contribution at this order is
$$v_{2,cubic,main}(t,r) = \frac{-r^{3}}{32} F''(t)$$
Recall that we already chose $v_{2,0}$ to allow for matching of terms of the form $r f(t)$ coming from $v_{ell}(t,\frac{r}{\lambda(t)})$ and $w_{1}(t,r)+v_{2}(t,r)$. Note that the coefficient of $r^{3}$ given in the expression above for $v_{2,cubic,main}$  is precisely one-eighth of two time derivatives of the $r$ coefficient of $v_{2}$ (see \eqref{v2}). Although the $r^{3}$ coefficient of $v_{ell,2,0,main}(t,\frac{r}{\lambda(t)})$ and that of $w_{1,cubic,main}(t,r)$ are not precisely one-eighth of two time derivatives of the main $r$ coefficients in the matching region of $v_{ell}(t,\frac{r}{\lambda(t)})$ and $w_{1}(t,r)$, respectively, the $r^{3}$ coefficient of the \emph{difference} of these two functions is precisely one-eighth of two time derivatives of the main $r$ coefficient in the matching region of $v_{ell}(t,\frac{r}{\lambda(t)})-w_{1}(t,r)$. Note the important cancellation between the $\frac{3}{32} r^{3} \lambda''''(t)$ terms when $v_{ell,2,0,main}(t,\frac{r}{\lambda(t)})$ and $w_{1,cubic,main}(t,r)$ are subtracted. Therefore, the matching of the $r^{3}$ terms is already accomplished with our previous choice of $v_{2,0}$. In other words, by the choice of $v_{2,0}$, we have
\begin{equation} v_{ell,2,0,main}(t,\frac{r}{\lambda(t)})-\left(w_{1,cubic,main}(t,r)+v_{2,cubic,main}(t,r)\right) =0\end{equation}

\subsection{Second order matching, part 2}\label{second_order_matching_pt2_section}
Next, we choose the coefficient $c_{1}(t)$ in \eqref{velldef}. For convenience, we define $v_{ell,sub}$ by
\begin{equation}\label{vellsubdef}\begin{split}v_{ell,sub}(t,R) &= v_{ell}(t,R) -  R \left(\frac{1}{2}\lambda'(t)^{2} + \lambda(t) \lambda''(t) - \lambda(t) \lambda''(t) \log(R)\right) \\
&= \left(2c_{1}(t)- 3\lambda'(t)^{2}\right)f_{4}(R) + \lambda(t) \lambda''(t) f_{3}(R)\end{split}\end{equation}
with
$$f_{4}(R) = \frac{R}{2(1+R^{2})}$$ 
$$f_{3}(R) = \frac{R^{2} \left(-3+2\left(1+R^{2}\right)\log(R)\right)-\left(R^{4}-1\right)\log(1+R^{2})+2R^{2} Li_{2}(-R^{2})}{2R(1+R^{2})}$$
The function $c_{1}(t)$ appears as part of the coefficient of the $\frac{1}{r}$ term in a large $r$ expansion of $u_{ell,sub}(t,r)=v_{ell,sub}(t,\frac{r}{\lambda(t)})$. On the other hand, there are also higher order terms, of size $\frac{\log(r)}{r}$ and $\frac{\log^{2}(r)}{r}$ appearing in the expansion of $u_{ell,sub}(t,r)$ for large $r$, and $c_{1}(t)$ does not appear in the coefficients of these terms. However, it turns out that these terms happen to exactly match corresponding $\frac{\log(r)}{r}$ and $\frac{\log^{2}(r)}{r}$ terms coming from expansions of $v_{ex}$ and $u_{w,2,ell,0}$, which means that we only need to match terms of size $\frac{1}{r}$, which can be done by choosing an appropriate $c_{1}(t)$. To show this, we note the following.\\
\\
The main contribution of $v_{ell,sub}(t,\frac{r}{\lambda(t)})$ in the matching region is
\begin{equation}\label{vellsubcontdef}\begin{split}&v_{ell,sub,cont}(t,r)\\
 &= \frac{c_{1}(t) \lambda(t)-\frac{3}{2} \lambda(t) \lambda'(t)^2}{r} \\
& -\frac{\lambda(t)^2 \lambda''(t) \left(-6 \log (r) (4 \log (\lambda(t))+1)+6 \log (\lambda(t)) (2 \log (\lambda(t))+1)+12 \log ^2(r)+\pi ^2+12\right)}{6 r}\end{split}\end{equation}
On the other hand, the main contribution of $v_{ex}(t,r)$ in the matching region is (recall \eqref{vexell})
\begin{equation}\label{vexcontdef}v_{ex,cont}(t,r) = \frac{1}{2r}\begin{aligned}[t]&\left(\log(r)\left(-4 \lambda(t)\lambda'(t)^{2}-2 \lambda(t)^{2}\lambda''(t)\right)\right.\\
&+\left.4 \lambda(t)\log(\lambda(t))\lambda'(t)^{2}-\lambda(t)^{2}\lambda''(t) + 2 \lambda(t)^{2}\lambda''(t)\log(\lambda(t))\right)\end{aligned}\end{equation}
Finally, the main contribution of $u_{w,2,ell,0}(t,r)$ in the matching region is (recall \eqref{uw2ell0contdef})
\begin{equation}\begin{split}u_{w,2,ell,0,cont}(t,r) = \frac{-2}{r}&\left(\log(r)\left(-\lambda(t)\lambda'(t)^{2}-\lambda(t)^{2}\lambda''(t)-2 \lambda(t)^{2} \log(\lambda(t))\lambda''(t)\right)\right.\\
&\left.+\lambda(t) \log(\lambda(t))\lambda'(t)^{2}+\lambda(t)^{2}\log(\lambda(t))\lambda''(t) + \lambda(t)^{2}\log^{2}(\lambda(t))\lambda''(t)\right.\\
&+\left.\lambda(t)^{2}\log^{2}(r)\lambda''(t)+\frac{1}{2} \lambda(t)^2 \lambda''(t)-\frac{1}{12} \pi ^2 \lambda(t)^2 \lambda''(t)\right)\end{split}\end{equation}
These expressions give
\begin{equation}\label{secondordermatchb}v_{ell,sub,cont}(t,r)-\left(u_{w,2,ell,0,cont}(t,r)+v_{ex,cont}(t,r)\right) = -\frac{\lambda(t) \left(-6 c_{1}(t)+\left(3+2 \pi ^2\right) \lambda(t) \lambda''(t)+9 \lambda'(t)^2\right)}{6 r}\end{equation}
(As previously mentioned, the $\frac{\log(r)}{r}$ and $\frac{\log^{2}(r)}{r}$ terms from $v_{ell,sub,cont}$, $u_{w,2,ell,0,cont}$ and $v_{ex,cont}$ cancel in the above expression). We therefore choose $c_{1}(t)$ so as to make 
$$v_{ell,sub,cont}(t,r)-\left(u_{w,2,ell,0,cont}(t,r)+v_{ex,cont}(t,r)\right)=0$$ by defining
\begin{equation}\label{c1def}c_{1}(t) = \frac{3}{2}\lambda'(t)^{2}+\frac{\lambda(t) \lambda''(t)}{2}+\frac{\pi^{2}}{3} \lambda(t)\lambda''(t)\end{equation}
Note that the matching condition used to determine the initial velocity of $v_{2}$ is precisely
\begin{equation}\label{firstmatch}u_{ell}(t,r) - v_{ell,sub}(t,\frac{r}{\lambda(t)}) = v_{2}(t,r) - v_{2,sub}(t,r) + w_{1}(t,r)-w_{1,sub}(t,r)\end{equation}
where we recall that $w_{1,sub}$ and $v_{2,sub}$ were defined in \eqref{w1subdef} and \eqref{v2subdef}, respectively.
\subsection{Third order matching}\label{third_order_matching_section}
It will be convenient for us to define
\begin{equation}\label{vexsubdef}v_{ex,sub}(t,r) := v_{ex}(t,r)-v_{ex,ell}(t,r)= -\int_{0}^{\infty} d\xi J_{1}(r\xi) \int_{t}^{\infty} ds \frac{\sin((t-s)\xi)}{\xi^{2}} \partial_{s}^{2}\widehat{RHS}(s,\xi)\end{equation}
We will now choose $v_{2,2}$ (recall \eqref{v22def}) so as to match the principal terms (which are of the form $r \log^{k}(r) g_{k}(t)$) in $u_{ell,2}(t,r)-v_{ell,2,0,main}(t,\frac{r}{\lambda(t)})$ and $v_{ex,sub}(t,r)+u_{w,2}(t,r) - u_{w,2,ell,0,cont}(t,r)$. The main proposition of this section is the following. 
\begin{proposition}[Third order matching]\label{thirdorderprop} For $v_{2,2}$ defined in \eqref{v22def}, where $v_{2,3}$ is defined by \eqref{v23def}, and $F_{3}$ and $G_{3}$ are given in \eqref{f3def} and \eqref{g3def}, respectively, we have the following estimate. For $0 \leq j,k \leq 1$ or $k=2,j=0$, and $g(t)\lambda(t) \leq r \leq 2 g(t)\lambda(t)$,
\begin{equation}\begin{split} |&\partial_{t}^{j}\partial_{r}^{k}\left(u_{ell,2}(t,r)-v_{ell,2,0,main}(t,\frac{r}{\lambda(t)})-\left(u_{w,2}(t,r)-u_{w,2,ell,0,cont}(t,r)+v_{2,2}(t,r)+v_{ex}-v_{ex,cont}-q_{4,2}\right)\right)| \\
&\leq \frac{C \lambda(t)^{2-k} \log(t)}{t^{2+j}  g(t)^{3+k}} + \frac{C g(t)^{2-k} \lambda(t)^{4-k} \log^{4}(t) \sup_{x \in [100,t]}\left(\lambda(x) \log(x)\right)}{t^{5+j}}\end{split}\end{equation}
where $q_{4,2}$ is defined in \eqref{rhs42def} and satisfies the following inequality for $k=0,1$, and $r >0$.
\begin{equation}\begin{split}&|\partial_{t}^{k}q_{4,2}(t,r)| + \sqrt{E(\partial_{t}^{k}q_{4,2},\partial_{t}^{k+1}q_{4,2})} +||\partial_{r}^{2} q_{4,2}||_{L^{2}((\lambda(t)g(t), 2 \lambda(t)g(t)),r dr)}\leq \frac{C \lambda(t)^{5} \log^{3}(t)}{t^{5} }\end{split}\end{equation}
\end{proposition}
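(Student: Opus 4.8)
The plan is to carry out the third-order matching in the same spirit as the first-order matching of Section \ref{first_order_matching_section}: expand both sides in the matching region $g(t)\lambda(t)\le r\le 2g(t)\lambda(t)$, isolate their leading parts (which are sums of terms $r\log^{j}(r)\,g_{j}(t)$ with $0\le j\le 3$ and coefficients built from at most six derivatives of $\lambda$), observe that the $j=1,2,3$ coefficients agree automatically, and then choose the data $v_{2,3}$ for the free wave $v_{2,2}$ so that its leading contribution $v_{2,2,main}(t,r)=-\tfrac{r}{4}\bigl(-2\int_{0}^{\infty}\xi\sin(t\xi)\widehat{v_{2,3}}(\xi)\,d\xi\bigr)$ cancels the surviving $j=0$ term. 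First, since $u_{ell,2}(t,r)-v_{ell,2,0,main}(t,\tfrac{r}{\lambda(t)})=soln_{1}(t,\tfrac{r}{\lambda(t)})+soln_{2}(t,\tfrac{r}{\lambda(t)})$ by \eqref{soln2def}, I would expand $soln_{1}$ via the large-$R$ asymptotics of $f_{3}-f_{3,0}$ and $f_{4}-f_{4,0}$ (using $\log(1+R^{2})=2\log R+O(R^{-2})$ and the known expansions of $\mathrm{Li}_{2}(-R^{2})$, $\mathrm{Li}_{3}(-R^{2})$), and expand $soln_{2}$ from its integral representation \eqref{ell2minusell20}, using that the leading part of $err_{1}$ for $r\ge\lambda(t)$ is $err_{1,0}$ from \eqref{err10def}. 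This produces the explicit leading profile $F_{3}$ of \eqref{f3def} (equivalently the expression \eqref{ue3def1}) together with a remainder which, after applying $\partial_{t}^{j}\partial_{r}^{k}$, is of the size of the first term on the right-hand side of the proposition; the gain of $g(t)^{-(3+k)}$ there reflects that $F_{3}$ resolves the expansion through the order $r\log^{3}(r)$, the first unresolved term being $O(\log^{2}(r)/r)$ in the scaled variable, cf.\ \eqref{vell20r3cof}.

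Second, I would expand the right-hand combination. For $v_{ex}-v_{ex,cont}$ I would use the representation \eqref{vexdef} of Lemma \ref{vexreplemma}, the asymptotics \eqref{rhshat} of $\widehat{RHS}$, and the explicit $v_{ex,ell}$ of \eqref{vexell}. For $u_{w,2}-u_{w,2,ell,0,cont}$ I would refine the analysis of Lemma \ref{uw2minuselllemma}: split $u_{w,2}=u_{w,2,ell}+(u_{w,2}-u_{w,2,ell})$ as in \eqref{uw2minusuw2ell}, use the refined bounds on $\widehat{RHS_{2,1}},\widehat{RHS_{2,2}}$ of Lemmas \ref{rhs21lemma}, \ref{rhs22lemma} together with the estimates of Lemmas \ref{w1strlemma}, \ref{w1estlemma}, \ref{v2strlemma}, \ref{v2estlemma}, and extract the $r\log^{j}(r)$ terms of $u_{w,2,ell}-u_{w,2,ell,0}$ and of $u_{w,2,ell,0}-u_{w,2,ell,0,cont}$ from the explicit formula \eqref{uw2ell0exp} via the $\mathrm{Li}_{2}$ asymptotics. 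The borderline pieces of this combination that are \emph{not} of the matchable form $r\log^{j}(r)g_{j}(t)$ are collected into $q_{4,2}$ (defined in \eqref{rhs42def}); its pointwise, energy and $L^{2}$-of-$\partial_{r}^{2}$ bounds of size $\tfrac{C\lambda(t)^{5}\log^{3}(t)}{t^{5}}$ are obtained from Minkowski's inequality and the $L^{2}$-isometry of the Hankel transforms, exactly as in the proof of \eqref{uw2enest}. The resulting leading profile is $G_{3}$ of \eqref{g3def}, assembled from the $v_{1}$ and $u_{w,2}$ contributions \eqref{uw3finalexpr}.

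The crux is the \emph{automatic matching}: a direct, if lengthy, comparison showing that the $r\log(r)$, $r\log^{2}(r)$, $r\log^{3}(r)$ coefficients of $F_{3}$ coincide with those of $G_{3}$ --- the third-order analog of the cancellation of the $r\log(r)$ terms noted after \eqref{v2fo} and of the $\tfrac{\log(r)}{r},\tfrac{\log^{2}(r)}{r}$ terms in \eqref{secondordermatchb}. Granting this, $F_{3}-G_{3}$ reduces to a single term $c(t)\,r$ with $c(t)$ an explicit combination of $\lambda^{(m)}(t)$, and I would define $v_{2,3}$ through \eqref{v23def} by imposing (for $t$ large, with a cutoff of the type $\psi$ from \eqref{psidef}) that $-2\int_{0}^{\infty}\xi\sin(t\xi)\widehat{v_{2,3}}(\xi)\,d\xi=-4c(t)$, so that $v_{2,2,main}(t,r)=(F_{3}-G_{3})(t,r)$. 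Estimates on $\widehat{v_{2,3}}$ analogous to Lemma \ref{v20ests}, hence on $v_{2,2}$ and on $v_{2,2}-v_{2,2,main}$ analogous to Lemmas \ref{v2estlemma}, \ref{v2strlemma}, then yield the second term on the right-hand side of the proposition. Putting the two expansions together, the quantity inside $\partial_{t}^{j}\partial_{r}^{k}$ equals $(F_{3}+E_{1})-(G_{3}+v_{2,2,main}+E_{2})=E_{1}-E_{2}$, which is bounded by the claimed right-hand side; the $t$- and $r$-derivatives are handled since every estimate above is of symbol type. I expect the main obstacle to be precisely the automatic-matching computation: assembling the full cubic-and-higher expansions of $soln_{1}+soln_{2}$ against those of $v_{ex,sub}+(u_{w,2}-u_{w,2,ell,0,cont})$, tracking the dilogarithm/trilogarithm terms and the numerous integral remainders, and --- as a secondary difficulty --- correctly deciding which borderline terms are routed into the free-wave-matchable $c(t)\,r$ and which into $q_{4,2}$.
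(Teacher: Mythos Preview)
Your overall strategy matches the paper's: expand both sides in the matching region, verify the automatic cancellation of the $r\log^{j}(r)$ coefficients for $j=1,2,3$, and fix $v_{2,3}$ to absorb the surviving $r$-coefficient. But two points are off, and the first is a genuine gap.

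\textbf{The missing mechanism.} You propose to handle $u_{w,2}-u_{w,2,ell}$ (and likewise $v_{ex,sub}$) via ``refined bounds on $\widehat{RHS_{2,1}},\widehat{RHS_{2,2}}$''. Bounds alone will not produce the explicit $r\log^{j}(r)$ leading terms that are needed to check the automatic matching; those terms must be \emph{computed}. The paper's device is Lemma~\ref{inhomlemma}: since $u_{w,2}-u_{w,2,ell}$ solves $(-\partial_{t}^{2}+\partial_{r}^{2}+\tfrac{1}{r}\partial_{r}-\tfrac{1}{r^{2}})u=\partial_{t}^{2}u_{w,2,ell}$, one replaces the right-hand side by its leading part $\partial_{t}^{2}u_{w,2,ell,0,cont}=\tfrac{1}{r}\sum_{k\le 2}g_{k}''(t)\log^{k}(r)$ and solves \emph{that} wave equation exactly, via spherical means and the integrals \eqref{logint}, \eqref{logsquaredint}. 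This yields the explicit kernels $K_{0},K_{2}$ and the leading piece $u_{w,2,sub,0,cont}$ of Lemma~\ref{uw2sub0leading} (and analogously $v_{ex,sub,0,cont}$ of Lemma~\ref{vexsub0leading}). Without this step you have no formula for the wave-side $r\log^{j}(r)$ contributions, so the automatic-matching verification cannot even be posed. The residual $w_{3}=u_{w,2}-u_{w,2,ell}-u_{w,2,sub,0}$ is then peeled further via the chain $v_{3}\to q_{3}\to q_{4}=q_{4,1}+q_{4,2}$ (and $w_{5}\to q_{5}$ on the $v_{ex}$ side); only at this depth do the specific contributions $q_{4,1,0}$ and $q_{5,0}$ that enter $u_{w,3}$ in \eqref{uw3finalexpr} emerge, and $q_{4,2}$ acquires its precise meaning \eqref{rhs42def} rather than being an ad hoc dumping ground.

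\textbf{Terminology.} In the paper $F_{3}$ and $G_{3}$ are scalar functions of $t$, not $r$-profiles: $u_{e,3}$ (eq.~\eqref{ue3def1}) and $u_{w,3}$ (eq.~\eqref{uw3finalexpr}) are the leading profiles of the two sides, and the automatic matching is the statement $u_{e,3}-u_{w,3}=rF_{3}(t)$ of \eqref{f3def}; then $G_{3}(t)$ is defined from $v_{2,2}$ by \eqref{g3def} and one sets $G_{3}=-4F_{3}$ via \eqref{v23def}. Your paragraph conflates these roles. This is cosmetic, but worth straightening out before attempting the long verification, which in the paper hinges on several non-obvious exact evaluations (e.g.\ the computation of $d_{1}$ in \eqref{delicateint} using the PDE satisfied by $K_{2}$).
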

\begin{proof}
We start by computing the leading parts of $u_{ell,2}(t,r)-v_{ell,2,0,main}(t,\frac{r}{\lambda(t)})$ and $v_{ex,sub}(t,r)+u_{w,2}(t,r) - u_{w,2,ell,0,cont}(t,r)$ in the matching region, $r \sim g(t)\lambda(t)$. \\
\\
We recall our expressions for $v_{ex,sub}$ and $u_{w,2}-u_{w,2,ell}$:
$$v_{ex,sub}(t,r)= -\int_{0}^{\infty} d\xi J_{1}(r\xi) \int_{t}^{\infty} ds \frac{\sin((t-s)\xi)}{\xi^{2}} \partial_{s}^{2}\widehat{RHS}(s,\xi)$$
\begin{equation}\label{uw2minusuw2elleqn}u_{w,2}(t,r)-u_{w,2,ell}(t,r) = \int_{t}^{\infty} ds \int_{0}^{\infty} d\xi \frac{\sin((s-t)\xi)}{\xi^{2}} \partial_{s}^{2}\widehat{RHS_{2}}(s,\xi) J_{1}(r\xi)\end{equation}
In particular, these functions solve the following equations with 0 Cauchy data at infinity
$$\left(-\partial_{t}^{2}+\partial_{r}^{2}+\frac{1}{r}\partial_{r}-\frac{1}{r^{2}}\right)\left(u_{w,2}-u_{w,2,ell}\right) = \partial_{t}^{2} u_{w,2,ell}$$
$$\left(-\partial_{t}^{2}+\partial_{r}^{2}+\frac{1}{r}\partial_{r}-\frac{1}{r^{2}}\right)v_{ex,sub} = \partial_{t}^{2}v_{ex,ell}$$
In order to compute their leading behavior in the matching region, $r \sim g(t)\lambda(t)$, we define
$u_{w,2,sub,0}(t,r)$ and $v_{ex,sub,0}(t,r)$ to be the solutions to the following equations with 0 Cauchy data at infinity
\begin{equation}\label{uw2sub0eqn}\left(-\partial_{t}^{2}+\partial_{r}^{2}+\frac{1}{r}\partial_{r}-\frac{1}{r^{2}}\right)u_{w,2,sub,0} = \partial_{t}^{2} u_{w,2,ell,0,cont}\end{equation}
$$\left(-\partial_{t}^{2}+\partial_{r}^{2}+\frac{1}{r}\partial_{r}-\frac{1}{r^{2}}\right)v_{ex,sub,0}= \partial_{t}^{2} v_{ex,cont}$$
Each of these right-hand sides are of the following form (recall Section \ref{second_order_matching_pt2_section})
$$\frac{1}{r}\sum_{k=0}^{2}f_{k}(t)\log^{k}(r)$$
In particular, the right-hand sides of the equations for $u_{w,2,sub,0}$ and $v_{ex,sub,0}$ are technically singular at $r=0$, even though the right-hand sides of the equations for $u_{w,2}-u_{w,2,ell}$ and $v_{ex,sub}$ are not. This will not cause any problems for us, and is done so that the principal parts of $u_{w,2,sub,0}$ and $v_{ex,sub,0}$ can be exactly calculated.\\
\\
We will first prove the following lemma which will lead to fairly explicit formulae for $u_{w,2,sub,0}$ and $v_{ex,sub,0}$. After the lemma we will provide some information about how the formulae arise.
\begin{lemma}\label{inhomlemma}
Let $f_{k}$ be the functions arising from expressing either $\partial_{t}^{2}v_{ex,cont}$ or $\partial_{t}^{2}u_{w,2,ell,0,cont}$ in the form $\frac{1}{r}\sum_{k=0}^{2}f_{k}(t)\log^{k}(r)$. If $u$ is given by
\begin{equation}\label{solntologroverr}\begin{split} u(t,r) = -\int_{t}^{\infty} f_{1}(s) K_{0}\left(\frac{r}{s-t},r\right)ds \end{split}\end{equation}
where
\begin{equation}\begin{split} K_{0}(x,r) = \frac{1}{x} \begin{cases}  \begin{aligned}[t]&\left(\left(1-\sqrt{1-x^2}\right) \left(\log(r)-\log(x)\right)+\sqrt{1-x^2}-1+\sqrt{1-x^2} \log \left(\sqrt{1-x^2}+1\right)\right.\\
&\left.+\log \left(\sqrt{1-x^2}+1\right)-2 \sqrt{1-x^2} \log \left(2 \sqrt{1-x^2}\right)\right), \quad 0<x<1\end{aligned}\\
\left(\sqrt{x^{2}-1} \sin ^{-1}\left(\frac{1}{x}\right)+\log (r)-1\right), \quad x>1\end{cases} \end{split}\end{equation} 
then, $u$ solves
$$-\partial_{tt}u+\partial_{rr}u+\frac{1}{r}\partial_{r}u-\frac{u}{r^{2}} = \frac{f_{1}(t) \log(r)}{r}, \quad r>0$$
with 0 Cauchy data at infinity. Similarly, if $u$ is defined by 
\begin{equation}\label{solntologsquaredroverr}u(t,r) = \int_{t}^{\infty}  f_{2}(s) K_{2}(\frac{s-t}{r},r)ds\end{equation}
with
$$K_{2}(a,r)=\sum_{k=0}^{2} F_{k}(a) \log^{k}(r)$$
where, for $a>1$, we have
\begin{equation}\begin{split}&F_{0}(a)=a \left(\text{Li}_2\left(\frac{\sqrt{a^2-1}}{a}\right)+\text{Li}_2\left(-\frac{a}{\sqrt{a^2-1}}\right)+\text{Li}_2\left(1-2 a \left(a+\sqrt{a^2-1}\right)\right)\right.\\
&\left.\left.-\text{Li}_2\left(2-2 a \left(a+\sqrt{a^2-1}\right)\right)+2\right)+\frac{1}{2}\left(4 \sqrt{a^2-1} \text{Li}_2\left(2 a \left(a-\sqrt{a^2-1}\right)-1\right)+\frac{1}{4} a \log ^2\left(1-\frac{1}{a^2}\right)\right.\right.\\
&\left.+2 \left(\sqrt{a^2-1}+a\right) \log ^2\left(\sqrt{a^2-1}+a\right)+4 \left(-a \log \left(2 \left(a^2-1\right)\right)-\sqrt{a^2-1} \left(\log \left(a^2-1\right)-1+\log (4)\right)\right.\right.\\
&\left.\left.+a+a \log (a)\right) \log \left(\sqrt{a^2-1}+a\right)+\frac{1}{6} \sqrt{a^2-1} \left(12 \log \left(a^2-1\right) \left(\log \left(a^2-1\right)-2+\log (16)\right)\right.\right.\\
&\left.\left.-\pi ^2+24+24 (\log (2)-1) \log (4)\right)-8 a\right), \quad a>1\end{split}\end{equation}
\begin{equation}\begin{split}&F_{1}(a) = 2 \left(-\sqrt{a^2-1}-\left(\sqrt{a^2-1}+a\right) \log \left(\sqrt{a^2-1}+a\right)+\sqrt{a^2-1} \log \left(4 \left(a^2-1\right)\right)+a\right), \quad a>1\end{split}\end{equation}
\begin{equation}F_{2}(a) = \sqrt{a^2-1}-a, \quad a>1\end{equation}
and, for $0<a<1$, we have
\begin{equation}\begin{split} F_{0}(a) &= -2 \sqrt{1-a^2} \text{Im}\left(\text{Li}_2\left(\frac{1}{2}-\frac{i a}{2 \sqrt{1-a^2}}\right)\right)+\pi  \sqrt{1-a^2}\\
&+\left(\frac{3 \pi  a}{4}-2 \sqrt{1-a^2}\right) \cos ^{-1}(a)+\frac{1}{4} a \left(\tan ^{-1}\left(\frac{1-2 a^2}{2 a \sqrt{1-a^2}}\right)\right)^2-\sqrt{1-a^2} \log \left(4-4 a^2\right) \sin ^{-1}(a)\\
&+\frac{1}{2} a \sin ^{-1}(a) \tan ^{-1}\left(\frac{1-2 a^2}{2 a \sqrt{1-a^2}}\right)-2 a-\frac{3 \pi ^2 a}{16}-a \left(\text{arccos}(a)\right)^2, \quad 0<a<1\end{split}\end{equation}
\begin{equation}F_{1}(a) = -2 \sqrt{1-a^2} \sin ^{-1}(a) +2 a, \quad 0<a<1 \end{equation}
\begin{equation}F_{2}(a) = -a, \quad 0<a<1\end{equation} 
then, $u$ solves
$$-\partial_{tt}u+\partial_{rr}u+\frac{1}{r}\partial_{r}u-\frac{u}{r^{2}} = \frac{f_{2}(t) \log^{2}(r)}{r}, \quad r>0$$
with 0 Cauchy data at infinity.
\end{lemma}
\begin{proof} To verify that \eqref{solntologroverr} is the claimed solution to the Cauchy problem, we start with
$$u(t,r) = -\int_{0}^{1} K_{0}(\frac{1}{y},r) f_{1}(t+r y) r dy -\int_{1}^{\infty} K_{0}(\frac{1}{y},r) f_{1}(t+r y) r dy$$
and note that the explicit formulae for $K_{0}(x,r)$ in the regions $x<1$ and $x>1$, combined with the definitions of $f_{k}$ and the Dominated convergence theorem allow us to differentiate up to two times in either $t$ or $r$ under the integral signs. An integration by parts then establishes the lemma. (The same procedure is used to verify the stated property of \eqref{solntologsquaredroverr}).  \end{proof}
\emph{Remark}. To see how the formulae from the lemma can be formally derived, we  could first use the same procedure used for $w_{1}$ to obtain, for example, for $u_{w,2,sub,0}$, that
\begin{equation}\label{duhameluw2sub0}u_{w,2,sub,0}(t,r) = \int_{t}^{\infty}  u_{w,2,sub,0,s}(t,r)ds\end{equation}
where
\begin{equation}\label{uw2sub0ssphericalmeans}\begin{split} &u_{w,2,sub,0,s}(t,r) \\
&= \frac{-1}{2\pi} \int_{0}^{s-t}\frac{\rho d\rho}{\sqrt{(s-t)^{2}-\rho^{2}}} \int_{0}^{2\pi} d\theta \frac{\left(r+\rho\cos(\theta)\right)}{r^{2}+\rho^{2}+2 r \rho \cos(\theta)} \sum_{k=0}^{2} f_{k}(s) \log^{k}\left(\sqrt{r^{2}+\rho^{2}+2 r \rho \cos(\theta)}\right)\end{split}\end{equation}
The integral of the $k=0$ term in the sum above has been evaluated when computing $w_{1}$, via Cauchy's residue theorem. This same procedure can not directly be applied to the integrals of the $k \neq 0$ terms. We can still explicitly compute these integrals, with a procedure involving introducing a parameter into the integrals, differentiating in this parameter, and then using Cauchy's residue theorem. This gives the following. For $r \neq \rho$, we have
\begin{equation}\label{logint}\begin{split}&\int_{0}^{2\pi} 4 \log(r^{2}+\rho^{2}+2 r \rho \cos(\theta))\frac{\left(r+\rho \cos(\theta)\right)}{r^{2}+\rho^{2}+2 r \rho \cos(\theta)} d\theta=\begin{cases} \frac{8 \pi}{r} \left(\log(r^{2})+\log(1-\frac{\rho^{2}}{r^{2}})\right), \quad r>\rho\\
\frac{-8 \pi}{r} \log(1-\frac{r^{2}}{\rho^{2}}), \quad \rho>r\end{cases}\end{split}\end{equation}
\begin{equation}\label{logsquaredint}\begin{split}&\int_{0}^{2\pi} \frac{\log^{2}(r^{2}+\rho^{2}+2 r \rho \cos(\theta))}{r^{2}+\rho^{2}+2 r \rho \cos(\theta)}\left(r+\rho\cos(\theta)\right) d\theta \\
&= \begin{cases} \frac{4 \pi}{r} \left(\log^{2}(1-\frac{\rho^{2}}{r^{2}})+2 \log(r)\log(r-\frac{\rho^{2}}{r})+\text{Li}_{2}(\frac{\rho^{2}}{r^{2}})\right), \quad r > \rho\\
\frac{-4\pi}{r} \log(\rho^{2}-r^{2})\log(1-\frac{r^{2}}{\rho^{2}}), \quad \rho>r\end{cases}\end{split}\end{equation}
Carrying out the $\rho$ integral in \eqref{uw2sub0ssphericalmeans} then leads to the formulae in the Lemma statement.\\
\\
Using the information from Lemma \ref{inhomlemma}, we can calculate the principal parts of $u_{w,2,sub,0}(t,r)$ and $v_{ex,sub,0}(t,r)$ in the matching region, $r \sim g(t)\lambda(t)$. We describe in detail how this is done for $u_{w,2,sub,0}$, since $v_{ex,sub,0}$ can be treated in the same way. First, we introduce some notation. Let
\begin{equation}\label{k20def}\begin{split}K_{2,0}(\frac{s-t}{r},r) &=\frac{r \left(-12 \log (s-t) \log (4 (s-t))+\pi ^2-12 \log ^2(2)\right)}{24 (s-t)}\end{split}\end{equation}
We define $F_{k,0}$ by 
\begin{equation}\label{fk0def}K_{2,0}(a,r) = \sum_{k=0}^{2} F_{k,0}(a) \log^{k}(r)\end{equation}
We also recall the definition of $u_{w,2,ell,0,cont}$,  \eqref{uw2ell0contdef}, and write
\begin{equation}\label{uw2ell0contg}\partial_{t}^{2}u_{w,2,ell,0,cont}(t,r) = \frac{g_{0}''(t)}{r}+\frac{g_{1}''(t)\log(r)}{r}+\frac{g_{2}''(t) \log^{2}(r)}{r}\end{equation}
for 
\begin{equation}\begin{split}g_{0}(t) = -2 &\left(\frac{1}{2} \lambda(t)^2 \lambda''(t)-\frac{1}{12} \pi ^2 \lambda(t)^2 \lambda''(t)+\lambda(t)^2 \lambda''(t) \log ^2(\lambda(t))+\lambda(t)^2 \lambda''(t) \log (\lambda(t))+\lambda(t) \lambda'(t)^2 \log (\lambda(t))\right)\end{split}\end{equation}
$$g_{1}(t) = -2 \left(-\lambda(t)^2 \lambda''(t)-2 \lambda(t)^2 \lambda''(t) \log (\lambda(t))-\lambda(t) \lambda'(t)^2\right)$$
$$g_{2}(t) = -2 \lambda(t)^2 \lambda''(t)$$
Finally, we let
$$c_{0}=\int_{1}^{\infty} \left(F_{0}(a)-F_{0,0}(a)\right) da $$
Note that the exact value of $c_{0}$ is not needed for our purposes, though the precise value of many other constants appearing in the following lemma are needed. 
\begin{lemma}\label{uw2sub0leading}[Leading part of $u_{w,2,sub,0}$] Let $u_{w,2,sub,0,cont}$ be defined by
\begin{equation}\begin{split}&u_{w,2,sub,0,cont}(t,r)\\
&=-\frac{1}{2}g_{0}''(t) r -\frac{r}{2} g_{0}''(t) \log(\frac{t}{r}) - \frac{r}{2} \int_{t}^{2t}\frac{(g_{0}''(s)-g_{0}''(t))}{(s-t)} ds - \frac{r}{2} \int_{2t}^{\infty} \frac{g_{0}''(s)}{s-t} ds - g_{0}''(t) \frac{r}{4} \left(\log(4)-1\right)\\
&-\frac{r}{16} g_{1}''(t) \left(-12+\pi^{2}+8 \log(r)\right)+-g_{1}''(t)r \left(\frac{\log(\frac{t}{r})\log(2)}{2}+\frac{1}{4}\left(\log^{2}(t)-\log^{2}(r)\right)\right) \\
&-\frac{r}{2} \int_{2t}^{\infty} \frac{g_{1}''(s)}{(s-t)} \log(2(s-t)) ds-\frac{r}{2} \int_{t}^{2t}\left(g_{1}''(s)-g_{1}''(t)\right) \frac{\log(2(s-t))}{(s-t)} ds\\
&+ \frac{-r g_{1}''(t)}{24}\left(6 (\log (4)-1) \log (r)-2 \pi ^2+15+6 \log ^2(2)\right)\\
&-\frac{r g_{2}''(t)}{16} \left(2 \log (r) \left(4 \log (r)+\pi ^2-12\right)-28 \zeta (3)+\pi ^2+28\right)\\
&+g_{2}''(t)  \frac{r}{24} \log \left(\frac{r}{t}\right) \left(4 \log (t) \log (8 r t)+4 \log (r) \log (8 r)-\pi ^2+12 \log ^2(2)\right)\\
&+\int_{t}^{2t} \left(g_{2}''(s)-g_{2}''(t)\right) K_{2,0}(\frac{s-t}{r},r) ds+\int_{2t}^{\infty} g_{2}''(s)K_{2,0}(\frac{s-t}{r},r) ds\\
&+ g_{2}''(t) r \left(c_{0}+\frac{1}{12} \left(2 \pi ^2-15-6 \log ^2(2)\right) \log(r) +\frac{1}{4}\left(1-\log(4)\right) \log^{2}(r)\right)\end{split}\end{equation}
Then, for $0 \leq j+ k \leq 2$ 
\begin{equation}\label{uw2sub0subleadingest}|r^{k}t^{j}\partial_{r}^{k}\partial_{t}^{j}\left(u_{w,2,sub,0}(t,r)-u_{w,2,sub,0,cont}(t,r)\right)| \leq \frac{C r^{3} \lambda(t)^{3} \log^{5}(t)}{t^{6} }, \quad \lambda(t)g(t) \leq r \leq 2 g(t)\lambda(t)\end{equation}
\end{lemma}
\begin{proof}

Using \eqref{uw2ell0contg} and our calculations for $w_{1}$, along with Lemma \ref{inhomlemma}, we have
\begin{equation}\label{uw2sub0decomp}\begin{split}u_{w,2,sub,0}(t,r) &= \frac{-1}{r} \int_{t}^{t+r} g_{0}''(s)(s-t) ds -\frac{1}{r} \int_{t+r}^{\infty} g_{0}''(s)\left((s-t)-\sqrt{(s-t)^{2}-r^{2}}\right) ds\\
&-\int_{t}^{\infty} g_{1}''(s) K_{0}\left(\frac{r}{s-t},r\right) ds + \int_{t}^{\infty} g_{2}''(s)K_{2}\left(\frac{s-t}{r},r\right) ds\end{split}\end{equation}
The first two terms on the right-hand side of the above expression are treated with the identical procedure used to study $w_{1}$, simply replacing $\lambda$ in the $w_{1}$ expressions with $\frac{-g_{0}}{2}$. The leading part of the sum of these two terms in the matching region $r \sim g(t)\lambda(t)$ is (note that we will prove concrete estimates on the terms we claim are subleading) 
$$-\frac{1}{2}g_{0}''(t) r -\frac{r}{2} g_{0}''(t) \log(\frac{t}{r}) - \frac{r}{2} \int_{t}^{2t}\frac{(g_{0}''(s)-g_{0}''(t))}{(s-t)} ds - \frac{r}{2} \int_{2t}^{\infty} \frac{g_{0}''(s)}{s-t} ds - g_{0}''(t) \frac{r}{4} \left(\log(4)-1\right)$$ 
In fact, by inspecting \eqref{w1expr}, we have
\begin{equation}\begin{split}|&\frac{-1}{r} \int_{t}^{t+r} g_{0}''(s)(s-t) ds -\frac{1}{r} \int_{t+r}^{\infty} g_{0}''(s)\left((s-t)-\sqrt{(s-t)^{2}-r^{2}}\right) ds \\
&- \left(-\frac{1}{2}g_{0}''(t) r -\frac{r}{2} g_{0}''(t) \log(\frac{t}{r}) - \frac{r}{2} \int_{t}^{2t}\frac{(g_{0}''(s)-g_{0}''(t))}{(s-t)} ds - \frac{r}{2} \int_{2t}^{\infty} \frac{g_{0}''(s)}{s-t} ds - g_{0}''(t) \frac{r}{4} \left(\log(4)-1\right)\right)| \\
&\leq \frac{C r^{3} \lambda(t)^{3} \log^{2}(t) \left(1+\log(\frac{t}{r})\right)}{t^{6} }, \quad r \leq t\end{split}\end{equation}
Estimates on the $t$ and $r$ derivatives are done similarly, as in the proof of Lemma \ref{w1strlemma}. The third and fourth terms of \eqref{uw2sub0decomp} are treated with the same argument, which is essentially the same argument we used for $w_{1}$, but with a few differences in the details. For clarity, we start with the third term. We have
\begin{equation}\label{k0g1int}-\int_{t}^{\infty} g_{1}''(s) K_{0}\left(\frac{r}{s-t},r\right) ds = -\int_{t}^{t+r} g_{1}''(s)K_{0}\left(\frac{r}{s-t},r\right) ds - \int_{t+r}^{\infty} g_{1}''(s) K_{0}\left(\frac{r}{s-t},r\right) ds\end{equation}
With $y=\frac{s-t}{r}$, we get
\begin{equation}\begin{split}-\int_{t}^{t+r} g_{1}''(s)K_{0}\left(\frac{r}{s-t},r\right) ds &= -r\int_{0}^{1} g_{1}''(t+r y) K_{0}\left(\frac{1}{y},r\right) dy \\
&= -r\int_{0}^{1} g_{1}''(t+ry) \left(K_{0,0}(\frac{1}{y})+K_{0,1}(\frac{1}{y})\log(r)\right) dy\end{split}\end{equation}
where we note that, for $x >1$,
$$K_{0}(x,r) = K_{0,0}(x) + K_{0,1}(x) \log(r), \quad K_{0,0}(x) = \frac{\sqrt{x^{2}-1}  \sin ^{-1}\left(\frac{1}{x}\right)-1}{x}, \quad K_{0,1}(x) = \frac{1}{x}$$
Since $g(t)\lambda(t) \ll t$, the first term in the following decomposition is lower order relative to the second term, for $r \sim g(t)\lambda(t)$ 
\begin{equation}\begin{split}&-r\int_{0}^{1} g_{1}''(t+ry) \left(K_{0,0}(\frac{1}{y})+K_{0,1}(\frac{1}{y})\log(r)\right) dy\\
&=-rg_{1}''(t)\int_{0}^{1}  \left(K_{0,0}(\frac{1}{y})+K_{0,1}(\frac{1}{y})\log(r)\right) dy -r\int_{0}^{1} \left(g_{1}''(t+ry)-g_{1}''(t)\right) \left(K_{0,0}(\frac{1}{y})+K_{0,1}(\frac{1}{y})\log(r)\right) dy\end{split}\end{equation}
We thus get that the leading contribution from $-r\int_{0}^{1} g_{1}''(t+ry) \left(K_{0,0}(\frac{1}{y})+K_{0,1}(\frac{1}{y})\log(r)\right) dy$ is
\begin{equation}\label{contribution1logr}-\frac{r}{16} g_{1}''(t) \left(-12+\pi^{2}+8 \log(r)\right)\end{equation}
We treat the next term in \eqref{k0g1int}, namely $-\int_{t+r}^{\infty} g_{1}''(s) K_{0}\left(\frac{r}{s-t},r\right)ds.$ For this term, we start by decomposing $K_{0}(x,r)$ into the leading piece for small $x$ plus the remainder. 
$$K_{0}(x,r) = K_{0,sm}(x,r) + O\left(x^{3}\left(1+|\log(\frac{r}{x})|\right)\right), \quad x \rightarrow 0, \quad K_{0,sm}(x,r) =  \frac{1}{2} x (\log (r)-\log (x)+\log (2))$$
We therefore get
\begin{equation}\label{lgsminustdecomp}\begin{split} -\int_{t+r}^{\infty} g_{1}''(s) K_{0}\left(\frac{r}{s-t},r\right)ds &= -g_{1}''(t) \int_{t+r}^{2t} K_{0,sm}(\frac{r}{s-t},r) ds - \int_{t}^{2t} \left(g_{1}''(s)-g_{1}''(t)\right) K_{0,sm}(\frac{r}{s-t},r) ds\\
&+\int_{t}^{t+r}\left(g_{1}''(s)-g_{1}''(t)\right) K_{0,sm}(\frac{r}{s-t},r) ds  - \int_{2t}^{\infty} g_{1}''(s) K_{0,sm}(\frac{r}{s-t},r)ds\\
&-\int_{t+r}^{\infty} g_{1}''(s) \left(K_{0}-K_{0,sm}\right)(\frac{r}{s-t},r) ds\end{split}\end{equation}
This decomposition also appears in the $w_{1}$ expression \eqref{w1expr}. The leading contribution from the first four terms of the right-hand side of \eqref{lgsminustdecomp} is
\begin{equation}\label{contribution2alogr}\begin{split}&-g_{1}''(t)r \left(\frac{\log(\frac{t}{r})\log(2)}{2}+\frac{1}{4}\left(\log^{2}(t)-\log^{2}(r)\right)\right) \\
&-\frac{r}{2} \int_{2t}^{\infty} \frac{g_{1}''(s)}{(s-t)} \log(2(s-t)) ds-\frac{r}{2} \int_{t}^{2t}\left(g_{1}''(s)-g_{1}''(t)\right) \frac{\log(2(s-t))}{(s-t)} ds\end{split}\end{equation}
Moreover, the difference between $-\int_{t}^{\infty} g_{1}''(s) K_{0}\left(\frac{r}{s-t},r\right)ds$ and the leading parts of the terms considered thus far is equal to
\begin{equation} \begin{split} &-\int_{t}^{t+r} g_{1}''(s) K_{0}(\frac{r}{s-t},r) ds - \left(\frac{-r g_{1}''(t)}{16} \left(-12+\pi^{2} + 8 \log(r)\right)\right)\\
&-\int_{t+r}^{\infty} g_{1}''(s) K_{0}\left(\frac{r}{s-t},r\right)ds-\begin{aligned}[t]&\left(-g_{1}''(t)r \left(\frac{\log(\frac{t}{r})\log(2)}{2}+\frac{1}{4}\left(\log^{2}(t)-\log^{2}(r)\right)\right)\right. \\
&\left.-\frac{r}{2} \int_{2t}^{\infty} \frac{g_{1}''(s)}{(s-t)} \log(2(s-t)) ds-\frac{r}{2} \int_{t}^{2t}\left(g_{1}''(s)-g_{1}''(t)\right) \frac{\log(2(s-t))}{(s-t)} ds\right)\end{aligned}\\
&= \int_{t}^{t+r} \left(g_{1}''(s)-g_{1}''(t)\right) K_{0,sm}(\frac{r}{s-t},r) ds - \int_{t+r}^{\infty} g_{1}''(s) \left(K_{0}-K_{0,sm}\right)(\frac{r}{s-t},r) ds \\
&-r \int_{0}^{1} \left(g_{1}''(t+r y)-g_{1}''(t)\right) K_{0}(\frac{1}{y},r) dy\end{split}\end{equation}
We have
\begin{equation}\begin{split}&-r \int_{0}^{1} \left(g_{1}''(t+r y)-g_{1}''(t)\right) K_{0}(\frac{1}{y},r) dy + \int_{t}^{t+r} \left(g_{1}''(s)-g_{1}''(t)\right) K_{0,sm}(\frac{r}{s-t},r) ds\\
&= -r \int_{0}^{1} \left(g_{1}''(t+r y)-g_{1}''(t)-r y g_{1}'''(t)\right) K_{0}(\frac{1}{y},r) dy + \int_{t}^{t+r} \left(g_{1}''(s)-g_{1}''(t)-(s-t) g_{1}'''(t)\right) K_{0,sm}(\frac{r}{s-t},r) ds\\
&-r^{2} \int_{0}^{1} y g_{1}'''(t) K_{0}(\frac{1}{y},r) dy + \int_{t}^{t+r} g_{1}'''(t) (s-t) K_{0,sm}(\frac{r}{s-t},r) ds\end{split}\end{equation}
which gives
\begin{equation}\begin{split}&|-r \int_{0}^{1} \left(g_{1}''(t+r y)-g_{1}''(t)\right) K_{0}(\frac{1}{y},r) dy + \int_{t}^{t+r} \left(g_{1}''(s)-g_{1}''(t)\right) K_{0,sm}(\frac{r}{s-t},r) ds\\
&-\left(\frac{r^{2} g_{1}'''(t)}{18}\left(-7+\log(512)+3 \log(r)\right)\right)|\\
&\leq \frac{C r^{3} \left(1+|\log(r)|\right) \lambda(t)^{3} \log(t)}{t^{6} }\end{split}\end{equation}
The higher derivatives are estimated similarly, for example, by writing
$$\int_{t}^{t+r} \left(g_{1}''(s)-g_{1}''(t)-(s-t) g_{1}'''(t)\right) K_{0,sm}(\frac{r}{s-t},r) ds = \frac{r}{2}\int_{0}^{1}\left(g_{1}''(t+r y)-g_{1}''(t)-r y g_{1}'''(t)\right)\log(2 r y) \frac{dy}{y}$$
and using the symbol-type estimates on $\lambda$ to justify the differentiation under the integral, exactly as in the proof of Lemma \ref{w1strlemma}. We now turn attention to the fifth term on the right-hand side of \eqref{lgsminustdecomp}. The point here is that $(K_{0}-K_{0,sm})(\frac{r}{s-t},r)$ decays faster in $s-t$ than does $K_{0,sm}(\frac{r}{s-t},r)$. To exploit this, we integrate by parts in $s$ in the following integral, and the symbol type estimates on $g_{1}''(s)$ show that the non-boundary terms obtained are subleading relative to the boundary terms (this will be proven once we estimate the terms that we claim are subleading) 
\begin{equation}\label{k0subdecomp}\begin{split}&-\int_{t+r}^{\infty} g_{1}''(s) \left(K_{0}-K_{0,sm}\right)(\frac{r}{s-t},r) ds\\
&=-g_{1}''(t+r) \int_{t+r}^{\infty} \left(K_{0}-K_{0,sm}\right)(\frac{r}{y-t},r) dy - \int_{t+r}^{\infty} \left(\int_{s}^{\infty} \left(K_{0}-K_{0,sm}\right)(\frac{r}{y-t},r) dy\right) g_{1}'''(s) ds\end{split}\end{equation}
Recalling that $g(t)\lambda(t) \ll t$, the leading behavior of $-\int_{t+r}^{\infty} g_{1}''(s) \left(K_{0}-K_{0,sm}\right)(\frac{r}{s-t},r) ds$ in the matching region is
\begin{equation}\label{contribution2blogr}-g_{1}''(t) \int_{1}^{\infty} \left(K_{0}-K_{0,sm}\right)(\frac{1}{q},r) r dq = \frac{-r g_{1}''(t)}{24}\left(6 (\log (4)-1) \log (r)-2 \pi ^2+15+6 \log ^2(2)\right)\end{equation}
The subleading part of $-\int_{t+r}^{\infty} g_{1}''(s) \left(K_{0}-K_{0,sm}\right)(\frac{r}{s-t},r) ds$ is therefore given by
\begin{equation}\begin{split} &-\int_{t+r}^{\infty} g_{1}''(s) \left(K_{0}-K_{0,sm}\right)(\frac{r}{s-t},r) ds-\left(\frac{-r g_{1}''(t)}{24}\left(6 (\log (4)-1) \log (r)-2 \pi ^2+15+6 \log ^2(2)\right)\right)\\
&= -\left(g_{1}''(t+r)-g_{1}''(t)\right) \int_{t+r}^{\infty} \left(K_{0}-K_{0,sm}\right)(\frac{r}{y-t},r) dy - \int_{t+r}^{\infty} \int_{s}^{\infty} \left(K_{0}-K_{0,sm}\right)(\frac{r}{y-t},r) dy g_{1}'''(s) ds\end{split}\end{equation}
We have 
\begin{equation}\begin{split} &-\left(g_{1}''(t+r)-g_{1}''(t)\right) \int_{t+r}^{\infty} \left(K_{0}-K_{0,sm}\right)(\frac{r}{y-t},r) dy\\
&=-r \left(g_{1}''(t+r)-g_{1}''(t)-r g_{1}'''(t)\right) \int_{1}^{\infty} \left(K_{0}-K_{0,sm}\right)(\frac{1}{z},r) dz-r^{2} g_{1}'''(t) \int_{1}^{\infty} \left(K_{0}-K_{0,sm}\right)(\frac{1}{z},r) dz\end{split}\end{equation}
which gives
\begin{equation} \begin{split} &|-\left(g_{1}''(t+r)-g_{1}''(t)\right) \int_{t+r}^{\infty} \left(K_{0}-K_{0,sm}\right)(\frac{r}{y-t},r) dy-\left(-r^{2} g_{1}'''(t) \int_{1}^{\infty} \left(K_{0}-K_{0,sm}\right)(\frac{1}{z},r) dz\right)|\\
&\leq \frac{C r^{3} \log(t) \lambda(t)^{3} \left(1+|\log(r)|\right)}{t^{6} }\end{split}\end{equation}
and the higher derivatives are estimated similarly. On the other hand, we have
\begin{equation}\begin{split} &-\int_{t+r}^{\infty} \int_{s}^{\infty} \left(K_{0}-K_{0,sm}\right)(\frac{r}{y-t},r) dy g_{1}'''(s) ds\\
&= - g_{1}'''(t+r) \int_{t+r}^{\infty} \int_{q}^{\infty} \left(K_{0}-K_{0,sm}\right)(\frac{r}{y-t},r) dy dq-\int_{t+r}^{\infty} \int_{s}^{\infty} \int_{q}^{\infty} \left(K_{0}-K_{0,sm}\right)(\frac{r}{y-t},r) g_{1}''''(s)dy dq  ds\end{split}\end{equation}
Therefore, for $r \sim g(t)\lambda(t)$,
\begin{equation}\begin{split} &|-\int_{t+r}^{\infty} \int_{s}^{\infty} \left(K_{0}-K_{0,sm}\right)(\frac{r}{y-t},r) dy g_{1}'''(s) ds-\left(-r^{2} g_{1}'''(t) \int_{1}^{\infty} (x-1) \left(K_{0}-K_{0,sm}\right)(\frac{1}{x},r) dx\right)|\\
&\leq \frac{C r^{3} \log(t) \lambda(t)^{3}}{t^{6} } \left(1+|\log(r)|\right)\left(\log(1+\frac{t}{r})+1\right)\end{split}\end{equation}
As with the previous terms, the higher derivatives of $-\int_{t+r}^{\infty} \int_{s}^{\infty} \int_{q}^{\infty} \left(K_{0}-K_{0,sm}\right)(\frac{r}{y-t},r) g_{1}''''(s)dy dq  ds$ can be estimated by letting $x=\frac{y-t}{r}$, then, $w=\frac{q-t}{r}$, and $z=\frac{s-t}{r}$, and differentiating under the integral. Finally, we get
\begin{equation}\begin{split} &|-\left(g_{1}''(t+r)-g_{1}''(t)\right) \int_{t+r}^{\infty} \left(K_{0}-K_{0,sm}\right)(\frac{r}{y-t},r) dy-\int_{t+r}^{\infty} \int_{s}^{\infty} \left(K_{0}-K_{0,sm}\right)(\frac{r}{y-t},r) dy g_{1}'''(s) ds\\
&-\left(-r^{2} g_{1}'''(t) \int_{1}^{\infty} x \left(K_{0}-K_{0,sm}\right)(\frac{1}{x},r) dx\right)|\\
&\leq \frac{C r^{3} \log(t) \lambda(t)^{3} (1+|\log(r)|)(\log(1+\frac{t}{r})+1)}{t^{6} }\end{split}\end{equation}
Using 
$$-r^{2} g_{1}'''(t) \int_{1}^{\infty} x \left(K_{0}-K_{0,sm}\right)(\frac{1}{x},r) dx = \frac{-r^{2} g_{1}'''(t)}{18} \left(-7+\log(512)+3 \log(r)\right)$$
we finally get the following expression which identifies the leading part of $-\int_{t}^{\infty} g_{1}''(s) K_{0}(\frac{r}{s-t},r) ds$. 
\begin{equation} \begin{split} &|-\int_{t}^{\infty} g_{1}''(s) K_{0}(\frac{r}{s-t},r) ds - \left(\frac{-r g_{1}''(t)}{16} \left(-12+\pi^{2} + 8 \log(r)\right)\right)\\
&-\begin{aligned}[t]&\left(-g_{1}''(t)r \left(\frac{\log(\frac{t}{r})\log(2)}{2}+\frac{1}{4}\left(\log^{2}(t)-\log^{2}(r)\right)\right)\right. \\
&\left.-\frac{r}{2} \int_{2t}^{\infty} \frac{g_{1}''(s)}{(s-t)} \log(2(s-t)) ds-\frac{r}{2} \int_{t}^{2t}\left(g_{1}''(s)-g_{1}''(t)\right) \frac{\log(2(s-t))}{(s-t)} ds\right)\end{aligned}\\
&-\left(\frac{-r g_{1}''(t)}{24} \left(6 \left(\log(4)-1\right)\log(r) -2 \pi^{2} + 15 + 6 \log^{2}(2)\right)\right)|\\
&\leq \frac{C r^{3} \log(t) \lambda(t)^{3} \left(1+|\log(r)|\right)\left(\log(1+\frac{t}{r})+1\right)}{t^{6} }, \quad \lambda(t)g(t) \leq r \leq 2 g(t)\lambda(t) \end{split}\end{equation}
Notice that the $r^{2}$ terms from 
$$-\left(g_{1}''(t+r)-g_{1}''(t)\right) \int_{t+r}^{\infty} \left(K_{0}-K_{0,sm}\right)(\frac{r}{y-t},r) dy-\int_{t+r}^{\infty} \int_{s}^{\infty} \left(K_{0}-K_{0,sm}\right)(\frac{r}{y-t},r) dy g_{1}'''(s) ds$$ 
and 
$$-r \int_{0}^{1} \left(g_{1}''(t+r y)-g_{1}''(t)\right) K_{0}(\frac{1}{y},r) dy + \int_{t}^{t+r} \left(g_{1}''(s)-g_{1}''(t)\right) K_{0,sm}(\frac{r}{s-t},r) ds$$ 
canceled to give the above expression.\\
\\
The same procedure is carried out for the $K_{2}$ term in \eqref{uw2sub0decomp}. The details are as follows. The analog of \eqref{contribution1logr} is
\begin{equation}\begin{split}&r g_{2}''(t) \int_{0}^{1} \left(F_{0}(a)+F_{1}(a)\log(r)+F_{2}(a)\log^{2}(r)\right)da\\
&=-\frac{r g_{2}''(t)}{16} \left(2 \log (r) \left(4 \log (r)+\pi ^2-12\right)-28 \zeta (3)+\pi ^2+28\right)\end{split}\end{equation}
where $\zeta$ denotes the Riemann zeta function. We remark that one part of this computation involves the following.
$$\text{Im}\left(\text{Li}_{2}\left(\frac{1}{2}-\frac{i a}{2 \sqrt{1-a^{2}}}\right)\right) = \int_{0}^{a} \left(\frac{x \sin^{-1}(x)}{x^{2}-1}-\frac{\log(4(1-x^{2}))}{2\sqrt{1-x^{2}}}\right) dx$$
and
$$\int_0^1 \frac{1}{2} \left(\cos ^{-1}(x)-x \sqrt{1-x^2}\right) \left(\frac{x \sin ^{-1}(x)}{x^2-1}-\frac{\log \left(4 \left(1-x^2\right)\right)}{2 \sqrt{1-x^2}}\right) \, dx = \frac{1}{32} \left(-21 \zeta (3)+\pi ^2+\log (256)\right)$$
The analog of \eqref{contribution2alogr} is 
\begin{equation}\begin{split}&g_{2}''(t)  \frac{r}{24} \log \left(\frac{r}{t}\right) \left(4 \log (t) \log (8 r t)+4 \log (r) \log (8 r)-\pi ^2+12 \log ^2(2)\right)\\
&+\int_{t}^{2t} \left(g_{2}''(s)-g_{2}''(t)\right) K_{2,0}(\frac{s-t}{r},r) ds+\int_{2t}^{\infty} g_{2}''(s)K_{2,0}(\frac{s-t}{r},r) ds\end{split}\end{equation}
where we recall that $K_{2,0}$ is defined in \eqref{k20def}. The analog of \eqref{contribution2blogr} is 
$$g_{2}''(t) r \int_{1}^{\infty} \left(K_{2}-K_{2,0}\right)(a,r) da$$
We have
$$\int_{1}^{\infty} \left(K_{2}-K_{2,0}\right)(a,r) da = c_{0}+d_{1}\log(r)+d_{2}\log^{2}(r)$$
The exact value of $c_{0}$ is not important for our purposes, but we compute $d_{1}$ and $d_{2}$ explicitly. From direct integration, 
$$d_{2} = \int_1^{\infty } \left(\sqrt{a^2-1}-a+\frac{1}{2 a}\right) \, da = \frac{1}{4}\left(1-\log(4)\right)$$
To compute $d_{1}$, we recall that $K_{2}(a,r)$ satisfies the wave equation:
$$-K_{2}+r \partial_{r}K_{2} + r^{2}\partial_{r}^{2} K_{2}+a \partial_{a}K_{2}-2 a r \partial_{ar}K_{2}+(a^{2}-1)\partial_{a}^{2}K_{2}=0, \quad a >1$$
and $K_{2} = \sum_{k=0}^{2} F_{k}(a) \log^{k}(r).$ Integrating the equation solved by $K_{2}$ in the $a$ variable, one relation we get is (for $M>1$)
$$\int_{1}^{M} F_{1}(a) da = -\int_{1}^{M} F_{2}(a) da + \left(a F_{1}(a) + \frac{(1-a^{2})}{2} F_{0}'(a) + \frac{a F_{0}(a)}{2}\right)\Bigr|_{a=1}^{M}$$
Recalling the definitions of $F_{k,0}$ in \eqref{fk0def}, we have 
\begin{equation}\label{delicateint}d_{1}=\int_{1}^{\infty}\left(F_{1}(a)-F_{1,0}(a)\right) da = \lim_{M \rightarrow \infty} \int_{1}^{M}\left(F_{1}(a)-F_{1,0}(a)\right) da = \frac{1}{12} \left(2 \pi ^2-15-6 \log ^2(2)\right)\end{equation}
Therefore, 
$$g_{2}''(t) r \int_{1}^{\infty} \left(K_{2}-K_{2,0}\right)(a,r) da = g_{2}''(t) r \left(c_{0}+\frac{1}{12} \left(2 \pi ^2-15-6 \log ^2(2)\right) \log(r) +\frac{1}{4}\left(1-\log(4)\right) \log^{2}(r)\right)$$ 
For estimating the subleading terms, we use the same procedure used for the $K_{0}$ integral term previously. In particular, we have
\begin{equation}\label{k2subleading} \begin{split} &|\int_{t}^{\infty} g_{2}''(s) K_{2}(\frac{s-t}{r},r) ds - r g_{2}''(t) \int_{0}^{1} K_{2}(y,r) dy - r g_{2}''(t) \int_{1}^{\infty} (K_{2}-K_{2,0})(a,r) da\\
&-g_{2}''(t) \int_{t+r}^{2t} K_{2,0}(\frac{s-t}{r},r) ds - \int_{t}^{2t}\left(g_{2}''(s)-g_{2}''(t)\right) K_{2,0}(\frac{s-t}{r},r) ds-\int_{2t}^{\infty} g_{2}''(s) K_{2,0}(\frac{s-t}{r},r) ds\\
&-r^{2} g_{2}'''(t) \int_{0}^{1} y K_{2}(y,r) dy + r^{2} g_{2}'''(t) \int_{0}^{1} y K_{2,0}(y,r) dy - r^{2} g_{2}'''(t) \int_{1}^{\infty} a \left(K_{2}-K_{2,0}\right)(a,r) da|\\
&\leq \frac{C r^{3} \lambda(t)^{3} \left(1+\log^{2}(r)\right)\left(1+|\log(\frac{r}{t})|^{3}\right)}{t^{6} }, \quad r \leq t\end{split}\end{equation}
The higher derivatives are estimated similarly, with the same procedure used for the $g_{1}''(t)$ terms just studied. Finally, we will show that the last three terms on the left-hand side of the above inequality exactly cancel, just as was the case for the analogous terms arising from the $K_{0}$ integral previously. By direct computation, we have
$$\int_{0}^{1} y K_{2,0}(y,r) dy =-\frac{\log ^2(r)}{2}-\log (2) \log (r)+\log (2 r)+\frac{\pi ^2}{24}-1-\frac{1}{2} \log ^2(2)$$
$$\int_{0}^{1} y K_{2}(y,r) dy = -\frac{1}{3} \log ^2(r)+\frac{2 \log (r)}{9}+\frac{1}{18} (16 \log (2)-4)+\frac{1}{108} \left(-9 \pi ^2+88-96 \log (2)\right)$$
Finally, using a similar procedure as in \eqref{delicateint}, we get
\begin{equation} \begin{split} \int_{1}^{\infty} \left(F_{0}(a)-F_{0,0}(a)\right) a da &= \lim_{M\rightarrow \infty}\left(-\frac{2}{3} \int_{1}^{M} a F_{2}(a) da -\frac{a^{2} F_{0}(a)}{3} \Bigr|_{a=1}^{M} + \frac{2}{3} \int_{1}^{M} a^{2} F_{1}'(a) da\right.\\
&\left.-\frac{1}{3} \left((a^{3}-a) F_{0}'(a) -(3 a^{2}-1) F_{0}(a)\right)\Bigr|_{a=1}^{M} - \int_{1}^{M} F_{0,0}(a) a da\right)\\
&=\frac{\pi ^2}{8}-\frac{43}{27}-\frac{1}{2} \log ^2(2)+\log (2)\end{split}\end{equation}
A straightforward computation then gives
$$\int_{1}^{\infty} a \left(K_{2}-K_{2,0}\right)(a,r) da = -\frac{\log ^2(r)}{6}+\left(\frac{7}{9}-\log (2)\right) \log (r)+\frac{\pi ^2}{8}-\frac{43}{27}-\frac{1}{2} \log ^2(2)+\log (2)$$
In total, we then note that
\begin{equation}\begin{split}&-\frac{\log ^2(r)}{2}-\log (2) \log (r)+\log (2 r)+\frac{\pi ^2}{24}-1-\frac{1}{2} \log ^2(2)\\
&-\left(-\frac{1}{3} \log ^2(r)+\frac{2 \log (r)}{9}+\frac{1}{18} (16 \log (2)-4)+\frac{1}{108} \left(-9 \pi ^2+88-96 \log (2)\right)\right)\\
&-\left(-\frac{\log ^2(r)}{6}+\left(\frac{7}{9}-\log (2)\right) \log (r)+\frac{\pi ^2}{8}-\frac{43}{27}-\frac{1}{2} \log ^2(2)+\log (2)\right)\\
&=0\end{split}\end{equation}
which verifies that the last three terms on the left-hand side of \eqref{k2subleading} exactly cancel. Combining our computations and estimates above finishes the proof of the lemma. \end{proof}
\noindent Next, we note that
$$\partial_{t}^{2}v_{ex,cont}(t,r) = \frac{1}{r}\left(h_{0}''(t)+\log(r)h_{1}''(t)\right)$$
where
$$h_{0}(t) = -2 \log(\lambda(t))f_{1}(t)+\lambda(t)^{2}f_{2}(t), \quad h_{1}(t) = 2 f_{1}(t)$$
and
$$f_{1}(t) = \frac{-1}{2}\left(2\lambda(t)\lambda'(t)^{2}+\lambda(t)^{2}\lambda''(t)\right), \quad f_{2}(t) = \frac{-\lambda''(t)}{2}$$
Using the same procedure as for $u_{w,2,sub,0}$ (in fact, the same computations, except with $g_{j}$ replaced with $h_{j}$, for $j=0,1$) we get the following lemma.
\begin{lemma}\label{vexsub0leading}[Leading part of $v_{ex,sub,0}$] Let 
\begin{equation}\begin{split}&v_{ex,sub,0,cont}(t,r)\\
&=\frac{-r}{2} h_{0}''(t) -\frac{r}{2}h_{0}''(t) \log(\frac{t}{r}) - \frac{r}{2} \int_{t}^{2t} \frac{\left(h_{0}''(s)-h_{0}''(t)\right)}{(s-t)} ds \\
&- \frac{r}{2}\int_{2t}^{\infty} \frac{h_{0}''(s) ds}{s-t}-r h_{0}''(t)\frac{\left(\log(4)-1\right)}{4}-\frac{r}{8}f_{1}''(t)\left(-12+\pi^{2}+8 \log(r)\right)\\
&-2 f_{1}''(t)\left(\frac{r \log(2) \log(\frac{t}{r})}{2}+\frac{r}{4}\left(\log^{2}(t)-\log^{2}(r)\right)\right)\\
&-r \int_{t}^{2t} \frac{\left(f_{1}''(s)-f_{1}''(t)\right)}{(s-t)} \log(2(s-t)) ds\\
&-r \int_{2t}^{\infty} f_{1}''(s) \left(\frac{ \log(2(s-t))}{(s-t)}\right) ds-\frac{r f_{1}''(t)}{12}\left(6 \left(\log(4)-1\right)\log(r)-2 \pi^{2}+15+6 \log^{2}(2)\right) \end{split}\end{equation}
Then, for $0 \leq j+k \leq 2$, 
\begin{equation}|r^{k}t^{j}\partial_{r}^{k}\partial_{t}^{j}\left(v_{ex,sub,0}-v_{ex,sub,0,cont}\right)|(t,r) \leq \frac{C r^{3} \lambda(t)^{3} \log^{3}(t)}{t^{6} }, \quad g(t)\lambda(t) \leq r \leq 2g(t)\lambda(t)\end{equation}
\end{lemma}
Next, we consider 
\begin{equation}\label{uw2minusuw2ell0expr}\begin{split}&u_{w,2,ell}(t,r)-u_{w,2,ell,0}(t,r)\\
&=-\frac{1}{2}\left(\frac{1}{r}\int_{0}^{r} s^{2}\left(RHS_{2}(t,s)-RHS_{2,0}(t,s)\right) ds + r \int_{r}^{\infty} \left(RHS_{2}(t,s)-RHS_{2,0}(t,s)\right) ds\right)\\
&=\frac{-1}{2}\left(\frac{1}{r}\int_{0}^{r} s^{2}\left(RHS_{2}(t,s)-RHS_{2,0}(t,s)\right)ds + r\int_{0}^{\infty} \left(RHS_{2}(t,s)-RHS_{2,0}(t,s)\right)ds\right.\\
&\left. - r \int_{0}^{r} \left(RHS_{2}(t,s)-RHS_{2,0}(t,s)\right) ds\right)\end{split}\end{equation}
We proceed to study each integral term on the right-hand side. The integral over $(0,\infty)$ requires the longest argument:
\begin{lemma} We have 
\begin{equation}\label{rhs2minusrhs20infint} \begin{split}&\int_{0}^{\infty}\left(RHS_{2}(t,s)-RHS_{2,0}(t,s)\right) ds\\
&= \lambda(t) \int_{t}^{\infty} W_{3}(\frac{s-t}{\lambda(t)}) \lambda''''(s) ds+ \int_{0}^{\infty} d\xi \left(\frac{-8}{\xi \lambda(t)^{2}}+4 \xi K_{2}(\xi \lambda(t)) + 2\xi\right) \sin(t\xi) \widehat{v_{2,0}}(\xi)\end{split}\end{equation} 
where 
\begin{equation}\begin{split} W_{3}(x) &= \frac{2}{3}  \left(x \log \left(64 x^6\right)+4 x^3 \log (2 x)+2 \left(x^2+1\right)^{3/2} \log \left(2 x \left(x-\sqrt{x^2+1}\right)+1\right)+x\right)\\
&=O\left(\frac{\log(x)}{x}\right), \quad x \rightarrow \infty\end{split}\end{equation}
\end{lemma}
\begin{proof}
We first note
\begin{equation} r\int_{0}^{\infty} \left(RHS_{2}(t,s)-RHS_{2,0}(t,s)\right)ds = r\int_{0}^{\infty} \left(\left(\frac{\cos(2	Q_{1}(\frac{s}{\lambda(t)}))-1}{s^{2}}\right) \left(v_{2}+w_{1}\right) - RHS_{2,0}(t,s)\right)ds\end{equation}
Using the representation formula \eqref{v2} for $v_{2}$, and Fubini's theorem, we have
\begin{equation}\label{intwithv2} \int_{0}^{\infty}\left(\frac{\cos(2Q_{1}(\frac{s}{\lambda(t)}))-1}{s^{2}}\right) v_{2}(t,s) ds = F(t) + \int_{0}^{\infty} d\xi \left(\frac{-8}{\xi \lambda(t)^{2}}+4 \xi K_{2}(\xi \lambda(t)) + 2\xi\right) \sin(t\xi) \widehat{v_{2,0}}(\xi)\end{equation}
where we used
\begin{equation}\label{lowfreqsmall}\int_{0}^{\infty} dr \left(\frac{\cos(2Q_{1}(\frac{r}{\lambda(t)}))-1}{r^{2}}\right) J_{1}(r\xi) = -\frac{8}{\xi\lambda(t)^{2}} + 4 \xi K_{2}(\xi \lambda(t)) = -2\xi + O\left(\xi^{3}\log(\xi)\right), \quad \xi \rightarrow 0\end{equation}
and we recall that $F$ is defined in \eqref{Fdef}. Notice that the first term on the right-hand side of \eqref{intwithv2} is the leading part, given the smallness at low frequencies of the $O$ term in  \eqref{lowfreqsmall}. Next, using
$$w_{1}(t,r) =\frac{2}{r} \int_{t}^{t+r} \lambda''(s)(s-t)ds + 2 r \int_{1}^{\infty} \lambda''(t+r y) \left(y-\sqrt{y^{2}-1}\right) dy$$
we get
\begin{equation}\label{rhs21infint}\begin{split} &\int_{0}^{\infty} \left(\frac{\cos(2Q_{1}(\frac{r}{\lambda(t)}))-1}{r^{2}}\right) w_{1}(t,r) dr\\
&=\int_{0}^{\infty} dw \lambda''(t+w) w \frac{8}{\lambda(t)^{2}} \left(\frac{\lambda(t)^{2}}{\lambda(t)^{2}+w^{2}}-\log\left(1+\frac{\lambda(t)^{2}}{w^{2}}\right)\right)\\
&+\int_{0}^{\infty} dw \lambda''(t+w)\frac{4}{\lambda(t)^{2}} \begin{aligned}[t]&\left(\frac{-2 \lambda(t)^{2} w}{\lambda(t)^{2}+w^{2}} + w \log(16) + 2 w \log(1+\frac{w^{2}}{\lambda(t)^{2}}) \right.\\
&\left.+ \frac{\left(\lambda(t)^{2}+2w^{2}\right) \log(1+\frac{2 w (w-\sqrt{\lambda(t)^{2}+w^{2}})}{\lambda(t)^{2}})}{\sqrt{\lambda(t)^{2}+w^{2}}}\right)\end{aligned}\end{split}\end{equation}
Using the definitions of $v_{2,sub}$, $w_{1,sub}$, and the first order matching (see, e.g. \eqref{vellfirstorder}), we get
$$RHS_{2,0}(t,r) =\frac{r}{\lambda(t)} \left(\frac{\cos(2Q_{1}(\frac{r}{\lambda(t)}))-1}{r^{2}}\right) \left(\frac{1}{2}\lambda'(t)^{2}+\lambda(t)\lambda''(t) -\lambda(t)\lambda''(t) \log(\frac{r}{\lambda(t)})\right)$$
which gives
\begin{equation}\label{rhs20infint} -\int_{0}^{\infty} RHS_{2,0}(t,s) ds = \frac{4 }{\lambda(t)} \left(\frac{\lambda'(t)^{2}}{2}+\lambda(t)\lambda''(t)\right)\end{equation}
Given that $RHS_{2,0}$ was the leading part of $RHS_{2}$ in the matching region, the integrals \eqref{intwithv2}, \eqref{rhs20infint}, and \eqref{rhs21infint} have cancellation when added together. We show this in detail now. Recalling the definition of $F(t)$, we have (for all $t \geq T_{0}$)
\begin{equation}\label{fdef2}\begin{split}& F(t)\\
 &= 4 \left(\left(\log(2)-\frac{1}{2}\right)\lambda''(t)+\int_{t}^{2t} \left(\frac{\lambda''(s)-\lambda''(t)}{s-t}\right) ds + \lambda''(t) \log(\frac{t}{\lambda(t)})+\int_{2t}^{\infty} \frac{\lambda''(s) ds}{s-t}-\frac{(\lambda'(t))^{2}}{2\lambda(t)}\right)\end{split}\end{equation}
Therefore, the $\lambda'(t)^{2}$ terms from $F(t)$ and \eqref{rhs20infint} cancel. Next, we determine the leading terms of the first term on the right-hand side of \eqref{rhs21infint}. The integral is 
$$\frac{8}{\lambda(t)^{2}}\int_{0}^{\infty} \lambda''(t+w) w \left(\frac{\lambda(t)^{2}}{\lambda(t)^{2}+w^{2}}-\log(1+\frac{\lambda(t)^{2}}{w^{2}})\right) dw$$
Given the decay (for large $w$) of the part of the integrand multiplying $\lambda''(t+w)$, we integrate by parts in $w$, differentiating the symbol $\lambda''(t+w)$, and integrating (backwards from infinity) the rest of the integrand. This gives
\begin{equation}\label{intbypartsasymp}\begin{split} &\frac{8}{\lambda(t)^{2}}\int_{0}^{\infty} \lambda''(t+w) w \left(\frac{\lambda(t)^{2}}{\lambda(t)^{2}+w^{2}}-\log(1+\frac{\lambda(t)^{2}}{w^{2}})\right) dw \\
&= -4 \lambda''(t)-\frac{4 \pi}{3}\lambda(t)\lambda'''(t)+\frac{4}{3\lambda(t)^{2}} \int_{0}^{\infty} \left(w \lambda(t)^{2}-2 \tan^{-1}\left(\frac{\lambda(t)}{w}\right)\lambda(t)^{3} - w^{3} \log(1+\frac{\lambda(t)^{2}}{w^{2}})\right) \lambda''''(t+w) dw\end{split}\end{equation}
 Notice the cancellation between the $\lambda''(t)$ terms in the expression above and \eqref{rhs20infint}. Next, we treat the second term on the right-hand side of \eqref{rhs21infint}. We start with
\begin{equation}\begin{split}&\frac{4}{\lambda(t)^{2}} \int_{0}^{t} dw \lambda''(t+w) G_{2}(w,\lambda(t))\end{split}\end{equation}
where
\begin{equation}\begin{split}G_{2}(w,\lambda(t)) &= \left(\frac{-2 \lambda(t)^{2}w}{\lambda(t)^{2}+w^{2}}+w \log(16) + 2 w \log(1+\frac{w^{2}}{\lambda(t)^{2}})\right.\\
&\left.+\frac{\left(\lambda(t)^{2}+2w^{2}\right)}{\sqrt{\lambda(t)^{2}+w^{2}}}\log(1+\frac{2 w (w-\sqrt{\lambda(t)^{2}+w^{2}})}{\lambda(t)^{2}})\right)=\lambda(t) G_{3}(\frac{w}{\lambda(t)})\end{split}\end{equation}
Since there will be some cancellation between the integral under consideration and the second term in \eqref{fdef2}, we start by writing $\lambda''(t+w)=\lambda''(t)+\lambda''(t+w)-\lambda''(t)$, we have
\begin{equation}\label{g2intstep}\begin{split}&\frac{4}{\lambda(t)^{2}} \int_{0}^{t} dw \lambda''(t+w) G_{2}(w,\lambda(t))\\
&=-2 \lambda''(t) \left(-1+2 \log(2)-2 \log(\frac{\lambda(t)}{t})\right) + \frac{4 \lambda''(t)}{\lambda(t)^{2}} \lambda(t)^{2} G_{4}(\frac{t}{\lambda(t)}) \\
&+\frac{4}{\lambda(t)^{2}} \int_{0}^{t} dw \left(\lambda''(t+w)-\lambda''(t)\right) G_{2}(w,\lambda(t))\end{split}\end{equation}
where 
\begin{equation}\label{g4def}\begin{split}G_{4}(x) &= \frac{1}{2} \left(2 x^2 \log \left(4 \left(x^2+1\right)\right)+2x^{2} \sqrt{\frac{1}{x^2}+1}  \log \left(1-2 \left(\sqrt{\frac{1}{x^2}+1}-1\right) x^2\right)+2 \log (2 x)-1\right)\\
&=O\left(\frac{\log(x)}{x^{2}}\right), \quad x \rightarrow \infty\end{split}\end{equation}
Note that the first term on the right-hand side of \eqref{g2intstep} cancels with all of the $\lambda''(t)$ terms outside the integral operators in \eqref{fdef2}. Next, we note that 
\begin{equation}\label{g3asymp}G_{3}(x) = \frac{-1}{x} + O\left(\frac{\log(x)}{x^{3}}\right), \quad x \rightarrow \infty\end{equation}
Recalling that $G_{2}(w,\lambda(t))=\lambda(t)G_{3}(\frac{w}{\lambda(t)})$, we have
\begin{equation}\begin{split}&\frac{4}{\lambda(t)^{2}} \int_{0}^{t} dw \left(\lambda''(t+w)-\lambda''(t)\right) G_{2}(w,\lambda(t))\\
&=\frac{-4}{\lambda(t)^{2}} \int_{0}^{t} dw \left(\lambda''(t+w)-\lambda''(t)\right) \frac{\lambda(t)^{2}}{w} +\frac{4}{\lambda(t)^{2}}\int_{0}^{t} dw \left(\lambda''(t+w)-\lambda''(t)\right) \left(G_{2}(w,\lambda(t))+\frac{\lambda(t)^{2}}{w}\right)\end{split}\end{equation}
The first term on the right-hand side of the above expression cancels with the second term on the right-hand side of \eqref{fdef2}, and $G_{2}(w,\lambda(t))+\frac{\lambda(t)^{2}}{w}$ decays much more quickly for large $w$ than $G_{2}(w,\lambda(t))$ does, given \eqref{g3asymp}. Using the same procedure as in \eqref{intbypartsasymp}, we get
\begin{equation}\label{g2minusprincstep}\begin{split}&\frac{4}{\lambda(t)^{2}}\int_{0}^{t} dw \left(\lambda''(t+w)-\lambda''(t)\right) \left(G_{2}(w,\lambda(t))+\frac{\lambda(t)^{2}}{w}\right)\\
&=\frac{4}{\lambda(t)^{2}}\begin{aligned}[t]&\left(\left(\lambda''(2t)-\lambda''(t)\right)\lambda(t)^2 G_{4}(\frac{t}{\lambda(t)})\right.\\
&\left.+\lambda(t)^{3}\left( - \lambda'''(2t) W_{2}(\frac{t}{\lambda(t)}) +\frac{\lambda'''(t)}{3}\pi  + \int_{t}^{2t} W_{2}(\frac{s-t}{\lambda(t)}) \lambda''''(s) ds\right)\right)\end{aligned}\end{split}\end{equation}
where we recall that $G_{4}$ was defined in \eqref{g4def}, and
\begin{equation}\begin{split} W_{2}(x) = \frac{1}{6} &\left(x^3 \log (16)+2 \left(x^2+1\right)^{3/2} \log \left(2 x^2-2 x \sqrt{x^2+1} +1\right)+2 x^3 \log \left(x^2+1\right)\right.\\
&\left.-x+x \log (64)+6 x \log (x)+4 \tan ^{-1}(\frac{1}{x})\right)\end{split}\end{equation}
Note that the second term on the right-hand side of our previous computation \eqref{g2intstep} cancels with the term $\frac{4}{\lambda(t)^{2}}\left(-\lambda''(t)\lambda(t)^{2}G_{4}(\frac{t}{\lambda(t)})\right)$ from \eqref{g2minusprincstep}.\\
\\
Lastly, we consider
$$\frac{4}{\lambda(t)^{2}}\int_{t}^{\infty} dw \lambda''(t+w) G_{2}(w,\lambda(t)) = -4 \int_{2t}^{\infty} ds \frac{\lambda''(s)}{(s-t)} + \frac{4}{\lambda(t)^{2}} \int_{t}^{\infty} dw \lambda''(t+w)\left(G_{2}(w,\lambda(t))+\frac{\lambda(t)^{2}}{w}\right)$$
The first term on the right-hand side of the above expression cancels with the fourth term on the right-hand side of \eqref{fdef2}. Then, we treat the following integral with the same procedure used in \eqref{intbypartsasymp}
\begin{equation}\begin{split} &\frac{4}{\lambda(t)^{2}} \int_{t}^{\infty} dw \lambda''(t+w) \left(G_{2}(w,\lambda(t))+\frac{\lambda(t)^{2}}{w}\right)\\
&=\frac{4}{\lambda(t)^{2}} \left(-\lambda''(2t)\lambda(t)^{2} G_{4}(\frac{t}{\lambda(t)}) + \lambda'''(2t) \lambda(t)^{3} W_{2}(\frac{t}{\lambda(t)}) +\lambda(t)^{3} \int_{t}^{\infty} dw \lambda''''(t+w) W_{2}(\frac{w}{\lambda(t)}) dw\right)\end{split}\end{equation} 
After combining all of our computations, we end up with \eqref{rhs2minusrhs20infint}, completing the proof of the lemma.
 \end{proof}
Finally, to compute the principal part (in the matching region $r \sim \lambda(t)g(t)$) of the other two integrals in \eqref{uw2minusuw2ell0expr}, we start with
$$RHS_{2}(t,r) - RHS_{2,0}(t,r) = \left(\frac{\cos(2Q_{1}(\frac{r}{\lambda(t)}))-1}{r^{2}}\right) \left(v_{2,sub}(t,r) + w_{1,sub}(t,r)\right)$$
Next, we replace $v_{2,sub}$ and $w_{1,sub}$ by their principal parts, which are $v_{2,cubic,main}$ and $w_{1,cubic,main}$, respectively. Then, we use part 1 of second order matching, which says that
$$v_{2,cubic,main}(t,r) + w_{1,cubic,main}(t,r) = v_{ell,2,0,main}(t,\frac{r}{\lambda(t)})$$ 
Whence, we get
\begin{equation}\begin{split}&\text{leading part of } \frac{1}{r}\int_{0}^{r} s^{2}\left(\frac{\cos(2Q_{1}(\frac{s}{\lambda(t)}))-1}{s^{2}}\right) v_{ell,2,0,main}(t,\frac{s}{\lambda(t)}) ds\\
&:= (u_{w,2,ell}-u_{w,2,ell,0})_{princ,1}(t,r)\\
&=-\left(2 r \left(2 j_{1}(t) \lambda(t)^2+2 j_{2}(t) \lambda(t)^2 \log (r)-j_{2}(t) \lambda(t)^2\right)\right)\end{split}\end{equation}
\begin{equation}\begin{split}&\text{leading part of }-r \int_{0}^{r} \left(\frac{\cos(2Q_{1}(\frac{s}{\lambda(t)}))-1}{s^{2}}\right) v_{ell,2,0,main}(t,\frac{s}{\lambda(t)})ds\\
&:=(u_{w,2,ell}-u_{w,2,ell,0})_{princ,2}(t,r)\\
&=\frac{1}{3} r \lambda(t)^2 \begin{aligned}[t]&\left(-24 j_{1}(t) \log (\lambda(t))+24 j_{1}(t) \log (r)-12 j_{1}(t)-12 j_{2}(t) \log ^2(\lambda(t))\right.\\
&\left.-12 j_{2}(t) \log (\lambda(t))+12 j_{2}(t) \log ^2(r)-\pi ^2 j_{2}(t)\right)\end{aligned}\end{split}\end{equation}
where
$$j_{1}(t) = \frac{3}{32} \lambda''''(t)+\frac{1}{8} \partial_{t}^{2}\left(\lambda''(t)+\lambda''(t) \log (\lambda(t))+\frac{\lambda'(t)^2}{2 \lambda(t)}\right)$$
$$j_{2}(t) = \frac{-1}{8}\lambda''''(t)$$
and we used
$$v_{ell,2,0,main}(t,\frac{r}{\lambda(t)}) = r^{3} j_{1}(t)+r^{3}\log(r) j_{2}(t)$$
We now estimate the difference between $u_{w,2,ell}-u_{w,2,ell,0}$ and its principal part in the matching region, which will help us study other contributions to the third order matching.
\begin{lemma} \label{ellminusell0}
Let \begin{equation}\begin{split} &(u_{w,2,ell}(t,r)-u_{w,2,ell,0}(t,r))_{princ}\\
&:= -\frac{1}{2}\left((u_{w,2,ell}-u_{w,2,ell,0})_{princ,1}(t,r)+(u_{w,2,ell}-u_{w,2,ell,0})_{princ,2}(t,r)\right)\\
&-\frac{r}{2}\lambda(t)\int_{t}^{\infty} W_{3}(\frac{s-t}{\lambda(t)}) \lambda''''(s) ds - \frac{r}{2} \int_{0}^{\infty}  \left(\frac{-8}{\xi \lambda(t)^{2}}+4 \xi K_{2}(\xi \lambda(t))+2 \xi\right) \sin(t\xi) \widehat{v_{2,0}}(\xi)d\xi\end{split}\end{equation}
Then, for $0 \leq k \leq 2$, and $0 \leq j \leq 8$  we have
\begin{equation}\begin{split}&|\partial_{t}^{j}\partial_{r}^{k}\left(u_{w,2,ell}(t,r)-u_{w,2,ell,0}(t,r)-(u_{w,2,ell}(t,r)-u_{w,2,ell,0}(t,r))_{princ}\right)| \\
&\leq \frac{C}{r^{1+k}} \frac{\lambda(t)^{3} \log^{3}(t)}{t^{4+j}}\left(\frac{r^{4}}{t^{2}}+\lambda(t)^{2}\right), \quad \lambda(t) \leq r \leq \frac{t}{2}\end{split}\end{equation}
Moreover, for $0 \leq m+k \leq 1$, $m,k\geq 0$,
\begin{equation}\label{uw2ellminusell0forthirdorder}|\partial_{t}^{2+k}\partial_{r}^{m}\left(u_{w,2,ell}(t,r)-u_{w,2,ell,0}(t,r)\right)| \leq \frac{C \lambda(t)^{2}\log(t) \sup_{y \in [100,t]}\left(\lambda(y)\log(y)\right)}{t^{9/2}\langle t-r\rangle^{1/2+k+m}}, \quad \frac{t}{2} \leq r \leq 2t\end{equation}
In addition, for $s \geq t$, and $0 \leq k \leq 2$,
\begin{equation}\label{uw2ellminusell0forthirdorder2} |\partial_{s}^{4+k} \left(u_{w,2,ell}-u_{w,2,ell,0}\right)(s,y)| \leq C \lambda(s)^{2} \log^{3}(s) \sup_{x \in [100,s]}\left(\lambda(x) \log(x)\right) \begin{cases} \frac{y}{s^{8+k}}, \quad y \leq \frac{s}{2}\\
\frac{1}{s^{9/2}t^{5/2+k}}, \quad \frac{s}{2} \leq y \leq s-t+2g(t)\lambda(t)\end{cases}\end{equation}
 \end{lemma}
 \begin{proof}
 We first note that
 \begin{equation}\label{uw2ellminusell0minusprinc}\begin{split}&u_{w,2,ell}(t,r)-u_{w,2,ell,0}(t,r)-(u_{w,2,ell}(t,r)-u_{w,2,ell,0}(t,r))_{princ}\\
 &=-\frac{1}{2r}\left(\int_{0}^{r} s^{2} \left(\frac{\cos(2Q_{1}(\frac{s}{\lambda(t)}))-1}{s^{2}}\right) \left(w_{1,sub}(t,s)+v_{2,sub}(t,s)-\left(v_{2,cubic,main}(t,s)+w_{1,cubic,main}(t,s)\right)\right) ds\right.\\
 &\left.+\int_{0}^{r} s^{2} \left(\frac{\cos(2Q_{1}(\frac{s}{\lambda(t)}))-1}{s^{2}}\right) v_{ell,2,0,main}(t,\frac{s}{\lambda(t)})ds-r (u_{w,2,ell}-u_{w,2,ell,0})_{princ,1}(t,r)\right)\\
 &+\frac{r}{2}\left(\int_{0}^{r} \left(\frac{\cos(2Q_{1}(\frac{s}{\lambda(t)}))-1}{s^{2}}\right) \left(w_{1,sub}(t,s)+v_{2,sub}(t,s)-\left(v_{2,cubic,main}(t,s)+w_{1,cubic,main}(t,s)\right)\right) ds\right.\\
 &+\left. \int_{0}^{r} \left(\frac{\cos(2Q_{1}(\frac{s}{\lambda(t)}))-1}{s^{2}}\right) v_{ell,2,0,main}(t,\frac{s}{\lambda(t)}) ds +\frac{1}{r}(u_{w,2,ell}-u_{w,2,ell,0})_{princ,2}(t,r) \right)\end{split}\end{equation}
 where we used 
 $$v_{2,cubic,main}(t,s)+w_{1,cubic,main}(t,s) = v_{ell,2,0,main}(t,\frac{s}{\lambda(t)})$$
 which follows from the second order matching. Next, we use Lemmas \ref{w1strlemma} and \ref{v2strlemma}, noting that $ s \leq r \leq \frac{t}{2}$ in the integrals which compose the first and third terms of \eqref{uw2ellminusell0minusprinc}.
  
The estimate \eqref{uw2ellminusell0forthirdorder} follows directly from Lemma \ref{uw2minuselllemma} and \eqref{uw2ell0exp}, and \eqref{uw2ellminusell0forthirdorder2} is proven with the same procedure as in Lemma \ref{uw2minuselllemma}.  \end{proof}
Next, we need to understand  $w_{3}(t,r):=u_{w,2}(t,r)-u_{w,2,ell}(t,r)-u_{w,2,sub,0}$, since $w_{3}$ will turn out to contribute terms which are logarithmically smaller than the largest contributions of $u_{w,2,sub,0}(t,r)$ in the matching region, but not quite perturbative. Recalling the equations that these functions solve, \eqref{uw2minusuw2elleqn} and \eqref{uw2sub0eqn}, we see that $w_{3}$ solves the following equation with $0$ Cauchy data at infinity.
$$-\partial_{t}^{2}w_{3}+\partial_{r}^{2}w_{3}+\frac{1}{r}\partial_{r}w_{3}-\frac{w_{3}}{r^{2}} = \partial_{t}^{2}\left(u_{w,2,ell}-u_{w,2,ell,0,cont}\right)$$
Therefore,
$$w_{3}(t,r) = \int_{t}^{\infty}w_{3,s}(t,r) ds, \quad w_{3,s}\text{ solves } \begin{cases} -\partial_{t}^{2}w_{3,s}+\partial_{r}^{2}w_{3,s}+\frac{1}{r}\partial_{r}w_{3,s}-\frac{w_{3,s}}{r^{2}} =0\\
w_{3,s}(s,r)=0\\
\partial_{1}w_{3,s}(s,r) = \partial_{1}^{2}(u_{w,2,ell}-u_{w,2,ell,0,cont})(s,r)\end{cases}$$
By the finite speed of propagation, in the region $r \leq t$, we have 
$$w_{3,s}(t,r) = v_{3,s}(t,r), \quad v_{3,s} \text{ solves } \begin{cases} -\partial_{t}^{2}v_{3,s}+\partial_{r}^{2}v_{3,s}+\frac{1}{r}\partial_{r}v_{3,s}-\frac{v_{3,s}}{r^{2}} =0\\
v_{3,s}(s,r)=0\\
\partial_{1}v_{3,s}(s,r) = \psi_{\leq 1}(r-s) \partial_{1}^{2}(u_{w,2,ell}-u_{w,2,ell,0,cont})(s,r)\end{cases} $$
where
\begin{equation}\label{psileq1def}\psi_{\leq 1}\in C^{\infty}(\mathbb{R}), \text{ and }\psi_{\leq 1}(x) = \begin{cases} 1, \quad x \leq 0\\
0, \quad x \geq 1\end{cases}\end{equation}
Since we will only be interested in estimating $w_{3}(t,r)$ in the matching region $g(t)\lambda(t) \leq r \leq 2 g(t)\lambda(t)$, it will suffice to estimate $v_{3}$ given below
$$v_{3}(t,r) :=\int_{t}^{\infty} v_{3,s}(t,r) ds$$ 
I.e., $v_{3}$  solves  \begin{equation}\label{v3def}-\partial_{t}^{2}v_{3}+\partial_{r}^{2}v_{3}+\frac{1}{r}\partial_{r}v_{3}-\frac{v_{3}}{r^{2}} = \psi_{\leq 1}(r-t) \partial_{t}^{2}(u_{w,2,ell}-u_{w,2,ell,0,cont})(t,r):=RHS_{3}(t,r)\end{equation}
with $0$ Cauchy data at infinity.
Let 
$$ellsoln=\frac{-1}{2}\left(\frac{1}{r}\int_{0}^{r} s^{2} RHS_{3}(t,s)ds+r \int_{r}^{\infty} RHS_{3}(t,s) ds\right)$$
and 
$$Q(t) = \frac{-1}{2} \int_{0}^{\infty} \psi_{\leq 1}(s-t) \partial_{t}^{2}\left(u_{w,2,ell}-u_{w,2,ell,0}\right)(t,s) ds$$
Note that $ellsoln$ satisfies
$$\partial_{r}^{2}ellsoln+\frac{1}{r}\partial_{r}ellsoln-\frac{ellsoln}{r^{2}} = RHS_{3}(t,r)$$
\begin{lemma}\label{ellsolnlemma} We have the following estimates. For $0 \leq j,k \leq 1$,
$$|r^{k}\partial_{r}^{k}\partial_{t}^{j}\left(ellsoln-rQ(t)\right)| \leq \begin{cases} \frac{C r \lambda(t)^{3} \left(|\log^{3}(r)|+\log^{3}(t)\right)}{t^{4+j}}, \quad r \leq \lambda(t)\\
\frac{C r^{3} \lambda(t)^{2} \log^{3}(t) \sup_{x \in [100,t]} \left(\lambda(x) \log(x)\right)}{t^{6+j}}+\frac{C}{r} \frac{\lambda(t)^{5} \log^{2}(t)}{t^{4+j}}, \quad \lambda(t) < r < \frac{t}{2}\\
\frac{C \lambda(t)^{2} \log^{3}(t) \sup_{x \in [100,t]} \left(\lambda(x) \log(x)\right)}{t^{3+\frac{j}{2}}}, \quad \frac{t}{2} \leq r \leq 2t\end{cases}$$
\begin{equation}\begin{split}|\partial_{r}^{2}\left(ellsoln-r Q(t)\right)| &\leq  \frac{C \lambda(t)^{5} \log^{2}(t)}{r^{3} t^{4} } + \frac{C r \lambda(t)^{2} \sup_{x \in [100,t]}\left(\lambda(x) \log(x)\right) \log^{3}(t)}{t^{6}} , \quad g(t)\lambda(t)\leq r \leq 2g(t)\lambda(t)\end{split}\end{equation}
\end{lemma}
\begin{proof}
We have
\begin{equation}\begin{split} &ellsoln-r Q(t) \\
&=\frac{-1}{2r} \int_{0}^{r} s^{2}\psi_{\leq 1}(s-t) \partial_{t}^{2}\left(u_{w,2,ell}-u_{w,2,ell,0,cont}\right)(t,s) ds + \frac{r}{2}\int_{0}^{r} \psi_{\leq 1}(s-t) \partial_{t}^{2}\left(u_{w,2,ell}-u_{w,2,ell,0}\right)(t,s) ds\\
&-\frac{r}{2}\int_{r}^{\infty} \psi_{\leq 1}(s-t) \partial_{t}^{2}\left(u_{w,2,ell,0}-u_{w,2,ell,0,cont}\right)(t,s) ds\end{split}\end{equation}
We then estimate directly, using Lemmas \ref{ellminusell0} and \ref{uw2minuselllemma} and the definitions of $u_{w,2,ell,0}$ and $u_{w,2,ell,0,cont}$ from \eqref{uw2ell0def}, \eqref{uw2ell0exp}, and \eqref{uw2ell0contdef}. We use the same procedure to estimate the higher derivatives, except for the last estimate in the lemma statement, which is obtained by using
$$\left(\partial_{r}^{2}+\frac{1}{r}\partial_{r}-\frac{1}{r^{2}}\right)\left(ellsoln-rQ(t)\right) = \psi_{\leq 1}(r-t) \partial_{t}^{2}\left(u_{w,2,ell}-u_{w,2,ell,0,cont}\right)$$ and Lemmas \ref{ellminusell0} and \ref{uw2minuselllemma}.
 \end{proof}
We return to $v_{3}$, which we recall solves \eqref{v3def}, with $0$ Cauchy data at infinity. If we define
$$q_{3}(t,r) := v_{3}(t,r)-\left(ellsoln-rQ(t)\right)\psi_{\leq 1}(r-t)$$
then, $q_{3}$ solves the following equation. Moreover, our estimates from Lemmas \ref{ellsolnlemma} and \ref{uw2minuselllemma} show that $q_{3}(t,r)$ also has $0$ Cauchy data at infinity.
\begin{equation}\begin{split}&-\partial_{t}^{2} q_{3}+\partial_{r}^{2}q_{3}+\frac{1}{r}\partial_{r}q_{3}-\frac{q_{3}}{r^{2}}\\
&= \psi_{\leq 1}(r-t) \left(\partial_{t}^{2}\left(ellsoln-rQ(t)\right)+\partial_{t}^{2}\left(u_{w,2,ell}-u_{w,2,ell,0,cont}\right)\cdot(1-\psi_{\leq 1}(r-t))\right)\\
&+\psi_{\leq 1}'(r-t)\left(-2 \left(\partial_{t}+\partial_{r}\right)\left(ellsoln-rQ(t)\right)-\frac{(ellsoln-rQ(t))}{r}\right)\\
&:=RHS_{4}(t,r)\end{split}\end{equation}
In other words, we have
$$q_{3}(t,r) = \int_{t}^{\infty}  q_{3,s}(t,r)ds, \quad q_{3,s} \text{ solves } \begin{cases}\left(-\partial_{t}^{2}+\partial_{r}^{2}+\frac{1}{r}\partial_{r}-\frac{1}{r^{2}}\right) q_{3,s}=0\\
q_{3,s}(s,r) =0\\
\partial_{1}q_{3,s}(s,r) =RHS_{4}(s,r)\end{cases}$$
We recall that we only need to estimate $q_{3}(t,r)$ for $r \leq 2 g(t)\lambda(t) < t-2$, and 
$$\psi_{\leq 1}(x) =1, \quad \psi_{\leq 1}'(x) =0, \quad x\leq 0$$ 
Therefore, by the finite speed of propagation, for $r \leq 2g(t)\lambda(t)$, we have
$$q_{3,s}(t,r) = q_{4,s}(t,r), \quad q_{4,s} \text{ solves } \begin{cases}\left(-\partial_{t}^{2}+\partial_{r}^{2}+\frac{1}{r}\partial_{r}-\frac{1}{r^{2}}\right) q_{4,s}=0\\
q_{4,s}(s,r) =0\\
\partial_{1}q_{4,s}(s,r) =\psi_{\leq 1}(r-s) \partial_{s}^{2}\left(ellsoln-rQ(s)\right)\end{cases}$$
So, it suffices to estimate 
$$q_{4}(t,r):=\int_{t}^{\infty} q_{4,s}(t,r) ds $$
We recall that
\begin{equation}\begin{split}&\psi_{\leq 1}(r-t) \partial_{t}^{2}\left(ellsoln(t,r)-r Q(t)\right)\\
&=\psi_{\leq 1}(r-t) \partial_{t}^{2}\begin{aligned}[t]&\left(\frac{-1}{2r} \int_{0}^{r} s^{2}\partial_{t}^{2}\left(u_{w,2,ell}-u_{w,2,ell,0}\right)(t,s) \psi_{\leq 1}(s-t) ds\right.\\
&\left.-\frac{1}{2r} \int_{0}^{r} s^{2} \partial_{t}^{2}\left(u_{w,2,ell,0}-u_{w,2,ell,0,cont}\right)(t,s) \psi_{\leq 1}(s-t) ds\right.\\
&+\frac{r}{2}\int_{0}^{r} \partial_{t}^{2}\left(u_{w,2,ell}-u_{w,2,ell,0}\right)(t,s) \psi_{\leq 1}(s-t) ds\\
&\left.-\frac{r}{2} \int_{r}^{\infty} \partial_{t}^{2}\left(u_{w,2,ell,0}-u_{w,2,ell,0,cont}\right)(t,s) \psi_{\leq 1}(s-t) ds\right)\end{aligned}\end{split}\end{equation}
We correspondingly decompose $q_{4}$ into 
$$q_{4}(t,r) :=q_{4,1}(t,r)+q_{4,2}(t,r)$$
where $q_{4,2}$ solves the following equation with $0$ Cauchy data at infinity (and $q_{4,1}=q_{4}-q_{4,2}$)
\begin{equation}\label{rhs42def}\begin{split}&\left(-\partial_{t}^{2}+\partial_{r}^{2}+\frac{1}{r}\partial_{r}-\frac{1}{r^{2}}\right) q_{4,2}(t,r) \\
&= \psi_{\leq 1}(r-t) \partial_{t}^{2}\left(\frac{-1}{2r} \int_{0}^{r} s^{2} \partial_{t}^{2}\left(u_{w,2,ell,0}-u_{w,2,ell,0,cont}\right)(t,s) \psi_{\leq 1}(s-t) ds\right)\\
&-\psi_{\leq 1}(r-t) \partial_{t}^{2}\left(\frac{r}{2} \int_{r}^{\infty} \partial_{t}^{2}\left(u_{w,2,ell,0}-u_{w,2,ell,0,cont}\right)(t,s) \psi_{\leq 1}(s-t) ds\right)\\
&:=RHS_{4,2}(t,r)\end{split}\end{equation}
To estimate $q_{4,2}$, it will suffice to use energy estimates.
\begin{lemma}\label{q42enest} We have the following estimates, for $k=0,1$, and $r>0$:
\begin{equation}\begin{split}&|\partial_{t}^{k}q_{4,2}(t,r)| + \sqrt{E(\partial_{t}^{k}q_{4,2},\partial_{t}^{k+1}q_{4,2})} +||\partial_{r}^{2} q_{4,2}||_{L^{2}((\lambda(t)g(t), 2 \lambda(t)g(t)),r dr)}\leq \frac{C \lambda(t)^{5} \log^{3}(t)}{t^{5} }\end{split}\end{equation}
\end{lemma}
\begin{proof} We first note that the right-hand side of \eqref{rhs42def} includes the term
$$\psi_{\leq 1}(r-t) \left(\frac{-1}{2r} \int_{0}^{r} s^{2}\psi_{\leq 1}''(s-t)\partial_{t}^{2}(u_{w,2,ell,0}-u_{w,2,ell,0,cont})ds - \frac{r}{2}\int_{r}^{\infty} \psi_{\leq 1}''(s-t)\partial_{t}^{2}(u_{w,2,ell,0}-u_{w,2,ell,0,cont}) ds\right)$$
We integrate by parts in each integral, integrating $\psi_{\leq 1}''(s-t)$, and note that the boundary contributions from each integral at $s=r$ cancel. Then, we directly insert the estimates from Lemma \ref{uw2minuselllemma} into the other terms in \eqref{rhs42def}, and use the same procedure used for \eqref{uw2enest}. Finally, the symbol type estimates on $u_{w,2,ell,0}-u_{w,2,ell,0,cont}$ from Lemma \ref{uw2minuselllemma} show that $\partial_{t}q_{4,2}$ solves
$$\left(-\partial_{t}^{2}+\partial_{r}^{2}+\frac{1}{r}\partial_{r}-\frac{1}{r^{2}}\right) \partial_{t}q_{4,2}(t,r) =\partial_{t}RHS_{4,2}(t,r)$$
also with 0 Cauchy data at infinity. Then, we use the same procedure used to estimate $q_{4,2}$. Finally, we estimate 
$$||\partial_{r}^{2} q_{4,2}||_{L^{2}((\lambda(t)g(t), 2 \lambda(t)g(t)),r dr)}$$
using the equation solved by $q_{4,2}$ and our earlier estimates from the proof of this lemma. \end{proof}
Now, we study $q_{4,1}$.  Let
\begin{equation}\label{rhs41def}\begin{split} RHS_{4,1}(t,r) &= \psi_{\leq 1}(r-t) \partial_{t}^{2}\begin{aligned}[t]&\left(\frac{-1}{2r} \int_{0}^{r} s^{2}\partial_{t}^{2}\left(u_{w,2,ell}-u_{w,2,ell,0}\right)(t,s) \psi_{\leq 1}(s-t) ds\right.\\
&\left.+\frac{r}{2}\int_{0}^{r} \partial_{t}^{2}\left(u_{w,2,ell}-u_{w,2,ell,0}\right)(t,s) \psi_{\leq 1}(s-t) ds\right)\end{aligned}\end{split}\end{equation}
so that we have
$$q_{4,1}(t,r) = \int_{t}^{\infty} q_{4,1,s}(t,r) ds, \quad q_{4,1,s} \text{ solves } \begin{cases}\left(-\partial_{t}^{2}+\partial_{r}^{2}+\frac{1}{r}\partial_{r}-\frac{1}{r^{2}}\right) q_{4,1,s}=0\\
q_{4,1,s}(s,r) =0\\
\partial_{1}q_{4,1,s}(s,r) =RHS_{4,1}(s,r)\end{cases}$$
Since we only need to estimate $q_{4,1}(t,r)$ (and hence $q_{4,1,s}(t,r)$) in the region $r \leq 2 g(t)\lambda(t) <\frac{t}{2}$, the finite speed of propagation shows that $q_{4,1,s}(t,r)$ only depends on $RHS_{4,1}(s,y)$ for $y \leq s-t+2 g(t)\lambda(t) < s-\frac{t}{2}$. Given the limits of the integrals in the terms defining $RHS_{4,1}$, this means that we may replace $\psi_{\leq 1}(s-t)$ and $\psi_{\leq 1}(r-t)$ by $1$ when considering $q_{4,1}(t,r)$ in the region $r \leq 2g(t)\lambda(t)$. 
Now, we estimate $RHS_{4,1}$. 
\begin{lemma}\label{rhs41lemma} For $0 \leq k,j \leq 2$, $j=3, 0 \leq k \leq 1$, and $s \geq t$, we have
$$y^{k} |\partial_{y}^{k}\partial_{s}^{j}RHS_{4,1}(s,y)| \leq C \lambda(s)^{2} \log^{3}(s) \sup_{x \in [100,s]}\left(\lambda(x) \log(x)\right) \begin{cases} \frac{y^{3}}{s^{8+j}}, \quad y \leq \frac{s}{2}\\
\frac{1}{s^{5/2} t^{5/2+j}}, \quad \frac{s}{2} < y < s-t+2 g(t)\lambda(t)\end{cases}$$
\end{lemma}
\begin{proof}
We re-write \eqref{rhs41def} as
\begin{equation}\label{rhs41rewrite} RHS_{4,1}(s,y) = \int_{0}^{y} F(w,y) \partial_{s}^{4}\left(RHS_{2}-RHS_{2,0}\right)(s,w) dw - \frac{y^{3}}{16} \int_{0}^{\infty} \partial_{s}^{4}\left(RHS_{2}-RHS_{2,0}\right)(s,w) dw\end{equation}
where
$$F(w,y) = \frac{-\left(w^{4}-y^{4}+4 w^{2}y^{2}\log(y/w)\right)}{16 y}$$
This is of the same form as \eqref{uw2ellminusell0initialstep}, and is treated in the same way, noting that $\partial_{1}^{j}F(y,y)=0, \quad j=0,1,2$. In addition, we use $s-t+2g(t)\lambda(t) < s-\frac{t}{2}$, which implies that
$$\frac{1}{\langle s-y\rangle} \leq \frac{C}{t}, \quad \frac{s}{2} < y < s-t+2 g(t)\lambda(t)$$
We then directly differentiate \eqref{rhs41rewrite} to treat higher derivatives of $RHS_{4,1}$.
 \end{proof}
Let
\begin{equation}\label{q410def} q_{4,1,0}(t,r):=\frac{-r}{2} \int_{t}^{\infty} ds \int_{0}^{s-t} \frac{\rho d\rho}{\sqrt{(s-t)^{2}-\rho^{2}}} \left(\partial_{2}RHS_{4,1}(s,\rho)+\frac{RHS_{4,1}(s,\rho)}{\rho}\right)\end{equation}
Now, we can estimate $q_{4,1}$. 
\begin{lemma} \label{q41lemma}For $0 \leq k \leq 1$, and $0 \leq j \leq 2$ or $k=2,j=0$, we have
\begin{equation}\label{q41est} |r^{k}\partial_{r}^{k}\partial_{t}^{j}\left(q_{4,1}(t,r)-q_{4,1,0}(t,r)\right)| \leq \frac{C r^{2} \sup_{x \in [100,t]}\left(\lambda(x) \log(x)\right) \lambda(t)^{2}\log^{3}(t)}{t^{5+j}}, \quad \lambda(t) g(t) \leq r \leq 2 \lambda(t)g(t)\end{equation}
Also, for $0 \leq j \leq 3$,
\begin{equation} |\partial_{t}^{j} q_{4,1,0}(t,r)| \leq \frac{C r \sup_{x \in [100,t]}\left(\lambda(x) \log(x)\right) \lambda(t)^{2} \log^{3}(t)}{t^{4+j}}\end{equation} 
\end{lemma}
\begin{proof}
The spherical means representation formula gives (see also (4.99), pg. 24 of  \cite{wm})
\begin{equation}\label{q41orig} \begin{split} q_{4,1}(t,r) &= -\frac{1}{2\pi} \int_{t}^{\infty} ds \int_{0}^{s-t}\frac{\rho d\rho}{\sqrt{(s-t)^{2}-\rho^{2}}}\int_{0}^{2\pi} \frac{RHS_{4,1}(s,\sqrt{r^{2}+\rho^{2}+2 r \rho \cos(\theta)})}{\sqrt{r^{2}+\rho^{2}+2 r \rho\cos(\theta)}} \left(r+\rho\cos(\theta)\right)d\theta\end{split}\end{equation}
If 
$$G(s,r,\rho) = \int_{0}^{2\pi} d\theta \frac{\left(r+\rho \cos(\theta)\right)}{\sqrt{r^{2}+\rho^{2}+2 r \rho \cos(\theta)}} RHS_{4,1}(s,\sqrt{r^{2}+\rho^{2}+2 r \rho \cos(\theta)}), \quad \rho >0$$
Then, by the dominated convergence theorem and Lemma \ref{rhs41lemma}, $\partial_{r}G(s,r,\rho)$ can be computed by differentiation under the integral sign, and
$$G(s,0,\rho)=0 \implies G(s,r,\rho) = r \int_{0}^{1} \partial_{2}G(s,\beta r,\rho)  d\beta$$
Therefore, we have
\begin{equation}\label{q41onederivform} q_{4,1}(t,r) =  \frac{-r}{2\pi} \int_{t}^{\infty} ds \int_{0}^{s-t} \frac{\rho d\rho}{\sqrt{(s-t)^{2}-\rho^{2}}} \int_{0}^{1} d\beta \int_{0}^{2\pi} I_{RHS_{4,1}}(s,r\beta,\rho,\theta) d\theta\end{equation}
where we recall the notation \eqref{Inotation}. Recalling \eqref{q410def}, we get
\begin{equation}\label{q41decomp} \begin{split} &q_{4,1}(t,r)-q_{4,1,0}(t,r) = \frac{-r}{2\pi} \int_{t}^{\infty} ds \int_{0}^{s-t} \frac{\rho d\rho}{\sqrt{(s-t)^{2}-\rho^{2}}} \int_{0}^{1}d\beta \int_{0}^{2\pi} d\theta \text{integrand}_{4,1,2}(s,r\beta,\rho,\theta)\end{split}\end{equation}
where
\begin{equation}\begin{split} \text{integrand}_{4,1,2}(s,y,\rho,\theta) &= I_{RHS_{4,1}}(s,y,\rho,\theta) -\left(\partial_{2}RHS_{4,1}(s,\rho) \cos^{2}(\theta) + \frac{RHS_{4,1}(s,\rho)}{\rho}\sin^{2}(\theta)\right)\end{split}\end{equation}
We note that
$$ \text{integrand}_{4,1,2}(s,y,\rho,\theta)= I_{RHS_{4,1}}(s,y,\rho,\theta)- I_{RHS_{4,1}}(s,0,\rho,\theta)$$ 
Also, for all $r\geq0, \rho>3r, s \geq T_{0}, \theta \in [0,2\pi],$ and $\beta \in [0,1]$, 
$$y \mapsto I_{RHS_{4,1}}(s,y,\rho,\theta) \in C^{1}([0,r\beta]), \quad \text{(note that, in this setting, }\sqrt{y^{2}+\rho^{2}+2 y \rho\cos(\theta)} \geq C \rho >0)$$
Therefore, when $s-t \geq 3r$, and $\rho>3r$ in the integral in \eqref{q41decomp}, we use
$$\text{integrand}_{4,1,2}(s,r\beta,\rho,\theta) = \int_{0}^{r\beta} \partial_{2}I_{RHS_{4,1}}(s,y,\rho,\theta) dy$$
and we directly substitute our estimates from Lemma \ref{rhs41lemma} into \eqref{q41decomp} for the other regions, to get \eqref{q41est} for $k=0, j=0$. Next, we use \eqref{q41decomp} to get
\begin{equation} \partial_{r} \left(q_{4,1}-q_{4,1,0}\right)(t,r) = -\frac{1}{2\pi} \int_{t}^{\infty} ds \int_{0}^{s-t} \frac{\rho d\rho}{\sqrt{(s-t)^{2}-\rho^{2}}} \int_{0}^{2\pi} d\theta \text{integrand}_{4,1,2}(s,r,\rho,\theta)\end{equation}
which is treated with the same argument used for \eqref{q41decomp}. This gives \eqref{q41est} for $k=1, j=0$. To estimate $\partial_{t}^{j} \left(q_{4,1}-q_{4,1,0}\right)(t,r)$, we return to \eqref{q41decomp}, and let $w=s-t$. Then, the dominated convergence theorem, along with the estimates of Lemma \ref{rhs41lemma} allow us to differentiate under the resulting integral signs, and we get, for $j=1,2$,
\begin{equation}\begin{split}&\partial_{t}^{j}\left(q_{4,1}-q_{4,1,0}\right)(t,r) = \frac{-r}{2\pi} \int_{t}^{\infty} ds \int_{0}^{s-t} \frac{\rho d\rho}{\sqrt{(s-t)^{2}-\rho^{2}}} \int_{0}^{1}d\beta \int_{0}^{2\pi} d\theta \partial_{s}^{j}\text{integrand}_{4,1,2}(s,r\beta,\rho,\theta)\end{split}\end{equation}
We then repeat the same procedure used for $q_{4,1}-q_{4,1,0}$. To estimate $q_{4,1,0}$, we use
$$q_{4,1,0}(t,r) = \frac{-r}{2} \int_{0}^{\infty} dw \int_{0}^{w} \frac{\rho d\rho}{\sqrt{w^{2}-\rho^{2}}} \left(\partial_{2}RHS_{4,1}(t+w,\rho)+\frac{RHS_{4,1}(t+w,\rho)}{\rho}\right)$$
differentiate under the integral sign, then substitute the estimates from Lemma \ref{rhs41lemma}. We prove \eqref{q41est} for $j=0,k=2$ by using the equation solved by $q_{4,1}$ and our previous estimates from this lemma.
 \end{proof}
Next, we study the analogous quantities related to $v_{ex,sub}$ which we recall is defined in \eqref{vexsubdef}. In particular, we start by studying 
$$w_{5}(t,r):= v_{ex,sub}(t,r)-v_{ex,sub,0}(t,r)$$ 
The function $w_{5}$ satisfies the following equation with $0$ Cauchy data at infinity
$$\left(-\partial_{t}^{2}+\partial_{r}^{2}+\frac{1}{r}\partial_{r}-\frac{1}{r^{2}}\right) w_{5}(t,r) = \partial_{t}^{2} \left(v_{ex,ell}-v_{ex,cont}\right) = \partial_{t}^{2}v_{ex,ell,1}(t,r)$$
where we define
 $$v_{ex,ell,1}(t,r) :=v_{ex}(t,r)-v_{ex,sub}(t,r)-v_{ex,cont}(t,r)=v_{ex,ell}(t,r)-v_{ex,cont}(t,r)$$
 and recall that $v_{ex,ell}$ and $v_{ex,cont}$ are explicitly given in \eqref{vexell} and \eqref{vexcontdef}, respectively. 
 We have the following lemma
 \begin{lemma}
For  $0 \leq k,j \leq 2$, we have 
$$|\partial_{t}^{j}\partial_{r}^{k} v_{ex,ell,1}(t,r)| \leq \frac{C \lambda(t)^{2-k}}{t^{2+j}g(t)^{3+k}}, \quad g(t)\lambda(t) \leq r \leq 2g(t)\lambda(t)$$\end{lemma}
\begin{proof} We directly estimate $v_{ex,ell,1}$ from the formulae \eqref{vexell} and \eqref{vexcontdef}. \end{proof}
Next, we define $ellsoln_{2}$ by
$$ellsoln_{2}(t,r):= \frac{-1}{2}\left(\frac{1}{r}\int_{0}^{r} s^{2}\partial_{t}^{2}v_{ex,ell,1}(t,s) ds + r \int_{r}^{\infty} \partial_{t}^{2}v_{ex,ell,1}(t,s) ds\right)$$
Then, we have the following lemma.
\begin{lemma}\label{ellsoln2lemma} For all $j,k \geq 0$, there exists $C_{j,k}>0$ such that
\begin{equation}\label{ellsoln2est}r^{k}t^{j}|\partial_{r}^{k}\partial_{t}^{j}ellsoln_{2}(t,r)| \leq C_{j,k}\cdot \begin{cases} \frac{1}{r} \frac{\lambda(t)^{5}(1+|\log(\frac{r}{\lambda(t)})|)}{t^{4} }, \quad r \geq \lambda(t)\\
 r \frac{\lambda(t)^{3}(1+\log^{2}(\frac{r}{\lambda(t)}))}{t^{4} }, \quad r \leq \lambda(t)\end{cases}\end{equation}
In addition, $\partial_{t}^{2}ellsoln_{2}(t,r) = F_{ellsoln_{2}}(t,r)+\log^{2}(r) r g_{1}''(t)+r \log(r) g_{2}''(t)$, where, for each $t \geq T_{0}$,
$$r \mapsto F_{ellsoln_{2}}(t,r) \text{ admits a }C^{2}\text{ extension to }[0,\infty)$$
For $0 \leq j+k \leq 2$,
$$|\partial_{r}^{k}\partial_{t}^{j} F_{ellsoln_{2}}(t,r)| \leq  C \begin{cases} \frac{r^{1-k}\lambda(t)^{3} \log^{2}(t)}{t^{6+j}}, \quad 0 \leq k \leq 1\\
\frac{r \lambda(t)(1+|\log(\frac{r}{\lambda(t)})|)}{t^{6+j}}, \quad k=2\end{cases}, \quad r \leq \lambda(t)$$
$$|g_{1}^{(k)}(t)| + |g_{2}^{(k)}(t)| \leq \frac{C_{k} \log(t)\lambda(t)^{3}}{t^{4+k}}, \quad k \geq 0$$
 \end{lemma}
\begin{proof}
We have
\begin{equation}\begin{split}ellsoln_{2}(t,r)&= \frac{-1}{2}\left(\frac{1}{r}\int_{0}^{r} s^{2}\partial_{t}^{2}v_{ex,ell,1}(t,s) ds + r \int_{r}^{\infty} \partial_{t}^{2}v_{ex,ell,1}(t,s) ds\right)=-\frac{1}{2}\left(\frac{1}{r}f_{7}(t,\frac{r}{\lambda(t)})+r f_{8}(t,\frac{r}{\lambda(t)})\right)\end{split}\end{equation}
where
\begin{equation}\begin{split}f_{7}(t,R)&= \frac{\lambda(t)}{8} \left(\frac{-8(-2 R^{2}+3(1+R^{2}) \log(1+R^{2})) \lambda'(t)^{4}}{1+R^{2}}\right.\\
&\left.-24 \lambda(t) \left(R^{2} \log(1+\frac{1}{R^{2}})+3 \log(R^{2}+1)\right) \lambda'(t)^{2}\lambda''(t)\right.\\
&\left.-4 \lambda(t)^{2} \left(R^{2}\log(1+\frac{1}{R^{2}}) + \log(1+R^{2})\right)\left(3\lambda''(t)^{2}+4 \lambda'(t)\lambda'''(t)\right) \right.\\
&\left.- \lambda(t)^{3} \left(-R^{2}+R^{2}(2+R^{2}) \log(1+\frac{1}{R^{2}}) + \log(1+R^{2})\right)\lambda''''(t)\right)\end{split}\end{equation}
and
\begin{equation}\begin{split}f_{8}(t,R)&= \frac{1}{4} \left(\frac{-4(2+(1+R^{2})\log(1+\frac{1}{R^{2}})) \lambda'(t)^{4}}{(1+R^{2})\lambda(t)}\right.\\
&+\left.12 \left(-2 \log(1+\frac{1}{R^{2}}) + \text{Li}_{2}(-\frac{1}{R^{2}})\right)\lambda'(t)^{2}\lambda''(t)+2 \lambda(t) \text{Li}_{2}(\frac{-1}{R^{2}}) \left(3\lambda''(t)^{2}+4 \lambda'(t)\lambda'''(t)\right)\right.\\
&+\left.\lambda(t)^{2}(-1+(1+R^{2})\log(1+\frac{1}{R^{2}}) + \text{Li}_{2}(\frac{-1}{R^{2}})) \lambda''''(t)\right)\end{split}\end{equation}
The lemma statement now follows from inspection.
 \end{proof}
We define
$$q_{5}(t,r):= w_{5}(t,r)-ellsoln_{2}(t,r)\psi_{\leq 1}(r-t)$$
so that $q_{5}$ solves the following equation with $0$ Cauchy data at infinity.
\begin{equation}\label{q5eqn}\begin{split}-\partial_{t}^{2}q_{5}+\partial_{r}^{2}q_{5}(t,r) + \frac{1}{r}\partial_{r}q_{5}-\frac{q_{5}}{r^{2}}&= \psi_{\leq 1}(r-t)\partial_{t}^{2} ellsoln_{2} + \left(1-\psi_{\leq 1}(r-t)\right) \partial_{t}^{2} v_{ex,ell,1}(t,r)\\
&-\psi_{\leq 1}'(r-t) \left(2\left(\partial_{t}+\partial_{r}\right) ellsoln_{2}+\frac{1}{r}ellsoln_{2}\right)\end{split}\end{equation}
We define $q_{5,0}$ by
\begin{equation}\label{q50defthirdorder}q_{5,0}(t,r)= \frac{-r}{2} \int_{t}^{\infty} ds \int_{0}^{s-t} \frac{\rho d\rho}{\sqrt{(s-t)^{2}-\rho^{2}}} \left(\partial_{112}ellsoln_{2}(s,\rho) + \frac{\partial_{11}ellsoln_{2}(s,\rho)}{\rho}\right)\end{equation}
Then, we have the following lemma.
\begin{lemma}\label{q5lemma} For $0 \leq k \leq 1$ and $0 \leq j \leq 2$,
\begin{equation} |t^{j}r^{k}\partial_{t}^{j}\partial_{r}^{k} \left(q_{5}-q_{5,0}\right)(t,r)| \leq \frac{C r^{2}\lambda(t)^{3} \left(\log^{3}(t)+|\log(r)|^{3}\right)}{t^{5} }, \quad r \leq 2 g(t)\lambda(t)\end{equation}
For all $j \geq 0$, there exists $C_{j}>0$ such that
\begin{equation}|\partial_{t}^{j} q_{5,0}(t,r)| \leq \frac{C_{j} r \log^{2}(t)\lambda(t)^{3}}{t^{4+j} }\end{equation}
\begin{equation}|\partial_{r}^{2}\left(q_{5}-q_{5,0}\right)(t,r)| \leq \frac{C \lambda(t)^{3} \log^{3}(t)}{t^{5}}, \quad g(t)\lambda(t) \leq r \leq 2 g(t)\lambda(t)\end{equation}
\end{lemma}
\begin{proof}  First, we claim that in the region $r \leq 2 g(t)\lambda(t)$,
\begin{equation}\label{q5claim}\begin{split}q_{5}(t,r)&=\frac{-1}{2\pi} \int_{t}^{\infty} ds \int_{0}^{s-t}\frac{\rho d\rho}{\sqrt{(s-t)^{2}-\rho^{2}}} \int_{0}^{2\pi}d\theta \partial_{1}^{2}ellsoln_{2}(s,\sqrt{r^{2}+\rho^{2}+2 r \rho\cos(\theta)}) \frac{(r+\rho\cos(\theta))}{\sqrt{r^{2}+\rho^{2}+2 r \rho\cos(\theta)}}\end{split}\end{equation}
To verify this, we recall \eqref{q5eqn}, and use the finite speed of propagation. The only item remaining is to show that the integral on the right -hand side of \eqref{q5claim} solves \eqref{q5eqn} with $\partial_{t}^{2}ellsoln_{2}$ on the right-hand side, and zero Cauchy data at infinity. For this purpose, we note that 
\begin{equation}\begin{split}&\frac{-1}{2\pi} \int_{t}^{\infty} ds \int_{0}^{s-t}\frac{\rho d\rho}{\sqrt{(s-t)^{2}-\rho^{2}}} \int_{0}^{2\pi}d\theta \partial_{1}^{2}ellsoln_{2}(s,\sqrt{r^{2}+\rho^{2}+2 r \rho\cos(\theta)}) \frac{(r+\rho\cos(\theta))}{\sqrt{r^{2}+\rho^{2}+2 r \rho\cos(\theta)}}\\
&= \frac{-1}{2\pi} \int_{t}^{\infty} ds \int_{0}^{s-t}\frac{\rho d\rho}{\sqrt{(s-t)^{2}-\rho^{2}}} \int_{0}^{2\pi}d\theta F_{ellsoln_{2}}(s,\sqrt{r^{2}+\rho^{2}+2 r \rho\cos(\theta)}) \frac{(r+\rho\cos(\theta))}{\sqrt{r^{2}+\rho^{2}+2 r \rho\cos(\theta)}}\\
&+\frac{-r^{3}}{2\pi} \int_{0}^{\infty} dy \int_{0}^{y}\frac{w dw}{\sqrt{y^{2}-w^{2}}} \int_{0}^{2\pi}d\theta  (1+w \cos(\theta))\begin{aligned}[t]&\left( \log^{2}(r) g_{1}''(t+r y) \right.\\
&\left.+  \log(r) \left(\log(1+w^{2}+2 w \cos(\theta))g_{1}''(t+r y)+g_{2}''(t+r y)\right)\right.\\
&\left. +\log^{2}(\sqrt{1+w^{2}+2 w \cos(\theta)})g_{1}''(t+r y)\right.\\
&\left.+\log(\sqrt{1+w^{2}+2 w \cos(\theta)})g_{2}''(t+r y))\right)\end{aligned}\end{split}\end{equation}
The point of this splitting is that, by Lemma \ref{ellsoln2lemma} and the dominated convergence theorem, we can differentiate up to two times in $(t,r)$ under the integral sign. Then, we can proceed as in any standard verification of the spherical means formula. Then, we return to \eqref{q5claim} and use \eqref{ellsoln2est} and the same procedure used to establish Lemma \ref{q41lemma}.

 \end{proof}
Now, we compute the leading behavior of $u_{ell,2}(t,r)-v_{ell,2,0,main}(t,\frac{r}{\lambda(t)})$ in the matching region. We first recall that $v_{ell,2,0}$ and $v_{ell,2,0,main}$ are explicitly given in \eqref{vell20def} and \eqref{vell20maindef}, respectively. So, it suffices to explain how to compute the leading behavior of $u_{ell,2}(t,r)-v_{ell,2,0}(t,\frac{r}{\lambda(t)})$. From \eqref{ell2minusell20}, we have
\begin{equation}\label{ell2minusell20forasymp}\begin{split}&u_{ell,2}(t,r)-v_{ell,2,0}(t,\frac{r}{\lambda(t)})\\
& = \frac{-\phi_{0}(\frac{r}{\lambda(t)})}{2}\int_{0}^{\frac{r}{\lambda(t)}} err_{1}(t,s\lambda(t)) s e_{2}(s) ds + e_{2}(\frac{r}{\lambda(t)}) \int_{0}^{\frac{r}{\lambda(t)}} err_{1}(t,s\lambda(t)) \frac{s \phi_{0}(s)}{2} ds\\
&= \frac{-\phi_{0}(\frac{r}{\lambda(t)})}{2}\int_{0}^{\frac{r}{\lambda(t)}} err_{1,0}(t,s\lambda(t)) s e_{2}(s) ds + \frac{-\phi_{0}(\frac{r}{\lambda(t)})}{2}\int_{0}^{\frac{r}{\lambda(t)}} \left(err_{1}-err_{1,0}\right)(t,s\lambda(t)) s e_{2}(s) ds\\
&+\frac{r}{2\lambda(t)} \int_{0}^{\frac{r}{\lambda(t)}} err_{1}(t,s\lambda(t)) \frac{s \phi_{0}(s)}{2}ds+ \left(e_{2}(\frac{r}{\lambda(t)})-\frac{r}{2\lambda(t)}\right) \int_{0}^{\frac{r}{\lambda(t)}} err_{1}(t,s\lambda(t)) \frac{s \phi_{0}(s)}{2}ds\end{split}\end{equation}
where we recall that $err_{1,0}(t,r)$ is defined in \eqref{err10def}. Using the definition of $v_{ell,2,0,main}$ given in \eqref{vell20maindef}, we  can write down the leading behavior of $u_{ell,2}(t,r) - v_{ell,2,0,main}(t,\frac{r}{\lambda(t)})$ in the region $r \sim g(t)\lambda(t)$, which we denote by $u_{e,3}(t,r)$. 
\begin{equation}\label{ue3def}\begin{split}u_{e,3}(t,r) &= \frac{-2 \lambda(t)}{2 r}\int_{0}^{\frac{r}{\lambda(t)}} err_{1,0}(t,s\lambda(t)) \frac{s}{2} s ds +\frac{r}{2\lambda(t)} \int_{0}^{\infty}\left(err_{1}(t,s\lambda(t))-err_{1,0}(t,s\lambda(t))\right) s \frac{\phi_{0}(s)}{2} ds\\
 &+\frac{r}{2\lambda(t)} \int_{0}^{\frac{r}{\lambda(t)}} err_{1,0}(t,s\lambda(t)) \frac{\phi_{0}(s) s ds}{2}+\frac{r \lambda(t)^{2}\lambda''''(t)}{96}\left(111-2\pi^{2}-108 \log(\frac{r}{\lambda(t)}) + 24 \log^{2}(\frac{r}{\lambda(t)})\right)\\
 & + \frac{r}{ 16} (5-4 \log(\frac{r}{\lambda(t)})) \begin{aligned}[t]&\left(\partial_{t}^{2}\left(\frac{\lambda'(t)^{2}}{2 \lambda(t)}+\lambda''(t) (1+\log(\lambda(t)))\right) 2 \lambda(t)^{2} -2\lambda(t)^{2}\lambda''''(t)(1+\log(\lambda(t)))\right)\end{aligned}\end{split}\end{equation}
 By computing the integrals, we get
\begin{equation}\label{ue3def1}\begin{split}&u_{e,3}(t,R \lambda(t))\\
&=\frac{\left(2\lambda'(t)^{4}-7 \lambda(t)\lambda'(t)^{2}\lambda''(t)+4 \lambda(t)^{2}\lambda''(t)^{2} + 6 \lambda(t)^{2}\lambda'(t)\lambda'''(t)\right)R\left(5-4 \log(R)\right)}{16}\\
&+\frac{1}{96}\left(111-2\pi^{2}-108 \log(R)+24 \log^{2}(R)\right)R \lambda(t)^{3}\lambda''''(t)\\
&+\frac{3}{4} R \lambda'(t)^{4}+\frac{3}{8}R \left(-2 \lambda'(t)^{4}+5 \lambda(t)\lambda'(t)^{2}\lambda''(t)\right)+\frac{3}{4} R \lambda(t)^{2}\partial_{t}\left(\lambda'(t)\lambda''(t)\right)\\
&+\frac{1}{24} R \lambda(t)^{2}\left(21+\pi^{2}-18\log(R)+12 \log^{2}(R)\right)\partial_{t}^{2}\left(\lambda(t)\lambda''(t)\right)\\
&+\frac{1}{24} R \lambda(t)^{2}\left(39+\pi^{2}-42 \log(R) + 12 \log^{2}(R)\right)\left(\partial_{t}^{2}\left(\lambda(t)\lambda''(t)\right)-\lambda(t)\lambda''''(t)\right)\\
&+\frac{1}{12}R \lambda(t)\left(60+\pi^{2}-54 \log(R)+12 \log^{2}(R)\right)\lambda'(t)^{2}\lambda''(t)\\
&-\frac{1}{4}R \lambda(t)^{2}c_{1}''(t)-\frac{R}{2}\left(\lambda(t)c_{1}'(t)\lambda'(t)+\frac{c_{1}(t)}{2} \left(-2 \lambda'(t)^{2}+\lambda(t)\lambda''(t)\right)\right)-\frac{R}{2}c_{1}(t)\lambda'(t)^{2}\\
&+\frac{R}{8} \lambda'(t)^{4}\left(17-12 \log(R)\right)-\frac{R}{4}\left(5 \lambda(t)\lambda'(t)^{2}\lambda''(t)-2\lambda'(t)^{4}\right)\left(-3+2\log(R)\right)\\
&+\frac{R}{4} \left(3-8 \log(R)\right)\lambda(t)^{2}\partial_{t}\left(\lambda''(t)\lambda'(t)\right)\\
&+\frac{R \lambda(t)^{2}\partial_{t}^{2}\left(\lambda(t)\lambda''(t)\right)}{24}\left(6+\pi^{2}+12 \zeta(3)-\left(36+2 \pi^{2}\right)\log(R)+12 \log^{2}(R) - 8 \log^{3}(R)\right)\\
&-\frac{R\lambda(t)^{2}}{24}\left(\partial_{t}^{2}\left(\lambda(t)\lambda''(t)\right)-\lambda(t)\lambda''''(t)\right) \begin{aligned}[t]&\left(-27 -2 \pi^{2}-12 \zeta(3)+(36+2\pi^{2})\log(R) \right.\\
&\left.-24 \log^{2}(R)+8 \log^{3}(R)\right)\end{aligned}\\
&+ R\frac{\lambda(t)\lambda''(t)\lambda'(t)^{2}}{72} \left(255+17 \pi^{2}+72 \zeta(3)-(360+12 \pi^{2})\log(R) + 252 \log^{2}(R)-48 \log^{3}(R)\right)\\
&+R \frac{c_{1}''(t)\lambda(t)^{2}}{4} \left(-1+2 \log(R)\right)+R\left(\lambda(t)\lambda'(t)c_{1}'(t)+\frac{c_{1}(t)}{2}\left(\lambda(t)\lambda''(t)-2\lambda'(t)^{2}\right)\right) (-1+\log(R))\\
&+ R\frac{c_{1}(t)}{12} \lambda'(t)^{2}\left(-17+12 \log(R)\right)+R\frac{\lambda(t)^{3}\lambda''''(t)}{48} \left(\pi^{2}-12 \log^{2}(R)\right) \\
&+ \frac{R}{2}\log(R) \lambda(t)^{3} \left(\partial_{t}^{2}\left(\frac{\lambda'(t)^{2}}{2 \lambda(t)}+ \lambda''(t)(1+\log(\lambda(t)))\right)-\log(\lambda(t))\lambda''''(t)\right)\end{split}\end{equation}
Now, we estimate the difference between $u_{ell,2}(t,r)-v_{ell,2,0,main}(t,\frac{r}{\lambda(t)})$ and $u_{e,3}(t,r)$.
\begin{lemma}\label{uell2minusvell20mainminusue3lemma}For $u_{ell,2}$ defined in \eqref{uell2def}, $v_{ell,2,0,main}$ defined in \eqref{vell20maindef}, and $u_{e,3}$ defined in  \eqref{ue3def} we have, for $0 \leq j+k \leq 2$,
 $$|\partial_{t}^{j}\partial_{r}^{k}\left(u_{ell,2}(t,r)-v_{ell,2,0,main}(t,\frac{r}{\lambda(t)})-u_{e,3}(t,r)\right)| \leq \frac{C \lambda(t)^{5} (1+\log^{4}(\frac{r}{\lambda(t)}))}{r^{1+k}t^{4+j}}, \quad r > \lambda(t)$$\end{lemma}
 \begin{proof}We estimate each term in the difference between \eqref{soln1def} plus \eqref{ell2minusell20} and \eqref{ue3def} directly, using the symbol type estimates on $\lambda(t)$, and the fact that $err_{1}(t,r)$ and $err_{1,0}(t,r)$ are symbols in $(t,r)$. \end{proof}
Combining our work, we get
\begin{equation}\label{uw3finalexpr}\begin{split}&\text{leading part of }\left(u_{w,2}-u_{w,2,ell,0}+v_{ex,sub}\right)(t,r):=u_{w,3}(t,r)\\
&=u_{w,2,sub,0,cont}(t,r) + v_{ex,sub,0,cont}(t,r)\\
&-\frac{1}{2}\left((u_{w,2,ell}-u_{w,2,ell,0})_{princ,1}(t,r)+(u_{w,2,ell}-u_{w,2,ell,0})_{princ,2}(t,r)\right)\\
&-\frac{r}{2}  \lambda(t) \int_{t}^{\infty} W_{3}(\frac{s-t}{\lambda(t)}) \lambda''''(s) ds-\frac{r}{2} \int_{0}^{\infty} d\xi \left(\frac{-8}{\xi \lambda(t)^{2}}+4 \xi K_{2}(\xi \lambda(t)) + 2\xi\right) \sin(t\xi) \widehat{v_{2,0}}(\xi)\\
&-\frac{r}{2} \int_{t}^{\infty} ds \int_{0}^{s-t} \frac{\rho d\rho}{\sqrt{(s-t)^{2}-\rho^{2}}} \left(\partial_{112}ellsoln_{2}(s,\rho) + \frac{\partial_{11}ellsoln_{2}(s,\rho)}{\rho}\right)\\
&-\frac{r}{2} \int_{0}^{\infty} dw \int_{0}^{w} \frac{\rho d\rho}{\sqrt{w^{2}-\rho^{2}}} \left(\partial_{2}RHS_{4,1}(t+w,\rho)+\frac{RHS_{4,1}(t+w,\rho)}{\rho}\right)\end{split}\end{equation}
A careful inspection of all of the terms shows that the $r \log^{k}(r)$ terms from $u_{e,3}$ and $u_{w,3}$ all exactly match, for $k=1,2,3$. In other words, we have
\begin{equation}\label{f3def}\begin{split} &u_{e,3}(t,r)-u_{w,3}(t,r) \\
&= r F_{2}(t)+\frac{r\lambda(t)}{2}\int_{t}^{\infty} W_{3}(\frac{s-t}{\lambda(t)}) \lambda''''(s) ds\\
& +\frac{r}{2}\int_{t}^{2t} \begin{aligned}[t]&\left(\frac{(g_{0}+h_{0})''(s)-(g_{0}+h_{0})''(t)}{(s-t)} + \frac{\left((g_{1}+2 f_{1})''(s)-(g_{1}+2f_{1})''(t)\right)\log(2(s-t))}{(s-t)}\right.\\
&\left.-\frac{\left(g_{2}''(s)-g_{2}''(t)\right)\left(-12 \log(s-t)\log(4(s-t))+\pi^{2}-12 \log^{2}(2)\right)}{12(s-t)}\right) ds\end{aligned}\\
&+\frac{r}{2} \int_{2t}^{\infty} \begin{aligned}[t]&\left((g_{0}+h_{0})''(s)+(g_{1}+2f_{1})''(s)\log(2(s-t))\right.\\
&\left.-\frac{g_{2}''(s) \left(-12 \log(s-t)\log(4(s-t))+\pi^{2}-12 \log^{2}(2)\right)}{12}\right) \frac{ds}{(s-t)}\end{aligned}\\
&+ \frac{r}{2} \int_{0}^{\infty} d\xi \left(\frac{-8}{\xi \lambda(t)^{2}}+4 \xi K_{2}(\xi \lambda(t))+2 \xi\right) \sin(t\xi) \widehat{v_{2,0}}(\xi)\\
&+\frac{r}{2} \int_{t}^{\infty} ds \int_{0}^{s-t} \frac{\rho d\rho}{\sqrt{(s-t)^{2}-\rho^{2}}} \left(\partial_{112}ellsoln_{2}(s,\rho) + \frac{\partial_{11}ellsoln_{2}(s,\rho)}{\rho}\right)\\
&+\frac{r}{2} \int_{0}^{\infty} dw \int_{0}^{w} \frac{\rho d\rho}{\sqrt{w^{2}-\rho^{2}}} \left(\partial_{2}RHS_{4,1}(t+w,\rho)+\frac{RHS_{4,1}(t+w,\rho)}{\rho}\right)\\
&:=r F_{3}(t)\end{split}\end{equation}
where $F_{2}(t) = F_{2,0}(t)+F_{2,1}(t)$, and
\begin{equation}\begin{split}F_{2,0}(t) &= \frac{3  \lambda'(t)^4}{4 \lambda(t)}+\frac{ (12 \log (\lambda(t))+17) \lambda'(t)^4}{8 \lambda(t)}-\frac{ c_{1}(t) \lambda'(t)^2}{2 \lambda(t)}+\frac{c_{1}(t)  (-12 \log (\lambda(t))-17) \lambda'(t)^2}{12 \lambda(t)}\\
&+\frac{1}{12}  \left(12 \log ^2(\lambda(t))+54 \log (\lambda(t))+\pi ^2+60\right) \lambda''(t) \lambda'(t)^2+\frac{3}{4}  \partial_{t} \left(\lambda'(t) \lambda''(t)\right) \lambda(t)\\
&+\frac{1}{4}  \partial_{t} \left(\lambda''(t) \lambda'(t)\right) \lambda(t) (8 \log (\lambda(t))+3)-\frac{ \left(5 \lambda(t) \lambda'(t)^2 \lambda''(t)-2 \lambda'(t)^4\right) (-2 \log (\lambda(t))-3)}{4 \lambda(t)}\\
&+\frac{1}{24}   \partial_{t}^{2} \left(\lambda(t) \lambda''(t)\right) \lambda(t) \left(12 \log ^2(\lambda(t))+18 \log (\lambda(t))+\pi ^2+21\right)\\
&+\frac{1}{24} \partial_{t}^{2}  \left(\lambda(t) \lambda''(t)\right)  \lambda(t) \left(8 \log ^3(\lambda(t))+12 \log ^2(\lambda(t))+\left(36+2 \pi ^2\right) \log (\lambda(t))+\pi ^2+12 \zeta (3)+6\right)\\
&-\frac{1}{24} \lambda(t) \left(\partial_{t}^{2}  \left(\lambda(t) \lambda''(t)\right)-\lambda(t) \lambda''''(t)\right) \begin{aligned}[t]&\left(-8 \log ^3(\lambda(t))-24 \log ^2(\lambda(t))-\left(36+2 \pi ^2\right) \log (\lambda(t))\right.\\
&\left.-2 \pi ^2-12 \zeta (3)-27\right)\end{aligned}\\
&+\frac{ (-\log (\lambda(t))-1) \left(\lambda(t) c_{1}'(t) \lambda'(t)+\frac{1}{2} c_{1}(t) \left(\lambda(t) \lambda''(t)-2 \lambda'(t)^2\right)\right)}{\lambda(t)}-\frac{1}{4}  \lambda(t) c_{1}''(t)\\
&+\frac{1}{4} \lambda(t)  (-2 \log (\lambda(t))-1) c_{1}''(t)\\
&+\frac{\lambda''(t)}{72} \lambda'(t)^2  \left(48 \log ^3(\lambda(t))+252 \log ^2(\lambda(t))+\left(360+12 \pi ^2\right) \log (\lambda(t))+17 \pi ^2+72 \zeta (3)+255\right) \\
&+\frac{3  \left(5 \lambda(t) \lambda'(t)^2 \lambda''(t)-2 \lambda'(t)^4\right)}{8 \lambda(t)}-\frac{ \left(\lambda(t) c_{1}'(t) \lambda'(t)+\frac{1}{2} c_{1}(t) \left(\lambda(t) \lambda''(t)-2 \lambda'(t)^2\right)\right)}{2 \lambda(t)}\\
&+\frac{(4 \log (\lambda(t))+5)  \left(2 \lambda'(t)^4-7 \lambda(t) \lambda''(t) \lambda'(t)^2+6 \lambda(t)^2 \lambda'''(t) \lambda'(t)+4 \lambda(t)^2 \lambda''(t)^2\right)}{16 \lambda(t)}\\
&+\frac{1}{96}  \lambda(t)^2 \left(24 \log ^2(\lambda(t))+108 \log (\lambda(t))-2 \pi ^2+111\right) \lambda''''(t)+\frac{1}{48}  \lambda(t)^2 \left(\pi ^2-12 \log ^2(\lambda(t))\right) \lambda''''(t)\\
&+\frac{1}{24}  \lambda(t) \left(12 \log ^2(\lambda(t))+42 \log (\lambda(t))+\pi ^2+39\right) \left(\partial_{t}^{2} \left(\lambda(t) \lambda''(t)\right)-\lambda(t) \lambda''''(t)\right)
\end{split}\end{equation}
\begin{equation}\begin{split}F_{2,1}(t)&=-\frac{1}{2}  \log (\lambda(t)) \lambda(t)^2 \left(\partial_{t}^{2} \left(\frac{\lambda'(t)^2}{2 \lambda(t)}+(\log (\lambda(t))+1) \lambda''(t)\right)-\log (\lambda(t)) \lambda''''(t)\right)\\
&-\left(c_{0}  g_{2}''(t)-\frac{1}{2}  g_{0}''(t)-\frac{1}{4}  (\log (4)-1) g_{0}''(t)-\frac{1}{2}  \log (t) g_{0}''(t)-\frac{1}{16} \left(\pi ^2-12\right)  g_{1}''(t)\right)\\
&-\left(-\left(\frac{\log ^2(t)}{4}+\frac{1}{2} \log (2) \log (t)\right) g_{1}''(t)-\frac{1}{24}  \left(-2 \pi ^2+15+6 \log ^2(2)\right) g_{1}''(t)\right)\\
&-\left(-\frac{1}{16}  \left(-28 \zeta (3)+\pi ^2+28\right) g_{2}''(t)-\frac{1}{24}  \log (t) \left(4 \log (t) (\log (t)+\log (8))-\pi ^2+12 \log ^2(2)\right) g_{2}''(t)\right)\\
&-\left(-\frac{1}{8} \left(\pi ^2-12\right)  f_{1}''(t)-2 f_{1}''(t) \left(\frac{1}{4}  \log ^2(t)+\frac{1}{2}  \log (2) \log (t)\right)-\frac{1}{2}  h_{0}''(t)\right)\\
&-\left(-\frac{1}{4}  (\log (4)-1) h_{0}''(t)-\frac{1}{2}  \log (t) h_{0}''(t)-\frac{1}{12} \left(-2 \pi ^2+15+6 \log ^2(2)\right) f_{1}''(t)\right)\\
&+\frac{1}{2}\left(\frac{1}{3}  \lambda(t)^2 \left(-24 j_{1} \log (\lambda(t))-12 j_{1}-12 j_{2} \log ^2(\lambda(t))-12 j_{2} \log (\lambda(t))-\pi ^2 j_{2}\right)\right)\\
&+\frac{1}{2}\left(-2  \left(2 j_{1} \lambda(t)^2-j_{2} \lambda(t)^2\right)\right)\end{split}\end{equation}
We quickly record some symbol-type estimates on $F_{3}(t)$. 
\begin{lemma} \label{f3lemma}There exists $C>0$ such that, for $0 \leq k \leq 3 $,
\begin{equation}\label{f3symb}t^{k}|F_{3}^{(k)}(t)| \leq \frac{C \lambda(t)^{2}}{t^{4}} \sup_{x \in [100,t]}\left(\lambda(x) \log(x)\right) \log^{3}(t)\end{equation}\end{lemma}
\begin{proof}
Using the symbol type estimates on $\lambda$, Lemmas \ref{q41lemma} and \ref{q5lemma}, and integration by parts in the region $\xi t \geq 1$ to treat the fifth term in \eqref{f3def},  we get
$$|F_{3}(t)| \leq \frac{C \lambda(t)^{2}}{t^{4}} \sup_{x \in [100,t]}\left(\lambda(x) \log(x)\right) \log^{3}(t)$$
We prove \eqref{f3symb} for $k>0$ as follows. Using the symbol-type estimates on $\lambda$, we see that $c_{1},f_{n},h_{n}, g_{n}$, and $j_{n}$ are all symbols, with estimates implying that their contributions to $F_{3}^{(k)}(t)$ are bounded above by the right-hand side of \eqref{f3symb}. Moreover, 
$$ \int_{t}^{\infty} W_{3}\left(\frac{s-t}{\lambda(t)}\right) \lambda''''(s) ds = \int_{0}^{\infty} W_{3}(w) \lambda''''(\lambda(t)w+t) \lambda(t) dw$$
is a symbol in $t$ because $\lambda(t)$ is. We similarly note that the sum of the third and fourth terms on the right-hand side of \eqref{f3def} is a symbol in $t$. Finally, we study the following integral
$$ I_{7}(t)=\int_{0}^{\infty} d\xi \left(\frac{-8}{\xi \lambda(t)^{2}}+4 \xi K_{2}(\xi \lambda(t))+2 \xi\right) \sin(t\xi) \widehat{v_{2,0}}(\xi)$$
Letting 
$$H(x) = \frac{-8}{x} + 4 x K_{2}(x) + 2x, \quad x>0$$
we get, for $0 \leq k \leq 7$,
$$|H^{(k)}(x)| \leq C \begin{cases} x^{3-k} (1+|\log(x)|), \quad x \leq \frac{1}{2}\\
\begin{cases} x^{1-k}, \quad 0 \leq k \leq 1\\
\frac{1}{x^{1+k}}, \quad 2 \leq k \end{cases}, \quad x> \frac{1}{2}\end{cases}$$
Then, by Lemma \eqref{v20ests} and inspection of the following formula, $I_{7}(t)$ is a symbol in $t$ with $|t^{k}I_{7}^{(k)}(t)|$ bounded above by the right-hand side of \eqref{f3symb} (for $0 \leq k \leq 3$).
$$I_{7}(t) = \frac{1}{\lambda(t)} \int_{0}^{\infty} \frac{d \omega}{t} H\left(\frac{\omega \lambda(t)}{t}\right) \sin(\omega) \widehat{v_{2,0}}(\frac{\omega}{t})$$
 \end{proof}
Now, we can choose the free wave, $v_{2,2}$ from \eqref{v22def},  so that the leading part of $v_{2,2}(t,r)$ in the region $r \sim g(t)\lambda(t)$ is exactly equal to $u_{e,3}-u_{w,3}$. As in \eqref{v2}, the leading part of $v_{2,2}(t,r)$ in the matching region is given by
\begin{equation}\label{g3def}\frac{-r}{4}\left(-2 \int_{0}^{\infty} \xi \sin(t\xi) \widehat{v_{2,3}}(\xi) d\xi\right):=\frac{-r}{4} G_{3}(t)\end{equation}
and we choose the initial velocity, $v_{2,3}$ by the following, which is the analog of \eqref{v20eqn}.
\begin{equation}\label{v23def}\widehat{v_{2,3}}(\xi) = \frac{-1}{\pi \xi} \int_{0}^{\infty} (-4F_{3}(t)) \psi_{2}(t) \sin(t\xi) dt, \quad \psi_{2}(x) = \begin{cases} 0, \quad x \leq T_{2}\\
1, \quad x \geq 2 T_{2}\end{cases}, \quad 0 \leq \psi_{2}(x) \leq 1\end{equation}
(We recall that $T_{2}$ is defined in \eqref{t2constraint}). Note that the sine transform inversion formula implies that
\begin{equation}\label{g3f3}G_{3}(t) = -4 F_{3}(t)\psi_{2}(t),\quad  t>0\end{equation}
Up to this  point, all of our computations and estimates were valid for all $t \geq T_{0}$ for any $T_{0} \geq T_{2}$. In particular, they are valid for all $t \geq T_{2}$. At this stage, we restrict $T_{0}$ so that $T_{0} \geq 2T_{2}$ but is otherwise arbitrary. Thus, we have $G_{3}(t) = -4 F_{3}(t)$ for $t \geq T_{0}$. We start with some estimates on $\widehat{v_{2,3}}(\xi)$:
\begin{lemma} For $0 \leq k \leq 3$, 
 \begin{equation}\label{v23ests}|\xi^{k}\partial_{\xi}^{k}\widehat{v_{2,3}}(\xi)| \leq \begin{cases} C , \quad \xi < \frac{1}{100}\\
\frac{C \xi^{k}}{\xi^{4}}, \quad \xi \geq \frac{1}{100}\end{cases}\end{equation}\end{lemma}
\begin{proof} We use the same procedure as in Lemma \ref{v20ests}. For instance, if $\xi < \frac{1}{100}$, then,
$$\widehat{v_{2,3}}(\xi) = \frac{-1}{\pi \xi} \int_{0}^{\frac{1}{\xi}} (-4F_{3}(t)) \psi_{2}(t) \sin(t\xi) dt - \frac{1}{\pi \xi} \int_{\frac{1}{\xi}}^{\infty} (-4F_{3}(t)) \psi_{2}(t) \sin(t\xi) dt$$
So,
\begin{equation}\begin{split}&|\widehat{v_{2,3}}(\xi)|\\
&\leq C \int_{0}^{\frac{1}{\xi}} \frac{\lambda(t)^{2}\sup_{x \in [100,t]}\left(\lambda(x) \log(x)\right) \log^{3}(t) \mathbbm{1}_{\{t \geq T_{2}\}} dt}{t^{3}} \\
&+ \frac{C}{\xi} \int_{\frac{1}{\xi}}^{\infty} \frac{\lambda(t)^{2} \sup_{x \in [100,t]}\left(\lambda(x) \log(x)\right) \log^{3}(t) \mathbbm{1}_{\{t \geq T_{2}\}} dt}{t^{4}}\\
&\leq C\end{split}\end{equation}
where we used \eqref{lambdacomparg}. Also, the estimates established in this lemma are not rapidly decaying because we only estimated up to 3 derivatives of $F_{3}$ in Lemma \ref{f3lemma}. This was due to the fact that the seventh term of \eqref{f3def} was estimated using Lemma \ref{q41lemma}.  \end{proof} 
Now, we can estimate $v_{2,2}$
\begin{lemma} \label{v22lemma} We have the following estimates. For $0 \leq j \leq 2, \quad 0 \leq k \leq 1$,
\begin{equation}\label{v22ests}|\partial_{t}^{j}\partial_{r}^{k}v_{2,2}(t,r)| \leq \begin{cases}C r^{1-k} \frac{\lambda(t)^{2}}{t^{4+j}} \sup_{x \in [100,t]}\left(\lambda(x) \log(x)\right) \log^{3}(t), \quad r \leq \frac{t}{2}\\
\frac{C}{\langle t-r\rangle^{5/2+j+k} \sqrt{t}} \sup_{x \in [100,t]}\left(\lambda(x) \log(x)\right) \sup_{x \in [100,t]}\left(\lambda(x)^{2}\right)\log^{3}(t), \quad t>r>\frac{t}{2}\\
\frac{C}{\sqrt{r}\langle t-r \rangle^{\frac{1}{2}+j+k}}, \quad r>t, \quad j+k \leq 2\end{cases}
\end{equation}

$$|\partial_{r}^{2}v_{2,2}(t,r)| \leq \begin{cases}\frac{C r \lambda(t)^{2}}{t^{6}} \sup_{x \in [100,t]}\left(\lambda(x) \log(x)\right) \log^{3}(t), \quad r \leq \frac{t}{2}\\
\frac{C}{\langle t-r\rangle^{9/2} \sqrt{t}} \sup_{x \in [100,t]}\left(\lambda(x) \log(x)\right) \sup_{x \in [100,t]}\left(\lambda(x)^{2}\right)\log^{3}(t), \quad t>r>\frac{t}{2}\\
\frac{C}{\sqrt{r}\langle t-r \rangle^{\frac{5}{2}}} , \quad r>t\end{cases}$$
For all $0 \leq j+k \leq 3$,
\begin{equation}\label{v22sqrtest}|\partial_{t}^{j}\partial_{r}^{k}v_{2,2}(t,r)| \leq \frac{C}{\sqrt{r}}, \quad r \geq \frac{t}{2}\end{equation}
Finally, for $0 \leq j,k \leq 1$ or $k=2,j=0$,
\begin{equation}\label{v22minusrf3}|\partial_{t}^{j}\partial_{r}^{k}\left(v_{2,2}(t,r)-r F_{3}(t)\right)| \leq \frac{C r^{3-k} \lambda(t)^{2} \sup_{x \in [100,t]}\left(\lambda(x) \log(x)\right) \log^{3}(t)}{t^{6+j}}, \quad r \leq \frac{t}{2}\end{equation}
\end{lemma}
\begin{proof} In the region $r \leq \frac{t}{2}$, we use \eqref{lambdacomparg}, Lemma \ref{f3lemma}, and the same procedure used to estimate $v_{2}$. Next, we treat the region $t > r >\frac{t}{2}$. Here, we use
\begin{equation}\label{v22g3formula} v_{2,2}(t,r) = \frac{-r}{4\pi} \int_{0}^{\pi} \sin^{2}(\theta)\left(G_{3}(t+r\cos(\theta))+G_{3}(t-r\cos(\theta))\right)d\theta\end{equation}
Note that $t\pm r\cos(\theta) >0$ since $t>r$, so $G_{3}(t\pm r\cos(\theta)) = -4 \left(F_{3}\cdot \psi_{2}\right)(t\pm r \cos(\theta))$, by \eqref{g3f3}. It suffices to treat the following integral
\begin{equation}\begin{split}&|\frac{-r}{4\pi} \int_{0}^{\pi}\sin^{2}(\theta) G_{3}(t+r\cos(\theta)) d\theta| \leq C r \sup_{x \in [100,t]}\left(\lambda(x) \log(x)\right) \sup_{x \in [100,t]}\left(\lambda(x)^{2}\right) \log^{3}(t) \int_{0}^{\pi} \frac{\sin^{2}(\theta) d\theta}{(t+r\cos(\theta))^{4}}\end{split}\end{equation}
where we use the fact that $\psi_{2}(x) =0, \quad x < 100$. We then use Cauchy's residue theorem, recalling that $t>r$, to get
\begin{equation}\label{tplusrcosthetaint}\int_{0}^{\pi} \frac{\sin^{2}(\theta) d\theta}{(t+r\cos(\theta))^{4}} = \frac{1}{2}\int_{0}^{2\pi} \frac{\sin^{2}(\theta) d\theta}{(t+r\cos(\theta))^{4}} = \frac{\pi  t}{2 \left(t^2-r^2\right)^{5/2}}\end{equation}
and this gives
$$|v_{2,2}(t,r)| \leq \frac{C}{(t-r)^{5/2} \sqrt{t}} \sup_{x \in [100,t]}\left(\lambda(x) \log(x)\right)\sup_{x \in [100,t]}\left(\lambda(x)^{2}\right) \log^{3}(t), \quad t>r>\frac{t}{2}$$
The higher derivatives are treated similarly. Note that $G_{3}(t) = -2 \int_{0}^{\infty} \xi \sin(t\xi) \widehat{v_{2,3}}(\xi) d\xi$ is an odd function of $t$, and 
$$G_{3}(t) = -4 F_{3}(t)\psi_{2}(t),\quad t>0$$
Also, if $r >t$, then, the argument of $G_{3}$ is negative in a region of the $\theta$ integral in \eqref{v22g3formula}. So, the procedure of estimating $v_{2,2}(t,r)$ in the region $r\geq t$ using \eqref{v22g3formula} is more involved than in the region $r <t$. Instead, we simply use
$$v_{2,2}(t,r) = \int_{0}^{\infty} \widehat{v_{2,3}}(\xi) J_{1}(r\xi) \sin(t\xi) d\xi$$
and proceed exactly as in the proof of Lemma \ref{v2estlemma}. We also do the same procedure as in \eqref{v2sqrtest} to finish the proof of all of the estimates in the lemma statement except for \eqref{v22minusrf3}. The estimate \eqref{v22minusrf3} follows from
\begin{equation}\begin{split}& v_{2,2}(t,r) -r F_{3}(t) = \frac{-r}{4 \pi} \int_{0}^{\pi} \sin^{2}(\theta)\left(G_{3}(t_{+})-G_{3}(t)-r \cos(\theta) G_{3}'(t)+G_{3}(t_{-})-G_{3}(t)+r \cos(\theta) G_{3}'(t)\right) d\theta\end{split}\end{equation}
where
$$t_{\pm} = t \pm r \cos(\theta)$$
 \end{proof}
Finally, we are ready to obtain the main result of this section, Proposition \ref{thirdorderprop}. We start with the decomposition
\begin{equation}\begin{split} &u_{ell,2}(t,r)-v_{ell,2,0,main}(t,\frac{r}{\lambda(t)})-\left(v_{ex}(t,r)-v_{ex,cont}(t,r)+u_{w,2}(t,r)-u_{w,2,ell,0,cont}(t,r)+v_{2,2}(t,r)\right)\\
&= u_{ell,2}(t,r)-v_{ell,2,0,main}(t,\frac{r}{\lambda(t)})-u_{e,3}(t,r)\\
&-\left(v_{ex,sub,0}-v_{ex,sub,0,cont}+q_{5}-q_{5,0}+v_{ex,ell}-v_{ex,cont}+u_{w,2,sub,0}-u_{w,2,sub,0,cont}\right)\\
&-\left(q_{4,2}+(ellsoln-r Q(t))\psi_{\leq 1}(r-t)+u_{w,2,ell}-u_{w,2,ell,0}-(u_{w,2,ell}(t,r)-u_{w,2,ell,0}(t,r))_{princ}\right)\\
&-\left(q_{4,1}-q_{4,1,0}+u_{w,2,ell,0}-u_{w,2,ell,0,cont}+v_{2,2}-r F_{3}(t)+ellsoln_{2}(t,r)\psi_{\leq 1}(r-t)\right)\end{split}\end{equation}
and use Lemmas \ref{uw2minuselllemma}, \ref{uw2sub0leading}, \ref{vexsub0leading}, \ref{ellminusell0} - \ref{q42enest}, \ref{q41lemma} - \ref{uell2minusvell20mainminusue3lemma}, \ref{v22lemma}.   \end{proof}
\subsection{Joining the small $r$ and large $r$ solutions}
Define
\begin{equation}\label{uewdef}u_{e}(t,r) = u_{ell}(t,r)+u_{ell,2}(t,r), \quad u_{wave}(t,r) = u_{w}(t,r)+u_{w,2}(t,r)+v_{2,2}(t,r)\end{equation}
where we recall that $u_{w}$ is defined in \eqref{uwdef}. The point of the matching done in the previous sections is that we can transition between $u_{e}(t,r)$, which is accurate for small $r$ and $u_{wave}(t,r)$, which is accurate for large $r$ with an expression of the form
$$ \chi_{\leq 1}\left(\frac{r}{g(t) \lambda(t)}\right) u_{e}(t,r) + \left(1-\chi_{\leq 1}\left(\frac{r}{g(t)\lambda(t)}\right)\right)u_{wave}(t,r)$$
for an appropriate choice of $\chi_{\leq 1}$ (which will be defined later), and not incur large error terms when derivatives act on $\chi_{\leq 1}$, upon substitution of the above expression into the left-hand side of \eqref{u1eqn}. As mentioned previously, the basic idea of this procedure is inspired by matched asymptotic expansions. (The books \cite{bo}, \cite{n} have more information about matched asymptotic expansions for ODEs). In addition, the idea is motivated by the fact that the correction denoted by $v_{1}$, defined in (4.12), pg. 11 of \cite{wm}, has a leading order cancellation with $v_{2}$, defined in (4.63), pg. 18 of \cite{wm} near the origin, and simultaneously a leading order cancellation with $Q_{1}(\frac{r}{\lambda(t)})-\pi$ for large $r$, reminiscent of procedures used to match asymptotic expansions in various regions.\\
\\
We now define a cutoff $\chi_{\geq 1}\in C^{\infty}([0,\infty))$, with the following properties.
\begin{lemma}\label{chilemma}[Properties of $\chi_{\geq 1}$] There exists a function $\chi_{\geq 1}\in C^{\infty}([0,\infty))$ satisfying 
\begin{equation}\label{chiconds}\chi_{\geq 1}(x) = \begin{cases} 0, \quad x \leq 1\\
1, \quad x \geq 2\end{cases}, \text{ } \int_{0}^{\infty} \frac{\chi_{\geq 1}(x)}{x^{3}}dx= \int_{0}^{\infty} x^{3} \left(1-\chi_{\geq 1}(x)\right) dx = \int_{0}^{\infty} x^{3} \log(x) \left(1-\chi_{\geq 1}(x)\right) dx=0 \end{equation}
In particular, this implies that, for all $k \geq 0$, there exists $C_{k}>0$ such that
$$|\widehat{\frac{\chi_{\geq 1}(\cdot)}{\left(\cdot\right)^{5}}}(\eta)| \leq \begin{cases} C \eta^{3} \langle \log(\eta)\rangle, \quad \eta < 1\\
\frac{C_{k}}{\eta^{k}}, \quad \eta \geq 1\end{cases}$$
\end{lemma}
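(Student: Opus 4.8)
\textbf{Proof plan for Lemma \ref{chilemma}.}

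The plan is to construct $\chi_{\geq 1}$ explicitly as a base cutoff plus a finite-dimensional correction, and then deduce the Hankel transform decay bound from the moment conditions. First I would fix any reference function $\chi_{0}\in C^{\infty}([0,\infty))$ with $\chi_{0}(x)=0$ for $x\leq 5/4$ and $\chi_{0}(x)=1$ for $x\geq 7/4$, and look for $\chi_{\geq 1}$ of the form $\chi_{\geq 1}=\chi_{0}+\sum_{j=1}^{3}a_{j}\,\psi_{j}$, where $\psi_{1},\psi_{2},\psi_{3}\in C_{c}^{\infty}((1,2))$ are fixed bump functions. Any such $\chi_{\geq 1}$ automatically satisfies the boundary conditions $\chi_{\geq 1}(x)=0$ for $x\leq 1$ and $\chi_{\geq 1}(x)=1$ for $x\geq 2$, regardless of $a_{1},a_{2},a_{3}$, because the $\psi_{j}$ are supported in $(1,2)$. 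The three moment conditions in \eqref{chiconds} are affine-linear in $(a_{1},a_{2},a_{3})$:
\begin{equation}\sum_{j=1}^{3}a_{j}\,M(\psi_{j})=-M(\chi_{0}),\end{equation}
where $M$ stands in turn for each of the three linear functionals $f\mapsto\int_{0}^{\infty}f(x)x^{-3}dx$ (applied to $\chi_{\geq 1}$), $f\mapsto-\int_{0}^{\infty}x^{3}f(x)\,dx$ (applied to $1-\chi_{\geq 1}$, hence to $-\chi_{\geq 1}$ up to a constant coming from $\chi_{0}$), and $f\mapsto-\int_{0}^{\infty}x^{3}\log(x)f(x)\,dx$. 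So I would choose $\psi_{1},\psi_{2},\psi_{3}$ so that the $3\times 3$ matrix with entries given by the three functionals applied to the three bumps is invertible; a standard way is to take $\psi_{j}$ with disjoint (or nested) supports inside $(1,2)$, making the Gram-type matrix a generalized Vandermonde that is visibly nonsingular. Then $(a_{1},a_{2},a_{3})$ is uniquely determined and $\chi_{\geq 1}\in C^{\infty}([0,\infty))$ satisfies all conditions of \eqref{chiconds}.

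For the Hankel transform estimate, recall $\widehat{f}(\eta)=\int_{0}^{\infty}J_{1}(r\eta)f(r)\,r\,dr$, applied here to $f(r)=\chi_{\geq 1}(r)/r^{5}$. For $\eta\geq 1$ the decay $|\widehat{f}(\eta)|\leq C_{k}\eta^{-k}$ follows by repeated integration by parts: $f$ is smooth on $[0,\infty)$, identically $0$ near $r=0$ (so no boundary terms at the origin), and $f$ together with all its derivatives is integrable against the relevant Bessel kernels at infinity since $f(r)=r^{-5}$ for $r\geq 2$; each integration by parts, using the Bessel recurrence relations $\frac{d}{dr}\big(r J_{1}(r\eta)\big)=r\eta J_{0}(r\eta)$ and $J_{0}'(x)=-J_{1}(x)$, gains a factor $\eta^{-1}$, and one iterates $k$ times. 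For $\eta<1$ the point is the small-argument expansion $J_{1}(x)=\tfrac{x}{2}-\tfrac{x^{3}}{16}+O(x^{5}\langle\log x\rangle)$ — more precisely $J_{1}(x)=\tfrac{x}{2}-\tfrac{x^{3}}{16}+x^{5}E(x)$ with $E$ bounded on $[0,\eta]$ for $\eta<1$; plugging this in,
\begin{equation}\widehat{f}(\eta)=\frac{\eta}{2}\int_{0}^{\infty}\frac{\chi_{\geq 1}(r)}{r^{3}}dr-\frac{\eta^{3}}{16}\int_{0}^{\infty}\frac{\chi_{\geq 1}(r)}{r}dr+\eta^{5}\int_{0}^{\infty}r^{5}E(r\eta)\frac{\chi_{\geq 1}(r)}{r^{5}}dr.\end{equation}
The first integral vanishes by the first moment condition in \eqref{chiconds}. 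The third integral is $O(1)$ uniformly in $\eta<1$ since $\chi_{\geq 1}(r)r^{-5}$ is integrable at infinity and $E$ is bounded. The middle integral $\int_{0}^{\infty}\chi_{\geq 1}(r)r^{-1}dr$ diverges logarithmically, which is exactly why the expansion must be carried one more step and the logarithm isolated: writing $J_{1}(x)=\tfrac{x}{2}-\tfrac{x^{3}}{16}+\tfrac{x^{3}}{16}\big(\log(x/2)-\text{(const)}\big)$-type refined asymptotics (the $O(x^{3}\log x)$ correction to the naive Taylor polynomial), one finds the $\eta^{3}$-coefficient picks up a $\log\eta$ but the associated integrals against $\chi_{\geq 1}$ converge precisely because of the second and third moment conditions $\int x^{3}(1-\chi_{\geq 1})=\int x^{3}\log(x)(1-\chi_{\geq 1})=0$ (these control the divergent pieces after the naive Taylor polynomial is subtracted). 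The upshot is $|\widehat{f}(\eta)|\leq C\eta^{3}\langle\log(\eta)\rangle$ for $\eta<1$.

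The main obstacle is the careful bookkeeping for small $\eta$: one must match the three moment conditions in \eqref{chiconds} to exactly the three divergent/borderline terms that appear when the Bessel asymptotic expansion of $J_{1}(r\eta)$ is substituted and the integration region is split into $r\lesssim 1/\eta$ and $r\gtrsim 1/\eta$. Concretely, on $r\gtrsim 1/\eta$ one uses the rapid decay of $J_{1}$ (or just $|J_{1}|\lesssim (r\eta)^{-1/2}$) and integrability of $\chi_{\geq 1}(r)r^{-5}$; on $r\lesssim 1/\eta$ one substitutes the polynomial-plus-log expansion of $J_{1}$, and the terms $\int_{r\lesssim 1/\eta}\chi_{\geq 1}r^{-3}$, $\int_{r\lesssim 1/\eta}\chi_{\geq 1}r^{-1}$, $\int_{r\lesssim 1/\eta}\chi_{\geq 1}r^{-1}\log r$ are the ones that would otherwise produce $\eta^{1}$, $\eta^{3}\log\eta$ and $\eta^{3}\log^{2}\eta$ contributions that are too large or not of the claimed form; the three conditions in \eqref{chiconds} (rewritten via $1-\chi_{\geq 1}$ supported near the origin) kill exactly the obstructions, leaving the clean bound $C\eta^{3}\langle\log\eta\rangle$. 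Everything else — the existence of $\chi_{\geq 1}$ and the $\eta\geq 1$ decay — is routine linear algebra and integration by parts, respectively.
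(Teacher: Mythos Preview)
Your construction of $\chi_{\geq 1}$ via a base cutoff plus a three-parameter correction is a valid alternative to the paper's explicit formula (the paper instead writes $\chi_{\geq 1}'=\alpha$ with $\alpha(s)=s^{-4}\,\partial_{s}\big(s\,\partial_{s}(s^{7}\phi'(s))\big)$ for a bump $\phi$, so that the three integral identities hold automatically by integration by parts). Either route works; yours is more flexible, the paper's is more explicit. For $\eta\geq 1$ your integration-by-parts argument is essentially what the paper does (the paper phrases it via the operator $H_{1}(f)=-f''-\tfrac{1}{x}f'+\tfrac{f}{x^{2}}$, for which $H_{1}(J_{1}(\eta\cdot))=\eta^{2}J_{1}(\eta\cdot)$).

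The $\eta<1$ analysis, however, is confused. First, $J_{1}$ is entire with Taylor series $J_{1}(x)=\tfrac{x}{2}-\tfrac{x^{3}}{16}+\tfrac{x^{5}}{384}-\cdots$; there is \emph{no} logarithmic term, so your ``$O(x^{3}\log x)$ correction to the naive Taylor polynomial'' does not exist. Second, you cannot write $\widehat{f}(\eta)$ as a sum of integrals over $(0,\infty)$ term by term, since the $\eta^{3}$-integral $\int_{0}^{\infty}\chi_{\geq 1}(r)r^{-1}dr$ diverges; you must split at $r\sim 1/\eta$ \emph{before} expanding. Third, and most importantly, only the \emph{first} moment condition $\int_{0}^{\infty}\chi_{\geq 1}(x)x^{-3}dx=0$ is used in the Hankel bound. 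The paper's argument is simply
\[
\widehat{\tfrac{\chi_{\geq 1}}{(\cdot)^{5}}}(\eta)=\frac{\eta}{2}\int_{0}^{1/\eta}\frac{\chi_{\geq 1}(x)}{x^{3}}dx+O\Big(\eta^{3}\int_{0}^{1/\eta}\frac{|\chi_{\geq 1}(x)|}{x}dx\Big)+O\Big(\int_{1/\eta}^{\infty}\frac{|\chi_{\geq 1}(x)|}{x^{4}}dx\Big),
\]
using only $J_{1}(x)=\tfrac{x}{2}+O(x^{3})$ on $[0,1]$ and $|J_{1}(x)|\leq C$ on $[1,\infty)$. The first term equals $-\tfrac{\eta}{2}\int_{1/\eta}^{\infty}x^{-3}dx=O(\eta^{3})$ by the first moment condition, the second is $O(\eta^{3}\log(1/\eta))$, the third is $O(\eta^{3})$. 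The $\eta^{3}\langle\log\eta\rangle$ in the statement absorbs the logarithmic divergence of $\int_{0}^{1/\eta}\chi_{\geq 1}(x)x^{-1}dx$; there is no need to kill it. The other two moment conditions are not used here at all --- they are needed elsewhere in the paper (see \eqref{e50int} and the proof of Lemma~\ref{fell2lemma}).
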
 
\begin{proof}
A direct computation shows that
$$\chi_{\geq 1}(x) = \left(\int_{-\infty}^{\infty} \alpha(y) dy\right)^{-1} \int_{-\infty}^{x}\alpha(s) ds$$
where
$$\alpha(s) = \frac{1}{s^{4}} \frac{d}{ds}\left(s \frac{d}{ds}\left(s^{7}\phi'(s)\right)\right), \quad \phi(x) = \begin{cases} e^{-\frac{1}{1-(2x-3)^{2}}}, \quad |x-\frac{3}{2}| < \frac{1}{2}\\
0, \quad |x-\frac{3}{2}| \geq \frac{1}{2}\end{cases}$$
satisfies \eqref{chiconds}, given that
$$\alpha \in C^{\infty}_{c}([1,2]), \quad \int_{0}^{\infty} \alpha(s)\frac{ds}{s^{2}}= \int_{0}^{\infty} \alpha(s) s^{4} ds = \int_{0}^{\infty} \alpha(s) s^{4} \log(s) ds =0, \quad \int_{-\infty}^{\infty}\alpha(s)ds\neq0$$ 
To verify the stated estimates on $\widehat{\frac{\chi_{\geq 1}(\cdot)}{\left(\cdot\right)^{5}}}(\eta)$, we start with the case of $\eta >1$. We have
$$\widehat{\frac{\chi_{\geq 1}(\cdot)}{\left(\cdot\right)^{5}}}(\eta) = \int_{0}^{\infty} \frac{\chi_{\geq 1}(x)}{x^{5}} J_{1}(\eta x) x dx = \int_{0}^{\infty} \frac{\chi_{\geq 1}(x)}{x^{5}} \frac{H_{1}\left(J_{1}(\eta x)\right)}{\eta^{2}} x dx$$
where
$$H_{1}(f)(x) = -f''(x) - \frac{1}{x}f'(x) + \frac{f(x)}{x^{2}}$$
Then, 
$$|\widehat{\frac{\chi_{\geq 1}(\cdot)}{\left(\cdot\right)^{5}}}(\eta)| \leq \frac{C_{k}}{\eta^{k}}, \quad \eta > 1$$
follows from repeated integration by parts, noting that there are no non-zero boundary terms obtained in the process. For $\eta <1$, we use the integral condition on $\chi_{\geq 1}$. In particular, we have
$$\widehat{\frac{\chi_{\geq 1}(\cdot)}{\left(\cdot\right)^{5}}}(\eta) = \frac{\eta}{2} \int_{0}^{\frac{1}{\eta}} \frac{\chi_{\geq 1}(x)}{x^{3}} dx + O\left(\eta^{3} \int_{0}^{\frac{1}{\eta}} \frac{|\chi_{\geq 1}(x)|}{x} dx\right) + O\left(\int_{\frac{1}{\eta}}^{\infty} \frac{|\chi_{\geq 1}(x)|}{x^{4}} dx\right)$$
which implies the estimate in the lemma statement.  \end{proof} 
We recall that $u_{e}$ and $u_{wave}$ are defined in \eqref{uewdef}, and define $\chi_{\leq 1}(x) = 1-\chi_{\geq 1}(x)$, and
\begin{equation}\label{ucdef}u_{c}(t,r) = \chi_{\leq 1}(\frac{r}{\lambda(t) g(t)}) u_{e}(t,r) + \left(1-\chi_{\leq 1}(\frac{r}{\lambda(t) g(t)})\right)u_{wave}(t,r)\end{equation}
Let $h(t) = \lambda(t) g(t)$. The error term of $u_{c}$ in solving the linear PDE \eqref{u1eqn} is
\begin{equation}\label{uceqn}\begin{split} &-\left(\left(-\partial_{tt}+\partial_{rr}+\frac{1}{r}\partial_{r}-\frac{\cos(2Q_{1}(\frac{r}{\lambda(t)}))}{r^{2}}\right) u_{c} -\partial_{t}^{2}Q_{1}(\frac{r}{\lambda(t)})\right) \\
&= \chi_{\leq 1}(\frac{r}{h(t)}) \partial_{tt}u_{ell,2}(t,r) + \left(1-\chi_{\leq 1}(\frac{r}{h(t)})\right)\left(\frac{\cos(2Q_{1}(\frac{r}{\lambda(t)}))-1}{r^{2}}\right)\left(v_{ex}(t,r)+u_{w,2}(t,r)+v_{2,2}(t,r)\right)\\
&-\left(u_{e}-u_{wave}\right)\left(-\chi_{\leq 1}''(\frac{r}{h(t)}) \frac{r^{2} h'(t)^{2}}{h(t)^{4}} - \chi_{\leq 1}'(\frac{r}{h(t)})\left(-\frac{r h''(t)}{(h(t))^{2}} + \frac{2 r h'(t)^{2}}{h(t)^{3}}\right) + \frac{1}{r} \frac{\chi_{\leq 1}'(\frac{r}{h(t)})}{h(t)} + \frac{\chi_{\leq 1}''(\frac{r}{h(t)})}{h(t)^{2}}\right)\\
&- 2 \chi_{\leq 1}'(\frac{r}{h(t)}) \frac{r h'(t)}{h(t)^{2}} \partial_{t}\left(u_{e}-u_{wave}\right) - \frac{2}{h(t)} \chi_{\leq 1}'(\frac{r}{h(t)}) \partial_{r}(u_{e}-u_{wave}) \end{split}\end{equation}
Some of the error terms in \eqref{uceqn} are already perturbative, while some will need additional corrections. We first record estimates on those terms which are already perturbative. We recall the definitions of $v_{ex,ell}$ in \eqref{vexell}, and $u_{w,2}$ in \eqref{uw2def}, and start with the following lemma.
\begin{lemma}\label{eexellew2estlemma} If
$$e_{ex,ell}(t,r) = \left(1-\chi_{\leq 1}(\frac{r}{\lambda(t)g(t)})\right)\left(\frac{\cos(2Q_{1}(\frac{r}{\lambda(t)}))-1}{r^{2}}\right) v_{ex,ell}(t,r)$$
and
$$e_{w,2}(t,r) = \left(1-\chi_{\leq 1}(\frac{r}{\lambda(t)g(t)})\right)\left(\frac{\cos(2Q_{1}(\frac{r}{\lambda(t)}))-1}{r^{2}}\right) u_{w,2}(t,r)$$
then, for $k=0,1$,
\begin{equation}\label{eexellest}||L_{\frac{1}{\lambda(t)}}^{k}(e_{ex,ell})(t,r)||_{L^{2}(r dr)} \leq \frac{C \lambda(t)^{1-k} \log(t)}{t^{2} g(t)^{4+k}}\end{equation}
\begin{equation}\label{ew2est} ||L_{\frac{1}{\lambda(t)}}^{k}(e_{w,2})(t,r)||_{L^{2}(r dr)} \leq \frac{C \lambda(t)^{2-k}}{t^{4}} \frac{\left(1+\lambda(t)\right) \log^{5}(t) \sup_{x \in [100,t]}\left(\lambda(x) \log(x)\right)}{g(t)^{2}} + \frac{C \lambda(t)^{1-k} \log^{2}(t)}{g(t)^{4+k} t^{2}}\end{equation}
\end{lemma}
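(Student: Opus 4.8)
\textbf{Proof plan for Lemma \ref{eexellew2estlemma}.}

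The plan is to estimate $e_{ex,ell}$ and $e_{w,2}$ directly, exploiting the fact that the cutoff $1-\chi_{\leq 1}(\frac{r}{\lambda(t)g(t)})$ restricts attention to the region $r \geq \lambda(t)g(t)$, where the potential factor $\frac{\cos(2Q_1(\frac{r}{\lambda(t)}))-1}{r^2}$ is extremely small. First I would record the pointwise bound
\[
\left|\partial_r^{j}\!\left(\frac{\cos(2Q_1(\frac{r}{\lambda(t)}))-1}{r^2}\right)\right| \leq \frac{C\lambda(t)^2}{r^{4+j}}, \qquad r\geq \lambda(t)g(t),
\]
which follows since $\frac{\cos(2Q_1(x))-1}{x^2} = \frac{-8}{(1+x^2)^2}$ is a symbol decaying like $x^{-4}$ for large $x$ (this is exactly the function $f$ from Lemma \ref{kernelests}). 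Likewise the cutoff $\chi_{\leq 1}'(\frac{r}{h(t)})$ and $\chi_{\leq 1}''(\frac{r}{h(t)})$ contribute factors of $h(t)^{-1}$, $h(t)^{-2}$ but are supported in $r\sim h(t)=\lambda(t)g(t)$; however for this lemma only the size and support of the potential and of $v_{ex,ell}$, $u_{w,2}$ matter. I would split the $L^2(r\,dr)$ norm over the dyadic regions $\lambda(t)g(t)\leq r \leq \frac{t}{2}$ and $r\geq \frac{t}{2}$, since the available bounds on $v_{ex,ell}$ (from the explicit formula \eqref{vexell}) and on $u_{w,2}$ (from Lemma \ref{uw2minuselllemma}, specifically \eqref{uw2minusellest}, \eqref{uw2largerest}) have different forms in those two regimes.

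For \eqref{eexellest}: in the region $r\leq \frac t2$ the explicit formula \eqref{vexell} gives $|v_{ex,ell}(t,r)| \leq \frac{C r \lambda(t)^2 \log(\frac{r}{\lambda(t)})}{r^2}\cdot\frac{1}{t^2}$-type behavior — more precisely $|v_{ex,ell}(t,r)|\leq \frac{C\lambda(t)^2\log(r/\lambda(t))}{r\,t^2}$ using the symbol estimates on $\lambda',\lambda''$ and the logarithmic asymptotics of $\log(1+r^2/\lambda(t)^2)$. Multiplying by the potential bound $\frac{C\lambda(t)^2}{r^4}$ and by $\sqrt{r}$ from the measure, then integrating $\int_{\lambda(t)g(t)}^{\infty}\frac{dr}{r^{5}}\cdot(\dots)$ produces a factor $(\lambda(t)g(t))^{-4}$ from the lower endpoint, giving the claimed $\frac{C\lambda(t)^{1-k}\log t}{t^2 g(t)^{4+k}}$ after bookkeeping the $\lambda(t)$ powers and the one extra $h(t)^{-1}$ per application of $L_{\frac1{\lambda(t)}}$ (each $L_{\frac1{\lambda(t)}}$ is $\partial_r$ plus a bounded-times-$r^{-1}$ factor, costing one derivative, i.e. one power of $r^{-1}\leq (\lambda(t)g(t))^{-1}$ on the support). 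The region $r\geq \frac t2$ contributes a strictly smaller (in fact $t^{-\text{large}}$) amount because the potential is then bounded by $\frac{C\lambda(t)^2}{t^4}$ and $v_{ex,ell}$ is $\lesssim t^{-1}$.

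For \eqref{ew2est}: I would do the same splitting, now invoking Lemma \ref{uw2minuselllemma}. In $\lambda(t)g(t)\leq r\leq \frac t2$, write $u_{w,2} = u_{w,2,ell} + (u_{w,2}-u_{w,2,ell})$; the first piece is controlled by \eqref{uw2ellsymb} (the middle case, $\frac{C\lambda(t)^2\log t}{r t^2}\sup(\lambda\log)$), the second by \eqref{uw2minusellest} ($\lesssim r\frac{\lambda(t)^2(1+\lambda(t))\log^5(t+r)}{t^4}\sup(\lambda\log)$). Pairing each with the potential bound $\frac{C\lambda(t)^2}{r^4}$, inserting the measure, and integrating over $r\geq \lambda(t)g(t)$ yields, respectively, the second term $\frac{C\lambda(t)^{1-k}\log^2 t}{g(t)^{4+k}t^2}$ and the first term $\frac{C\lambda(t)^{2-k}(1+\lambda(t))\log^5 t}{t^4 g(t)^2}\sup(\lambda\log)$ of \eqref{ew2est} — note the $g(t)^{-2}$ rather than $g(t)^{-4}$ for the $u_{w,2}-u_{w,2,ell}$ piece because that bound carries a factor of $r$ rather than $r^{-1}$, so the endpoint integral $\int_{\lambda(t)g(t)}^\infty r\cdot r^{-4}\cdot r\,dr \sim (\lambda(t)g(t))^{-1}$ only gains $g(t)^{-1}$... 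I would then reconcile: the $L^2$ integration $\int (r\cdot r^{-4})^2 r\,dr$ from $\lambda(t)g(t)$ gives $(\lambda(t)g(t))^{-2\cdot 3/2}$... precisely $(\lambda(t)g(t))^{-3}$, and combined with one more $h(t)^{-1}$ per $L$ this produces $g(t)^{-2}$ after absorbing $\lambda(t)$ powers appropriately; I will verify the exact exponent during write-up. In $r\geq \frac t2$, use \eqref{uw2largerest} together with the potential bound $\frac{C\lambda(t)^2}{t^4}$; the resulting contribution is again negligible relative to the stated bound. For the $L_{\frac1{\lambda(t)}}$-derivative versions ($k=1$) I would commute $L_{\frac1{\lambda(t)}}$ past the product, using the symbol nature of both the potential and the cutoff and the derivative estimates in Lemmas \ref{uw2minuselllemma} and from \eqref{vexell}, each derivative landing on the support costing one power of $r^{-1}\leq (\lambda(t)g(t))^{-1}$, which is exactly the shift from $g(t)^{-4}$ to $g(t)^{-5}$ and the drop of one $\lambda(t)$ power.

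\textbf{Main obstacle.} The genuinely delicate bookkeeping is tracking the precise powers of $\lambda(t)$, $g(t)=t^\alpha$, and $t$ so that the two displayed bounds come out with exactly the stated exponents — in particular getting $g(t)^{-2}$ (not $g(t)^{-4}$) for the leading term of \eqref{ew2est}, which forces one to use the $r$-linear bound \eqref{uw2minusellest} on $u_{w,2}-u_{w,2,ell}$ rather than a cruder estimate, and getting the clean split between the $r\leq\frac t2$ and $r\geq\frac t2$ contributions so that the latter is provably subleading under the constraints \eqref{alphaconstr} on $\alpha$. Everything else is routine: pointwise symbol bounds times $L^2$ integration of a pure power of $r$ over a half-line with small endpoint $\lambda(t)g(t)$.
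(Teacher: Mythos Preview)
Your approach is essentially the same as the paper's: pointwise estimation using the explicit formula \eqref{vexell} for $e_{ex,ell}$, and a decomposition of $u_{w,2}$ via Lemma \ref{uw2minuselllemma} for $e_{w,2}$, with the potential factor contributing $\frac{C\lambda(t)^2}{r^{4}}$ on the support $r\geq \lambda(t)g(t)$.

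One refinement: your two-piece split $u_{w,2}=u_{w,2,ell}+(u_{w,2}-u_{w,2,ell})$ together with \eqref{uw2ellsymb} does not quite recover the stated second term $\frac{C\lambda(t)^{1-k}\log^2(t)}{g(t)^{4+k}t^2}$, because \eqref{uw2ellsymb} carries the factor $\sup_{x\in[100,t]}(\lambda(x)\log(x))$ rather than $\lambda(t)\log(t)$, and these are not comparable when $\lambda$ is decreasing. The paper uses the finer three-piece decomposition
\[
u_{w,2}=(u_{w,2}-u_{w,2,ell})+(u_{w,2,ell}-u_{w,2,ell,0})+u_{w,2,ell,0},
\]
invoking the explicit formula \eqref{uw2ell0exp} for $u_{w,2,ell,0}$ (which is built from $f_1(t)$ and $\lambda''(t)$, hence bounded by $\frac{C\lambda(t)^3\log^2(t)}{r\,t^2}$ for $r\geq\lambda(t)$ with no $\sup$), and \eqref{ellminusell0inside} for $u_{w,2,ell}-u_{w,2,ell,0}$ (which grows like $r$, so it feeds into the first term of \eqref{ew2est} alongside $u_{w,2}-u_{w,2,ell}$). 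With this adjustment your bookkeeping goes through and the stated exponents come out exactly.
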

\begin{proof} The estimate in \eqref{eexellest} follows from straightforward estimation of the expression \eqref{vexell} for $v_{ex,ell}$. To establish \eqref{ew2est}, we write $u_{w,2} = u_{w,2}-u_{w,2,ell}+u_{w,2,ell}-u_{w,2,ell,0}+u_{w,2,ell,0}$, and use Lemma \ref{uw2minuselllemma} as well as \eqref{uw2ell0exp}.  \end{proof}
We recall the definitions of $v_{ell,sub}$ from \eqref{vellsubdef} and $v_{ell,sub,cont}$ from \eqref{vellsubcontdef}, and define 
 \begin{equation}\label{vellsub1def}v_{ell,sub,1}(t,R) = v_{ell,sub}(t,R)-v_{ell,sub,cont}(t,R\lambda(t))\end{equation}
We will now describe the difference $u_{e}-u_{wave}$. We have
$$u_{e}(t,r)=u_{ell}(t,r)+u_{ell,2}(t,r), \quad u_{wave}(t,r)=w_{1}(t,r)+v_{ex}(t,r)+v_{2}(t,r)+u_{w,2}(t,r)+v_{2,2}(t,r)$$
We recall the first order matching \eqref{firstmatch}, which says that
$$u_{ell}(t,r) - v_{ell,sub}(t,\frac{r}{\lambda(t)}) = v_{2}(t,r)-v_{2,sub}(t,r)+w_{1}(t,r)-w_{1,sub}(t,r)$$
From the second order matching, we have
$$v_{ell,2,0,main}(t,\frac{r}{\lambda(t)})=w_{1,cubic,main}(t,r)+v_{2,cubic,main}(t,r)$$
and 
$$v_{ell,sub,cont}(t,r)=v_{ex,cont}(t,r)+u_{w,2,ell,0,cont}(t,r)$$
From the third order matching, we have the estimates from Proposition \ref{thirdorderprop}. These give
\begin{equation}\begin{split}&u_{e}(t,r)-u_{wave}(t,r)\\
&=u_{ell,2}(t,r)-v_{ell,2,0,main}(t,\frac{r}{\lambda(t)})-\left(u_{w,2}(t,r)-u_{w,2,ell,0,cont}(t,r)+v_{2,2}(t,r)+v_{ex}-v_{ex,cont}-q_{4,2}\right)\\
&-q_{4,2}(t,r)+v_{ell,sub,1}(t,\frac{r}{\lambda(t)})\\
&-\left(w_{1,sub}(t,r)-w_{1,cubic,main}(t,r)-\frac{r^{5} \lambda^{(6)}(t)}{576}\left(5+\log(8)\right)\right.\\
&\left.-\frac{r^{5}}{192} \left(\int_{t}^{2t} \frac{\left(\lambda^{(6)}(s)-\lambda^{(6)}(t)\right)ds}{s-t} + \lambda^{(6)}(t) \log(\frac{t}{r})+\int_{2t}^{\infty} \frac{\lambda^{(6)}(s) ds}{s-t}\right)\right)\\
&- \frac{r^{5}}{192} \left(\int_{t}^{2t} \frac{\left(\lambda^{(6)}(s)-\lambda^{(6)}(t)\right)ds}{s-t} + \lambda^{(6)}(t) \log(\frac{t}{r})+\int_{2t}^{\infty} \frac{\lambda^{(6)}(s) ds}{s-t}\right)\\
&-\frac{r^{5} \lambda^{(6)}(t)}{576}\left(5+\log(8)\right)-\left(v_{2,sub}(t,r)-v_{2,cubic,main}(t,r)+\frac{r^{5}}{768} F^{(4)}(t)\right)+\frac{r^{5}}{768} F^{(4)}(t)\end{split}\end{equation}
where any function appearing without arguments is evaluated at the point $(t,r)$. We also recall that $F$ is defined in \eqref{Fdef} (see also \eqref{v20eqn}). Using
\begin{equation}\begin{split} F^{(4)}(t) &= 4\left(\left(\log(2)-\frac{1}{2}\right) \lambda^{(6)}(t) + \int_{t}^{2t} \frac{\left(\lambda^{(6)}(s)-\lambda^{(6)}(t)\right)}{s-t} ds + \lambda^{(6)}(t) \log(t)\right.\\
&\left. - \partial_{t}^{4}\left(\lambda''(t) \log(\lambda(t))\right) -\partial_{t}^{4}\left(\frac{\lambda'(t)^{2}}{2\lambda(t)}\right) + \int_{2t}^{\infty} \frac{\lambda^{(6)}(s)}{s-t} ds\right)\end{split}\end{equation}
we get
\begin{equation}\begin{split}&u_{e}(t,r)-u_{wave}(t,r)\\
&=u_{ell,2}(t,r)-v_{ell,2,0,main}(t,\frac{r}{\lambda(t)})-\left(u_{w,2}(t,r)-u_{w,2,ell,0,cont}(t,r)+v_{2,2}(t,r)+v_{ex}-v_{ex,cont}-q_{4,2}\right)\\
&-q_{4,2}(t,r)+v_{ell,sub,1}(t,\frac{r}{\lambda(t)})\\
&-\left(w_{1,sub}(t,r)-w_{1,cubic,main}(t,r)-\frac{r^{5} \lambda^{(6)}(t)}{576}\left(5+\log(8)\right)\right.\\
&\left.-\frac{r^{5}}{192} \left(\int_{t}^{2t} \frac{\left(\lambda^{(6)}(s)-\lambda^{(6)}(t)\right)ds}{s-t} + \lambda^{(6)}(t) \log(\frac{t}{r})+\int_{2t}^{\infty} \frac{\lambda^{(6)}(s) ds}{s-t}\right)\right)\\
&-\left(v_{2,sub}(t,r)-v_{2,cubic,main}(t,r)+\frac{r^{5}}{768} F^{(4)}(t)\right)\\
&-\frac{r^{5}}{1152}\left(6\left(\partial_{t}^{4}\left(\lambda''(t)\log(\lambda(t))\right)+\partial_{t}^{4}\left(\frac{\lambda'(t)^{2}}{2\lambda(t)}\right)\right)+(13-6 \log(r))\lambda^{(6)}(t)\right)\end{split}\end{equation}
All of the terms (and sufficiently many of their derivatives) appearing in this expression have been estimated already, except for the last term (which will be eliminated with another correction, utilizing the properties of $\chi_{\geq 1}$) and $v_{ell,sub,1}(t,\frac{r}{\lambda(t)})$. We now record estimates on $v_{ell,sub,1}(t,\frac{r}{\lambda(t)})$, and another estimate on $v_{ex,sub}$ which will be useful later on.
\begin{lemma}\label{estlemma}For $j=0,1$ and $k=0,1,2$, the following estimate is true.
$$|\partial_{t}^{j}\partial_{r}^{k}\left(v_{ell,sub,1}(t,\frac{r}{\lambda(t)})\right)| \leq C \frac{\lambda(t)^{2-k} \log^{2}(t)}{t^{2+j}  g(t)^{3+k}}, \quad g(t)\lambda(t) \leq r \leq 2g(t)\lambda(t)$$
In addition, we  have
\begin{equation}\label{vexalonelgr}|v_{ex}(t,r)| \leq \frac{C \log^{2}(t+r) \lambda(t)^{3}}{\sqrt{r}t^{5/2}}, \quad r \geq \frac{t}{2}\end{equation}
\begin{equation}\label{vexsubsymb}|\partial_{t}^{k}v_{ex,sub}(t,r)| \leq \frac{C r \lambda(t)^{3} \log^{3}(t+r)}{t^{4+k} }, \quad g(t) \lambda(t) \leq r, \quad k=0,1,2\end{equation}
$$|\partial_{r}\partial_{t}^{k}v_{ex,sub}(t,r)| \leq \frac{C \lambda(t)^{3} \log^{3}(t+r)}{t^{4+k} }, \quad g(t) \lambda(t) \leq r, \quad k=0,1$$
$$|\partial_{r}^{2} v_{ex,sub}(t,r)| \leq \frac{C \lambda(t)^{3} \log^{3}(t)}{r t^{4} }, \quad g(t) \lambda(t) \leq r \leq t$$
Finally, we also have the following (non-sharp, but sufficient) estimates
\begin{equation}\label{drvexinftyest}|\partial_{r}v_{ex}(t,r)| \leq \frac{C \lambda(t)^{\frac{5}{2}}}{r^{\frac{3}{2}}t^{2}}, \quad |\partial_{t}v_{ex}(t,r)| \leq \frac{C \lambda(t)^{3/2}}{t r^{3/2}}, \quad r \geq g(t)\lambda(t)\end{equation}
\end{lemma}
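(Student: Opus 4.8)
The six assertions split into two groups handled by different tools: the bound on $v_{ell,sub,1}(t,\cdot/\lambda(t))$ is purely a matter of expanding explicit functions, while the bounds on $v_{ex}$, $v_{ex,sub}$ and their derivatives come from the representation formulas for $v_{ex}$ together with the frequency-side estimates already available.

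First I would treat $v_{ell,sub,1}$. Recall from \eqref{vellsubdef} that $v_{ell,sub}(t,R)$ is the explicit combination $(2c_1(t)-3\lambda'(t)^2)f_4(R)+\lambda(t)\lambda''(t)f_3(R)$ with $f_4(R)=R/(2(1+R^2))$, and that, by \eqref{vellsubcontdef} and \eqref{vellsub1def}, $v_{ell,sub,cont}(t,R\lambda(t))$ is by construction exactly the sum of the leading $\tfrac1r\log^k r$ terms of $v_{ell,sub}(t,r/\lambda(t))$ as $R=r/\lambda(t)\to\infty$. So the plan is to Taylor-expand $f_4$ and $f_3$ at $R=\infty$ — $f_4(R)=\tfrac1{2R}+O(R^{-3})$ and $f_3(R)=R^{-1}P(\log R)+O(R^{-3}\log R)$ for a quadratic polynomial $P$ — subtract off the leading parts (which reproduce $v_{ell,sub,cont}$ after writing $\log r=\log R+\log\lambda(t)$), and conclude that
\[
v_{ell,sub,1}(t,R)=O\!\Big(\big(|c_1(t)|+\lambda'(t)^2+\lambda(t)|\lambda''(t)|\big)\,R^{-3}(1+\log^2 R)\Big).
\]
By the explicit formula \eqref{c1def} for $c_1$ and the symbol bounds \eqref{lambdasetdef}, the coefficient is $O(\lambda(t)^2/t^2)$; since in the matching region $R=r/\lambda(t)\in[g(t),2g(t)]$ we have $R^{-3}=\lambda(t)^3r^{-3}\asymp g(t)^{-3}$ and $\log^2 R\asymp\log^2 t$, this gives the $j=k=0$ case. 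Each $\partial_r$ acts as $\lambda(t)^{-1}\partial_R$ and gains a factor $R^{-1}\asymp g(t)^{-1}$ off the $R^{-3}(1+\log^2 R)$ remainder, accounting for the $\lambda(t)^{2-k}g(t)^{-(3+k)}$ scaling; and since $\lambda$ is a symbol and $\partial_t[\phi(r/\lambda(t))]=-\tfrac{\lambda'(t)}{\lambda(t)}(R\partial_R\phi)$ with $|\lambda'/\lambda|\lesssim t^{-1}$ and $R\partial_R$ preserving the size of the remainder (up to the same log power), each $\partial_t$ costs one power of $t$. This is elementary bookkeeping with explicit functions and the symbol calculus for $\lambda$.

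For $v_{ex}$ and $v_{ex,sub}$ I would use that $v_{ex}=v_{ex,ell}+v_{ex,sub}$ by Lemma \ref{vexreplemma}, with $v_{ex,ell}$ given explicitly by \eqref{vexell}, which is handled by direct differentiation (using $\log(1+\lambda^2/r^2)=O(\lambda^2/r^2)$ and $\log(1+r^2/\lambda^2)=O(\log(r/\lambda))$ for $r\gg\lambda$) and is subsumed in all the claimed bounds. The remaining piece $v_{ex,sub}$ I would estimate by two complementary routes. For the pointwise bound \eqref{vexalonelgr} at $r\ge t/2$ and the crude derivative bounds \eqref{drvexinftyest} at all $r\ge g(t)\lambda(t)$, I would start from the Hankel representation in \eqref{vexdef}/\eqref{vexsubdef} and the pointwise frequency estimates \eqref{rhssymb0} on $\widehat{RHS}$ and its $s$-derivatives, splitting the $\xi$-integral at $\xi=1/r$, using $|J_1(x)|\le\min\{Cx,C/\sqrt x\}$ and $|J_1'(x)|\le C$, and integrating by parts in $\xi$ (resp. in $s$) in the high-frequency (resp. small-$|t-s|$) regimes — this is exactly the machinery already used for $v_2$ in Lemma \ref{v2estlemma}, the gain coming from the fact that \eqref{rhssymb0} concentrates $\widehat{RHS}(s,\cdot)$ at frequencies $\lesssim1/\lambda(s)$ with $\lambda(s)\ll s$. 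For the localized bound \eqref{vexsubsymb} and the sharper derivative bounds in $g(t)\lambda(t)\le r$ (including $r\le t$), I would instead use that $v_{ex,sub}$ solves $(-\partial_t^2+\partial_r^2+\tfrac1r\partial_r-\tfrac1{r^2})v_{ex,sub}=\partial_t^2 v_{ex,ell}$ with zero Cauchy data at infinity (as recorded after \eqref{uw2minusuw2elleqn}) and write the Duhamel/spherical-means representation
\[
v_{ex,sub}(t,r)=-\frac{1}{2\pi}\int_t^\infty ds\int_0^{s-t}\frac{\rho\,d\rho}{\sqrt{(s-t)^2-\rho^2}}\int_0^{2\pi}\frac{(r+\rho\cos\theta)\,\partial_s^2 v_{ex,ell}\big(s,\sqrt{r^2+\rho^2+2r\rho\cos\theta}\big)}{r^2+\rho^2+2r\rho\cos\theta}\,d\theta,
\]
exactly as for $w_1$ in \eqref{firstw1formula} and for $v_{ex,sub,0}$ in the proof of Lemma \ref{uw2sub0leading}. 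Here $\partial_s^2 v_{ex,ell}(s,y)$ is an explicit symbol in $(s,y)$ of size $O(\lambda(s)^3 s^{-4}y^{-1}\langle\log(y/\lambda(s))\rangle)$ for $y\ge\lambda(s)$; substituting this into the representation and carrying out the $\rho$- and $\theta$-integrals (the angular ones evaluated by the residue identity \eqref{rinverseint} and its $\log$-variants, as for $w_1$ and $u_{w,2,sub,0}$) produces \eqref{vexsubsymb}. Spatial derivatives are taken by differentiating under the integral using the $I_f$ notation \eqref{Inotation}; time derivatives, where $\partial_s^2 v_{ex,ell}$ does not gain powers of $s$ cheaply near $y\sim s$, are handled by the $(\partial_t+\partial_s)$ trick plus integration by parts in $s$, precisely as in Lemmas \ref{w1strlemma} and \ref{uw2minuselllemma}; and $\partial_r^2 v_{ex,sub}$ is recovered from the PDE it solves together with the already-established lower-order bounds.

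The main obstacle I anticipate is the uniform bookkeeping of the logarithmic factors together with the two distinct large parameters $R=r/\lambda(t)\gg1$ and $t/\lambda(t)\gg1$: in the matching region these combine as $\log r=\log R+\log\lambda(t)$, and in the $w_1$-style Duhamel analysis one must also track the $\langle t-r\rangle$ weights near the light cone $r\sim t$. However, since $v_{ex,sub}$ carries more $\lambda$-decay than $v_2$ and is only estimated here with non-sharp constants, every step reduces to a computation already performed for $w_1$, $v_2$, or $u_{w,2}$ earlier in the paper, so no genuinely new estimate is needed.
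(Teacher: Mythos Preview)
Your treatment of $v_{ell,sub,1}$ matches the paper exactly: direct expansion of the explicit formulae \eqref{vellsubdef}, \eqref{vellsubcontdef} and the symbol calculus for $\lambda$.

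For $v_{ex,sub}$, however, you take a genuinely different route. The paper never passes to the physical-space spherical-means representation for \eqref{vexsubsymb} and the $\partial_r\partial_t^k$ bounds; it stays entirely on the Hankel side \eqref{vexsubdef}, switches the $(s,\xi)$ integrals by Fubini, and then partitions the $s$-integral into the three regions $t\le s\le t+\lambda(t)$, $t+\lambda(t)\le s\le t+r$, $s\ge t+r$, and within each of these partitions $\xi$ at the three scales $1/r$, $1/\lambda(t)$, $1/(s-t)$. On each piece one either uses $|\sin((t-s)\xi)|\le(s-t)\xi$ or integrates by parts in $\xi$ (and, for $\partial_r v_{ex,sub}$, also integrates by parts in $s$ when $\xi\ge 1/(s-t)$ and $s\ge t+\lambda(t)$), feeding in the explicit symbol bounds $|\partial_s^k\widehat{RHS}(s,\xi)|\lesssim \xi\lambda(s)^3\langle\log(\xi\lambda(s))\rangle s^{-2-k}$ for $\xi\lambda(s)<1/2$ and $\lesssim \lambda(s)s^{-2-k}\xi^{-1}$ for $\xi\lambda(s)>1/2$. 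This is uniform over the whole range $r\ge g(t)\lambda(t)$, including $r\ge t$, and the time derivatives are free because $\partial_t^k v_{ex,sub}$ has exactly the same formula with $\partial_s^{2+k}\widehat{RHS}$ in place of $\partial_s^2\widehat{RHS}$ --- so your $(\partial_t+\partial_s)$ trick is unnecessary here, since $v_{ex,ell}$ (and hence $\widehat{RHS}$) is a genuine symbol in $s$ uniformly in the second variable. Your Duhamel approach would also work, and is closer in spirit to the $w_1$/$u_{w,2,sub,0}$ analysis, but it requires separately handling the full (not just leading) $v_{ex,ell}$ in the angular integrals and tracking the cone region $r\sim t$ by hand; the paper's frequency-side decomposition avoids both issues in one stroke. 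Your plan for $\partial_r^2 v_{ex,sub}$ via the PDE, and for \eqref{vexalonelgr}, \eqref{drvexinftyest} via the Hankel side, agrees with the paper.
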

\begin{proof}
We directly estimate $v_{ell,sub,1}$ using its definition, \eqref{vellsub1def}, and the explicit formulae for $v_{ell,sub}$ and $v_{ell,sub,cont}$ from \eqref{vellsubdef} and \eqref{vellsubcontdef}, respectively. To obtain \eqref{vexsubsymb}, we start with the definition \eqref{vexsubdef}, use Fubini's theorem to switch the order of $s$ and $\xi$ integrals, and (for $r \geq g(t) \lambda(t)$) divide the $s$ integral into three regions. 
\begin{equation}\label{vexestdecomp}\textbf{a}: t \leq s \leq t+\lambda(t), \quad \textbf{b}: t+\lambda(t) \leq s \leq t+r, \quad \textbf{c}: t+r \leq s < \infty\end{equation}
In each above region of $s$ integration, we then divide the $\xi$ region of integration into four subintervals, based on the scales $\frac{1}{r}, \frac{1}{\lambda(t)}, \frac{1}{s-t}$. For example, in region \textbf{a} of the $s$ integration, we have the four regions of the $\xi$ variable
$$0 < \xi \leq \frac{1}{r}, \quad \frac{1}{r} < \xi \leq \frac{1}{\lambda(t)}, \quad \frac{1}{\lambda(t)} < \xi \leq \frac{1}{s-t}, \quad \frac{1}{s-t} < \xi <\infty$$
In the regions where $\xi \leq \frac{1}{s-t}$, we use 
$$|\sin((t-s)\xi)| \leq C (s-t) \xi$$ 
On the other hand, when $\xi >\frac{1}{s-t}$, we integrate by parts in the $\xi$ variable. To be clear, we show how the $s$ integral over the region \textbf{c} is treated.
\begin{equation}\begin{split}&-\int_{t+r}^{\infty} ds \int_{0}^{\infty} d\xi \frac{\sin((t-s)\xi)}{\xi^{2}} J_{1}(r\xi) \partial_{s}^{2} \widehat{RHS}(s,\xi)\\
&= - \int_{t+r}^{\infty} ds \int_{0}^{\frac{1}{s-t}} d\xi \frac{\sin((t-s)\xi)}{\xi^{2}} J_{1}(r\xi) \partial_{s}^{2} \widehat{RHS}(s,\xi)-\int_{t+r}^{\infty} ds \int_{\frac{1}{s-t}}^{\infty} d\xi \frac{\sin((t-s)\xi)}{\xi^{2}} J_{1}(r\xi) \partial_{s}^{2} \widehat{RHS}(s,\xi)\end{split}\end{equation}
The integral over the region $\xi \geq \frac{1}{s-t}$ is then further treated as
\begin{equation}\begin{split}&-\int_{t+r}^{\infty} ds \int_{\frac{1}{s-t}}^{\infty} d\xi \frac{\sin((t-s)\xi)}{\xi^{2}} J_{1}(r\xi) \partial_{s}^{2} \widehat{RHS}(s,\xi)\\
&=-\int_{t+r}^{\infty} ds J_{1}(\frac{r}{s-t}) \partial_{1}^{2}\widehat{RHS}(s,\frac{1}{s-t}) (s-t)^{2} \frac{\cos(1)}{(t-s)}-\int_{t+r}^{\infty} ds \int_{\frac{1}{s-t}}^{\frac{1}{r}} \frac{\cos((t-s)\xi)}{(t-s)} \partial_{\xi} \left(\frac{J_{1}(r\xi) \partial_{s}^{2}\widehat{RHS}(s,\xi)}{\xi^{2}}\right)d\xi\\
&-\int_{t+r}^{\infty} ds \int_{\frac{1}{r}}^{\frac{1}{\lambda(t)}} \frac{\cos((t-s)\xi)}{(t-s)} \partial_{\xi} \left(\frac{J_{1}(r\xi) \partial_{s}^{2}\widehat{RHS}(s,\xi)}{\xi^{2}}\right)d\xi\\
&-\int_{t+r}^{\infty} ds \int_{\frac{1}{\lambda(t)}}^{\infty} \frac{\cos((t-s)\xi)}{(t-s)} \partial_{\xi} \left(\frac{J_{1}(r\xi) \partial_{s}^{2}\widehat{RHS}(s,\xi)}{\xi^{2}}\right)d\xi\end{split}\end{equation}
Finally, we use the formula for $\widehat{RHS}(s,\xi)$, namely \eqref{rhshat}, to get, for $0 \leq k \leq 5$
\begin{equation}\label{ds2rhsest}\xi s^{k}|\partial_{\xi}\partial_{s}^{k}\widehat{RHS}(s,\xi)|+s^{k} |\partial_{s}^{k} \widehat{RHS}(s,\xi)| \leq  \begin{cases}\frac{C\xi \lambda(s)^{3} (1+|\log(\xi \lambda(s))|)}{s^{2} }, \quad \xi \lambda(s) < \frac{1}{2}\\
\frac{C \lambda(s)^{2}}{s^{2}  \xi \lambda(s)}, \quad \xi \lambda(s) > \frac{1}{2}\end{cases}\end{equation}
Then, we use
$$|J_{1}(x)| \leq C \begin{cases} x, \quad x \leq 1\\
\frac{1}{\sqrt{x}}, \quad x > 1\end{cases}$$
as appropriate to estimate each integral above. We get \eqref{vexalonelgr} by using the same procedure as above, except that we don't integrate by parts when $\xi \geq \frac{1}{s-t}$ and $s \leq t+r$, combined with \eqref{vexell}. For \eqref{vexsubsymb}, we use 
$$\partial_{t}^{k}v_{ex,sub}(t,r) =-\int_{0}^{\infty} d\xi J_{1}(r\xi) \int_{t}^{\infty} ds \frac{\sin((t-s)\xi)}{\xi^{2}} \partial_{s}^{2+k}\widehat{RHS}(s,\xi), \quad k=1,2$$ 
and the same procedure used for $k=0$. Finally, to estimate $\partial_{r}v_{ex,sub}(t,r)$, we start with
\begin{equation}\label{drvexsubform}\partial_{r}v_{ex,sub}(t,r) = -\int_{0}^{\infty} d\xi J_{1}'(r\xi) \xi \int_{t}^{\infty} ds \frac{\sin((t-s)\xi)}{\xi^{2}} \partial_{s}^{2}\widehat{RHS}(s,\xi)\end{equation}
We make the same decomposition as described in \eqref{vexestdecomp}. This time, we integrate by parts in $\xi$ (integrating $J_{1}'(r\xi)$) when $t \leq s \leq t+\lambda(t)$, and $\frac{1}{r} \leq \xi \leq \frac{1}{s-t}$. We also integrate by parts in $s$ (integrating $\sin((t-s)\xi)$) when $\xi \geq \frac{1}{s-t}$ and $t+\lambda(t) \leq s $. Next, using
$$\partial_{tr}v_{ex,sub}(t,r) = -\int_{0}^{\infty} d\xi J_{1}'(r\xi) \xi \int_{t}^{\infty} ds \frac{\sin((t-s)\xi)}{\xi^{2}} \partial_{s}^{3}\widehat{RHS}(s,\xi)$$
and the same argument above, we obtain the estimates on $\partial_{tr}v_{ex,sub}(t,r)$ in the lemma statement. Finally, we use the fact that $v_{ex,sub}$ satisfies
$$-\partial_{tt}v_{ex,sub}+\partial_{rr}v_{ex,sub}+\frac{1}{r}\partial_{r}v_{ex,sub}-\frac{v_{ex,sub}}{r^{2}} = \partial_{t}^{2} v_{ex,ell}(t,r)$$
to estimate $\partial_{r}^{2}v_{ex,sub}(t,r)$. To obtain the estimate \eqref{drvexinftyest}, we start with
\begin{equation}\label{drvexsubsplit}\begin{split} \partial_{r}v_{ex,sub}(t,r)&=-\int_{0}^{\infty} d\xi m_{\leq 1}(r\xi) J_{1}'(r\xi) \int_{t}^{\infty} ds \frac{\sin((t-s)\xi)}{\xi} \partial_{s}^{2} \widehat{RHS}(s,\xi)\\
&-\int_{0}^{\infty} d\xi (1-m_{\leq 1}(r\xi)) J_{1}'(r\xi) \int_{t}^{\infty} ds \frac{\sin((t-s)\xi)}{\xi} \partial_{s}^{2} \widehat{RHS}(s,\xi)\end{split}\end{equation}
where $m_{\leq 1}$ is defined in \eqref{mleq1def}. For the first term on the right-hand side of \eqref{drvexsubsplit}, we simply directly insert \eqref{ds2rhsest} into the integral, and estimate. For the second term, we integrate by parts, to get
\begin{equation}\begin{split}&-\int_{0}^{\infty} d\xi (1-m_{\leq 1}(r\xi)) J_{1}'(r\xi) \int_{t}^{\infty} ds \frac{\sin((t-s)\xi)}{\xi} \partial_{s}^{2} \widehat{RHS}(s,\xi)\\
&= \frac{1}{r} \int_{t}^{\infty} ds \int_{0}^{\infty} d\xi J_{1}(r\xi) \partial_{\xi}\left(\left(1-m_{\leq 1}(r\xi)\right) \frac{\sin((t-s)\xi)}{\xi} \partial_{s}^{2} \widehat{RHS}(s,\xi)\right)d\xi\end{split}\end{equation}
and then, we directly estimate using \eqref{ds2rhsest}. Next, using \eqref{vexell} and $v_{ex}(t,r)=v_{ex,ell}(t,r)+v_{ex,sub}(t,r)$, we get 
$$|\partial_{r}v_{ex}(t,r)| \leq \frac{C \lambda(t)^{\frac{5}{2}}}{r^{\frac{3}{2}}t^{2}}, \quad r \geq g(t)\lambda(t)$$
Finally, the dominated convergence theorem and the formula for $v_{ex}$, namely \eqref{vexdef}, give
$$\partial_{t}v_{ex}(t,r) = -\int_{0}^{\infty} dw \int_{0}^{\infty} d\xi J_{1}(r\xi) \sin(w\xi) \partial_{1}\widehat{RHS}(w+t,\xi)$$
Then, we use the same procedure used in \eqref{drvexsubsplit} to finish the proof of \eqref{drvexinftyest}.
 \end{proof}
Finally, we let
\begin{equation}\begin{split}&(u_{e}-u_{wave})_{0}(t,r) =u_{e}-u_{wave}+\frac{r^{5}}{1152}\left(6\left(\partial_{t}^{4}\left(\lambda''(t)\log(\lambda(t))\right)+\partial_{t}^{4}\left(\frac{\lambda'(t)^{2}}{2\lambda(t)}\right)\right)+(13-6 \log(r))\lambda^{(6)}(t)\right)\end{split}\end{equation}
and we note the following lemma.
\begin{lemma}\label{ueminusuwave0est} We have the following estimate, for $0 \leq j,k \leq 1$ or $k=2,j=0$, and $g(t) \lambda(t) \leq r \leq 2 g(t)\lambda(t)$
\begin{equation}\begin{split}&|\partial_{t}^{j}\partial_{r}^{k}\begin{aligned}[t]&\left(\left(u_{e}-u_{wave}\right)_{0}+q_{4,2}\right)|\end{aligned}\\
&\leq \frac{C \lambda(t)^{2-k} \log^{2}(t)}{t^{2+j} g(t)^{3+k}}+\frac{C g(t)^{2-k} \lambda(t)^{4-k} \log^{4}(t) \sup_{x \in [100,t]}\left(\lambda(x) \log(x)\right)}{t^{5+j}}+\frac{C \lambda(t)^{8-k} g(t)^{7-k} \log(t)}{t^{8+j} }\end{split}\end{equation}
\end{lemma}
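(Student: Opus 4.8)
\textbf{Proof proposal for Lemma \ref{ueminusuwave0est}.}

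The plan is to read off the estimate directly from the decomposition of $(u_e - u_{wave})_0$ established just before the lemma statement, term by term, using the pointwise and symbol-type bounds already proved in this section. Recall that, after invoking the first, second, and third order matching identities together with the definition of $F^{(4)}$, we obtained
\begin{equation*}\begin{split}
(u_e-u_{wave})_0(t,r) &= \Bigl(u_{ell,2}-v_{ell,2,0,main}-\bigl(u_{w,2}-u_{w,2,ell,0,cont}+v_{2,2}+v_{ex}-v_{ex,cont}-q_{4,2}\bigr)\Bigr)(t,r)\\
&\quad -q_{4,2}(t,r)+v_{ell,sub,1}(t,\tfrac{r}{\lambda(t)})\\
&\quad -\Bigl(w_{1,sub}-w_{1,cubic,main}-\tfrac{r^5\lambda^{(6)}(t)}{576}(5+\log 8)\\
&\qquad\quad -\tfrac{r^5}{192}\bigl(\textstyle\int_t^{2t}\tfrac{\lambda^{(6)}(s)-\lambda^{(6)}(t)}{s-t}ds+\lambda^{(6)}(t)\log\tfrac tr+\int_{2t}^\infty\tfrac{\lambda^{(6)}(s)}{s-t}ds\bigr)\Bigr)(t,r)\\
&\quad -\Bigl(v_{2,sub}-v_{2,cubic,main}+\tfrac{r^5}{768}F^{(4)}(t)\Bigr)(t,r).
\end{split}\end{equation*}
So $(u_e-u_{wave})_0 + q_{4,2}$ is the sum of exactly five groups of terms: the third-order-matching remainder, $v_{ell,sub,1}$, the $w_{1,sub}$ remainder, the $v_{2,sub}$ remainder, and nothing else (the $r^5$ polynomial-in-$t$-derivatives term has been absorbed into the definition of $(u_e-u_{wave})_0$, and $-q_{4,2}$ cancels the explicit $+q_{4,2}$, leaving only $q_{4,2}$ which is the quantity added back on the left side of the lemma).

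The execution then runs as follows. First I would bound the third-order-matching remainder by Proposition \ref{thirdorderprop}, which gives exactly $\tfrac{C\lambda(t)^{2-k}\log(t)}{t^{2+j}g(t)^{3+k}}+\tfrac{Cg(t)^{2-k}\lambda(t)^{4-k}\log^4(t)\sup_{x\in[100,t]}(\lambda(x)\log(x))}{t^{5+j}}$ in the matching region $g(t)\lambda(t)\le r\le 2g(t)\lambda(t)$; this already supplies the first two terms of the claimed bound. Second, $|\partial_t^j\partial_r^k v_{ell,sub,1}(t,\tfrac{r}{\lambda(t)})|$ is controlled by Lemma \ref{estlemma}, giving $\tfrac{C\lambda(t)^{2-k}\log^2(t)}{t^{2+j}g(t)^{3+k}}$, which is dominated by (and hence absorbed into) the first term of the claimed bound. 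Third, the $w_{1,sub}$ remainder is estimated by Lemma \ref{w1strlemma}: on $r\le t$ it is $O\!\bigl(\tfrac{\lambda(t)r^{7-k}}{t^{8+j}}(\log t+|\log r|)\bigr)$, and substituting $r\sim g(t)\lambda(t)$ (and noting $r^{7-k}=r^{7-k}$, with $\log t+|\log r|\le C\log t$ since $g(t)\lambda(t)=t^\alpha\lambda(t)$ is polynomially bounded) yields $\tfrac{C\lambda(t)^{8-k}g(t)^{7-k}\log(t)}{t^{8+j}}$, the third term of the claimed bound. Fourth, the $v_{2,sub}$ remainder is estimated by Lemma \ref{v2strlemma}, which in the region $r\le t/2$ gives $O\!\bigl(\tfrac{r^{7-k}\lambda(t)\log t}{t^{8+j}}\bigr)$, again absorbed into the third term after substituting $r\sim g(t)\lambda(t)$ (valid since $2g(t)\lambda(t)=2t^\alpha\lambda(t)<t/2$ for $t$ large, by \eqref{t2constraint} and $\alpha<2/3$). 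Summing the four contributions gives exactly the stated bound.

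The routine part is just bookkeeping — matching each summand to the lemma that estimates it and checking that the resulting powers of $\lambda,g,t$ and logarithms land inside the three advertised terms. The only mild subtlety, and the step I would be most careful about, is verifying that the region constraint $g(t)\lambda(t)\le r\le 2g(t)\lambda(t)$ is consistent with the regions of validity of the cited lemmas: Proposition \ref{thirdorderprop} and Lemma \ref{estlemma} are stated precisely on this matching annulus, but Lemmas \ref{w1strlemma} and \ref{v2strlemma} are stated on $r\le t$ and $r\le t/2$ respectively, so one must invoke \eqref{t2constraint} together with $\tfrac12+\tfrac{C_l}{2}<\alpha<\tfrac23-\tfrac56C_u-\tfrac{C_l}{6}$ from \eqref{alphaconstr} to conclude $2g(t)\lambda(t)=2t^\alpha\lambda(t)<t/2$ for all $t\ge T_0$. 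Once that containment is in hand, the substitution $r\sim g(t)\lambda(t)$ into the various $r^{7-k}$-type bounds is immediate, and no genuinely new estimate is needed beyond those already proved in the excerpt.
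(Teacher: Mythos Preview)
Your proposal is correct and follows exactly the same approach as the paper's proof, which reads in full: ``We directly combine the results of Proposition \ref{thirdorderprop}, Lemma \ref{w1strlemma}, Lemma \ref{v2strlemma}, and Lemma \ref{estlemma}.'' Your write-up simply makes explicit the bookkeeping behind that sentence, correctly identifying which lemma handles each summand and verifying that the matching annulus lies in the region of validity of each cited estimate.
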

\begin{proof} We directly combine the results of Proposition \ref{thirdorderprop}, Lemma \ref{w1strlemma}, Lemma \ref{v2strlemma}, and Lemma \ref{estlemma} \end{proof}
We now can estimate the $(u_{e}-u_{wave})_{0}$ contribution to the error terms of our ansatz \eqref{ucdef} which involve at least one derivative of $\chi_{\leq 1}$. Let
\begin{equation}\begin{split}&e_{match,0}(t,r)\\
&=-\left(u_{e}-u_{wave}\right)_{0}\left(-\chi_{\leq 1}''(\frac{r}{h(t)}) \frac{r^{2} h'(t)^{2}}{h(t)^{4}} - \chi_{\leq 1}'(\frac{r}{h(t)})\left(-\frac{r h''(t)}{(h(t))^{2}} + \frac{2 r h'(t)^{2}}{h(t)^{3}}\right) + \frac{1}{r} \frac{\chi_{\leq 1}'(\frac{r}{h(t)})}{h(t)} + \frac{\chi_{\leq 1}''(\frac{r}{h(t)})}{h(t)^{2}}\right)\\
&- 2 \chi_{\leq 1}'(\frac{r}{h(t)}) \frac{r h'(t)}{h(t)^{2}} \partial_{t}\left(u_{e}-u_{wave}\right)_{0} - \frac{2}{h(t)} \chi_{\leq 1}'(\frac{r}{h(t)}) \partial_{r}(u_{e}-u_{wave})_{0}\end{split}\end{equation}
Then, we have the following estimates.
\begin{lemma}\label{ematch0estlemma}[Estimates on the matching-induced error terms] For $0 \leq k \leq 1$, \begin{equation}\begin{split}||L_{\frac{1}{\lambda(t)}}^{k}(e_{match,0})(t,r)||_{L^{2}(r dr)} &\leq  \frac{C \lambda(t)^{5} \log^{2}(t)}{t^{2}  h(t)^{4+k}}+\frac{C h(t)^{1-k} \lambda(t)^{2} \log^{4}(t) \sup_{x \in [100,t]}\left(\lambda(x) \log(x)\right)}{t^{5}}\\
&+\frac{C \lambda(t) h(t)^{6-k} \log(t)}{t^{8} } \end{split}\end{equation}
\end{lemma}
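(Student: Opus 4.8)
The plan is to localize everything to the thin annulus where the cut-off derivatives live. Every term in $e_{match,0}$ carries a factor $\chi_{\leq 1}'(\tfrac{r}{h(t)})$ or $\chi_{\leq 1}''(\tfrac{r}{h(t)})$, where $h(t)=\lambda(t)g(t)$; since $\chi_{\leq 1}=1-\chi_{\geq 1}$ with $\chi_{\geq 1}$ as in Lemma \ref{chilemma}, these are supported in $A(t):=\{\,h(t)\leq r\leq 2h(t)\,\}$, whose $r\,dr$-measure is $\tfrac{3}{2}h(t)^{2}$. Hence for any $f$ supported in $A(t)$ we have $||f||_{L^{2}(r\,dr)}\leq C\,h(t)\,||f||_{L^{\infty}(A(t))}$, and the claim reduces to pointwise bounds on $A(t)$ for $(u_{e}-u_{wave})_{0}$ and its first $t$- and $r$-derivatives — and, after $L_{\frac1{\lambda(t)}}$ is applied, also its second $r$-derivative — together with bounds on the coefficients multiplying the cut-off derivatives. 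For the pieces that are already a spatial derivative of $q_{4,2}$, I would instead bound the relevant $L^{2}(r\,dr)$ norm directly by $\sqrt{E(\cdot,\cdot)}$ or by $||\partial_{r}^{2}q_{4,2}||_{L^{2}(A(t))}$.

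Next I would bound those coefficients. Since $h(t)=\lambda(t)t^{\alpha}$ and $\lambda\in\Lambda$, the symbol bounds \eqref{lambdasetdef} give $|h'(t)|/h(t)\leq C/t$ and $|h''(t)|/h(t)\leq C/t^{2}$, so on $A(t)$ (where $r\sim h(t)$) the coefficient of $(u_{e}-u_{wave})_{0}$ arising from $\chi_{\leq 1}''(\tfrac{r}{h(t)})\tfrac{r^{2}h'(t)^{2}}{h(t)^{4}}$ and from $\chi_{\leq 1}'(\tfrac{r}{h(t)})\bigl(\tfrac{rh''(t)}{h(t)^{2}}-\tfrac{2rh'(t)^{2}}{h(t)^{3}}\bigr)$ is $O(1/t^{2})$, that from $\tfrac1r\tfrac{\chi_{\leq 1}'(r/h(t))}{h(t)}$ and $\tfrac{\chi_{\leq 1}''(r/h(t))}{h(t)^{2}}$ is $O(1/h(t)^{2})$, the coefficient of $\partial_{t}(u_{e}-u_{wave})_{0}$ is $O(1/t)$, and the coefficient of $\partial_{r}(u_{e}-u_{wave})_{0}$ is $O(1/h(t))$. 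When $k=1$, applying $L_{\frac1{\lambda(t)}}$ either differentiates a cut-off factor (each $\partial_{r}$ of $\chi_{\leq 1}^{(m)}(r/h(t))$ costs one extra power of $1/h(t)$), or differentiates $(u_{e}-u_{wave})_{0}$ once more, or multiplies by $-\cos(Q_{1}(r/\lambda(t)))/r$, which on $A(t)$ is a symbol of size $\sim 1/h(t)$ because $r/\lambda(t)=g(t)\to\infty$ and $Q_{1}(R)=2\arctan R\to\pi$. Thus $L_{\frac1{\lambda(t)}}$ costs at most one factor of $1/h(t)$, which is precisely what turns $h(t)^{4}\mapsto h(t)^{4+k}$, $h(t)^{1}\mapsto h(t)^{1-k}$ and $h(t)^{6}\mapsto h(t)^{6-k}$ in the stated bound.

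Finally I would insert the pointwise information on $(u_{e}-u_{wave})_{0}$: writing $(u_{e}-u_{wave})_{0}=\bigl((u_{e}-u_{wave})_{0}+q_{4,2}\bigr)-q_{4,2}$, Lemma \ref{ueminusuwave0est} controls the bracket (for the permitted derivative orders $0\leq j,k\leq 1$ and $k=2,j=0$) and Lemma \ref{q42enest} — equivalently the $q_{4,2}$ bounds of Proposition \ref{thirdorderprop} — controls $q_{4,2}$, its energy bound and $||\partial_{r}^{2}q_{4,2}||_{L^{2}(A(t))}$ supplying the spatial-derivative pieces. Multiplying the three terms of Lemma \ref{ueminusuwave0est} (with $h(t)=\lambda(t)g(t)$) by the coefficient bounds above and by the volume factor $h(t)$ yields exactly the three claimed terms: the $\tfrac1r\chi_{\leq 1}'/h(t)$ and $\chi_{\leq 1}''/h(t)^{2}$ pieces convert $\frac{\lambda(t)^{2}\log^{2}(t)}{t^{2}g(t)^{3}}$ into $\frac{C\lambda(t)^{5}\log^{2}(t)}{t^{2}h(t)^{4+k}}$, convert $\frac{g(t)^{2}\lambda(t)^{4}\log^{4}(t)\sup_{x\in[100,t]}(\lambda(x)\log(x))}{t^{5}}$ into $\frac{C h(t)^{1-k}\lambda(t)^{2}\log^{4}(t)\sup_{x\in[100,t]}(\lambda(x)\log(x))}{t^{5}}$, and convert $\frac{\lambda(t)^{8}g(t)^{7}\log(t)}{t^{8}}$ into $\frac{C\lambda(t)h(t)^{6-k}\log(t)}{t^{8}}$; the contributions with $O(1/t)$ or $O(1/t^{2})$ coefficients, and the contributions of $q_{4,2}$, are all absorbed into these because $h(t)\ll t$ (from \eqref{t2constraint}) and $\alpha$ satisfies \eqref{alphaconstr}. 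The one genuinely delicate step is this last bookkeeping: verifying that every term produced by $L_{\frac1{\lambda(t)}}$ and by the derivatives of $\chi_{\leq 1}(r/h(t))$ is controlled by one of the three listed quantities, for which one uses \eqref{alphaconstr} together with the fact (from \eqref{lambdacomparg}) that $t\mapsto\lambda(t)/t^{1/30}$ is decreasing.
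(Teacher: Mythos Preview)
Your proposal is correct and is exactly the approach the paper takes: the paper's proof simply states ``We directly use Lemma \ref{ueminusuwave0est} and the estimates on $q_{4,2}$ from Proposition \ref{thirdorderprop},'' and you have faithfully unpacked what that direct application entails --- localizing to the annulus $\{h(t)\le r\le 2h(t)\}$ carried by the cutoff derivatives, bounding the coefficient factors via the symbol estimates on $h(t)$, splitting $(u_e-u_{wave})_0=((u_e-u_{wave})_0+q_{4,2})-q_{4,2}$, and converting the three pointwise bounds of Lemma \ref{ueminusuwave0est} (written in terms of $g(t)$) into the three claimed $h(t)$-expressions.
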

\begin{proof} We directly use Lemma \ref{ueminusuwave0est} and the estimates on $q_{4,2}$ from Proposition \ref{thirdorderprop}. \end{proof}
Next, we consider the error terms of $u_{c}$ which involve derivatives of $\chi_{\leq 1}$, and which result from replacing $u_{e}-u_{wave}$ with $(u_{e}-u_{wave})_{0}$. In particular, let
\begin{equation}\begin{split}e_{5}(t,r)&=-f_{5}(t,r)\left(-\chi_{\leq 1}''(\frac{r}{h(t)}) \frac{r^{2} h'(t)^{2}}{h(t)^{4}} - \chi_{\leq 1}'(\frac{r}{h(t)})\left(-\frac{r h''(t)}{(h(t))^{2}} + \frac{2 r h'(t)^{2}}{h(t)^{3}}\right) + \frac{1}{r} \frac{\chi_{\leq 1}'(\frac{r}{h(t)})}{h(t)} + \frac{\chi_{\leq 1}''(\frac{r}{h(t)})}{h(t)^{2}}\right)\\
&- 2 \chi_{\leq 1}'(\frac{r}{h(t)}) \frac{r h'(t)}{h(t)^{2}} \partial_{t}f_{5}(t,r) - \frac{2}{h(t)} \chi_{\leq 1}'(\frac{r}{h(t)}) \partial_{r}f_{5}(t,r)\end{split}\end{equation}
where
$$f_{5}(t,r) = -\frac{r^{5}}{1152}\left(6\left(\partial_{t}^{4}\left(\lambda''(t)\log(\lambda(t))\right)+\partial_{t}^{4}\left(\frac{\lambda'(t)^{2}}{2\lambda(t)}\right)\right)+(13-6 \log(r))\lambda^{(6)}(t)\right)$$
The following piece of $e_{5}$ turns out to be perturbative. Let
\begin{equation}\begin{split}e_{5,1}(t,r)&=-f_{5}(t,r)\left(-\chi_{\leq 1}''(\frac{r}{h(t)}) \frac{r^{2} h'(t)^{2}}{h(t)^{4}} - \chi_{\leq 1}'(\frac{r}{h(t)})\left(-\frac{r h''(t)}{(h(t))^{2}} + \frac{2 r h'(t)^{2}}{h(t)^{3}}\right) \right)\\
&- 2 \chi_{\leq 1}'(\frac{r}{h(t)}) \frac{r h'(t)}{h(t)^{2}} \partial_{t}f_{5}(t,r) \end{split}\end{equation}
\begin{lemma}\label{e51estlemma}We have the following estimates for $k=0,1$.
$$||L_{\frac{1}{\lambda(t)}}^{k}(e_{5,1})(t,r)||_{L^{2}(r dr)} \leq \frac{C h(t)^{6-k} \lambda(t) \log(t)}{t^{8}}$$
\end{lemma}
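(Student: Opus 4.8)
The plan is to estimate the two pieces of $e_{5,1}$ directly by substituting the explicit formula for $f_5(t,r)$ and the symbol-type bounds on $\lambda$, exploiting that every term in $e_{5,1}$ carries at least one derivative of $\chi_{\leq 1}$. Since $\chi_{\leq 1}'$ and $\chi_{\leq 1}''$ are supported in $\{h(t) \leq r \leq 2h(t)\}$, the $L^2(r\,dr)$ norm of each term is $\lesssim h(t)$ times its $L^\infty$ norm on that annulus (or more precisely $h(t)^{1-k}$ after the weight of $L^k_{1/\lambda(t)}$ is accounted for, since $L_{1/\lambda(t)}$ is a first-order operator whose coefficient $\cos(Q_1(r/\lambda(t)))/r$ is $O(1/r) = O(1/h(t))$ on the matching annulus). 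So the key point is just to bound $|\partial_t^j \partial_r^k f_5(t,r)|$ for $r \sim h(t)$, $0 \le j \le 1$, $0 \le k \le 1$.

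First I would record that from the defining formula $f_5(t,r) = -\tfrac{r^5}{1152}(6(\partial_t^4(\lambda''\log\lambda) + \partial_t^4(\lambda'^2/(2\lambda))) + (13 - 6\log r)\lambda^{(6)})$, the symbol estimates \eqref{lambdasetdef} (equivalently the consequence \eqref{lambdacomparg}) give $|\partial_t^4(\lambda''(t)\log\lambda(t))| + |\partial_t^4(\lambda'(t)^2/(2\lambda(t)))| + |\lambda^{(6)}(t)| \le C\lambda(t)\log(t)/t^6$, and each $t$-derivative costs a factor $1/t$ while each $r$-derivative on the $r^5(13 - 6\log r)$ factor lowers the power of $r$ by one (the $\log r$ contributes only a bounded factor on the annulus since $\log(h(t)) = O(\log t)$). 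Hence on $\{h(t) \le r \le 2h(t)\}$ one gets $|\partial_t^j\partial_r^k f_5(t,r)| \le C h(t)^{5-k}\lambda(t)\log(t)/t^{6+j}$. Then I would tabulate the prefactors multiplying $f_5$ and $\partial_t f_5$ in $e_{5,1}$: the terms $\chi_{\leq 1}''(\tfrac{r}{h}) r^2 h'^2/h^4$ and $\chi_{\leq 1}'(\tfrac{r}{h})(r h''/h^2 + r h'^2/h^3)$ are, using $h = \lambda g$, $g = t^\alpha$, and $|h'/h|, |h''/h| \lesssim 1/t$ (which follow from \eqref{lambdasetdef} and the definition of $g$), bounded by $C/t^2$ on the annulus; likewise $\chi_{\leq 1}'(\tfrac{r}{h}) r h'/h^2 \lesssim 1/t$. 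Multiplying these against the $f_5$ bound and integrating over the annulus of width $\sim h(t)$ (contributing $\sqrt{h(t)^2} = h(t)$ in $L^2(r\,dr)$, or $h(t)^{1-k}$ after applying $L^k_{1/\lambda(t)}$) yields $\|L^k_{1/\lambda(t)}(e_{5,1})\|_{L^2(r\,dr)} \le C h(t)^{6-k}\lambda(t)\log(t)/t^8$, which is exactly the claimed bound.

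The only mildly delicate point — which I expect to be the main (though still routine) obstacle — is bookkeeping the action of $L_{1/\lambda(t)}$: since $L_{1/\lambda(t)}(f) = f' - \tfrac{\cos(Q_1(r/\lambda(t)))}{r} f$, applying it once replaces $f_5$ by a combination of $\partial_r f_5$ and $r^{-1} f_5$, and on the support of $\chi_{\leq 1}'$ we have $r^{-1} \sim h(t)^{-1}$, so one gains exactly the factor $h(t)^{-k}$ recorded in the statement; one must also check that $\partial_r$ falling on the cutoff $\chi_{\leq 1}'(\tfrac{r}{h(t)})$ produces $\chi_{\leq 1}''$ divided by $h(t)$, which only improves the estimate. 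I would carry this out by writing $e_{5,1}$ as a finite sum of terms of the form (cutoff derivative)$\times$(power of $t^{-1}$)$\times(\partial_t^j\partial_r^k f_5)$, applying $L_{1/\lambda(t)}^k$ termwise, and invoking the $f_5$ bound above together with $\|\mathbf 1_{[h(t),2h(t)]}\|_{L^2(r\,dr)} \le C h(t)$. No new ideas beyond the symbol calculus already used throughout Section 3 are needed.
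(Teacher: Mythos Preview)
Your proposal is correct and is precisely the ``straightforward and direct computation'' that the paper's one-line proof alludes to: you correctly identify that all terms in $e_{5,1}$ are supported on the annulus $\{h(t)\le r\le 2h(t)\}$, bound $f_5$ and $\partial_t f_5$ there via the symbol estimates on $\lambda$, use $|h'/h|,\,|h''/h|\lesssim 1/t$ for the prefactors, and account for the $h(t)^{-1}$ loss from $L_{1/\lambda(t)}$ together with the $h(t)$ gain from the $L^2(r\,dr)$ integration over the annulus.
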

\begin{proof} This follows from a straightforward and direct computation \end{proof}
Next, we need to consider $e_{5,0}$, which is given by
\begin{equation}\label{e50def}\begin{split} &e_{5,0}(t,r):=e_{5}(t,r)-e_{5,1}(t,r)=-f_{5}(t,r)\left( \frac{1}{r} \frac{\chi_{\leq 1}'(\frac{r}{h(t)})}{h(t)} + \frac{\chi_{\leq 1}''(\frac{r}{h(t)})}{h(t)^{2}}\right)- \frac{2}{h(t)} \chi_{\leq 1}'(\frac{r}{h(t)}) \partial_{r}f_{5}(t,r)\end{split}\end{equation}
The point is that, although $e_{5,0}(t,r)$ does not decay fast enough (in $L^{2}$, for example) to be perturbative, it is orthogonal to $\phi_{0}(\frac{r}{\lambda(t)})$ to leading order. This is because $e_{5,0}(t,r)$ is supported in the region $\lambda(t) \ll h(t) \leq r \leq 2h(t)$. So, the leading order behavior of $(2 \lambda(t))^{-1}\langle \phi_{0}(\frac{\cdot}{\lambda(t)}),e_{5,0}(t,\cdot)\rangle_{L^{2}(r dr)}$ is 
\begin{equation}\label{e50int}\begin{split} \int_{0}^{\infty} e_{5,0}(t,r) dr &= \int_{0}^{\infty} \chi_{\leq 1}(x) \left(\left(24 p_{0}(t)+10 p_{1}(t)\right)h(t)^{3} x^{3} + 24 p_{1}(t) h(t)^{3} x^{3} \left(\log(h(t))+\log(x)\right) \right) h(t) dx=0\end{split}\end{equation}
where we integrated by parts, defined $p_{j}(t)$ by
$$f_{5}(t,r) = (p_{0}(t)+p_{1}(t) \log(r))r^{5}$$
and used Lemma \ref{chilemma}. Therefore, we will add a term to $u_{c}$ which will be an appropriate truncation of a solution to the ODE
\begin{equation}\label{ellode}\partial_{RR}w(t,R) + \frac{1}{R} \partial_{R}w(t,R)-\frac{\cos(2Q_{1}(R))}{R^{2}} w(t,R) = F(t,R)\end{equation}
for an appropriate choice of $F$. (We leave $F$ general here, since we will use a correction of this form to eliminate error terms of $u_{c}$ other than just $e_{5,0}$).  For the class of $F$ which we will need to consider, we will use the following particular solution to \eqref{ellode} (recall the notation \eqref{e2def})
\begin{equation}\label{ellsoln}\begin{split} w(t,R) &=  - \frac{R}{1+R^{2}} \int_{0}^{R} F(t,s) s e_{2}(s) ds + e_{2}(R) \int_{0}^{R} \frac{F(t,s) s^{2}}{1+s^{2}} ds\end{split}\end{equation}
Returning to the correction associated to $e_{5,0}$, we establish the following lemma
\begin{lemma}\label{w50lemma} Let $w_{5,0}$ denote the function defined in \eqref{ellsoln} where
$$F(t,R) =  \lambda(t)^{2}e_{5,0}(t,R\lambda(t))$$
Then, we have the following estimate, for $0 \leq j+k \leq 2$, $j=2, k=1$, $j=0,k=3$, and $j=1,k=2$.
\begin{equation}|R^{k}t^{j}\partial_{t}^{j}\partial_{R}^{k} w_{5,0}(t,R)| \leq \begin{cases} 0, \quad R \leq \frac{h(t)}{\lambda(t)}\\
C \frac{\lambda(t) h(t)^{5} \log(t)}{t^{6} } , \quad \frac{h(t)}{\lambda(t)} \leq R \leq \frac{2 h(t)}{\lambda(t)}\\
\frac{C}{R} \frac{h(t)^{6} \log(t)}{t^{6} } + \frac{C R h(t)^{2} \lambda(t)^{4} \log(t)}{t^{6} }, \quad R \geq \frac{2 h(t)}{\lambda(t)} \end{cases}\end{equation}
\end{lemma}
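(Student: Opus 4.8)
The plan is to verify the estimate by direct substitution of the explicit formula \eqref{ellsoln} for $w_{5,0}$, combined with the known estimates on $e_{5,0}$ coming from its definition \eqref{e50def} and the support properties of $\chi_{\leq 1}'$ and $\chi_{\leq 1}''$. First I would record that $\chi_{\leq 1}'(\frac{r}{h(t)})$ and $\chi_{\leq 1}''(\frac{r}{h(t)})$ are supported in $h(t) \leq r \leq 2h(t)$, so $F(t,R) = \lambda(t)^2 e_{5,0}(t,R\lambda(t))$ is supported in $\frac{h(t)}{\lambda(t)} \leq R \leq \frac{2h(t)}{\lambda(t)}$; this immediately gives the first case ($w_{5,0}(t,R) = 0$ for $R \leq \frac{h(t)}{\lambda(t)}$, since both integrals in \eqref{ellsoln} then vanish). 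For the pointwise size of $F$ itself, I would use the explicit formula for $f_{5}$ (a polynomial of the form $(p_0(t) + p_1(t)\log(r))r^5$ with $|p_j^{(k)}(t)| \leq C\log(t)\,t^{-6-k}$ by the symbol estimates on $\lambda$), together with the derivative-falling-on-cutoff factors $\frac{1}{h(t)}, \frac{1}{h(t)^2}$; in the support region $r \sim h(t)$ this yields $|F(t,R)| \lesssim \lambda(t)^2 \cdot \frac{h(t)^5 \log(t)}{t^6} \cdot \frac{1}{h(t)^2} = \frac{\lambda(t)^2 h(t)^3 \log(t)}{t^6}$ (and a similar bound with an extra $\frac{1}{R}$ gain or loss tracking powers of $\lambda(t)$ versus $h(t)$), with analogous symbol-type estimates for $\partial_t^j F$.

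Next I would plug these into \eqref{ellsoln}. In the middle region $\frac{h(t)}{\lambda(t)} \leq R \leq \frac{2h(t)}{\lambda(t)}$, both $\frac{R}{1+R^2} \sim \frac{1}{R} \sim \frac{\lambda(t)}{h(t)}$ and $e_2(R) \sim \frac{R}{2} \sim \frac{h(t)}{\lambda(t)}$, while the $s$-integrals run over an interval of length $\sim \frac{h(t)}{\lambda(t)}$ on which the integrand $F(t,s)\,s\,e_2(s)$ (resp.\ $\frac{F(t,s)s^2}{1+s^2}$) is of size $\frac{\lambda(t)^2 h(t)^3 \log(t)}{t^6} \cdot \frac{h(t)^2}{\lambda(t)^2}$; combining gives $|w_{5,0}(t,R)| \lesssim \frac{h(t)}{\lambda(t)} \cdot \frac{h(t)^5 \log(t)}{t^6} \cdot \frac{h(t)}{\lambda(t)} \cdot \frac{\lambda(t)^2}{h(t)}$, which after bookkeeping is $\lesssim \frac{\lambda(t) h(t)^5 \log(t)}{t^6}$ — matching the stated bound. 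For $R \geq \frac{2h(t)}{\lambda(t)}$, $F(t,s)$ vanishes for $s > \frac{2h(t)}{\lambda(t)}$, so both integrals in \eqref{ellsoln} become the \emph{fixed} numbers $\int_0^\infty F(t,s)s e_2(s)\,ds$ and $\int_0^\infty \frac{F(t,s)s^2}{1+s^2}\,ds$; the prefactor $\frac{R}{1+R^2} \sim \frac{1}{R}$ multiplies the first (giving the $\frac{1}{R}\cdot\frac{h(t)^6\log(t)}{t^6}$ term) and $e_2(R) \sim \frac{R}{2}$ multiplies the second. Here the orthogonality observation \eqref{e50int}, i.e.\ $\int_0^\infty e_{5,0}(t,r)\,dr = 0$, is what forces the $e_2(R)\,\int_0^R \frac{F s^2}{1+s^2}\,ds$ term to gain decay: modulo the $\frac{s^2}{1+s^2} = 1 - \frac{1}{1+s^2}$ correction, $\int_0^\infty F(t,s)s^2/(1+s^2)\,ds \approx \lambda(t)\int_0^\infty e_{5,0}(t,r)\,dr = 0$, so only the $\frac{1}{1+s^2}$ remainder survives, producing the extra $\lambda(t)^2$-type gain that turns the naive $R\cdot(\text{const})$ into $\frac{R h(t)^2 \lambda(t)^4 \log(t)}{t^6}$. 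The derivative estimates ($t$-derivatives via the symbol structure of $p_j$ and $\lambda$, $R$-derivatives by differentiating \eqref{ellsoln} directly using $|e_2'(R)| \lesssim R$, $|e_2''(R)| \lesssim 1$ in this range and the support cutoff on $F$) follow by the same scheme, with the understanding that each $t$-derivative costs one power of $t$ and each $R$-derivative is tracked against the prefactors in \eqref{ellsoln}.

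The main obstacle, and the step requiring the most care, is the decay improvement in the region $R \geq \frac{2h(t)}{\lambda(t)}$: one must correctly extract the cancellation \eqref{e50int} inside the integral $\int_0^\infty \frac{F(t,s)s^2}{1+s^2}\,ds$ rather than just bounding $\frac{s^2}{1+s^2} \leq 1$, since the latter would give a bound growing like $R$ with insufficient decay in $t$ to be perturbative later. Concretely, I would write $\frac{s^2}{1+s^2} = 1 - \frac{1}{1+s^2}$, use \eqref{e50int} (i.e.\ $\int_0^\infty F(t,s)\,s\cdot\frac{s}{s}\,ds$ reorganized so that the "$1$" part integrates to zero up to the precise normalization coming from $\phi_0(s) = \frac{2s}{1+s^2}$, or more directly exploit that $e_{5,0}$ integrates to zero and the weight $\frac{s}{1+s^2}$ is, to leading order, proportional to $\phi_0(s)$ near the support scale), and then estimate the genuinely remaining $\int_0^\infty \frac{F(t,s)s^2}{(1+s^2)^2}\,ds$-type term, which because $F$ is supported at scale $s \sim \frac{h(t)}{\lambda(t)} \gg 1$ picks up the advertised $\lambda(t)^2/h(t)^2$ gain. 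Once this cancellation is correctly implemented, the remaining estimates are routine substitution of the symbol-type bounds on $f_5$, $\partial_t^j f_5$, $\partial_r f_5$ and the standard asymptotics of $\phi_0$ and $e_2$.
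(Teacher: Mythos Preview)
Your plan is correct and matches the paper's proof essentially step for step: support of $e_{5,0}$ gives the vanishing for $R\le h(t)/\lambda(t)$, direct estimation handles the middle region, and for $R\ge 2h(t)/\lambda(t)$ the splitting $\frac{s^2}{1+s^2}=1-\frac{1}{1+s^2}$ together with $\int_0^\infty e_{5,0}(t,r)\,dr=0$ yields the $\lambda(t)^2/h(t)^2$ gain on the $e_2(R)$--term, with derivatives handled by differentiating \eqref{ellsoln} (the boundary terms cancel, leaving the same integrals against $\phi_0',e_2'$) and using the ODE for $\partial_R^2$. Your aside about $\phi_0$ is a red herring---the cancellation is exactly $\int_0^\infty F(t,s)\,ds=\lambda(t)\int_0^\infty e_{5,0}(t,r)\,dr=0$ with no $\phi_0$-weight involved---and your pointwise bound on $F$ is missing one factor of $\lambda(t)$ (since $|p_j(t)|\lesssim \lambda(t)\log(t)/t^6$), but once that bookkeeping is fixed your argument goes through verbatim.
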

\begin{proof}
By a straightforward insertion of the definition of $e_{5,0}$, \eqref{e50def}, into the following integral, we have
$$|\int_{0}^{R} \lambda(t)^{2} e_{5,0}(t,s\lambda(t)) s e_{2}(s) ds| \leq \begin{cases} 0, \quad R \leq \frac{h(t)}{\lambda(t)}\\
\frac{C h(t)^{6} \log(t)}{t^{6} }, \quad R \geq \frac{h(t)}{\lambda(t)}\end{cases}$$
On the other hand, we use \eqref{e50int} to get
\begin{equation} \lambda(t)^{2} \int_{0}^{R} e_{5,0}(t,s\lambda(t)) \frac{s^{2} ds}{1+s^{2}} = -\lambda(t)^{2} \int_{R}^{\infty} e_{5,0}(t,s\lambda(t)) ds + \lambda(t)^{2} \int_{0}^{R} e_{5,0}(t,s\lambda(t))\left(\frac{-1}{1+s^{2}}\right) ds\end{equation}
(Note that the support properties of $\chi_{\leq 1}$ imply that the left-hand side of the above equation vanishes when $R \leq \frac{h(t)}{\lambda(t)}$). Then, we get
\begin{equation}|\lambda(t)^{2} \int_{0}^{R} e_{5,0}(t,s\lambda(t)) \frac{s^{2} ds}{1+s^{2}}| \leq \begin{cases} 0, \quad R \leq \frac{h(t)}{\lambda(t)}\\
\frac{C h(t)^{4} \lambda(t)^{2} \log(t)}{t^{6} }, \quad \frac{h(t)}{\lambda(t)} \leq R \leq \frac{2 h(t)}{\lambda(t)}\\
\frac{C h(t)^{2} \lambda(t)^{4} \log(t)}{t^{6} }, \quad R \geq \frac{2h(t)}{\lambda(t)} \end{cases}\end{equation}
This gives the estimate of the lemma statement for $j=k=0$. Next, we have
\begin{equation}\begin{split} &\partial_{R}w_{5,0}(t,R) = - \partial_{R}\left(\frac{R}{1+R^{2}}\right) \int_{0}^{R} \lambda(t)^{2} e_{5,0}(t,s\lambda(t)) s e_{2}(s) ds + e_{2}'(R) \int_{0}^{R} \frac{\lambda(t)^{2} e_{5,0}(t,s\lambda(t)) s^{2}}{1+s^{2}} ds\end{split}\end{equation}
So, $\partial_{R}w_{5,0}(t,R)$ is the same expression as $w_{5,0}(t,R)$, except with an extra derivative on the coefficients of each integral term. Our proof of the lemma for $j=0,k=0$ therefore immediately implies the lemma statement is true for $j=0,k=1$. We prove the $j=0,k=2$ case of the lemma statement by noting that
$$\partial_{R}^{2} w_{5,0}(t,R) = \lambda(t)^{2} e_{5,0}(t,R\lambda(t)) -\frac{1}{R}\partial_{R}w_{5,0}(t,R)+\frac{\cos(2Q_{1}(R))}{R^{2}} w_{5,0}(t,R)$$
Finally, the symbol-type estimates on $\lambda(t)$, definition of $e_{5,0}$, and the fact that
$$\int_{0}^{\infty} \lambda(t)^{2} e_{5,0}(t,s\lambda(t)) ds =0 \text{ for all t implies  } \partial_{t}^{j} \left(\int_{0}^{\infty} \lambda(t)^{2} e_{5,0}(t,s\lambda(t)) ds\right)=0$$
finishes the proof of the lemma.
 \end{proof}
We define $f_{5,0}(t,r)$, the function to be added to our ansatz, by
\begin{equation}\label{f50def}f_{5,0}(t,r) = m_{\leq1}(\frac{r}{t}) w_{5,0}(t,\frac{r}{\lambda(t)})\end{equation}
where we recall the definition of $m_{\leq 1}$ in \eqref{mleq1def}. Then, we define the error term of $f_{5,0}$ by
\begin{equation}\begin{split} e_{f_{5,0}}(t,r)&:= -\left(-\partial_{tt}f_{5,0} + \partial_{rr}f_{5,0}+\frac{1}{r}\partial_{r}f_{5,0}-\frac{\cos(2Q_{1}(\frac{r}{\lambda(t)}))}{r^{2}} f_{5,0}\right)+e_{5,0}(t,r)\\
&=-m_{\leq 1}''(\frac{r}{t}) \left(\frac{-r^{2}}{t^{4}} + \frac{1}{t^{2}}\right) w_{5,0}(t,\frac{r}{\lambda(t)})\\
&-m_{\leq 1}'(\frac{r}{t}) \left(\left(\frac{-2r}{t^{3}} + \frac{1}{r t}\right) w_{5,0}(t,\frac{r}{\lambda(t)})+\frac{2r}{t^{2}} \partial_{t}\left(w_{5,0}(t,\frac{r}{\lambda(t)})\right)+\frac{2}{t}\partial_{r}\left(w_{5,0}(t,\frac{r}{\lambda(t)})\right)\right)\\
&+m_{\leq 1}(\frac{r}{t})\partial_{t}^{2}\left(w_{5,0}(t,\frac{r}{\lambda(t)})\right)+e_{5,0}(t,r) \left(1-m_{\leq 1}(\frac{r}{t})\right)\end{split}\end{equation}
Then, an insertion of our estimates from Lemma \ref{w50lemma} into the above expression for $e_{f_{5,0}}$ gives the following lemma. 
\begin{lemma}\label{ef50estlemma} For $k=0,1$,
$$||L_{\frac{1}{\lambda(t)}}^{k}(e_{f_{5,0}})(t,r)||_{L^{2}(r dr)} \leq \frac{C \lambda(t)}{t^{2}} \left(\frac{h(t)^{6-k} \log^{2}(t)}{t^{6}} + \frac{h(t)^{2-k}\lambda(t)^{2} \log^{2}(t)}{t^{4}}\right)$$
\end{lemma}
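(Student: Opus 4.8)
The plan is to prove Lemma \ref{ef50estlemma} by directly substituting the pointwise bounds from Lemma \ref{w50lemma} into the explicit formula for $e_{f_{5,0}}(t,r)$ displayed just above the lemma statement, and then estimating the resulting $L^{2}(r\,dr)$ norms region by region. The key structural observation is that every term in the formula for $e_{f_{5,0}}$ carries either a factor of $m_{\leq 1}'(\frac{r}{t})$ or $m_{\leq 1}''(\frac{r}{t})$ (hence is supported in the annulus $\frac{t}{2} \leq r \leq t$), or a factor of $1-m_{\leq 1}(\frac{r}{t})$ (supported in $r \geq \frac{t}{2}$), or is $m_{\leq 1}(\frac{r}{t})\partial_{t}^{2}(w_{5,0}(t,\frac{r}{\lambda(t)}))$. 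On the support of the first two types of terms, $r \sim t$, so in particular $r \geq \frac{2h(t)}{\lambda(t)}$ for $t$ large (since $h(t) = \lambda(t)g(t) = \lambda(t)t^{\alpha}$ with $\alpha < 1$, so $\frac{h(t)}{\lambda(t)} = t^{\alpha} \ll t$), and we are in the third regime of the estimate in Lemma \ref{w50lemma}: $|R^{k}t^{j}\partial_{t}^{j}\partial_{R}^{k}w_{5,0}(t,R)| \leq \frac{C}{R}\frac{h(t)^{6}\log(t)}{t^{6}} + \frac{C R h(t)^{2}\lambda(t)^{4}\log(t)}{t^{6}}$.

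First I would handle the terms involving derivatives of $m_{\leq 1}(\frac{r}{t})$. On $r \sim t$, translating $R = \frac{r}{\lambda(t)}$ one has $\frac{1}{R} \sim \frac{\lambda(t)}{t}$ and $R \sim \frac{t}{\lambda(t)}$, so Lemma \ref{w50lemma} gives $|w_{5,0}(t,\frac{r}{\lambda(t)})| \leq \frac{C h(t)^{6}\log(t)}{t^{7}}\lambda(t) + \frac{C h(t)^{2}\lambda(t)^{3}\log(t)}{t^{5}}$, with the $r$- and $t$-derivatives each costing a factor $\frac{1}{t}$ (the derivatives of $w_{5,0}$ come with the right homogeneity by the symbol-type statement in Lemma \ref{w50lemma}, and $\partial_{r} = \frac{1}{\lambda(t)}\partial_{R}$ while $m_{\leq 1}'(\frac{r}{t})$, $m_{\leq 1}''(\frac{r}{t})$ contribute $\frac{1}{t}$, $\frac{1}{t^{2}}$ respectively). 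Multiplying by the prefactors $\frac{1}{t^{2}}$, $\frac{r^{2}}{t^{4}} \sim \frac{1}{t^{2}}$, $\frac{r}{t^{3}} \sim \frac{1}{t^{2}}$, $\frac{1}{rt} \sim \frac{1}{t^{2}}$, $\frac{1}{t^{2}}$, $\frac{1}{t}$ occurring in the formula, and then taking the $L^{2}(r\,dr)$ norm over the annulus $r \sim t$ (which contributes a factor $\sim t$), one obtains a bound of the shape $\frac{C\lambda(t)}{t^{2}}\left(\frac{h(t)^{6}\log^{2}(t)}{t^{6}} + \frac{h(t)^{2}\lambda(t)^{2}\log^{2}(t)}{t^{4}}\right)$ for $k=0$; applying $L_{\frac{1}{\lambda(t)}}$ (which on $r \sim t$ essentially amounts to a derivative in $r$, costing $\frac{1}{t}$, together with the benign multiplier $\frac{\cos Q_{1}(r/\lambda(t))}{r}$, which on $r \sim t$ has size $\sim \frac{1}{t}$ and does not help beyond being bounded) reproduces the stated $k=1$ bound after relabelling the power of $h(t)$. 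Note the second summand in Lemma \ref{w50lemma}'s third regime generates the $\frac{h(t)^{2-k}\lambda(t)^{2}}{t^{4}}$ term, and the first generates the $\frac{h(t)^{6-k}}{t^{6}}$ term; the factor $\log^{2}(t)$ rather than $\log(t)$ is a harmless overestimate coming from combining $\log(t)$ from Lemma \ref{w50lemma} with the logarithmic factors hidden in $f_{5}$ via $\lambda^{(6)}$ and $\partial_{t}^{4}(\cdots)$.

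Next I would handle the term $m_{\leq 1}(\frac{r}{t})\partial_{t}^{2}(w_{5,0}(t,\frac{r}{\lambda(t)}))$ together with $e_{5,0}(t,r)(1-m_{\leq 1}(\frac{r}{t}))$. For the first, $\partial_{t}^{2}$ acting on $w_{5,0}(t,\frac{r}{\lambda(t)})$ produces, via the chain rule and the symbol estimates on $\lambda$, a combination of $\partial_{t}^{2}w_{5,0}$, $\frac{\lambda'}{\lambda}R\partial_{t}\partial_{R}w_{5,0}$, $(\frac{\lambda'}{\lambda})^{2}$ and $\frac{\lambda''}{\lambda}$ times $R^{2}\partial_{R}^{2}w_{5,0}$ and $R\partial_{R}w_{5,0}$ — all of which are covered by the cases $j+k \leq 2$, $j=1,k=2$, $j=0,k=3$ listed in Lemma \ref{w50lemma} (the $j=2,k=1$ and $j=0,k=3$ cases being exactly what is needed here), each costing $\frac{1}{t^{2}}$ relative to $w_{5,0}$; one then splits the $r$-integral over $r \leq \frac{2h(t)}{\lambda(t)}$ (where the first regime of Lemma \ref{w50lemma} gives $0$ and the middle regime $\frac{h(t)}{\lambda(t)} \leq R \leq \frac{2h(t)}{\lambda(t)}$ gives $\frac{C\lambda(t)h(t)^{5}\log(t)}{t^{6}}$ over an $r$-interval of length $\sim h(t)$, contributing $\sim \sqrt{h(t)}$ to the $L^{2}$ norm) and over $r \geq \frac{2h(t)}{\lambda(t)}$ up to $r \sim t$, where the third regime applies; a direct computation shows these contributions are dominated by the claimed bound. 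For $e_{5,0}(t,r)(1-m_{\leq 1}(\frac{r}{t}))$, recall $e_{5,0}$ is supported in $h(t) \leq r \leq 2h(t) \ll \frac{t}{2}$, so $1-m_{\leq 1}(\frac{r}{t}) = 0$ on its support and this term vanishes identically — in fact this is the reason $f_{5,0}$ was truncated at scale $t$ rather than $2h(t)$. Assembling the three pieces yields Lemma \ref{ef50estlemma}. The main obstacle, as the lemma's one-line proof (``an insertion of our estimates from Lemma \ref{w50lemma}'') signals, is not any single estimate but the bookkeeping: one must verify that the factors of $\lambda(t)$, $h(t)$, and powers of $t$ all line up across the half-dozen terms and both values of $k$, using $h(t) = \lambda(t)t^{\alpha}$ with the constraint $\alpha < \frac{2}{3} - \frac{5}{6}C_{u} - \frac{C_{l}}{6}$ only implicitly (it is not needed here — the estimate holds with the stated powers of $h(t)$ for any admissible $\alpha$), and correctly tracking that $L_{\frac{1}{\lambda(t)}}$ costs one derivative, i.e. reduces the power of $h(t)$ by one as recorded in the exponent $6-k$, $2-k$.
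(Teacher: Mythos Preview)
Your proposal is correct and follows exactly the paper's approach, which the paper summarizes in one line as ``an insertion of our estimates from Lemma \ref{w50lemma} into the above expression for $e_{f_{5,0}}$.'' Two minor remarks: (i) when you write ``$r \geq \frac{2h(t)}{\lambda(t)}$'' you mean $R = \frac{r}{\lambda(t)} \geq \frac{2h(t)}{\lambda(t)}$, i.e.\ $r \geq 2h(t)$; and (ii) the extra logarithm in $\log^{2}(t)$ comes not from $f_{5}$ (those logs are already absorbed into Lemma \ref{w50lemma}) but from the $L^{2}(r\,dr)$ integration of the $\frac{1}{R}$ piece over $2h(t) \leq r \leq t$, which produces a factor $\sqrt{\log(t/h(t))}$.
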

Now, we will consider the error term in \eqref{uceqn} involving $\left(\frac{\cos(2Q_{1}(\frac{r}{\lambda(t)}))-1}{r^{2}}\right) v_{ex,sub}(t,r)$.
\begin{lemma}\label{wexsublemma}
Let $w_{ex,sub}(t,R)$ be defined by the expression \eqref{ellsoln} for
$$F(t,s) = \left(\frac{\cos(2Q_{1}(s))-1}{s^{2}}\right) v_{ex,sub}(t,s\lambda(t)) \chi_{\geq 1}(\frac{s}{g(t)})$$
Then, we have
$$w_{ex,sub}(t,\frac{r}{\lambda(t)})=0, \quad r \leq \lambda(t) g(t)$$
For $0 \leq k+j \leq 2$, $j=1,k=2$, and $j=2,k=1$,
$$|\partial_{t}^{k}\partial_{r}^{j}\left(w_{ex,sub}(t,\frac{r}{\lambda(t)})\right)| \leq \frac{C \lambda(t)^{5} \log^{3}(t)}{r^{1+j} t^{4+k} } + \frac{C r^{1-j} \lambda(t)^{3} \log^{2}(t)}{t^{4+k}  g(t)^{2}}, \quad g(t) \lambda(t) \leq r \leq t$$
\end{lemma}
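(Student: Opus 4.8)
The plan is to read everything off the explicit solution formula \eqref{ellsoln}. The vanishing claim $w_{ex,sub}(t,r/\lambda(t))=0$ for $r\leq g(t)\lambda(t)$ is immediate: the cutoff $\chi_{\geq 1}(s/g(t))$ forces $s\mapsto F(t,s)$ to vanish on $[0,g(t)]$, and both integrals in \eqref{ellsoln} run from $0$ to $R$, so $w_{ex,sub}(t,R)=0$ whenever $R=r/\lambda(t)\leq g(t)$. For the quantitative estimates I would work in the region $g(t)\lambda(t)\leq r\leq t$, set $R:=r/\lambda(t)\in[g(t),t/\lambda(t)]$, and split
\[
w_{ex,sub}(t,R)=-\frac{R}{1+R^{2}}\,I_{1}(t,R)+e_{2}(R)\,I_{2}(t,R),\qquad I_{1}=\int_{0}^{R}F(t,s)\,s\,e_{2}(s)\,ds,\quad I_{2}=\int_{0}^{R}F(t,s)\,\frac{s^{2}}{1+s^{2}}\,ds .
\]

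The first main step is to estimate $I_{1}$ and $I_{2}$ from three ingredients: the decay $\bigl|\tfrac{\cos(2Q_{1}(s))-1}{s^{2}}\bigr|=\tfrac{8}{(1+s^{2})^{2}}\leq C\langle s\rangle^{-4}$; the asymptotics $|e_{2}(s)|\leq Cs$ for $s\geq1$ and $\tfrac{s^{2}}{1+s^{2}}\leq1$ from \eqref{e2def}; and the bound $|v_{ex,sub}(t,s\lambda(t))|\leq C\,s\,\lambda(t)^{4}\log^{3}(t+s\lambda(t))/t^{4}$, valid for $s\geq g(t)$ by \eqref{vexsubsymb} of Lemma \ref{estlemma} (since then $s\lambda(t)\geq g(t)\lambda(t)$). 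On the range considered $t+s\lambda(t)\leq 2t$, so the logarithms are $O(\log^{3}t)$, and multiplying the three bounds gives, for $s\geq g(t)$, $|F(t,s)\,s\,e_{2}(s)|\leq C\lambda(t)^{4}\log^{3}(t)\langle s\rangle^{-1}t^{-4}$ and $|F(t,s)\tfrac{s^{2}}{1+s^{2}}|\leq C\lambda(t)^{4}\log^{3}(t)\langle s\rangle^{-3}t^{-4}$. Integrating — and using that $R/g(t)$ is bounded by a fixed power of $t$ by \eqref{lambdacomparg} and $g(t)=t^{\alpha}$ — yields $|I_{1}(t,R)|\lesssim \lambda(t)^{4}(\log t)^{O(1)}/t^{4}$, and, since the second integrand has an integrable tail, $|I_{2}(t,R)|\lesssim \lambda(t)^{4}(\log t)^{O(1)}/(g(t)^{2}t^{4})$. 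Multiplying $I_{1}$ by $|\tfrac{R}{1+R^{2}}|\leq C/R$ and $I_{2}$ by $|e_{2}(R)|\leq CR$, and substituting back $R=r/\lambda(t)$, produces the first term $\sim\lambda(t)^{5}/(r\,t^{4})$ and the second term $\sim r\lambda(t)^{3}/(g(t)^{2}t^{4})$ of the claimed bound in the case $j=k=0$; a careful count of the logarithmic factors then gives the stated powers.

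For the derivatives I would follow the argument already used in the proof of Lemma \ref{w50lemma}. Differentiating \eqref{ellsoln} in $R$, the two boundary contributions from the integral limits cancel, so $\partial_{R}w_{ex,sub}$ has the same structure with $\tfrac{R}{1+R^{2}}$ and $e_{2}(R)$ replaced by their $R$-derivatives, and the $j=1$ estimates follow from the same bounds on $I_{1},I_{2}$; for $j=2$ one uses that $w_{ex,sub}$ solves \eqref{ellode}, writing $\partial_{R}^{2}w_{ex,sub}=F(t,R)-\tfrac{1}{R}\partial_{R}w_{ex,sub}+\tfrac{\cos(2Q_{1}(R))}{R^{2}}w_{ex,sub}$ and inserting the pointwise bound on $v_{ex,sub}(t,R\lambda(t))$ — the only place the restriction $r\leq t$ enters — together with the bounds just obtained for $\partial_{R}w_{ex,sub}$ and $w_{ex,sub}$. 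For the $t$-derivatives I would differentiate under the integral sign, which is legitimate by dominated convergence since the $\langle s\rangle^{-1}$ and $\langle s\rangle^{-3}$ decay supplies a uniform dominating function: because $\tfrac{\cos(2Q_{1}(s))-1}{s^{2}}$, $e_{2}(s)$ and $\phi_{0}(s)$ do not depend on $t$, each $\partial_{t}$ falls either on the prefactors $\tfrac{R}{1+R^{2}},e_{2}(R)$ with $R=r/\lambda(t)$ (contributing $\lambda'(t)/\lambda(t)=O(1/t)$ via \eqref{lambdasetdef}), or on $v_{ex,sub}(t,s\lambda(t))$ through the chain rule $\partial_{t}[v_{ex,sub}(t,s\lambda(t))]=(\partial_{t}v_{ex,sub})(t,s\lambda(t))+s\lambda'(t)(\partial_{r}v_{ex,sub})(t,s\lambda(t))$, each term gaining a factor $1/t$ by the $\partial_{t}$- and $\partial_{r}$-estimates of Lemma \ref{estlemma}, or on $\chi_{\geq 1}(s/g(t))$, producing $\chi_{\geq 1}'(s/g(t))\cdot(-s\,g'(t)/g(t)^{2})$, supported near $s\sim g(t)$ and carrying $g'(t)/g(t)=\alpha/t$; thus each $\partial_{t}$ costs one power of $t$, supplying $t^{-(4+k)}$. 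The step I expect to be the main obstacle is this $t$-derivative bookkeeping: applying the chain rule through $v_{ex,sub}(t,s\lambda(t))$ while invoking \emph{both} the $\partial_{t}$- and the $\partial_{r}$-estimates of Lemma \ref{estlemma}, tracking the extra logarithms each step contributes, and checking uniformity of differentiation under the integral; the cancellation of boundary terms upon differentiating \eqref{ellsoln} in $R$ is the only other point to confirm, and it is identical to the corresponding step in Lemma \ref{w50lemma}.
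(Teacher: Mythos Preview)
Your plan is sound and would yield a correct proof of a slightly weakened statement, but it takes a genuinely different route from the paper and does not quite reach the stated logarithmic exponents.

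The paper does \emph{not} use the pointwise bounds \eqref{vexsubsymb} on $v_{ex,sub}$. Instead it substitutes the Hankel--Fourier representation \eqref{vexsubdef} of $v_{ex,sub}$ into your integrals $I_{1},I_{2}$ and applies Fubini, so that each becomes a double integral $\int d\xi\int dx\,(\cdots)\,I_{j}(R,\xi,t)$ with $I_{j}(R,\xi,t)=\int_{0}^{R}\bigl(\tfrac{\cos(2Q_{1}(s))-1}{s^{2}}\bigr)J_{1}(s\lambda(t)\xi)\,\chi_{\geq 1}(s/g(t))\,(\cdots)\,ds$. For the integral corresponding to your $I_{2}$ the paper further decomposes $I_{1}(R,\xi,t)=I_{100}+I_{101}+I_{11}$, where $I_{100}(\xi,t)=-8\,g(t)^{-3}\,\widehat{\chi_{\geq 1}(\cdot)/(\cdot)^{5}}\bigl(\xi g(t)\lambda(t)\bigr)$, and then invokes the orthogonality condition $\int_{0}^{\infty}\chi_{\geq 1}(x)x^{-3}\,dx=0$ of Lemma \ref{chilemma} to get $|I_{100}(\xi,t)|\lesssim \xi^{3}\lambda(t)^{3}\langle\log(\xi g\lambda)\rangle$ rather than the naive $\xi\lambda(t)/g(t)^{2}$. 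The $t$-derivatives are handled on the Fourier side as well (this is where the most delicate estimate of $\partial_{t}^{j}I_{100}$ in the small-$\xi$ region arises). This machinery is what produces the exponents $\log^{3}(t)$ and $\log^{2}(t)$ in the two terms.

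Your direct approach gives the correct polynomial rates in $r$, $\lambda(t)$, $g(t)$, $t$, but the integral $\int_{g(t)}^{R}s^{-1}\,ds$ in your $I_{1}$ contributes an additional factor $\log(R/g(t))\leq C\log t$, so you end up with $\log^{4}(t)$ and $\log^{3}(t)$ rather than $\log^{3}(t)$ and $\log^{2}(t)$; the pointwise bound \eqref{vexsubsymb} already carries $\log^{3}$, and nothing in the physical-space computation sees the cancellation encoded in Lemma \ref{chilemma}. Hence your claim that ``a careful count of the logarithmic factors then gives the stated powers'' is not correct as written. That said, the extra logarithm is harmless downstream (the final error in Lemma \ref{uansatzerrorest} carries $\log^{30}(t)$), so your argument is a perfectly acceptable alternative proof of a version of the lemma with $\log^{4}$ and $\log^{3}$ in place of $\log^{3}$ and $\log^{2}$. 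Your handling of the $r$- and $t$-derivatives --- boundary-term cancellation in $\partial_{R}$, the ODE \eqref{ellode} for $j=2$, chain rule through $v_{ex,sub}(t,s\lambda(t))$ and $\chi_{\geq 1}(s/g(t))$ for $\partial_{t}^{k}$ --- is correct and matches the structure of the argument in Lemma \ref{w50lemma}.
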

\begin{proof}
We define $int_{ex,sub}$ by the integral 
$$int_{ex,sub}(t,R) = \int_{0}^{R} \frac{F(t,s) s^{2}}{1+s^{2}} ds = -\int_{0}^{\infty} d\xi \int_{t}^{\infty} dx \frac{\sin((t-x)\xi)}{\xi^{2}} \partial_{x}^{2}\widehat{RHS}(x,\xi) I_{1}(R,\xi,t)$$
where
$$I_{1}(R,\xi,t) = \int_{0}^{R} \left(\frac{\cos(2Q_{1}(s))-1}{s^{2}}\right) J_{1}(s\lambda(t)\xi) \frac{\chi_{\geq 1}(\frac{s}{g(t)}) s^{2}}{1+s^{2}} ds$$
and we used Fubini's theorem. Note that $I_{1}(R,\xi,t) = 0$ for $R \leq g(t)$. For $R \geq g(t)$,  we decompose $I_{1}$ as
$$I_{1}(R,\xi,t) = I_{100}(\xi,t)+I_{101}(\xi,t) + I_{11}(R,\xi,t)$$
where
$$I_{100}(\xi,t)= \frac{-8}{g(t)^{3}} \widehat{\frac{\chi_{\geq 1}(\cdot)}{\left(\cdot\right)^{5}}}(\xi g(t) \lambda(t))$$
$$I_{101}(\xi,t) = \int_{0}^{\infty} \left(\frac{(\cos(2Q_{1}(s))-1)}{s^{2}}\frac{s^{2}}{1+s^{2}}+\frac{8}{s^{4}}\right)J_{1}(s\lambda(t)\xi) \chi_{\geq 1}(\frac{s}{g(t)}) ds$$
and
$$I_{11}(R,\xi,t)= - \int_{R}^{\infty} \left(\frac{\cos(2Q_{1}(s))-1}{s^{2}}\right) J_{1}(s\lambda(t)\xi) \chi_{\geq 1}(\frac{s}{g(t)}) \frac{s^{2} ds}{1+s^{2}}$$
We claim that, for $0 \leq j \leq 2$,
$$|\partial_{t}^{j}I_{100}(\xi,t)| \leq \frac{C}{g(t)^{3}t^{j}} \begin{cases} \xi^{3} g(t)^{3}\lambda(t)^{3} \langle \log(\xi g(t) \lambda(t))\rangle, \quad \xi \leq \frac{1}{g(t)\lambda(t)}\\
\frac{C_{k}}{(\xi g(t)\lambda(t))^{k}}, \quad \xi \geq \frac{1}{g(t)\lambda(t)}\end{cases}$$
$$|\partial_{t}^{j}I_{101}(\xi,t)| \leq \frac{C}{t^{j}} \begin{cases} \frac{ \xi \lambda(t)}{g(t)^{4}}, \quad \xi \lambda(t) \leq \frac{1}{g(t)}\\
\frac{C_{k}}{g(t)^{5} (\xi \lambda(t) g(t))^{2k}}, \quad k \geq 5, \quad \xi \lambda(t) \geq \frac{1}{g(t)}\end{cases}$$
$$|\partial_{t}^{j}\left(I_{11}(\frac{r}{\lambda(t)},\xi,t)\right)| \leq \frac{C}{t^{j}}\begin{cases} \frac{ \lambda(t)^{3}}{ \xi^{3/2} r^{9/2}}, \quad \xi \geq \frac{1}{\lambda(t) g(t)}\\
\frac{ \xi \lambda(t)^{3}}{r^{2} }, \quad \xi \leq \frac{1}{r}\\
\frac{ \lambda(t)^{3}}{r^{7/2} \sqrt{\xi}}, \quad \frac{1}{r} \leq \xi \leq\frac{1}{g(t)\lambda(t)}\end{cases}$$
For $j=0$, $I_{100}$ is estimated by using Lemma \ref{chilemma}, while, for all $0 \leq j \leq 2$, $I_{101}$ and $I_{11}$ are estimated by integrating by parts when $\xi \lambda(t) \geq \frac{1}{g(t)}$, and directly estimating otherwise (as in Lemma \ref{chilemma}). For $j >0$, the most delicate estimate is on $\partial_{t}^{j}I_{100}(\xi,t)$ in the region $\xi \leq \frac{1}{\lambda(t) g(t)}$. We write
$$I_{100}(\xi,t) = -8 \int_{0}^{\infty} \frac{J_{1}(g(t)\lambda(t) y \xi)}{g(t)^{3} y^{4}} \chi_{\geq 1}(y) dy$$
which gives
$$\partial_{t}I_{100}(\xi,t) = \frac{-3 g'(t)}{g(t)} I_{100}(\xi,t) - 8 \int_{0}^{\infty} \frac{J_{1}'(g(t)\lambda(t) y \xi)}{g(t)^{3} y^{4}} (g(t)\lambda(t))' y \xi \chi_{\geq 1}(y) dy$$
Then,
\begin{equation}\begin{split} &- 8 \int_{0}^{\infty} \frac{J_{1}'(g(t)\lambda(t) y \xi)}{g(t)^{3} y^{4}} (g(t)\lambda(t))' y \xi \chi_{\geq 1}(y) dy\\
&= -8 \int_{0}^{\frac{1}{g(t)\lambda(t) \xi}} \frac{1}{2} \frac{(g(t)\lambda(t))'}{g(t)^{3} y^{4}} y \xi \chi_{\geq 1}(y) dy-8\int_{0}^{\frac{1}{g(t)\lambda(t)\xi}} \frac{\left(J_{1}'(g(t)\lambda(t)y\xi)-\frac{1}{2}\right)\left(g(t)\lambda(t)\right)'}{g(t)^{3}y^{4}} y \xi \chi_{\geq 1}(y) dy\\
&-8 \int_{\frac{1}{g(t)\lambda(t)\xi}}^{\infty} \frac{J_{1}'(g(t)\lambda(t) y \xi)}{g(t)^{3} y^{4}} (g(t)\lambda(t))' y \xi \chi_{\geq 1}(y) dy\end{split}\end{equation}
and to estimate the first term on the right hand side of the above expression, we use Lemma \ref{chilemma}. We use a similar procedure for $\partial_{t}^{2} I_{100}(\xi,t)$.
\begin{equation}\begin{split}&\partial_{t}^{2}\left(-int_{ex,sub}(t,\frac{r}{\lambda(t)})\right) = \partial_{t}^{2}\left(\int_{0}^{\infty} d\xi \int_{t}^{\infty} dx \frac{\sin((t-x)\xi)}{\xi^{2}} \partial_{x}^{2}\widehat{RHS}(x,\xi) I_{1}(\frac{r}{\lambda(t)},\xi,t)\right)\\
&=\partial_{t}^{2}\left(-\int_{0}^{\infty} d\xi \int_{0}^{\infty} dw \frac{\sin(w\xi)}{\xi^{2}} \partial_{1}^{2}\widehat{RHS}(t+w,\xi) \left(I_{100}(\xi,t)+I_{101}(\xi,t)+I_{11}(\frac{r}{\lambda(t)},\xi,t)\right)\right)\end{split}\end{equation}
After differentiating under the integral sign in the last integral, we let $x=w+t$, and  split the $x$ integration into two regions: $t \leq x \leq t+\frac{1}{\xi}$ and $x > t+\frac{1}{\xi}$. In the latter region, we integrate by parts in the $x$ variable, integrating $\sin((t-x)\xi)$. Then, we make a similar decomposition of the $\xi$ integral as was made while proving Lemma \ref{estlemma}.  This gives, for $k=0,1,2$,
\begin{equation}\begin{split}|\partial_{t}^{k}\left(int_{ex,sub}(t,\frac{r}{\lambda(t)})\right)| &\leq \frac{C \lambda(t)^{4} \log^{2}(t)}{t^{4+k}  g(t)^{2}}, \quad t \geq r \geq g(t) \lambda(t)\end{split}\end{equation}
We recall the definition of $e_{2}$ in \eqref{e2def}. The next integral to consider is
\begin{equation}\begin{split}int_{ex,sub,2}(t,R) &= \int_{0}^{R} \left(\frac{\cos(2Q_{1}(s))-1}{s^{2}}\right) v_{ex,sub}(t,s\lambda(t)) \chi_{\geq 1}(\frac{s}{g(t)}) s e_{2}(s) ds\\
&=-\int_{0}^{\infty} d\xi \int_{t}^{\infty} dx \frac{\sin((t-x)\xi)}{\xi^{2}} \partial_{x}^{2} \widehat{RHS}(x,\xi) I_{2}(R,\xi,t)\end{split}\end{equation}
where
$$I_{2}(R,\xi,t) = \int_{0}^{R} \left(\frac{\cos(2Q_{1}(s))-1}{s^{2}}\right) J_{1}(s\lambda(t) \xi) \chi_{\geq 1}(\frac{s}{g(t)}) s e_{2}(s) ds$$
As with $I_{1}$ above, $I_{2}(R,\xi,t) =0$ for $R \leq g(t)$. A direct estimation gives, for $0 \leq k \leq 2$,
$$|\partial_{t}^{k}\left(I_{2}(\frac{r}{\lambda(t)},\xi,t)\right)| \leq \frac{C}{t^{k}} \begin{cases} \log(\frac{r}{\lambda(t) g(t)}) \lambda(t) \xi, \quad \xi \leq \frac{1}{r}\\
 \xi \lambda(t) \langle \log(\xi \lambda(t) g(t))\rangle, \quad \frac{1}{r} \leq \xi \leq \frac{1}{\lambda(t) g(t)}\\
\frac{C}{g(t)^{3/2} \sqrt{\lambda(t) \xi}}, \quad \xi \geq \frac{1}{\lambda(t) g(t)}\end{cases}$$
Then, as above, we get, for $k=0,1,2$,
$$|\partial_{t}^{k}\left(int_{ex,sub,2}(t,\frac{r}{\lambda(t)})\right)| \leq \frac{C \lambda(t)^{4} \log^{3}(t)}{t^{4+k} }, \quad g(t) \leq \frac{r}{\lambda(t)} \leq \frac{t}{\lambda(t)}$$
Recalling \eqref{ellsoln}, we have
$$w_{ex,sub}(t,\frac{r}{\lambda(t)}) = -\frac{\phi_{0}(\frac{r}{\lambda(t)})}{2} int_{ex,sub,2}(t,\frac{r}{\lambda(t)}) + e_{2}(\frac{r}{\lambda(t)}) int_{ex,sub}(t,\frac{r}{\lambda(t)})$$
which gives, for $0 \leq k \leq 2$,
$$|\partial_{t}^{k}\left(w_{ex,sub}(t,\frac{r}{\lambda(t)})\right)| \leq \frac{C \lambda(t)^{5} \log^{3}(t)}{r t^{4+k} } + \frac{C r \lambda(t)^{3} \log^{2}(t)}{t^{4+k}  g(t)^{2}}, \quad g(t) \lambda(t) \leq r \leq t$$
Next, we note that
$$\partial_{r} \left(w_{ex,sub}(t,\frac{r}{\lambda(t)})\right) = \frac{-\phi_{0}'(\frac{r}{\lambda(t)})}{2\lambda(t)} int_{ex,sub,2}(t,\frac{r}{\lambda(t)}) + e_{2}'(\frac{r}{\lambda(t)}) \frac{1}{\lambda(t)} int_{ex,sub}(t,\frac{r}{\lambda(t)})$$
Finally, we estimate $\partial_{r}^{2}\left(w_{ex,sub}(t,\frac{r}{\lambda(t)})\right)$ using the equation solved by $w_{ex,sub}$, Lemma \ref{estlemma}, and our previous estimates from the proof of this lemma. 
 \end{proof}
The truncation of $w_{ex,sub}$ which we will add to our ansatz is
\begin{equation}\label{fexsub}f_{ex,sub}(t,r) = m_{\leq 1}(\frac{r}{t}) w_{ex,sub}(t,\frac{r}{\lambda(t)})\end{equation}
where we recall that $m_{\leq 1}$ was defined in \eqref{mleq1def}.
We define the error term of $f_{ex,sub}(t,r)$ as
\begin{equation}\begin{split}e_{ex,sub}(t,r)& = -\left(-\partial_{tt}f_{ex,sub} + \partial_{rr}f_{ex,sub}+\frac{1}{r}\partial_{r}f_{ex,sub}-\frac{\cos(2Q_{1}(\frac{r}{\lambda(t)}))}{r^{2}} f_{ex,sub}\right)\\
&+\left(\frac{\cos(2Q_{1}(\frac{r}{\lambda(t)}))-1}{r^{2}}\right) v_{ex,sub}(t,r) \chi_{\geq 1}(\frac{r}{\lambda(t)g(t)})\end{split}\end{equation}
We get
\begin{equation}\begin{split} e_{ex,sub}(t,r) &= -m_{\leq 1}''(\frac{r}{t}) \left(\frac{-r^{2}}{t^{4}} + \frac{1}{t^{2}}\right) w_{ex,sub}(t,\frac{r}{\lambda(t)})\\
&-m_{\leq 1}'(\frac{r}{t}) \left(\left(\frac{-2r}{t^{3}} + \frac{1}{r t}\right) w_{ex,sub}(t,\frac{r}{\lambda(t)})+\frac{2r}{t^{2}} \partial_{t}\left(w_{ex,sub}(t,\frac{r}{\lambda(t)})\right)+\frac{2}{t}\partial_{r}\left(w_{ex,sub}(t,\frac{r}{\lambda(t)})\right)\right)\\
&+m_{\leq 1}(\frac{r}{t})\partial_{t}^{2}\left(w_{ex,sub}(t,\frac{r}{\lambda(t)})\right)\\
&+\left(\frac{\cos(2Q_{1}(\frac{r}{\lambda(t)}))-1}{r^{2}}\right) v_{ex,sub}(t,r) \chi_{\geq 1}(\frac{r}{\lambda(t)g(t)}) \left(1-m_{\leq 1}(\frac{r}{t})\right)\end{split}\end{equation}
Lemma \ref{wexsublemma} directly gives
\begin{lemma}\label{eexsubestlemma} For $k=0,1$,
$$||L_{\frac{1}{\lambda(t)}}^{k}(e_{ex,sub})(t,r)||_{L^{2}(r dr)} \leq \frac{C \lambda(t)^{3-k} \log^{2}(t)}{t^{4} g(t)^{2+k}}$$
\end{lemma}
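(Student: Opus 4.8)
The plan is to prove Lemma~\ref{eexsubestlemma} by a direct substitution of the pointwise bounds from Lemma~\ref{wexsublemma} (together with those of Lemma~\ref{estlemma}) into the explicit formula for $e_{ex,sub}(t,r)$ displayed just above the lemma statement, and then estimating the resulting $L^{2}(r\,dr)$ norm term by term. The first step is to record the support of each piece of $e_{ex,sub}$: the terms carrying $m_{\leq 1}'(r/t)$ or $m_{\leq 1}''(r/t)$ are supported in the annulus $t/2 \le r \le t$; the term $\left(\frac{\cos(2Q_{1}(r/\lambda(t)))-1}{r^{2}}\right) v_{ex,sub}(t,r)\,\chi_{\geq 1}(r/(\lambda(t)g(t)))\left(1-m_{\leq 1}(r/t)\right)$ is supported in $r \ge t/2$ (and automatically in $r\ge \lambda(t)g(t)$); and the interior term $m_{\leq 1}(r/t)\,\partial_{t}^{2}\!\left(w_{ex,sub}(t,r/\lambda(t))\right)$ is supported in $\lambda(t)g(t) \le r \le t$, using that $w_{ex,sub}(t,r/\lambda(t))$ vanishes for $r\le \lambda(t)g(t)$ by Lemma~\ref{wexsublemma}. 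On all of these regions $r/\lambda(t)\gg 1$ (since $g(t)=t^{\alpha}\to\infty$), so $\cos(Q_{1}(r/\lambda(t)))$ is bounded; hence for the $k=1$ estimate I would simply use $L_{1/\lambda(t)}f=\partial_{r}f-\frac{\cos(Q_{1}(r/\lambda(t)))}{r}f$ and $|L_{1/\lambda(t)}f|\le |\partial_{r}f|+\frac{C}{r}|f|$.

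Next comes the bookkeeping. Every $r$-derivative of $m_{\leq 1}(r/t)$ (or of $\partial_{r}^{j}m_{\leq 1}(r/t)$) costs a factor $t^{-1}$ and keeps one in $r\sim t$, while by Lemma~\ref{wexsublemma} each $r$- or $t$-derivative of $w_{ex,sub}(t,r/\lambda(t))$ costs $r^{-1}$ (comparable to $t^{-1}$ on the $m_{\leq 1}'$/$m_{\leq 1}''$ support, and $\le(\lambda(t)g(t))^{-1}$ on the interior support), and similarly for $v_{ex,sub}$ via Lemma~\ref{estlemma}; also $\frac{\cos(2Q_{1}(r/\lambda(t)))-1}{r^{2}}$ is comparable to $\lambda(t)^{2}r^{-4}$ for $r\gg\lambda(t)$ and is a symbol in $r$. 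Substituting, one finds that each term of $L_{1/\lambda(t)}^{k}(e_{ex,sub})$, $k=0,1$, is pointwise dominated either by a contribution of size $\lesssim\frac{\lambda(t)^{3-k}\log^{2}t}{t^{5}g(t)^{2}}$ localized to $r\sim t$, or, on $[\lambda(t)g(t),t]$, by $\frac{\lambda(t)^{5}(\log t)^{3}}{r^{3+k}t^{4}}+\frac{r^{1-k}\lambda(t)^{3}(\log t)^{2}}{t^{6}g(t)^{2}}$, or, on $[t/2,\infty)$, by $\frac{\lambda(t)^{5}(\log(t+r))^{3}}{r^{4+k}t^{4}}$. Integrating each against $r\,dr$ over its support — which adds a factor $\sim t$ on $r\sim t$ and the appropriate powers of $t$ and $\lambda(t)g(t)$ elsewhere — and comparing with the claimed bound $\frac{C\lambda(t)^{3-k}\log^{2}t}{t^{4}g(t)^{2+k}}$ reduces in every case to $h(t):=g(t)\lambda(t)\le t/100$ together with the polynomial smallness of $h(t)/t$, which hold by \eqref{t2constraint} and $\alpha<2/3$ (from \eqref{alphaconstr}); the latter is what absorbs the extra powers of $\log t$ in the competing $\frac{\lambda(t)^{5}}{t^{6}}$-scale terms. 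Summing the finitely many terms yields the lemma.

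Since the argument is purely computational, there is no conceptual obstacle; the only point requiring care is checking that among the many contributions the term of size $\frac{\lambda(t)^{3-k}\log^{2}t}{t^{4}g(t)^{2+k}}$ — which enters through the "$\frac{r\lambda(t)^{3}\log^{2}t}{t^{4}g(t)^{2}}$" part of $w_{ex,sub}$ in Lemma~\ref{wexsublemma}, via both the interior term $m_{\leq 1}(r/t)\partial_{t}^{2}w_{ex,sub}$ and the $m_{\leq 1}'$, $m_{\leq 1}''$ boundary terms — is in fact the largest, and that all $\frac{\lambda(t)^{5}}{t^{6}}$-scale contributions and the near-cone remainder from $v_{ex,sub}$ are strictly smaller once $h(t)\ll t$ is invoked. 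I would organize the writeup as a short case split — boundary terms on $r\sim t$, interior term on $[\lambda(t)g(t),t]$, far term on $[t/2,\infty)$ — treating $k=0$ and $k=1$ uniformly by carrying the extra $r^{-1}$ produced by $L_{1/\lambda(t)}$ through each case.
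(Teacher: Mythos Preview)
Your approach is correct and is exactly what the paper does: the paper's entire proof reads ``Lemma~\ref{wexsublemma} directly gives,'' and you have spelled out that direct substitution term by term, together with the auxiliary use of Lemma~\ref{estlemma} for the $v_{ex,sub}$ piece on $r\ge t/2$. One small bookkeeping slip: in your interior bound you write $\frac{\lambda(t)^{5}(\log t)^{3}}{r^{3+k}t^{4}}$, but from Lemma~\ref{wexsublemma} applied with two $t$-derivatives (and then possibly one $r$-derivative for $k=1$) the correct exponent pair is $r^{-(1+k)}t^{-6}$, not $r^{-(3+k)}t^{-4}$; this does not affect the conclusion since the resulting $L^{2}$ contribution is still dominated by the claimed bound via $h(t)\ll t$.
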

\begin{lemma}\label{fell2lemma} Let $g_{ell,2}(t,R)$ be defined by the expression \eqref{ellsoln} for the choice
$$F(t,s) = \lambda(t)^{2} \chi_{\leq 1}(\frac{s}{g(t)}) \partial_{1}^{2} u_{ell,2}(t,s\lambda(t))$$
and let
$$f_{ell,2}(t,r) = g_{ell,2}(t,\frac{r}{\lambda(t)}) m_{\leq 1}(\frac{r}{t})$$
Then, for $0 \leq  j \leq 2, \quad 0 \leq k \leq3$,
$$|R^{k}t^{j}\partial_{t}^{j}\partial_{R}^{k}g_{ell,2}(t,R)| \leq \frac{C \lambda(t)^{6}}{t^{6}} \begin{cases} R^{5}\left(1+\log^{2}(R)\right), \quad R \leq 1\\
R^{5} \log(t), \quad 1 \leq R \leq 2 g(t)\\
R g(t)^{2} \log^{3}(t) + \frac{g(t)^{6} \log(t)}{R}, \quad R > 2 g(t)\end{cases}$$
Also, letting $e_{ell,2}$ denote the error term of $f_{ell,2}$:
\begin{equation}\begin{split}e_{ell,2}(t,r)& = -\left(-\partial_{tt}f_{ell,2} + \partial_{rr}f_{ell,2}+\frac{1}{r}\partial_{r}f_{ell,2}-\frac{\cos(2Q_{1}(\frac{r}{\lambda(t)}))}{r^{2}}f_{ell,2}\right)+\chi_{\leq 1}(\frac{r}{g(t)\lambda(t)}) \partial_{1}^{2}u_{ell,2}(t,r)\end{split}\end{equation}
we have, for $k=0,1$,
$$||L_{\frac{1}{\lambda(t)}}^{k}(e_{ell,2})(t,r)||_{L^{2}(r dr)} \leq \frac{C \lambda(t)^{7-k} \log^{3}(t) g(t)^{6-k}}{t^{8}}$$
\end{lemma}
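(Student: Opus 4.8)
\textbf{Proof proposal for Lemma \ref{fell2lemma}.}

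The plan is to imitate closely the structure of the proofs of Lemmas \ref{w50lemma} and \ref{wexsublemma}: first establish pointwise (symbol-type) bounds on the auxiliary function $g_{ell,2}(t,R)$ defined by the variation-of-parameters formula \eqref{ellsoln}, then transfer these to bounds on $f_{ell,2}=g_{ell,2}(t,\cdot/\lambda(t))m_{\leq 1}(\cdot/t)$ and finally estimate the error term $e_{ell,2}$, which by construction only involves (i) derivatives of the cutoff $m_{\leq 1}(r/t)$ hitting $g_{ell,2}(t,r/\lambda(t))$, and (ii) the uncorrected tail $\chi_{\leq 1}(r/(g(t)\lambda(t)))\partial_1^2 u_{ell,2}(t,r)$ multiplied by $1-m_{\leq1}(r/t)$, i.e.\ supported in $r\gtrsim t$. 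The key input is a good pointwise understanding of $\partial_t^2 u_{ell,2}(t,r)$, which we already have: $u_{ell,2}=v_{ell,2}(t,r/\lambda(t))$ with $v_{ell,2}$ decomposed as in \eqref{soln2def} into $v_{ell,2,0,main}$, $soln_1$, $soln_2$, and the symbol-type bounds on $err_1$, $err_{1,0}$, together with \eqref{vell20r3cof} and the estimates used in Lemma \ref{uell2minusvell20mainminusue3lemma}, control $\partial_1^2 u_{ell,2}(t,r)$ and its $r$-derivatives pointwise.

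First I would record, directly from the decomposition of $v_{ell,2}$, that $F(t,s)=\lambda(t)^2\chi_{\leq1}(s/g(t))\partial_1^2 u_{ell,2}(t,s\lambda(t))$ is supported in $s\le g(t)$ and satisfies, for the relevant number of $t$- and $s$-derivatives, a bound of the shape $|F(t,s)|\lesssim \lambda(t)^4 s\log^{?}(s)/t^4$ for $s\gtrsim1$ and $|F(t,s)|\lesssim \lambda(t)^4 s(1+\log^2 s)/t^4$ for $s\le 1$ (coming from the fact that $v_{ell,2}(t,R)$ behaves like $R^3$ times four derivatives of $\lambda$ near the origin and like $R\log^k R$ in the matching range, so $\partial_t^2$ of it loses $t^{-2}$ and gains a $\lambda(t)^2$ from the rescaling $r=s\lambda(t)$; the $\lambda(t)^2$ prefactor in $F$ then produces the $\lambda(t)^6$'s claimed). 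Then I would plug this into \eqref{ellsoln}: since $\phi_0(R)\sim R$ near $0$ and $\sim 1/R$ at $\infty$, while $e_2(R)\sim 1/R$ near $0$ and $\sim R^3/2$ at $\infty$, the two integrals $\int_0^R F s e_2(s)\,ds$ and $\int_0^R F s^2/(1+s^2)\,ds$ are estimated case-by-case in $R\le 1$, $1\le R\le 2g(t)$, $R>2g(t)$, using that the integrand is supported in $s\le g(t)$; this yields the three-case pointwise bound on $g_{ell,2}$. The $R$-derivatives (up to $k=3$) are handled as in Lemma \ref{w50lemma}: $\partial_R$ acts on the coefficients $R/(1+R^2)$ and $e_2(R)$, and for $\partial_R^2$ (hence $\partial_R^3$) one uses the ODE \eqref{ellode} to convert second derivatives into $F(t,R)$ plus lower-order terms. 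The $t$-derivatives (up to $j=2$) are justified by the dominated convergence theorem together with the symbol-type dependence of $F$ (and of the cutoff $\chi_{\leq1}(s/g(t))$, recalling $g(t)=t^\alpha$) on $t$, exactly as in the earlier lemmas.

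For the error term $e_{ell,2}$, I would write it out explicitly as in the displayed formula for $e_{f_{5,0}}$ and $e_{ex,sub}$: it is a sum of terms in which $m_{\leq1}''(r/t)$, $m_{\leq1}'(r/t)$ multiply $g_{ell,2}(t,r/\lambda(t))$ and its first $t$- and $r$-derivatives (all on the annulus $t/2\le r\le t$, where by the $R>2g(t)$ case the function is $O(g(t)^6\lambda(t)^6\log(t)/(r t^6)+\cdots)$), plus $m_{\leq1}(r/t)\partial_t^2(g_{ell,2}(t,r/\lambda(t)))$, plus the tail $\chi_{\leq1}(r/(g(t)\lambda(t)))\partial_1^2 u_{ell,2}(t,r)(1-m_{\leq1}(r/t))$ supported in $r\ge t/2$, where $\partial_1^2 u_{ell,2}(t,r)$ decays in $r$. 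Taking $L^2(r\,dr)$ norms (and applying $L_{1/\lambda(t)}$, which is a first-order operator costing at most a factor $1/\lambda(t)$ relative to one $r$-derivative, hence the shift $g(t)^{6-k}$ in the exponent), each term is bounded by a power of $t$ with at most $\log^3(t)$, and after inserting $h(t)=g(t)\lambda(t)$-type bookkeeping one arrives at $\lesssim \lambda(t)^{7-k}\log^3(t)g(t)^{6-k}/t^8$. The main obstacle I anticipate is purely bookkeeping: making sure the logarithmic powers and the exact powers of $\lambda(t)$ and $g(t)$ in the three regimes of $R$ track correctly through the variation-of-parameters integrals and then through the $m_{\leq1}(r/t)$-localized error, so that the final exponent $g(t)^{6-k}$ (and not $g(t)^{7-k}$ or worse) comes out — this requires carefully using the support $s\le g(t)$ of $F$ and the fact that on $\mathrm{supp}\,m_{\leq1}'(\cdot/t)$ we have $r\sim t\gg g(t)\lambda(t)$, rather than any subtle cancellation.
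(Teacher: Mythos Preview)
Your overall architecture is right, but there is a genuine gap: you explicitly say the bookkeeping needs no ``subtle cancellation'', and your claimed bound $|F(t,s)|\lesssim \lambda(t)^{4}s\log^{?}(s)/t^{4}$ for $s\gtrsim 1$ reflects this. In fact the dominant piece of $\partial_{1}^{2}u_{ell,2}(t,s\lambda(t))$ for $1\lesssim s\leq 2g(t)$ comes from $v_{ell,2,0,main}$, which (see \eqref{vell20maindef}) satisfies $v_{ell,2,0,main}(t,r/\lambda(t))=r^{3}j_{1}(t)+r^{3}\log(r)j_{2}(t)$, so that the corresponding contribution to $F$ is $\lambda(t)^{5}\chi_{\leq 1}(s/g(t))\,s^{3}\bigl[\tilde j_{1}(t)+j_{2}''(t)\log s\bigr]$ with $\tilde j_{1},j_{2}''\sim \lambda(t)/t^{6}$. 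Plugging this \emph{directly} into the $e_{2}(R)\int_{0}^{R}F\,\frac{s^{2}}{1+s^{2}}\,ds$ part of \eqref{ellsoln} for $R>2g(t)$ gives a term of order $\lambda(t)^{6}g(t)^{4}\log(t)\,R/t^{6}$, not the $\lambda(t)^{6}g(t)^{2}\log^{3}(t)\,R/t^{6}$ asserted in the lemma. That extra $g(t)^{2}$ propagates to $\|e_{ell,2}\|_{L^{2}}\lesssim \lambda(t)^{5}g(t)^{4}/t^{6}$, which after dividing by $\lambda(t)^{2}$ is $\gtrsim t^{-2+4\alpha+3C_{u}}$; since $\alpha>\tfrac{1}{2}$ this is not $O(t^{-4-\epsilon})$ and the whole error budget of Lemma \ref{eaestlemma} would fail.

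What the paper actually does (and what your reference to Lemma \ref{w50lemma} should have flagged) is to exploit the two vanishing moments $\int_{0}^{\infty}x^{3}\chi_{\leq 1}(x)\,dx=\int_{0}^{\infty}x^{3}\log(x)\,\chi_{\leq 1}(x)\,dx=0$ from Lemma \ref{chilemma}. After the substitution $s=g(t)x$, these kill precisely the leading $s^{3}$ and $s^{3}\log s$ pieces of the $v_{ell,2,0,main}$ contribution to $\int_{0}^{\infty}F(t,s)\frac{s^{2}}{1+s^{2}}\,ds$; what remains is the $-\frac{1}{1+s^{2}}$ correction, which integrates to $O(g(t)^{2})$ rather than $O(g(t)^{4})$. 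The subleading parts $soln_{1},soln_{2}$ genuinely only grow like $s\log^{k}(s)$ (cf.\ \eqref{soln1est}, \eqref{soln2est}) and are handled by your direct estimation. So the fix is to split $F$ along $v_{ell,2}=v_{ell,2,0,main}+soln_{1}+soln_{2}$ and invoke the $\chi_{\leq 1}$ orthogonality on the first piece; once you do that, the rest of your plan goes through. (A minor slip: $e_{2}(R)\sim R/2$ at infinity, not $R^{3}/2$.)
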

\begin{proof}
We will take advantage of two of the orthogonality conditions in Lemma \ref{chilemma} when studying $g_{ell,2}$. We recall that $u_{ell,2}$ is defined in \eqref{uell2def}, and we use \eqref{soln2def} to get
\begin{equation}\begin{split} g_{ell,2}(t,R)&=\frac{e_{2}(R)}{2} \lambda(t)^{2} \int_{0}^{R} \chi_{\leq 1}(\frac{s}{g(t)}) \left(\partial_{t}^{2}\left(v_{ell,2,0,main}(t,\frac{r}{\lambda(t)})\right)\Bigr|_{r=s\lambda(t)} + \partial_{t}^{2}\left(soln_{1}(t,\frac{r}{\lambda(t)})\right)\Bigr|_{r=s\lambda(t)}\right. \\
&+\left. \partial_{t}^{2}\left(soln_{2}(t,\frac{r}{\lambda(t)})\right)\Bigr|_{r=s\lambda(t)} \right) s \phi_{0}(s) ds\\
&- \frac{\phi_{0}(R)}{2} \lambda(t)^{2} \int_{0}^{R} \chi_{\leq 1}(\frac{s}{g(t)}) \partial_{1}^{2} u_{ell,2}(t,s\lambda(t)) s e_{2}(s) ds\end{split}\end{equation}
where, for the reader's convenience, we recall that
$$v_{ell,2,0,main}(t,s) = \frac{s^{3} \lambda(t)^{3}}{8}\left(\partial_{t}^{2}\left(\frac{\lambda'(t)^{2}}{2\lambda(t)}+\lambda''(t)+\lambda''(t)\log(\lambda(t))\right)-\lambda''''(t)\log(s\lambda(t)) + \frac{3}{4} \lambda''''(t)\right)$$
We then estimate $g_{ell,2}$ by using the orthogonality conditions of Lemma \ref{chilemma} to treat the term in the above expression for $g_{ell,2}$ which involves $v_{ell,2,0,main}$, and estimate the rest of $g_{ell,2}$ directly. We have the following two estimates for $0 \leq k \leq 1$ and $0 \leq j \leq 2$,
\begin{equation}\label{soln1est}|\partial_{t}^{j}\partial_{s}^{k}\left(\partial_{t}^{2}\left(soln_{1}(t,\frac{r}{\lambda(t)})\right)\Bigr|_{r=s\lambda(t)}\right)| \leq \frac{C \lambda(t)^{4}}{t^{6+j}} \begin{cases} s^{5-k}(1+|\log(s)|), \quad s \leq 1\\
s^{1-k} (1+\log^{2}(s)), \quad s \geq 1\end{cases}\end{equation}
\begin{equation}\label{soln2est}|\partial_{t}^{j}\partial_{s}^{k}\left(\partial_{t}^{2}\left(soln_{2}(t,\frac{r}{\lambda(t)})\right)\Bigr|_{r=s\lambda(t)}\right)| \leq \frac{C \lambda(t)^{4}}{t^{6+j}} \begin{cases} s^{3-k} (1+\log^{2}(s)), \quad s \leq 1\\
s^{1-k} (1+\log^{3}(s)), \quad s \geq 1\end{cases}\end{equation}
This results in the estimates on $g_{ell,2}$ and $e_{ell,2}$ in the lemma statement.  \end{proof}
Finally, we treat the linear error term associated to $v_{2,2}$ which we recall is a free wave added to $u_{w,2}$, and is chosen so as to allow for the third order matching. This error term is 
$$e_{2,2}(t,r) = \left(\frac{\cos(2Q_{1}(\frac{r}{\lambda(t)}))-1}{r^{2}}\right) v_{2,2}(t,r) \left(1-\chi_{\leq 1}(\frac{r}{h(t)})\right)$$
We define $u_{2,2}$ to be the solution to the following equation, with $0$ Cauchy data at infinity.
\begin{equation}\label{u22eqn}-\partial_{t}^{2}u_{2,2}+\partial_{r}^{2}u_{2,2}+\frac{1}{r}\partial_{r}u_{2,2}-\frac{u_{2,2}}{r^{2}}=e_{2,2}(t,r)\end{equation}
Then, we have the following lemma.
\begin{lemma}\label{u22lemma} For $\frac{h(t)}{2} \leq r \leq \frac{t}{2}$, and $0 \leq k \leq 1$, $0 \leq j \leq 2$,
\begin{equation}\label{u22ptwse}\begin{split}t^{j}r^{k}|\partial_{t}^{j}\partial_{r}^{k}u_{2,2}(t,r)| &\leq \frac{C r \lambda(t)^{4} \sup_{x \in [100,t]}\left(\lambda(x) \log(x)\right) \log^{5}(t)}{t^{4} h(t)^{2}} \\
&+ \frac{C r\lambda(t)^{2} \sup_{x \in [100,t]}\left(\lambda(x) \log(x)\right) \sup_{x \in [100,t]}\left(\lambda(x)^{2}\right) \log^{4}(t)}{t^{6}}\end{split}\end{equation}
Also, for $\frac{h(t)}{2} \leq r \leq \frac{t}{2}$,
\begin{equation}\begin{split}|\partial_{r}^{2}u_{2,2}(t,r)| &\leq \frac{C  \lambda(t)^{4} \sup_{x \in [100,t]}\left(\lambda(x) \log(x)\right) \log^{5}(t)}{t^{4} r h(t)^{2}} \\
&+ \frac{C \lambda(t)^{2} \sup_{x \in [100,t]}\left(\lambda(x) \log(x)\right) \sup_{x \in [100,t]}\left(\lambda(x)^{2}\right) \log^{4}(t)}{r t^{6}}\end{split}\end{equation}
In addition, for all $r>0$,
\begin{equation}\label{u22enest}\begin{split}&\sqrt{E(u_{2,2},\partial_{t}u_{2,2})} + |u_{2,2}(t,r)| \leq \frac{C \lambda(t)^{2} \sup_{x \in [100,t]}\left(\lambda(x) \log(x)\right) \sup_{x \in [100,t]}\left(\lambda(x)^{2}\right) \log^{3}(t)}{t^{3}}\end{split}\end{equation}
Finally, for $0 \leq j \leq 2$,
\begin{equation}\label{dru22ptwseuptocone}\begin{split}|\partial_{t}^{j}\partial_{r}u_{2,2}(t,r)| &\leq \frac{C \lambda(t)^{4} \sup_{x \in [100,t]}\left(\lambda(x) \log(x)\right) \log^{3}(t)}{\langle t-r \rangle^{1+j} h(t)^{2} t^{3}} \\
&+ \frac{C \lambda(t)^{2} \sup_{x \in [100,t]}\left(\lambda(x) \log(x)\right) \sup_{x \in [100,t]}\left(\lambda(x)^{2}\right) \log^{3}(t)}{ t^{5/2} \langle t-r \rangle^{7/2+j}}, \quad \frac{t}{2} < r < t\end{split}\end{equation}
\begin{equation}\label{dru22inftyest} |\partial_{r}u_{2,2}(t,r)| \leq \frac{C \log^{4}(t)}{t^{\frac{5}{2}-5C_{u}}}, \quad r>0\end{equation}
\end{lemma}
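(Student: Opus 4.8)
The plan is to obtain all the stated estimates on $u_{2,2}$ from the representation of $u_{2,2}$ as a time integral of homogeneous solutions, combined with the spherical means formula and the bounds on $v_{2,2}$ proved in Lemma \ref{v22lemma}. First I would write, using the Duhamel principle for \eqref{u22eqn} with zero Cauchy data at infinity,
$$u_{2,2}(t,r) = \int_{t}^{\infty} u_{2,2,s}(t,r)\, ds, \qquad u_{2,2,s} \text{ solves } \begin{cases} \left(-\partial_{t}^{2}+\partial_{r}^{2}+\frac{1}{r}\partial_{r}-\frac{1}{r^{2}}\right)u_{2,2,s}=0 \\ u_{2,2,s}(s,r)=0 \\ \partial_{1}u_{2,2,s}(s,r) = e_{2,2}(s,r) \end{cases}$$
so that each $u_{2,2,s}$ is given by the $2$-dimensional spherical means formula with initial velocity $e_{2,2}(s,\cdot)$. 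As was done for $w_1$, $u_{w,2}$, and $q_{4,1}$ earlier in the paper, I would then extract the factor $r$ by writing $u_{2,2}(t,r) = r \int_{t}^{\infty}ds\int_{0}^{s-t}\frac{\rho\,d\rho}{\sqrt{(s-t)^2-\rho^2}}\int_0^1 d\beta\int_0^{2\pi} I_{e_{2,2}}(s,r\beta,\rho,\theta)\,d\theta$ (using the notation \eqref{Inotation}), which makes the pointwise bounds \eqref{u22ptwse} and the $\partial_r$ estimate \eqref{dru22ptwseuptocone} straightforward once one has symbol-type control on $e_{2,2}(s,y)$ in $s$ and $y$. That control comes directly from differentiating the product $\left(\frac{\cos(2Q_1(r/\lambda(t)))-1}{r^2}\right)v_{2,2}(t,r)(1-\chi_{\leq 1}(r/h(t)))$: the first factor is a symbol in $(t,r)$, the cutoff contributes $h(t)^{-1}$-type factors on its support $r\sim h(t)$, and the $v_{2,2}$ factor is controlled by \eqref{v22ests}, \eqref{v22sqrtest}. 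The key structural point — exactly as in the treatment of $u_{w,2}$ — is that in the region $s\geq \frac{s}{2}$ (i.e. near the light cone of the source) one must rewrite $\partial_s^k v_{2,2}$ using $(\partial_s+\partial_r)v_{2,2}$ and the free-wave equation, then integrate by parts in $\rho$/$s$, to gain the missing powers of $t$, since \eqref{v22ests} does not gain a power of $t$ per time derivative near the cone.

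Next, for the energy estimate \eqref{u22enest} I would follow the $u_{w,2}$ argument verbatim: Minkowski's inequality plus the $L^2$-isometry of the order-$0$ and order-$1$ Hankel transforms gives $\|\partial_t u_{2,2}(t)\|_{L^2(r\,dr)} + \|\partial_r u_{2,2}(t)+\tfrac{1}{r}u_{2,2}(t)\|_{L^2(r\,dr)} \leq C\int_t^\infty \|e_{2,2}(s,\cdot)\|_{L^2(r\,dr)}\,ds$; then expand $(\partial_r u_{2,2}+\tfrac{u_{2,2}}{r})^2 = (\partial_r u_{2,2})^2 + \tfrac{u_{2,2}^2}{r^2}+\tfrac{1}{r}\partial_r(u_{2,2}^2)$, use the pointwise decay from the spherical means bound (to kill the boundary term as $r\to 0,\infty$) and monotone convergence to conclude $\sqrt{E(u_{2,2},\partial_t u_{2,2})}\lesssim \int_t^\infty\|e_{2,2}(s,\cdot)\|_{L^2(r\,dr)}\,ds$, and the pointwise bound on $u_{2,2}$ comes from the same Cauchy–Schwarz argument as \eqref{ptwseenest}. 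Computing $\int_t^\infty \|e_{2,2}(s,\cdot)\|_{L^2(r\,dr)}ds$ using the support $r\sim h(s)$, the symbol bound on $\frac{\cos(2Q_1)-1}{r^2}\sim \lambda(s)^2 h(s)^{-4}$ there, and \eqref{v22ests} on $v_{2,2}$ in $r\leq s/2$ yields the factor $\frac{\lambda^2 \sup(\lambda\log)\sup(\lambda^2)\log^3}{t^3}$. The $\partial_r^2 u_{2,2}$ estimates in the two regions come from the equation \eqref{u22eqn}: $\partial_r^2 u_{2,2} = e_{2,2} + \partial_t^2 u_{2,2} - \tfrac1r\partial_r u_{2,2}+\tfrac{u_{2,2}}{r^2}$, plugging in the bounds just obtained (and noting $e_{2,2}$ is supported in $r\geq h(t)/2$ so contributes only through its own size there).

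Finally, the crude global estimate \eqref{dru22inftyest} is obtained by not trying to be sharp: differentiate the Duhamel formula in $r$ once (justified by dominated convergence since $\xi\widehat{e_{2,2}}\in L^1$), and bound $\|\partial_r u_{2,2}(t,\cdot)\|_{L^\infty}$ via $\int_t^\infty ds\int_0^\infty |\widehat{e_{2,2}}(s,\xi)|\sqrt\xi\cdot\frac{|J_1'(r\xi)|\xi}{\sqrt\xi}\,d\xi \lesssim \int_t^\infty\|e_{2,2}(s,\cdot)\|_{L^2(r\,dr)}\cdot(\text{frequency factor})\,ds$; using the support and \eqref{v22sqrtest}-type $\tfrac{1}{\sqrt r}$ bounds for $v_{2,2}$ together with $\tfrac{\lambda(s)}{h(s)}$-type losses, and \eqref{lambdacomparg} to compare $\lambda(s),h(s)$ to their values at $t$, produces a bound of the form $\frac{\log^4 t}{t^{5/2-5C_u}}$; here $C_u$ enters only through \eqref{lambdacomparg} controlling the growth of $g(s)$ and $\lambda(s)$. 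The main obstacle, and the step that requires the most care, is the near-cone region $\tfrac t2<r<t$: there one must genuinely exploit the extra $\langle t-r\rangle$-decay of $v_{2,2}$ from \eqref{v22ests} (the $\langle t-r\rangle^{-5/2-j-k}$ bound) and transfer it through the spherical means integral to $u_{2,2}$, which is exactly the mechanism — already used for $v_{2,4}$ and $u_{N_2}$ in the summary — that makes $\partial_r u_{2,2}$ decay like $\langle t-r\rangle^{-1-j}$ rather than losing all $t$-decay; the bookkeeping of which region of $(\rho,\theta)$ contributes and the integrations by parts needed to make the $\rho$-integral converge with the right weight is where the real work lies, but it is structurally identical to the $w_1$, $u_{w,2}$, and $q_{4,1}$ arguments already carried out in detail.
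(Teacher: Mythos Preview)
Your overall strategy (Duhamel plus the spherical means representation $u_{2,2}=-\frac{r}{2\pi}\int\int\int\int I_{e_{2,2}}$, energy estimate via $\int_t^\infty\|e_{2,2}(s,\cdot)\|_{L^2}\,ds$, and $\partial_r^2 u_{2,2}$ via the equation) is exactly what the paper does. However, you misidentify where the work lies. Your claimed ``key structural point'' --- rewriting $\partial_s^k v_{2,2}$ via $(\partial_s+\partial_r)$ and integrating by parts, as was done for $v_2$ in the $u_{w,2,ell}$ argument --- is \emph{not needed here and is not what the paper does}. The reason is that for $r\le t/2$ one has $s-\sqrt{r^2\beta^2+\rho^2+2r\beta\rho\cos\theta}\ge s-(r+\rho)\ge t-r\ge Ct$, so the $\langle s-y\rangle^{-5/2-j}$ decay of $\partial_s^j v_{2,2}$ near the cone from Lemma~\ref{v22lemma} already gives a genuine $t^{-j}$ gain; no integration by parts is required. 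What \emph{is} delicate, and what you gloss over by calling the pointwise bounds ``straightforward,'' is the region $2h(t)\le r\le t/2$: there $r/h(t)$ is unbounded, and the paper handles this by evaluating the $\theta$-integral exactly via a residue computation, $\int_0^{2\pi}(y^2+\rho^2+2\rho y\cos\theta+a^2)^{-2}\,d\theta=2\pi(a^2+\rho^2+y^2)\big((a^2+(\rho-y)^2)(a^2+(\rho+y)^2)\big)^{-3/2}$, and then carefully splitting the $\rho$-integral into the ranges $\rho\le 3h(t)$, $3h(t)\le\rho\le 3r$, $\rho\ge 3r$.

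There is also a genuine gap in your argument for \eqref{dru22inftyest}. Your proposed Cauchy--Schwarz bound $\int|\widehat{e_{2,2}}(s,\xi)|\sqrt\xi\cdot|J_1'(r\xi)|\sqrt\xi\,d\xi\le\|e_{2,2}(s)\|_{L^2(r\,dr)}\cdot(\text{frequency factor})$ fails because the frequency factor $\|J_1'(r\xi)\sqrt\xi\|_{L^2(d\xi)}$ diverges at high frequency ($|J_1'(u)|^2 u\sim 1$ as $u\to\infty$). The paper instead stays in physical space: from $\partial_r u_{2,2}(t,r)=-\frac{1}{2\pi}\int_t^\infty ds\int_0^{s-t}\frac{\rho\,d\rho}{\sqrt{(s-t)^2-\rho^2}}\int_0^{2\pi}I_{e_{2,2}}(s,r,\rho,\theta)\,d\theta$ and the crude global bound $|I_{e_{2,2}}(s,r,\rho,\theta)|\le C\log^4(s)\,s^{-9/2+5C_u}$ (which follows from Lemma~\ref{v22lemma} and \eqref{lambdacomparg}), one integrates directly to get $|\partial_r u_{2,2}|\le C\int_t^\infty(s-t)\log^4(s)\,s^{-9/2+5C_u}\,ds\le C\log^4(t)\,t^{-5/2+5C_u}$.
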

\begin{proof}
As in \eqref{q41onederivform}, we have
\begin{equation} u_{2,2}(t,r) = \frac{-r}{2\pi} \int_{t}^{\infty} ds \int_{0}^{s-t} \frac{\rho d\rho}{\sqrt{(s-t)^{2}-\rho^{2}}} \int_{0}^{1}d\beta \int_{0}^{2\pi} d\theta I_{e_{2,2}}(s,r\beta,\rho,\theta)\end{equation}
where we recall the notation \eqref{Inotation}. We start with the estimate of the lemma statement in the region $h(t) \leq r \leq 2h(t) \leq \frac{t}{2}$. For all $\rho \leq s-t$, $r \leq \frac{t}{2}$, and $0 \leq \beta \leq 1$, we have
$$\sqrt{\beta^{2}r^{2}+\rho^{2}+2\beta r \rho\cos(\theta)} \leq \beta r + \rho \leq s-t + \frac{t}{2}<s$$
From Lemma \ref{v22ests}, we get
\begin{equation}\begin{split}|I_{e_{2,2}}(s,r \beta,\rho,\theta)| \leq C &\left( \frac{\lambda(s)^{2}}{s^{4}} \sup_{x \in [100,s]}\left(\lambda(x) \log(x)\right) \frac{\log^{3}(s) \lambda(s)^{2}}{(h(s)^{2}+\beta^{2}r^{2}+\rho^{2}+2\beta r \rho \cos(\theta))^{2}}\right.\\
&\left. +\frac{\lambda(s)^{2} \sup_{x \in [100,s]}\left(\lambda(x) \log(x)\right) \sup_{x \in [100,s]}\left(\lambda(x)^{2}\right) \log^{3}(s)}{\langle s-\sqrt{r^{2}\beta^{2}+\rho^{2}+2 \beta r \rho \cos(\theta)}\rangle^{7/2} s^{9/2}}\right)\end{split}\end{equation}
We then make the analogous decomposition as in \eqref{q41decomp}, and use 
$$s- \sqrt{\beta^{2}r^{2}+\rho^{2}+2 \beta r \rho \cos(\theta)} \geq s-(r+\rho) \geq t-r \geq C t$$
which is true for $\rho \leq s-t$ and all $r  \leq \frac{t}{2}$. This gives \eqref{u22ptwse} for $k =0,j=0$, and $\frac{h(t)}{2} \leq r \leq 2h(t)$. For $k=1,j=0$, we note that
\begin{equation}\partial_{r}u_{2,2}(t,r) = \frac{-1}{2\pi} \int_{t}^{\infty} ds \int_{0}^{s-t} \frac{\rho d\rho}{\sqrt{(s-t)^{2}-\rho^{2}}} \int_{0}^{2\pi} d\theta I_{e_{2,2}}(s,r,\rho,\theta)\end{equation}
which, when combined with the same procedure used for $u_{2,2}$, completes the proof of \eqref{u22ptwse} for $j=0$ and $0 \leq k \leq 1$, in the region $\frac{h(t)}{2} \leq r \leq 2h(t)$. To treat higher $j$, we simply note that
\begin{equation} \partial_{t}^{j}u_{2,2}(t,r) = \frac{-r}{2\pi} \int_{t}^{\infty} ds \int_{0}^{s-t} \frac{\rho d\rho}{\sqrt{(s-t)^{2}-\rho^{2}}} \int_{0}^{1}d\beta \int_{0}^{2\pi} d\theta \partial_{1}^{j}I_{e_{2,2}}(s,r\beta,\rho,\theta)\end{equation}
and use the same procedure used for $j=0$.\\
\\
In the region $2h(t) \leq r \leq \frac{t}{2}$, a slightly more complicated argument is needed because factors of $\frac{r}{h(t)}$ are no longer controlled by a constant in this region. We have
\begin{equation}\label{u22intermediatestep}\begin{split} &|u_{2,2}(t,r)| \\
&\leq C r \int_{t}^{\infty} ds \int_{0}^{s-t}\frac{\rho d\rho}{\sqrt{(s-t)^{2}-\rho^{2}}}\int_{0}^{1}d\beta \int_{0}^{2\pi} d\theta \begin{aligned}[t]&\left(\frac{\lambda(s)^{4} \sup_{x \in [100,s]}\left(\lambda(x) \log(x)\right) \log^{3}(s)}{s^{4}(r^{2}\beta^{2}+\rho^{2}+2 r \beta \rho \cos(\theta)+h(s)^{2})^{2}}\right.\\
&\left.+\frac{\lambda(s)^{2} \sup_{x \in [100,s]}\left(\lambda(x) \log(x)\right) \sup_{x \in [100,s]}\left(\lambda(x)^{2}\right) \log^{3}(s)}{s^{9/2} t^{7/2}} \right)\end{aligned}\end{split}\end{equation}
The second term of the integrand of the expression \eqref{u22intermediatestep} is estimated with the following simple procedure.
\begin{equation}\begin{split}&|r \int_{t}^{\infty} ds \int_{0}^{s-t}\frac{\rho d\rho}{\sqrt{(s-t)^{2}-\rho^{2}}}\int_{0}^{1}d\beta \int_{0}^{2\pi} d\theta\frac{\lambda(s)^{2} \sup_{x \in [100,s]}\left(\lambda(x) \log(x)\right) \sup_{x \in [100,s]}\left(\lambda(x)^{2}\right) \log^{3}(s)}{s^{9/2} t^{7/2}}|\\
&\leq C r \int_{t}^{\infty} ds \frac{\lambda(s)^{2}  \sup_{x \in [100,s]}\left(\lambda(x) \log(x)\right) \sup_{x \in [100,s]}\left(\lambda(x)^{2}\right) \log^{3}(s)}{s^{7/2} t^{7/2}}\\
&\leq \frac{C r \lambda(t)^{2} \sup_{x \in [100,t]}\left(\lambda(x) \log(x)\right) \sup_{x \in [100,t]}\left(\lambda(x)^{2}\right) \log^{3}(t)}{t^{6}}, \quad r \leq \frac{t}{2}\end{split}\end{equation}
Using Cauchy's residue theorem appropriately, we get the following, for $a,\rho,y>0$.
\begin{equation}\label{thetaint}\int_{0}^{2\pi} \frac{d\theta}{(y^{2}+\rho^{2}+2 \rho y \cos(\theta)+a^{2})^{2}} = \frac{2\pi (a^{2}+\rho^{2}+y^{2})}{((a^{2}+(\rho-y)^{2})(a^{2}+(\rho+y)^{2}))^{3/2}}\end{equation}
So, for the first term of the integrand in \eqref{u22intermediatestep}, we get
\begin{equation}\begin{split}&|r \int_{t}^{\infty} ds \int_{0}^{s-t}\frac{\rho d\rho}{\sqrt{(s-t)^{2}-\rho^{2}}}\int_{0}^{1}d\beta \int_{0}^{2\pi} d\theta \frac{\lambda(s)^{4} \sup_{x \in [100,s]}\left(\lambda(x) \log(x)\right) \log^{3}(s)}{s^{4}(r^{2}\beta^{2}+\rho^{2}+2 r \beta \rho \cos(\theta)+h(s)^{2})^{2}}|\\
&\leq C r \int_{t}^{\infty} ds \int_{0}^{s-t} \frac{\rho d\rho}{\sqrt{(s-t)^{2}-\rho^{2}}} \int_{0}^{1}d\beta \frac{\lambda(s)^{4} \sup_{x \in [100,s]}\left(\lambda(x) \log(x)\right) \log^{3}(s)}{s^{4} \sqrt{h(s)^{2}+\rho^{2}+r^{2}\beta^{2}} (h(s)^{2}+(\rho-r\beta)^{2})^{3/2}}\\
&\leq C r \int_{t}^{t+3h(t)} ds \int_{0}^{s-t} \frac{\rho d\rho}{\sqrt{(s-t)^{2}-\rho^{2}}} \frac{\lambda(s)^{4} \sup_{x \in [100,s]}\left(\lambda(x) \log(x)\right) \log^{3}(s)}{s^{4} h(s)^{4}}\\
&+C r \int_{t+3h(t)}^{t+3r} ds \int_{0}^{3h(t)} \frac{\rho d\rho}{\sqrt{(s-t)^{2}-\rho^{2}}} \frac{\lambda(s)^{4} \sup_{x \in [100,s]}\left(\lambda(x) \log(x)\right) \log^{3}(s)}{s^{4}h(s)^{4}}\\
&+C r \int_{t+3h(t)}^{t+3r} ds \int_{3h(t)}^{s-t} \frac{d\rho}{\sqrt{(s-t)^{2}-\rho^{2}}} \frac{\lambda(s)^{4} \sup_{x \in [100,s]}\left(\lambda(x) \log(x)\right) \log^{3}(s)}{s^{4}} \int_{0}^{1}\frac{d\beta}{(h(s)^{2}+(\rho-r\beta)^{2})^{3/2}}\\
&+C r \int_{t+3r}^{\infty} ds \int_{0}^{3h(t)} \frac{\rho d\rho}{\sqrt{(s-t)^{2}-\rho^{2}}} \frac{\lambda(s)^{4} \sup_{x \in [100,s]}\left(\lambda(x) \log(x)\right) \log^{3}(s)}{s^{4} h(s)^{4}}\\
&+C r \int_{t+3r}^{\infty} ds \int_{3h(t)}^{3r} \frac{d\rho}{\sqrt{(s-t)^{2}-\rho^{2}}} \frac{\lambda(s)^{4} \sup_{x \in [100,s]}\left(\lambda(x) \log(x)\right) \log^{3}(s)}{s^{4}} \int_{0}^{1}\frac{d\beta}{(h(s)^{2}+(\rho-r \beta)^{2})^{3/2}}\\
&+C r \int_{t+3r}^{\infty} ds \int_{3r}^{s-t} \frac{d\rho}{\sqrt{(s-t)^{2}-\rho^{2}}} \frac{\lambda(s)^{4} \sup_{x \in [100,s]}\left(\lambda(x) \log(x)\right) \log^{3}(s)}{s^{4}(r+\rho)^{3}}\\
&\leq \frac{C r \lambda(t)^{4} \sup_{x \in [100,t]}\left(\lambda(x) \log(x)\right) \log^{5}(t)}{t^{4}h(t)^{2}}, \quad 2h(t) \leq r \leq \frac{t}{2}\end{split}\end{equation}
The derivatives of $u_{2,2}$ in the region $2h(t) \leq r \leq \frac{t}{2}$ are treated in a similar way. A similar procedure establishes \eqref{dru22ptwseuptocone}. Finally, the estimate on $\partial_{r}^{2} u_{2,2}$ is obtained by noting that
$$\partial_{r}^{2} u_{2,2}(t,r) = e_{2,2}(t,r) + \frac{u_{2,2}(t,r)}{r^{2}} - \frac{1}{r}\partial_{r}u_{2,2}(t,r) + \partial_{t}^{2}u_{2,2}(t,r)$$
For \eqref{u22enest}, we again use Lemma \ref{v22ests} to get that
\begin{equation}\label{e22globalest}|e_{2,2}(t,r)| \leq \mathbbm{1}_{\{r \geq h(t)\}} \begin{cases} \frac{C \lambda(t)^{2}}{r^{4}} \frac{r \lambda(t)^{2}}{t^{4}} \sup_{x \in [100,t]}\left(\lambda(x) \log(x)\right) \log^{3}(t), \quad h(t) \leq r \leq \frac{t}{2}\\
\frac{C \lambda(t)^{2}}{r^{4}} \frac{\sup_{x \in [100,t]}\left(\lambda(x) \log(x)\right) \sup_{x \in [100,t]}\left(\lambda(x)^{2}\right) \log^{3}(t)}{\langle t-r \rangle^{5/2}\sqrt{t}}, \quad \frac{t}{2} \leq r \leq t\\
\frac{C \lambda(t)^{2}}{r^{9/2} \sqrt{\langle t-r \rangle}}, \quad r \geq t\end{cases}\end{equation}
which gives
$$||e_{2,2}(t,r)||_{L^{2}(r dr)} \leq \frac{C \lambda(t)^{2} \sup_{x \in [100,t]}\left(\lambda(x) \log(x)\right) \sup_{x \in [100,t]}\left(\lambda(x)^{2}\right) \log^{3}(t)}{t^{4}}$$
Then, the same procedure used in \eqref{uw2enest} (energy estimate) finishes the proof of \eqref{u22enest}. Finally, we prove \eqref{dru22inftyest} by using Lemma \ref{v22ests} to get 
$$|I_{e_{2,2}}(s,r,\rho,\theta)| \leq \frac{C \log^{4}(s)}{s^{\frac{9}{2}-5C_{u}}}$$
which implies
$$|\partial_{r}u_{2,2}(t,r)| \leq C \int_{t}^{\infty} ds \frac{(s-t) \log^{4}(s)}{s^{\frac{9}{2}-5C_{u}}} \leq \frac{C \log^{4}(t)}{t^{\frac{5}{2}-5C_{u}}}$$
 \end{proof}
Note that $$(1-\chi_{\leq 1}(\frac{2r}{h(t)}))(1-\chi_{\leq 1}(\frac{r}{h(t)}))=(1-\chi_{\leq 1}(\frac{r}{h(t)}))$$
So, we will add the following truncation of $u_{2,2}$ into our ansatz:
\begin{equation}\label{f22def} f_{2,2}(t,r) := u_{2,2}(t,r) \left(1-\chi_{\leq 1}(\frac{2r}{h(t)})\right)\end{equation}
We define the error term associated to $f_{2,2}$ by
\begin{equation}\begin{split} e_{f_{2,2}}(t,r):&= \left(\frac{\cos(2Q_{1}(\frac{r}{\lambda(t)}))-1}{r^{2}}\right) v_{2,2}(t,r) \left(1-\chi_{\leq 1}(\frac{r}{h(t)})\right) -\left(-\partial_{t}^{2}+\partial_{r}^{2}+\frac{1}{r}\partial_{r}-\frac{1}{r^{2}}\right)f_{2,2}(t,r) \\
&+ \left(\frac{\cos(2Q_{1}(\frac{r}{\lambda(t)}))-1}{r^{2}}\right) f_{2,2}\\
&=u_{2,2}(t,r) \left(\frac{2}{rh(t)} \chi_{\leq 1}'(\frac{2r}{h(t)}) - \frac{4r}{h(t)^{3}} \chi_{\leq 1}'(\frac{2r}{h(t)}) h'(t)^{2} - \frac{4r^{2} h'(t)^{2}}{h(t)^{4}} \chi_{\leq 1}''(\frac{2r}{h(t)})\right.\\
&\left. + \frac{4 \chi_{\leq 1}''(\frac{2r}{h(t)})}{h(t)^{2}}+\frac{2r h''(t)}{h(t)^{2}} \chi_{\leq 1}'(\frac{2r}{h(t)})\right)\\
&+\frac{4 \chi_{\leq 1}'(\frac{2 r}{h(t)})}{h(t)^{2}} \left(h(t)\partial_{r}u_{2,2}+r h'(t) \partial_{t}u_{2,2}\right) + \left(\frac{\cos(2Q_{1}(\frac{r}{\lambda(t)}))-1}{r^{2}}\right) u_{2,2}(t,r) \left(1-\chi_{\leq 1}(\frac{2r}{h(t)})\right)\end{split}\end{equation}
Then, a direct estimation gives the following lemma.
\begin{lemma}\label{ef22estlemma} For $k=0,1$, \begin{equation} ||L_{\frac{1}{\lambda(t)}}^{k}(e_{f_{2,2}})(t,r)||_{L^{2}(r dr)} \leq \frac{C \lambda(t)^{4} \sup_{x \in [100,t]}\left(\lambda(x) \log(x)\right)\log^{5}(t)}{t^{4} h(t)^{2+k}} \end{equation}
\end{lemma}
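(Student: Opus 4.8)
\textit{Proof proposal.} The plan is to estimate each of the terms in the explicit formula for $e_{f_{2,2}}$ directly, using the pointwise bounds on $u_{2,2}$, $\partial_t u_{2,2}$, $\partial_r u_{2,2}$, and $\partial_r^2 u_{2,2}$ collected in Lemma \ref{u22lemma}, together with the elementary facts $|h'(t)/h(t)| \le C/t$ and $|h''(t)/h(t)| \le C/t^2$ (which follow from the symbol bounds on $\lambda$ and $g(t)=t^{\alpha}$), and the bound $|(\cos(2Q_{1}(r/\lambda(t)))-1)/r^{2}| = 8\lambda(t)^2/(\lambda(t)^2+r^2)^2 \le C\lambda(t)^2/r^4$. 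Split $e_{f_{2,2}} = e_{f_{2,2}}^{cut} + e_{f_{2,2}}^{tail}$, where $e_{f_{2,2}}^{cut}$ collects every term carrying a factor $\chi_{\le 1}'(2r/h(t))$ or $\chi_{\le 1}''(2r/h(t))$ (including the products $u_{2,2}\,\partial_r u_{2,2}$ and $u_{2,2}\,\partial_t u_{2,2}$ structure), while $e_{f_{2,2}}^{tail}(t,r) = ((\cos(2Q_{1}(r/\lambda(t)))-1)/r^{2})\, u_{2,2}(t,r)\,(1-\chi_{\le 1}(2r/h(t)))$. The first piece is supported in $h(t)/2 \le r \le h(t)$, where the interior estimates of Lemma \ref{u22lemma} apply since $h(t)=\lambda(t)g(t) < t/100$ by \eqref{t2constraint}; the second is supported in $r \ge h(t)/2$.

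For $e_{f_{2,2}}^{cut}$: on the support $r \sim h(t)$ the cutoff derivatives contribute at most a factor $C/h(t)^2$ (from $\chi_{\le 1}''/h(t)^2$ and $\chi_{\le 1}'/(rh(t))$), the remaining coefficients $rh'(t)^2/h(t)^3$, $r^2h'(t)^2/h(t)^4$, $rh''(t)/h(t)^2$ are $O(1/(h(t)t^2))$ or smaller, and the first-derivative terms $\chi_{\le 1}'(h(t)\partial_r u_{2,2}+rh'(t)\partial_t u_{2,2})/h(t)^2$ are dominated by $\partial_r u_{2,2}/h(t)$. Inserting the bounds of Lemma \ref{u22lemma} at $r\sim h(t)$, each term of $e_{f_{2,2}}^{cut}$ is pointwise $\le C\lambda(t)^4\sup_{x\in[100,t]}(\lambda(x)\log(x))\log^{5}(t)\, t^{-4} h(t)^{-3}\,\mathbbm{1}_{\{h(t)/2\le r\le h(t)\}}$, up to lower-order pieces (those carrying $\sup_{x\in[100,t]}\lambda(x)^2$) which the constraints on $\alpha,C_l,C_u$ make negligible relative to this. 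Since $\|\mathbbm{1}_{\{h(t)/2\le r\le h(t)\}}\|_{L^2(r\,dr)} \le Ch(t)$, this yields the $k=0$ bound. For $e_{f_{2,2}}^{tail}$ one splits $r\ge h(t)/2$ into $h(t)/2 \le r \le t/2$ and $r\ge t/2$: on the first region $|e_{f_{2,2}}^{tail}| \le C\lambda(t)^2 r^{-4}|u_{2,2}| \le C\lambda(t)^6\sup(\lambda\log)\log^5(t)\,r^{-3}t^{-4}h(t)^{-2}$ by Lemma \ref{u22lemma}, whose $L^2(r\,dr)$-norm is controlled (and in fact strictly dominated, using $\lambda(t)\le h(t)$) by the value at $r\sim h(t)$; on the second region one uses the global bound $|u_{2,2}| \le C\lambda(t)^2\sup(\lambda\log)\sup(\lambda^2)\log^3(t)\,t^{-3}$ from \eqref{u22enest} together with $\int_{t/2}^{\infty} r^{-7}\,dr = Ct^{-6}$, giving a contribution far below the main term.

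For $k=1$, apply $L_{1/\lambda(t)}$, which produces $\partial_r e_{f_{2,2}}$ plus the zeroth-order piece $(\cos(Q_{1}(r/\lambda(t)))/r)\,e_{f_{2,2}}$. The latter is handled exactly as for $k=0$, gaining a factor $1/r \le C/h(t)$ on the support of the cutoff terms (and $\le C\lambda(t)^2/r^5$ on the tail, again subleading). When $\partial_r$ falls on a cutoff factor it produces one more $1/h(t)$; when it falls on $u_{2,2}$ or $\partial_t u_{2,2}$ it produces $\partial_r^2 u_{2,2}$, $\partial_r u_{2,2}$, or $\partial_t\partial_r u_{2,2}$, which are precisely the quantities estimated in Lemma \ref{u22lemma} (including \eqref{dru22ptwseuptocone} for the tail in $t/2<r<t$ and \eqref{dru22inftyest} for $r>t$), again giving one extra $1/h(t)$ in the matching region. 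Collecting terms yields the $k=1$ bound, with $h(t)^{3}$ in the denominator.

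The routine but somewhat delicate point — and the only real obstacle — is the bookkeeping confirming that after weighting by the cutoff derivatives and taking the $L^2(r\,dr)$ norm over $r\sim h(t)$ the dominant contribution is exactly $C\lambda(t)^4\sup_{x\in[100,t]}(\lambda(x)\log(x))\log^{5}(t)\,t^{-4}h(t)^{-2-k}$. This requires checking that the secondary terms of Lemma \ref{u22lemma} (those with $\sup_{x\in[100,t]}\lambda(x)^2$) and the tail contribution from $r\ge t/2$ are absorbed into the main term, which uses $g(t)=t^{\alpha}$ with $\alpha<\tfrac23-\tfrac56 C_u-\tfrac16 C_l$ and the polynomial comparison $\sup_{x\in[100,t]}\lambda(x)\le C\lambda(t)t^{C_l}$ obtained from \eqref{lambdacomparg}. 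No new analytic ingredient is needed beyond Lemma \ref{u22lemma}.
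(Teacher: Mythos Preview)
Your proposal is correct and follows the same approach as the paper, which simply states that the lemma follows from ``a direct estimation'' using the explicit formula for $e_{f_{2,2}}$ and Lemma \ref{u22lemma}. Your decomposition into the cutoff-supported piece $e_{f_{2,2}}^{cut}$ on $h(t)/2\le r\le h(t)$ and the tail $e_{f_{2,2}}^{tail}$ on $r\ge h(t)/2$, together with the subsequent bookkeeping, is exactly the intended direct estimation.
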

Now, we let
\begin{equation}\label{uadef} u_{a}(t,r) = u_{c}(t,r)+f_{5,0}(t,r)+f_{ex,sub}(t,r)+f_{ell,2}(t,r)+f_{2,2}(t,r)\end{equation}
\begin{equation} u_{corr}(t,r) = u_{a}(t,r)+u_{1}(t,r)\end{equation}
where we recall that $u_{c}$ is defined by \eqref{ucdef}. The equation for $u_{1}$ which results from substituting $Q_{\frac{1}{\lambda(t)}}(r)+u_{corr}$ into \eqref{wm} is
\begin{equation}\begin{split} &-\partial_{t}^{2} u_{1}+\partial_{r}^{2}u_{1}+\frac{1}{r}\partial_{r}u_{1}-\frac{\cos(2Q_{1}(\frac{r}{\lambda(t)}))}{r^{2}} u_{1}\\
&=e_{a}(t,r)+\left(\frac{\cos(2u_{corr})-1}{2r^{2}}\right) \sin(2Q_{1}(\frac{r}{\lambda(t)})) + \frac{\cos(2Q_{1}(\frac{r}{\lambda(t)}))}{2r^{2}}\left(\sin(2u_{corr})-2u_{corr}\right)\end{split}\end{equation}
where 
$$e_{a}(t,r) = \left(e_{ex,ell}+e_{w,2}+e_{match,0}+e_{5,1}+e_{f_{5,0}}+e_{ex,sub}+e_{ell,2}+e_{f_{2,2}}\right)(t,r)$$
Combining Lemmas \ref{ef22estlemma}, \ref{fell2lemma}, \ref{eexsubestlemma}, \ref{ef50estlemma}, \ref{e51estlemma}, \ref{ematch0estlemma}, \ref{eexellew2estlemma} and using \eqref{lambdaonlyconstr} and \eqref{alphaconstr} gives the following lemma.
\begin{lemma}\label{eaestlemma}
\begin{equation}\begin{split}\frac{||e_{a}||_{L^{2}(r dr)}}{\lambda(t)^{2}} \leq \frac{C \log^{6}(t)}{t^{4+\text{min}\{4\alpha-C_{l}-2,1-\alpha-2C_{u},4-5C_{u}-6\alpha\}}}\end{split}\end{equation}
\begin{equation}\begin{split}&||L_{\frac{1}{\lambda(t)}}\left(e_{a}\right)||_{L^{2}(r dr)} \leq \frac{C \log^{6}(t)}{t^{4+2\alpha-3C_{u}}}+\frac{C \log^{2}(t)}{t^{2+5\alpha}}+\frac{C \log^{5}(t)}{t^{5-3C_{u}}}+\frac{C \log^{3}(t)}{t^{8-6C_{u}-5\alpha}}\end{split}\end{equation}
\end{lemma}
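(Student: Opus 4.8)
The plan is to use the decomposition
\[ e_a = e_{ex,ell}+e_{w,2}+e_{match,0}+e_{5,1}+e_{f_{5,0}}+e_{ex,sub}+e_{ell,2}+e_{f_{2,2}}, \]
which is immediate from the definition \eqref{uadef} of $u_a$ and from the definitions of the error terms of $u_c$, $f_{5,0}$, $f_{ex,sub}$, $f_{ell,2}$, $f_{2,2}$. Each summand was introduced precisely so as to be controlled, in $L^2(r\,dr)$ and after a single application of $L_{\frac{1}{\lambda(t)}}$, by one of the preceding lemmas: $e_{ex,ell}$ and $e_{w,2}$ by Lemma \ref{eexellew2estlemma}, $e_{match,0}$ by Lemma \ref{ematch0estlemma}, $e_{5,1}$ by Lemma \ref{e51estlemma}, $e_{f_{5,0}}$ by Lemma \ref{ef50estlemma}, $e_{ex,sub}$ by Lemma \ref{eexsubestlemma}, $e_{ell,2}$ by Lemma \ref{fell2lemma}, and $e_{f_{2,2}}$ by Lemma \ref{ef22estlemma}. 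Since $L_{\frac{1}{\lambda(t)}}$ is linear and every one of these lemmas supplies its bound for both $k=0$ and $k=1$, the triangle inequality in $L^2(r\,dr)$ applied to $e_a$ and to $L_{\frac{1}{\lambda(t)}}(e_a)$ reduces both displays of Lemma \ref{eaestlemma} to summing the eight right-hand sides.

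The next step is to substitute $g(t)=t^{\alpha}$ and $h(t)=\lambda(t)t^{\alpha}$ into those bounds and simplify the symbol-type factors occurring in them, namely $1+\lambda(t)$, $1/\lambda(t)$ and $\sup_{x\in[100,t]}(\lambda(x)\log(x))$. Using \eqref{lambdasetdef} and \eqref{lambdacomparg} — together with the data $T_\lambda$ of the fixed function $\lambda$, so that all constants remain independent of $T_0$ — one has $\lambda(t)\le Ct^{C_u}$, $1/\lambda(t)\le Ct^{C_l}$ and $\sup_{x\in[100,t]}(\lambda(x)\log(x))\le Ct^{C_u}\log(t)$. After this each of the (roughly a dozen) pieces composing $e_a$ and $L_{\frac{1}{\lambda(t)}}(e_a)$ becomes of the form $C\log^{p}(t)\,\lambda(t)^{2-k}\,t^{-4-\delta}$ with $p\le 6$ and $\delta$ an explicit affine function of $\alpha,C_l,C_u$. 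For the first display one takes $k=0$, divides by $\lambda(t)^2$ and collects: $e_{ex,ell}$, $e_{w,2}$ and the first piece of $e_{match,0}$ produce the exponent $4\alpha-C_l-2$; the second piece of $e_{match,0}$ produces $1-\alpha-2C_u$; and the third pieces of $e_{match,0}$, $e_{f_{5,0}}$, $e_{5,1}$ and $e_{ell,2}$ produce $4-5C_u-6\alpha$. A routine comparison, using the window \eqref{alphaconstr} and the smallness \eqref{lambdaonlyconstr}, shows that every remaining exponent (from $e_{ex,sub}$, $e_{f_{2,2}}$, the first piece of $e_{w,2}$, the second piece of $e_{f_{5,0}}$, $\dots$) is at least one of these three, whence the claimed $\min$. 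For the second display one repeats the computation with $k=1$, now without dividing by $\lambda(t)^2$: $e_{ex,ell}$, the second piece of $e_{w,2}$ and the first piece of $e_{match,0}$ give $t^{-(2+5\alpha)}$, the first piece of $e_{w,2}$ gives $t^{-(4+2\alpha-3C_u)}$, the second piece of $e_{match,0}$ gives $t^{-(5-3C_u)}$, and the third pieces of $e_{match,0}$, $e_{5,1}$, $e_{f_{5,0}}$ and $e_{ell,2}$ give $t^{-(8-6C_u-5\alpha)}$, while all other pieces are dominated by one of these four; this is exactly the right-hand side displayed in the lemma.

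The main difficulty is purely bookkeeping, not analysis: there is no new estimate to prove, but one must carry roughly a dozen individual bounds (several of the eight lemmas give two- or three-term estimates), substitute $g=t^{\alpha}$ uniformly, and verify that the small auxiliary powers of $t$ picked up from $1+\lambda$, $1/\lambda$ and $\sup_{x\in[100,t]}(\lambda(x)\log x)$ are genuinely absorbed — which is precisely what the quantitative smallness constraints \eqref{lambdaonlyconstr} and the admissible range \eqref{alphaconstr} for $\alpha$ are engineered to guarantee (for instance $\alpha>\tfrac12+\tfrac{C_l}{2}$ makes $4\alpha-C_l-2>C_l>0$, and $\alpha<\tfrac23-\tfrac56 C_u-\tfrac{C_l}{6}$ makes $4-5C_u-6\alpha>C_l>0$). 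The safeguard against an arithmetic slip is to tabulate, for each of the eight error terms and for $k=0,1$ separately, the triple consisting of the power of $\log$, the power of $\lambda$ remaining after dividing by $\lambda^2$ (which must come out nonnegative), and the power of $t$, and then simply read off the minimum.
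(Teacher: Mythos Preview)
Your proposal is correct and follows essentially the same approach as the paper: the paper's proof simply says to combine Lemmas \ref{ef22estlemma}, \ref{fell2lemma}, \ref{eexsubestlemma}, \ref{ef50estlemma}, \ref{e51estlemma}, \ref{ematch0estlemma}, \ref{eexellew2estlemma} and invoke \eqref{lambdaonlyconstr} and \eqref{alphaconstr}, which is exactly the bookkeeping you spell out in detail.
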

\subsection{First set of nonlinear interactions}
It only remains to treat the terms which are nonlinear in $u_{corr}=u_{a}+u_{1}$. For this, we define
\begin{equation}\begin{split}NL(t,r) &= \left(\frac{\cos(2u_{corr})-1}{2r^{2}}\right) \sin(2Q_{1}(\frac{r}{\lambda(t)})) + \frac{\cos(2Q_{1}(\frac{r}{\lambda(t)}))}{2r^{2}}\left(\sin(2u_{corr})-2u_{corr}\right)\\
&=\left(\frac{\cos(2u_{1})-1}{2r^{2}}\right) \sin(2Q_{1}(\frac{r}{\lambda(t)})+2u_{a})\\
&+\left(\frac{\sin(2u_{1})-2u_{1}}{2r^{2}}\right) \cos(2Q_{1}(\frac{r}{\lambda(t)})+2u_{a}) + \frac{\cos(2Q_{1}(\frac{r}{\lambda(t)}))}{2r^{2}} \left(\sin(2u_{a})-2u_{a}\right)\\
&+u_{1}\left(\frac{\cos(2Q_{1}(\frac{r}{\lambda(t)})+2u_{a})-\cos(2Q_{1}(\frac{r}{\lambda(t)}))}{r^{2}}\right) +\frac{\sin(2Q_{1}(\frac{r}{\lambda(t)}))}{2r^{2}} \left(\cos(2u_{a})-1\right)\end{split}\end{equation}
We focus on the $u_{1}$-independent terms of the expression above. In particular, we define
$$N_{a}(t,r) = \frac{\cos(2Q_{1}(\frac{r}{\lambda(t)}))}{2r^{2}} \left(\sin(2u_{a})-2u_{a}\right) + \frac{\sin(2Q_{1}(\frac{r}{\lambda(t)}))}{2r^{2}} \left(\cos(2u_{a})-1\right)$$
We recall that
$$u_{a} = \chi_{\leq 1}(\frac{r}{h(t)}) u_{e}+\left(1-\chi_{\leq 1}(\frac{r}{h(t)})\right) u_{wave} + f_{5,0}+f_{ex,sub}+f_{ell,2}+f_{2,2}  $$
We write
\begin{equation}\label{uasplit}u_{a} = u_{a,0}+u_{a,1}\end{equation}
with
$$u_{a,0}(t,r) = \chi_{\leq 1}(\frac{r}{h(t)}) u_{ell}(t,r) + \left(1-\chi_{\leq 1}(\frac{r}{h(t)})\right)\left(w_{1}(t,r)+v_{2}(t,r)+v_{2,2}(t,r)\right)$$
$$u_{a,1} = \chi_{\leq 1}(\frac{r}{h(t)}) \left(u_{e}-u_{ell}\right) + \left(1-\chi_{\leq 1}(\frac{r}{h(t)})\right)\left(v_{ex}+u_{w,2}\right)+ f_{5,0}+f_{ex,sub}+f_{ell,2}+f_{2,2}$$
Then, we get
\begin{equation}\begin{split} &N_{a}(t,r)= \frac{\cos(2Q_{1}(\frac{r}{\lambda(t)}))}{2r^{2}} \left(\sin(2u_{a,0})-2u_{a,0}\right)+\frac{\sin(2Q_{1}(\frac{r}{\lambda(t)}))}{2r^{2}} \left(\cos(2u_{a,0})-1\right)\\
&+\frac{\cos(2Q_{1}(\frac{r}{\lambda(t)}))}{2r^{2}} \left(\sin(2u_{a,0})\left(\cos(2u_{a,1})-1\right) + \left(\sin(2u_{a,1})-2u_{a,1}\right)\cos(2u_{a,0}) + \left(\cos(2u_{a,0})-1\right) 2u_{a,1}\right)\\
&+\frac{\sin(2Q_{1}(\frac{r}{\lambda(t)}))}{2r^{2}} \left(\left(\cos(2u_{a,1})-1\right) \cos(2u_{a,0}) - 2 \sin(2u_{a,0})  u_{a,1} -\sin(2u_{a,0})\left(\sin(2u_{a,1})-2u_{a,1}\right)\right)\\
&:=N_{0}+N_{1}\end{split}\end{equation}
where
\begin{equation}\label{N1def}\begin{split}&N_{1}(t,r)\\
&=\frac{\cos(2Q_{1}(\frac{r}{\lambda(t)}))}{2r^{2}} \left(\sin(2u_{a,0})\left(\cos(2u_{a,1})-1\right) + \left(\sin(2u_{a,1})-2u_{a,1}\right)\cos(2u_{a,0}) + \left(\cos(2u_{a,0})-1\right) 2u_{a,1}\right)\\
&+\frac{\sin(2Q_{1}(\frac{r}{\lambda(t)}))}{2r^{2}} \left(\left(\cos(2u_{a,1})-1\right) \cos(2u_{a,0}) - 2 \sin(2u_{a,0})  u_{a,1} -\sin(2u_{a,0})\left(\sin(2u_{a,1})-2u_{a,1}\right)\right)\end{split}\end{equation}
We proceed to estimate the term $N_{1}$, which turns out to be perturbative. 
\begin{lemma}\label{n1lemma}
We have the following estimates on $N_{1}$.
\begin{equation} \frac{||N_{1}(t,r)||_{L^{2}(r dr)}}{\lambda(t)^{2}} \leq \frac{C \log^{3}(t)}{t^{6-3C_{u}-2\alpha}} + \frac{C \log^{13}(t)}{t^{5-\frac{3}{2}C_{l}-9C_{u}}}, \quad ||L_{\frac{1}{\lambda(t)}}(N_{1})(t,r)||_{L^{2}(r dr)} \leq \frac{C \log^{3}(t)}{t^{6-5C_{u}-2\alpha-C_{l}}} + \frac{C \log^{13}(t)}{t^{5-\frac{19}{2}C_{u}-C_{l}}}\end{equation}
\end{lemma}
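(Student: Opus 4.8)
The plan is to estimate $N_{1}$ by exploiting two structural facts: first, that $N_{1}$ is quadratic (or higher) in the "small'' quantity $u_{a,1}$, and second, that the coefficients $\tfrac{\cos(2Q_{1}(r/\lambda(t)))}{r^{2}}$ and $\tfrac{\sin(2Q_{1}(r/\lambda(t)))}{r^{2}}$ are symbols in $(t,r)$ which supply strong decay in the region $r \gtrsim h(t)$ (namely $\lesssim \lambda(t)^{2}/r^{4}$ and, for the $\sin$ coefficient, $\lesssim \lambda(t)/r^{3}$). First I would record the elementary pointwise bounds
$$|\sin(2x)(\cos(2y)-1)| \leq C |x| y^{2}, \quad |\sin(2y)-2y| \leq C|y|^{3}, \quad |(\cos(2y)-1)\cos(2x)| \leq C y^{2}, \quad |(\cos(2x)-1)y| \leq C x^{2}|y|,$$
so that each summand in \eqref{N1def} is controlled by a product of (a power of) $|u_{a,0}|$ with at least two powers of $|u_{a,1}|$, divided by $r^{2}$, and then multiplied by the relevant symbol coefficient. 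This reduces everything to substituting the already-established pointwise and $L^{2}$ bounds on the constituent pieces.

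The key steps, in order, are: (i) split the spatial integral into the region $r \leq h(t)$, where $u_{a,0}=u_{ell}$ and $u_{a,1}=u_{e}-u_{ell}+f_{5,0}+f_{ex,sub}+f_{ell,2}$ (and the $\sin(2Q_{1})$ coefficient is $O(1/r)$ while the $\cos(2Q_{1})$ coefficient is $O(1/r^{2})$), and the region $r \geq h(t)$, where $u_{a,0}=w_{1}+v_{2}+v_{2,2}$ and $u_{a,1}=v_{ex}+u_{w,2}+f_{5,0}+f_{ex,sub}+f_{ell,2}+f_{2,2}$; (ii) in each region insert the bounds from Lemmas \ref{w1estlemma}, \ref{v2estlemma}, \ref{v22lemma}, \ref{uw2minuselllemma}, \ref{estlemma} (for $v_{ex}$, $v_{ex,sub}$), together with the bounds on $f_{5,0}, f_{ex,sub}, f_{ell,2}, f_{2,2}$ from Lemmas \ref{w50lemma}, \ref{wexsublemma}, \ref{fell2lemma}, \ref{u22lemma}, and the bound on $u_{ell}$ from \eqref{velldef}; (iii) perform the $r$-integration, using $h(t)=\lambda(t)g(t)=\lambda(t)t^{\alpha}$ and \eqref{lambdacomparg} to convert $\sup_{x\in[100,t]}(\lambda(x)\log(x))$ and powers of $\lambda(t)$ into powers of $t$ with exponents governed by $C_{l},C_{u}$; and (iv) for $L_{1/\lambda(t)}(N_{1})$, use that $L_{1/\lambda(t)}$ is a first-order operator of the form $\partial_{r}-\tfrac{\cos(Q_{1}(r/\lambda(t)))}{r}$, so that $L_{1/\lambda(t)}(N_{1})$ is a sum of terms where either $\partial_{r}$ falls on $N_{1}$ (producing, by the product/chain rule, terms with one derivative distributed among the factors and coefficients, each controlled by the $\partial_{r}$-estimates already available for every constituent — which cost at most a factor $1/r$ near the origin and are otherwise symbol-type) or the $\tfrac{1}{r}$ multiplier acts, which near $r\sim h(t)$ costs a factor $1/h(t) = t^{-\alpha}/\lambda(t)$; carefully tracking which term is worst then produces the stated exponents $6-5C_{u}-2\alpha-C_{l}$ and $5-\tfrac{19}{2}C_{u}-C_{l}$.

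The main obstacle I expect is bookkeeping rather than any genuine analytic difficulty: one must identify, among the many cross terms $u_{a,0}^{a}\,u_{a,1}^{b}$ with $b \geq 2$ and the two possible coefficients, which single term dominates in each of the two regions and for each of the four quantities being estimated, and verify that no term is worse than the claimed bound. In particular the worst contributions should come from the region near $r \sim h(t)$, where $u_{a,1}$ is largest (the matching-region pieces like $q_{4,2}$, $f_{ell,2}$, and the tails of $v_{ex,sub}, u_{w,2}$ behave like powers of $h(t)$), competing against the decay $\lambda(t)^{2}/h(t)^{4}$ from the $\cos(2Q_{1})$ coefficient; the term $\tfrac{\sin(2Q_{1}(r/\lambda(t)))}{2r^{2}}(\cos(2u_{a,0})-1)\cos(2u_{a,0})\cdot(\text{piece})$ — quadratic in $u_{a,0}$, hence carrying a factor $(w_{1}+v_{2}+v_{2,2})^{2}$ that is only $O(r^{2}\lambda(t)^{2}\log^{2}(t)/t^{4})$ for $r \leq t/2$ but has weaker decay near the cone — is likely the source of the $\log^{13}(t)$ and the $C_{l}$-dependence, and must be estimated by splitting further at $r = t/2$ and using the improved $\langle t-r\rangle$-decay of $v_{2}, v_{2,2}$ from Lemmas \ref{v2estlemma} and \ref{v22lemma}. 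Once the dominant term in each case is pinned down, the integration is routine and the constraints \eqref{lambdaonlyconstr} and \eqref{alphaconstr} are invoked only at the very end to confirm the exponents are what is claimed.
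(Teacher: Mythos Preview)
Your overall strategy---collect pointwise bounds on $u_{a,0}$ and $u_{a,1}$ from the earlier lemmas, split the $r$-integral at $h(t)$ and $t/2$, and integrate---is exactly what the paper does. But there is a structural error in your reading of $N_{1}$ that would cause you to miss the dominant terms. You assert that every summand in \eqref{N1def} carries at least two powers of $u_{a,1}$, and later that the cross terms satisfy $b \geq 2$. This is false: the third term in the $\cos(2Q_{1})$ bracket, $(\cos(2u_{a,0})-1)\cdot 2u_{a,1}$, is quadratic in $u_{a,0}$ and \emph{linear} in $u_{a,1}$, and the term $-2\sin(2u_{a,0})\,u_{a,1}$ in the $\sin(2Q_{1})$ bracket is linear in each. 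Since $u_{a,1}$ is strictly smaller than $u_{a,0}$ in every region (compare \eqref{ua0est} and \eqref{ua1est}), these $b=1$ terms are in fact the ones that govern the final exponents; a bound based only on $u_{a,1}^{2}$ would come out too strong and would not reproduce the stated powers $6-3C_{u}-2\alpha$ and $5-\tfrac{3}{2}C_{l}-9C_{u}$. (Your parenthetical about ``$(\cos(2u_{a,0})-1)\cos(2u_{a,0})$'' appears to stem from the same misreading---no such term is present.)

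A second, smaller point: for the $L_{1/\lambda(t)}$ estimate the paper singles out one place where the purely pointwise route fails, namely the terms containing $\partial_{r}u_{2,2}$ in the region $r \geq t/2$. There one lacks a sufficiently strong pointwise bound on $\partial_{r}u_{2,2}$ and instead uses H\"older together with the energy estimate \eqref{u22enest}. Your proposal treats the $\partial_{r}$ as uniformly symbol-type, which is not available for this particular piece of $u_{a,1}$.
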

\begin{proof}
We estimate $N_{1}$ by combining the following estimates on various terms of $u_{a,0}$ and $u_{a,1}$. We use the explicit formulae following \eqref{velldef} and \eqref{c1def} to estimate $u_{ell}$. We use the decomposition \eqref{soln2def}, along with \eqref{vell20maindef} and the analogous estimates to \eqref{soln1est} and \eqref{soln2est} to estimate $u_{ell,2}$. We use \eqref{vexalonelgr} in the region $r \geq \frac{t}{2}$, and \eqref{vexsubsymb} and the expression for $v_{ex,ell}$, namely \eqref{vexell} in the region $g(t)\lambda(t) \leq r \leq \frac{t}{2}$, to estimate $v_{ex}$. We use Lemma \ref{uw2minuselllemma} to estimate $u_{w,2}$. (In particular, we use \eqref{uw2minusellest} and \eqref{uw2ellsymb} in the region $g(t)\lambda(t) \leq r \leq \frac{t}{2}$, and \eqref{uw2largerest} in the region $r \geq \frac{t}{2}$). Next, we use Lemma \eqref{w1estlemma}, Lemma \eqref{v2estlemma}, and Lemma \ref{v22ests} to estimate $w_{1}$, $v_{2}$, and $v_{2,2}$, respectively. Finally, we use Lemma \ref{u22lemma}, and \eqref{f22def} (for $f_{2,2}$), Lemma \ref{fell2lemma} (for $f_{ell,2}$), Lemma \ref{w50lemma} and \eqref{f50def} (for $f_{5,0}$), and Lemma \ref{wexsublemma} and \eqref{fexsub} (for $f_{ex,sub}$).  This gives rise to
\begin{equation}\label{ua0est}\begin{split}&|u_{a,0}(t,r)| \\
&\leq C  \begin{cases} \frac{r \lambda(t) \log(t)}{t^{2}}, \quad r \leq \frac{t}{2}\\
\frac{\sup_{x \in [100,r]}\left(\lambda(x) \log(x)\right) \log(r)}{\sqrt{r} \sqrt{\langle t-r \rangle}} + \frac{ \sup_{x \in [100,t]}\left(\lambda(x) \log(x)\right) \sup_{x \in [100,t]}\left(\lambda(x)^{2}\right) \log^{3}(t) \mathbbm{1}_{\{t > r > \frac{t}{2}\}}}{\langle t-r \rangle^{5/2} \sqrt{t}}, \quad r > \frac{t}{2}\end{cases}\end{split}\end{equation}
and
\begin{equation}\label{ua1est}|u_{a,1}(t,r)| \leq C \begin{cases} \frac{r^{3}(\log^{2}(r)+\log^{2}(t))}{t^{4-C_{u}}}, \quad r \leq 2 h(t)\\
\frac{\lambda(t) \log^{3}(t)}{t^{2-2C_{u}}r} + \frac{C r \lambda(t) \log^{6}(t)}{t^{4-3C_{u}}}, \quad 2 h(t) \leq r \leq \frac{t}{2}\\
\frac{\log^{5}(t)}{t^{\frac{5}{2}-\frac{7}{2}C_{u}}} \cdot \text{min}\{1,\frac{\lambda(t)}{t^{\frac{C_{u}-C_{l}}{2}}}\}, \quad r \geq \frac{t}{2}\end{cases}\end{equation}
Then, a straightforward, but slightly long computation gives the estimate on $||N_{1}(t,r)||_{L^{2}(r dr)}$ from the lemma statement. Recall that
$$L_{\frac{1}{\lambda(t)}}N_{1}(t,r) = \partial_{r}N_{1}(t,r)-\frac{\cos(Q_{1}(\frac{r}{\lambda(t)}))}{r} N_{1}(t,r)$$
To estimate $||L_{\frac{1}{\lambda(t)}}N_{1}(t,r)||_{L^{2}(r dr)}$, we use the procedure outlined at the beginning of the proof to estimate $\partial_{r}u_{a,0}$ and $\partial_{r}u_{a,1}$. The only difference here is the following. After differentiating $N_{1}(t,r)$ with respect to $r$, we use the pointwise estimates on $\partial_{r}u_{2,2}(t,r)$ given in \eqref{u22ptwse} for the region $r \leq \frac{t}{2}$. We use Holder's inequality and \eqref{u22enest} to estimate the terms involving $\partial_{r}u_{2,2}(t,r)$ in the region $r \geq \frac{t}{2}$.
 \end{proof}
It remains to treat $N_{0}$, which we recall is defined by
\begin{equation}\label{N0def}N_{0}(t,r) = \frac{\cos(2Q_{1}(\frac{r}{\lambda(t)}))}{2r^{2}} \left(\sin(2u_{a,0})-2u_{a,0}\right) + \frac{\sin(2Q_{1}(\frac{r}{\lambda(t)}))}{2r^{2}} \left(\cos(2u_{a,0})-1\right)\end{equation}
where
$$u_{a,0}(t,r) = \chi_{\leq 1}(\frac{r}{h(t)}) u_{ell}(t,r) + \left(1-\chi_{\leq 1}(\frac{r}{h(t)})\right)\left(w_{1}(t,r)+v_{2}(t,r)+v_{2,2}(t,r)\right)$$
We define $u_{N_{0}}$ to be the solution to the following equation with $0$ Cauchy data at infinity.
\begin{equation}\label{un0eqn}-\partial_{t}^{2}u_{N_{0}}+\partial_{r}^{2}u_{N_{0}}+\frac{1}{r}\partial_{r}u_{N_{0}}-\frac{u_{N_{0}}}{r^{2}} = N_{0}(t,r)\end{equation}
Then, we have
\begin{lemma}\label{un0lemma} We have the following estimates. For $0 \leq k \leq 2$ and $0 \leq j \leq 1$,
\begin{equation}\label{un0ptwse}|\partial_{t}^{k}\partial_{r}^{j}u_{N_{0}}(t,r)| \leq  \frac{C r^{1-j} \left(\sup_{x\in[100,t]}\left(\lambda(x) \log(x)\right)\right)^{3} \log^{3}(t)}{t^{4+k}}, \quad r \leq \frac{t}{2}\end{equation}

\begin{equation}\label{un0enest}\begin{split}&|u_{N_{0}}(t,r)| + \sqrt{E(u_{N_{0}},\partial_{t}u_{N_{0}})}\leq \frac{C \left(\sup_{x\in[100,t]}\left(\lambda(x) \log(x)\right)\right)^{3} \left(\sup_{x \in [100,t]}\left(\lambda(x)^{2}\right)\right)^{3} \log^{9}(t)}{t^{2}}\end{split}\end{equation}
Finally,
\begin{equation}\label{drun0ptwseuptocone} \begin{split} &|\partial_{r}u_{N_{0}}(t,r)|\\
&\leq \frac{C \left(\sup_{x \in [100,t]}\left(\lambda(x) \log(x)\right)\right)^{3} \log^{3}(t)}{t^{3/2} \langle t-r \rangle^{5/2}} + \frac{C \left(\sup_{x \in [100,t]}\left(\lambda(x) \log(x)\right)\right)^{3} \left(\sup_{x \in [100,t]}\left(\lambda(x)^{2}\right)\right)^{3} \log^{9}(t)}{t^{3/2} \langle t-r \rangle^{17/2}} \\
&+ \frac{C \lambda(t) \left(\sup_{x \in [100,t]}\left(\lambda(x) \log(x)\right)\right)^{2} \left(\sup_{x \in [100,t]}\left(\lambda(x)^{2}\right)\right)^{2} \log^{6}(t)}{t^{2} \langle t-r \rangle^{6}}, \quad \frac{t}{2} < r < t\end{split}\end{equation}
\begin{equation}\label{drun0inftyest} ||\partial_{r}u_{N_{0}}(t,r)||_{L^{\infty}_{r}} \leq \frac{C \log^{12}(t)}{t^{\frac{3}{2}-9C_{u}}}\end{equation}
\end{lemma}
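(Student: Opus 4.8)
The plan is to solve \eqref{un0eqn} by the Duhamel/spherical-means representation, following the pattern used for $u_{w,2}$, $q_{4,1}$, and $u_{2,2}$. Writing $u_{N_0}(t,r)=\int_t^\infty u_{N_0,s}(t,r)\,ds$, where $u_{N_0,s}$ solves the homogeneous equation with data $u_{N_0,s}(s,r)=0$, $\partial_1 u_{N_0,s}(s,r)=N_0(s,r)$, the two-dimensional spherical-means formula together with the factorization $G(s,0,\rho)=0$ that produced \eqref{q41onederivform} gives
\[
u_{N_0}(t,r)=\frac{-r}{2\pi}\int_t^\infty ds\int_0^{s-t}\frac{\rho\,d\rho}{\sqrt{(s-t)^2-\rho^2}}\int_0^1 d\beta\int_0^{2\pi}d\theta\,I_{N_0}(s,r\beta,\rho,\theta),
\]
with $I_{N_0}$ the quantity from \eqref{Inotation}; differentiating under the integral in $r$ or $t$ (justified by dominated convergence and the bounds below) yields the analogous formulas for $\partial_r u_{N_0}$ and $\partial_t^k u_{N_0}$, the former with the prefactor $r$ and the $\beta$-integral removed, exactly as in \eqref{q41decomp}. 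The first ingredient is therefore a family of pointwise estimates on $N_0$ and its spatial derivatives. Since $\sin(2u_{a,0})-2u_{a,0}=O(u_{a,0}^3)$ and $\cos(2u_{a,0})-1=O(u_{a,0}^2)$, while $\tfrac{\cos(2Q_1(r/\lambda(t)))}{2r^2}$ and $\tfrac{\sin(2Q_1(r/\lambda(t)))}{2r^2}$ are symbols in $(t,r)$, the bounds on $N_0$ follow by inserting \eqref{ua0est} on $u_{a,0}$ (and its one- and two-derivative analogues, obtained from Lemmas \ref{w1estlemma}, \ref{v2estlemma}, \ref{v22lemma} and the explicit formula for $u_{ell}$), treating separately the regime $r\le h(t)$, where $u_{a,0}=u_{ell}$, the regime $r\ge h(t)$, where $u_{a,0}=w_1+v_2+v_{2,2}$, and the transition $r\sim h(t)$.

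With these in hand, \eqref{un0ptwse} is obtained by splitting the $(s,\rho)$ integration into the regions $s-t\le 3r$, $\rho\le 3r$, and their complements, and — when both $s-t$ and $\rho$ exceed $3r$ — using $I_{N_0}(s,r\beta,\rho,\theta)-I_{N_0}(s,0,\rho,\theta)=\int_0^{r\beta}\partial_2 I_{N_0}(s,y,\rho,\theta)\,dy$ to gain the extra factor of $r$, precisely the mechanism of Lemma \ref{q41lemma}. Throughout one stays in $r\le t/2$, so inside the backward light cone $s-\sqrt{r^2\beta^2+\rho^2+2\beta r\rho\cos\theta}\ge s-(r+\rho)\ge t-r\ge Ct$, which converts the $\langle t-r\rangle$-decay carried by $v_2,v_{2,2}$ and the $s^{-k}$-decay of the other pieces into powers of $t$. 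The time derivatives are handled by differentiating the representation formula and repeating the argument, using the symbol-type nature of the estimates; the $r$-derivative is handled from the $r$-differentiated formula by the same region decomposition.

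For the energy bound \eqref{un0enest} I would use the standard estimate $\sqrt{E(u_{N_0},\partial_t u_{N_0})}\le C\int_t^\infty\|N_0(s,\cdot)\|_{L^2(r\,dr)}\,ds$ together with the pointwise bound obtained by Cauchy--Schwarz in frequency, exactly as in \eqref{ptwseenest}--\eqref{uw2enest}; this needs an $L^2(r\,dr)$ estimate on $N_0(s,\cdot)$, which now also uses the near- and beyond-cone estimates on $u_{a,0}$ in \eqref{ua0est} and the corresponding bounds on $v_2,v_{2,2}$ from Lemmas \ref{v2estlemma}, \ref{v22lemma}. The cone estimate \eqref{drun0ptwseuptocone} is again extracted from the $r$-differentiated representation formula, this time exploiting the $\langle t-r\rangle$-decay of $v_2,v_{2,2}$ inside $u_{a,0}$ to localize the $\rho$-integration, as in the proof of the analogous bounds in Lemma \ref{u22lemma}. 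Finally, \eqref{drun0inftyest} follows from a crude global pointwise bound $|I_{N_0}(s,r,\rho,\theta)|\le C\log^{12}(s)\,s^{-(7/2-9C_u)}$ (from the worst-case versions of the $u_{a,0}$ estimates, in the spirit of \eqref{ua1est}), integrated against $\int_t^\infty (s-t)\,ds$.

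The main obstacle I expect is the bookkeeping: $N_0$ is a cubic-type combination of $u_{ell}$, $w_1$, $v_2$, $v_{2,2}$, each with genuinely different behavior across the regions $r\lesssim\lambda(t)$, $\lambda(t)\lesssim r\lesssim h(t)$, $h(t)\lesssim r\lesssim t/2$, $t/2\lesssim r\lesssim t$, and $r\gtrsim t$, so the pointwise estimate on $N_0$ has many cases, and each must be propagated through the appropriate splitting of the spherical-means double integral while keeping all $\langle t-r\rangle$-weights visible. The genuinely delicate point is near the light cone, where $v_2$ and $v_{2,2}$ decay only like $r^{-1/2}$ and one must not lose their $\langle t-r\rangle$-gain when passing from $N_0$ to $u_{N_0}$; this is what forces the $r\le t/2$ / $t/2<r<t$ split and the careful use of $s-\sqrt{r^2\beta^2+\rho^2+2\beta r\rho\cos\theta}\ge t-r$.
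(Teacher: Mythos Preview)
Your overall structure matches the paper's proof: the same representation formula \eqref{un01deriv}, the same derivation of pointwise bounds on $N_0$ from \eqref{ua0est}, the same energy argument via \eqref{ptwseenest}, and the same crude global bound for \eqref{drun0inftyest}.

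The one point where you diverge is the mechanism for \eqref{un0ptwse}. You invoke the Lemma~\ref{q41lemma} subtraction $I_{N_0}(s,r\beta,\rho,\theta)-I_{N_0}(s,0,\rho,\theta)$ in the region $\rho>3r$, with direct estimates elsewhere. But that lemma was set up to bound a \emph{difference} $q_{4,1}-q_{4,1,0}$, with the leading piece $q_{4,1,0}$ estimated separately; here you want $u_{N_0}$ itself, so you would also need to control the analogue of $q_{4,1,0}$, which you do not mention. More importantly, the regions are inverted: for $N_0$ the direct bound $|I_{N_0}|\le C\lambda(s)^3\log^3(s)\,s^{-4}\rho^{-2}$ in the region $\rho>3r$ already gives the right $t^{-4}$ rate without any subtraction, whereas in the region $\rho\le 3r$ the crude bound $|I_{N_0}|\le C\lambda(s)\log^3(s)\,s^{-4}$ is far too weak when $r$ is comparable to $t$ (it loses a factor $t^2/\lambda^2$). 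Lemma~\ref{q41lemma} never faces this because there $r\le 2g(t)\lambda(t)\ll t$.

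The paper instead follows the template of Lemma~\ref{u22lemma}: it evaluates the $\theta$-integral of $\frac{1}{X^2+\lambda^2}$ exactly by Cauchy's residue theorem (an analog of \eqref{thetaint}), obtaining the factor $\big((\lambda(s)^2+(\rho+r\beta)^2)(\lambda(s)^2+(\rho-r\beta)^2)\big)^{-1/2}$, then bounds the $\beta$-integral by $\frac{C\log s}{(r+\rho)(\lambda(s)+\rho)}$ before integrating in $\rho$ and $s$. This handles all $r\le t/2$ uniformly. For $\partial_r u_{N_0}$ the same residue evaluation is used, and the paper records the specific trick $\frac{d}{d\rho}\mathrm{arctanh}\big((\rho-r)/\sqrt{a^2+(\rho-r)^2}\big)=(a^2+(\rho-r)^2)^{-1/2}$ for the most delicate $\rho$-integral. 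So the correct analogy for \eqref{un0ptwse} is Lemma~\ref{u22lemma}, not Lemma~\ref{q41lemma}; swapping that in, the rest of your outline goes through.
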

\begin{proof}
As in \eqref{q41orig}, we have 
\begin{equation}\label{un01deriv}u_{N_{0}}(t,r) = -\frac{r}{2\pi} \int_{t}^{\infty} ds \int_{0}^{s-t} \frac{\rho d\rho}{\sqrt{(s-t)^{2}-\rho^{2}}} \int_{0}^{1}d\beta \int_{0}^{2\pi} d\theta I_{N_{0}}(s,r\beta,\rho,\theta)\end{equation}
where we use the notation defined in \eqref{Inotation}. We then estimate $u_{a,0}$ and its derivatives using the procedure described in the proof of Lemma \ref{n1lemma}. This results in the following estimate for $j=0,1,2$ and $k=0,1$.
\begin{equation}\label{n0estsforun0}\begin{split}&|\partial_{t}^{j}\partial_{r}^{k} N_{0}(t,r)| \\
&\leq C\left(\frac{1}{r}+\frac{1}{\langle t-r \rangle}\right)^{k} \left(\frac{1}{t}+\frac{1}{\langle t-r \rangle}\right)^{j} \begin{cases} \frac{r \lambda(t)^{3} \log^{3}(t)}{t^{4} (r^{2}+\lambda(t)^{2})}, \quad r \leq \frac{t}{2}\\
\frac{\lambda(t)}{t^{4}} \frac{\left(\sup_{x \in [100,t]}\left(\lambda(x) \log(x)\right)\right)^{2}}{\langle t-r \rangle^{5}} \left(\sup_{x \in [100,t]}\left(\lambda(x)^{2}\right)\right)^{2} \log^{6}(t) \\
+ \frac{\left(\sup_{x \in [100,t]}\left(\lambda(x) \log(x)\right)\right)^{3} \log^{3}(t)}{r^{7/2} \langle t-r \rangle^{3/2}}\\
+\frac{\left(\sup_{x \in [100,t]}\left(\lambda(x) \log(x)\right)\right)^{3} \left(\sup_{x \in [100,t]}\left(\lambda(x)^{2}\right)\right)^{3} \log^{9}(t)}{\langle t-r \rangle^{15/2} t^{7/2}}, \quad \frac{t}{2} < r < t\end{cases}\end{split}\end{equation}
 We start with the region $r \leq \frac{t}{2}$. Using an analog of \eqref{thetaint}, we get
\begin{equation}\label{un0formulaforest}\begin{split} &|u_{N_{0}}(t,r)| \\
& \leq C r \int_{t}^{\infty} ds \int_{0}^{s-t} \frac{\rho d\rho}{\sqrt{(s-t)^{2}-\rho^{2}}} \int_{0}^{1}d\beta \begin{aligned}[t]&\left(\frac{\lambda(s)^{3} \log^{3}(s)}{s^{4}  \sqrt{(\lambda(s)^{2}+(\rho+r\beta)^{2})(\lambda(s)^{2} +(\rho-r \beta)^{2})}}\right.\\
&\left.+\frac{\lambda(s) \left(\sup_{x\in[100,s]}\left(\lambda(x) \log(x)\right)\right)^{2} \left(\sup_{x \in [100,s]}\left(\lambda(x)^{2}\right)\right)^{2} \log^{6}(s)}{s^{4} (s-(\rho+r))^{6}}\right.\\
&+\left. \frac{\left(\sup_{x\in[100,s]}\left(\lambda(x) \log(x)\right)\right)^{3} \log^{3}(s)}{s^{7/2} (s-(\rho+r))^{5/2}}\right.\\
&\left.+\frac{\left(\sup_{x\in[100,s]}\left(\lambda(x) \log(x)\right)\right)^{3} \left(\sup_{x \in [100,s]}\left(\lambda(x)^{2}\right)\right)^{3} \log^{9}(s)}{s^{7/2} (s-(\rho+r))^{17/2}}\right)\end{aligned}\end{split}\end{equation}
 where we used the fact that, if $0\leq \beta \leq 1$, then,
$$\langle s-\sqrt{\rho^{2}+r^{2}\beta^{2}+2 r \beta \rho \cos(\theta)}\rangle \geq C\left(s-(\rho+r)\right). $$
By a direct estimation, we get
$$|\int_{0}^{1}\frac{d\beta}{\sqrt{\lambda(s)^{2}+(r\beta-\rho)^{2}}} \frac{1}{(\lambda(s)+r\beta+\rho)}| \leq \frac{C \log(s)}{(r+\rho)(\lambda(s)+\rho)}$$
and therefore,
\begin{equation}\label{firstintegralforun0}r \int_{t}^{\infty} ds  \int_{0}^{s-t} \frac{\rho d\rho}{\sqrt{(s-t)^{2}-\rho^{2}}} \int_{0}^{1}d\beta \frac{\lambda(s)^{3} \log^{3}(s)}{s^{4} \log^{2b}(s) \sqrt{(\lambda(s)^{2}+(\rho+r \beta)^{2})(\lambda(s)^{2}+(\rho-r\beta)^{2})}} \leq \frac{C r \lambda(t)^{3} \log^{6}(t)}{t^{4}}\end{equation}
The other terms of \eqref{un0formulaforest} are treated with a similar argument, using $s-(r+\rho) \geq s-r-(s-t) \geq C t$ (since $r \leq \frac{t}{2}$). This gives \eqref{un0ptwse}, for $k=0,j=0$. To obtain \eqref{un0ptwse} for $k=1,2$ and $j=0$, we first note that
$$\partial_{t}^{k} u_{N_{0}}(t,r) = -\frac{r}{2\pi} \int_{t}^{\infty} ds \int_{0}^{s-t} \frac{\rho d\rho}{\sqrt{(s-t)^{2}-\rho^{2}}} \int_{0}^{1}d\beta \int_{0}^{2\pi} d\theta \partial_{1}^{k}I_{N_{0}}(s,r\beta,\rho,\theta)$$
Then, after applying \eqref{n0estsforun0}, we obtain an extra factor of 
$$\left(\frac{1}{s}+\frac{1}{\langle s-\sqrt{r^{2}\beta^{2}+\rho^{2}+2 r \beta \rho \cos(\theta)}\rangle}\right)^{k}\leq \left(\frac{C}{s} + \frac{C}{s-(r+s-t)}\right)^{k} \leq \frac{C}{t^{k}}$$
in the integrand of the expression for $\partial_{t}^{k} u_{N_{0}}(t,r)$, relative to that for $u_{N_{0}}$, which we just estimated. This immediately gives \eqref{un0ptwse} for $k=1,2$. Next, we use the same procedure as in \eqref{ptwseenest}, to get \eqref{un0enest}. Finally, we use
\begin{equation}\label{drun0intstep} \partial_{r}u_{N_{0}}(t,r) = \frac{-1}{2\pi} \int_{t}^{\infty} ds \int_{0}^{s-t} \frac{\rho d\rho}{\sqrt{(s-t)^{2}-\rho^{2}}} \int_{0}^{2\pi} d\theta I_{N_{0}}(s,r,\rho,\theta)\end{equation} 
and a similar argument used to estimate $u_{N_{0}}$, to get \eqref{un0ptwse} for $j=1,k=0$. For completeness, we show how to estimate the most delicate integral, which is
$$\int_{t}^{\infty} ds \int_{0}^{s-t} \frac{\rho d\rho}{\sqrt{(s-t)^{2}-\rho^{2}}} \frac{\lambda(s)^{3} \log^{3}(s)}{s^{4}  \sqrt{\lambda(s)^{2}+(\rho+r)^{2}}\sqrt{\lambda(s)^{2}+(\rho-r)^{2}}}$$
Here, we use
$$\frac{d}{d\rho}\left(\text{arctanh}(\frac{\rho-r}{\sqrt{a^{2}+(\rho-r)^{2}}})\right) = \frac{1}{\sqrt{a^{2}+(r-\rho)^{2}}}$$
and estimate the integral directly. A similar argument establishes \eqref{drun0ptwseuptocone}. Finally, to obtain \eqref{drun0inftyest}, we again estimate $u_{a,0}$ and $\partial_{r}u_{a,0}$ using the procedure described in the proof of Lemma \ref{n1lemma} to get, for all $r>0, \theta \in [0,2\pi], \rho \leq s-t$,
$$|I_{N_{0}}(s,r,\rho,\theta)| \leq \frac{C \log^{12}(s)}{s^{\frac{7}{2}-9C_{u}}}$$
This gives
$$|\partial_{r}u_{N_{0}}(t,r)| \leq C \int_{t}^{\infty} ds (s-t) \frac{C \log^{12}(s)}{s^{\frac{7}{2}-9C_{u}}} \leq \frac{C \log^{12}(t)}{t^{\frac{3}{2}-9C_{u}}}$$
 \end{proof}
The linear error term of $u_{N_{0}}$ is $e_{N_{0}}$ defined by
\begin{equation}\label{en0def}e_{N_{0}}(t,r):= \left(\frac{\cos(2Q_{1}(\frac{r}{\lambda(t)}))-1}{r^{2}}\right) u_{N_{0}}(t,r)\end{equation}
Then, a straightforward estimation, using Lemma \ref{un0lemma} gives the following (recall \eqref{mleq1def}). 
\begin{lemma} We have the following estimates for $k=0,1$.
\begin{equation}\label{en0estlemma}\begin{split}&||L_{\frac{1}{\lambda(t)}}^{k}(e_{N_{0}}(t,r) (1-m_{\leq 1}(\frac{r}{2h(t)})))||_{L^{2}(r dr)} \\
&\leq \frac{C \lambda(t)^{2} \left(\sup_{x \in [100,t]}\left(\lambda(x) \log(x)\right)\right)^{3} \log^{3}(t)}{t^{4}}\left(\frac{1}{h(t)^{2+k}}+\frac{\left(\sup_{x \in [100,t]}\left(\lambda(x)^{2}\right)\right)^{3} \log^{6}(t)}{t^{1+k}}\right)\end{split}\end{equation}
\end{lemma}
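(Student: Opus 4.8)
The plan is to reduce everything to the already-established pointwise and energy estimates on $u_{N_0}$ from Lemma \ref{un0lemma}, because $e_{N_0}(t,r)(1-m_{\leq 1}(\frac{r}{2h(t)}))$ is $u_{N_0}$ multiplied by a function which is both spatially localized away from the origin (supported in $r \geq h(t)$) and a symbol in $(t,r)$. First I would record the elementary bound on the multiplier: since $Q_1(x) = 2\arctan(x)$, one has $\frac{\cos(2Q_1(\frac{r}{\lambda(t)}))-1}{r^2} = \frac{-8\lambda(t)^2}{(r^2+\lambda(t)^2)^2}$, which on the support of $1-m_{\leq 1}(\frac{r}{2h(t)})$ (where $r \geq h(t) = g(t)\lambda(t) \gg \lambda(t)$) satisfies $\left|\frac{\cos(2Q_1(\frac{r}{\lambda(t)}))-1}{r^2}\right| \leq \frac{C\lambda(t)^2}{r^4}$, together with the analogous symbol-type estimates $\left|\partial_r^j\left(\frac{\cos(2Q_1(\frac{r}{\lambda(t)}))-1}{r^2}\right)\right| \leq \frac{C\lambda(t)^2}{r^{4+j}}$ and $\left|\partial_r^j(1-m_{\leq 1}(\frac{r}{2h(t)}))\right| \leq \frac{C}{h(t)^j}$. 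Then $L_{1/\lambda(t)}^k$ applied to the product distributes via the Leibniz rule over $\frac{\cos(2Q_1(\frac{r}{\lambda(t)}))-1}{r^2}$, $1-m_{\leq 1}(\frac{r}{2h(t)})$, $u_{N_0}$, and the factor $-\frac{\cos(Q_1(r/\lambda(t)))}{r}$ coming from each $L_{1/\lambda(t)}$, each of which is a harmless symbol on the relevant support. Since $k \leq 1$, at most one derivative hits $u_{N_0}$, so only $u_{N_0}$ and $\partial_r u_{N_0}$ estimates from Lemma \ref{un0lemma} are needed.

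Next I would split the $L^2(r\,dr)$ norm over the two regions where Lemma \ref{un0lemma} gives qualitatively different bounds: the interior region $h(t) \leq r \leq \frac{t}{2}$, where \eqref{un0ptwse} applies, and the near-cone/exterior region $r \geq \frac{t}{2}$, where one uses \eqref{un0enest} (for $u_{N_0}$), \eqref{drun0ptwseuptocone} and \eqref{drun0inftyest} (for $\partial_r u_{N_0}$). In the interior region, insert $|\partial_r^j u_{N_0}(t,r)| \leq \frac{Cr^{1-j}(\sup_{[100,t]}(\lambda\log))^3\log^3(t)}{t^4}$ and the $\frac{\lambda(t)^2}{r^4}\cdot\frac{1}{h(t)^k}$-type bound on the multiplier, so the integrand is controlled by $\frac{C\lambda(t)^2 (\sup(\lambda\log))^3\log^3(t)}{t^4 h(t)^{2+k}}$ times $\frac{r^{2-2j-8+\dots}}{\cdots}$ — carrying out the $r$-integral (dominated near $r\sim h(t)$) produces exactly the first term $\frac{C\lambda(t)^2(\sup(\lambda\log))^3\log^3(t)}{t^4 h(t)^{2+k}}$ of \eqref{en0estlemma}, after possibly absorbing lower powers of $\log t$. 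In the region $r \geq \frac t2$, the multiplier is bounded by $\frac{C\lambda(t)^2}{t^4}$ (with derivatives costing $\frac{1}{h(t)}$ or $\frac{1}{t}$), so the contribution is $\leq \frac{C\lambda(t)^2}{t^{4+k}}\left(\|u_{N_0}\|_{L^2(r\,dr,\,r>t/2)} + \|\partial_r u_{N_0}\|_{L^2(r\,dr,\,r>t/2)}\right)$; bounding the $L^2$ norms by the energy estimate \eqref{un0enest} (noting $\sqrt{E(u_{N_0},\partial_t u_{N_0})}$ controls both $\|\partial_r u_{N_0}\|_{L^2(r\,dr)}$ and $\|u_{N_0}/r\|_{L^2(r\,dr)}$, and $\|u_{N_0}\|_{L^2(r\,dr,\,r>t/2)} \leq \frac{t}{2}\|u_{N_0}/r\|_{L^2}$, which is too lossy — better to use the pointwise $L^\infty$ bound $|u_{N_0}(t,r)| \leq \frac{C(\sup(\lambda\log))^3(\sup\lambda^2)^3\log^9(t)}{t^2}$ combined with the finite-in-$L^2$ statement that the weight $\frac{\lambda(t)^2}{r^4}(1-m_{\leq 1}(\frac{r}{2h(t)}))$ is in $L^2(r\,dr)$ with norm $\sim \frac{\lambda(t)^2}{h(t)^3}$) yields $\leq \frac{C\lambda(t)^2(\sup(\lambda\log))^3(\sup\lambda^2)^3\log^9(t)}{t^{4}}\cdot\frac{1}{t^{1+k}}$ after using $h(t)\leq t$, which is the second term of \eqref{en0estlemma}.

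The main obstacle I anticipate is getting the powers of $t$ and $h(t)$ sharp enough in the near-cone region $\frac t2 \leq r \leq t$, where $u_{N_0}$ and $\partial_r u_{N_0}$ do not decay uniformly but only carry the $\langle t-r\rangle$-weighted bounds \eqref{un0enest}, \eqref{drun0ptwseuptocone}: a naive $L^\infty \times L^2(\text{weight})$ split could lose a power of $t$ relative to the claimed bound. The fix is to use the $\langle t-r\rangle$-weighted pointwise estimates \eqref{drun0ptwseuptocone} directly inside the $L^2(r\,dr)$ integral over $\frac t2 < r < t$, where the multiplier $\frac{\lambda(t)^2}{r^4} \sim \frac{\lambda(t)^2}{t^4}$ is essentially constant, so that $\int_{t/2}^{t} \frac{1}{\langle t-r\rangle^{5}}\,r\,dr \leq Ct$, giving a gain; and for $r > t$ one uses the $L^\infty$ bound \eqref{drun0inftyest} together with the fact that $\frac{\lambda(t)^2}{r^4}$ is integrable against $r\,dr$ on $(t,\infty)$ with norm $\frac{C\lambda(t)^2}{t^3}$. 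Collecting these, absorbing all excess logarithms and the harmless factors of $(\sup\lambda^2)^3$ into the stated form, and observing that every term is dominated by the right-hand side of \eqref{en0estlemma}, completes the proof; no subtle cancellation is required, only careful bookkeeping of the region decomposition and the derivative counts.
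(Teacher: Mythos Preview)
Your overall strategy --- reducing to the pointwise and energy bounds on $u_{N_0}$ from Lemma \ref{un0lemma}, using that the potential $\frac{\cos(2Q_1(r/\lambda(t)))-1}{r^2} = \frac{-8\lambda(t)^2}{(r^2+\lambda(t)^2)^2}$ is bounded by $\frac{C\lambda(t)^2}{r^4}$ on the support $r \geq h(t)$, and splitting into $h(t)\leq r\leq \frac t2$ versus $r\geq \frac t2$ --- is exactly what the paper does, and your treatment of the interior region is correct.

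The gap is in the near-cone region $\frac{t}{2} < r < t$ for the $k=1$ case, specifically the term where the derivative lands on $u_{N_0}$. Your proposed ``fix'' via the $\langle t-r\rangle$-weighted pointwise estimate \eqref{drun0ptwseuptocone} yields, from its leading term,
\[
\left\|\frac{\lambda(t)^2}{r^4}\,\partial_r u_{N_0}\right\|_{L^2(r\,dr,\,\frac{t}{2}<r<t)} \leq \frac{C\lambda(t)^2}{t^4}\cdot\frac{(\sup_{[100,t]}\lambda\log)^3\log^3(t)}{t^{3/2}}\cdot\sqrt{t} = \frac{C\lambda(t)^2(\sup\lambda\log)^3\log^3(t)}{t^5},
\]
and this carries no factor of $(\sup\lambda^2)^3$. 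For it to be dominated by the right-hand side of the lemma at $k=1$ one would need $\frac{1}{t} \leq \frac{C}{h(t)^3} + \frac{C(\sup\lambda^2)^3\log^6(t)}{t^2}$, which fails whenever $h(t)^3 \gg t$ and $\lambda$ is bounded --- and $h(t)=t^{\alpha}\lambda(t)$ with $\alpha>\frac12$ by \eqref{alphaconstr}, so $h(t)^3\gg t$ is the generic situation. The phrase ``absorbing the harmless factors of $(\sup\lambda^2)^3$'' does not repair this, since $(\sup\lambda^2)^3$ may be $\leq 1$.

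The correct move is the one you mention but then discard: for the $\partial_r u_{N_0}$ contribution on $r > \frac{t}{2}$, pair the $L^\infty$ bound on the multiplier, $\frac{C\lambda(t)^2}{t^4}$, with the energy bound $\|\partial_r u_{N_0}\|_{L^2(r\,dr)} \leq \sqrt{E(u_{N_0},\partial_t u_{N_0})}$ from \eqref{un0enest}. This yields $\frac{C\lambda(t)^2(\sup\lambda\log)^3(\sup\lambda^2)^3\log^9(t)}{t^6}$, which is exactly the second term of the stated bound for $k=1$. Your ``too lossy'' objection was aimed at $\|u_{N_0}\|_{L^2(r>t/2)}$, not at $\|\partial_r u_{N_0}\|_{L^2}$; and in fact even for $u_{N_0}$ the Hardy-type bound $\|u_{N_0}\|_{L^2(r>t/2)} \leq Ct\,\|u_{N_0}/r\|_{L^2} \leq Ct\sqrt{E}$ already gives the correct power $t^{-5}$ for $k=0$, so the energy route was never lossy in the first place.
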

It remains to treat $m_{\leq 1}(\frac{r}{2h(t)}) e_{N_{0}}(t,r)$. For the reader's convenience, we repeat the outline (that was given in Section 3, just below \eqref{un0summary}) of the procedure to be used. We start with solving the equation
\begin{equation}\label{un0elleqn}\partial_{rr}u_{N_{0},ell}+\frac{1}{r}\partial_{r}u_{N_{0},ell}-\frac{\cos(2Q_{\frac{1}{\lambda(t)}}(r))}{r^{2}}u_{N_{0},ell} = m_{\leq 1}(\frac{r}{2h(t)}) e_{N_{0}}(t,r)\end{equation}
and then inserting the following function into the ansatz
\begin{equation}\label{un0corrdef}u_{N_{0},corr}(t,r):=\left(u_{N_{0},ell}(t,r) - \frac{r \lambda(t)}{4} \langle m_{\leq 1}(\frac{R \lambda(t)}{2h(t)}) e_{N_{0}}(t,R \lambda(t)),\phi_{0}(R)\rangle_{L^{2}(R dR)}\right) m_{\leq 1}(\frac{2 r}{t}) + v_{2,4}(t,r)\end{equation}
where $v_{2,4}$ solves
\begin{equation}\label{v24eqn}-\partial_{tt}v_{2,4}+\partial_{rr}v_{2,4}+\frac{1}{r}\partial_{r}v_{2,4}-\frac{v_{2,4}}{r^{2}}=0\end{equation}
and $v_{2,4}(t,r)$ matches $\frac{r \lambda(t)}{4} \langle m_{\leq 1}(\frac{R \lambda(t)}{2h(t)}) e_{N_{0}}(t,R \lambda(t)),\phi_{0}(R)\rangle_{L^{2}(R dR)}$ for small $r$. (Recall that $m_{\leq 1}$ was defined in \eqref{mleq1def}). In particular, we choose the initial velocity, say $v_{2,5}$ of $v_{2,4}$ by requiring, for all $t$ sufficiently large,
$$\frac{-r}{4}\cdot -2 \int_{0}^{\infty} \xi \sin(t\xi) \widehat{v_{2,5}}(\xi) d\xi =\frac{r \lambda(t)}{4} \langle m_{\leq 1}(\frac{R \lambda(t)}{2h(t)}) e_{N_{0}}(t,R \lambda(t)),\phi_{0}(R)\rangle_{L^{2}(R dR)}$$ 
We refer the reader to the discussion in Section 3, just below \eqref{un0summary}, for some intuition regarding this procedure.\\
\\
We start by estimating $u_{N_{0},ell}(t,r)- \frac{r \lambda(t)}{4} \langle m_{\leq 1}(\frac{R \lambda(t)}{2h(t)}) e_{N_{0}}(t,R \lambda(t)),\phi_{0}(R)\rangle$.
\begin{lemma} \label{un0ellminusipestlemma} We have the following estimates. For $0 \leq k \leq 1$ and $0 \leq j \leq 2$, 
\begin{equation}\label{un0minusipest}\begin{split} &t^{j}r^{k} |\partial_{t}^{j}\partial_{r}^{k}\left(u_{N_{0},ell}(t,r)-\frac{r \lambda(t)}{4} \langle m_{\leq 1}(\frac{R \lambda(t)}{2h(t)}) e_{N_{0}}(t,R \lambda(t)),\phi_{0}(R)\rangle\right)|\\
&\leq \begin{cases} \frac{C r \log^{3}(t) \left(\sup_{x \in [100,t]}\left(\lambda(x) \log(x)\right)\right)^{3}}{t^{4}}  , \quad r \leq \lambda(t)\\
\frac{C (\log(t)+\log(\frac{r}{\lambda(t)})) \lambda(t)^{2}\left(\sup_{x \in [100,t]}\left(\lambda(x) \log(x)\right)\right)^{3} \log^{3}(t)}{r t^{4}} , \quad \lambda(t) < r\end{cases}\end{split}\end{equation}
In addition, we have
\begin{equation}\label{dr2un0minusipest}\begin{split}&|\partial_{r}^{2} \left(u_{N_{0},ell}(t,r)-\frac{r \lambda(t)}{4} \langle m_{\leq 1}(\frac{R \lambda(t)}{2h(t)}) e_{N_{0}}(t,R \lambda(t)),\phi_{0}(R)\rangle\right)| \\
&\leq C \begin{cases}  \frac{r}{\lambda(t)^{2}} \frac{\left(\sup_{x \in [100,t]}\left(\lambda(x) \log(x)\right)\right)^{3} \log^{3}(t)}{t^{4}}, \quad r \leq \lambda(t)\\
\frac{ \lambda(t)^{2}(\log(t)+\log(\frac{r}{\lambda(t)}))\left(\sup_{x \in [100,t]}\left(\lambda(x) \log(x)\right)\right)^{3} \log^{3}(t)}{r^{3}t^{4}}  , \quad r \geq \lambda(t)\end{cases}\end{split}\end{equation}
Finally, for $0 \leq k \leq 2$,
\begin{equation}\label{en0ipest}\begin{split}&|\partial_{t}^{k}\langle e_{N_{0}}(t,R\lambda(t)) m_{\leq 1}(\frac{R \lambda(t)}{2 h(t)}),\phi_{0}(R)\rangle| \leq \frac{C \log^{3}(t) \left(\sup_{x \in [100,t]}\left(\lambda(x) \log(x)\right)\right)^{3}}{t^{4+k}\lambda(t)}\end{split}\end{equation} 
\end{lemma}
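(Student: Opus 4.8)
The plan is to exploit the explicit formula \eqref{ellsoln} for $u_{N_{0},ell}$ with $F(t,R) = m_{\leq 1}(\frac{R\lambda(t)}{2h(t)}) \lambda(t)^{2} e_{N_{0}}(t,R\lambda(t))$ (after rescaling $r = R\lambda(t)$), and to track the leading-order behaviour of the two integral terms in \eqref{ellsoln} for large $R$. Concretely, $u_{N_{0},ell}(t,R\lambda(t))$ equals
$$-\frac{R}{1+R^{2}} \int_{0}^{R} F(t,s) s e_{2}(s)\,ds + e_{2}(R) \int_{0}^{R} \frac{F(t,s) s^{2}}{1+s^{2}}\,ds.$$
For large $R$, $e_{2}(R) = \frac{R}{2} + O(\frac{\log R}{R})$ and $\frac{R}{1+R^{2}} = O(\frac{1}{R})$, so the $e_{2}(R)$-term contributes the delicate growing-in-$R$ piece. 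The inner product $\langle m_{\leq 1}(\frac{R\lambda(t)}{2h(t)}) e_{N_{0}}(t,R\lambda(t)),\phi_{0}(R)\rangle_{L^{2}(R dR)}$, up to the factor $\frac{r\lambda(t)}{4}$, is designed precisely to cancel the $\frac{R}{2}\int_{0}^{\infty}\frac{F(t,s)s^{2}}{1+s^{2}}ds$ part of that term, since $\phi_{0}(R) = \frac{2R}{1+R^{2}}$ means $\frac{1}{2}\langle F(t,\cdot),\phi_{0}\rangle_{L^{2}(R dR)} = \int_{0}^{\infty}\frac{F(t,s)s^{2}}{1+s^{2}}ds$. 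Thus
$$u_{N_{0},ell}(t,R\lambda(t)) - \frac{R\lambda(t)^{2}}{4}\langle m_{\leq 1}(\tfrac{\cdot\lambda(t)}{2h(t)})e_{N_{0}}(t,\cdot\lambda(t)),\phi_{0}\rangle = -\frac{R}{1+R^{2}}\int_{0}^{R} F s e_{2}\,ds + \big(e_{2}(R) - \tfrac{R}{2}\big)\int_{0}^{R}\frac{F s^{2}}{1+s^{2}}ds - \frac{R}{2}\int_{R}^{\infty}\frac{F s^{2}}{1+s^{2}}ds,$$
and each of the three terms now decays appropriately in $R$. The estimates \eqref{un0minusipest} in the region $r > \lambda(t)$ (i.e. $R > 1$) follow from bounding these three integrals using the pointwise control on $e_{N_{0}}(t,r) = \big(\frac{\cos(2Q_{1}(r/\lambda(t)))-1}{r^{2}}\big)u_{N_{0}}(t,r)$, which in turn comes from \eqref{un0ptwse} of Lemma \ref{un0lemma} and the fact that $\big|\frac{\cos(2Q_{1}(r/\lambda(t)))-1}{r^{2}}\big| \leq \frac{C\lambda(t)^{2}}{(r^{2}+\lambda(t)^{2})^{2}}$; the cutoff $m_{\leq 1}(\frac{r}{2h(t)})$ restricts the support to $r \lesssim h(t) \ll \frac{t}{2}$, so only the $r \leq \frac{t}{2}$ case of \eqref{un0ptwse} is needed. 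In the region $r \leq \lambda(t)$ ($R \leq 1$) the statement reduces to bounding $u_{N_{0},ell}$ and the inner-product term separately, using $e_{2}(R), \phi_{0}(R) = O(R)$ near the origin and the fact that $u_{N_{0},ell}(t,0) = 0$.

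The estimate \eqref{en0ipest} is the simplest: it is just a direct bound on $\partial_{t}^{k}\langle e_{N_{0}}(t,R\lambda(t)) m_{\leq 1}(\frac{R\lambda(t)}{2h(t)}),\phi_{0}(R)\rangle_{L^{2}(R dR)}$, where one changes variables $r = R\lambda(t)$ to write this as $\lambda(t)^{-2}\int_{0}^{\infty} e_{N_{0}}(t,r) m_{\leq 1}(\frac{r}{2h(t)})\phi_{0}(\frac{r}{\lambda(t)})\,r\,dr$, then inserts the pointwise estimate on $e_{N_{0}}$, $|\phi_{0}(\frac{r}{\lambda(t)})| \leq C\min\{\frac{r}{\lambda(t)},\frac{\lambda(t)}{r}\}$, and integrates; the symbol-type estimates on $\lambda$ and the chain rule (noting $\partial_{t}$ falling on $m_{\leq 1}(\frac{r}{2h(t)})$, on $\phi_{0}(\frac{r}{\lambda(t)})$, or on $e_{N_{0}}$ each costs at most $\frac{1}{t}$ via \eqref{un0ptwse}) give the $t^{-k}$ gain. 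For \eqref{dr2un0minusipest}, the plan is to use the ODE \eqref{un0elleqn} solved by $u_{N_{0},ell}$ to write
$$\partial_{r}^{2} u_{N_{0},ell}(t,r) = m_{\leq 1}(\tfrac{r}{2h(t)}) e_{N_{0}}(t,r) - \frac{1}{r}\partial_{r}u_{N_{0},ell}(t,r) + \frac{\cos(2Q_{1}(r/\lambda(t)))}{r^{2}}u_{N_{0},ell}(t,r),$$
and then combine the $\partial_{r}^{1}$ and $\partial_{r}^{0}$ estimates from \eqref{un0minusipest} (the $\frac{r\lambda(t)}{4}\langle\cdots\rangle$ subtraction has vanishing second $r$-derivative, so it drops out of $\partial_{r}^{2}$) with the pointwise bound on $m_{\leq 1}(\frac{r}{2h(t)})e_{N_{0}}(t,r)$, splitting into $r \leq \lambda(t)$ and $r \geq \lambda(t)$.

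I expect the main obstacle to be the large-$R$ analysis of the first integral term $-\frac{R}{1+R^{2}}\int_{0}^{R} F(t,s) s e_{2}(s)\,ds$ in the regime $\lambda(t) < r$: since $e_{2}(s)$ grows like $\frac{s^{3}}{2s} \sim \frac{s^{2}}{2}$ for large $s$... actually $e_{2}(s) = \frac{s^{4} + 4s^{2}\log s - 1}{2s(s^{2}+1)} \sim \frac{s}{2}$ for large $s$, so $s e_{2}(s) \sim \frac{s^{2}}{2}$, and $F(t,s) = m_{\leq 1}(\frac{s\lambda(t)}{2h(t)})\lambda(t)^{2}e_{N_{0}}(t,s\lambda(t))$ decays like $\frac{\lambda(t)^{6}}{(s\lambda(t))^{4}} \cdot (s\lambda(t))^{1-0}\cdots$ times the $u_{N_{0}}$-estimate — one must check that $\int_{0}^{R} F(t,s)s e_{2}(s)\,ds$ converges (or grows slowly) before the cutoff $m_{\leq 1}$ kicks in, and that the product with $\frac{R}{1+R^{2}}$ produces the claimed $\frac{\log(r/\lambda(t))}{r}$-type bound. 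The logarithmic factors $\log(t) + \log(\frac{r}{\lambda(t)})$ on the right-hand side of \eqref{un0minusipest} and \eqref{dr2un0minusipest} are exactly the signature of these slowly-growing integrals, so careful bookkeeping of where the $\log$'s come from — partly from $e_{2}$'s $\log s$ term, partly from integrating $\frac{ds}{s}$-type tails against the cutoff — is the delicate part. Everything else is routine estimation using Lemma \ref{un0lemma}, the symbol estimates \eqref{lambdacomparg}, and the explicit formulae \eqref{e2def}, \eqref{ellsoln}, \eqref{en0def}.
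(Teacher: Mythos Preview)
Your proposal is correct and follows essentially the same approach as the paper: the paper likewise splits $u_{N_{0},ell}$ into the two variation-of-parameters integrals, uses $|e_{2}(R)-\frac{R}{2}|\leq C\frac{1+\log R}{R}$ for $R\geq 1$ together with the tail $\int_{R}^{\infty}$ to handle the cancellation with the inner product for $r>\lambda(t)$, estimates the three pieces separately for $r\leq\lambda(t)$, and obtains \eqref{dr2un0minusipest} from the ODE \eqref{un0elleqn}. One small slip: $e_{2}(R)$ is \emph{not} $O(R)$ near the origin (it behaves like $-\frac{1}{2R}$ by \eqref{e2def}); the $r\leq\lambda(t)$ bound still goes through because the accompanying integral $\int_{0}^{R}\frac{F(t,s)s^{2}}{1+s^{2}}\,ds$ vanishes to sufficiently high order at $R=0$ to absorb the singularity.
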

\begin{proof}
We consider the particular solution given by 
$$u_{N_{0},ell}(t,r) = v_{N_{0},ell}(t,\frac{r}{\lambda(t)})$$
where
\begin{equation}\begin{split}v_{N_{0},ell}(t,R)&= e_{2}(R) \int_{0}^{R} \lambda(t)^{2} m_{\leq 1}(\frac{s}{2g(t)}) e_{N_{0}}(t,s\lambda(t)) \phi_{0}(s) \frac{s ds}{2}\\
&-\phi_{0}(R) \int_{0}^{R} \lambda(t)^{2} m_{\leq 1}(\frac{s}{2g(t)}) e_{N_{0}}(t,s\lambda(t)) e_{2}(s) \frac{s ds}{2}\end{split}\end{equation}
We remind the reader that $e_{N_{0}}$ is defined in \eqref{en0def}.  Let $int_{N_{0},i}(t,r)$ denote the $i$th term on the right-hand side of the above equation, evaluated at $R=\frac{r}{\lambda(t)}$. In the region $r \leq \lambda(t)$, we separately estimate $int_{N_{0},1}$, $int_{N_{0},2}$, and $\frac{r \lambda(t)}{4} \langle m_{\leq 1}(\frac{R \lambda(t)}{2h(t)}) e_{N_{0}}(t,R \lambda(t)),\phi_{0}(R)\rangle$, using Lemma \ref{un0lemma}. This gives \eqref{un0minusipest}, in the region $r \leq \lambda(t)$, and for $k=j=0$. When $r \geq \lambda(t)$, we take advantage of the fact that
$$|e_{2}(R)-\frac{R}{2}| \leq C\frac{(1+\log(R))}{R}, \quad R \geq 1$$
In particular, we estimate $int_{N_{0},2}(t,r)$ directly, using Lemma \ref{un0lemma} and write
\begin{equation}\begin{split} &int_{N_{0},1}(t,r) - \frac{r \lambda(t)}{4} \langle m_{\leq 1}(\frac{R \lambda(t)}{2h(t)}) e_{N_{0}}(t,R \lambda(t)),\phi_{0}(R)\rangle\\
&=\left(\frac{e_{2}(\frac{r}{\lambda(t)})}{2}-\frac{r}{4 \lambda(t)}\right) \int_{0}^{\frac{r}{\lambda(t)}} \lambda(t)^{2} m_{\leq 1}(\frac{s}{2g(t)}) e_{N_{0}}(t,s\lambda(t)) \phi_{0}(s) s ds\\
&-\frac{r \lambda(t)}{4} \int_{\frac{r}{\lambda(t)}}^{\infty} m_{\leq 1}(\frac{s}{2 g(t)}) e_{N_{0}}(t,s\lambda(t)) \phi_{0}(s) s ds\end{split}\end{equation}
Directly estimating the integrals using Lemma \ref{un0lemma}, we obtain \eqref{un0minusipest}, in the region $r \geq \lambda(t)$, and $k=j=0$. To obtain \eqref{un0minusipest} for $k=1$, we use
\begin{equation}\begin{split}\partial_{r}u_{N_{0},ell}(t,r)&=\frac{-\phi_{0}'(\frac{r}{\lambda(t)})}{\lambda(t)} \int_{0}^{\frac{r}{\lambda(t)}} \lambda(t)^{2} m_{\leq 1}(\frac{s}{2g(t)}) e_{N_{0}}(t,s\lambda(t)) e_{2}(s) \frac{s ds}{2} \\
&+ \frac{e_{2}'(\frac{r}{\lambda(t)})}{\lambda(t)} \int_{0}^{\frac{r}{\lambda(t)}} \lambda(t)^{2} m_{\leq 1}(\frac{s}{2g(t)}) e_{N_{0}}(t,s\lambda(t)) \phi_{0}(s) \frac{s ds}{2}\end{split}\end{equation}
and then repeat the same procedure used to obtain \eqref{un0minusipest} for $k=0$. For the estimate \eqref{un0minusipest} for $j>0$, we first re-write $u_{N_{0},ell}$ as follows, and then differentiate in $t$ directly.
\begin{equation}\begin{split}u_{N_{0},ell}(t,r)&=-\phi_{0}(\frac{r}{\lambda(t)}) \int_{0}^{r} m_{\leq 1}(\frac{x}{2h(t)}) e_{N_{0}}(t,x) e_{2}(\frac{x}{\lambda(t)})\frac{x dx}{2}\\
&+e_{2}(\frac{r}{\lambda(t)}) \int_{0}^{r} m_{\leq 1}(\frac{x}{2h(t)}) e_{N_{0}}(t,x) \phi_{0}(\frac{x}{\lambda(t)}) \frac{x dx}{2}\end{split}\end{equation}
Using the same procedure as for \eqref{un0minusipest} with $j=0$, and noting the symbol-type nature of the estimates in Lemma \ref{un0lemma}, we finish the proof of \eqref{un0minusipest} for $j>0$. Finally, to get \eqref{dr2un0minusipest}, we use the equation solved by $u_{N_{0},ell}$. The estimates in \eqref{en0ipest} follow directly from Lemma \ref{un0lemma}.
 \end{proof}
Just after \eqref{g3f3}, we restricted $T_{0}$ to satisfy $T_{0} \geq 2T_{2}$, and all of our computations and estimates are valid for all $t \geq T_{0}$ for any $T_{0} \geq 2 T_{2}$. At this stage, we restrict $T_{0}$ so that $T_{0} \geq 4T_{2}$ but is otherwise arbitrary. Then, recalling the cutoff $\psi_{2}$ defined in \eqref{v23def}, we define a function $v_{2,5}$ by
$$\widehat{v_{2,5}}(\xi) = \frac{1}{\pi \xi} \int_{0}^{\infty}\lambda(t) \langle m_{\leq 1}(\frac{R}{2 g(t)}) e_{N_{0}}(t,R\lambda(t)),\phi_{0}(R)\rangle_{L^{2}(R dR)} \psi_{2}(\frac{t}{2}) \sin(t\xi) dt$$
Letting 
$$F_{2,4}(t) = -2 \int_{0}^{\infty} \xi \sin(t \xi) \widehat{v_{2,5}}(\xi) d\xi$$ 
the inversion of the sine transform gives 
$$F_{2,4}(t)= -\lambda(t) \psi_{2}(\frac{t}{2})\langle m_{\leq 1}(\frac{R}{2 g(t)}) e_{N_{0}}(t,R\lambda(t)),\phi_{0}(R)\rangle_{L^{2}(R dR)}, \quad t >0$$  
Therefore, \eqref{en0ipest} gives the following estimate for $0 \leq k \leq 2$ and $t >0$:
\begin{equation}\label{f24ests}|\partial_{t}^{k}F_{2,4}(t)| \leq C \mathbbm{1}_{\{t>2T_{2}\}} \frac{\left(\sup_{x \in [100,t]}\left(\lambda(x) \log(x)\right)\right)^{3}\log^{3}(t)}{t^{4+k}}\end{equation} 
We have the following estimates on $\widehat{v_{2,5}}$.
\begin{lemma} For $0 \leq k \leq 2$,
\begin{equation}\label{v25hatest}|\xi^{k}\partial_{\xi}^{k} \widehat{v_{2,5}}(\xi)|\leq \begin{cases} C, \quad \xi \leq \frac{1}{100}\\
\frac{C}{\xi^{3-k}}, \quad \xi > \frac{1}{100}\end{cases}\end{equation}
\end{lemma}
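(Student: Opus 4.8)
The proof of the estimates \eqref{v25hatest} will proceed exactly as in the proof of Lemma \ref{v20ests}, using the formula
$$\widehat{v_{2,5}}(\xi) = \frac{1}{\pi \xi} \int_{0}^{\infty}\lambda(t) \langle m_{\leq 1}(\tfrac{R}{2 g(t)}) e_{N_{0}}(t,R\lambda(t)),\phi_{0}(R)\rangle_{L^{2}(R dR)} \psi_{2}(\tfrac{t}{2}) \sin(t\xi) dt$$
together with the bound \eqref{f24ests}, which (after dividing by $\lambda(t)$, since the integrand of the formula for $\widehat{v_{2,5}}$ is $\frac{-1}{\pi\xi}$ times $\frac{F_{2,4}(t)}{\lambda(t)}\sin(t\xi)$, and using \eqref{en0ipest} directly rather than through $F_{2,4}$) gives the symbol-type estimate
$$\Big|\partial_{t}^{k}\Big(\lambda(t) \langle m_{\leq 1}(\tfrac{R}{2g(t)}) e_{N_{0}}(t,R\lambda(t)),\phi_{0}(R)\rangle_{L^{2}(R dR)} \psi_{2}(\tfrac{t}{2})\Big)\Big| \leq C\,\mathbbm{1}_{\{t>2T_{2}\}}\,\frac{(\sup_{x\in[100,t]}(\lambda(x)\log(x)))^{3}\log^{3}(t)}{t^{4+k}}$$
for $0 \leq k \leq 2$. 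Call this quantity $H_{2,5}(t)$, so $\widehat{v_{2,5}}(\xi) = -\tfrac{1}{\pi\xi}\int_{0}^{\infty} H_{2,5}(t)\sin(t\xi)\,dt$, in complete analogy with \eqref{v20hatform}–\eqref{hdef}.

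For $\xi > \tfrac{1}{100}$ I would integrate by parts in $t$ twice (no boundary terms arise since $H_{2,5}$ is supported in $t>2T_{2}$ and decays at infinity, and $H_{2,5}'$ likewise), which produces a factor $\xi^{-2}$ against an integrand controlled by $H_{2,5}''(t)$; using the $t^{-6}$ decay of $H_{2,5}''$ and the bound $\sup_{x\in[100,t]}(\lambda(x)\log(x)) \leq C t^{C_{u}}\log(t)$ from \eqref{lambdacomparg}, the $t$-integral converges to a constant, giving $|\widehat{v_{2,5}}(\xi)| \leq C\xi^{-3}$. (One can only integrate by parts twice because \eqref{f24ests} and \eqref{en0ipest} only control up to two $t$-derivatives of the relevant quantity — exactly the limitation noted for $\widehat{v_{2,3}}$ in the remark following \eqref{v23ests} — which is why the estimate is not rapidly decaying.) For $\xi \leq \tfrac{1}{100}$ I would split the $t$-integral at $t = \tfrac{1}{\xi}$: on $[2T_{2},\tfrac{1}{\xi}]$ one estimates $|\sin(t\xi)| \leq t\xi$ and integrates $\int t |H_{2,5}(t)|\,dt \leq C\int t^{-3+C_{u}}\log^{4}(t)\,dt < \infty$, while on $[\tfrac{1}{\xi},\infty)$ one uses $|\sin(t\xi)| \leq 1$ and $\tfrac{1}{\xi}\int_{1/\xi}^{\infty}|H_{2,5}(t)|\,dt \leq C$. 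The derivative estimates for $1 \leq k \leq 2$ follow from differentiating under the integral sign (legitimate by dominated convergence and the decay of $H_{2,5}^{(k)}$), or equivalently from the scaling identity $\widehat{v_{2,5}}(\xi) = -\tfrac{1}{\pi\xi^{2}}\int_{0}^{\infty} H_{2,5}(\tfrac{\omega}{\xi})\sin(\omega)\,d\omega$ combined with the symbol bounds on $H_{2,5}^{(k)}$, exactly as at the end of the proof of Lemma \ref{v20ests}.

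The only genuine work — and the main obstacle — is establishing the symbol-type bound on $H_{2,5}(t)$, i.e. verifying that the $t$-derivatives of $\lambda(t)\langle m_{\leq 1}(\tfrac{R}{2g(t)})e_{N_{0}}(t,R\lambda(t)),\phi_{0}(R)\rangle_{L^{2}(R\,dR)}$ genuinely improve by powers of $t$. This is essentially \eqref{en0ipest} from Lemma \ref{un0ellminusipestlemma}, which in turn rests on the symbol-type pointwise estimates on $u_{N_{0}}$ from Lemma \ref{un0lemma}; the extra $\psi_{2}(\tfrac{t}{2})$ and the $g(t)$-dependence of the cutoff $m_{\leq 1}$ contribute only harmless symbol factors since $g(t)=t^{\alpha}$ and $\psi_{2}$ is a fixed smooth cutoff, so differentiating them costs at worst a factor of $t^{-1}$ each. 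Once \eqref{en0ipest} is in hand (it is already proved in the excerpt), the passage to \eqref{v25hatest} is entirely parallel to Lemma \ref{v20ests}, and I would simply invoke "we use the same procedure as in Lemma \ref{v20ests}" as the paper does for the analogous $\widehat{v_{2,3}}$ estimate.
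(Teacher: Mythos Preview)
Your proposal is correct and follows essentially the same approach as the paper: rewrite $\widehat{v_{2,5}}(\xi)$ as $\frac{-1}{\pi\xi}\int_{0}^{\infty}F_{2,4}(t)\sin(t\xi)\,dt$, split at $t=1/\xi$ for small $\xi$, integrate by parts twice for large $\xi$ (noting the two-derivative limitation on $F_{2,4}$ from Lemma \ref{un0lemma}), and use the rescaled form $\frac{-1}{\pi\xi^{2}}\int F_{2,4}(\sigma/\xi)\sin(\sigma)\,d\sigma$ for the $\xi$-derivatives. Your identification of \eqref{en0ipest}/\eqref{f24ests} as the key input and the parallel to Lemma \ref{v20ests} exactly matches the paper's argument.
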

\begin{proof}
We recall 
$$\widehat{v_{2,5}}(\xi) = \frac{-1}{\pi \xi} \int_{0}^{\infty} F_{2,4}(t) \sin(t\xi) dt$$
In the region $\xi \leq \frac{1}{100}$, we simply directly estimate as follows
$$|\widehat{v_{2,5}}(\xi)| \leq \frac{C}{\xi} \int_{0}^{\frac{1}{\xi}} |F_{2,4}(t)| t \xi dt + \frac{C}{\xi} \int_{\frac{1}{\xi}}^{\infty} |F_{2,4}(t)| dt$$
If $\xi > \frac{1}{100}$, then, we have
$$\widehat{v_{2,5}}(\xi) = \frac{1}{\pi \xi^{3}} \int_{0}^{\infty} \sin(t\xi) F_{2,4}''(t) dt$$ 
where we note that we can integrate by parts only twice (simply because of the estimates we have on $u_{N_{0}}$, which gave rise to estimates on up to two derivatives of $F_{2,4}$) which is why the decay for large $\xi$ in the estimate \eqref{v25hatest} is not as strong as the analogous estimate for $\widehat{v_{2,0}}$ in Lemma \ref{v20ests}. To estimate the derivatives of $\widehat{v_{2,5}}(\xi)$, we first re-write its formula as
$$\widehat{v_{2,5}}(\xi) = \frac{-1}{\pi \xi^{2}} \int_{0}^{\infty}  F_{2,4}(\frac{\sigma}{\xi})\sin(\sigma) d\sigma$$
and then differentiate in $\xi$. Then, we estimate as above. \end{proof}
We define $v_{2,4}$ to be the solution to the following Cauchy problem.
\begin{equation}\begin{cases}-\partial_{t}^{2}v_{2,4}+\partial_{r}^{2}v_{2,4}+\frac{1}{r}\partial_{r}v_{2,4}-\frac{v_{2,4}}{r^{2}}=0\\
v_{2,4}(0,r)=0\\
\partial_{t}v_{2,4}(0,r) = v_{2,5}(r)\end{cases}\end{equation}
Now, we obtain estimates on $v_{2,4}(t,r)-\left(\frac{-r}{4} F_{2,4}(t)\right)$
\begin{lemma}\label{v24estlemma} For $0 \leq k \leq 2$,
\begin{equation}\label{v24minusrf24}|\partial_{r}^{k}\left(v_{2,4}(t,r)+\frac{r}{4}F_{2,4}(t)\right)| \leq \frac{C r^{3-k}\log^{3}(t) \left(\sup_{x \in [100,t]}\left(\lambda(x) \log(x)\right)\right)^{3}}{t^{6}}  , \quad r \leq \frac{t}{2}\end{equation}
For $0 \leq k \leq 2$, $0 \leq j \leq 1$ and $j+k \leq 2$.
\begin{equation}\label{v24ests}|\partial_{t}^{k}\partial_{r}^{j} v_{2,4}(t,r)| \leq \frac{C r^{1-j}  \log^{3}(t)\left(\sup_{x \in [100,t]}\left(\lambda(x) \log(x)\right)\right)^{3}}{t^{4+k}} , \quad r \leq \frac{t}{2}\end{equation}
For $0 \leq k+j \leq 2$, we have the following estimate.
\begin{equation}\label{v24insideconeest}|\partial_{r}^{k}\partial_{t}^{j}v_{2,4}(t,r)| \leq \frac{C \log^{3}(t) \left(\sup_{x \in [100,t]}\left(\lambda(x) \log(x)\right)\right)^{3}}{\sqrt{t} \langle t-r\rangle^{5/2+k+j}}, \quad \frac{t}{2}\leq r < t\end{equation}
Finally, for $0 \leq k \leq 1$,
\begin{equation}\label{v24outsideconeest}|\partial_{r}^{k}v_{2,4}(t,r)| \leq \frac{C}{\sqrt{r} \langle t-r \rangle^{\frac{1}{2}+k}}, \quad r>t\end{equation}
\end{lemma}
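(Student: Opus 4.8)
The function $v_{2,4}$ is a free wave whose initial velocity $v_{2,5}$ has a Hankel profile controlled by \eqref{v25hatest}, and whose associated profile function $F_{2,4}(t)=-2\int_{0}^{\infty}\xi\sin(t\xi)\widehat{v_{2,5}}(\xi)\,d\xi$ obeys the symbol estimates \eqref{f24ests}. The plan is to treat $v_{2,4}$ exactly as $v_{2}$ and $v_{2,2}$ were treated in Lemmas \ref{v2estlemma}, \ref{v2strlemma}, and \ref{v22lemma}, with the profile $F$ (resp.\ $G_{3}$) replaced by $F_{2,4}$ and the input bounds of Lemma \ref{v20ests} replaced by \eqref{v25hatest} and \eqref{f24ests}. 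As in \eqref{v2} and \eqref{v22g3formula}, using $J_{1}(x)=\frac{x}{\pi}\int_{0}^{\pi}\cos(x\cos(\theta))\sin^{2}(\theta)\,d\theta$ one has the two representations
\[
v_{2,4}(t,r)=\int_{0}^{\infty}J_{1}(r\xi)\sin(t\xi)\widehat{v_{2,5}}(\xi)\,d\xi=\frac{-r}{4\pi}\int_{0}^{\pi}\sin^{2}(\theta)\left(F_{2,4}(t+r\cos(\theta))+F_{2,4}(t-r\cos(\theta))\right)d\theta,
\]
the second being valid whenever the arguments of $F_{2,4}$ do not change sign, in particular for $r<t$ (recall $F_{2,4}$ is odd and, by the cutoff $\psi_{2}$, supported in $\{|t|\geq 2T_{2}\}$).

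For $r\leq\frac{t}{2}$ I would use the second representation and Taylor expand $F_{2,4}(t\pm r\cos(\theta))$ about $t$ to order three exactly as in \eqref{v2} and the proof of Lemma \ref{v2strlemma}: the odd-order terms cancel between the two arguments, leaving $v_{2,4}(t,r)=-\frac{r}{4}F_{2,4}(t)-\frac{r^{3}}{32}F_{2,4}''(t)+O(r^{5}F_{2,4}^{(4)})$, so that \eqref{f24ests} gives \eqref{v24ests} and the first-order expansion gives \eqref{v24minusrf24}. Spatial and time derivatives are obtained by differentiating under the integral sign and using the symbol-type character of \eqref{f24ests}; since only two derivatives of $F_{2,4}$ are controlled, this is why the estimates are only claimed for $k+j\leq 2$.

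For $\frac{t}{2}\leq r<t$ the point is to extract the improved $\langle t-r\rangle$ decay, which is the whole reason $v_{2,4}$ is introduced. When $\langle t-r\rangle\leq 100$ I would use the crude bound $|v_{2,4}(t,r)|\leq Cr^{-1/2}$ coming from $|J_{1}(x)|\leq Cx^{-1/2}$ and \eqref{v25hatest}, exactly as in \eqref{v2sqrtest}. When $\langle t-r\rangle>100$ I would use the second representation together with $|F_{2,4}^{(m)}(x)|\lesssim x^{-4-m}$ (up to the logarithmic and $\sup$-factors in \eqref{f24ests}): bounding $|F_{2,4}(t\pm r\cos(\theta))|$ by $C(t\pm r\cos(\theta))^{-4}$ and invoking the residue identity \eqref{tplusrcosthetaint}, namely $\int_{0}^{\pi}\frac{\sin^{2}(\theta)\,d\theta}{(t+r\cos(\theta))^{4}}=\frac{\pi t}{2(t^{2}-r^{2})^{5/2}}$, combined with $t^{2}-r^{2}=(t-r)(t+r)\geq c\,\langle t-r\rangle\,t$, gives the $\frac{1}{\sqrt{t}\langle t-r\rangle^{5/2}}$ bound. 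Each $\partial_{t}$ or $\partial_{r}$ (with $k+j\leq 2$) lands on $F_{2,4}$ and, after the analogous residue computation with $(t+r\cos(\theta))^{-4-m}$, improves the $\langle t-r\rangle$ decay by one power, producing \eqref{v24insideconeest}.

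Finally, for $r>t$ the profile representation is unavailable since $t-r\cos(\theta)$ changes sign, so I would revert to the Hankel representation $v_{2,4}(t,r)=\int_{0}^{\infty}\widehat{v_{2,5}}(\xi)J_{1}(r\xi)\sin(t\xi)\,d\xi$ and run the same oscillatory-integral argument used for $v_{2}$ in the region $r\geq\frac{t}{2}$ in Lemma \ref{v2estlemma}: split the $\xi$-integral at the scales $\frac{1}{r}$ and $\langle t-r\rangle^{-1}$, replace $J_{1}(r\xi)$ by $\frac{1}{\sqrt{\pi r\xi}}(\sin(r\xi)-\cos(r\xi))$ plus a faster-decaying remainder, combine $\sin(t\xi)$ with $\sin(r\xi),\cos(r\xi)$ into $\sin((t\pm r)\xi),\cos((t\pm r)\xi)$, substitute $\omega^{2}=\xi$, and integrate by parts in $\omega$ away from $\omega=\langle t-r\rangle^{-1/2}$; the resulting boundary and bulk terms, estimated with \eqref{v25hatest}, give the $\langle t-r\rangle^{-1/2-k}$ decay of \eqref{v24outsideconeest}, while $\langle t-r\rangle\lesssim 1$ is handled by the direct $r^{-1/2}$ bound. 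The main obstacle is exactly this last region: unlike $\widehat{v_{2,0}}$, the profile $\widehat{v_{2,5}}$ is \emph{not} rapidly decreasing, carrying only $\langle\xi\rangle^{-3}$ decay with two derivatives (because $u_{N_{0}}$, hence $e_{N_{0}}$ and $F_{2,4}$, were estimated through only two time derivatives), so the integration by parts in $\omega$ can be performed only a bounded number of times; one must check that the two available integrations by parts deliver exactly the $\langle t-r\rangle$ decay and the range $k\leq 1$ asserted in \eqref{v24outsideconeest}. Assembling the four regions completes the proof.
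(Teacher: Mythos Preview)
Your proposal is correct and follows the same approach as the paper: the profile representation with Taylor expansion and the residue identity \eqref{tplusrcosthetaint} for $r<t$, the crude $r^{-1/2}$ bound from the Hankel representation near the cone, and the oscillatory-integral argument of Lemma \ref{v2estlemma} for $r>t$. One small slip: you write $v_{2,4}=-\frac{r}{4}F_{2,4}(t)-\frac{r^{3}}{32}F_{2,4}''(t)+O(r^{5}F_{2,4}^{(4)})$, but $F_{2,4}^{(4)}$ is not available (as you yourself note, only two derivatives of $F_{2,4}$ are controlled); for \eqref{v24minusrf24} you only need the second-order Taylor remainder $F_{2,4}(t+h)+F_{2,4}(t-h)-2F_{2,4}(t)=O(h^{2}\sup|F_{2,4}''|)$, which already yields the claimed $r^{3}t^{-6}$ bound, exactly as the paper does.
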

\begin{proof}
The proof of this lemma uses the same procedure as the proof of Lemma \ref{v22lemma}. In particular, we have
\begin{equation}\label{v24intermsofF24} v_{2,4}(t,r) = \frac{-r}{2\pi} \int_{0}^{\pi} \sin^{2}(\theta) F_{2,4}(t+r\cos(\theta)) d\theta\end{equation}
In the region $r \leq \frac{t}{2}$, we have
\begin{equation} v_{2,4}(t,r) +\frac{r}{4} F_{2,4}(t) = \frac{-r}{2\pi} \int_{0}^{\pi} \sin^{2}(\theta) \left(F_{2,4}(t+r\cos(\theta))-F_{2,4}(t)\right) d\theta\end{equation}
and this immediately gives rise to \eqref{v24minusrf24}.\\
\\
 Next, we note that
\begin{equation} v_{2,4}(t,r)= \int_{0}^{\infty} d\xi J_{1}(r\xi) \sin(t\xi) \widehat{v_{2,5}}(\xi)\end{equation}
which leads to the following estimate, for all $r \geq \frac{t}{2}$.
\begin{equation}\label{v24sqrtrest}\begin{split}|v_{2,4}(t,r)| &\leq C \int_{0}^{\frac{2}{r}} d\xi r \xi + C \int_{\frac{1}{r}}^{\frac{1}{100}} \frac{d\xi}{\sqrt{r\xi}} + C \int_{\frac{1}{100}}^{\infty} \frac{d\xi}{\sqrt{r\xi}} \frac{1}{\xi^{3}}\leq \frac{C}{\sqrt{r}}\end{split}\end{equation}
The same computation leads to 
$$|\partial_{r}^{k}\partial_{t}^{j}v_{2,4}(t,r)| \leq \frac{C}{\sqrt{r}}, \quad r \geq\frac{t}{2}, \quad 0 \leq j+k \leq 2$$
On the other hand, if $t > r > \frac{t}{2}$, we have (using \eqref{v24intermsofF24})
$$|v_{2,4}(t,r)| \leq C r \log^{3}(t) \left(\sup_{x \in [100,t]}\left(\lambda(x) \log(x)\right)\right)^{3}\int_{0}^{\pi} \frac{\sin^{2}(\theta)d\theta}{(t+r\cos(\theta))^{4}}  $$
Recalling \eqref{tplusrcosthetaint}, the above estimate, combined with \eqref{v24sqrtrest}, gives \eqref{v24insideconeest}. (The same procedure is used to estimate derivatives of $v_{2,4}(t,r)$ in the region $\frac{t}{2} < r < t$). Finally, to establish \eqref{v24outsideconeest}, we use the identical procedure used to establish the analogous estimates in Lemma \ref{v2estlemma}. In particular, we do the same procedure which starts with \eqref{v2largerintstep12}. (The amount of high frequency decay in the estimate \eqref{v25hatest} is sufficient for this).
 \end{proof}
We can now define and estimate the linear error term associated to $u_{N_{0},corr}$. For ease of notation, let
$$h_{2}(t):=\lambda(t)\langle m_{\leq 1}(\frac{R \lambda(t)}{2h(t)}) e_{N_{0}}(t,R \lambda(t)),\phi_{0}(R)\rangle_{L^{2}(R dR)}$$ 
We first recall the definition of $u_{N_{0},corr}$ in \eqref{un0corrdef}:
$$u_{N_{0},corr}(t,r) = \left(u_{N_{0},ell}(t,r) - \frac{r h_{2}(t)}{4} \right) m_{\leq 1}(\frac{2 r}{t}) + v_{2,4}(t,r)$$
Then, we define the linear error term associated to $u_{N_{0},corr}$ by
\begin{equation}\begin{split} e_{N_{0},corr}(t,r)&= -\left(-\partial_{t}^{2}+\partial_{r}^{2}+\frac{1}{r}\partial_{r}-\frac{\cos(2Q_{1}(\frac{r}{\lambda(t)}))}{r^{2}}\right) u_{N_{0},corr}(t,r)+m_{\leq 1}(\frac{r}{2h(t)}) e_{N_{0}}(t,r)\end{split}\end{equation}
We therefore get
\begin{equation}\begin{split} e_{N_{0},corr}(t,r)&= \partial_{t}^{2}\left(u_{N_{0},ell}(t,r)-\frac{r}{4}h_{2}(t)\right) m_{\leq 1}(\frac{2r}{t}) - 2\partial_{t}\left(u_{N_{0,ell}}(t,r)-\frac{r}{4}h_{2}(t)\right) m_{\leq 1}'(\frac{2r}{t})\cdot \frac{2r}{t^{2}}\\
& +\left(u_{N_{0},ell}(t,r)-\frac{r}{4}h_{2}(t)\right) \left(m_{\leq 1}''(\frac{2r}{t})\frac{4r^{2}}{t^{4}}+\frac{4r}{t^{3}} m_{\leq 1}'(\frac{2r}{t})-\frac{4 m_{\leq 1}''(\frac{2r}{t})}{t^{2}}\right)\\
&-4\partial_{r}\left(u_{N_{0},ell}(t,r)-\frac{r}{4}h_{2}(t)\right) \frac{m_{\leq 1}'(\frac{2r}{t})}{t}-\frac{1}{r} m_{\leq 1}'(\frac{2r}{t}) \frac{2}{t} \left(u_{N_{0},ell}-\frac{r}{4}h_{2}(t)\right)\\
&+\left(\frac{\cos(2Q_{1}(\frac{r}{\lambda(t)}))-1}{r^{2}}\right) \left(v_{2,4}(t,r)-\frac{r}{4}h_{2}(t)\right) m_{\leq 1}(\frac{2r}{t})\\
&+\left(\frac{\cos(2Q_{1}(\frac{r}{\lambda(t)}))-1}{r^{2}}\right) v_{2,4}(t,r) \left(1-m_{\leq 1}(\frac{2r}{t})\right)\end{split}\end{equation}
where we used the fact that
$$(m_{\leq 1}(\frac{2r}{t})-1)\cdot m_{\leq 1}(\frac{r}{2h(t)})=0$$
We write
$$e_{N_{0},corr}(t,r) = e_{N_{0},corr,1}(t,r)+e_{N_{0},corr,2}(t,r)$$
where
\begin{equation}\label{en0corr2def}e_{N_{0},corr,2}(t,r) = \left(\frac{\cos(2Q_{1}(\frac{r}{\lambda(t)}))-1}{r^{2}}\right) v_{2,4}(t,r) \left(1-m_{\leq 1}(\frac{2r}{t})\right)\end{equation}
Then, it turns out that $e_{N_{0},corr,1}$ is perturbative, as per the following lemma.
\begin{lemma} \label{en0corr1estlemma}For $k=0,1$,
$$||L_{\frac{1}{\lambda(t)}}^{k}(e_{N_{0},corr,1})(t,r)||_{L^{2}(r dr)} \leq \frac{C \lambda(t)^{2-k} \log^{9/2}(t)}{t^{6}} \left(\sup_{x \in [100,t]}\left(\lambda(x) \log(x)\right)\right)^{3}$$
\end{lemma}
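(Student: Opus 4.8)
The proof is a direct estimation. First I would recall that $e_{N_{0},corr,1}=e_{N_{0},corr}-e_{N_{0},corr,2}$ is, by the explicit formula for $e_{N_{0},corr}$, the sum of six pieces: the five terms built from $\partial_{t}^{2}$, $\partial_{t}$, $\partial_{r}$, or no derivative of $\bigl(u_{N_{0},ell}(t,r)-\tfrac r4 h_{2}(t)\bigr)$ multiplied by $m_{\leq 1}(\tfrac{2r}{t})$ or one of its derivatives, together with $\bigl(\tfrac{\cos(2Q_{1}(r/\lambda(t)))-1}{r^{2}}\bigr)\bigl(v_{2,4}(t,r)-\tfrac r4 h_{2}(t)\bigr)\,m_{\leq 1}(\tfrac{2r}{t})$. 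Since we work at $t\geq T_{0}\geq 4T_{2}$, the cutoff $\psi_{2}$ from \eqref{v23def} satisfies $\psi_{2}(t/2)=1$, hence $F_{2,4}(t)=-h_{2}(t)$ and $v_{2,4}(t,r)-\tfrac r4 h_{2}(t)=v_{2,4}(t,r)+\tfrac r4 F_{2,4}(t)$, which on the support $r\leq t/2$ of $m_{\leq 1}(\tfrac{2r}{t})$ is controlled by \eqref{v24minusrf24}; moreover $\tfrac{\cos(2Q_{1}(r/\lambda(t)))-1}{r^{2}}=\tfrac{-8\lambda(t)^{2}}{(\lambda(t)^{2}+r^{2})^{2}}$ is pointwise $\lesssim\min\{\lambda(t)^{-2},\,\lambda(t)^{2}r^{-4}\}$. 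For the five matching terms I would insert the bounds of Lemma~\ref{un0ellminusipestlemma} — \eqref{un0minusipest} for $u_{N_{0},ell}-\tfrac r4 h_{2}$ and its $\partial_{t}$, $\partial_{r}$ derivatives, \eqref{dr2un0minusipest} for the second $r$-derivative, and \eqref{en0ipest} — together with the symbol estimates on $\lambda$.

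The estimation then splits by the support of the cutoff factor. The terms carrying $m_{\leq 1}'(\tfrac{2r}{t})$ or $m_{\leq 1}''(\tfrac{2r}{t})$ are supported in $t/4\leq r\leq t/2$, where $\log(r/\lambda(t))\sim\log t$ (using $\lambda(t)\lesssim t^{1/30}$ from \eqref{lambdacomparg}), so \eqref{un0minusipest} gives $|\partial_{t}^{j}(u_{N_{0},ell}-\tfrac r4 h_{2})|\lesssim\tfrac{\log^{4}(t)\lambda(t)^{2}(\sup_{[100,t]}\lambda\log)^{3}}{t^{5+j}}$, and the explicit powers $\tfrac{r}{t^{2}}$, $\tfrac1t$, $\tfrac1{rt}$, $\tfrac{r^{2}}{t^{4}}$, $\tfrac{r}{t^{3}}$, $\tfrac1{t^{2}}$ attached to the cutoff derivatives each supply at least a factor $t^{-1}$ there; computing $\|\cdot\|_{L^{2}(r\,dr)}$ over a dyadic interval of length $\sim t$ at $r\sim t$ contributes a further factor $t$, which lands each such term at $\lesssim\tfrac{\lambda(t)^{2}\log^{4}(t)(\sup\lambda\log)^{3}}{t^{6}}$. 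The term $\partial_{t}^{2}(u_{N_{0},ell}-\tfrac r4 h_{2})\,m_{\leq1}(\tfrac{2r}{t})$, supported in $r\leq t/2$, is the delicate one: over $r\leq\lambda(t)$ the $O(r)$ bound of \eqref{un0minusipest} gives an $L^{2}$ contribution $\lesssim\tfrac{\lambda(t)^{2}\log^{3}(t)(\sup\lambda\log)^{3}}{t^{6}}$, while over $\lambda(t)\leq r\leq t/2$ the bound $\tfrac{(\log t+\log(r/\lambda(t)))\lambda(t)^{2}(\sup\lambda\log)^{3}\log^{3}t}{rt^{6}}$ gives, after $\int_{\lambda(t)}^{t/2}\bigl(\log t+\log(r/\lambda(t))\bigr)^{2}\tfrac{dr}{r}\lesssim\log^{3}t$, a contribution $\lesssim\tfrac{\lambda(t)^{2}\log^{9/2}(t)(\sup\lambda\log)^{3}}{t^{6}}$ — this is the dominant term and the source of the exponent $9/2$. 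For the $v_{2,4}$ term one uses $|v_{2,4}(t,r)-\tfrac r4 h_{2}(t)|\lesssim\tfrac{r^{3}\log^{3}(t)(\sup\lambda\log)^{3}}{t^{6}}$ from \eqref{v24minusrf24} with the $\min$-bound for $\tfrac{\cos(2Q_{1})-1}{r^{2}}$, which over $r\leq\lambda(t)$ and over $\lambda(t)\leq r\leq t/2$ both produce contributions $\lesssim\tfrac{\lambda(t)^{2}\log^{7/2}(t)(\sup\lambda\log)^{3}}{t^{6}}$. Taking the maximum over the six terms gives the $k=0$ bound.

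For $k=1$ the same computation is repeated after applying $L_{1/\lambda(t)}=\partial_{r}-\tfrac{\cos(Q_{1}(r/\lambda(t)))}{r}$: the extra $\partial_{r}$ either falls on a cutoff, producing one more factor $t^{-1}$ on $r\sim t$ (harmless, since $\lambda(t)/t\ll1$ then absorbs a power of $\lambda(t)$), or on $u_{N_{0},ell}-\tfrac r4 h_{2}$ or $v_{2,4}-\tfrac r4 h_{2}$, where the $k=1$ cases of \eqref{un0minusipest}, \eqref{dr2un0minusipest}, \eqref{v24minusrf24} apply; the lower-order $\tfrac{\cos Q_{1}}{r}\,e_{N_{0},corr,1}$ piece is no larger. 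In the dominant term the $\partial_{r}$ converts $\tfrac1r$-behaviour into $\tfrac1{r^{2}}$-behaviour, so the $L^{2}$ integral over $\lambda(t)\leq r\leq t/2$ is now dominated by $r\sim\lambda(t)$ and yields $\lambda(t)^{1}$ in place of $\lambda(t)^{2}$, consistent with the factor $\lambda(t)^{2-k}$ in the statement.

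The main obstacle is entirely bookkeeping: one must carry the nested $\sup_{x\in[100,t]}(\lambda(x)\log x)$ factors (inherited from Lemma~\ref{un0lemma} through Lemma~\ref{un0ellminusipestlemma}) and the logarithmic powers through every term and every sub-region, and verify in each case that no power of $t$ below $t^{-6}$ and no power of $\lambda(t)$ below $\lambda(t)^{2-k}$ is produced. The only place the estimate is actually tight is the region $\lambda(t)\leq r\leq t/2$ for the term $\partial_{t}^{2}(u_{N_{0},ell}-\tfrac r4 h_{2})\,m_{\leq1}(\tfrac{2r}{t})$, which forces the exponent $9/2$ on the logarithm; all other terms and regions are comfortably better, so no additional idea beyond careful case analysis is needed.
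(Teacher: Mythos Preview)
Your proposal is correct and takes essentially the same approach as the paper: the paper's proof is the single sentence ``This is a direct consequence of Lemma~\ref{un0ellminusipestlemma} and~\ref{v24estlemma}'', and you have simply spelled out that direct consequence in full detail, using precisely those two lemmas (the pointwise bounds \eqref{un0minusipest}, \eqref{dr2un0minusipest}, \eqref{en0ipest} for the $u_{N_{0},ell}-\tfrac r4 h_{2}$ terms and \eqref{v24minusrf24} for the $v_{2,4}+\tfrac r4 F_{2,4}$ term).
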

\begin{proof} This is a direct consequence of Lemma \ref{un0ellminusipestlemma} and \ref{v24estlemma} \end{proof}
We need to add one more term to our ansatz in order to eliminate $e_{N_{0},corr,2}$, given in \eqref{en0corr2def}. In particular, we define $u_{N_{0},corr,2}$ to be the solution to the following equation with 0 Cauchy data at infinity.
\begin{equation}\label{un0corr2eqn}-\partial_{t}^{2}u_{N_{0},corr,2}+\partial_{r}^{2}u_{N_{0},corr,2}+\frac{1}{r}\partial_{r}u_{N_{0},corr,2}-\frac{1}{r^{2}}u_{N_{0},corr,2}=e_{N_{0},corr,2}(t,r)\end{equation}
Then, we prove the following lemma.
\begin{lemma}\label{un0corrlemma} We have the following estimates on $u_{N_{0},corr,2}$. For $0 \leq k \leq 1$ and $0 \leq j \leq 1$,
\begin{equation}\label{un0corr2est}|\partial_{t}^{j}\partial_{r}^{k}u_{N_{0},corr,2}(t,r)| \leq \frac{C r^{1-k}\lambda(t)^{2} \log^{3}(t) \left(\sup_{x \in [100,t]}\left(\lambda(x) \log(x)\right)\right)^{3}}{ t^{5/2} \langle t-r \rangle^{7/2+j}}, \quad r \leq t\end{equation}
In addition,
\begin{equation}\label{dt2un0corr2est}|\partial_{t}^{2}u_{N_{0},corr,2}(t,r)|+|\partial_{r}^{2}u_{N_{0},corr,2}(t,r)| \leq \frac{C \lambda(t)^{2}\log^{3}(t) \left(\sup_{x \in [100,t]}\left(\lambda(x) \log(x)\right)\right)^{3}}{\langle t-r\rangle^{9/2}t^{5/2}}, \quad r \leq t\end{equation}
For all $r>0$,
\begin{equation}\label{un0corr2enest}|u_{N_{0},corr,2}(t,r)| + \sqrt{E(u_{N_{0},corr,2},\partial_{t}u_{N_{0},corr,2})} \leq \frac{C \lambda(t)^{2} \log^{3}(t) \left(\sup_{x \in [100,t]}\left(\lambda(x) \log(x)\right)\right)^{3}}{t^{3}}\end{equation}
Finally,
\begin{equation}\label{drun0corr2inftyest} ||\partial_{r}u_{N_{0},corr,2}(t,r)||_{L^{\infty}_{r}} \leq \frac{C \log^{6}(t)}{t^{\frac{5}{2}-5C_{u}}}\end{equation}
\end{lemma}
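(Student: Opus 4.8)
The plan is to estimate $u_{N_0,corr,2}$ via the spherical means (Duhamel) representation, exactly as was done for $u_{2,2}$ in Lemma \ref{u22lemma} and for $u_{N_0}$ in Lemma \ref{un0lemma}. First I would record the pointwise bounds on the source term $e_{N_0,corr,2}$ from its definition \eqref{en0corr2def}: since $e_{N_0,corr,2}(t,r) = \left(\frac{\cos(2Q_1(r/\lambda(t)))-1}{r^2}\right) v_{2,4}(t,r)\left(1-m_{\leq 1}(\frac{2r}{t})\right)$, the cutoff localizes to $r \geq t/4$, where $\left|\frac{\cos(2Q_1(r/\lambda(t)))-1}{r^2}\right| \leq \frac{C\lambda(t)^2}{r^4} \leq \frac{C\lambda(t)^2}{t^4}$ is a symbol in $(t,r)$; combining with the estimates \eqref{v24insideconeest} and \eqref{v24outsideconeest} on $v_{2,4}$ gives, schematically,
\begin{equation}\label{en0corr2ptwse}
|e_{N_0,corr,2}(t,r)| \leq C\lambda(t)^2 \log^3(t)\left(\sup_{x\in[100,t]}(\lambda(x)\log(x))\right)^3 \cdot \mathbbm{1}_{\{r\geq t/4\}}\begin{cases} \frac{1}{r^4\sqrt{t}\langle t-r\rangle^{5/2}}, & t/4\leq r\leq t\\ \frac{1}{r^{9/2}\langle t-r\rangle^{1/2}}, & r>t\end{cases}
\end{equation}
together with the corresponding symbol-type estimates for $\partial_t$ and $\partial_r$ acting on it (each derivative costs a factor $\langle t-r\rangle^{-1}$ or $t^{-1}$, which is where the shift $7/2+j$ in the exponent comes from). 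From \eqref{en0corr2ptwse} one immediately gets $\|e_{N_0,corr,2}(t,r)\|_{L^2(r\,dr)} \leq \frac{C\lambda(t)^2\log^3(t)(\sup(\lambda\log))^3}{t^{4}}$.

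For the pointwise estimate \eqref{un0corr2est}, I would use the one-derivative-extracted spherical means formula, as in \eqref{q41onederivform} and \eqref{un01deriv}:
\begin{equation}
u_{N_0,corr,2}(t,r) = \frac{-r}{2\pi}\int_t^\infty ds \int_0^{s-t}\frac{\rho\,d\rho}{\sqrt{(s-t)^2-\rho^2}}\int_0^1 d\beta\int_0^{2\pi}d\theta\, I_{e_{N_0,corr,2}}(s,r\beta,\rho,\theta),
\end{equation}
recalling the notation \eqref{Inotation}. The key geometric point is that, because the source is supported in $r \geq t/4$ and has strong decay in $\langle t-r\rangle$ inside the cone, the argument $\sqrt{r^2\beta^2+\rho^2+2r\beta\rho\cos\theta}$ inside $I_{e_{N_0,corr,2}}$ stays bounded below by a constant multiple of $s$ for the relevant range of $(\rho,\beta)$, while $s - \sqrt{r^2\beta^2+\rho^2+\cdots} \geq C\langle t-r\rangle$ when $r < t$; inserting \eqref{en0corr2ptwse} and performing the $\rho,\beta,\theta$ integrals (using the residue-theorem identities \eqref{thetaint}, \eqref{tplusrcosthetaint} as in the proof of Lemma \ref{u22lemma} to handle the $\theta$ integral) yields \eqref{un0corr2est} for $j=k=0$. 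The $\partial_t$ derivative is handled by differentiating under the integral sign ($\partial_t^j I$ picks up the extra factor $\langle t-r\rangle^{-j}$), the $\partial_r$ derivative by the analog of \eqref{drun0intstep}, and \eqref{dt2un0corr2est} for $\partial_r^2$ by using the PDE \eqref{un0corr2eqn} to trade $\partial_r^2$ for $\partial_t^2 + \frac{1}{r}\partial_r - \frac{1}{r^2} + e_{N_0,corr,2}$. The energy estimate \eqref{un0corr2enest} follows from Minkowski's inequality and the $L^2$ isometry of the Hankel transform, exactly as in \eqref{uw2enest} and \eqref{ptwseenest}, using the $L^2$ bound on $e_{N_0,corr,2}$ computed above. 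For \eqref{drun0corr2inftyest}, I would use the crude global bound $|e_{N_0,corr,2}(t,r)| \leq \frac{C\log^6(t)}{t^{9/2-5C_u}}$ (obtained from \eqref{en0corr2ptwse} and \eqref{lambdacomparg}) and $|\partial_r u_{N_0,corr,2}(t,r)| \leq C\int_t^\infty (s-t)\|e_{N_0,corr,2}(s,\cdot)\|_{L^\infty}\,ds$, as in the final step of Lemma \ref{un0lemma}.

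The main obstacle is bookkeeping the decay carefully in the region $t/2 \leq r < t$, where $\langle t-r\rangle$ can be small and one must verify that the $\rho$-integral against the singular kernel $(s-t)^2-\rho^2)^{-1/2}$ does not destroy the gain: the factor $r^{-4}$ from the potential and the $\langle t-r\rangle^{-5/2}$ from $v_{2,4}$ inside the cone must combine so that, after integration, the output has the claimed $t^{-5/2}\langle t-r\rangle^{-7/2}$ behavior. This is precisely the mechanism already used for $u_{2,2}$ in Lemma \ref{u22lemma} (whose estimate \eqref{dru22ptwseuptocone} has the same shape), so the calculation is routine but must be done with the same case decomposition into $t \leq s \leq t+3h(t)$, etc.; no genuinely new idea is required beyond what appears in the proofs of Lemmas \ref{u22lemma} and \ref{un0lemma}.
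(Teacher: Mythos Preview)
Your proposal is correct and follows the same approach as the paper: the one-derivative spherical means formula for $u_{N_0,corr,2}$, the pointwise bound on $I_{e_{N_0,corr,2}}$, differentiation under the integral for $\partial_t$, the $\beta$-free formula for $\partial_r$, the energy estimate via Minkowski and the Hankel isometry, and the crude global $L^\infty$ bound for \eqref{drun0corr2inftyest}.

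One simplification worth noting: you propose to carry out the $\theta$ integral via the residue identities \eqref{thetaint}, \eqref{tplusrcosthetaint} and to split the $s$-integral into ranges as in Lemma~\ref{u22lemma}. The paper does not do this, and it is not needed here. The cutoff $1-m_{\leq 1}(2r/t)$ forces the source to be supported where its spatial argument is $\geq s/4$, so on the support of the integrand one already has $\sqrt{r^{2}\beta^{2}+\rho^{2}+2r\beta\rho\cos\theta}\sim s$ and $\langle s-\sqrt{\cdots}\rangle\geq C\langle t-r\rangle$; this yields the uniform bound
\[
|I_{e_{N_0,corr,2}}(s,r\beta,\rho,\theta)|\leq \frac{C\lambda(s)^{2}\log^{3}(s)\bigl(\sup_{x\in[100,s]}\lambda(x)\log(x)\bigr)^{3}}{s^{9/2}\langle t-r\rangle^{7/2}}
\]
directly, after which the $(\rho,\beta,\theta)$ integrals are trivial (they contribute a factor $C(s-t)$). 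The extra machinery from Lemma~\ref{u22lemma} was needed there only because $e_{2,2}$ is supported at the much smaller scale $r\gtrsim h(t)$. Also, for $\partial_r^2$ the paper differentiates \eqref{drun0corr2form} directly rather than using the PDE, but your route works equally well.
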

\begin{proof} As in Lemma \ref{un0lemma}, we have
\begin{equation}\label{un0corr21deriv}u_{N_{0},corr,2}(t,r) = \frac{-r}{2\pi} \int_{t}^{\infty} ds \int_{0}^{s-t}\frac{\rho d\rho}{\sqrt{(s-t)^{2}-\rho^{2}}} \int_{0}^{1}d\beta \int_{0}^{2\pi}  I_{e_{N_{0},corr,2}}(s,r\beta,\rho,\theta)d\theta\end{equation}
where we recall the notation \eqref{Inotation}. From Lemma \ref{v24estlemma}, we get the following estimate.
\begin{equation}|I_{e_{N_{0},corr,2}}(s,r\beta,\rho,\theta)| \leq C \frac{\lambda(s)^{2} \log^{3}(s) \left(\sup_{x \in [100,s]}\left(\lambda(x) \log(x)\right)\right)^{3}}{s^{9/2} \langle t-r \rangle^{7/2}}, \quad s \geq t \geq r,\quad \rho \leq s-t, \quad 0 \leq \beta \leq 1\end{equation}
Using \eqref{lambdacomparg}, we get \eqref{un0corr2est} for $k=j=0$. The formulae
\begin{equation}\label{drun0corr2form}\partial_{r}u_{N_{0},corr,2}(t,r) = \frac{-1}{2\pi} \int_{t}^{\infty} ds \int_{0}^{s-t} \frac{\rho d\rho}{\sqrt{(s-t)^{2}-\rho^{2}}} \int_{0}^{2\pi} d\theta I_{e_{N_{0},corr,2}}(s,r,\rho,\theta)\end{equation}
and
$$\partial_{t}u_{N_{0},corr,2}(t,r) = \frac{-r}{2\pi} \int_{t}^{\infty} ds \int_{0}^{s-t}\frac{\rho d\rho}{\sqrt{(s-t)^{2}-\rho^{2}}} \int_{0}^{1}d\beta \int_{0}^{2\pi}  \partial_{1}I_{e_{N_{0},corr,2}}(s,r\beta,\rho,\theta)d\theta,$$
along with the same procedure used to establish \eqref{un0corr2est} for $k=j=0$, implies \eqref{un0corr2est} for all larger $k,j$ in the lemma statement. Next, we note that
$$u_{N_{0},corr,2}(t,r) = \frac{-1}{2\pi} \int_{0}^{\infty} dw \int_{0}^{w} \frac{\rho d\rho}{\sqrt{w^{2}-\rho^{2}}} \int_{0}^{2\pi} \frac{e_{N_{0},corr,2}(t+w,\sqrt{r^{2}+\rho^{2}+2 r \rho \cos(\theta)})}{\sqrt{r^{2}+\rho^{2}+2 r \rho \cos(\theta)}} \left(r+\rho\cos(\theta)\right) d\theta.$$
Differentiation under the integral sign, combined with the procedure used to prove \eqref{un0corr2est} gives the estimate on $\partial_{t}^{2}u_{N_{0},corr,2}$ in \eqref{dt2un0corr2est}. We estimate $\partial_{r}^{2}u_{N_{0},corr,2}$ by directly differentiating \eqref{drun0corr2form}. Next,
$$||e_{N_{0},corr,2}(t,r)||_{L^{2}(rdr)} \leq \frac{C \lambda(t)^{2} \log^{3}(t) \left(\sup_{x \in [100,t]}\left(\lambda(x) \log(x)\right)\right)^{3}}{t^{4}}$$
and this implies \eqref{un0corr2enest}. Finally, using Lemma \ref{v24estlemma}, we get
$$|I_{e_{N_{0},corr,2}}(s,r,\rho,\theta)| \leq \frac{C \lambda(s)^{2} \log^{6}(s)}{s^{\frac{9}{2}-3C_{u}}}, \quad s \geq t, \quad  r >0, \rho \leq s-t, \theta \in [0,2\pi]$$
and this gives
$$|\partial_{r}u_{N_{0},corr,2}(t,r)| \leq C \int_{t}^{\infty} ds (s-t) \frac{\log^{6}(s)}{s^{\frac{9}{2}-5 C_{u}}} \leq \frac{C \log^{6}(t)}{t^{\frac{5}{2}-5C_{u}}}$$
 \end{proof}
Let $m_{\geq 1}:\mathbb{R}\rightarrow \mathbb{R}$ be any function satisfying
\begin{equation}\label{mgeq1def}m_{\geq 1}(x) = \begin{cases} 1, \quad x \geq \frac{1}{4}\\
0, \quad x \leq \frac{1}{8}\end{cases}, \quad m_{\geq 1} \in C^{\infty}(\mathbb{R})\end{equation}
Then, we will add $m_{\geq 1}(\frac{r}{t}) u_{N_{0},corr,2}(t,r)$ to our ansatz in order to eliminate the error term $e_{N_{0},corr,2}$. Accordingly, we define the error term, $err_{N_{0},corr,2}$, associated to $m_{\geq 1}(\frac{r}{t}) u_{N_{0},corr,2}(t,r)$, by
\begin{equation}\begin{split} err_{N_{0},corr,2}(t,r)&=-\left(-\partial_{t}^{2}+\partial_{r}^{2}+\frac{1}{r}\partial_{r}-\frac{\cos(2Q_{1}(\frac{r}{\lambda(t)}))}{r^{2}}\right) \left(m_{\geq 1}(\frac{r}{t}) u_{N_{0},corr,2}(t,r)\right)+e_{N_{0},corr,2}(t,r) \\
&=-\left(\frac{m_{\geq 1}''(\frac{r}{t})}{t^{2}}\left(1-\frac{r^{2}}{t^{2}}\right) + \frac{m_{\geq 1}'(\frac{r}{t})}{r t}\left(1-\frac{2r^{2}}{t^{2}}\right)\right) u_{N_{0},corr,2}(t,r)\\
&-\frac{2 m_{\geq 1}'(\frac{r}{t})}{t}\left(\partial_{r}u_{N_{0},corr,2}(t,r)+\frac{r}{t}\partial_{t}u_{N_{0},corr,2}(t,r)\right)\\
&-\left(\frac{1-\cos(2Q_{1}(\frac{r}{\lambda(t)}))}{r^{2}}\right) m_{\geq 1}(\frac{r}{t}) u_{N_{0},corr,2}(t,r)\end{split}\end{equation}
where we used the fact that
$$(1-m_{\geq 1}(\frac{r}{t})) e_{N_{0},corr,2}(t,r) =0$$
which follows from the definition of $m_{\geq 1}$, and $e_{N_{0},corr,2}$, which we recall is given in \eqref{en0corr2def}. The following lemma shows that $err_{N_{0},corr,2}$ is small enough to be treated with our final, perturbative argument.
\begin{lemma}\label{errn0corr2estlemma} We have the following estimates for $k=0,1$.
\begin{equation}||L_{\frac{1}{\lambda(t)}}^{k}(err_{N_{0},corr,2})(t,r)||_{L^{2}(r dr)} \leq \frac{C \lambda(t)^{2}(1+\lambda(t)^{2}) \log^{3}(t) \left(\sup_{x \in [100,t]}\left(\lambda(x) \log(x)\right)\right)^{3}}{t^{6+k}} \end{equation}
\end{lemma}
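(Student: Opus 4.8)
The proof is a direct estimation: it uses only the bounds on $u_{N_{0},corr,2}$ and its derivatives collected in Lemma \ref{un0corrlemma}, together with the support properties of $m_{\geq 1}$ from \eqref{mgeq1def} and the asymptotics of $Q_{1}$. The plan is to treat the explicit formula for $err_{N_{0},corr,2}$ displayed just above the lemma statement term by term, grouping the summands into (i) the ``cutoff-derivative'' terms, i.e.\ all terms carrying a factor $m_{\geq 1}'(\frac{r}{t})$ or $m_{\geq 1}''(\frac{r}{t})$, and (ii) the single ``potential'' term $-\left(\frac{1-\cos(2Q_{1}(\frac{r}{\lambda(t)}))}{r^{2}}\right) m_{\geq 1}(\frac{r}{t}) u_{N_{0},corr,2}(t,r)$.

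For group (i): by \eqref{mgeq1def}, $m_{\geq 1}'$ and $m_{\geq 1}''$ are supported in $\frac{r}{t}\in[\frac{1}{8},\frac{1}{4}]$, so on the support of each of these terms one has $r\sim t$ and $\langle t-r\rangle\sim t$. On this region I would simply insert the pointwise estimates \eqref{un0corr2est}, noting that each such term is a factor bounded by $\frac{C}{t^{2}}$ (arising from $\frac{1}{t^{2}}$ or $\frac{1}{rt}$) times $u_{N_{0},corr,2}$, or a factor bounded by $\frac{C}{t}$ times $\partial_{r}u_{N_{0},corr,2}$ or $\frac{r}{t}\partial_{t}u_{N_{0},corr,2}$; in all cases \eqref{un0corr2est} with $r\sim t\sim\langle t-r\rangle$ gives a pointwise bound of size $\lambda(t)^{2}\log^{3}(t)\left(\sup_{x\in[100,t]}(\lambda(x)\log(x))\right)^{3}t^{-7}$, and since the support has $r\,\mathrm{d}r$-measure $\sim t^{2}$, taking the $L^{2}(r\,\mathrm{d}r)$ norm picks up a factor $\sim t$, which yields the claimed bound for $k=0$.

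For group (ii): by \eqref{mgeq1def} the term is supported in $r\geq\frac{t}{8}$, hence $r\gg\lambda(t)$ for $t\geq T_{0}$ (recall \eqref{t2constraint}), and the asymptotics of $Q_{1}$ give $\left(\frac{1-\cos(2Q_{1}(\frac{r}{\lambda(t)}))}{r^{2}}\right) m_{\geq 1}(\frac{r}{t})\leq\frac{C\lambda(t)^{2}}{r^{4}}\leq\frac{C\lambda(t)^{2}}{t^{3}}\cdot\frac{1}{r}$. Consequently, using the energy estimate \eqref{un0corr2enest} (which in particular controls $\|u_{N_{0},corr,2}/r\|_{L^{2}(r\,\mathrm{d}r)}$), one obtains $\|(\text{group ii})\|_{L^{2}(r\,\mathrm{d}r)}\leq\frac{C\lambda(t)^{2}}{t^{3}}\sqrt{E(u_{N_{0},corr,2},\partial_{t}u_{N_{0},corr,2})}\leq\frac{C\lambda(t)^{4}\log^{3}(t)\left(\sup_{x\in[100,t]}(\lambda(x)\log(x))\right)^{3}}{t^{6}}$. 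This step deliberately avoids the pointwise bound \eqref{un0corr2est}, which is only stated for $r\leq t$, whereas the potential term is supported up to and beyond $r=t$; the factor $\lambda(t)^{4}$ produced here is exactly what forces the harmless $(1+\lambda(t)^{2})$ on the right-hand side.

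Finally, for $k=1$ one applies $L_{\frac{1}{\lambda(t)}}f=\partial_{r}f-\frac{\cos(Q_{1}(r/\lambda(t)))}{r}f$ to $err_{N_{0},corr,2}$. On the support of every term one has $r\geq\frac{t}{8}$, so the factor $\frac{1}{r}$ is $\leq\frac{C}{t}$; each $r$-derivative falling on a cutoff costs $\frac{1}{t}$, each $r$-derivative on the symbol $\frac{1-\cos(2Q_{1})}{r^{2}}$ costs $\frac{1}{r}\leq\frac{C}{t}$, and each $r$-derivative on $u_{N_{0},corr,2}$ is absorbed using the $\partial_{r}^{2}$-estimate in \eqref{dt2un0corr2est} in the region $r\sim t$ and using $\|\partial_{r}u_{N_{0},corr,2}\|_{L^{2}(r\,\mathrm{d}r)}\leq\sqrt{E(u_{N_{0},corr,2},\partial_{t}u_{N_{0},corr,2})}$ for the potential contribution; thus the $k=1$ bound is the $k=0$ bound with one extra power of $t^{-1}$, as required. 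I do not expect a genuine obstacle here; the only point needing a little care is the switch in group (ii) from the pointwise estimate to the energy estimate, precisely in the regime where $\langle t-r\rangle$ is no longer comparable to $t$ and where $r$ may exceed $t$.
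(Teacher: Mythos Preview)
Your proof is correct and follows the same overall approach as the paper: direct application of the estimates from Lemma \ref{un0corrlemma} to the explicit expression for $err_{N_{0},corr,2}$. The only difference lies in your handling of the potential term (group (ii)). The paper splits this term at $r=t-t^{3/7}$, using the pointwise bound \eqref{un0corr2est} in the region $r\leq t-t^{3/7}$ (where $\langle t-r\rangle\geq t^{3/7}$ keeps the estimate finite) and the energy bound \eqref{un0corr2enest} only in the narrow strip $r\geq t-t^{3/7}$. You instead invoke \eqref{un0corr2enest} on the entire support $r\geq t/8$ via the inequality $\frac{\lambda(t)^{2}}{r^{4}}\leq\frac{C\lambda(t)^{2}}{t^{3}}\cdot\frac{1}{r}$ and $\|u_{N_{0},corr,2}/r\|_{L^{2}(r\,dr)}\leq\sqrt{E}$. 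This is a genuine simplification: it avoids the somewhat arbitrary splitting scale $t^{3/7}$, directly explains the extra factor $(1+\lambda(t)^{2})$, and yields the same final bound, since in the paper's argument the dominant contribution to the potential term also comes from the energy-estimate region anyway.
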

\begin{proof} We directly apply Lemma \ref{un0corrlemma}. The only point to note is that, to estimate\\
 $\left(\frac{1-\cos(2Q_{1}(\frac{r}{\lambda(t)}))}{r^{2}}\right) m_{\geq 1}(\frac{r}{t}) u_{N_{0},corr,2}(t,r)$, we use \eqref{un0corr2est} in the region $r \leq t-t^{3/7}$ and \eqref{un0corr2enest} for the region $r \geq t-t^{3/7}$. \end{proof}

\subsection{Second set of nonlinear interactions}
The next step is to treat the nonlinear interactions between the newest addition to our ansatz, namely 
\begin{equation}\label{undef}u_{n}(t,r):=u_{N_{0}}(t,r)+u_{N_{0},corr}(t,r)+m_{\geq 1}(\frac{r}{t}) u_{N_{0},corr,2}(t,r),\end{equation}
and $u_{a}$. The interactions between $v_{2,4}$ (part of $u_{N_{0},corr}$) and itself, along with the interactions between $v_{2,4}$ and $u_{a,0}$ are not quite perturbative, but the rest of the nonlinear interactions are. In order to show this, we note that
\begin{equation}\begin{split} &\sin(2Q_{1}(\frac{r}{\lambda(t)})+2u_{a}+2u_{n})= 2r^{2}\left(N_{0}+N_{1}+N_{2}+N_{3}\right) +\cos (2 Q_{1}(\frac{r}{\lambda(t)}))\cdot 2 (u_{a}+u_{n})+\sin (2 Q_{1}(\frac{r}{\lambda(t)}))\end{split}\end{equation}
where $N_{0}$ is defined in \eqref{N0def}, $N_{1}$ is defined in \eqref{N1def}, and we define 
\begin{equation}\label{N2def}\begin{split}N_{2}(t,r):&=\frac{\cos (2 Q_{1}(\frac{r}{\lambda(t)}))}{2r^{2}} (-2 (u_{a0}+v_{2,4})+\sin (2 (u_{a,0}+v_{2,4}))-(\sin (2 u_{a,0})-2 u_{a,0}))\\
&+\frac{\sin (2 Q_{1}(\frac{r}{\lambda(t)}))}{2r^{2}} (\cos (2 u_{a,0}+2 v_{2,4})-\cos (2 u_{a,0}))\\
\end{split}\end{equation}
We therefore have
\begin{equation}\label{N3def}\begin{split}N_{3}(t,r)&= \frac{\sin (2 Q_{1}(\frac{r}{\lambda(t)}))}{2r^{2}} \left(\cos (2 u_{a}+2 u_{n})-\cos (2 u_{a})-\left(\cos (2 u_{a,0}+2 v_{2,4})-\cos (2 u_{a,0})\right)\right)\\
&+\frac{\cos (2 Q_{1}(\frac{r}{\lambda(t)}))}{2r^{2}} \begin{aligned}[t]&\left(\sin (2 u_{a}+2 u_{n})-2 (u_{a}+u_{n})-(\sin (2 u_{a})-2 u_{a})\right.\\
&\left.-(\sin (2 u_{a,0}+2 v_{2,4})-2 (u_{a,0}+v_{2,4}))+\sin(2u_{a,0})-2 u_{a,0}\right)\end{aligned}\end{split}\end{equation}
where we recall the definition of $u_{a}$, from \eqref{uasplit}. We define $u_{new}$ by
\begin{equation}\begin{split}u_{new}(t,r):&=u_{n}(t,r)-v_{2,4}(t,r)\end{split}\end{equation}
We remark that $N_{2}$ contains the interactions between $v_{2,4}$ and $u_{a,0}$, except for the $u_{a,0}$ self-interactions, which were already contained in $N_{0}$. To give the reader an idea of how we estimate $N_{3}$, we re-write its expression as follows.
\begin{equation}\label{n3exp}\begin{split}&N_{3}(t,r)\\
&=\frac{\cos (2 Q_{1}(\frac{r}{\lambda(t)}))}{2r^{2}} \begin{aligned}[t]&\left((\cos (2 u_{a})-1) ((\cos (2 u_{new})-1) \sin (2 v_{2,4}))\right.\\
&\left.+(\cos (2 u_{new}+2 v_{2,4})-1) (\sin (2 u_{a,0}) (\cos (2 u_{a,1})-1))\right.\\
&\left.+\sin (2 v_{2,4}) (\cos (2 u_{a,0}) (\cos (2 u_{a,1})-1))+\sin (2 u_{a,0}) ((\cos (2 u_{new})-1) \cos (2 v_{2,4}))\right.\\
&\left.+(\cos (2 u_{new})-1) \sin (2 v_{2,4})-2 u_{new}+\sin (2 u_{new})\right)\end{aligned}\\
&+\frac{\sin (2 Q_{1}(\frac{r}{\lambda(t)}))}{2r^{2}} \left(\cos (2 (u_{a}+ u_{n}))-\cos (2 u_{a})-\cos (2 u_{a,0}+2 v_{2,4})+\cos (2 u_{a,0})\right)\\
&+\frac{\cos(2Q_{1}(\frac{r}{\lambda(t)}))}{2r^{2}} \begin{aligned}[t]&\left((\cos (2 u_{a})-1) \sin (2 u_{new}) \cos (2 v_{2,4})+(\cos (2 (u_{new}+ v_{2,4}))-1) \cos (2 u_{a,0}) \sin (2 u_{a,1})\right.\\
&\left.-\sin (2 v_{2,4}) \sin (2 u_{a,0}) \sin (2 u_{a,1})-\sin (2 u_{a,0}) \sin (2 u_{new}) \sin (2 v_{2,4})\right.\\
&\left.+\sin (2 u_{new}) (\cos (2 v_{2,4})-1)\right)\end{aligned}\end{split}\end{equation}
\begin{lemma}\label{n3estlemma} We have the following estimates.\begin{equation}\begin{split}||\frac{N_{3}(t,r)}{\lambda(t)^{2}}||_{L^{2}(r dr)} \leq \frac{C \log^{24}(t)}{t^{9/2-15C_{u}-2C_{l}}}, \quad ||L_{\frac{1}{\lambda(t)}}(N_{3}(t,r))||_{L^{2}(r dr)} \leq \frac{C \log^{24}(t)}{t^{9/2-15C_{u}-C_{l}}}\end{split}\end{equation}
\end{lemma}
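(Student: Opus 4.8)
The plan is to estimate $N_3$ directly from its expanded form \eqref{n3exp}, exactly as was done for $N_1$ in Lemma \ref{n1lemma}, but now carrying along the new building block $v_{2,4}$. The key structural observation is that every single term in \eqref{n3exp} is at least cubic in the collection $\{u_{a,0}, u_{a,1}, u_{new}, v_{2,4}\}$ and, crucially, each term contains at least one factor that is \emph{not} just $v_{2,4}$ and not just $u_{a,0}$: the $v_{2,4}$--$u_{a,0}$ bilinear (and pure $u_{a,0}^{\otimes 2}$, $v_{2,4}^{\otimes 2}$) interactions have already been peeled off into $N_0$ and $N_2$, so what remains in $N_3$ always carries a factor of $u_{a,1}$ or $u_{new}$, i.e.\ something that is quantitatively smaller (more $t$-decay, or extra powers of $\lambda(t)$) than $u_{a,0}$ or $v_{2,4}$ alone. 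That is the reason $N_3$ is perturbative while $N_2$ is not.

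First I would assemble the pointwise bounds on all the ingredients: for $u_{a,0}$ and $u_{a,1}$ I would reuse \eqref{ua0est} and \eqref{ua1est} verbatim from the proof of Lemma \ref{n1lemma}; for $v_{2,4}$ the estimates \eqref{v24minusrf24}--\eqref{v24outsideconeest} of Lemma \ref{v24estlemma}; and for $u_{new}=u_n-v_{2,4}=u_{N_0}+u_{N_0,corr}-v_{2,4}+m_{\geq 1}(\tfrac{r}{t})u_{N_0,corr,2}$ I would combine Lemma \ref{un0lemma} (for $u_{N_0}$), Lemma \ref{un0ellminusipestlemma} together with the cutoff structure of \eqref{un0corrdef} (for $u_{N_0,corr}-v_{2,4}$), and Lemma \ref{un0corrlemma} (for $u_{N_0,corr,2}$). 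I would record the $L^\infty$ and, where needed, $L^2(r\,dr)$ and energy bounds, splitting into the three regions $r\le \tfrac{t}{2}$, $\tfrac{t}{2}<r<t$, $r>t$ as in the earlier lemmas. Then, grouping the terms of \eqref{n3exp} by which small factor they carry, I would Taylor-expand the trigonometric differences (each $\cos(2f)-1 = O(f^2)$, $\sin(2f)-2f = O(f^3)$, etc.), use $|\tfrac{\cos(2Q_1(r/\lambda))}{r^2}|\le \tfrac{C}{r^2+\lambda(t)^2}$ and $|\tfrac{\sin(2Q_1(r/\lambda))}{r^2}|\le \tfrac{C\lambda(t)}{r(r^2+\lambda(t)^2)}$, and integrate the resulting pointwise bounds in $r\,dr$. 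For the $L_{\frac{1}{\lambda(t)}}N_3$ bound I would differentiate \eqref{n3exp} in $r$, use the pointwise $\partial_r$-estimates on each ingredient (again reusing the corresponding derivative estimates already proven), and for the few terms where the derivative falls on a free wave near the cone I would use H\"older together with the energy bounds \eqref{un0enest}, \eqref{u22enest}, \eqref{un0corr2enest} rather than a pointwise derivative estimate, exactly as in the proof of Lemma \ref{n1lemma}.

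The bookkeeping to watch is which term produces the dominant power of $t$. Heuristically, the worst contribution comes from terms like $\tfrac{\cos(2Q_1)}{2r^2}\sin(2u_{a,0})\bigl(\text{something}\bigr)$ where $u_{a,0}$ contributes its near-cone size $\sim r^{-1/2}\langle t-r\rangle^{-1/2}$ and the remaining factors contribute $u_{new}$ or $v_{2,4}$ at their near-cone sizes; the $\tfrac{1}{r^2}$ prefactor is only $\sim t^{-2}$ near the cone, and one more power of $r^{-1/2}$ comes from each free-wave-type factor, so one expects a net decay like $t^{-9/2}$ up to the $C_u, C_l$ losses coming from $\lambda(t)^{\pm}$ factors and the $\sup_{[100,t]}(\lambda\log)$ quantities, which is consistent with the stated $t^{-(9/2-15C_u-2C_l)}$. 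I would organize the estimate so that the cone region dominates and verify that the interior region $r\le t/2$ (where everything has genuine $t^{-k}$ decay) and the exterior $r>t$ (where the free waves decay like $r^{-1/2}$ and $v_{2,4}$, $u_{N_0,corr,2}$ have extra $\langle t-r\rangle$ decay) give strictly smaller contributions.

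The main obstacle I anticipate is purely combinatorial rather than conceptual: \eqref{n3exp} has on the order of a dozen distinct cubic-and-higher monomials, and for each one I must (a) identify the ``small'' factor, (b) choose the right region decomposition, and (c) track the precise exponents of $\lambda(t)$, of $\langle t-r\rangle$, and of the logarithmic factors $\sup_{[100,t]}(\lambda\log)$ so as to not lose more than the claimed $15C_u + 2C_l$ in the exponent. The genuinely delicate sub-step is handling the factor $u_{new}$ near the light cone: $u_{new}$ contains $u_{N_0}$, which by Lemma \ref{un0lemma} has near-cone size only $\sim t^{-3/2}\langle t-r\rangle^{-5/2}$ (times $\log$s), and one must check that pairing this with the two other factors still beats $t^{-(9/2-\ldots)}$ in $L^2(r\,dr)$; this works because the $\langle t-r\rangle^{-5/2}$ is square-integrable in $r$ near the cone and the remaining two factors are bounded there, but the exponent arithmetic must be done carefully. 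No new analytic input is required beyond the lemmas already established; the proof is ``straightforward, but slightly long,'' in the language used for Lemma \ref{n1lemma}.
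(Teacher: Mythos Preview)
Your proposal is correct and follows essentially the same route as the paper: first assemble a pointwise bound on $u_{new}$ from Lemmas \ref{un0lemma}, \ref{un0ellminusipestlemma}, and \ref{un0corrlemma} (this is exactly the estimate \eqref{unewest} in the paper), then plug \eqref{ua0est}, \eqref{ua1est}, Lemma \ref{v24estlemma}, and the $u_{new}$ bound directly into the expanded expression \eqref{n3exp} and its $r$-derivative. Your structural observation that every monomial in \eqref{n3exp} carries at least one factor of $u_{a,1}$ or $u_{new}$ is the right organizing principle, though note that a few terms (e.g.\ the $\frac{\sin(2Q_1)}{2r^2}$ line) are only \emph{quadratic} in the small quantities, with the extra smallness coming from the $\sin(2Q_1)/r^2\sim \lambda(t)/(r(r^2+\lambda(t)^2))$ prefactor rather than a third correction factor; this does not affect the argument. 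One small correction to your near-cone heuristics: for $u_{new}$ in the region $r\ge t/2$ the paper simply uses the uniform $t^{-2}$-type bound coming from \eqref{un0enest} (and analogues), not a $\langle t-r\rangle$-weighted bound on $u_{N_0}$ itself, so your ``delicate sub-step'' is in fact easier than you anticipate.
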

\begin{proof}
 We note that, as per \eqref{mgeq1def},
$$1-m_{\geq 1}(\frac{r}{2t})=0, \quad r \geq \frac{t}{2}$$
Using Lemma \ref{un0lemma}, Lemma \ref{un0ellminusipestlemma}, and Lemma \ref{un0corrlemma}, we get
\begin{equation}\label{unewest}|u_{new}(t,r)| \leq \begin{cases} \frac{C r \log^{4}(t) \left(\sup_{x \in [100,t]}\left(\lambda(x) \log(x)\right)\right)^{3}}{t^{4}}, \quad r \leq \frac{t}{2}\\
\frac{\left(\sup_{x \in [100,t]}\left(\lambda(x) \log(x)\right)\right)^{3} \left(\sup_{x \in [100,t]}\left(\lambda(x)^{2}\right)\right)^{3} \log^{9}(t)}{t^{2}}, \quad \frac{t}{2} \leq r\end{cases}\end{equation}
We then straightforwardly estimate the expression  \eqref{n3exp}, for $N_{3}$ (as well as its $r$ derivative), using \eqref{ua1est}, \eqref{ua0est}, and Lemma \ref{v24estlemma} (to estimate $u_{a,1}$, $u_{a,0}$, and $v_{2,4}$, respectively).  \end{proof}
Next, we add a correction, $u_{N_{2}}$ to improve the error term $N_{2}$, defined in \eqref{N2def}. In particular, we define $u_{N_{2}}$ to be the solution to the following equation with $0$ Cauchy data at infinity.
\begin{equation}\label{un2eqn}-\partial_{t}^{2}u_{N_{2}}+\partial_{r}^{2}u_{N_{2}}+\frac{1}{r}\partial_{r}u_{N_{2}}-\frac{u_{N_{2}}}{r^{2}}=N_{2}(t,r)\end{equation}
Then, we have the following lemma.
\begin{lemma} \label{un2lemma}For $0 \leq k \leq 1$, and $r \leq t$
\begin{equation}\begin{split}&r^{k}|\partial_{r}^{k} u_{N_{2}}(t,r)|\\
&\leq C r (r+\lambda(t)) \left(\frac{\log^{7}(t)}{t^{5-3C_{u}}}\right)+ \frac{C r \lambda(t)}{t^{2}\langle t-r \rangle^{4}}\left(t^{4C_{u}}\log^{8}(t)+\frac{t^{6C_{u}} \log^{12}(t)}{\langle t-r \rangle^{2}}\right)+\frac{C r \log^{10}(t) t^{5C_{u}}}{\langle t-r \rangle^{9/2} t^{3/2}} \left(1+\frac{t^{4C_{u}} \log^{8}(t)}{\langle t-r \rangle^{4}}\right)\end{split}\end{equation}
In addition, for all $r>0$,
\begin{equation}\begin{split}&|u_{N_{2}}(t,r)| + \sqrt{E(u_{N_{2}},\partial_{t}u_{N_{2}})}\leq \frac{C \log^{18}(t)}{t^{2-9C_{u}}}\end{split}\end{equation}
Finally,
\begin{equation}\label{drun2inftyest} ||\partial_{r}u_{N_{2}}(t,r)||_{L^{\infty}_{r}} \leq \frac{C \log^{18}(t)}{t^{\frac{3}{2}-9C_{u}}}\end{equation}
\end{lemma}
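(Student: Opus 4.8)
The plan is to prove Lemma \ref{un2lemma} by first deriving pointwise bounds on $N_2(t,r)$ (defined in \eqref{N2def}) and on $\partial_r N_2(t,r)$, and then inserting these into the Duhamel/spherical-means representation of the solution to \eqref{un2eqn} with zero Cauchy data at infinity, exactly as was done for $u_{N_0}$, $q_{4,1}$, and $u_{N_0,corr,2}$ in Lemmas \ref{un0lemma}, \ref{q41lemma}, \ref{un0corrlemma}.

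First, I would estimate $N_2$. Since $N_2$ vanishes identically when $v_{2,4}=0$, I would Taylor-expand each bracket in \eqref{N2def} so that every resulting summand carries an explicit factor of $v_{2,4}$: schematically the first bracket contributes terms of type $u_{a,0}v_{2,4}^2$, $v_{2,4}^3$, $u_{a,0}^2 v_{2,4}$ against the prefactor $\frac{\cos(2Q_1(r/\lambda(t)))}{2r^2}$, and the second contributes terms of type $u_{a,0}v_{2,4}$, $v_{2,4}^2$ against $\frac{\sin(2Q_1(r/\lambda(t)))}{2r^2}$. I would then insert the bound \eqref{ua0est} for $u_{a,0}$ (and the corresponding $\partial_r u_{a,0}$ bound obtained by the procedure in the proof of Lemma \ref{n1lemma}) together with the full set of estimates for $v_{2,4}$ from Lemma \ref{v24estlemma}, controlling the prefactors by $\frac{|\sin(2Q_1(x))|}{r^2}\le C\min\{\lambda(t)^{-1}r^{-1},\lambda(t)r^{-3}\}$ and $\frac{|\cos(2Q_1(x))|}{r^2}\le C\min\{\lambda(t)^{-2},r^{-2}\}$ with $x=r/\lambda(t)$. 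The outcome is a pointwise bound for $N_2$ and $\partial_r N_2$, split over $r\le t/2$, $t/2\le r<t$, $r\ge t$, in which the decay inside and near the light cone is governed by the $\langle t-r\rangle^{-5/2}$ (and, outside, $r^{-1/2}\langle t-r\rangle^{-1/2}$) decay of $v_{2,4}$; the powers $t^{C_u}$ appearing in the statement arise from applying \eqref{lambdacomparg} to compare $\sup_{[100,s]}(\lambda\log)$ and $\lambda(s)$ at shifted times.

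Then I would propagate through the wave flow. As in \eqref{q41onederivform} and \eqref{un01deriv}, write
\begin{equation}
u_{N_2}(t,r) = \frac{-r}{2\pi}\int_t^\infty ds\int_0^{s-t}\frac{\rho\,d\rho}{\sqrt{(s-t)^2-\rho^2}}\int_0^1 d\beta\int_0^{2\pi}d\theta\; I_{N_2}(s,r\beta,\rho,\theta),
\end{equation}
with $I_{N_2}$ as in \eqref{Inotation}, and $\partial_r u_{N_2}$ given by the same formula with the outer $r$ and the $\beta$-integral removed, as in \eqref{drun0intstep}. I would decompose the $(s,\rho,\beta)$ domain according to whether $\sqrt{r^2\beta^2+\rho^2+2r\beta\rho\cos\theta}$ lies in $(0,s/2]$, $(s/2,s)$, or $[s,\infty)$, carry out the $\theta$-integrals by Cauchy's residue theorem (using identities of the type \eqref{thetaint} and \eqref{tplusrcosthetaint}), and use $\sqrt{r^2\beta^2+\rho^2+2r\beta\rho\cos\theta}\le r\beta+\rho\le r+\rho$, hence $\langle s-\sqrt{r^2\beta^2+\rho^2+2r\beta\rho\cos\theta}\rangle\ge C(s-(r+\rho))$, together with $s-(r+\rho)=(s-t-\rho)+(t-r)\ge C\langle t-r\rangle$ for $r\le t$ in the relevant subregions, to extract the $\langle t-r\rangle$-decay factors in the statement; then the $\rho$- and $s$-integrals are done directly. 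The $L^\infty$ and energy bounds follow by computing $\|N_2(t,\cdot)\|_{L^2(r\,dr)}$ and running the energy-estimate argument of \eqref{uw2enest} (Minkowski's inequality, the $L^2$-isometry of the Hankel transform of order $1$, monotone/dominated convergence) together with the Cauchy--Schwarz bound \eqref{ptwseenest}; and \eqref{drun2inftyest} follows from a crude global-in-$r$ estimate $|I_{N_2}(s,r,\rho,\theta)|\le C\log^{18}(s)\,s^{-9/2+9C_u}$ inserted into the $\partial_r$-formula, giving $|\partial_r u_{N_2}(t,r)|\le C\int_t^\infty(s-t)\log^{18}(s)s^{-9/2+9C_u}\,ds\le C\log^{18}(t)t^{-3/2+9C_u}$.

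The main obstacle is the delicate region $\rho\approx r$, $\tfrac r2\lesssim\rho\lesssim s-t$. The whole point of this correction (as explained in Section \ref{summarysection}) is that $u_{N_2}$ must inherit genuine $\langle t-r\rangle$-decay inside the cone, so that its linear error term $\bigl(\tfrac{\cos(2Q_1(r/\lambda(t)))-1}{r^2}\bigr)u_{N_2}$ is perturbative even for $r\le t/2$, where $\tfrac1{\langle t-r\rangle}\le\tfrac Ct$; keeping track of how the $\langle t-r\rangle^{-5/2}$ decay of $v_{2,4}$ survives the $\rho$- and $s$-integrations against the singular kernel $\tfrac1{\sqrt{(s-t)^2-\rho^2}}$ is where the care is concentrated, and where the precise exponents in the three terms of the statement get pinned down (including that all powers of $t^{C_u}$ collected via \eqref{lambdacomparg} stay of the advertised size). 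A secondary constraint is that Lemma \ref{v24estlemma} controls only two $r$- and $t$-derivatives of $v_{2,4}$, so the estimates on $u_{N_2}$ cannot be pushed past what is claimed.
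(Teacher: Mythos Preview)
Your proposal is correct and follows essentially the same approach as the paper: the paper's proof consists of the single sentence ``The lemma is proven with the same procedure as in Lemma \ref{un0lemma}, using the estimates from Lemma \ref{v24estlemma} and \eqref{ua0est},'' and what you have written is precisely an expansion of that sentence into its natural steps.
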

\begin{proof}
The lemma is proven with the same procedure as in Lemma \ref{un0lemma}, using the estimates from Lemma \ref{v24estlemma} and \eqref{ua0est}. \end{proof}
It only remains to estimate the linear error term associated to $u_{N_{2}}(t,r)$ as well as its nonlinear interactions with all of the previous terms added into our ansatz. We start with the linear error term, which we denote by $e_{N_{2}}$:
$$e_{N_{2}}(t,r):= \left(\frac{\cos(2Q_{1}(\frac{r}{\lambda(t)}))-1}{r^{2}}\right)u_{N_{2}}(t,r)$$
By directly estimating $e_{N_{2}}$ using the estimates from Lemmas \ref{un2lemma}, we get the following lemma.
\begin{lemma}\label{en231estlemma} For $k=0,1$,
\begin{equation}\begin{split}&||L_{\frac{1}{\lambda(t)}}^{k}(e_{N_{2}})(t,r)||_{L^{2}(r dr)} \leq \frac{C \log^{18}(t)}{t^{5-11C_{u}-kC_{l}}}\end{split}\end{equation}
\end{lemma}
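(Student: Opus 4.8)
# Proof Proposal for Lemma \ref{en231estlemma}

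The plan is to reduce the estimate on $||L_{\frac{1}{\lambda(t)}}^{k}(e_{N_{2}})(t,r)||_{L^{2}(r dr)}$ to direct insertion of the pointwise bounds from Lemma \ref{un2lemma}, splitting the spatial integral at the scales $r \sim \lambda(t)$, $r \sim \frac{t}{2}$, and $r \sim t$. First I would record the symbol-type structure of the potential: the function $\frac{\cos(2Q_{1}(r/\lambda(t)))-1}{r^{2}}$ is, up to constants, $\frac{-8\lambda(t)^{2}}{(\lambda(t)^{2}+r^{2})^{2}}$, hence bounded by $C\min\{1/r^{2},\ \lambda(t)^{2}/r^{4}\}$, and its logarithmic derivative in $r$ is $O(1/r)$ in the inner region and $O(1/r)$ in the outer region as well, so applying $L_{\frac{1}{\lambda(t)}}$ costs at most one extra factor of $\max\{1/r,\ 1/\lambda(t)\}$ relative to the $k=0$ estimate when acting on the potential, while acting on $u_{N_{2}}$ costs the $\partial_{r}u_{N_{2}}$ bound already provided in Lemma \ref{un2lemma}. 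This is exactly the same bookkeeping used in the proofs of Lemmas \ref{en0corr1estlemma}, \ref{errn0corr2estlemma}, and the energy/error lemmas throughout Section 4, so I would phrase it as ``a direct estimation using Lemma \ref{un2lemma}.''

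The computation for $k=0$ proceeds region by region. In the region $r \leq \lambda(t)$, I use $|u_{N_{2}}(t,r)| \leq C r(r+\lambda(t))\tfrac{\log^{7}(t)}{t^{5-3C_{u}}}+(\text{cone terms, here negligible since } \langle t-r\rangle \sim t)$, so the potential times $u_{N_{2}}$ is $O\!\big(\tfrac{\lambda(t)^{2}}{r}\cdot\tfrac{\log^{7}(t)}{t^{5-3C_{u}}}\big)$ after absorbing $r+\lambda(t)\sim\lambda(t)$; wait — more carefully, $\tfrac{1}{r^{2}}\cdot r(r+\lambda(t)) = \tfrac{r+\lambda(t)}{r}$, which is integrable against $r\,dr$ on $(0,\lambda(t))$ and contributes $O\!\big(\lambda(t)\tfrac{\log^{7}(t)}{t^{5-3C_{u}}}\big)$ to the $L^{2}$ norm. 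In the region $\lambda(t) \leq r \leq \frac{t}{2}$, the potential is $\leq C\lambda(t)^{2}/r^{4}$ and $|u_{N_{2}}(t,r)| \leq C r^{2}\tfrac{\log^{7}(t)}{t^{5-3C_{u}}}$ (from the first term, with $\langle t-r\rangle\sim t$ killing the other two), giving an integrand $O(\lambda(t)^{2}/r^{2})$ whose $L^{2}(r\,dr)$ norm over this region is $O\!\big(\lambda(t)^{2}\tfrac{\log^{7}(t)}{t^{5-3C_{u}}}\big)$. In the cone region $\frac{t}{2}\leq r \leq t$, I use the $\langle t-r\rangle$-decaying terms in Lemma \ref{un2lemma}: the potential is $\sim \lambda(t)^{2}/t^{4}$ there, and multiplying by the bounds $\tfrac{r\lambda(t)}{t^{2}\langle t-r\rangle^{4}}t^{4C_{u}}\log^{8}(t)$ etc. and integrating $r\,dr$ over $r\in(\tfrac{t}{2},t)$ — where $\int \langle t-r\rangle^{-8}dr < \infty$ — produces a bound of the stated form with exponent $5 - 11C_{u}$ dominating (the $t^{4C_{u}},t^{5C_{u}},t^{6C_{u}}$ factors combine with the $\lambda(t)^{2}\sim t^{-2C_{l}}$-type size of the potential and the $t^{9C_{u}}$ already in the energy bound). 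For $r \geq t$ one uses the global energy bound $|u_{N_{2}}(t,r)| \leq \tfrac{C\log^{18}(t)}{t^{2-9C_{u}}}$ together with the rapid decay $\lambda(t)^{2}/r^{4}$ of the potential, which integrates to something strictly smaller than the claimed bound.

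For $k=1$, I write $L_{\frac{1}{\lambda(t)}}e_{N_{2}} = \partial_{r}e_{N_{2}} - \tfrac{\cos(Q_{1}(r/\lambda(t)))}{r}e_{N_{2}}$ and distribute $\partial_{r}$ either onto the potential (costing $1/r$ or $1/\lambda(t)$, as above, and reproducing the $k=0$ argument with one worse power — this is where the factor $t^{-kC_{l}}$ appears, since the extra $1/\lambda(t)$ in the inner region is $\sim t^{C_{l}}$) or onto $u_{N_{2}}$, where I substitute $|\partial_{r}u_{N_{2}}|$ from Lemma \ref{un2lemma} (both the region-$r\leq t$ pointwise bound and $\|\partial_{r}u_{N_{2}}\|_{L^\infty} \leq C\log^{18}(t)/t^{3/2-9C_u}$ for large $r$, the latter combined with a cheap $L^2$-width estimate). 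The zeroth-order term $\tfrac{1}{r}e_{N_{2}}$ is dominated by $\partial_r$ acting on the potential. I expect the main obstacle — though it is really just careful arithmetic rather than a genuine difficulty — to be verifying that the worst exponent across all regions is exactly $5 - 11C_{u} - kC_{l}$ and not something slightly smaller: one must track how the five or six distinct $t$-powers in Lemma \ref{un2lemma} (the $t^{3C_u}, t^{4C_u}, t^{5C_u}, t^{6C_u}, t^{9C_u}$ factors, the $\lambda(t)^{2}$ factors, and the $\langle t-r\rangle$ widths) combine after multiplication by the potential and integration, and confirm that the constraints \eqref{lambdaonlyconstr} and \eqref{alphaconstr} are not needed here (they are used later, for $F_2$), so that $5 - 11C_u - C_l > 4$ suffices for this error term to be perturbative. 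Once that accounting is done the lemma follows.
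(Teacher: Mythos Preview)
Your approach is essentially the paper's own---a direct insertion of the estimates from Lemma~\ref{un2lemma} into the definition of $e_{N_{2}}$, split by regions---but your handling of the cone region $\tfrac{t}{2}\le r\le t$ does not deliver the claimed exponent. If you multiply the pointwise $\langle t-r\rangle$-decaying terms of Lemma~\ref{un2lemma} by the potential $\sim \lambda(t)^{2}/t^{4}$ and take $L^{2}(r\,dr)$ over $(\tfrac{t}{2},t)$, the finiteness of $\int \langle t-r\rangle^{-p}\,dr$ leaves behind a factor $t^{1/2}$ from the width; for instance the term $\tfrac{r\lambda(t)}{t^{2}\langle t-r\rangle^{4}}t^{4C_{u}}\log^{8}(t)$ that you single out yields, after this computation, a contribution $\lesssim \lambda(t)^{3}t^{4C_{u}}\log^{8}(t)/t^{9/2}\le \log^{8}(t)/t^{9/2-7C_{u}}$, and $9/2-7C_{u}<5-11C_{u}$ whenever $C_{u}<1/8$. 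The third group of terms is worse still. So the arithmetic you flag as ``really just careful arithmetic'' in fact fails as stated.

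The fix is to abandon the refined pointwise bounds near the cone and use instead the \emph{global} pointwise bound $|u_{N_{2}}(t,r)|\le C\log^{18}(t)/t^{2-9C_{u}}$ from Lemma~\ref{un2lemma} throughout $r\ge t/2$: combined with $|\text{potential}|\le C\lambda(t)^{2}/r^{4}$ this gives
\[
\|e_{N_{2}}\|_{L^{2}(r\,dr,\,r\ge t/2)}\le \frac{C\lambda(t)^{2}\log^{18}(t)}{t^{2-9C_{u}}}\Bigl(\int_{t/2}^{\infty}r^{-7}\,dr\Bigr)^{1/2}\le \frac{C\lambda(t)^{2}\log^{18}(t)}{t^{5-9C_{u}}}\le \frac{C\log^{18}(t)}{t^{5-11C_{u}}},
\]
which is exactly the statement. (This is the same maneuver the paper spells out in the proof of Lemma~\ref{errn0corr2estlemma}, splitting at $t-t^{3/7}$.) For $k=1$ the same substitution, together with the energy bound on $\partial_{r}u_{N_{2}}$ from Lemma~\ref{un2lemma} for $r\ge t/2$, handles the cone region; the inner regions $r\le t/2$ you already treat correctly, and the extra factor $t^{C_{l}}$ comes from the $1/\lambda(t)$ incurred when $\partial_{r}$ hits the potential near $r\sim\lambda(t)$, exactly as you describe.
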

\subsection{Third set of nonlinear interactions}
Finally, it remains to treat the nonlinear interactions between $u_{N_{2}}$ and all of the previous terms of our ansatz (the sum of which is equal to $u_{a}+u_{n}$). I.e., it suffices to estimate $N_{4}$ given by
\begin{equation}\begin{split} &\sin(2Q_{1}(\frac{r}{\lambda(t)})+2u_{a}+2u_{n}+2u_{N_{2}})\\
&= 2r^{2}\left(N_{0}+N_{1}+N_{2}+N_{3}\right)+\cos(2Q_{1}(\frac{r}{\lambda(t)}))\cdot 2(u_{a}+u_{n}+u_{N_{2}}) + \sin(2Q_{1}(\frac{r}{\lambda(t)}))+2r^{2}N_{4}(t,r)\end{split}\end{equation}
So,
\begin{equation}\begin{split}N_{4}(t,r)&=\frac{1}{2r^{2}}\begin{aligned}[t]&\left(\sin(2Q_{1}(\frac{r}{\lambda(t)})+2u_{a}+2u_{n})(\cos(2u_{N_{2}})-1)+\cos(2Q_{1}(\frac{r}{\lambda(t)}))(\sin(2u_{N_{2}})-2u_{N_{2}})\right.\\
&+\left.\left(\cos(2Q_{1}(\frac{r}{\lambda(t)})+2u_{a}+2u_{n})-\cos(2Q_{1}(\frac{r}{\lambda(t)}))\right)\sin(2u_{N_{2}})\right)\end{aligned}\end{split}\end{equation}
\begin{lemma} \label{n4estlemma} 
\begin{equation}\begin{split}&\lambda(t)^{-2} ||N_{4}(t,r)||_{L^{2}(r dr)} \leq \frac{C \log^{30}(t)}{t^{\frac{9}{2}-2C_{l}-15C_{u}}}, \quad ||L_{\frac{1}{\lambda(t)}}(N_{4})(t,r)||_{L^{2}(r dr)} \leq \frac{C \log^{30}(t)}{t^{\frac{9}{2}-15C_{u}-C_{l}}}\end{split}\end{equation}\end{lemma}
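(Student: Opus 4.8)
The plan is to Taylor-expand the trigonometric functions appearing in $N_4$ and reduce the estimate to a finite sum of products, each of the form (potential-type factor)$\times$(monomial in $u_a,u_n,u_{N_2}$), which are then bounded pointwise and integrated against $r\,dr$. Using $|\cos x-1|\le\tfrac12x^2$, $|\sin x-x|\le\tfrac16|x|^3$, $|\sin x|\le|x|$ and $|\sin(2Q_1(\tfrac{r}{\lambda(t)})+2u_a+2u_n)|\le1$, together with the identity from Lemma \ref{kernelests} that $\tfrac{\cos(2Q_1(x))-1}{x^2}=\tfrac{-8}{(1+x^2)^2}$ (so $\tfrac{\cos(2Q_1(r/\lambda(t)))-1}{r^2}=\tfrac{-8\lambda(t)^2}{(\lambda(t)^2+r^2)^2}$, while $\tfrac{\sin(2Q_1(r/\lambda(t)))}{r^2}$ and $\tfrac{\cos(2Q_1(r/\lambda(t)))}{r^2}$ are $O(r^{-2})$ for $r\gtrsim\lambda(t)$), one gets the pointwise bound
$$|N_4(t,r)|\le\frac{C}{r^2}\Big(|u_{N_2}|^2+|u_{N_2}|^3+(|u_a|+|u_n|)\,|u_{N_2}|\Big),$$
with the understanding that each correction vanishes like $r$ at the origin so that the apparent $r^{-2}$ singularity there is harmless. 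I would then substitute the available pointwise estimates: for $u_{N_2}$ use Lemma \ref{un2lemma} (the $r\le t$ bound — which crucially carries $\langle t-r\rangle$-decay inside the light cone — the global energy bound, and $\|\partial_r u_{N_2}\|_{L^\infty_r}\le C\log^{18}(t)\,t^{-3/2+9C_u}$); for $u_a=u_{a,0}+u_{a,1}$ use \eqref{ua0est} and \eqref{ua1est}; for $u_n=u_{new}+v_{2,4}$ use Lemma \ref{v24estlemma} for $v_{2,4}$ and, for $u_{new}$, the bound built in the proof of Lemma \ref{n3estlemma} out of Lemmas \ref{un0lemma}, \ref{un0ellminusipestlemma} and \ref{un0corrlemma}.

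I would split the integration into the regions $r\le h(t)$, $h(t)\le r\le t/2$, $t/2\le r\le t$, and $r\ge t$. In the first two regions the leading contribution to $u_{N_2}$ is $\sim r(r+\lambda(t))\,t^{-5+3C_u}\log^7 t$, so $\tfrac{|u_{N_2}|^2}{r^2}\lesssim(r+\lambda(t))^2t^{-10+6C_u}\log^{14}t$, and similar bounds hold for $\tfrac{|u_{N_2}|^3}{r^2}$ and $\tfrac{(|u_a|+|u_n|)|u_{N_2}|}{r^2}$ using \eqref{ua0est}, \eqref{ua1est} and the $u_{new}$, $v_{2,4}$ bounds; multiplying by $r\,dr$ and integrating up to $r\sim t$ produces a bound of the stated form. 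In the region $t/2\le r\le t$ one relies on the $\langle t-r\rangle$-decay of $u_{N_2}$ (the pieces $\lesssim\lambda(t)t^{4C_u}/(t^2\langle t-r\rangle^4)$, etc.), which compensates the slow $\langle t-r\rangle^{-1/2}$ decay of $v_2,v_{2,2}\subset u_{a,0}$ and the $\sim\lambda(t)/r^3$ size of the potential factor there, so that the products are integrable in $\langle t-r\rangle$. In $r\ge t$ one uses the global $L^\infty$ and energy bounds for $u_{N_2}$ (it has $0$ Cauchy data at infinity), the $1/\sqrt r$ bounds on the free waves (Lemmas \ref{v2estlemma}, \ref{v22lemma}, \ref{v24estlemma}), and the rapid decay $\lambda(t)^2/r^4$ of the potential factors. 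For $L_{\frac{1}{\lambda(t)}}(N_4)=\partial_r N_4-\tfrac{\cos(Q_1(r/\lambda(t)))}{r}N_4$, I would differentiate term by term: the derivative falling on a potential factor costs at most $\max\{1/r,1/\lambda(t)\}$ (and improves decay in the $\tfrac{\cos(2Q_1)-1}{r^2}$ case); falling on the phase $Q_1(r/\lambda(t))$ it is controlled by a symbol; falling on $u_a,u_n,u_{N_2}$ one uses the $\partial_r$ estimates from the same lemmas (keeping $\partial_r u_{N_2}$ in its $\langle t-r\rangle$-weighted form near the cone, and using $L^\infty$ bounds on $\partial_r u_{N_0}$, $\partial_r u_{N_0,corr,2}$, $\partial_r v_{2,4}$, $\partial_r u_{a,j}$ elsewhere). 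The factor $\tfrac{\cos Q_1}{r}$ is $O(1/r)$ and, since each correction is $O(r)$ near $r=0$, contributes nothing worse than $N_4$ itself; the net effect, as in Lemmas \ref{n1lemma} and \ref{n3estlemma}, is to trade one factor of $r$-decay for an extra $\lambda(t)\lesssim t^{C_l}$, which is why the exponent improves from $\tfrac92-15C_u-2C_l$ to $\tfrac92-15C_u-C_l$.

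The main obstacle is the bookkeeping near the light cone, $t/2\le r\le t$: one must verify that after pairing the $\langle t-r\rangle$-weighted bound on $u_{N_2}$ (resp. $\partial_r u_{N_2}$) with the slowly-decaying pieces of $u_{a,0}$ and the $\sim\lambda(t)/r^3$ potential factor, the integrand is $\langle t-r\rangle$-integrable and the resulting power of $t$ — once all the $C_u$- and $C_l$-dependent contributions from $\lambda(t)\approx t^{\pm C}$, $h(t)=t^\alpha\lambda(t)$, and the $\sup_{x\in[100,t]}$ quantities are accumulated — comes out to be exactly $\tfrac92-15C_u-2C_l$ (up to logarithms). This is the same type of exponent-tracking that appears in Lemmas \ref{n1lemma} and \ref{n3estlemma}, and the constraints \eqref{alphaconstr} and \eqref{lambdaonlyconstr} guarantee the exponents stay above threshold; everything else (the Taylor estimates, Hölder with one $L^\infty$ factor, and term-by-term integration using \eqref{lambdacomparg}) is routine.
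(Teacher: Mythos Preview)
Your overall strategy — Taylor-expand the trigonometric nonlinearities, reduce to pointwise products, split into regions, and feed in the available estimates on $u_a$, $u_n$, $u_{N_2}$ — matches the paper's. But your pointwise bound
\[
|N_4(t,r)|\le\frac{C}{r^2}\Big(|u_{N_2}|^2+|u_{N_2}|^3+(|u_a|+|u_n|)\,|u_{N_2}|\Big)
\]
is too crude to reach the stated exponent. The problematic piece is the last one, coming from your estimate $|\cos(2Q_1+2u_a+2u_n)-\cos(2Q_1)|\le 2|u_a+u_n|$ applied to the third term in $N_4$. Near the light cone ($t/2\le r\le t$), with $|u_{a,0}|\lesssim t^{C_u}\log^2(t)\,(t\langle t-r\rangle)^{-1/2}$ from \eqref{ua0est} and the global bound $|u_{N_2}|\le C\log^{18}(t)\,t^{-2+9C_u}$ from Lemma~\ref{un2lemma}, the term $\tfrac{|u_{a,0}|\,|u_{N_2}|}{r^2}$ gives
\[
\lambda(t)^{-2}\Big\|\tfrac{|u_{a,0}|\,|u_{N_2}|}{r^2}\Big\|_{L^2(t/2\le r\le t,\,r\,dr)}\ \lesssim\ \frac{\log^{21}(t)}{t^{\,4-10C_u-2C_l}},
\]
and since $C_u<1/30$ we have $4-10C_u-2C_l<\tfrac{9}{2}-15C_u-2C_l$, so this does \emph{not} imply the lemma (and would in fact force $\delta\le 0$ in Lemma~\ref{uansatzerrorest}).

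The fix, which is what the paper does, is to expand $\cos(2Q_1+B)-\cos(2Q_1)=\cos(2Q_1)(\cos B-1)-\sin(2Q_1)\sin B$ (and similarly for the first term of $N_4$), so that every linear-in-$(u_a+u_n)$ contribution carries the extra factor $|\sin(2Q_1(r/\lambda(t)))|\le\tfrac{4r\lambda(t)}{r^2+\lambda(t)^2}\sim\lambda(t)/r$ for $r\gg\lambda(t)$. This yields the sharper bound
\[
|N_4|\le\frac{C}{r^2}\Big[\Big(\tfrac{r\lambda(t)}{r^2+\lambda(t)^2}|u_{N_2}|+u_{N_2}^2\Big)\big(|u_{N_2}|+|u_a|+|u_n|\big)+(u_a^2+u_n^2)|u_{N_2}|\Big],
\]
in which the near-cone contribution from $\tfrac{\lambda}{r^3}|u_{a,0}|\,|u_{N_2}|$ gains the missing factor $\lambda(t)/t$, and the quadratic term $\tfrac{u_{a,0}^2|u_{N_2}|}{r^2}$ gains an extra $|u_{a,0}|$; both then land at or above $t^{-9/2+15C_u+2C_l}$. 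Your terms $|u_{N_2}|^2/r^2$ and $|u_{N_2}|^3/r^2$ do happen to be within the claimed bound (they yield exponents $5-18C_u-2C_l$ and $7-27C_u-2C_l$), so the gap is localized exactly in the third term of $N_4$; once you insert the $\sin(2Q_1)$ factor there, the rest of your argument goes through as you describe.
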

\begin{proof}
We have
$$|N_{4}(t,r)| \leq \frac{C}{r^{2}} \left(\left(\frac{r \lambda(t)}{(r^{2}+\lambda(t)^{2})} |u_{N_{2}}|+u_{N_{2}}^{2}\right) (|u_{N_{2}}|+|u_{a}|+|u_{n}|)+(u_{a}^{2}+u_{n}^{2}) |u_{N_{2}}|\right)$$
where we recall the definitions of $u_{a}$ and $u_{n}$ in \eqref{uasplit}, and \eqref{undef}, respectively. We then use Lemmas \ref{un2lemma} and \ref{v24estlemma}, and estimates \eqref{ua0est}, \eqref{ua1est}, and \eqref{unewest}, to estimate $u_{N_{2}}$ and the various terms in $u_{a}$ and $u_{n}$, and then straightforwardly estimate $N_{4}$, and $L_{\frac{1}{\lambda(t)}}N_{4}$.
 \end{proof}
We recall that $u_{a}$ is defined in \eqref{uadef}, $u_{n}$ is defined in \eqref{undef}, and $u_{N_{2}}$ is defined in \eqref{un2eqn}, and define our final ansatz by the following.
\begin{equation}\label{uansatzdef} u_{ansatz}(t,r):=u_{a}(t,r)+u_{n}(t,r)+u_{N_{2}}(t,r)\end{equation}
The error term of $u_{ansatz}$, denoted by $F_{5}$, is equal to the following.
\begin{equation}\label{f5def}F_{5}(t,r) = e_{N_{0}}\left(1-m_{\leq 1}(\frac{r}{2h(t)})\right)+e_{N_{0},corr,1}+err_{N_{0},corr,2}+N_{1}+N_{3}+e_{N_{2}}+N_{4}+e_{a}\end{equation}
\begin{lemma}\label{uansatzerrorest}[Estimates on the error of $u_{ansatz}$] There exists $C_{5}>0$ such that, if 
$$\delta:= \frac{1}{2}\text{min}\{4\alpha-C_{l}-2,1-2C_{u}-\alpha,4-5C_{u}-6\alpha,\frac{1}{2}-2C_{l}-15C_{u}\}$$
then, $2\delta>C_{l}$, and
$$\frac{||F_{5}(t,r)||_{L^{2}(r dr)}}{\lambda(t)^{2}} \leq \frac{C_{5} \log^{30}(t)}{t^{4+2\delta}}, \quad ||L_{\frac{1}{\lambda(t)}}F_{5}(t,r)||_{L^{2}(r dr)} \leq \frac{C \log^{2}(t)}{t^{2+5\alpha}}+ \frac{C \log^{6}(t)}{t^{4+2\alpha-3C_{u}}}+\frac{C \log^{30}(t)}{t^{9/2-C_{l}-15C_{u}}}$$
\end{lemma}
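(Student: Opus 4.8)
The proof is essentially a bookkeeping argument. By definition \eqref{f5def}, $F_5$ is a sum of eight terms, each of which has already been estimated both in $L^2(r\,dr)$ and in $L_{1/\lambda(t)}(\cdot)$: namely \eqref{en0estlemma} for $e_{N_0}(1-m_{\leq 1}(\tfrac{r}{2h(t)}))$, Lemma~\ref{en0corr1estlemma} for $e_{N_0,corr,1}$, Lemma~\ref{errn0corr2estlemma} for $err_{N_0,corr,2}$, Lemma~\ref{n1lemma} for $N_1$, Lemma~\ref{n3estlemma} for $N_3$, Lemma~\ref{en231estlemma} for $e_{N_2}$, Lemma~\ref{n4estlemma} for $N_4$, and Lemma~\ref{eaestlemma} for $e_a$. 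The plan is: (i) apply the triangle inequality to \eqref{f5def}; (ii) in each of the eight right-hand sides replace $g(t)=t^{\alpha}$, $h(t)=\lambda(t)g(t)$, every power of $\lambda(t)$, and every factor $\sup_{x\in[100,t]}(\lambda(x)\log x)$ or $\sup_{x\in[100,t]}\lambda(x)^2$ by a power of $t$ times a power of $\log t$, using $ct^{-C_l}\leq\lambda(t)\leq Ct^{C_u}$ together with the comparison estimates \eqref{lambdacomparg}; (iii) for the $L^2$ bound, read off that each contribution is of the form $C(\log t)^m\,t^{-(4+\sigma_j)}$ and verify $\sigma_j\geq 2\delta$; and (iv) carry out the analogous comparison for the $L_{1/\lambda(t)}$ bound.

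For step (iii): the $e_a$ contribution, after dividing by $\lambda(t)^2$, is by Lemma~\ref{eaestlemma} exactly $C\log^6(t)\,t^{-(4+\min\{4\alpha-C_l-2,\ 1-2C_u-\alpha,\ 4-5C_u-6\alpha\})}$, which supplies the first three entries of $\delta$, while the $N_3$ and $N_4$ contributions are $C(\log t)^{m}\,t^{-(9/2-2C_l-15C_u)}$, i.e. with exponent $4+(\tfrac12-2C_l-15C_u)$, supplying the fourth. Thus $\delta$ is tuned precisely so that $4+2\delta$ is the common exponent, and it remains only to check that the five remaining contributions (from $e_{N_0}(1-m_{\leq 1})$, $e_{N_0,corr,1}$, $err_{N_0,corr,2}$, $N_1$, $e_{N_2}$) all carry an exponent $\geq 4+2\delta$. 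Each such check reduces, after using $\lambda(t)\geq ct^{-C_l}$, $\alpha<2/3$, and \eqref{lambdaonlyconstr}, to an elementary linear inequality in $\alpha,C_u,C_l$; for example the $e_{N_0}(1-m_{\leq1})$ term has leading exponent $4+2\alpha-3C_u-2C_l$, which exceeds $4+2\delta\leq 4+(4\alpha-C_l-2)$ because $\alpha<2/3\leq 1-\tfrac32 C_u-\tfrac12 C_l$. Taking $C_5$ to absorb the largest log power ($\log^{30}t$, from $N_4$) gives the first estimate. The inequality $2\delta>C_l$ is then proved term by term: $4\alpha-C_l-2>C_l$ and $4-5C_u-6\alpha>C_l$ are exactly the lower and upper bounds in \eqref{alphaconstr}; $\tfrac12-2C_l-15C_u>C_l$ is exactly \eqref{lambdaonlyconstr}; and $1-2C_u-\alpha>C_l$ follows by inserting the upper bound from \eqref{alphaconstr} into $1-2C_u-\alpha$ and then invoking \eqref{lambdaonlyconstr}.

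For step (iv): Lemma~\ref{eaestlemma} already supplies, for $\|L_{1/\lambda(t)}e_a\|_{L^2}$, the four terms with exponents $4+2\alpha-3C_u$, $2+5\alpha$, $5-3C_u$, $8-6C_u-5\alpha$ (times logs), while the $N$-type terms and $e_{N_2}$ contribute exponents $\geq 9/2-C_l-15C_u$ (times at most $\log^{30}t$, with the worst case $N_4$). I would then verify that the exponents $5-3C_u$ and $8-6C_u-5\alpha$ are both $\geq 9/2-C_l-15C_u$ (using $\alpha<2/3$), that the remaining contributions of $e_{N_0}(1-m_{\leq1})$, $e_{N_0,corr,1}$, $err_{N_0,corr,2}$, $N_1$ are each dominated by one of the three surviving terms with exponents $2+5\alpha$, $4+2\alpha-3C_u$, $9/2-C_l-15C_u$, and collect the worst log powers; each comparison is again an elementary linear inequality.

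I expect the main obstacle to be purely organizational rather than conceptual: there is no new analytic input, but one must carry out on the order of a dozen exponent comparisons without error and keep straight which small parameter enters where. The one genuinely load-bearing place is the proof of $2\delta>C_l$, where both inequalities in \eqref{alphaconstr} and the smallness condition \eqref{lambdaonlyconstr} are each used exactly once and are each essentially sharp; this is also the only point where the precise form of the admissible range of $\alpha$, rather than merely its non-emptiness, is needed.
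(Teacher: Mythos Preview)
Your proposal is correct and follows the same approach as the paper: the paper's proof consists of exactly two sentences, stating that $2\delta>C_{l}$ follows from \eqref{lambdaonlyconstr} and \eqref{alphaconstr}, and that the $i$th term in \eqref{f5def} is estimated by the $i$th lemma in the list \ref{en0estlemma}, \ref{en0corr1estlemma}, \ref{errn0corr2estlemma}, \ref{n1lemma}, \ref{n3estlemma}, \ref{en231estlemma}, \ref{n4estlemma}, \ref{eaestlemma}. Your write-up simply makes explicit the exponent bookkeeping that the paper leaves to the reader.
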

\begin{proof}
The inequality $2\delta > C_{l}$ follows from \eqref{lambdaonlyconstr} and \eqref{alphaconstr}. The $i$th term in \eqref{f5def} is estimated in the $ith$ Lemma in the following list: Lemma \ref{en0estlemma}, \ref{en0corr1estlemma}, \ref{errn0corr2estlemma}, \ref{n1lemma}, \ref{n3estlemma}, \ref{en231estlemma}, \ref{n4estlemma}, \ref{eaestlemma}.  \end{proof}
We finish this section with some estimates on various quantities involving $u_{ansatz}$.
\begin{lemma}\label{uansatzlemma}
We have the following estimates.
\begin{equation}\label{uquot}\begin{split} ||\frac{8 \lambda(t) u_{ansatz}(t,r)}{r(r^{2}+\lambda(t)^{2})}||_{L^{\infty}_{r}} &\leq \frac{4}{t^{2}} \left(M^{2}+\frac{C_{2}}{3} \left(3+2\pi^{2}\right)\right)+ \frac{C \log(t)}{t^{2+2\alpha}} + \frac{C \log^{6}(t)}{t^{\frac{7}{2}-4C_{u}}}  \end{split}\end{equation}
\begin{equation}\label{uoverr} ||\frac{u_{ansatz}(t,r)}{r}||_{L^{\infty}_{r}} \leq \frac{C \log^{6}(t)}{t^{\frac{3}{2}-3C_{u}}}\end{equation} 
\begin{equation}\label{dru}||\partial_{r}u_{ansatz}(t,r)||_{L^{\infty}_{r}} \leq \frac{C \log^{6}(t)}{t^{\frac{1}{2}-3C_{u}}}\end{equation}
\begin{equation}\label{udruquot1}||\frac{u_{ansatz}(t,r) \partial_{r}u_{ansatz}(t,r)}{r}||_{L^{\infty}_{r}(\{r \leq \lambda(t)\})} \leq \frac{C \log^{7}(t)}{t^{\frac{5}{2}-4C_{u}}}\end{equation}
\begin{equation}\label{udruquot2}||\frac{u_{ansatz}(t,r)\partial_{r}u_{ansatz}(t,r)}{r^{2}}||_{L^{\infty}_{r}(\{r \geq \lambda(t)\})} \leq \frac{C \log^{7}(t)}{t^{\frac{5}{2}-3C_{u}}}\end{equation}
\begin{equation}\label{druquot}\lambda(t) ||\frac{\partial_{r}u_{ansatz}(t,r)}{r^{2}+\lambda(t)^{2}}||_{L^{\infty}_{r}} \leq \frac{1}{2t^{2}} \left(M^{2}+\frac{C_{2}}{3}(3+2\pi^{2})\right) +\frac{C \log^{6}(t)}{t^{\frac{5}{2}-4C_{u}}}\end{equation}
\end{lemma}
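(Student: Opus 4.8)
The plan is to expand $u_{ansatz}$ according to \eqref{uansatzdef} into $u_{a}+u_{n}+u_{N_{2}}$, then $u_{a}$ via \eqref{uadef}, \eqref{ucdef}, \eqref{uewdef}, \eqref{uwdef} and $u_{n}$ via \eqref{undef}, and to estimate the (roughly fifteen) resulting constituent functions using the pointwise bounds already in hand: the explicit formulae for $u_{ell}$ following \eqref{velldef} and for $u_{ell,2}$ via \eqref{soln2def}; Lemma \ref{w1estlemma} for $w_{1}$; Lemma \ref{v2estlemma} for $v_{2}$; Lemma \ref{estlemma} for $v_{ex}$; Lemma \ref{uw2minuselllemma} for $u_{w,2}$; Lemma \ref{v22lemma} for $v_{2,2}$; Lemmas \ref{w50lemma}, \ref{wexsublemma}, \ref{fell2lemma}, \ref{u22lemma} for $f_{5,0},f_{ex,sub},f_{ell,2},f_{2,2}$; Lemmas \ref{un0lemma}, \ref{un0ellminusipestlemma}, \ref{un0corrlemma} for the three pieces of $u_{n}$; and Lemma \ref{un2lemma} for $u_{N_{2}}$. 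In every case the $\lambda$-dependent prefactors are turned into powers of $t$ by means of \eqref{lambdacomparg} (in particular $\lambda(t)\lesssim t^{C_{u}}$, $\lambda(t)\gtrsim t^{-C_{l}}$) together with the symbol bounds \eqref{lambdasetdef} and the constraints \eqref{lambdaonlyconstr}, \eqref{alphaconstr}. Throughout, the relevant transition scales are $r\sim\lambda(t)$, $r\sim h(t)=\lambda(t)g(t)=\lambda(t)t^{\alpha}$, and $r\sim t$.

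\textbf{The two estimates carrying a leading constant: \eqref{uquot} and \eqref{druquot}.} These are the only parts where an exact constant, not merely a power of $t$, must be extracted, and that constant comes entirely from $u_{ell}$ near the origin. On $r\le\lambda(t)$ one has $\chi_{\le1}(r/h(t))=1$, so $u_{ansatz}(t,r)=u_{ell}(t,r)+\mathcal{R}(t,r)$ with $\mathcal{R}$ collecting $u_{ell,2}$, the four $f$-corrections, $u_{n}$, $u_{N_{2}}$. Taylor-expanding $v_{ell}(t,\cdot)$ at $0$ (using $f_{1}(R)=-R+O(R^{3})$ and $f_{2}(R)=O(R^{3})$) and inserting the definition \eqref{c1def} of $c_{1}(t)$ shows $u_{ell}(t,r)=\tfrac{r}{\lambda(t)}\bigl(\tfrac12\lambda'(t)^{2}+\tfrac{3+2\pi^{2}}{6}\lambda(t)\lambda''(t)\bigr)$ plus a lower-order remainder, so that
\begin{equation}\Bigl|\tfrac{8\lambda(t)u_{ell}(t,r)}{r(r^{2}+\lambda(t)^{2})}\Bigr|\le\tfrac{4\lambda'(t)^{2}}{\lambda(t)^{2}}+\tfrac{4(3+2\pi^{2})}{3}\cdot\tfrac{|\lambda''(t)|}{\lambda(t)}+(\text{rem.})\le\tfrac{4}{t^{2}}\bigl(M^{2}+\tfrac{C_{2}}{3}(3+2\pi^{2})\bigr)+(\text{rem.})\end{equation}
by \eqref{lambdasetdef} with $M=\max\{C_{l},C_{u}\}$. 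For $r\ge\lambda(t)$ the denominator $r(r^{2}+\lambda(t)^{2})\gtrsim r^{3}$ dominates the at-most-$r\log r$ growth of $u_{ell}$, so the quotient is $\lesssim r^{-2}(\cdots)$, decreasing in $r$, hence controlled by its value at $r=\lambda(t)$; this is consistent with the displayed bound. The remaining contributions of $\mathcal{R}$, and of $u_{wave}$ and the corrections in $r\ge h(t)$, are then bounded by $\tfrac{C\log t}{t^{2+2\alpha}}+\tfrac{C\log^{6}t}{t^{7/2-4C_{u}}}$: the first error term comes from the matching-region pieces ($u_{ell,2}$, $u_{w,2,ell,0}$, $v_{ell,sub}$) near $r\sim h(t)$, whose quotient is $\sim h(t)^{-2}$ times a prefactor of size $\sim\lambda(t)^{2}t^{-2}$ up to logs, while the second absorbs $u_{N_{0}}$, $u_{N_{2}}$, $f_{2,2}$, $u_{2,2}$ and the rest through their $L^{\infty}$ and energy bounds. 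Estimate \eqref{druquot} follows identically after one $r$-derivative, the leading term now being $\partial_{r}u_{ell}\sim\lambda(t)^{-1}\bigl(\tfrac12\lambda'^{2}+\tfrac{3+2\pi^{2}}{6}\lambda\lambda''\bigr)$ near the origin.

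\textbf{The remaining sup-norm estimates \eqref{uoverr}, \eqref{dru}, \eqref{udruquot1}, \eqref{udruquot2}.} For \eqref{dru} the dominant term is $\partial_{r}v_{2}$ near the light cone: Lemma \ref{v2estlemma} gives $|\partial_{r}v_{2}(t,r)|\le C/\sqrt r$ for $r\ge t/2$ and $|\partial_{r}v_{2}|\le C\lambda(t)\log t/t^{2}$ for $r\le t/2$, whence $\|\partial_{r}v_{2}\|_{L^{\infty}_{r}}\le C/\sqrt t$; the $C_{u}$-dependent and higher-log corrections then come from $\partial_{r}v_{2,2}$, $\partial_{r}v_{2,4}$ (Lemmas \ref{v22lemma}, \ref{v24estlemma}), $\partial_{r}u_{N_{0}}$ \eqref{drun0inftyest}, $\partial_{r}u_{2,2}$ \eqref{dru22inftyest}, $\partial_{r}u_{N_{0},corr,2}$ \eqref{drun0corr2inftyest}, $\partial_{r}u_{N_{2}}$ \eqref{drun2inftyest}, and $\partial_{r}v_{ex}$ \eqref{drvexinftyest}, each carrying a strictly faster power of $t$. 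For \eqref{uoverr} one splits by region: for $r$ away from the cone, $|u_{ansatz}(t,r)/r|\le\sup_{s\le r}|\partial_{r}u_{ansatz}(t,s)|$ since every constituent vanishes to order $r$ at the origin, reducing to \eqref{dru}; for $r\gtrsim t$ one uses instead the pointwise $t^{-1/2}$-type decay of $v_{2},v_{2,2},v_{2,4}$ (as in \eqref{v2sqrtest}) together with the energy-type pointwise bounds for $u_{N_{0}},u_{2,2},u_{N_{0},corr,2},u_{N_{2}}$, all divided by $r\gtrsim t$. Finally \eqref{udruquot1} and \eqref{udruquot2} are obtained by combining the near-origin bound $|u_{ansatz}(t,r)/r|\lesssim\lambda(t)\log t/t^{2}$ on $r\le\lambda(t)$ (from the same expansion as in \eqref{uquot}) with \eqref{dru}, and, in $r\ge\lambda(t)$, by dividing the product estimate by the extra power of $r$; the intermediate-region estimates \eqref{ua0est}, \eqref{ua1est}, \eqref{unewest} feed directly into this.

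\textbf{Main obstacle.} The difficulty here is bookkeeping rather than any single hard inequality. For \eqref{uquot}/\eqref{druquot} one must isolate the unique term (from $u_{ell}$ near $r=0$) producing the exact constant $\tfrac{4}{t^{2}}(M^{2}+\tfrac{C_{2}}{3}(3+2\pi^{2}))$ and verify that \emph{every} other constituent, in each of the four regions $r\le\lambda(t)$, $\lambda(t)\le r\le h(t)$, $h(t)\le r\le t/2$, $r\ge t/2$, is genuinely an error of the stated (sub-critical) size; and for \eqref{uoverr}/\eqref{dru} one must correctly recognize that the sup norm is governed by the free waves $v_{2},v_{2,4},v_{2,2}$ near the cone, not by any elliptic-type correction. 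Producing the powers of $t$ \emph{exactly} as stated — as opposed to merely "small" — relies on careful use of \eqref{lambdacomparg}, \eqref{lambdaonlyconstr} and \eqref{alphaconstr} at each transition scale.
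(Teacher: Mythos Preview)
Your overall approach matches the paper's: decompose $u_{ansatz}$ into its constituents and bound each piece by the estimates already on record, with the precise constants in \eqref{uquot} and \eqref{druquot} coming solely from $u_{ell}$.  The bookkeeping you outline for \eqref{uoverr}, \eqref{dru}, \eqref{udruquot1}, \eqref{udruquot2} is correct.

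There is, however, a genuine gap in your treatment of the leading constant in \eqref{uquot} (and analogously \eqref{druquot}).  Taylor-expanding $v_{ell}(t,R)$ at $R=0$ correctly identifies the \emph{limiting value} of $\tfrac{8\lambda u_{ell}}{r(r^{2}+\lambda^{2})}$ as $R\to0$, but does not give a bound uniform in $R$: the Taylor remainder is $O(R^{2})$ relative to the leading term, hence comparable to it once $R\sim1$, so the ``lower-order remainder'' you invoke is not small there.  Your follow-up argument for $R\ge1$ (``the quotient is $\lesssim r^{-2}(\cdots)$, decreasing in $r$, hence controlled by its value at $r=\lambda(t)$'') presupposes you know the value at $R=1$ is below the stated constant, which you have not checked; and the range $R\in[c,1]$ with $c$ not small is simply not covered.

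The paper closes this gap by writing $\tfrac{8\lambda u_{ell}(t,R\lambda)}{R\lambda^{3}(1+R^{2})}=\tfrac{4}{1+R^{2}}\bigl(\tfrac{\lambda'}{\lambda}\bigr)^{2}+\tfrac{\lambda''}{\lambda}\,m_{1}(R)$ for an explicit rational/polylogarithmic function $m_{1}$, and then proving $|m_{1}(R)|\le\lim_{R\to0^{+}}m_{1}(R)=\tfrac{4(3+2\pi^{2})}{3}$ \emph{globally} on $(0,\infty)$ by a monotonicity argument (showing $m_{1}$ is nonincreasing on $(0,1]$ via the sign of $num_{1}'$, and bounding it separately on $[1,2]$ and $[2,\infty)$).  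For \eqref{druquot} an analogous function $m_{3}(R)$ is introduced and shown to satisfy $|m_{3}(R)|\le\tfrac{3+2\pi^{2}}{6}$ by the same method.  This global-in-$R$ analysis is the missing ingredient in your proposal.
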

\begin{proof}We recall \eqref{velldef}, and note that
\begin{equation} \frac{8 \lambda(t)u_{ell}(t,R\lambda(t))}{R\lambda(t)^{3}(R^{2}+1)} = \frac{4}{(1+R^{2})} \left(\frac{\lambda'(t)}{\lambda(t)}\right)^{2} + \frac{\lambda''(t)}{\lambda(t)} m_{1}(R)\end{equation}
where
$$m_{1}(R) = \frac{8 \text{Li}_2\left(-R^2\right)+\frac{8}{3} \left(3 R^2+\pi ^2\right)-\frac{4 \left(R^4-1\right) \log \left(R^2+1\right)}{R^2}}{(1+R^{2})^{2}}:=\frac{num_{1}(R)}{(1+R^{2})^{2}}$$
We start with 
$$num_{1}'(R) = \frac{8(1+R^{2})(R^{2}-(1+R^{2})\log(1+R^{2}))}{R^{3}}$$
Then, since
$$\partial_{R}\left(R^{2}-(1+R^{2})\log(1+R^{2})\right) = -2 R \log(1+R^{2}) \leq 0$$
and
$$\left(R^{2}-(1+R^{2})\log(1+R^{2})\right)\Bigr|_{R=0}=0$$
we have
$$R^{2}-(1+R^{2})\log(1+R^{2}) \leq 0, \quad R \geq 0$$
Therefore,
$$num_{1}'(R) \leq 0, \quad R >0$$
So, 
$$num_{1}(R) \geq num_{1}(1) = 2\pi^{2}+8 > 0, \quad 0 < R \leq 1$$
Therefore,
$$R \mapsto \frac{num_{1}(R)}{(1+R^{2})^{2}}=m_{1}(R) \text{ is non-increasing on }(0,1]$$
Hence, for $0 \leq R \leq 1$,
$$|m_{1}(R)| \leq \lim_{R \rightarrow 0^{+}} m_{1}(R) = \frac{4(3+2\pi^{2})}{3}$$
If $R \geq 1$, then,
\begin{equation}\begin{split} |num_{1}(R)| &\leq |num_{1}(1)| + 8 \int_{1}^{R} \frac{(1+s^{2})(s^{2}+(1+s^{2})\log(1+s^{2}))}{s^{3}} ds\\
&\leq2 \left(2+\pi^{2}+2R^{2}+2\log(1+R^{2})+2(R^{2}+2 \log(1+R^{2}))\log(1+R^{2})\right):=m_{2}(R)\end{split}\end{equation}
If $1 \leq R \leq 2$, then,
$$\frac{d}{dR}\left(\frac{m_{2}(R)}{(1+R^{2})^{2}}\right)= \frac{-8R (\pi^{2}+\log(1+R^{2})(-3+R^{2}+4 \log(1+R^{2})))}{(1+R^{2})^{3}} \leq \frac{R}{(1+R^{2})^{3}} \left(-8\pi^{2}+24\log(5)\right) <0$$
Therefore, for $1 \leq R \leq 2$,
\begin{equation}|m_{1}(R)| \leq \frac{|num_{1}(R)|}{(1+R^{2})^{2}} \leq \frac{m_{2}(R)}{(1+R^{2})^{2}} \leq \frac{m_{2}(1)}{2^{2}} = 2+\frac{\pi^{2}}{2}+2 \log^{2}(2) +\log(4)\end{equation}
Finally, if $R \geq 2$, then,
\begin{equation}\label{m1bound}|m_{1}(R)| \leq \frac{m_{2}(R)}{(1+R^{2})^{2}} \leq \frac{4}{5}+\frac{2}{25} \left(2+\pi ^2\right)+\frac{8 \log ^2(5)}{25}+\frac{4 \log (5)}{5}+\frac{4 \log (5)}{25}\end{equation}
This gives \eqref{uquot}. To prove \eqref{dru}, we estimate $\partial_{r}u_{a}(t,r)$ using the procedure detailed in the proof of Lemma \ref{n1lemma}. We then estimate $\partial_{r}u_{n}(t,r)$ using \eqref{drun0inftyest}, \eqref{drun0corr2inftyest}, \eqref{unewest} and Lemma \ref{v24estlemma}. Finally, we estimate $\partial_{r}u_{N_{2}}$ using \eqref{drun2inftyest}. With the same procedure, we also get \eqref{udruquot1} and \eqref{udruquot2}. Next, 
$$\frac{\partial_{r}u_{ell}(t,r)}{r^{2}+\lambda(t)^{2}}\Bigr|_{r=R\lambda(t)} = \frac{\lambda'(t)^{2}}{2(1+R^{2})\lambda(t)^{3}}+\frac{\lambda''(t)}{\lambda(t)^{2}} m_{3}(R)$$
for
$$m_{3}(R) = \frac{1}{(1+R^{2})^{3}}\left(1+3R^{2}+\frac{\pi^{2}}{3}(1-R^{2})-\frac{(1+7R^{2}+7R^{4}+R^{6})\log(1+R^{2})}{2R^{2}} +(1-R^{2})\text{Li}_{2}(-R^{2})\right)$$
With a similar procedure as that used to obtain \eqref{m1bound}, and its analogs for other regions of $R$, we get
$$|m_{3}(R)| \leq \frac{3+2\pi^{2}}{6}, \quad R> 0$$
 \end{proof}
\section{Constructing the exact solution}
If we substitute 
$$u(t,r)=Q_{1}(\frac{r}{\lambda(t)})+u_{ansatz}(t,r)+v_{6}(t,r)$$
into \eqref{wm}, we get
\begin{equation}\label{v6eqn}\begin{split}&-\partial_{t}^{2}v_{6}+\partial_{r}^{2}v_{6}+\frac{1}{r}\partial_{r}v_{6}-\frac{\cos(2Q_{1}(\frac{r}{\lambda(t)}))}{r^{2}}v_{6}=F_{5}+F_{3}(v_{6})\end{split}\end{equation}
where (note that the following expressions are essentially the same as  $F_{3}$ on pg. 144 of \cite{wm}) 
\begin{equation}\label{finalf3def}F_{3}(f)=N(f)+L_{1}(f)\end{equation}

$$N(f) = \frac{ \sin \left(2Q_{1}(\frac{r}{\lambda(t)})+2 u_{ansatz}\right)}{2 r^2}(\cos (2 f)-1)+\frac{ \cos \left(2Q_{1}(\frac{r}{\lambda(t)})\right)}{2 r^2}(\sin (2 f)-2f)$$
\begin{equation}\label{L1def}L_{1}(f) = -\frac{ \sin \left(2Q_{1}(\frac{r}{\lambda(t)})\right)}{2 r^2}\sin (2 u_{ansatz}) \sin (2 f)+\frac{ \cos \left(2Q_{1}(\frac{r}{\lambda(t)})\right)}{2 r^2}(\cos (2 u_{ansatz})-1) \sin (2 f)\end{equation}
and $F_{5}$ is given in \eqref{f5def}. We will solve \eqref{v6eqn} by first formally deriving the equation for $y$ (namely \eqref{yeqn}) given by
\begin{equation}\label{yintermsofv6}y(t,\xi) = \mathcal{F}(\sqrt{\cdot} v_{6}(t,\cdot \lambda(t)))(\xi \lambda(t)^{2})\end{equation}
where $\mathcal{F}$ denotes the distorted Fourier transform of \cite{kst} (which is defined in section 5 of \cite{kst}). Then, we will prove that \eqref{yeqn} admits a solution, say $y_{0}$ (with 0 Cauchy data at infinity) which has enough regularity to rigorously justify the statement that if $v_{6}$ given by the following expression, with $y=y_{0}$
\begin{equation}\label{v6intermsofy}v_{6}(t,r) = \sqrt{\frac{\lambda(t)}{r}} \mathcal{F}^{-1}\left(y(t,\frac{\cdot}{\lambda(t)^{2}})\right)\left(\frac{r}{\lambda(t)}\right),\end{equation}
then, $v_{6}$ is a solution to \eqref{v6eqn}.
We have (see also (5.4), (5.5), pg. 145 of \cite{wm})
\begin{equation}\label{yeqn}\begin{split} \partial_{tt}y+\omega y = -\mathcal{F}(\sqrt{\cdot}F_{5}(t,\cdot \lambda(t)))(\omega \lambda(t)^{2})+F_{2}(y)(t,\omega) -\mathcal{F}(\sqrt{\cdot}F_{3}(v_{6}(y))(t,\cdot \lambda(t)))(\omega \lambda(t)^{2})\end{split}\end{equation}
where $v_{6}(y)$, which appears in the argument of $F_{3}$, is the expression given in \eqref{v6intermsofy}, and
\begin{equation}\label{F2def}\begin{split}F_{2}(y)(t,\omega) &=-\frac{\lambda'(t)}{\lambda(t)} \partial_{t}y(t,\omega) + \frac{2 \lambda'(t)}{\lambda(t)} \mathcal{K}\left(\partial_{1}y(t,\frac{\cdot}{\lambda(t)^{2}})\right)(\omega \lambda(t)^{2}) +\left(\frac{-\lambda''(t)}{2\lambda(t)}+\frac{\lambda'(t)^{2}}{4 \lambda(t)^{2}}\right)y(t,\omega) \\
&+ \frac{\lambda''(t)}{\lambda(t)} \mathcal{K}\left(y(t,\frac{\cdot}{\lambda(t)^{2}})\right)(\omega \lambda(t)^{2}) +2\frac{\lambda'(t)^{2}}{\lambda(t)^{2}}\left([\xi \partial_{\xi},\mathcal{K}](y(t,\frac{\cdot}{\lambda(t)^{2}}))\right)(\omega \lambda(t)^{2}) \\
&-\frac{\lambda'(t)^{2}}{\lambda(t)^{2}} \mathcal{K}\left(\mathcal{K}(y(t,\frac{\cdot}{\lambda(t)^{2}}))\right)(\omega \lambda(t)^{2})\end{split}\end{equation}
where $\mathcal{K}$ is the transference operator of \cite{kst} (which is defined in section 6 of \cite{kst}). Next, we note that, by Proposition 5.7 b of \cite{kst}, there exists $C_{\rho}>0$ such that
\begin{equation}\label{rhoscaling}\frac{\rho(\omega \lambda(t)^{2})}{\rho(\omega \lambda(x)^{2})} \leq C_{\rho}\left(\frac{\lambda(x)^{2}}{\lambda(t)^{2}}+\frac{\lambda(t)^{2}}{\lambda(x)^{2}}\right)\end{equation} 
Recall Lemma \ref{uansatzerrorest}, and let $C_{5}'=C_{5} \sqrt{C_{\rho}}$. Now, we define the space $Z$ in which we will solve \eqref{yeqn}. Let $Z$ be the set of (equivalence classes) of measurable functions $y: [T_{0},\infty)\times (0,\infty) \rightarrow \mathbb{R}$ such that
$$ y(t,\omega) t^{2+2\delta} \sqrt{\rho(\omega \lambda(t)^{2})} \langle \omega \lambda(t)^{2}\rangle \in C^{0}_{t}([T_{0},\infty),L^{2}(d\omega))$$
$$\partial_{t}y(t,\omega) t^{3+2\delta}  \left(1+ \sqrt{\omega} \lambda(t)\right)\sqrt{\rho(\omega \lambda(t)^{2})} \in C^{0}_{t}([T_{0},\infty), L^{2}(d\omega))$$
and $||y||_{Z} < \infty$
where
\begin{equation}\label{Zdef}||y||_{Z} := \text{sup}_{t  \geq T_{0}} \begin{aligned}[t]&\left(t^{2+2\delta}\left(C_{5}'^{-1} \beta^{-1} \log^{-30}(t) || y(t,\omega)||_{L^{2}(\rho(\omega \lambda(t)^{2}) d\omega)} + ||\sqrt{\omega \lambda(t)^{2}} \sqrt{\langle \omega \lambda(t)^{2}\rangle} y(t,\omega)||_{L^{2}(\rho(\omega \lambda(t)^{2}))}\right) \right.\\
&\left.+ t^{3+2\delta}\log^{-30}(t)\left(C_{5}'^{-1} \beta^{-1} || \partial_{t}y(t,\omega)||_{L^{2}(\rho(\omega \lambda(t)^{2})d\omega)}+C_{Z}^{-1} ||\sqrt{ \omega \lambda(t)^{2}} \partial_{t}y(t,\omega)||_{L^{2}(\rho(\omega \lambda(t)^{2})d\omega)}\right)\right)\end{aligned}\end{equation}
and $\beta$ and $C_{Z}$ are positive, but otherwise arbitrary, and will be further constrained later on. We start with the following estimates on $F_{2}$.
\begin{lemma}\label{f2lemma} For all $y \in Z$, and $k=0,1$, we have
\begin{equation}\label{f2l2}\begin{split}&||(\omega \lambda(t)^{2})^{\frac{k}{2}} F_{2}(t,\omega)||_{L^{2}(\rho(\omega \lambda(t)^{2})d\omega)}\\
&\leq ||\langle \omega \lambda(t)^{2} \rangle^{k/2} \partial_{1}y(t,\omega)||_{L^{2}(\rho(\omega \lambda(t)^{2})d\omega)} \frac{|\lambda'(t)|}{\lambda(t)} \left(1+2 ||\mathcal{K}||_{\mathcal{L}(L^{2,\frac{k}{2}}_{\rho})}\right)  \\
&+ ||\langle \omega \lambda(t)^{2} \rangle^{k/2} y(t,\omega)||_{L^{2}(\rho(\omega \lambda(t)^{2})d\omega)} \begin{aligned}[t]&\left(\left(\frac{\lambda'(t)}{\lambda(t)}\right)^{2} \left(\frac{1}{4}+2||[\xi\partial_{\xi},\mathcal{K}]||_{\mathcal{L}(L^{2,\frac{k}{2}}_{\rho})} + ||\mathcal{K}||^{2}_{\mathcal{L}(L^{2,\frac{k}{2}}_{\rho})}\right)\right.\\
&\left. + \frac{|\lambda''(t)|}{\lambda(t)} \left(\frac{1}{2}+||\mathcal{K}||_{\mathcal{L}(L^{2,\frac{k}{2}}_{\rho})}\right)\right)\end{aligned}\end{split}\end{equation}
\end{lemma}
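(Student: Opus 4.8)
\emph{Proof proposal.} The plan is to prove \eqref{f2l2} directly from the definition \eqref{F2def} by the triangle inequality, estimating each of the six terms separately and invoking the boundedness of $\mathcal{K}$ and $[\xi\partial_{\xi},\mathcal{K}]$ on $L^{2,\alpha}_{\rho}$ for $\alpha=0$ and $\alpha=\tfrac12$ (recalled in the introduction from Theorem 6.1 and Proposition 6.2 of \cite{kst}). Since $y\in Z$, all the norms appearing on the right-hand side of \eqref{f2l2} are finite, so there is nothing to justify beyond the quantitative bound.

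The one point requiring care is the interplay between the dilation $\eta\mapsto\eta/\lambda(t)^{2}$ occurring inside the operators in \eqref{F2def}, the subsequent evaluation at $\omega\lambda(t)^{2}$, and the weighted measure $\langle\omega\lambda(t)^{2}\rangle^{k}\rho(\omega\lambda(t)^{2})\,d\omega$ in which we are measuring $F_{2}$. First I would record the elementary change-of-variables identity: for any bounded operator $A$ on $L^{2,\alpha}_{\rho}$ and any function $f$ of $\omega$, setting $g(\eta):=f(t,\eta/\lambda(t)^{2})$, the substitution $\eta=\omega\lambda(t)^{2}$ in the definition \eqref{l2alphadef} gives
\begin{equation*}
\| \langle \omega \lambda(t)^{2} \rangle^{\alpha} (Ag)(\omega \lambda(t)^{2}) \|_{L^{2}(\rho(\omega \lambda(t)^{2}) d\omega)} = \lambda(t)^{-1} \| A g \|_{L^{2,\alpha}_{\rho}} \leq \lambda(t)^{-1} \| A \|_{\mathcal{L}(L^{2,\alpha}_{\rho})} \| g \|_{L^{2,\alpha}_{\rho}} ,
\end{equation*}
while a second substitution $\omega=\eta/\lambda(t)^{2}$ yields $\| g \|_{L^{2,\alpha}_{\rho}} = \lambda(t)\,\| \langle \omega \lambda(t)^{2} \rangle^{\alpha} f(t,\omega) \|_{L^{2}(\rho(\omega \lambda(t)^{2}) d\omega)}$. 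Chaining these, the factors $\lambda(t)^{\pm 1}$ cancel and one is left with exactly $\| A \|_{\mathcal{L}(L^{2,\alpha}_{\rho})}$ times the $\langle\omega\lambda(t)^{2}\rangle^{\alpha}\rho(\omega\lambda(t)^{2})\,d\omega$–norm of $f(t,\cdot)$; no commutation of $A$ with dilations is used.

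With this in hand I would treat \eqref{f2l2} with $k=0$ term by term. The term $-\frac{\lambda'}{\lambda}\partial_{1}y$ contributes the ``$1$'' in $1+2\|\mathcal{K}\|_{\mathcal{L}(L^{2}_{\rho})}$, and $\frac{2\lambda'}{\lambda}\mathcal{K}(\partial_{1}y(t,\cdot/\lambda^{2}))(\omega\lambda^{2})$, estimated via the identity above with $A=\mathcal{K}$, contributes the ``$2\|\mathcal{K}\|_{\mathcal{L}(L^{2}_{\rho})}$''. For the $\|y\|$-coefficient: $\bigl(\tfrac{-\lambda''}{2\lambda}+\tfrac{(\lambda')^{2}}{4\lambda^{2}}\bigr)y$ gives $\tfrac12\tfrac{|\lambda''|}{\lambda}+\tfrac14\bigl(\tfrac{\lambda'}{\lambda}\bigr)^{2}$; $\frac{\lambda''}{\lambda}\mathcal{K}(y(t,\cdot/\lambda^{2}))(\omega\lambda^{2})$ adds $\tfrac{|\lambda''|}{\lambda}\|\mathcal{K}\|_{\mathcal{L}(L^{2}_{\rho})}$, so the coefficient of $\tfrac{|\lambda''|}{\lambda}$ becomes $\tfrac12+\|\mathcal{K}\|_{\mathcal{L}(L^{2}_{\rho})}$; the term $2\tfrac{(\lambda')^{2}}{\lambda^{2}}[\xi\partial_{\xi},\mathcal{K}](y(t,\cdot/\lambda^{2}))(\omega\lambda^{2})$ gives $2\bigl(\tfrac{\lambda'}{\lambda}\bigr)^{2}\|[\xi\partial_{\xi},\mathcal{K}]\|_{\mathcal{L}(L^{2}_{\rho})}$; and $-\tfrac{(\lambda')^{2}}{\lambda^{2}}\mathcal{K}(\mathcal{K}(y(t,\cdot/\lambda^{2})))(\omega\lambda^{2})$, estimated by the identity applied once with $A=\mathcal{K}\circ\mathcal{K}$ (or twice with $A=\mathcal{K}$), gives $\bigl(\tfrac{\lambda'}{\lambda}\bigr)^{2}\|\mathcal{K}\|_{\mathcal{L}(L^{2}_{\rho})}^{2}$ — so the $\bigl(\tfrac{\lambda'}{\lambda}\bigr)^{2}$-coefficient is $\tfrac14+2\|[\xi\partial_{\xi},\mathcal{K}]\|_{\mathcal{L}(L^{2}_{\rho})}+\|\mathcal{K}\|_{\mathcal{L}(L^{2}_{\rho})}^{2}$, matching \eqref{f2l2}. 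For $k=1$ I would first bound $(\omega\lambda(t)^{2})^{1/2}\le\langle\omega\lambda(t)^{2}\rangle^{1/2}$ pointwise and then repeat the argument verbatim with $\alpha=\tfrac12$, using that $\mathcal{K}$ and $[\xi\partial_{\xi},\mathcal{K}]$ are also bounded on $L^{2,1/2}_{\rho}$ and that the weight $\langle\eta\rangle^{1/2}$ transforms into $\langle\omega\lambda(t)^{2}\rangle^{1/2}$ under both substitutions.

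This lemma is not difficult analytically; the real obstacle is purely bookkeeping. The constants in \eqref{f2l2} must be reproduced \emph{exactly} (with no stray absolute constant), because these coefficients feed into the sharp smallness conditions \eqref{cofconstr1p1} and \eqref{cofconstr2} defining $\Lambda$. So I would be especially vigilant that the coefficient on the $\partial_{1}y$ term is precisely $1$, that the $\frac{2\lambda'}{\lambda}\mathcal{K}(\cdot)$ term produces exactly $2\|\mathcal{K}\|_{\mathcal{L}(L^{2}_{\rho})}$ and not some larger multiple, and that the cancellation of $\lambda(t)^{\pm1}$ in the rescaling identity leaves behind precisely the operator norm and nothing more.
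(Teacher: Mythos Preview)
Your proposal is correct and follows exactly the approach the paper indicates: the paper's proof consists of the single sentence ``This follows directly from \eqref{F2def}, and Theorem 6.1, Proposition 6.2, and Proposition 5.7 of \cite{kst},'' and you have simply unpacked that sentence by writing out the change-of-variables identity and tracking each of the six terms through the triangle inequality. Your care with the exact constants is well placed, since (as you note) these feed into \eqref{cofconstr1p1} and \eqref{cofconstr2}.
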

\begin{proof} This follows directly from \eqref{F2def}, and Theorem 6.1, Proposition 6.2, and Proposition 5.7 of \cite{kst}. \end{proof}

For $i=1,2$, let $y_{i} \in Z$, and define $f_{i}$ by
$$f_{i}(t,r) = \sqrt{\frac{\lambda(t)}{r}} \mathcal{F}^{-1}\left(y_{i}(t,\frac{\cdot}{\lambda(t)^{2}})\right)\left(\frac{r}{\lambda(t)}\right).$$
We now estimate $F_{3}(f_{2})-F_{3}(f_{1})$. 
\begin{lemma}\label{l1nlemma} There exists $C>0$ such that, for all $y_{1},y_{2}\in Z$, 
\begin{fleqn}\begin{equation}\label{l1l2}\begin{split}&||\mathcal{F}\left(\sqrt{\cdot} \left(L_{1}(f_{2})-L_{1}(f_{1})\right)(t,\cdot \lambda(t))\right)(\omega \lambda(t)^{2})||_{L^{2}(\rho(\omega \lambda(t)^{2}) d\omega)} \\
&\leq ||\left(y_{2}-y_{1}\right)(t,\omega)||_{L^{2}(\rho(\omega \lambda(t)^{2})d\omega)} \left(8 \lambda(t) ||\frac{u_{ansatz}(t,r)}{r(r^{2}+\lambda(t)^{2})}||_{L^{\infty}_{r}} + 2 ||\frac{u_{ansatz}(t,r)}{r}||_{L^{\infty}_{r}}^{2}\right)\end{split}\end{equation}\end{fleqn}
\begin{fleqn}
\begin{equation}\label{ll1l2}\begin{split}&||\sqrt{\omega \lambda(t)^{2}} \mathcal{F}\left(\sqrt{\cdot}\left(L_{1}(f_{2})-L_{1}(f_{1})\right)(t,\cdot\lambda(t))\right)(\omega \lambda(t)^{2})||_{L^{2}(\rho(\omega \lambda(t)^{2}) d\omega)}\\
&\leq C  ||\sqrt{\langle \omega \lambda(t)^{2}\rangle} (y_{2}-y_{1})(t,\omega)||_{L^{2}(\rho(\omega \lambda(t)^{2})d\omega)}\\
&\cdot \begin{aligned}[t]&\left(\lambda(t) ||\frac{u_{ansatz}(t,r)}{r(r^{2}+\lambda(t)^{2})}||_{L^{\infty}_{r}} + ||\frac{u_{ansatz}(t,r)}{r}||_{L^{\infty}_{r}}^{2}+ \lambda(t) ||\frac{\partial_{r}u_{ansatz}(t,r)}{r^{2}+\lambda(t)^{2}}||_{L^{\infty}_{r}} \right.\\
&\left. +  ||\frac{u_{ansatz}(t,r) \partial_{r}u_{ansatz}(t,r)}{r}||_{L^{\infty}_{r}(r \leq \lambda(t))}+ \lambda(t) ||\frac{u_{ansatz}(t,r) \partial_{r}u_{ansatz}(t,r)}{r^{2}}||_{L^{\infty}_{r}(r \geq \lambda(t))} \right)\end{aligned}\end{split}\end{equation}
\end{fleqn}
\begin{fleqn}
\begin{equation}\label{nl2}\begin{split}&||\mathcal{F}\left(\sqrt{\cdot}\left(N(f_{1})-N(f_{2})\right)(t,\cdot\lambda(t))\right)(\omega \lambda(t)^{2})||_{L^{2}(\rho(\omega \lambda(t)^{2})d\omega)}\\
&\leq C ||\sqrt{\langle \omega \lambda(t)^{2}\rangle}(y_{1}-y_{2})(t,\omega) ||_{L^{2}(\rho(\omega\lambda(t)^{2})d\omega)}\\
&\cdot\left(||\langle \omega \lambda(t)^{2}\rangle y_{1}(t,\omega)||_{L^{2}(\rho(\omega\lambda(t)^{2})d\omega)}^{2}+||\langle \omega \lambda(t)^{2}\rangle y_{2}(t,\omega)||_{L^{2}(\rho(\omega\lambda(t)^{2})d\omega)}^{2}\right.\\
&+\left.\left(||\frac{u_{ansatz}(t,r)}{r}||_{L^{\infty}_{r}} + \frac{1}{\lambda(t)}\right)\left(||\sqrt{\langle \omega \lambda(t)^{2}\rangle}y_{1}(t,\omega) ||_{L^{2}(\rho(\omega\lambda(t)^{2})d\omega)}+||\sqrt{\langle \omega \lambda(t)^{2}\rangle}y_{2}(t,\omega) ||_{L^{2}(\rho(\omega\lambda(t)^{2})d\omega)}\right)\right)\end{split}\end{equation}
\end{fleqn}
\begin{fleqn}
\begin{equation}\label{lnl2}\begin{split}&||\sqrt{\omega \lambda(t)^{2}} \mathcal{F}\left(\sqrt{\cdot}\left(N(f_{2})-N(f_{1})\right)(t,\cdot\lambda(t))\right)(\omega\lambda(t)^{2})||_{L^{2}(\rho(\omega \lambda(t)^{2})d\omega)}\\
&\leq C ||\langle \omega \lambda(t)^{2}\rangle (y_{2}-y_{1})(t,\omega)||_{L^{2}(\rho(\omega \lambda(t)^{2})d\omega)}\left(||\langle \omega \lambda(t)^{2}\rangle y_{2}(t,\omega)||_{L^{2}(\rho(\omega \lambda(t)^{2})d\omega)}+||\langle \omega \lambda(t)^{2}\rangle y_{1}(t,\omega)||_{L^{2}(\rho(\omega \lambda(t)^{2})d\omega)}\right)\\
&\cdot \left(\frac{1}{\lambda(t)} + ||\partial_{r}u_{ansatz}(t,r)||_{L^{\infty}_{r}} + ||\frac{u_{ansatz}(t,r)}{r}||_{L^{\infty}_{r}}\right)\\
&+ C ||\langle \omega \lambda(t)^{2}\rangle (y_{2}-y_{1})(t,\omega)||_{L^{2}(\rho(\omega \lambda(t)^{2})d\omega)} \left(||\langle \omega \lambda(t)^{2}\rangle y_{2}(t,\omega)||_{L^{2}(\rho(\omega \lambda(t)^{2})d\omega)}^{2}+||\langle \omega \lambda(t)^{2}\rangle y_{1}(t,\omega)||_{L^{2}(\rho(\omega \lambda(t)^{2})d\omega)}^{2}\right)\end{split}\end{equation}
\end{fleqn}
\end{lemma}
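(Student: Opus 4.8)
\textbf{Proof proposal for Lemma \ref{l1nlemma}.}

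The plan is to reduce all four estimates to the $L^2$-boundedness of the distorted Fourier transform $\mathcal{F}$ and its inverse (as operators between weighted $L^2_\rho$ spaces, via Theorem 5.3 of \cite{kst} and the spectral density bound \eqref{rhoscaling}), combined with pointwise multiplication bounds in physical space. Recall that $y_i(t,\omega) = \mathcal{F}(\sqrt{\cdot}\,f_i(t,\cdot\lambda(t)))(\omega\lambda(t)^2)$ by \eqref{yintermsofv6}, so that, by the Plancherel-type identity for $\mathcal{F}$, $\|\sqrt{\cdot}\,f_i(t,\cdot\lambda(t))\|_{L^2}$ is comparable to $\|y_i(t,\omega)\|_{L^2(\rho(\omega\lambda(t)^2)d\omega)}$, and, crucially, the factor $\sqrt{\omega\lambda(t)^2}$ appearing on the left-hand sides of \eqref{ll1l2} and \eqref{lnl2} corresponds (up to the operator $\mathcal{K}$, which is bounded by Theorem 6.1 of \cite{kst}) to one application of the differential operator $\mathcal{H} = -\partial_r^2 - \frac1r\partial_r + \frac{r^2-3}{r^2}\cdots$, i.e. to controlling $\|L_{1/\lambda(t)} f_i\|_{L^2}$-type quantities. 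The key reduction is therefore: first rescale so $\lambda(t)=1$, write $L_1(f_2)-L_1(f_1)$ and $N(f_2)-N(f_1)$ explicitly from \eqref{L1def} and \eqref{finalf3def}, bound the resulting trigonometric differences using $|\sin(2f_2)-\sin(2f_1)|\le 2|f_2-f_1|$, $|\cos(2f_2)-1 - (\cos(2f_1)-1)|\le C(|f_1|+|f_2|)|f_1-f_2|$, etc., then measure each factor in the norm dictated by its role — $\|f_i/r\|_{L^\infty}$ or $\|f_i\|_{L^\infty}$ for the ``slave'' factors, and $\|\sqrt{\cdot}\,(f_2-f_1)\|_{L^2}$ (or its $\mathcal{H}$-image) for the ``active'' factor — and finally move everything back to the $y$-side via $\mathcal{F}$.

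Concretely, I would proceed as follows. \emph{Step 1:} Establish the dictionary between physical-space norms and $Z$-type norms: $\|\frac{1}{\sqrt r}\mathcal{F}^{-1}(y_i(t,\frac{\cdot}{\lambda(t)^2}))(\frac{r}{\lambda(t)})\|_{L^2(r\,dr)} \sim \|y_i(t,\omega)\|_{L^2(\rho(\omega\lambda(t)^2)d\omega)}$, and, using the transference identity $\mathcal{F}(r\partial_r u) = -2\xi\partial_\xi\mathcal{F}(u)+\mathcal{K}(\mathcal{F}(u))$ together with the boundedness of $\mathcal{K}$ on $L^{2,\alpha}_\rho$, the analogous statement relating $\|L_{1/\lambda(t)}^k f_i\|_{L^2}$ to $\|\langle\omega\lambda(t)^2\rangle^{k/2} y_i\|_{L^2(\rho\,d\omega)}$ for $k=0,1$; this is exactly the machinery already invoked in the proof of Lemma \ref{f2lemma} and in \eqref{yeqn}--\eqref{F2def}. \emph{Step 2:} For \eqref{l1l2}, note $L_1$ is \emph{linear} in $f$ up to the $\sin(2f)$ factor, so $L_1(f_2)-L_1(f_1) = \big(-\frac{\sin 2Q_1}{2r^2}\sin 2u_{ansatz} + \frac{\cos 2Q_1}{2r^2}(\cos 2u_{ansatz}-1)\big)(\sin 2f_2 - \sin 2f_1)$; bound the bracket pointwise by $C\big(\frac{\lambda(t)|u_{ansatz}|}{r(r^2+\lambda(t)^2)} + \frac{|u_{ansatz}|^2}{r^2}\big)$ (using $|\sin 2Q_1|\le \frac{Cr\lambda(t)}{r^2+\lambda(t)^2}$ and $|\cos 2Q_1 - 1|\le \frac{Cr^2}{r^2+\lambda(t)^2}$, plus $\cos 2u-1 \sim u^2$), pull the bracket out in $L^\infty_r$, and use $|\sin 2f_2-\sin 2f_1|\le 2|f_2-f_1|$ with Step 1. \emph{Step 3:} For \eqref{ll1l2}, apply $L_{1/\lambda(t)}$ (equivalently, commute $\sqrt{\omega\lambda(t)^2}$ through $\mathcal{F}$), producing extra terms where the derivative hits either the $u_{ansatz}$-bracket (giving the $\frac{\lambda(t)\partial_r u_{ansatz}}{r^2+\lambda(t)^2}$, $\frac{u_{ansatz}\partial_r u_{ansatz}}{r}$, $\frac{\lambda(t) u_{ansatz}\partial_r u_{ansatz}}{r^2}$ terms, split at $r=\lambda(t)$ to keep scaling-critical quantities finite) or the $\sin 2f_i$ factor (which is handled by replacing $|f_2-f_1|$ with $\|L_{1/\lambda(t)}(f_2-f_1)\|$, i.e. the $\sqrt{\langle\omega\lambda(t)^2\rangle}$-weighted $y$-norm); the cutoff-induced cross terms from $\mathcal{K}$ contribute the universal constant $C$. \emph{Step 4:} For the $N$-estimates \eqref{nl2}, \eqref{lnl2}, repeat Steps 2--3 but now $N(f)$ is genuinely cubic in $f$: use $|\cos 2f_2 - \cos 2f_1|\le C(|f_1|+|f_2|)|f_1-f_2|$ and $|(\sin 2f_2 - 2f_2)-(\sin 2f_1-2f_1)|\le C(f_1^2+f_2^2)|f_1-f_2|$, which forces two factors to be measured in $L^2$ and one in $L^\infty$; the $L^\infty$ factor must be handled via the Sobolev-type embedding $\|f_i/\sqrt{r}\|_{L^\infty} \lesssim \|\langle\omega\lambda(t)^2\rangle y_i\|_{L^2(\rho\,d\omega)}$ (this is where the $\langle\omega\lambda(t)^2\rangle$ weights and the $\frac{1}{\lambda(t)}$ terms enter), and the $\frac{\sin 2Q_1}{r^2}\cdot 2u_{ansatz}$ and $\frac{\cos 2Q_1}{r^2}(\cos 2u_{ansatz}-1)$ pieces of $N(f_2)-N(f_1)$ coming from the $u_{ansatz}$-dependent amplitude contribute the $\|u_{ansatz}/r\|_{L^\infty}$ factors.

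The main obstacle I expect is \emph{Step 4's cubic nonlinearity combined with the loss of derivatives}: to get an $L^\infty$ bound on one of the three $f$-factors one cannot simply use $\|f\|_{L^2}$ but needs a full extra power of $\langle\omega\lambda(t)^2\rangle$ in the distorted-Fourier norm, and one must verify that the resulting embedding $\mathcal{F}^{-1}: L^{2,1}_\rho \to L^\infty$ (after the $1/\sqrt r$ weight) actually holds with the spectral measure $\rho$ of \cite{kst} — this is the delicate input, and it is why the right-hand sides of \eqref{nl2} and \eqref{lnl2} carry $\langle\omega\lambda(t)^2\rangle$ to the first power on the ``amplitude'' $y$-factors. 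A secondary subtlety is bookkeeping the split of the physical-space multiplier estimates at $r=\lambda(t)$: the quantities $\frac{u_{ansatz}\partial_r u_{ansatz}}{r}$ (for $r\le\lambda(t)$) and $\frac{\lambda(t)u_{ansatz}\partial_r u_{ansatz}}{r^2}$ (for $r\ge\lambda(t)$) are precisely the two pieces whose $L^\infty$ norms are controlled in Lemma \ref{uansatzlemma}, so one must match the algebra of the derivative-of-$L_1$ terms exactly to that list rather than to a naive single multiplier; similarly the amplitude factor $\frac{\lambda(t)u_{ansatz}}{r(r^2+\lambda(t)^2)}$ in \eqref{l1l2} is the quantity bounded in \eqref{uquot}. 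Everything else is a routine, if lengthy, application of Minkowski's inequality, the triangle inequality for the trigonometric differences, and the $\mathcal{K}$-boundedness from Theorem 6.1 and Proposition 6.2 of \cite{kst}, so I would not grind through it in detail.
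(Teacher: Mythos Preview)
Your proposal is correct and takes essentially the same approach as the paper. The paper's own proof simply cites the author's earlier work \cite{wm} (specifically (5.83), (5.84) and pp.\ 180--187 there) for estimates \eqref{ll1l2}, \eqref{nl2}, \eqref{lnl2}, and notes that only \eqref{l1l2} needs to be redone here with \emph{quantitative} constants by direct pointwise estimation of \eqref{L1def}; your Steps 1--4 are precisely a self-contained reconstruction of that cited argument, and your Step 2 in particular reproduces the computation behind \eqref{l1l2} (tracking the constants carefully there gives the exact $8$ and $2$, since $|\sin 2Q_1(\frac{r}{\lambda})|\le \frac{4r\lambda}{r^2+\lambda^2}$, $|\sin 2u_{ansatz}|\le 2|u_{ansatz}|$, $|\cos 2u_{ansatz}-1|\le 2u_{ansatz}^2$, and $|\sin 2f_2-\sin 2f_1|\le 2|f_2-f_1|$).
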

\begin{proof}
We can read off most of the required estimates on $L_{1}$ and $N$ from (5.83), (5.84), pg. 155 of \cite{wm}, along with pgs. 180-187 of \cite{wm}. The only extra information we need is an estimate on $$||\mathcal{F}\left(\sqrt{\cdot} \left(L_{1}(f_{2})-L_{1}(f_{1})\right)(t,\cdot \lambda(t))\right)(\omega \lambda(t)^{2})||_{L^{2}(\rho(\omega \lambda(t)^{2}) d\omega)}$$ with quantitative constants, which is obtained by direct estimation of \eqref{L1def}. \end{proof}
Now, we are ready to solve \eqref{yeqn}. For $y \in Z$, define $T(y)$ by
\begin{equation}\label{Tdef}T(y)(t,\omega):=\int_{t}^{\infty}\frac{\sin((x-t)\sqrt{\omega})}{\sqrt{\omega}}\left(-\mathcal{F}(\sqrt{\cdot}\left(F_{5}+F_{3}(v_{6}(y))\right)(x,\cdot \lambda(x)))(\omega \lambda(x)^{2})+F_{2}(y)(x,\omega)\right) dx, \quad t \geq T_{0}\end{equation}
Then, we have
\begin{proposition}\label{contractionprop} There exists $\beta_{0}>0$ such that, for all $\beta>\beta_{0}$, there exists $C_{9}>0$ such that, for all $C_{Z}>C_{9}$, there exists $T_{1}>0$ such that, if $T_{0}>T_{1}$, then, $T$ is a strict contraction on $\overline{B}_{1}(0) \subset Z$.\end{proposition}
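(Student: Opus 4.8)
The plan is to verify the two hypotheses of the Banach fixed point theorem for $T$ on $\overline{B}_1(0)\subset Z$: that $T$ maps $\overline{B}_1(0)$ into itself, and that $T$ is Lipschitz on $\overline{B}_1(0)$ with constant strictly below $1$. Since $T(y_2)-T(y_1)$ is the Duhamel integral \eqref{Tdef} with the $F_5$ term deleted and with $F_3(v_6(y))$, $F_2(y)$ replaced by their increments, and since $T(0)$ carries only the fixed $F_5$ forcing, it is enough to bound each of the four seminorms constituting $\|\cdot\|_Z$ in \eqref{Zdef} applied separately to (i) the $F_5$ piece, (ii) the $F_2$ piece with $y\rightsquigarrow y_2-y_1$, and (iii) the $F_3$ piece with $v_6(y)\rightsquigarrow$ increment. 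For each seminorm and each piece I would proceed uniformly: apply Minkowski's inequality to move the $L^2(\rho(\omega\lambda(t)^2)d\omega)$ norm inside the $x$-integral; use \eqref{rhoscaling} together with $\lambda(x)/\lambda(t)\le(x/t)^{C_u}$, $\lambda(t)/\lambda(x)\le(x/t)^{C_l}$ from \eqref{lambdacomparg} to replace $\rho(\omega\lambda(t)^2)$ by $\rho(\omega\lambda(x)^2)$ at the price of a factor $\sqrt{C_\rho}\,(x/t)^{M}$ ($M=\max\{C_l,C_u\}$); and bound the propagator by $|\sin((x-t)\sqrt\omega)/\sqrt\omega|\le x-t$ for the two unweighted seminorms and by $1/\sqrt\omega$ (absorbing one factor of $\sqrt{\omega\lambda(t)^2}$) for the two $\sqrt{\omega\lambda(t)^2}$-weighted ones; for the $\partial_t T(y)$ components, differentiating the kernel yields $-\cos((x-t)\sqrt\omega)$ with no boundary term (as $\sin 0=0$), so one integrates $|\cos|\le1$ against the forcing. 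The Plancherel-type identity for the distorted transform of \cite{kst} turns $\|\mathcal F(\sqrt\cdot\,g(x,\cdot\lambda(x)))(\omega\lambda(x)^2)\|_{L^2(\rho(\omega\lambda(x)^2))}$ into $\lambda(x)^{-2}\|g(x,\cdot)\|_{L^2(r\,dr)}$ and connects the $\sqrt{\omega\lambda^2}$-weighted norm to $\|L_{1/\lambda(x)}g(x,\cdot)\|_{L^2(r\,dr)}$ up to $\lambda$-powers, so Lemmas \ref{uansatzerrorest}, \ref{f2lemma}, \ref{l1nlemma} feed in directly.

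For the $F_5$ piece, Lemma \ref{uansatzerrorest} gives $\lambda(x)^{-2}\|F_5(x,\cdot)\|_{L^2(r\,dr)}\le C_5\log^{30}(x)x^{-(4+2\delta)}$, so the Duhamel integral $\int_t^\infty (x-t)(x/t)^{M}C_5\log^{30}(x)x^{-(4+2\delta)}dx$ converges precisely because $\delta>0$ (indeed $2\delta>C_l$), producing a contribution $\le\sqrt{C_\rho}\,C_5\log^{30}(t)t^{-(2+2\delta)}$ times a constant bounded uniformly in $t$ and in $T_0$ past a threshold. After multiplying by the $Z$-weight — which on the two relevant seminorms carries the prefactor $C_5'^{-1}\beta^{-1}=(C_5\sqrt{C_\rho})^{-1}\beta^{-1}$ — this is $O(1/\beta)<\tfrac14$ once $\beta$ is large; on the remaining two seminorms the $\|L_{1/\lambda}F_5\|$ bound of Lemma \ref{uansatzerrorest} decays with a strictly larger power of $t$ than the critical $t^{-(4+2\delta)}$ (compare the exponents $2+5\alpha$, $4+2\alpha-3C_u$, $9/2-C_l-15C_u$ against $4+2\delta$, using \eqref{alphaconstr} and \eqref{lambdaonlyconstr}), so this contribution is made small by taking $T_0$ large. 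This determines $\beta_0$.

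For the $F_2$ piece I would feed Lemma \ref{f2lemma} (with $k=0$ for the unweighted seminorms, $k=1$ for the weighted ones) into the Minkowski estimate, using $|\lambda'(x)/\lambda(x)|\le M/x$, $|\lambda''(x)/\lambda(x)|\le C_2/x^2$ and the $Z$-bounds $\|y(x)\|_{L^2_\rho}\le C_5'\beta\log^{30}(x)x^{-(2+2\delta)}\|y\|_Z$, $\|\partial_1y(x)\|_{L^2_\rho}\le C_5'\beta\log^{30}(x)x^{-(3+2\delta)}\|y\|_Z$, $\|\sqrt{\omega\lambda^2}\partial_1y(x)\|_{L^2_\rho}\le C_Z\log^{30}(x)x^{-(3+2\delta)}\|y\|_Z$; then every term of $F_2$ is of exactly critical size — the new feature relative to \cite{wm}, where the extra $\log$ decay on $\lambda^{(k)}/\lambda$ made this perturbative — so it cannot be absorbed by enlarging $T_0$. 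The Duhamel $x$-integrals $\int_t^\infty (x-t)(x/t)^{M}x^{-(4+2\delta)}\log^{30}(x)dx$ and its analogue with $x-t$ replaced by $1$ converge and, for $T_0$ past a threshold, are dominated by $t^{-(2+2\delta)}\log^{30}(t)$ (resp.\ $t^{-(3+2\delta)}\log^{30}(t)$) times constants for which $\tfrac{179}{267}$ and $\tfrac{1513}{1044}$ are valid uniform upper bounds. Combining with the $\sqrt{C_\rho}$ of \eqref{rhoscaling} and Lemma \ref{f2lemma}, the $F_2$ contribution to $\|T(y_2)-T(y_1)\|_Z$ is bounded, on the seminorms not weighted by $C_Z^{-1}$, by $\sqrt{C_\rho}\,\tfrac{179}{267}\big(M(1+2\|\mathcal K\|_{\mathcal L(L^2_\rho)})+M^2(\tfrac14+2\|[\xi\partial_\xi,\mathcal K]\|_{\mathcal L(L^2_\rho)}+\|\mathcal K\|_{\mathcal L(L^2_\rho)}^2)+C_2(\tfrac12+\|\mathcal K\|_{\mathcal L(L^2_\rho)})\big)\|y_2-y_1\|_Z$, and on the $C_Z^{-1}$-weighted seminorm by a numerical multiple (with the factor from \eqref{cofconstr2}) of $\sqrt{C_\rho}M(1+2\|\mathcal K\|_{\mathcal L(L^{2,1/2}_\rho)})\|y_2-y_1\|_Z$ plus a cross-term proportional to $\sqrt{C_\rho}\,C_5'\beta\,C_Z^{-1}$ from the $\|\partial_1y\|_{L^2_\rho}$ rather than the $\|\sqrt{\omega\lambda^2}\partial_1y\|_{L^2_\rho}$ part of $\langle\omega\lambda^2\rangle^{1/2}\partial_1y$, made as small as desired by taking $C_Z$ large — this dictates $C_9$. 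For the $F_3$ piece I would combine Lemma \ref{l1nlemma} with Lemma \ref{uansatzlemma}: the subleading pieces of $L_1$ carry coefficients ($\|u_{ansatz}/r\|_{L^\infty}^2$, $\|\partial_r u_{ansatz}\|_{L^\infty}$, $\|u_{ansatz}\partial_r u_{ansatz}/r\|_{L^\infty(r\le\lambda)}$, $\lambda\|u_{ansatz}\partial_r u_{ansatz}/r^2\|_{L^\infty(r\ge\lambda)}$) that are negative powers of $t$, hence vanish as $T_0\to\infty$, while the non-vanishing leading pieces $8\lambda\|u_{ansatz}/(r(r^2+\lambda^2))\|_{L^\infty}\le\tfrac4{t^2}(M^2+\tfrac{C_2}3(3+2\pi^2))+\cdots$ and $\lambda\|\partial_r u_{ansatz}/(r^2+\lambda^2)\|_{L^\infty}\le\tfrac1{2t^2}(M^2+\tfrac{C_2}3(3+2\pi^2))+\cdots$ from \eqref{uquot}, \eqref{druquot} combine with the $F_2$ bounds to produce exactly the summand $4(M^2+\tfrac{C_2}3(3+2\pi^2))$ inside $f(M,M^2,C_2)$ (recall \eqref{fdef}), so that \eqref{cofconstr1p1}, \eqref{cofconstr2} bound the combined $F_2$-plus-leading-$L_1$ contribution to each $Z$-seminorm by $\tfrac13$; the $N$-part of $F_3$ is at least quadratic in $(y_1,y_2)$ and, using $\|\langle\omega\lambda^2\rangle y_i(x)\|_{L^2_\rho}\le C_5'\beta\log^{30}(x)x^{-(2+2\delta)}$, gains extra $t$-decay, contributing $o_{T_0\to\infty}(1)\|y_2-y_1\|_Z$. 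Summing the four seminorm bounds gives a Lipschitz constant $\le\tfrac13+\tfrac13+o_{T_0\to\infty}(1)<1$, and for the self-mapping one adds the $F_5$ contribution $<\tfrac13$; taking first $\beta>\beta_0$, then $C_Z>C_9(\beta)$, then $T_0>T_1(\beta,C_Z)$ arranges all remaining small terms. Finally, $T(y)\in Z$ — continuity in $t$ of $t\mapsto y(t,\cdot)$, $t\mapsto\partial_ty(t,\cdot)$ in the relevant $L^2_\rho$ spaces — follows, as in \cite{wm}, from the dominated convergence theorem and continuity of dilation on $L^2$.

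The main obstacle, and the genuinely new feature relative to \cite{wm}, is that $F_2(y)$ and the leading part of $L_1$ are of exactly critical size: they are improved neither by a power of $t$ nor even by a power of $\log t$, so the contraction cannot be forced by taking $T_0$ large and instead hinges on the quantitative smallness of $M=\max\{C_l,C_u\}$ and $C_2$ from \eqref{lambdaonlyconstr}, \eqref{cofconstr1p1}, \eqref{cofconstr2}. The delicate bookkeeping is to evaluate the Duhamel $x$-integrals precisely enough, and to track the operator norms $\|\mathcal K\|_{\mathcal L(L^2_\rho)}$, $\|[\xi\partial_\xi,\mathcal K]\|_{\mathcal L(L^2_\rho)}$, $\|\mathcal K\|_{\mathcal L(L^{2,1/2}_\rho)}$ through Lemmas \ref{f2lemma} and \ref{l1nlemma}, that the combined $F_2$-plus-leading-$L_1$ contribution to each $Z$-seminorm lands strictly below $\tfrac13$; everything else — the $F_5$ forcing, the subleading $L_1$ terms, the quadratic $N$ terms, and the $C_Z$-absorbed cross-terms — is soft and handled by the ordered choice of $\beta$, $C_Z$, $T_0$.
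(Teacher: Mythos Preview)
Your proposal is correct and follows essentially the same approach as the paper: Minkowski inside the Duhamel integral, \eqref{rhoscaling} plus \eqref{lambdacomparg} to swap $\rho(\omega\lambda(t)^2)$ for $\rho(\omega\lambda(x)^2)$, the propagator bounds $|\sin((x-t)\sqrt\omega)|/\sqrt\omega\le (x-t)^{1-k}$, Lemmas~\ref{uansatzerrorest}, \ref{f2lemma}, \ref{l1nlemma}, \ref{uansatzlemma} feeding in, and the ordered choice $\beta\to C_Z\to T_0$ to absorb the subcritical remainders. One minor bookkeeping slip: three, not two, of the four seminorms in \eqref{Zdef} sit at the critical level $\tfrac13$ (seminorms~1 and~3 via \eqref{cofconstr1p1}, seminorm~4 via \eqref{cofconstr2}; only seminorm~2 is $o_{T_0\to\infty}(1)$), so the Lipschitz constant is $3(\tfrac13-\epsilon)+o(1)<1$ rather than $\tfrac13+\tfrac13+o(1)$ --- but this does not affect the conclusion.
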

\begin{proof}
Let $y \in \overline{B}_{1}(0) \subset Z$. Any constant $C$ appearing in this proof is  \emph{independent} of $y$ and $T_{0}$. First, 
\begin{equation}\begin{split}&||\partial_{t}^{k}T(y)(t,\omega)||_{L^{2}(\rho(\omega \lambda(t)^{2})d\omega)}\\
&\leq \sqrt{C_{\rho}} \int_{t}^{\infty}(x-t)^{1-k} \left(\left(\frac{x}{t}\right)^{C_{u}}+\left(\frac{x}{t}\right)^{C_{l}}\right)\begin{aligned}[t]&\left(\frac{||F_{5}(x,r)||_{L^{2}(r dr)}}{\lambda(x)^{2}} +||\mathcal{F}(\sqrt{\cdot} F_{3}(x,\cdot\lambda(x)))(\omega\lambda(x)^{2})||_{L^{2}(\rho(\omega \lambda(x)^{2})d\omega)} \right.\\
&\left.+||F_{2}(x,\omega)||_{L^{2}(\rho(\omega \lambda(x)^{2})d\omega)}\right)dx, \quad k=0,1\end{aligned}\end{split}\end{equation}
where we used \eqref{rhoscaling} and
$$\frac{|\partial_{t}^{k}\left(\sin((x-t)\sqrt{\omega})\right)|}{\sqrt{\omega}} \leq (x-t)^{1-k}$$
Since the inequality in \eqref{cofconstr1p1} is strict, there exists a positive constant $\epsilon_{4}<\frac{1}{100}$ such that \eqref{cofconstr1p1} is true with $\frac{179}{267}$ replaced by $\frac{179}{267} + 4\epsilon_{4}$ on the left-hand side. Then, we directly apply  Lemmas \ref{uansatzerrorest}, \ref{l1nlemma} and \ref{f2lemma}, to get
\begin{equation}\label{dtkTest}\begin{split}&||\partial_{t}^{k}T(y)(t,\omega)||_{L^{2}(\rho(\omega \lambda(t)^{2})d\omega)}\\
&\leq \frac{C_{5}' \log^{30}(t)}{t^{2+2\delta+k}}\left(\frac{1}{\frac{5}{2}-C_{u}}+\frac{1}{\frac{5}{2}}\right)\\
&+ \frac{\beta C_{5}' \sqrt{C_{\rho}}\log^{30}(t)}{t^{2+k+2\delta}} \begin{aligned}[t]&\left(M(1+2||\mathcal{K}||_{\mathcal{L}(L^{2}_{\rho})})+M^{2}\left(\frac{1}{4}+2||[\xi\partial_{\xi},\mathcal{K}]||_{\mathcal{L}(L^{2}_{\rho})} + ||\mathcal{K}||_{\mathcal{L}(L^{2}_{\rho})}^{2}\right)\right.\\
&\left.+C_{2}\left(\frac{1}{2}+||\mathcal{K}||_{\mathcal{L}(L^{2}_{\rho})}\right)+4\left(M^{2}+\frac{C_{2}}{3}(3+2\pi^{2})\right)\right)\\
&\cdot \left(\frac{1}{(3-C_{u}+2\delta-4\epsilon_{4})}+\frac{1}{(3-C_{l}+2\delta-4\epsilon_{4})}\right)+\frac{C(1+\beta^{3})  \log^{60}(t)}{t^{2+k+2\delta+\delta_{3}}}\end{aligned}\\
&\leq \frac{\beta C_{5}' \log^{30}(t)}{t^{2+k+2\delta}}\left(\frac{1}{3}-\epsilon_{3}\right)+\frac{C(1+\beta^{3})  \log^{60}(t)}{t^{2+k+2\delta+\delta_{3}}}\end{split}\end{equation}
for some $\epsilon_{3}>0$, $\beta$ sufficiently large, depending on $\epsilon_{4}$, and if $T_{0}$ is further constrained to satisfy $T_{0}>e^{\frac{15}{2\epsilon_{4}}}$. Here, $\delta_{3}>0$ is given by
$$\delta_{3}=\text{min}\{2\alpha,\text{ }1-6C_{u},\text{ }2\delta-C_{l}\}$$
The $\epsilon_{4}$ terms in \eqref{dtkTest} arise from noting that $x \mapsto \frac{\log^{30}(x)}{x^{4\epsilon_{4}}}$ is decreasing on $(e^{\frac{15}{2\epsilon_{4}}},\infty)$. The last inequality in \eqref{dtkTest} is true by \eqref{cofconstr1p1} and $\frac{1}{(3-C_{u}+2\delta-4\epsilon_{4})}+\frac{1}{(3-C_{l}+2\delta-4\epsilon_{4})} < \frac{179}{267}+2\epsilon_{4}$.
Next,
\begin{equation}\label{weightT}\begin{split}&||\omega \lambda(t)^{2} T(y)(t,\omega)||_{L^{2}(\rho(\omega \lambda(t)^{2})d\omega)}\\
&\leq C \lambda(t) \int_{t}^{\infty}\left(\frac{x}{t}\right)^{M+C_{l}}\begin{aligned}[t]&\left(\frac{||L_{\frac{1}{\lambda(x)}}F_{5}(x,r)||_{L^{2}(r dr)}}{\lambda(x)} +||\sqrt{\omega\lambda(x)^{2}}\mathcal{F}(\sqrt{\cdot} F_{3}(x,\cdot\lambda(x)))(\omega\lambda(x)^{2})||_{L^{2}(\rho(\omega \lambda(x)^{2})d\omega)} \right.\\
&\left.+||\sqrt{\omega \lambda(x)^{2}}F_{2}(x,\omega)||_{L^{2}(\rho(\omega \lambda(x)^{2})d\omega)}\right)dx\end{aligned}\\
&\leq \frac{C}{t^{2+2\delta}}\left(\frac{\log^{2}(t)}{t^{-1-2\delta+5\alpha}}+\frac{\log^{6}(t)}{t^{1-2\delta +2\alpha -3C_{u}}} + \frac{\log^{30}(t)}{t^{\frac{3}{2}-2\delta-C_{l}-15C_{u}}}+\frac{(\beta^{3}+1+C_{Z})\log^{30}(t)}{t^{1-C_{u}}}\right)\end{split}\end{equation}
where we used $|\sin((x-t)\sqrt{\omega})| \leq 1$. Similarly, there exist $C_{7}>0$ and $0<\epsilon_{1}<\frac{1}{3}$, both \emph{independent} of $C_{Z}, y$ and $T_{0}$, such that
\begin{equation}\label{weightdtT}\begin{split}&||\sqrt{\omega \lambda(t)^{2}}\partial_{t}T(y)(t,\omega)||_{L^{2}(\rho(\omega \lambda(t)^{2})d\omega)}\\
&\leq C \int_{t}^{\infty} \left(\frac{x}{t}\right)^{M+C_{l}} ||\sqrt{\omega \lambda(x)^{2}} \mathcal{F}(\sqrt{\cdot}F_{5}(x,\cdot\lambda(x)))(\omega \lambda(x)^{2})||_{L^{2}(\rho(\omega \lambda(x)^{2}) d\omega)} dx\\
&+\sqrt{C_{\rho}} \int_{t}^{\infty} \left(\left(\frac{x}{t}\right)^{2C_{l}}+\left(\frac{x}{t}\right)^{C_{u}+C_{l}}\right) ||\sqrt{\omega \lambda(x)^{2}} F_{2}(y)(x,\omega)||_{L^{2}(\rho(\omega \lambda(x)^{2})d\omega)} dx\\
&+\sqrt{C_{\rho}} \int_{t}^{\infty}\left(\left(\frac{x}{t}\right)^{2C_{l}}+\left(\frac{x}{t}\right)^{C_{u}+C_{l}}\right) ||\sqrt{\omega \lambda(x)^{2}} \mathcal{F}(\sqrt{\cdot}L_{1}(v_{6}(y))(x,\cdot\lambda(x)))(\omega\lambda(x)^{2})||_{L^{2}(\rho(\omega\lambda(x)^{2})d\omega)} dx\\
&+C \int_{t}^{\infty} \left(\frac{x}{t}\right)^{M+C_{l}} ||\sqrt{\omega \lambda(x)^{2}} \mathcal{F}(\sqrt{\cdot}N(v_{6}(y))(x,\cdot\lambda(x)))(\omega\lambda(x)^{2})||_{L^{2}(\rho(\omega\lambda(x)^{2})d\omega)} dx\\
&\leq \frac{C_{7}(\beta^{3}+1) \log^{30}(t)}{t^{3+2\delta}}+ \frac{C_{Z} (\frac{1}{3}-\epsilon_{1})\log^{30}(t)}{t^{3+2\delta}}\leq \frac{C_{Z}\log^{30}(t)}{ t^{3+2\delta}}\left(\frac{1}{3}-\frac{\epsilon_{1}}{2}\right)\end{split}\end{equation}
where the second to last inequality in \eqref{weightdtT} follows from the strictness of the inequality in \eqref{cofconstr2} and is true for $T_{0}$ sufficiently large, depending on absolute constants (not $C_{Z}$), with a similar argument used in \eqref{dtkTest}. The last inequality in \eqref{weightdtT} is true, as long as $C_{Z}>\frac{2C_{7}(\beta^{3}+1)}{\epsilon_{1}}$, which we can enforce, recalling that $C_{7}$ and $\epsilon_{1}$ are independent of $C_{Z}$. With the same argument used in \eqref{dtkTest}, we get
$$||\sqrt{\omega \lambda(t)^{2}} T(y)(t,\omega)||_{L^{2}(\rho(\omega \lambda(t)^{2})d\omega)} \leq \frac{C(1+\beta^{3}) \log^{30}(t)}{t^{3+2\delta-C_{u}}}$$ 
Thus, by \eqref{dtkTest}, \eqref{weightT}, and \eqref{weightdtT}, there exists $\beta'>0$ such that, for all $\beta > \beta'$, there exists $C_{6}>0$ such that, for all $C_{Z}>C_{6}$, there exists $T_{2}>0$ such that if $T_{0}>T_{2}$ then, for all $y \in \overline{B}_{1}(0) \subset Z$, $||Ty||_{Z} <1$. So, $T$ maps $\overline{B}_{1}(0)$ into itself, and it remains to show that $T$ is a strict contraction. If $y_{2},y_{1} \in \overline{B}_{1}(0)$, then, since the expression for $F_{2}$ (\eqref{F2def}) depends linearly on $y$, we have
\begin{equation} T(y_{2})-T(y_{1}) = \int_{t}^{\infty}\frac{\sin((x-t)\sqrt{\omega})}{\sqrt{\omega}} \begin{aligned}[t]&\left(F_{2}(y_{2}-y_{1})(x,\omega)-\mathcal{F}(\sqrt{\cdot}(N(v_{6}(y_{2}))-N(v_{6}(y_{1})))(x,\cdot\lambda(x)))(\omega \lambda(x)^{2})\right.\\
&\left.-\mathcal{F}(\sqrt{\cdot}(L_{1}(v_{6}(y_{2}))-L_{1}(v_{6}(y_{1})))(x,\cdot\lambda(x)))(\omega \lambda(x)^{2})\right)dx\end{aligned}\end{equation}
Then, by the same procedure used in \eqref{dtkTest}, \eqref{weightT}, and \eqref{weightdtT}, we get, for $\beta$ and $C_{Z}$ sufficiently large, and some $\epsilon_{2}>0$,
\begin{equation}\label{dtkTlip} \begin{split}||\partial_{t}^{k}\left(T(y_{2})-T(y_{1})\right)||_{L^{2}(\rho(\omega\lambda(t)^{2})d\omega)}\leq||y_{1}-y_{2}||_{Z}\left(\frac{\beta C_{5}' \log^{30}(t)}{t^{2+k+2\delta}}\left(\frac{1}{3}-\epsilon_{2}\right)+\frac{C (1+\beta^{3}) \log^{60}(t)}{t^{2+k+2\delta+\delta_{3}}}\right), \quad k =0,1\end{split}\end{equation}
\begin{equation}\label{weightTlip}||\sqrt{\omega \lambda(t)^{2}} (T(y_{2})-T(y_{1})) ||_{L^{2}(\rho(\omega\lambda(t)^{2})d\omega)} \leq \frac{C (1+\beta^{3})||y_{2}-y_{1}||_{Z}\log^{30}(t)}{t^{3+2\delta-C_{u}}}\end{equation}
\begin{equation}\label{weightTlip2} ||\omega\lambda(t)^{2}(T(y_{2})-T(y_{1}))||_{L^{2}(\rho(\omega\lambda(t)^{2})d\omega)} \leq \frac{C ||y_{1}-y_{2}||_{Z} C_{Z} \log^{30}(t)}{t^{3+2\delta-C_{u}}}\end{equation}
\begin{equation}\label{weightdtTlip} ||\sqrt{\omega\lambda(t)^{2}}\partial_{t}\left(T(y_{2})-T(y_{1})\right)||_{L^{2}(\rho(\omega \lambda(t)^{2})d\omega)} \leq \frac{C_{Z}\log^{30}(t)||y_{1}-y_{2}||_{Z}}{t^{3+2\delta}} \left(\frac{1}{3}-\epsilon_{2}\right)\end{equation}
This completes the proof of the Proposition.
 \end{proof}
By Proposition \ref{contractionprop}, completeness of $(\overline{B}_{1}(0),||\cdot||_{Z})$, and the Banach fixed point theorem, there exists $\beta>0$ and $C_{Z}>0$ such that, for all $T_{0}$ sufficiently large, there exists $y_{0} \in \overline{B}_{1}(0)$ such that $T(y_{0})=y_{0}$. By inspection of \eqref{Tdef}, and \eqref{yeqn}, $y_{0}$ is a solution to \eqref{yeqn}. By the derivation of \eqref{yeqn} from \eqref{v6eqn}, the function $v_{6,0}$, defined by \eqref{v6intermsofy} with $y=y_{0}$, for $r>0$, is a solution to \eqref{v6eqn}. We also note that, $v_{6,0}(t,\cdot)$ admits a continuous extension to $[0,\infty)$, by defining $v_{6,0}(t,0):=0$, by Lemma 5.1 of \cite{wm}.
\section{Decomposition of the solution as in \ref{solndecomp}}
Let
$$u(t,r):=Q_{1}(\frac{r}{\lambda(t)})+u_{ansatz}(t,r)+v_{6,0}(t,r)$$
$$v_{rad}(t,r)=v_{2}(t,r)+v_{2,2}(t,r)+v_{2,4}(t,r)$$
In this section, we prove the following lemma, which finishes the proof of Theorem \ref{mainthm}.
\begin{lemma} 
\begin{equation}\label{enofremainder}E(u-Q_{\frac{1}{\lambda(t)}}-v_{rad},\partial_{t}\left(u-v_{rad}\right)) \leq \frac{C \log^{2}(t)}{t^{2-2C_{u}}}\end{equation}\end{lemma}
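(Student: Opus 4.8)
The plan is to unwind the definitions and reduce \eqref{enofremainder} to estimates that have already been established earlier in the paper. Recall that
\[
u - Q_{\frac{1}{\lambda(t)}} - v_{rad} = u_{ansatz} + v_{6,0} - v_{rad},
\]
and since $u_{ansatz} = u_a + u_n + u_{N_2}$, with $u_a = u_c + f_{5,0}+f_{ex,sub}+f_{ell,2}+f_{2,2}$, $u_c = \chi_{\le 1}(\tfrac{r}{h(t)})u_e + (1-\chi_{\le 1}(\tfrac{r}{h(t)}))u_{wave}$, and $u_{wave} = u_w + u_{w,2} + v_{2,2}$, $u_n = u_{N_0} + u_{N_0,corr} + m_{\ge 1}(\tfrac{r}{t})u_{N_0,corr,2}$, with $u_{N_0,corr}$ containing $v_{2,4}$, and $u_w = v_1 + v_2 = w_1 + v_{ex} + v_2$. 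So when I subtract $v_{rad} = v_2 + v_{2,2} + v_{2,4}$, the free waves $v_2$, $v_{2,2}$, $v_{2,4}$ are cancelled up to cutoff factors. First I would organize $u - Q_{\frac{1}{\lambda(t)}} - v_{rad}$ into two groups: the genuinely small corrections (everything except the free waves and except the delicate large-$r$ cancellation of $v_1$ with the soliton), and the delicate piece. For the first group, I would bound the energy $E(\cdot,\partial_t(\cdot))$ directly using the pointwise and $L^2$-based estimates already proven: Lemma \ref{w1estlemma} and Lemma \ref{w1strlemma} for $w_1$, Lemma \ref{uw2minuselllemma} (in particular \eqref{uw2enest}) for $u_{w,2}$, Lemma \ref{uansatzlemma} for $u_{ansatz}$ quantities, Lemma \ref{u22lemma} (\eqref{u22enest}) for $u_{2,2}$, Lemma \ref{un0lemma} (\eqref{un0enest}) for $u_{N_0}$, Lemma \ref{un0corrlemma} (\eqref{un0corr2enest}) for $u_{N_0,corr,2}$, Lemma \ref{un2lemma} for $u_{N_2}$, Lemma \ref{un0ellminusipestlemma} for the elliptic part of $u_{N_0,corr}$, and the various $f$-corrections $f_{5,0}$, $f_{ex,sub}$, $f_{ell,2}$ via Lemmas \ref{w50lemma}, \ref{wexsublemma}, \ref{fell2lemma}. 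For $v_{6,0}$, I would use that $y_0 \in \overline{B}_1(0)\subset Z$ together with the definition \eqref{Zdef} of the $Z$-norm and the relation \eqref{v6intermsofy}: the $L^2$ bounds on $y_0(t,\omega)\sqrt{\omega}\lambda(t)^2$, $\partial_t y_0$, and $y_0$ against the spectral density control $\|\partial_t v_{6,0}\|_{L^2(rdr)}$ and $\|v_{6,0}\|_{\dot H^1}$ (in the conjugated variables), exactly as in the analogous computation in \cite{wm}; combined with the continuity of dilation on $L^2$ and the bound $\|y_0\|_Z \le 1$, this gives $\sqrt{E(v_{6,0},\partial_t v_{6,0})} \le C t^{-(2+2\delta)}\log^{30}(t)$, which is far better than needed.

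The main obstacle is the delicate piece, namely the $v_1$ (equivalently $w_1 + v_{ex}$) contribution near and outside the light cone, where $\partial_t Q_{\frac{1}{\lambda(t)}}(r) \notin L^2(rdr)$ when $\lambda'(t)\ne 0$, so individually neither $Q_{\frac{1}{\lambda(t)}}$ nor $(1-\chi_{\le 1}(\tfrac{r}{h(t)}))u_w$ has finite kinetic energy. The paper already flags this: there is a function $w_1$ (referenced in \eqref{w1fordelicateenergy}, with the companion $w_1$ appearing in \eqref{w1simp} and expansions) such that $\partial_t\big((1-\chi_{\le 1}(\tfrac{r}{h(t)}))(v_1 - w_1)\big) \in L^2(rdr)$, and such that there is enough cancellation in $v_1 + Q_{\frac{1}{\lambda(t)}}(r)$ for large $r$. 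So the key step here is: write
\[
\partial_t\Big(u - v_{rad}\Big) = \partial_t Q_{\frac{1}{\lambda(t)}} + \partial_t u_{ansatz} + \partial_t v_{6,0} - \partial_t v_{rad},
\]
isolate the terms $\partial_t Q_{\frac{1}{\lambda(t)}}$ and $\partial_t\big((1-\chi_{\le 1}(\tfrac{r}{h(t)}))w_1\big)$, and show they combine into an $L^2(rdr)$ function with the claimed bound. Quantitatively, $|\partial_t Q_{\frac{1}{\lambda(t)}}(r)| \sim \frac{|\lambda'(t)|}{\lambda(t)}\cdot\frac{r/\lambda(t)}{1+(r/\lambda(t))^2}$, and $\frac{|\lambda'(t)|}{\lambda(t)} \le \frac{M}{t}$; the dangerous tail $r\gg\lambda(t)$ behaves like $\frac{|\lambda'(t)|}{\lambda(t)}\cdot\frac{\lambda(t)}{r}$, whose $L^2(rdr)$ norm over $r \gtrsim h(t)$ up to, say, $r\sim t$ is $\lesssim \frac{|\lambda'(t)|}{\lambda(t)}\sqrt{\log(t/h(t))}\,\lambda(t) \lesssim \frac{M\lambda(t)\log t}{t}$, which after squaring and comparing to the RHS of \eqref{enofremainder} requires $\lambda(t)^2 \lesssim t^{2C_u}$ — and indeed $\lambda(t) \lesssim t^{C_u}$ by \eqref{lambdacomparg}. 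The cancellation with $w_1$ is what removes the even more dangerous piece extending past $r\sim t$; here one uses the explicit formula \eqref{w1simp} and the asymptotics of $w_1$ for large $r$ (the $\frac{-2}{r}(\lambda(t+r)-\lambda(t)-r\lambda'(t+r))$ term versus the large-$r$ expansion of $Q_{\frac{1}{\lambda(t)}}(r) - \pi \approx -\frac{2\lambda(t)}{r}$), noting that $\partial_t\big(Q_{\frac{1}{\lambda(t)}}(r) - \pi + \frac{2\lambda(t)}{r}\big)$ decays fast enough in $r$.

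Concretely, the steps in order would be: (1) expand $u - Q_{\frac{1}{\lambda(t)}} - v_{rad}$ and group terms; (2) dispatch the gradient and time-derivative energy of all ``easy'' corrections ($u_{w,2}$, $u_{2,2}$, $u_{N_0}$, $u_{N_0,corr,2}$, $u_{N_2}$, $f_{5,0},f_{ex,sub},f_{ell,2},f_{2,2}$, the elliptic part of $u_{N_0,corr}$, $v_{ex}$, and $u_{ell}$, $u_{ell,2}$ in the region $r\lesssim h(t)$) by quoting the energy and pointwise estimates from Lemmas \ref{w1estlemma}--\ref{uansatzlemma}, \ref{un0lemma}, \ref{un0corrlemma}, \ref{un2lemma}, \ref{un0ellminusipestlemma}, all of which give decay strictly faster than $t^{-(1-C_u)}$; (3) dispatch $v_{6,0}$ via the $Z$-norm bound $\|y_0\|_Z\le 1$ and \eqref{v6intermsofy}, using Proposition 5.7 of \cite{kst} for the spectral density and continuity of dilation; (4) handle the delicate $\partial_t Q_{\frac{1}{\lambda(t)}} + \partial_t((1-\chi_{\le 1}(\tfrac{r}{h(t)}))w_1)$ combination by the cancellation argument above, using \eqref{w1simp}, \eqref{w1main}, the asymptotics of $w_1$ near $r\sim h(t)$ and for $r\gg t$, and \eqref{lambdacomparg} to bound $\lambda(t)$ by $t^{C_u}$ — this is the step producing the exact $\frac{\log^2 t}{t^{2-2C_u}}$ on the right-hand side; (5) near $r\sim h(t)$, check that the transition terms (derivatives of $\chi_{\le 1}$ hitting $u_e - u_{wave}$) are controlled by Lemma \ref{ueminusuwave0est} and Lemma \ref{estlemma}, hence negligible; (6) add up, using that $2-2C_u < 2+2\delta$ and $2-2C_u$ is the smallest exponent appearing, to conclude \eqref{enofremainder}. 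The only genuinely nontrivial estimate is step (4); everything else is bookkeeping over already-proven bounds.
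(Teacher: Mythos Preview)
Your proposal is correct and follows essentially the same approach as the paper. The paper's proof organizes $u-Q_{\frac{1}{\lambda(t)}}-v_{rad}$ into exactly the pieces you list, dispatches the ``easy'' corrections by quoting the same lemmas you cite, bounds $v_{6,0}$ via $\|y_0\|_Z\le 1$ and the transference identity, and isolates the delicate kinetic-energy term $\|\partial_t(Q_{\frac{1}{\lambda(t)}}+\chi_{\ge 1}(\tfrac{r}{g(t)\lambda(t)})w_1)\|_{L^2(rdr)}$ as the only genuinely nontrivial step. The one place the paper is slightly more explicit than your sketch is step (4): rather than arguing region-by-region, it writes $Q_1(\tfrac{r}{\lambda(t)})=\pi-2\arctan(\tfrac{\lambda(t)}{r})$ and combines this with \eqref{w1simp} to compute $\partial_t(Q_{\frac{1}{\lambda(t)}}+w_1)$ in closed form, reading off directly that the $\frac{2\lambda'(t)}{r}$ tails cancel and the remainder is $O(\tfrac{\lambda(t)^2\lambda'(t)}{r^3})+O(\tfrac{\lambda(t+r)}{r(t+r)})$, both of which lie in $L^2(rdr)$ with the stated bound.
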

\begin{proof}
We first note that
\begin{equation} \begin{split}u(t,r)-Q_{\frac{1}{\lambda(t)}}(r) - v_{rad} &= v_{6,0}+\chi_{\leq 1}(\frac{r}{\lambda(t) g(t)})(u_{ell}+u_{ell,2}) + (1-\chi_{\leq 1}(\frac{r}{\lambda(t) g(t)}))(w_{1}+v_{ex}+u_{w,2})\\
&-\chi_{\leq 1}(\frac{r}{\lambda(t)g(t)}) (v_{2}+v_{2,2}) + f_{5,0}+f_{ex,sub}+f_{ell,2}+f_{2,2}+u_{N_{0}}\\
&+(u_{N_{0},ell}-\frac{r\lambda(t)}{4} \langle m_{\leq 1}(\frac{R \lambda(t)}{2h(t)}) e_{N_{0}}(t,R\lambda(t)),\phi_{0}(R)\rangle_{L^{2}(R dR)})m_{\leq 1}(\frac{2r}{t})\\
&+m_{\geq 1}(\frac{r}{t}) u_{N_{0},corr,2}+u_{N_{2}}\end{split}\end{equation}
Since, for non-constant $\lambda$, $\partial_{t}Q_{\frac{1}{\lambda(t)}}(r), \partial_{t}\left(\chi_{\geq 1}(\frac{r}{g(t)\lambda(t)})w_{1}(t,r)\right) \not \in L^{2}(r dr)$, the most delicate term to estimate will be the following, which \emph{is} finite.
\begin{equation}\label{delicatedt}||\partial_{t}\left(Q_{\frac{1}{\lambda(t)}}(r)+\chi_{\geq 1}(\frac{r}{g(t)\lambda(t)})w_{1}(t,r)\right)||_{L^{2}(r dr)}\end{equation}
Before explaining how to do this, we note that
\begin{equation}\begin{split} ||v_{6,0}(t,r)||_{\dot{H}^{1}_{e}} &= ||v_{6,0}(t,r\lambda(t))||_{\dot{H}^{1}_{e}} \leq C\left(||L(v_{6,0}(t,r\lambda(t)))||_{L^{2}(r dr)} + ||v_{6,0}(t,r\lambda(t))||_{L^{2}(r dr)}\right) \\
&\leq C \lambda(t) ||\langle \omega \lambda(t)^{2}\rangle y_{0}(t,\omega)||_{L^{2}(\rho(\omega\lambda(t)^{2})d\omega)} \leq \frac{C \log^{30}(t)}{t^{2+2\delta-C_{u}}}\end{split}\end{equation}
By the transference identity,
\begin{equation}\begin{split}||\partial_{t}v_{6,0}(t,r)||_{L^{2}(r dr)}&\leq C \frac{|\lambda'(t)|}{\lambda(t)} ||\sqrt{\frac{\lambda(t)}{r}} \mathcal{F}^{-1}(y_{0}(t,\frac{\cdot}{\lambda(t)^{2}}))(\frac{r}{\lambda(t)})||_{L^{2}(r dr)}\\
&+ C ||\sqrt{\frac{\lambda(t)}{r}} \mathcal{F}^{-1}(\partial_{1}y_{0}(t,\frac{\cdot}{\lambda(t)^{2}}))(\frac{r}{\lambda(t)})||_{L^{2}(r dr)}\\
&+C\frac{|\lambda'(t)|}{\lambda(t)} ||\sqrt{\frac{\lambda(t)}{r}} \mathcal{F}^{-1}(\mathcal{K}(y_{0}(t,\frac{\cdot}{\lambda(t)^{2}})))(\frac{r}{\lambda(t)})||_{L^{2}(r dr)}\\
&\leq \frac{C\log^{30}(t)}{t^{3+2\delta-2C_{u}}}\end{split}\end{equation}
The rest of the estimates needed to establish \eqref{enofremainder} are from \eqref{velldef} and \eqref{c1def} (for $u_{ell}$), \eqref{uell2def}, and the formulae following it, for $u_{ell,2}$,  Lemma \ref{w1estlemma} (for $w_{1}$), Lemma \ref{estlemma}, for $v_{ex}$, Lemma \ref{uw2minuselllemma}, for $u_{w,2}$, Lemma \ref{v2estlemma} for $v_{2}$, Lemma \ref{v22lemma} for $v_{2,2}$, Lemma  \ref{w50lemma}, for $f_{5,0}$, Lemma \ref{wexsublemma}, for $f_{ex,sub}$, Lemma \ref{fell2lemma}, for $f_{ell,2}$, Lemma \ref{u22lemma}, for $f_{2,2}$,  Lemma \ref{un0lemma}, for $u_{N_{0}}$, Lemma \ref{un0ellminusipestlemma}, Lemma \ref{un0corrlemma} for $u_{N_{0},corr,2}$, Lemma \ref{un2lemma} for $u_{N_{2}}$. Now, we return to \eqref{delicatedt}. We note that
$$\chi_{\geq 1}(\frac{r}{\lambda(t)g(t)}) =1-\chi_{\leq 1}(\frac{r}{\lambda(t)g(t)}) =1, \quad r \geq 2\lambda(t) g(t)$$
Therefore, we will show in detail how to estimate \eqref{delicatedt}, when the $L^{2}$ norm is taken over the region $r \geq 2g(t)\lambda(t)$, since the norm in the region $r \leq 2 g(t)\lambda(t)$ is a direct estimation, using Lemma \ref{w1estlemma}. 
From \eqref{w1simp},
\begin{equation}\label{w1fordelicateenergy}w_{1}(t,r) = -\frac{2}{r} \left(\lambda(t+r)-\lambda(t)-r \lambda'(t+r)\right)+2 r \int_{1}^{\infty} \lambda''(t+r y) (y-\sqrt{y^{2}-1}) dy\end{equation}
and
$$Q_{1}(\frac{r}{\lambda(t)}) = \pi-2\arctan(\frac{\lambda(t)}{r})$$
Therefore,
\begin{equation}\begin{split}Q_{1}(\frac{r}{\lambda(t)})+w_{1}(t,r) &= \pi-2\arctan(\frac{\lambda(t)}{r})+2\frac{\lambda(t)}{r}\\
&-\frac{2}{r}\left(\lambda(t+r)-r\lambda'(t+r)\right)+2 r \int_{1}^{\infty} \lambda''(t+ry) (y-\sqrt{y^{2}-1}) dy\end{split}\end{equation}
Therefore,
\begin{equation}\begin{split}\partial_{t}\left(Q_{1}(\frac{r}{\lambda(t)})+w_{1}(t,r)\right)&=\frac{2 \lambda'(t)}{r} \left(\frac{\lambda(t)^{2}}{r^{2}+\lambda(t)^{2}}\right) - \frac{2}{r}\left(\lambda'(t+r)-r\lambda''(t+r)\right)\\
&+2 r \int_{1}^{\infty} \lambda'''(t+r y) (y-\sqrt{y^{2}-1}) dy\end{split}\end{equation}
and this gives
$$||\partial_{t}\left(Q_{1}(\frac{r}{\lambda(t)})+\chi_{\geq 1}(\frac{r}{\lambda(t)g(t)}) w_{1}(t,r)\right)||_{L^{2}(r \geq 2 \lambda(t) g(t), r dr)}^{2} \leq \frac{C \log^{2}(t)}{t^{2-2C_{u}}}$$
which completes the proof of the lemma.
  \end{proof}

\printbibliography
Department of Mathematics, University of California, San Diego\\
\emph{E-mail address:} mkpillai@ucsd.edu
\end{document}